\newtheorem{Theorem}{Theorem}[section]
\newtheorem{Proposition}[Theorem]{Proposition}
\newtheorem{Lemma}[Theorem]{Lemma}
\newtheorem{Corollary}[Theorem]{Corollary}
\newtheorem{Example}[Theorem]{Example}
\newtheorem{Remark}[Theorem]{Remark}
\newtheorem{Conjecture}[Theorem]{Conjecture}
\makeatletter \@addtoreset{equation}{chapter}
\numberwithin{section}{chapter}
\numberwithin{equation}{section}
\newcommand\Comment[2][\relax]{\space\par\medskip\noindent%
   \fbox{\begin{minipage}{\textwidth}\textbf{Comment\ifx\relax#1\else---#1\fi}\newline%
        #2\end{minipage}}\medskip
}
\def\bi{\text{\boldmath$i$}}
\def\bj{\text{\boldmath$j$}}
\def\bk{\text{\boldmath$k$}}
\def\bl{\text{\boldmath$l$}}
\def\b1{\text{\boldmath$1$}}
\def\gg{\text{\boldmath$g$}}
\def\bn{\text{\boldmath$n$}}
\def\bh{\text{\boldmath$h$}}
\def\pmod#1{\text{ }(\text{\rm mod } #1)\,}
\newcommand{\Hom}{\operatorname{Hom}}
\newcommand{\ext}{\operatorname{ext}}
\newcommand{\EXT}{\operatorname{EXT}}
\newcommand{\End}{\operatorname{End}}
\newcommand{\ind}{\operatorname{ind}}
\newcommand{\im}{\operatorname{im}}
\newcommand{\id}{\operatorname{id}}
\def\sgn{\mathtt{sgn}}
\newcommand{\res}{\operatorname{res}}
\newcommand{\soc}{\operatorname{soc}}
\newcommand{\head}{\operatorname{head}}
\newcommand{\cha}{\operatorname{char}}
\newcommand{\infl}{\operatorname{infl}}
\newcommand{\trun}{\operatorname{trun}}
\newcommand{\Z}{\mathbb{Z}}
\def\eps{{\epsilon}}
\def\phi{{\varphi}}
\newcommand{\ga}{\gamma}
\newcommand{\Ga}{\Gamma}
\newcommand{\la}{\lambda}
\newcommand{\La}{\Lambda}
\newcommand{\al}{\alpha}
\newcommand{\be}{\beta}
\def\Si{\mathfrak{S}}
\newcommand{\si}{\sigma}
\newcommand{\de}{\delta}
\newcommand{\De}{\Delta}
\def\triv#1{\O_{#1}}
\newcommand{\Ze}{{\mathbb Z}}
\def\id{\mathop{\mathrm {id}}\nolimits}
\newcommand{\Ind}{{\mathrm {Ind}}}
\newcommand{\Coind}{{\mathrm {Coind}}}
\newcommand{\tr}{{\mathrm {tr}}}
\newcommand{\pr}{{\mathrm {pr}}}
\newcommand{\Res}{{\mathrm {Res}}}
\newcommand{\Ann}{{\mathrm {Ann}}}
\newcommand{\C}{{\mathbb C}}
\newcommand{\A}{{\mathscr A}}
\newcommand{\D}{{\mathscr D}}
\renewcommand{\mod}{\bmod \,}
\def\h{{\mathfrak h}}
\def\g{{\mathfrak g}}
\def\n{{\mathfrak n}}
\def\Par{{\mathscr P}}
\def\ula{{\underline{\lambda}}}
\def\umu{{\underline{\mu}}}
\def\unu{{\underline{\nu}}}
\def\T{{\mathtt T}}
\def\spa{\operatorname{span}}
\def\height{{\operatorname{ht}}}
\def\wt{{\operatorname{wt}}}
\def\surj{{\twoheadrightarrow}}
\def\op{{\mathrm{op}}}
\def\re{{\mathrm{re}}}
\def\im{{\mathrm{im}}}
\def\onto{{\twoheadrightarrow}}
\def\into{{\hookrightarrow}}
\def\Mod#1{#1\!\operatorname{-Mod}}
\def\mod#1{#1\!\operatorname{-mod}}
\renewcommand\O{\mathcal O}
\def\HCI{I}
\def\HCR{{}^*I}
\def\iso{\stackrel{\sim}{\longrightarrow}}
\def\B{B}
\def\Mde{M}
\def\Sde{\operatorname{Sym}}
\def\Zde{Z}
\def\Lade{\La}
\def\Yde{Y}
\def\Sdot{\operatorname{Sym}}
\def\Ladot{\La}
\def\Zdot{Z}
\def\Lde{L}
\def\Dede{\De}
\def\nade{\nabla}
\def\Tde{T}
\def\Pde{P}
\def\triv{{\tt triv}}
\def\ch{\operatorname{ch}}
\def\lan{\langle}
\def\ran{\rangle}
\def\Stand{\Delta}
\def\Perm{\operatorname{Per}}
\def\SPerm{\operatorname{SPer}}
\def\CH{{\operatorname{ch}_q\:}}
\def\DIM{{\operatorname{dim}_q\:}}
\def\co{{\operatorname{col}}}
\def\words{{\langle I\rangle}}
\def\shift{{\tt sh}}
\def\Seq{{\tt Se}}
\def\Car{{\tt C}}
\def\ImS{{\mathscr S}}
\def\JTD{{\mathscr D}}
\def\cc{{\tt c}}
\newcommand*{\rom}[1]{\expandafter\@slowromancap\romannumeral #1@}
  \gdef\set#1{\mathinner{\lbrace\,{\mathcode`\|"8000%
  \let|\midvert #1}\,\rbrace}}
\def\midvert{\egroup\mid\bgroup}
\colorlet{darkgreen}{green!50!black}
\tikzset{dots/.style={very thick,loosely dotted},
         greendot/.style={fill,circle,color=darkgreen,inner sep=1.5pt,outer sep=0},
         blackdot/.style={fill,circle,color=black,inner sep=1.1pt,outer sep=0},
         graydot/.style={fill,circle,color=gray,inner sep=1.1pt,outer sep=0}
}
\def\greendot(#1,#2){\node[greendot] at(#1,#2){}}
\def\blackdot(#1,#2){\node[blackdot] at(#1,#2){}}
\def\graydot(#1,#2){\node[graydot] at(#1,#2){}}
\newenvironment{braid}{
  \begin{tikzpicture}[baseline=6mm,blue,line width=1pt, scale=0.4,
                      draw/.append style={rounded corners},
                      every node/.append style={font=\fontsize{5}{5}\selectfont}]%
  }{\end{tikzpicture}
}
\def\Grid(#1,#2){
  \draw[very thin,gray,step=2mm] (0,0)grid(#1,#2);
  \draw[very thin,darkgreen,step=10mm] (0,0)grid(#1,#2);
}
\newcommand\Tableau[2][\relax]{
  \begin{tikzpicture}[scale=0.5,draw/.append style={thick,black}]
    \ifx\relax#1\relax%
    \else 
      \foreach\box in {#1} { \filldraw[blue!30]\box+(-.5,-.5)rectangle++(.5,.5); }
    \fi
    \newcount\row\newcount\col
    \row=0
    \foreach \Row in {#2} {
       \col=1
       \foreach\k in \Row {
          \draw(\the\col,\the\row)+(-.5,-.5)rectangle++(.5,.5);
          \draw(\the\col,\the\row)node{\k};
          \global\advance\col by 1
       }
       \global\advance\row by -1
    }
  \end{tikzpicture}
}
\newcommand\YoungDiagram[2][\relax]{
  \begin{tikzpicture}[scale=0.5,draw/.append style={thick,black}]
    \ifx\relax#1\relax%
    \else 
    \foreach\box in {#1} {
      \filldraw[blue!30]\box rectangle ++(1,1);
    }
    \fi
    \newcount\row
    \row=0
    \foreach \col in {#2} {
       \draw(1,\the\row)grid ++(\col,1);
       \global\advance\row by -1
    }
  \end{tikzpicture}
}
\begin{document}


\frontmatter

\title[Imaginary Schur-Weyl duality]{{\bf Imaginary Schur-Weyl duality}}

\author{\sc Alexander Kleshchev}
\address{Department of Mathematics\\ University of Oregon\\
Eugene\\ OR 97403, USA}
\email{klesh@uoregon.edu}

\author{\sc Robert Muth}
\address{Department of Mathematics\\ University of Oregon\\
Eugene\\ OR 97403, USA}
\email{muth@uoregon.edu}


\subjclass[2000]{20C08, 20C30, 05E10}

\thanks{
Research supported in part by the NSF grant no. DMS-1161094, the Humboldt Foundation, and the Hausdorff Institute for Mathematics.}

\begin{abstract}
We study imaginary representations of the Khovanov-Lauda-Rouquier algebras of affine Lie type. Irreducible modules for such algebras arise as simple heads of standard modules. In order to define standard modules one needs to have a cuspidal system for a fixed convex preorder. A cuspidal system consists of irreducible cuspidal modules---one for each real positive root for the corresponding affine root system ${\tt X}_l^{(1)}$, as well as irreducible imaginary modules---one for each $l$-multipartition. 
We study  imaginary modules by means of `imaginary Schur-Weyl duality'. We introduce an imaginary analogue  of  tensor space and the imaginary Schur algebra.
 We construct a projective generator for the imaginary Schur algebra, which yields a Morita equivalence between the imaginary and the classical Schur algebra. We construct imaginary analogues of Gelfand-Graev representations, Ringel duality and the Jacobi-Trudy formula. 
\end{abstract}

\maketitle

\setcounter{page}{4}

\tableofcontents

\chapter{Introduction}

\section{Convex preorders and cuspidal systems}\label{SConCus}
Let $F$ be an arbitrary ground field. The KLR algebra $R_\al=R_\al(\Car,F)$, whose precise definition is given in in Section~\ref{SSDefKLR}, is a graded unital associative $F$-algebra depending on a Lie type $\Car$ and an element $\al$ of the positive part $Q_+$ of the corresponding root lattice. 
A natural approach to representation theory of $R_\al$ is provided by the theory of standard modules. For KLR algebras of {\em finite} Lie type such a theory was first described in \cite{KRbz}, see also \cite{HMM,BKOP,BKM}, and especially \cite{McN}. For KLR algebras of {\em affine}\, Lie type two different approaches to  the theory of standard modules were proposed in  \cite{TW} and \cite{Kcusp}. This paper is a continuation of \cite{Kcusp}. 

To describe the main results, let the Cartan matrix $\Car$ be of arbitrary {\em untwisted affine type}. In particular, the  simple roots $\al_i$ are labeled by  $i\in I=\{0,1,\dots,l\}$, where $0$ is the affine vertex of the corresponding Dynkin diagram. We have an (affine) root system $\Phi$ and the corresponding finite root subsystem $\Phi' =\Phi\cap \Z\,\text{-}\spa(\al_1,\dots,\al_l)$. Denote by $\Phi'_+$ and $\Phi_+$ the sets of {\em positive}\, roots in $\Phi'$ and $\Phi$, respectively. Then 
$\Phi_+=\Phi_+^\im\sqcup \Phi_+^\re
$, where
$
\Phi_+^\im=\{n\de\mid n\in\Z_{>0}\}
$
for the null-root $\de$, and
$$
\Phi_+^\re=\{\be+n\de\mid \be\in  \Phi'_+,\ n\in\Z_{\geq 0}\}\sqcup \{-\be+n\de\mid \be\in  \Phi'_+,\ n\in\Z_{> 0}\}. 
$$


As in \cite{BKT}, a {\em convex preorder} on $\Phi_+$ is a preorder $\preceq$ such that the following three conditions hold for all $\be,\ga\in\Phi_+$:
\begin{eqnarray}
\label{EPO1}
&\be\preceq\ga \ \text{or}\ \ga\preceq \be;
\\
\label{EPO2}
&\text{if $\be\preceq \ga$ and $\be+\ga\in\Phi_+$, then $\be\preceq\be+\ga\preceq\ga$};
\\
&\label{EPO3}
\text{$\be\preceq\ga$ and $\ga\preceq\be$ if and only if $\be$ and $\ga$ are proportional}.
\end{eqnarray}
Convex preorders are known to exist. Let us fix an arbitrary convex preorder $\preceq$ on $\Phi_+$. From (\ref{EPO3}) we have  that $\be\preceq\ga$ and $\ga\preceq\be$ happens for $\be\neq\ga$ if and only if both $\be$ and $\ga$ are imaginary. We write $\be\prec\ga$ if $\be\preceq\ga$ but $\ga\not\preceq\be$. 
The following set is {\em totally ordered}\, with respect to $\preceq$:
\begin{equation}\label{EPsi}
\Psi:=\Phi_+^\re\cup\{\de\}.
\end{equation}
It is easy to see that the set of real roots splits into two disjoint infinite sets
$$
\Phi^\re_{\succ}:=\{\be\in \Phi_+^\re\mid \be\succ\de\}\ \text{and}\ 
\Phi^\re_{\prec}:=\{\be\in \Phi_+^\re\mid \be\prec\de\}. 
$$


If $\mu$ is a partition of $n$ we write $\mu\vdash n$ and $n=|\mu|$. 
By an {\em $l$-multipartition} of $n$, we mean a tuple $\umu=(\mu^{(1)},\dots,\mu^{(l)})$ of partitions such that $|\mu^{(1)}|+\dots+|\mu^{(l)}|=n$. The set of all $l$-multipartitions of $n$ is denoted by $\Par_n$, and $\Par:=\sqcup_{n\geq 0}\Par_n$. 
A {\em root partition of $\al$} is a pair $(M,\umu)$, where $M$ is a tuple $(m_\rho)_{\rho\in\Psi}$ of non-negative integers such that $\sum_{\rho\in\Psi}m_\rho\rho=\al$, and $\umu$ is an $l$-multipartition of $m_\de$. 
It is clear that all but finitely many integers $m_\rho$ are zero, so we can always choose a finite subset
$$
\rho_1>\dots>\rho_s>\de>\rho_{-t}>\dots>\rho_{-1}
$$
of $\Psi$ such that $m_\rho=0$ for $\rho$ outside of this subset. Then, denoting $m_u:=m_{\rho_u}$, we can 
 write any root partition of $\al$ in the form
\begin{equation}\label{ERP}
(M,\umu)=(\rho_1^{m_1},\dots,\rho_s^{m_s},\umu,\rho_{-t}^{m_{-t}},\dots,\rho_{-1}^{m_{-1}}),
\end{equation}
where all $m_u\in\Z_{\geq 0}$, $\umu\in\Par$, and 
$$\sum_{u=1}^s m_u\rho_u+|\umu|\de+\sum_{u=1}^{t} m_{-u}\rho_{-u}=\al.$$
We write $\Pi(\al)$ for the set of root partitions of~$\al$. 
The set $\Pi(\al)$ has a natural partial order `$\leq$', see Section~\ref{SSConPreRP}. 

There is an induction functor $\Ind_{\al,\be}$, which associates to an $R_\al$-module $M$ and and $R_\be$-module $N$ the $R_{\al+\be}$-module $M\circ N:=\Ind_{\al,\be} M\boxtimes N$ for $\al,\be\in Q_+$. We refer to this operation as the {\em induction product}. The functor $\Ind_{\al,\be}$ has a right adjoint $\Res_{\al,\be}$.

A {\em cuspidal system} (for a fixed convex preorder) is the following data:
\begin{enumerate}
\item[{\rm (Cus1)}] A {\em cuspidal} irreducible $R_\rho$-module $L_\rho$ assigned to every $\rho\in \Phi_+^\re$, with the following property: if $\be,\ga\in Q_+$ are non-zero elements such that $\rho=\be+\ga$ and $\Res_{\be,\ga}L_\rho\neq 0$, then $\beta$ is a sum of roots less than $\rho$ and $\ga$ is a sum of roots greater than $\rho$. 
\item[{\rm (Cus2)}] An irreducible {\em imaginary}  $R_{n\de}$-module $L(\umu)$ assigned to every $\umu\in\Par_n$, with the following property: if $\be,\ga\in Q_+\setminus\Phi_+^\im$ are non-zero elements such that $n\de=\be+\ga$ and $\Res_{\be,\ga}L(\umu)\neq 0$, then $\beta$ is a sum of real roots less than $\de$ and $\ga$ is a sum of real roots greater than $\de$. It is required that $L(\ula)\not\simeq L(\umu)$ unless $\ula=\umu$. 
\end{enumerate}

It is proved in \cite{Kcusp} that (for a fixed convex preorder) cuspidal modules exist and are determined uniquely up to an  isomorphism.

Given a root partition 
$
\pi=(\rho_1^{m_1},\dots,\rho_s^{m_s},\umu,\rho_{-t}^{m_{-t}},\dots,\rho_{-1}^{m_{-1}})\in\Pi(\al)
$ as above, 
the corresponding {\em standard module}  is: 
\begin{equation}\label{EStandIntro}
\Stand(\pi):=q^{\shift(\pi)}L_{\rho_1}^{\circ m_1} \circ\dots\circ L_{\rho_s}^{\circ m_s}\circ  L(\umu)\circ L_{\rho_{-t}}^{\circ m_{-t}}\circ\dots\circ L_{\rho_{-1}}^{m_{-1}} ,
\end{equation}
where $q^{\shift(\pi)}$ means that grading is shifted by an integer $\shift(\pi)$ defined in (\ref{EShift}). 

\vspace{2mm}
\noindent
{\bf Theorem 1. (Cuspidal Systems)} 
{\em {\rm \cite[Main Theorem]{Kcusp}} 
For any convex preorder there exists a cuspidal system $$\{L_\rho\mid \rho\in \Phi_+^\re\}\cup\{L(\umu)\mid \umu\in\Par\}.$$ Moreover: 
\begin{enumerate}
\item[{\rm (i)}] For every root partition $\pi$, the standard module  
$
\Stand(\pi)
$ has irreducible head; denote this irreducible module $L(\pi)$. 

\item[{\rm (ii)}] $\{L(\pi)\mid \pi\in \Pi(\al)\}$ is a complete and irredundant system of irreducible $R_\al$-modules up to isomorphism and degree shift.

\item[{\rm (iii)}] For every root partition  $\pi$, we have $L(\pi)^\circledast\cong L(\pi)$.  

\item[{\rm (iv)}] For all root partitions $\pi,\si\in\Pi(\al)$, we have that $[\Stand(\pi):L(\pi)]_q=1$, and $[\Stand(\pi):L(\si)]_q\neq 0$ implies $\si\leq \pi$. 


\item[{\rm (v)}] The induced module $L_\rho^{\circ n}$ is irreducible for all $\rho\in\Phi^\re_+$ and $n\in\Z_{>0}$.
\end{enumerate}
}
\vspace{2mm}

We note that in the definition (\ref{EStandIntro}) of standard modules $\De(\pi)$, instead of $L(\umu)$ we could take
\begin{equation}\label{EStandDelta}
\De(\umu):=\De_1(\mu^{(1)})\circ \dots\circ \De_l(\mu^{(l)}),
\end{equation}
with $\De_i(\mu^{(i)})$ being the {\em imaginary standard module of color $i$} defined in (\ref{EDeIntro}) below. By \cite[Theorem 4.7]{Kcusp}, the new standard module 
\begin{equation}\label{EStand'Intro}
\Stand'(\pi):=q^{\shift(\pi)}L_{\rho_1}^{\circ m_1} \circ\dots\circ L_{\rho_s}^{\circ m_s}\circ  \De(\umu)\circ L_{\rho_{-t}}^{\circ m_{-t}}\circ\dots\circ L_{\rho_{-1}}^{m_{-1}} 
\end{equation}
has all the properties of $\De(\pi)$ listed in Theorem 1. If $\cha F=0$ or $\cha F>m_\de$, we always have $\De(\pi)=\De'(\pi)$. The {\em disadvantage} of $\De'(\pi)$ is that it is in general larger than $\De(\pi)$, and so it is `further' from the irreducible module $L(\pi)$. The {\em advantage} is that $\De'(\pi)$ has formal character which does not depend on the characteristic of the ground field, and which in many important cases is known.

\section{Imaginary representations}\label{SImRep}
Theorem 1 gives a `rough classification' of irreducible $R_\al$-modules. The main problem is that we did not give a canonical definition of individual irreducible imaginary modules $L(\umu)$. We just know that the amount of such modules for $R_{n\de}$ is equal to the number of $l$-multipartitions of $n$, and so we have labeled them by such multipartitions in an arbitrary way. 

In this paper we address this problem. Our approach relies on the so-called {\em imaginary Schur-Weyl duality}. This theory in particular allows us to construct an equivalence between an appropriate category of imaginary representations of KLR algebras and the category of representations of the classical Schur algebra. 

Let us make an additional assumption that the convex preorder is {\em balanced}, which means that
\begin{equation}\label{EBalanced}
\Phi_{\succ}^\re=\{\be+n\de\mid \be\in  \Phi'_+,\ n\in\Z_{\geq 0}\}. 
\end{equation}
This is equivalent to  
\begin{equation}\label{EBalancedEquiv}
\al_i\succ n\de\succ\al_0 \qquad(i\in I',n\in\Z_{>0}).
\end{equation}
Of course, we then also have $\Phi_{\prec}^\re=\{-\be+n\de\mid \be\in  \Phi'_+,\ n\in\Z_{> 0}\}$. 
Balanced convex preorders always exist, see for example \cite{BCP}. 

The first steps towards imaginary Schur-Weyl duality have already been made in \cite{Kcusp}. First of all, we have described explicitly  {\em minuscule} representations---the irreducible imaginary representations which correspond to $l$-multipartitions of~$1$. There are of course exactly $l$ such multipartitions, namely $\umu(1),\dots,\umu(l)$, where 
$$
\umu(i):=(\emptyset,\dots,\emptyset,(1),\emptyset,\dots,\emptyset)
$$
with the partition $(1)$ in the $i$th position. For each $i=1,\dots,l$,  we have defined an irreducible $R_\de$-module $L_{\de,i}$, see \cite[Section 5]{Kcusp}, and set
$$
L(\umu(i)):=L_{\de,i}\qquad(1\leq i\leq l).
$$ 
Further, we have defined the {\em imaginary tensor space of color $i$}  to be the $R_{n\de}$-module
$$
M_{n,i}:=L_{\de,i}^{\circ n}\qquad(1\leq i\leq l). 
$$
and proved \cite[Lemma 5.7]{Kcusp} that any composition factor of a {\em mixed tensor space} 
$$M_{n_1,1}\circ \dots\circ M_{n_l,l}$$ 
is imaginary. We call composition factors of $M_{n,i}$ {\em irreducible imaginary  modules of  color $i$.} The following theorem reduces the study of irreducible imaginary modules to irreducible imaginary modules of a fixed color:

\vspace{2mm}
\noindent
{\bf Theorem 2. (Reduction to One Color)} 
{\em {\rm \cite[Theorem 5.10]{Kcusp}} 
Suppose that for each $n\in\Z_{\geq 0}$ and $i\in I'$, we have an irredundant family $\{L_i(\la)\mid\la\vdash n\}$ of irreducible imaginary $R_{n\de}$-modules of color $i$. For a multipartition $\ula=(\la^{(1)},\dots,\la^{(l)})\in\Par_n$, define $$L(\ula):=L_1(\la^{(1)})\circ\dots\circ L_l(\la^{(l)}).$$ 
Then $\{L(\ula)\mid\ula\in\Par_n\}$ is a complete and irredundant system of irreducible imaginary $R_{n\de}$-modules. In particular, the given modules $\{L_i(\la)\mid\la\vdash n\}$ give all the irreducible imaginary modules of color $i$ up to isomorphism. 
}
\vspace{2mm}

Theorem 2 can be strengthened as a certain category equivalence, see Proposition~\ref{PT2'}. 

\section{Imaginary Schur-Weyl duality}
From now on we fix a color $i\in I'$. In view of Theorem 2, we need to construct  irreducible imaginary $R_{n\de}$-modules $L_i(\la)$ of color $i$. Since $i$ is now fixed we are going to drop the index $i$ from our notation. So we need to describe the composition factors of the imaginary tensor space $\Mde_n=\Mde_{n,i}$ and show that they are naturally labeled by the partitions $\la$ of $n$. 


The $R_{n\de}$-module structure on the imaginary tensor space $M_n$ yields an algebra homomorphism $R_{n\de}\to \End_F(M_n)$. Define the {\em imaginary Schur algebra} $\ImS_n
$ as the image of $R_{n\de}$ under this homomorphism. In other words,  
$$\ImS_n=R_{n\de}/\Ann_{R_{n\de}}(M_n). 
$$
Modules over $R_{n\de}$ which factor through to $\ImS_n$ will be called {\em imaginary modules} (of color $i$). Thus the {\em category of imaginary $R_{n\de}$-modules} is the same as the category of $\ImS_n$-modules. The main goal of this paper is to understand this category.

It is clear that $M_n$ and its composition factors are imaginary modules. Conversely, any irreducible $\ImS_n$-module appears as  a composition factor of $M_n$. So our new notion of an imaginary module fits with the old notion of an irreducible imaginary module in the sense of cuspidal systems.  

Our first new result is as follows:

\vspace{2mm}
\noindent
{\bf Theorem 3. (Imaginary Schur-Weyl Duality)}
{\em
\begin{enumerate}
\item[{\rm (i)}] $M_n$ is a projective $\ImS_n$-module. 
\item[{\rm (ii)}] The endomorphism algebra $\End_{R_{n\de}}(\Mde_n)=\End_{\ImS_n}(\Mde_n)$ of the imaginary tensor space $\Mde_n$ is isomorphic to the group algebra $F\Si_n$ of the symmetric group $\Si_n$ (concentrated in degree zero). Thus $M_n$ can be considered as a right $F\Si_n$-module. 
\item[{\rm (iii)}] $\End_{F\Si_n}(M_n)=\ImS_n$. 
\end{enumerate}

}
\vspace{2 mm}
 
 
Parts (i) and (ii) of Theorem 3  are  Theorem~\ref{TSchub}, and part (iii) is Theorem~\ref{4.5e}(ii). 

In view of Theorem 3, we have an exact functor 
\begin{equation}\label{EGaFun}
\ga_{n}: \mod{\ImS_n}\to\mod{F\Si_n},\quad V\mapsto \Hom_{\ImS_n}(M_n,V).
\end{equation}
Unfortunately, $\ga_n$ is not an equivalence of categories, unless the characteristic of the ground field is zero or greater than $n$, cf. Theorem~\ref{TChar0}, since in general the $\ImS_n$-module $M_n$ is not a projective generator. In order to resolve this problem, we need to upgrade from imaginary Schur-Weyl duality to imaginary Howe duality.

\section{Imaginary Howe and Ringel dualities}
Let 
$
{\mathtt x}_{n}:=\sum_{g\in \Si_n}g.
$
In view of Theorem 3, $M_n$ is a right $F\Si_n$-module. Define the {\em imaginary 
divided} and {\em exterior} powers respectively as follows:
\begin{align*}
\Zde_n&:=\{m\in \Mde_n\mid mg-\sgn(g)m=0\ \text{for all $g\in\Si_n$}\},
\\
\Lade_n&:=\Mde_n {\mathtt x}_n.
\end{align*} 
Note that the divided power in our definition corresponds to the sign representation of the symmetric group, while the exterior power corresponds to the trivial representation. This set up is similar to \cite[\S3.3]{BDK}. 

For $h\in\Z_{>0}$, denote by $X(h,n)$ the set of all compositions of $n$ with $h$ parts:
$$
X(h,n):=\{(n_1,\dots,n_h)\in\Z_{\geq 0}^h\mid n_1+\dots+n_h=n\}.
$$
The corresponding set of partitions is
$$
X_+(h,n):=\{(n_1,\dots,n_h)\in X(h,n)\mid n_1\geq\dots\geq n_h\}.
$$
For a composition $\nu=(n_1,\dots,n_h)\in X(h,n)$, we define the functors of {\em imaginary induction} and {\em imaginary restriction} as
$$
\HCI_\nu^n:=\Ind_{n_1\de,\dots,n_h\de}: \mod{R_{n_1\de,\dots,n_h\de}}\to \mod{R_{n\de}}
$$
and
$$
\HCR_\nu^n:=\Res_{n_1\de,\dots,n_h\de}: \mod{R_{n\de}}\to \mod{R_{n_1\de,\dots,n_h\de}}.
$$
These functors `respect' the categories of imaginary representations. For example, given imaginary $R_{n_b\de}$-modules $V_b$ for $b=1,\dots,h$, the module $\HCI_\nu^n(V_1\boxtimes\dots\boxtimes V_h)$ is also imaginary. Define
\begin{align*}
\Zde^\nu&:=\HCI_\nu^n(\Zde_{n_1}\boxtimes\dots\boxtimes \Zde_{n_h}),
\\
\Lade^\nu&:=\HCI_\nu^n(\Lade_{n_1}\boxtimes\dots\boxtimes \Lade_{n_h}).
\end{align*} 

Now, let $S_{h,n}$ be the classical Schur algebra, whose representations are the same as the degree $n$ polynomial representations of the general linear group $GL_h(F)$, see \cite{Green}. Well-known basic facts on $S_{h,n}$ are collected in Sections~\ref{SSSA}--\ref{SSIRSA}. In particular, it is a finite dimensional quasi-hereditary algebra with irreducible, standard, costandard, and indecomposable tilting modules
$$
L_h(\la),\ \De_h(\la),\ \nabla_h(\la),\ T_h(\la)\qquad(\la\in X_+(h,n)). 
$$

\vspace{2mm}
\noindent
{\bf Theorem 4. (Imaginary Howe Duality)}
{\em
\begin{enumerate}
\item[{\rm (i)}] For each $\nu\in X(h,n)$ the $\ImS_n$-module $\Zdot^\nu$ is  projective. Moreover, for any $h\geq n$, we have that $Z:=\bigoplus_{\nu\in X(h,n)}\Zdot^\nu$ is a projective generator for $\ImS_n$.  
\item[{\rm (ii)}] The endomorphism algebra $\End_{\ImS_n}(Z)$ is isomorphic to the classical Schur algebra $S_{h,n}$. Thus $Z$ can be considered as a right $S_{h,n}$-module. 
\item[{\rm (iii)}] $\End_{S_{h,n}}(Z)=\ImS_n$. 
\end{enumerate}
}
\vspace{2 mm}

Part (i) of Theorem 4 is Theorem~\ref{3.4g}(iii), part (ii) is Theorem~\ref{3.4c}, while part (iii) follows from (i) and (ii) and general Morita theory, see for example \cite[Theorem 3.54(iv)]{CRI}. 

Theorem 4 allows us to plug in Morita theory to define mutually inverse  equivalences of categories 
\begin{align}
\al_{h,n}&: \mod{\ImS_n}\to\mod{S_{h,n}},\quad V\mapsto \Hom_{\ImS_n}(Z,V)
\label{EAlpha}
\\
\be_{h,n}&:\mod{S_{h,n}}\to \mod{\ImS_n},\quad W\mapsto Z\otimes_{S_{h,n}}W.
\label{EBeta}
\end{align}
Denoting by $f_{h,n}$ the usual Schur functor, as for example in \cite[\S6]{Green}, by definitions we then have a commutative triangle (up to isomorphism of functors):
\begin{equation}\label{EFunTriangle}
\begin{picture}(100,30)
\put(0,-30){
\begin{tikzpicture}
\node at (0,0.5) {$\mod{S_{h,n}}$};
\node at (1.5,-1) {$\mod{\ImS_n}$}; 
\node at (-1.5,-1) {$\mod{F\Si_n}$};

\draw [->] (-0.5,0.2) -- (-1.2,-0.8);
\draw [<-] (-0.5,-1) -- (0.6,-1);
\draw [->] (0.5,0.2) -- (1.3,-0.7);
\draw [<-] (0.3,0.2) -- (1.1,-0.7);

\node at (-1.3,-0.2) {${\scriptstyle f_{h,n}}$};
\node at (0.25,-0.25) {${\scriptstyle \al_{h,n}}$};
\node at (1.35,-0.25) {${\scriptstyle \be_{h,n}}$};
\node at (0,-0.8) {${\scriptstyle \ga_{n}}$};
\end{tikzpicture}
}
\end{picture}
\end{equation}

\vspace{1cm}
Under the Morita equivalence imaginary induction commutes with tensor products:

\vspace{2mm}
\noindent
{\bf Theorem 5. (Imaginary Induction and Tensor Products)}
{\em
Let $h\geq n$ and $\nu=(n_1,\dots,n_a)\in X(a,n)$. 
The following functors are isomorphic:
\begin{align*}
\HCI_\nu^n (\be_{h,n_1}?\boxtimes\dots\boxtimes \be_{h,n_a} ?) & :\mod{S_{h,n_1}}\times\dots\times \mod{S_{h,n_a}}\to \mod{\ImS_n},
\\
\be_{h,n}(?\otimes\dots\otimes  ?) & :\mod{S_{h,n_1}}\times\dots\times \mod{S_{h,n_a}}\to \mod{\ImS_n}.
\end{align*}
}

Theorem 5 is Theorem~\ref{4.2a}. Since the Schur functor sends tensor products to the induction for symmetric groups, cf. \cite[Theorem 4.13]{BKlr}, we deduce from Theorem 5 and (\ref{EFunTriangle}):

\vspace{2mm}
\noindent
{\bf Corollary.}
{\em
Let $\nu=(n_1,\dots,n_a)\in X(a,n)$. We have a functorial isomorphism  
$$
\ga_n\big(\HCI_\nu^n (V_1\boxtimes\dots\boxtimes  V_a)\big)\cong \ind_{F\Si_\nu}^{F\Si_n}
\big(\ga_{n_1}(V_1)\boxtimes\dots\boxtimes \ga_{n_a}(V_a)\big)
$$
for $V_1\in\mod{\ImS_{n_1}},\dots,V_a\in\mod{\ImS_{n_a}}$.
}

\vspace{2mm}
Let $\la \in X_+(n,n)$ and $h\geq n$. We can also consider $\la$ as an element of $X_+(h,n)$. Define the graded $\ImS_n$-modules (hence, by inflation, also graded $R_{n\de}$-modules):
\begin{eqnarray}
\label{ELIntro}
\Lde(\la) &:= \be_{h,n}(L_h(\la)),
\\
\label{EDeIntro}
\Dede(\la) &:= \be_{h,n}(\Delta_h(\la)),
\\
\label{ENaIntro}
\nade(\la) &:= \be_{h,n}(\nabla_h(\la)),
\\
\label{ETIntro}
T(\la) &:= \be_{h,n}(T_h(\la)).
\end{eqnarray}
These definitions turn out to be independent of the choice of $h \geq n$, see Lemma~\ref{3.5c}. 
We get:

\vspace{2mm}
\noindent
{\bf Theorem 6. (Imaginary Schur Algebra)}
{\em
The imaginary Schur algebra $\ImS_n$ is a finite dimensional quasi-hereditary algebra with irreducible, standard, costandard, and indecomposable tilting modules
$$
L(\la),\ \De(\la),\ \nabla(\la),\ T(\la)\qquad(\la\in X_+(h,n)). 
$$ 
}

We also have 

\vspace{2mm}
\noindent
{\bf Theorem 7. (Imaginary Ringel Duality)}
{\em
\begin{enumerate}
\item[{\rm (i)}] Let $h\geq n$. The $\ImS_n$-module $\bigoplus_{\nu\in X(h,n)} \Lade^\nu$ is a full tilting module. 
\item[{\rm (ii)}] We have isomorphisms of endomorphism algebras 
$$\End_{\ImS_n}\Big(\bigoplus_{\nu\in X(h,n)} \Lade^\nu \Big)\cong S_{h,n}\quad\text{and}\quad   \End_{S_{h,n}}\Big(\bigoplus_{\nu\in X(h,n)} \Lade^\nu \Big)\cong \ImS_{n}.$$
\end{enumerate}
}
\vspace{2 mm}

Part (i) of Theorem 7 is Theorem~\ref{4.5d}, while part (ii) is Theorem~\ref{4.5e}(i). 

\section{Gelfand-Graev words and representations}
An important role in the paper is played by an analogue of {\em Gelfand-Graev representation}, cf. \cite[Corollary 2.5e and \S2.4]{BDK}. 
Recall that the algebra $R_\al$ comes with a family of idempotents $\{1_\bi\mid\bi\in\words_\al\}$ labeled by the set $\words_\al$ of words or weight $\al$, see Section~\ref{SSDefKLR}. For a finite dimensional graded $R_\al$-module $V$ we have the corresponding {\em word spaces}
$$
V_\bi:=1_\bi V\qquad(\bi\in\words_\al)
$$
and the {\em formal character}  
$$
\CH=\sum_{\bi\in\words_\al}(\DIM V_\bi)\bi\in \Z[q,q^{-1}]\cdot \words_\al,
$$
where $\DIM V_\bi\in\Z[q,q^{-1}]$ is the graded dimension of the word space $V_\bi$, see Section~\ref{SSBasicRep}.

Recall that $M_1=L_{\de}$ is a minuscule representation (whose formal character is well understood). Let $e:=\height(\de)$. Fix an extremal word $\bi=i_1\dots i_e$ appearing in the formal character of $M_1$ as in Section~\ref{SSImTenSp}, and write it in the form
$$
\bi=j_1^{m_1}\dots j_r^{m_r}
$$
with $j_k\neq j_{k+1}$ for all $1\leq k<r$. 
Define the corresponding {\em Gelfand-Graev  words} 
$$
\gg^{(t)}=\gg^{(t)}_\bi:=i_1^t i_2^t\dots i_e^t=j_1^{m_1t}\dots j_r^{m_rt}\in\words_{t\de}
$$
for all $t\in\Z_{>0}$, and, more generally, 
\begin{equation}\label{EGGW}
\gg^\mu=\gg^\mu_\bi:=\gg^{(\mu_1)}\dots\gg^{(\mu_n)}\in\words_{n\de}
\end{equation}
for any composition $\mu=(\mu_1,\dots,\mu_n)\in X(n, n)$. 

For $\mu=(\mu_1,\dots,\mu_n)\in X(n,n)$, set 
\begin{equation}\label{EmlaIntro}
c(\mu)=c(\mu)_\bi:=\prod_{m=1}^n\prod_{k=1}^r [m_k\mu_m]^!_{j_k}\in\Z[q,q^{-1}], 
\end{equation}
where the quantum factorials are defined as usual, see (\ref{EQuantumFactorials}). If $\Car$ is symmetric then all $m_k=1$, and so $c(\mu)=([\mu_1]_q^!\dots [\mu_n]_q^!)^e$. 

Let $V$ be a finite dimensional graded $R_{n\de}$-module. 
It is known that  
$$\DIM V_{\gg^\mu}=c(\mu)m_\mu(V)\qquad(\mu\in X(n,n))$$
for some $m_\mu(V)\in\Z[q,q^{-1}]$. 
In (\ref{EGGId}), we define explicit {\em Gelfand-Graev idempotents} $\ga_{\mu,\de}\in R_{n\de}$ such that $\ga_{\mu,\de}=1_{\gg^\mu}\ga_{\mu,\de}1_{\gg^\mu}$. In the special case $\mu=(n,0,\dots,0)$, we denote $\ga_{\mu,\de}$ by $\ga_{n,\de}$ and define the {\em Gelfand-Graev module} to be the projective module 
$$
\Ga_n:=\big(\prod_{k=1}^rq_{j_k}^{-nm_k(nm_k-1)/2}\big)R_{n\de}\ga_{n,\de}.
$$
Recall that a power of $q$ means a degree shift. 

\vspace{2mm}
\noindent
{\bf Theorem 8. (`Gelfand-Graev Properties')}
{\em
\begin{enumerate}
\item[{\rm (i)}] $\Hom_{R_{n\de}}(\Ga_{n},\Mde_n)\cong F$.  
\item[{\rm (ii)}] The submodule $\Zde_n\subseteq \Mde_n$ coincides with the image of any non-zero homomorphism from $\Ga_n$ to $M_n$. 
\item[{\rm (iii)}] for any $V\in\mod{R_{n\de}}$ and $\mu\in X(n,n)$, we have 
$$
m_\mu(V)=\DIM\Hom_{R_{n\de}}(\Ga_{\mu_1}\circ \dots\circ \Ga_{\mu_n},V).
$$
\end{enumerate}
}
\vspace{2 mm}

Part (i) of Theorem 8 is proved in Proposition~\ref{2.5e}(ii),  part (ii) is proved in Theorem~\ref{3.4g}(i), and part (iii) is Lemma~\ref{LDimGG}. 

Since we have equivalences of categories (\ref{EAlpha}) and (\ref{EBeta}), every finite dimensional graded $\ImS_n$-module $V$ can be written as $\be_{n,n}(W)$ (up to degree shift). Suppose we know the usual formal character of the module $W$ over the classical Schur algebra $S_{n,n}$, i.e. the dimensions of all usual weight spaces $W_\mu=e(\mu)W$ for $\mu\in X(n,n)$, see Section~\ref{SSSchurRT}. Then, recalling  $c(\mu)$ from (\ref{EmlaIntro}), we can also describe the {\em Gelfand-Graev fragment} of the (graded) formal character of the imaginary representation $V$ as follows:

\vspace{2mm}
\noindent
{\bf Theorem 9. (Gelfand-Graev Fragment of Graded Character)}
{\em
Let 
$\mu\in X(n, n)$, $W\in\mod{S_{h,n}}$ and $V=\beta_{n,n}(W)\in\mod{\ImS_n}$. Then 
$$\DIM V_{\gg^\mu}=c(\mu)\dim e(\mu)W.$$ 
}
\vspace{.1 mm}

Theorem 9 is proved in Corollary~\ref{CCharLLa}. 

For $\la\in X_+(n,n)$ and $\mu\in X(n, n)$, we now define 
\begin{align*}
k_{\la,\mu}&:=\dim e(\mu)L_n(\la),\\
K_{\la,\mu}&:=\dim e(\mu)\De_n(\la)
\end{align*}
Of course, $K_{\la,\mu}$ is the classical Kostka number, i.e. $K_{\la,\mu}$ equals the number of semistandard $\la$-tableaux of type $\mu$. 
If $\cha F=0$, we have $k_{\la,\mu}=K_{\la,\mu}$. 

\vspace{2mm}
\noindent
{\bf Theorem 10. (Gelfand-Graev Fragment for Irreducible and Standard Modules)}
{\em
Let $\la\in X_+(n,n)$ and $\mu\in X(n, n)$. Then we have:
\begin{enumerate}
\item[{\rm (i)}] $\DIM L(\la)_{\gg^\mu}=c(\mu)k_{\la,\mu}.$
\item[{\rm (ii)}] $\DIM \Dede(\la)_{\gg^\mu}=c(\mu)K_{\la,\mu}.$   
\end{enumerate}
}
\vspace{2 mm}

Part (i) of Theorem 10 is proved in Theorem~\ref{TGGFragmDede}, and part (ii) in Corollary~\ref{CCharLLa}. We note that the whole graded  character of $\Dede(\la)$ can be found using the {\em imaginary Jacobi-Trudi formula} of Theorem~\ref{TDet}. 

For $\la\in X_+(n,n)$, let $P(\la)$ be the projective cover of $L(\la)$ as an $\ImS_n$-module. 
The multiplicities $k_{\la,\mu}$ can  be used to strengthen Theorem 4(i) and Theorem 6(i) as follows:

\vspace{2mm}
\noindent
{\bf Theorem 11. (PIMs and Indecomposable Tilting Multiplicities)}
{\em
For $\mu\in X(n,n)$ we have:
\begin{enumerate}
\item[{\rm (i)}] $\Zde^\mu\cong \bigoplus_{\la\vdash n}\Pde(\la)^{\oplus k_{\la,\mu}}$;
\item[{\rm (ii)}] $\Lade^\nu\cong \bigoplus_{\la\vdash n}\Tde(\la^\tr)^{\oplus k_{\la,\mu}}$.
\end{enumerate}
}
\vspace{2 mm}

Note that $\la^\tr$ stands for the partition conjugate (or transpose) to $\la$. 
Theorem 11 is proved in Theorem~\ref{4.5f}. 

\section{Example: type ${\tt A}_1^{(1)}$}
For illustration, we now specialize to the case $\Car={\tt A}_1^{(1)}$ and describe some of our results again in this special case. Then the set of the real roots is: 
$$
\Phi_+^\re=\{n\de+\al_1,n\de+\al_0\mid n\in\Z_{\geq 0}\}. 
$$
There are only two convex preorders:
$$
\al_1\succ\al_1+\de\succ\al_1+2\de\succ\dots\succ \de,2\de,3\de,\dots\succ \dots\succ \al_0+2\de\succ\al_0+\de\succ\al_0
$$
and its opposite:
$$
\al_0\succ\al_0+\de\succ\al_0+2\de\succ\dots\succ \de,2\de,3\de,\dots\succ \dots\succ \al_1+2\de\succ\al_1+\de\succ\al_1
$$
For the second preorder, the theory is completely similar to and is obtained from the theory for the first preorder by switching the roles of $1$ and $0$ (which are  symmetric). So we just work with the first preorder, which is balanced. 

Let us notice that there is an obvious {\em trivial} module $\triv_n$ over $R_{n\de}$ with the the formal character 
$$\bi_{n\de}:=0101\dots 01=(01)^n.$$ 
It is defined as the $1$-dimensional module on which all standard generators of $R_{n\de}$, except $1_{\bi_{n\de}}$ act as $0$. When $n=0$, this is interpreted as the trivial $R_0$-module $\b1$, which is just the trivial $F$-module $F$. 
It will turn out that $\triv_n$ is the imaginary module $L((1^n))$ using the notation of Theorem~6. At any rate, it is immediate  that $\triv_1=L_{\de,1}=L_\de$. 

The cuspidal modules can now be described rather explicitly:

\vspace{2mm}
\noindent
{\bf Theorem A. (Cuspidal modules in type ${\tt A}_1^{(1)}$)}
{\em
Let $\al=\al_1+n\de$ or $\al=\al_0+n\de$. If $n>0$, set $\be=\al-\de$. We have: 

\begin{enumerate}
\item[{\rm (i)}] Let $\al=\al_1+n\de$.

\begin{enumerate}
\item[{\rm (a)}] $0^n1^{n+1}$ is an extremal word of $L_\al$; in particular, 
$
L_{\al}=\tilde f_0^n\tilde f_1^{n+1}\b1. 
$

\item[{\rm (b)}] The standard module 
$\De(\be,(1))= L_\be\circ \triv_1$
is uniserial of length two with head $L(\be,(1))$ and socle $qL_\al$. 
\item[{\rm (c)}] $\displaystyle\CH L_\al=\frac{1}{q-q^{-1}}\big((\CH L_\be)\circ (01)-(01)\circ (\CH L_\be)\big)$. 
\item[{\rm (d)}] $\displaystyle\CH L_\al=\frac{1}{q-q^{-1}}\sum_{m=0}^n (-1)^m(01)^{\circ m}\circ (1) \circ (01)^{\circ (n-m)}.$
\end{enumerate}
\item[{\rm (ii)}] Let $\al_0+n\de$. 
\begin{enumerate}
\item[{\rm (a)}] $0^{n+1}1^{n}$ is an extremal word of $L_\al$; in particular, 
$
L_\al=\tilde f_0^n\tilde f_1^{n+1}\b1. 
$

\item[{\rm (b)}] The standard module 
$\De((1),\be)= \triv_1\circ L_\be$ 
is uniserial of length two with head $L((1),\be)$ and socle $qL_\al$. 
\item[{\rm (c)}] $\displaystyle\CH L_\al=\frac{1}{q-q^{-1}}\big((01)\circ (\CH L_\be)-(\CH L_\be)\circ (01)\big)$. 
\item[{\rm (d)}] $\displaystyle\CH L_\al=\frac{1}{q-q^{-1}}\sum_{m=0}^n (-1)^{n-m}(01)^{\circ m}\circ (0) \circ (01)^{\circ (n-m)}.$
\end{enumerate}
\end{enumerate}
}
\vspace{2 mm}

Theorem A is a special case of \cite[Propositions 6.7, 6.8]{Kcusp}. Let us order the words lexicographically so that $0<1$. Then it follows easily from Theorem A that the word $\bi_\al$ is the highest word of $L_\al$ and $\bj_\al$ is the lowest word, where
$$
\bi_\al:=
\left\{
\begin{array}{ll}
(01)^n1 &\hbox{if $\al=\al_1+n\de$,}\\
0(01)^n &\hbox{if $\al=\al_0+n\de$}
\end{array}
\right.
\quad\text{and}
\quad 
\bj_\al:=
\left\{
\begin{array}{ll}
0^{n}1^{n+1} &\hbox{if $\al=\al_1+n\de$,}\\
0^{n+1}1^{n} &\hbox{if $\al=\al_0+n\de$.}
\end{array}
\right.
$$

Since $l=1$, we only have one color, and only one tensor space
$
M_n=\triv_1^{\circ n}. 
$
Then Theorem 2 is not needed, and Theorems 3-11 hold as stated. Note that the Gelfand-Graev word $\gg^{(n)}$ is simply
$
\gg^{(n)}=0^n1^n.
$
It is easy to check that
$$
\De(1^n)=L(1^n)=\triv_n. 
$$
So the Imaginary Jacobi-Trudi formula of Theorem~\ref{TDet} takes an especially nice form. 

Let $\la=(l_1,\dots,l_a)\vdash n$. Note that the quantum shuffle product is commutative on words of the form $\bi_{r\de}$, i.e. $\bi_{r\de}\circ\bi_{s\de}=\bi_{s\de}\circ\bi_{r\de}$. So we can use the quantum shuffle product to make sense of the  following determinant as an element of $\A \words_{n\de}$: 
$$
D(\la):=\det(\bi_{(l_r-r+s)\de})_{1\leq r,s\leq a} \in \A \words_{n\de}.
$$
where $\bi_{0\cdot\de}$ is interpreted as (a multiplicative) identity, and $\bi_{m\de}$ is interpreted as (a multiplicative) $0$ if $m<0$. 
For example, if $\la=(3,1,1)$, we get
\begin{align*}
D((1,1))&=\det 
\left(
\begin{matrix}
\bi_{\de} & \bi_{2\de}  \\
1 & \bi_{\de}
\end{matrix}
\right)
=\bi_{\de}\circ\bi_{\de}-\bi_{2\de}=(0101)+(q+q^{-1})^2 (0011),
\\
D((3,1,1))&=\det 
\left(
\begin{matrix}
\bi_{3\de} & \bi_{4\de}& \bi_{5\de}  \\
1 & \bi_{\de}& \bi_{2\de}\\
0 & 1 & \bi_\de
\end{matrix}
\right)
=\bi_{3\de}\circ\bi_{\de}\circ \bi_{\de}+\bi_{5\de}-\bi_{4\de}\circ\bi_{\de}-\bi_{3\de}\circ \bi_{2\de}.
\end{align*}

\vspace{2mm}
\noindent
{\bf Theorem B. (Imaginary Jacobi-Trudi Formula in type ${\tt A}_1^{(1)}$)}
{\em
Let $\la\vdash n$. Then
$
\CH\Dede(\la)=D(\la^\tr). 
$
}
\vspace{2 mm}

\section{Structure of the paper}

We now outline the structure of the paper. In Chapter~\ref{SPrel}, we set up basic notation concerning partitions and compositions, recall some useful facts on shortest coset representatives in symmetric groups, and then review standard theory of classical Schur algebras and their representations. 

In Chapter~\ref{ChKLR}, we review the definition and standard properties on KLR algebras,  their basic representation theory, induction and restriction functors, crystal operators, and the Mackey Theorem. 
Then we review standard module theory from \cite{Kcusp}. We discuss  convex preorders, cuspidal systems, colored imaginary tensor spaces and reduction to one color. 

New material begins in Chapter~\ref{ITTCA}. Here we study some fundamental properties of imaginary tensor spaces, deduce imaginary Schur-Weyl duality, define imaginary Schur algebras, and study imaginary induction and restriction. 

The key Chapter~\ref{SISDEP} studies properties of 
Gelfand-Graev representations and their relation to imaginary 
tensor spaces. 
We define imaginary symmetric, divided and exterior powers and describe a convenient projective generator for imaginary Schur algebra. This projective generator is used to obtain a Morita equivalence between imaginary and classical Schur algebras. 

This Morita equivalence is further studied in Chapter~\ref{SSIM}, where we describe and study standard, irreducible and tilting  modules over imaginary Schur algebras, investigate connections between imaginary induction and tensor products, obtain Ringel duality for imaginary Schur algebras, and complete the proofs of double centralizer properties. 

In Chapter~\ref{SFCIM}, we study formal characters of imaginary representations, describe their Gelfand-Graev fragments and obtain an analogue of the Jacobi-Trudy Formula. 

Chapter~\ref{ChLast} is devoted to construction of minuscule imaginary representations for non-simply-laced types and the proof of the corresponding Schur-Weyl duality. We use the Kang-Kashiwara-Kim deformation approach, but we cannot use \cite{KKK} directly, since that paper assumes that the Cartan matrix is symmetric. These results are used in Chapters~\ref{ITTCA}--\ref{SFCIM}, but unfortunately, their proofs are quite technical, so we relegated them to the last chapter. 


\section{Acknowledgements}
We are grateful to Jon Brundan for infinite useful discussions and computer support. We are also grateful to Arun Ram for many useful discussions and hospitality---the idea of imaginary Schur-Weyl duality is essentially due to him. Finally, we are grateful to Peter McNamara, who explained to us that the Coxeter relations for the action of the symmetric group on an imaginary tensor space follow readily  from the work of Kang, Kashiwara, and Kim \cite{KKK}---originally these relations were checked by lengthy case by case computations, which also required computer for exceptional types.

\mainmatter
\chapter{Preliminaries}\label{SPrel}

Throughout the paper, $F$ is a 
field 
of arbitrary  characteristic $p\geq 0$. We also often work over a ring $\O$, which is assumed to be either $\Z$ or $F$. Denote the ring of Laurent polynomials in the indeterminate $q$ by $\A:=\Z[q,q^{-1}]$. 
We use quantum integers 
$$[n]_q:=(q^n-q^{-n})/(q-q^{-1})\in\A\qquad(n\in Z),
$$ 
and the quantum factorials $[n]^!_q:=[1]_q[2]_q\dots[n]_q$. 


\section{Partitions and compositions}\label{SSParComp}
We denote by $X(h,n)$ the set of all compositions of $n$ with $h$ parts (some of which could be zero), $X_+(h,n)$ the set of al partitions of $n$ with at most $h$ parts, and $X_+(n)=X_+(n,n)$ the set of partitions of $n$. Sometimes we write $\la\vdash n$ to indicate that $\la\in X_+(n)$ and $\la\vDash n$ to indicate that $\la\in X(n,n)$. The standard {\em dominance order} on $X(h,n)$ is denoted by ``$\leq$". 


We will use the special elements $\eps_1,\dots,\eps_h\in X(h,1)$, where $$\eps_m=(0,\dots,0,1,0,\dots,0)$$ 
with $1$ in the $m$th position. 
For a composition $\mu\vDash n$ we denote by $\mu^+\vdash n$ the unique partition obtained from $\mu$ by a permutation of its parts. 
For $\la \vdash n$, we have its {\em transpose} partition $\la^\tr\vdash n$. 

If $p>0$, then $\la\in X_+(h,n)$ is {\em $p$-restricted} if $\la_r-\la_{r+1}<p$ for all $r=1,2,\dots,h-1$. 
A {\em $p$-adic expansion}\, of $\la$ is some (non-unique) way of writing
$\la=\la(0) +p\la(1) +p^2\la(2) +\dots$ 
such that each $\la(i) \in X_+(h,n(i))$ is $p$-restricted. 
This can be applied to a partition $\la\vdash n$ considered as an element of $X_+(n,n)$, in which case the $n$th part $\la_n\leq 1$, and so the $p$-adic expansion is unique.

\section{Coset representatives}\label{SSCosetRep}
Let $\la=(\la_1,\dots,\la_a)\vDash n$, and let $\Si_\la$ be the corresponding standard parabolic subgroup of $\Si_n$, i.e. $\Si_\la$ is the row stabilizer in $\Si_n$ of the {\em row leading tableau}\, $\T^\la$ obtained by allocating the numbers $1,\dots,n$ into the boxes of $\la$ from left to right in each row starting from the first row and going down. The {\em column leading tableau}\, $\T_\la$ obtained by allocating the numbers $1,\dots,n$ into the boxes of $\la$ from top to bottom in each column starting from the first column and going to the right. 
Denote
$$
{\mathtt x}_\la=\sum_{w\in\Si_\la}w,\quad {\mathtt y}_\la=\sum_{w\in\Si_\la}\sgn(w)w\qquad(\la\vDash n),
$$
where $\sgn(w)$ is the sign of the permutation $w\in\Si_n$. Sometimes we also use the notation $\sgn_{\Si_n}$ and $\sgn_{\Si_\la}$ to denote the sign representations of the corresponding groups.

Let $x\in \Si_n$. 
If 
$
x\Si_\la x^{-1}
$ is a standard parabolic subgroup, say $\Si_\mu$ for some composition $\mu$, we write  $\mu=:x\la$ and say that $x$ {\em permutes the parts of $\la$}, i.e. in that case we have 
$$
x\Si_\la x^{-1}=\Si_{x\la}. 
$$


We recall some standard facts on minimal length coset representatives in symmetric groups, see e.g. \cite[Section 1]{DJ}. 
For $\la\vDash n$, denote by $\D^\la_n$ (resp. ${}^\la\D_n$) the set of the minimal length left (resp. right) coset representatives of $\Si_\la$ in $\Si_n$. 
Note that the {\em permutation module} 
$$
\Perm^\la:=\ind_{\O\Si_\la}^{\O\Si_n}{\tt triv}_{\Si_\la}\simeq \O\Si_n{\mathtt x}_\la
$$
has an $\O$-basis $\{g\otimes 1\mid g\in \D^\la_n\}$, and similarly for the {\em signed permutation module}
$$
\SPerm^\la:=\ind_{\O\Si_\la}^{\O\Si_n}{\sgn}_{\Si_\la}\simeq \O\Si_n{\mathtt y}_\la\simeq \Perm^\la\otimes \sgn_{\Si_n}.
$$

More generally, if $\nu\vDash n$ and $\la$ is a refinement of $\nu$, denote $\D^\la_\nu:=\D^\la_n\cap\Si_\nu$ and 
${}^\la\D_\nu:={}^\la\D_n\cap\Si_\nu$. Then $\D^\la_\nu$ (resp.  
${}^\la\D_\nu$) is set of the minimal length left (resp. right) coset representatives of $\Si_\la$ in $\Si_\nu$. Moreover,
\begin{equation}\label{E12411}
\D_n^\la=\{xy\mid x\in\D_n^\nu,\ y\in \D_\nu^\la\}\ \text{and}\  {}^\la\D_n=\{yx\mid x\in{}^\nu\D_n,\ y\in {}^\la\D_\nu\}.
\end{equation}

For two compositions $\la,\mu\vDash n$ set ${}^\la\D_n^\mu:=\D^\mu_n\cap{}^\la\D_n$. Then ${}^\la\D_n^\mu$ is the set of the minimal length $(\Si_\la,\Si_\mu)$-double coset representatives in $\Si_n$. If $x\in {}^\la\D_n^\mu$, then $\Si_\la\cap x\Si_\mu x^{-1}$ is a standard parabolic in $\Si_n$. This standard parabolic corresponds to certain composition of $n$, which we denote $\la\cap x\mu$. Similarly, $x^{-1}\Si_\la x\cap\Si_\mu$ is the  standard parabolic corresponding to a composotion $x^{-1}\la\cap \mu$. Thus:
\begin{equation}\label{EDJ1}
\Si_\la\cap x\Si_\mu x^{-1}=\Si_{\la\cap x\mu},\quad 
x^{-1}\Si_\la x\cap\Si_\mu=\Si_{x^{-1}\la\cap \mu}\qquad(x\in{}^\la\D_n^\mu).
\end{equation}
Moreover, $x$ permutes the parts of $x^{-1}\la\cap \mu$, and $x(x^{-1}\la\cap \mu)= \la\cap x\mu$, so
$$
x\Si_{x^{-1}\la\cap \mu}x^{-1}=\Si_{\la\cap x\mu}.
$$


For $\la\vdash n$ define $u_\la$ to be the unique element of ${}^{\la^\tr}\D^\la$ such that $\Si_{\la^\tr} \cap u_\la\Si_\la u_\la^{-1} = \{1\}$; in other words, $u_\la$ is defined from $u_\la\T^\la=\T_\la$.

\begin{Lemma} \label{1.1d} {\rm \cite[Lemma 4.1]{DJ}} 
If $\la\vdash n$, then ${\mathtt y}_{\la^\tr}\O\Si_n {\mathtt x}_\la$ is an $\O$-free $\O$-module of rank one, generated by the element ${\mathtt y}_{\la^\tr}u_\la {\mathtt x}_\la$.
\end{Lemma}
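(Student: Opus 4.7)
The plan is to decompose $\O\Si_n$ by $(\Si_{\la^\tr},\Si_\la)$-double cosets and use a sign-cancellation argument to kill every contribution except the one coming from $u_\la$. Using $\Si_n=\bigsqcup_{x\in{}^{\la^\tr}\D_n^\la}\Si_{\la^\tr}\,x\,\Si_\la$ together with the relations ${\mathtt y}_{\la^\tr}w=\sgn(w){\mathtt y}_{\la^\tr}$ for $w\in\Si_{\la^\tr}$ and $v{\mathtt x}_\la={\mathtt x}_\la$ for $v\in\Si_\la$, I would first conclude that ${\mathtt y}_{\la^\tr}\O\Si_n{\mathtt x}_\la$ is $\O$-spanned by the elements $\{{\mathtt y}_{\la^\tr}x{\mathtt x}_\la\mid x\in{}^{\la^\tr}\D_n^\la\}$.

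Next, for a fixed $x\in{}^{\la^\tr}\D_n^\la$, set $K_x:=\Si_{\la^\tr}\cap x\Si_\la x^{-1}$, which equals the standard parabolic $\Si_{\la^\tr\cap x\la}$ by (\ref{EDJ1}). Choosing a set $\D$ of left coset representatives for $K_x$ in $\Si_{\la^\tr}$, one has the factorization ${\mathtt y}_{\la^\tr}=\bigl(\sum_{w\in\D}\sgn(w)w\bigr)\bigl(\sum_{k\in K_x}\sgn(k)k\bigr)$. For any $k\in K_x$, writing $k=x k'x^{-1}$ with $k'\in\Si_\la$ gives $kx{\mathtt x}_\la=xk'{\mathtt x}_\la=x{\mathtt x}_\la$, so $\bigl(\sum_{k\in K_x}\sgn(k)k\bigr)x{\mathtt x}_\la=\bigl(\sum_{k\in K_x}\sgn(k)\bigr)x{\mathtt x}_\la$. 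Whenever $K_x$ is a non-trivial standard parabolic it contains a simple reflection, forcing $\sum_{k\in K_x}\sgn(k)=0$. Hence ${\mathtt y}_{\la^\tr}x{\mathtt x}_\la=0$ unless $K_x=\{1\}$, and by the definition of $u_\la$ the only such $x$ is $u_\la$ itself. This yields ${\mathtt y}_{\la^\tr}\O\Si_n{\mathtt x}_\la=\O\cdot {\mathtt y}_{\la^\tr}u_\la{\mathtt x}_\la$.

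Finally, for the rank-one freeness of ${\mathtt y}_{\la^\tr}u_\la{\mathtt x}_\la$, triviality of $\Si_{\la^\tr}\cap u_\la\Si_\la u_\la^{-1}$ implies that the multiplication map $(w,v)\mapsto wu_\la v$ from $\Si_{\la^\tr}\times\Si_\la$ into $\Si_n$ is injective: from $w_1u_\la v_1=w_2u_\la v_2$ one gets $w_2^{-1}w_1=u_\la v_2 v_1^{-1}u_\la^{-1}\in\Si_{\la^\tr}\cap u_\la\Si_\la u_\la^{-1}=\{1\}$. Therefore the expansion ${\mathtt y}_{\la^\tr}u_\la{\mathtt x}_\la=\sum_{w\in\Si_{\la^\tr},\,v\in\Si_\la}\sgn(w)\,wu_\la v$ is a $\pm 1$-linear combination of $|\Si_{\la^\tr}|\cdot|\Si_\la|$ distinct standard basis vectors of $\O\Si_n$, so it is non-zero and $\O$-freely generates a submodule of rank one.

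The main obstacle lies in the middle step, where one must carefully factor ${\mathtt y}_{\la^\tr}$ through the parabolic $K_x$ in a way compatible with the sign character; once the factorization is set up, the vanishing is immediate, and the first and third steps reduce to routine double-coset bookkeeping and an injectivity observation.
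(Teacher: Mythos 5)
Your argument is correct. The paper itself gives no proof of this lemma (it is quoted from \cite[Lemma 4.1]{DJ}), and your double-coset decomposition with the sign-cancellation over $K_x=\Si_{\la^\tr\cap x\la}$, together with the injectivity of $(w,v)\mapsto wu_\la v$ for the freeness, is exactly the classical Dipper--James argument; your only reliance on unproved input is the uniqueness of $u_\la$, which the paper builds into its definition of $u_\la$ in Section~\ref{SSCosetRep}, so that is legitimate here.
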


\section{Schur algebras}\label{SSchurPrel}\label{SSSA}
The necessary information on Schur algebras is conveniently gathered in \cite[Section 1]{BDK}. We recall only some most often needed facts for reader's convenience. The {\em Schur algebra} $S_{h,n}=S_{h,n,\O}$ is defined to the endomorphism algebra 
$$
S_{h,n}:=\End_{\O\Si_n}\Big(\bigoplus_{\nu\in X(h,n)}\Perm^\nu\Big),
$$
writing endomorphisms commuting with the left action of $\O\Si_n$ on the right.

Let $\la,\mu\in X(h,n)$ and $u\in \Si_n$. The right multiplication in $\O\Si_n$ by 
\begin{equation}\label{EGUMuLa}
g_{\mu,\la}^u:=\sum_{w\in \Si_\mu u\,\Si_\la\,\cap\,{}^\mu D}w
\end{equation}
induces a well-defined homomorphism of left $\O\Si_n$-modules 
$$
\phi^u_{\mu,\la} : \Perm^\mu\to \Perm^\la.
$$
Extending $\phi^u_{\mu,\la}$ to all of $\bigoplus_{\nu\in X(h,n)}\Perm^\nu$ by letting it act as zero on $\Perm^\nu$ for $\nu\neq\mu$, we obtain a well-defined element 
\begin{equation}\label{EPhiULaMu}
\phi^u_{\mu,\la}\in S_{h,n}.
\end{equation}

\begin{Lemma} \label{1.2a} 
$S_{h,n}$ is $\O$-free with basis $\{\phi^u_{\mu,\la}\mid \mu,\la\in X(h,n),\ u\in {}^\mu\D^\la\}.$
\end{Lemma}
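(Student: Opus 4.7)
The plan is to decompose $S_{h,n}$ according to the direct sum decomposition $\bigoplus_{\nu\in X(h,n)}\Perm^\nu$, handle each $\Hom$-space separately, and then identify an explicit basis. Concretely, writing composition of endomorphisms on the right, projection onto the $\la$-summand and inclusion of the $\mu$-summand give
\[
S_{h,n}\;=\;\bigoplus_{\mu,\la\in X(h,n)}\Hom_{\O\Si_n}(\Perm^\mu,\Perm^\la).
\]
So it suffices to prove that, for each fixed pair $(\mu,\la)$, the elements $\phi^u_{\mu,\la}$ with $u\in{}^\mu\D^\la_n$ form an $\O$-basis of $\Hom_{\O\Si_n}(\Perm^\mu,\Perm^\la)$.

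Since $\Perm^\mu=\O\Si_n{\mathtt x}_\mu$ is cyclic and ${\mathtt x}_\mu$ is $\Si_\mu$-invariant under left multiplication, a right $\O\Si_n$-linear map $\phi:\Perm^\mu\to\Perm^\la$ is determined by $({\mathtt x}_\mu)\phi$, and this value can be any element of the $\Si_\mu$-invariants $(\Perm^\la)^{\Si_\mu}$. Thus I would establish the natural $\O$-module isomorphism
\[
\Hom_{\O\Si_n}(\Perm^\mu,\Perm^\la)\;\iso\;(\Perm^\la)^{\Si_\mu},\qquad \phi\mapsto({\mathtt x}_\mu)\phi,
\]
and reduce the problem to exhibiting a basis of the right-hand side.

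Next, I would use the standard $\O$-basis $\{y{\mathtt x}_\la\mid y\in\D^\la_n\}$ of $\Perm^\la$. For $x\in\Si_\mu$ and $y\in\D^\la_n$, writing $xy=y'z$ with $y'\in\D^\la_n$ and $z\in\Si_\la$, we get $x(y{\mathtt x}_\la)=y'{\mathtt x}_\la$, so $\Si_\mu$ permutes this basis. The orbits are in bijection with the double cosets $\Si_\mu\backslash\Si_n/\Si_\la$, indexed by $u\in{}^\mu\D^\la_n$, and the orbit sums
\[
\xi_u\;:=\;\sum_{y\in\,\Si_\mu u\Si_\la\,\cap\,\D^\la_n} y{\mathtt x}_\la
\]
form an $\O$-basis of $(\Perm^\la)^{\Si_\mu}$. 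A short direct computation, partitioning $\Si_\mu u\Si_\la$ first according to right $\Si_\mu$-cosets and then according to left $\Si_\la$-cosets, gives
\[
{\mathtt x}_\mu\, g^u_{\mu,\la}\;=\;\sum_{v\in\Si_\mu u\Si_\la}v\;=\;\xi_u,
\]
so under the identification above, $\phi^u_{\mu,\la}\leftrightarrow\xi_u$. This shows $\{\phi^u_{\mu,\la}\mid u\in{}^\mu\D^\la_n\}$ is the desired basis, and taking the union over $\mu,\la\in X(h,n)$ completes the proof.

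There is no real obstacle; the only mildly technical point is the bookkeeping between the two descriptions of $\Si_\mu u\Si_\la$ (as a union of $\Si_\mu$-right cosets via ${}^\mu\D$ and as a union of $\Si_\la$-left cosets via $\D^\la$), which is needed to match $g^u_{\mu,\la}$ with the orbit sum $\xi_u$ and thus to see that $\phi^u_{\mu,\la}$ actually corresponds to a basis element rather than merely a well-defined element of $\Hom_{\O\Si_n}(\Perm^\mu,\Perm^\la)$.
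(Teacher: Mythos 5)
Your proof is correct and is the standard argument (reduce to $\Hom_{\O\Si_n}(\Perm^\mu,\Perm^\la)\cong(\Perm^\la)^{\Si_\mu}$ via evaluation at ${\mathtt x}_\mu$, then take orbit sums over $(\Si_\mu,\Si_\la)$-double cosets and match them with ${\mathtt x}_\mu g^u_{\mu,\la}$); the paper states Lemma~\ref{1.2a} without proof, citing \cite{BDK}, and your argument is essentially the one found there and in the standard references. One terminological nit: the elements of $S_{h,n}$ are left $\O\Si_n$-module homomorphisms written on the right rather than ``right $\O\Si_n$-linear maps,'' but your computation uses exactly the correct commutation with the left action, so nothing is affected.
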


\begin{Lemma} \label{1.2b}	
For $h \geq n$, the $\O$-linear map $\kappa: F\Si_n\to S_{h,n}$, defined on a basis element $w\in \Si_n$ by $\kappa(w) := \phi^w_{(1^n),(1^n)}$, is a (unital) ring embedding.
\end{Lemma}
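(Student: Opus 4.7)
The plan is to identify $\kappa(w)$ very concretely as a specific endomorphism of $\bigoplus_{\nu \in X(h,n)} \Perm^\nu$ and then verify $\O$-linearity, injectivity, multiplicativity, and the unit axiom by direct computation.

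First I would unpack the basis element $\phi^w_{(1^n),(1^n)}$. Regarding $(1^n)$ as a composition in $X(h,n)$ (padded with zeros, which is possible since $h \geq n$), its row stabilizer $\Si_{(1^n)}$ is trivial, so ${\mathtt x}_{(1^n)} = 1$, $\Perm^{(1^n)} = \O\Si_n$ as a left $\O\Si_n$-module, and ${}^{(1^n)}\D^{(1^n)} = \Si_n$. Consequently the double-coset sum $g^w_{(1^n),(1^n)}$ of \eqref{EGUMuLa} reduces to the single term $w$, and $\phi^w_{(1^n),(1^n)}$ is the endomorphism that acts on $\Perm^{(1^n)}$ as right multiplication by $w$ and is zero on every summand $\Perm^\nu$ with $\nu \neq (1^n)$. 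In particular the $\O$-linearity of $\kappa$ is immediate, and injectivity is immediate from Lemma~\ref{1.2a}, since $\{\phi^w_{(1^n),(1^n)} \mid w \in \Si_n\}$ is part of an $\O$-basis of $S_{h,n}$.

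For multiplicativity I would compose two such endomorphisms in the Schur-algebra convention, in which endomorphisms commute with the left $\O\Si_n$-action by being written on the right, so that juxtaposition corresponds to composition applied left-to-right. For any $a \in \Perm^{(1^n)} = \O\Si_n$ and any $w_1, w_2 \in \Si_n$,
$$
a\bigl(\phi^{w_1}_{(1^n),(1^n)}\,\phi^{w_2}_{(1^n),(1^n)}\bigr) = (a w_1) w_2 = a(w_1 w_2),
$$
while the composition obviously vanishes on each $\Perm^\nu$ with $\nu \neq (1^n)$. Hence $\kappa(w_1)\kappa(w_2) = \phi^{w_1 w_2}_{(1^n),(1^n)} = \kappa(w_1 w_2)$. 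Moreover $\kappa(1) = \phi^1_{(1^n),(1^n)}$ is the idempotent $e_{(1^n)} \in S_{h,n}$ projecting onto the $\Perm^{(1^n)}$ summand, and it is the multiplicative identity of the subalgebra $\kappa(\O\Si_n)$, realizing the latter as the corner ring $e_{(1^n)} S_{h,n} e_{(1^n)}$ of the Schur algebra.

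The only real obstacle is bookkeeping: making sure the composition convention in $S_{h,n}$ is fixed so that $\phi^{w_1}\phi^{w_2} = \phi^{w_1 w_2}$ rather than $\phi^{w_2 w_1}$, and observing that padding $(1^n)$ with zeros leaves its stabilizer trivial. Once these conventions are in place, everything else is a direct one-line verification.
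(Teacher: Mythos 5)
Your proof is correct. The paper does not prove this lemma itself---it is recalled from \cite[Section 1]{BDK}---but your argument is exactly the standard one: since $\Si_{(1^n)}$ is trivial (also after padding $(1^n)$ with zeros to lie in $X(h,n)$, which is where $h\geq n$ enters), the sum $g^w_{(1^n),(1^n)}$ of (\ref{EGUMuLa}) collapses to the single term $w$, so $\kappa(w)$ is right multiplication by $w$ on $\Perm^{(1^n)}=\O\Si_n$ and zero on the other summands; injectivity then follows from Lemma~\ref{1.2a}, and multiplicativity from the right-handed composition convention, exactly as you computed. You were also right to isolate the one delicate point: $\kappa(1)=e((1^n))$ is not the identity of all of $S_{h,n}$ but only of the corner ring $e((1^n))S_{h,n}e((1^n))\cong F\Si_n$, which is the sense in which the embedding is ``unital'' and is precisely how the lemma is used later in the paper (e.g.\ in the proof of Theorem~\ref{4.5e}(ii)).
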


One can also define the Schur algebra using the
signed permutation modules. 
So consider instead the algebra 
$$
\End_{\O\Si_n}\Big(\bigoplus_{\nu\in X(h,n)}\SPerm^\nu\Big).
$$
For $\la,\mu\in X(h,n)$ and $u\in \Si_n$ set  
\begin{equation}\label{ESUMuLa}
s_{\mu,\la}^u:=\sum_{w\in \Si_\mu u\,\Si_\la\,\cap\,{}^\mu D}\sgn(w)w.
\end{equation}

\begin{Lemma} \label{1.2c} 
The algebras $S_{h,n}$ and $\End_{\O\Si_n}\Big(\bigoplus_{\nu\in X(h,n)}\SPerm^\nu\Big)$ are isomorphic, the natural basis element $\phi^u_{\mu,\la}$ of $S_{h,n}$ corresponding under the isomorphism to the endomorphism which is zero on $\SPerm^\nu$ for $\nu\neq\mu$ and sends $\SPerm^\mu$ into $\SPerm^\la$ via the homomorphism induced by right multiplication in $\O\Si_n$ by $s_{\mu,\la}^u$.
\end{Lemma}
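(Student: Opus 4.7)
The plan is to obtain the claimed isomorphism via twisting by the sign representation of $\Si_n$. Let $F \colon \mod{\O\Si_n} \to \mod{\O\Si_n}$ denote the auto-equivalence $M \mapsto M \otimes \sgn_{\Si_n}$. Since induction commutes with tensoring by a representation of the ambient group,
$$
F(\Perm^\nu) = \ind_{\O\Si_\nu}^{\O\Si_n}({\tt triv}_{\Si_\nu} \otimes \sgn_{\Si_\nu}) = \ind_{\O\Si_\nu}^{\O\Si_n} \sgn_{\Si_\nu} \cong \SPerm^\nu
$$
for every $\nu \in X(h,n)$. An explicit instance of this identification is the $\O\Si_n$-linear map $\iota_\nu \colon \Perm^\nu \otimes \sgn_{\Si_n} \to \SPerm^\nu$ determined by ${\mathtt x}_\nu \otimes 1 \mapsto {\mathtt y}_\nu$; well-definedness follows from the common transformation rule $v \cdot ({\mathtt x}_\nu \otimes 1) = \sgn(v)({\mathtt x}_\nu \otimes 1)$ and $v \cdot {\mathtt y}_\nu = \sgn(v) {\mathtt y}_\nu$ for $v \in \Si_\nu$, and $\iota_\nu$ is an isomorphism because it sends the $\O$-basis $\{g {\mathtt x}_\nu \otimes 1 : g \in \D^\nu_n\}$ to $\{\sgn(g) g {\mathtt y}_\nu : g \in \D^\nu_n\}$.

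Setting $\iota := \bigoplus_\nu \iota_\nu$, conjugation by $\iota$ combined with $F$ yields an $\O$-algebra isomorphism $S_{h,n} \cong \End_{\O\Si_n}(\bigoplus_\nu \SPerm^\nu)$. To identify the image of the basis element $\phi^u_{\mu,\la}$, first compute $\phi^u_{\mu,\la}({\mathtt x}_\mu) = {\mathtt x}_\mu g^u_{\mu,\la}$. The ``horizontal'' coset decomposition $\Si_\mu u \Si_\la = \bigsqcup_{w \in \Si_\mu u \Si_\la \cap {}^\mu\D_n} \Si_\mu w$ yields ${\mathtt x}_\mu g^u_{\mu,\la} = \sum_{v \in \Si_\mu u \Si_\la} v$, and the ``vertical'' decomposition $\Si_\mu u \Si_\la = \bigsqcup_{x \in \Si_\mu u \Si_\la \cap \D^\la_n} x \Si_\la$ rewrites this as $\bigl(\sum_x x\bigr) {\mathtt x}_\la$. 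Applying $\iota_\la$ (and noting that $\iota_\la(h {\mathtt x}_\la \otimes 1) = \sgn(h) h {\mathtt y}_\la$) shows that under the algebra isomorphism, the image of $\phi^u_{\mu,\la}$ sends ${\mathtt y}_\mu \in \SPerm^\mu$ to $\bigl(\sum_{x \in \Si_\mu u \Si_\la \cap \D^\la_n} \sgn(x) x\bigr) {\mathtt y}_\la$.

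The same double-coset bookkeeping applied to $s^u_{\mu,\la}$, now using $\sgn(aw) = \sgn(a)\sgn(w)$, yields
$$
{\mathtt y}_\mu s^u_{\mu,\la} = \sum_{v \in \Si_\mu u \Si_\la} \sgn(v) v = \Bigl(\sum_{x \in \Si_\mu u \Si_\la \cap \D^\la_n} \sgn(x) x\Bigr) {\mathtt y}_\la.
$$
This simultaneously certifies that right multiplication by $s^u_{\mu,\la}$ is a well-defined $\O\Si_n$-linear map $\SPerm^\mu \to \SPerm^\la$, and that this map agrees on the generator ${\mathtt y}_\mu$ with the image of $\phi^u_{\mu,\la}$ under the algebra isomorphism---so the two $\O\Si_n$-linear maps coincide. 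The argument is essentially routine bookkeeping; the one conceptual step is identifying the right isomorphism $\iota_\nu$, after which the sign tracking in the double-coset sums is forced.
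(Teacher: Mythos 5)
Your proof is correct and takes the standard route — twist by the sign representation via the $\O\Si_n$-linear isomorphism $\iota_\nu\colon \Perm^\nu\otimes\sgn_{\Si_n}\to\SPerm^\nu$ sending ${\mathtt x}_\nu\otimes 1\mapsto{\mathtt y}_\nu$, then track signs through the two double-coset decompositions of $\Si_\mu u\Si_\la$ to match the image of $\phi^u_{\mu,\la}$ with right multiplication by $s^u_{\mu,\la}$. The paper does not prove the lemma itself (it cites \cite[Section 1]{BDK}), but this is precisely the argument that reference uses, so the approach is essentially the same.
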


\section{Representation theory of Schur algebras} \label{SSSchurRT}
We recall some facts about the representation theory of $S_{h,n}$, assuming now that $\O=F$. All the results gathered here are explained in detail and properly referenced in \cite[Section 1]{BDK}. First of all, it is known that the elements
\begin{equation}\label{EIdSchur}
e(\mu):=\phi^1_{\mu,\mu}\in S_{h,n} \qquad(\mu\in X(h,n))
\end{equation}
are idempotents. We have a {\em weight space decomposition} for $W\in\mod{S_{h,n}}$:
$$
W=\bigoplus_{\mu\in X(h,n)} e(\mu)W.
$$
The subspaces $e(\mu)W$ are the {\em weight spaces of $W$}.

The irreducible $S_{h,n}$-modules are parametrized by the elements of  $X_+(h,n)$. We write $L_h(\la)$ for the irreducible $S_{h,n}$-module corresponding to $\la\in X_+(h,n)$. In particular, $L_h(\la)$ has highest weight $\la$, i.e. $e(\la)L_h(\la)\neq 0$ and $e(\mu)L_h(\la)= 0$ for all $\mu\in X(h,n)$ with $\mu\not\leq\la$. 
It is known that $S_{h,n}$ is a quasi-hereditary algebra with weight poset $(X_+(h,n),\leq)$. 
In particular, we have associated to $\la\in X_+(h,n)$ the standard and costandard modules $\De_h(\la)$ and $\nabla_h(\la)$ such that $\De_h(\la)$ (resp. $\nabla_h(\la)$) has simple head (resp. socle) isomorphic to $L_h(\la)$, and all other composition factors are of the form $L_h(\mu)$ with $\mu<\la$.

For $\la\in X_+(h,n)$ and $\nu\in X(h,n)$, denote by $k_{\la,\nu}$ the dimension of the $\nu$-weight space of $L_h(\la)$:
\begin{equation}\label{EWtSp}
k_{\la,\nu}:=\dim e(\nu)L_h(\la).
\end{equation}
In particular, if $\cha F=0$, then $L_h(\la)=\De_h(\la)$ and so it is well-known that $k_{\la,\nu}=K_{\la,\nu}$, where 
\begin{equation}\label{EKostka}
K_{\la,\nu}:=\sharp\{\text{semistandard $\la$-tableaux of type $\nu$}\},
\end{equation}
also known as the $(\la,\nu)$-Kostka number. To give the necessary definitions, we consider $\la$ as a partition, and so we can speak of the corresponding Young diagram. A {\em $\la$-tableau} is an allocation of numbers from the set $\{1,\dots,h\}$ (possibly with repetitions) into the boxes of the Young diagram $\la$. 
A $\la$-tableau is of type $\nu$ if each $1\leq k\leq n$ appears in it exactly $\nu_k$ times. 
A $\la$-tableau is {\em column strict} if its entries increase down the columns.  A $\la$-tableau is {\em row weak} if its entries weakly increase from left to right along the rows. A $\la$-tableau is {\em semistandard} if it is row weak and column strict. 

The algebra $S_{h,n}$ possesses an anti-automorphism $\tau$ defined on the standard basis elements	by $\tau(\phi^u_{\mu,\la})=\phi^{u^{-1}}_{\la,\mu}$.  Using this, we define the contravariant dual $M^\tau$ of an
$S_{h,n}$-module $M$ to be the dual vector space $M^*$ with action defined by $(s\cdot f)(m) = f(\tau(s)m)$ for all $s \in S_{h,n}, m \in M,f \in M^*$. We have $L_h(\la)^\tau\simeq L_h(\la)$ and $\De_h(\la)^\tau\simeq \nabla_h(\la)$
for all $\la\in  X_+(h, n)$. 

Given a left $S_{h,n}$-module $M$, we write $\tilde M$ for the right $S_{h,n}$-module equal to $M$ as a vector space with right action defined by $ms = \tau(s)m$ for $m \in M,s \in S_{h,n}$. This gives us modules $\tilde L_h(\la), \tilde\De_h(\la)$ and $\tilde\nabla_h(\la)$ for each $\la\in X_+(h, n)$.

\begin{Lemma} \label{1.2e} 
$S_{h,n}$	has	a	filtration	as	an $(S_{h,n},S_{h,n})$-bimodule with factors	isomorphic to $\De_h(\la) \otimes \tilde\De_h(\la)$, each appearing once for each $\la\in X_+(h, n)$ and ordered in any way refining the dominance order on partitions so that factors corresponding to more dominant $\la$ appear lower in the filtration.
\end{Lemma}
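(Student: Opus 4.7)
The plan is to construct the filtration as a heredity chain for the quasi-hereditary structure of $S_{h,n}$ (with weight poset $(X_+(h,n),\leq)$) and then use the anti-automorphism $\tau$ to identify the right-hand tensor factor as $\tilde{\De}_h(\la)$. First I would enumerate $X_+(h,n) = \{\la^{(1)}, \ldots, \la^{(N)}\}$ refining the dominance order so that $\la^{(i)} > \la^{(j)}$ implies $i < j$ (more dominant partitions earlier), and then build an ascending chain $0 = J_0 \subset J_1 \subset \cdots \subset J_N = S_{h,n}$ of two-sided ideals in which $J_k/J_{k-1}$ is the heredity ideal in $\bar{S} := S_{h,n}/J_{k-1}$ attached to the now-maximal weight $\la^{(k)}$.

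Concretely, I would choose a primitive idempotent $e \in S_{h,n}$ whose image $\bar{e} \in \bar{S}$ projects onto $L_h(\la^{(k)})$ and let $J_k$ be the preimage of $\bar{S}\bar{e}\bar{S} \subseteq \bar{S}$. Since $\la^{(k)}$ is maximal in the weight poset of $\bar{S}$, split-heredity theory yields $\bar{e}\bar{S}\bar{e} \cong F$ and a bimodule isomorphism $J_k/J_{k-1} \cong \bar{S}\bar{e} \otimes_F \bar{e}\bar{S}$. The left factor $\bar{S}\bar{e}$ is the projective cover of $L_h(\la^{(k)})$ in $\bar{S}$, which coincides with $\De_h(\la^{(k)})$ because $\la^{(k)}$ is maximal; inflated along $S_{h,n} \to \bar{S}$ it is $\De_h(\la^{(k)})$ as a left $S_{h,n}$-module. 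For the right factor, I would take $e$ with $\tau(e) = e$ and verify inductively that $\tau$ stabilises each $J_{k-1}$; this uses that the weight idempotents $e(\mu)$ of (\ref{EIdSchur}) satisfy $\tau(e(\mu)) = e(\mu)$, so $\tau$ descends to an anti-automorphism of $\bar{S}$, and $\bar{e}\bar{S}$ viewed as a left $\bar{S}$-module via $\tau$ is again the standard module of the maximal weight. By the very definition of the tilde construction, this makes $\bar{e}\bar{S}$ equal to $\tilde{\De}_h(\la^{(k)})$ as a right $S_{h,n}$-module.

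The main obstacle is the careful bookkeeping of the right action under $\tau$, in particular the inductive step showing that $J_{k-1}$ is $\tau$-stable so that the construction iterates cleanly. That each factor occurs exactly once follows from the dimension count $\sum_\la (\dim \De_h(\la))^2 = \dim S_{h,n}$ (standard for split quasi-hereditary algebras), which matches $\sum_\la \dim(\De_h(\la) \otimes \tilde{\De}_h(\la))$. Finally, the ordering hypothesis is automatic: since more dominant $\la$ come earlier in the enumeration, the heredity factor $\De_h(\la^{(k)}) \otimes \tilde{\De}_h(\la^{(k)})$ with dominant $\la^{(k)}$ appears at a smaller index $k$, i.e.\ lower in the filtration, as required.
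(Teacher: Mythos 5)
You should first be aware that the paper does not prove this lemma at all: it is quoted verbatim from \cite[Section 1]{BDK} as part of the background on Schur algebras, and there it rests on the general theory of split quasi-hereditary algebras equipped with an anti-automorphism fixing the weight idempotents. Your proposal reconstructs exactly that standard argument (heredity chain refining dominance, bimodule splitting of each heredity ideal, identification of the right factor via $\tau$), so in spirit it is the ``right'' proof and supplies an argument where the paper gives only a citation.

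There is, however, one step as you have written it that is not justified and is the only real soft spot: you require a \emph{primitive} idempotent $e$ with $\tau(e)=e$ projecting onto $L_h(\la^{(k)})$. An anti-automorphism of a finite-dimensional algebra need not fix any primitive idempotent in the relevant block, even up to conjugacy, so this cannot simply be asserted. The standard repair, which fits your own observation that $\tau(e(\mu))=e(\mu)$, is to take $e=e(\la^{(k)})$ itself. It is $\tau$-fixed, and although it is not primitive, one still has $\bar e\bar S\bar e\cong F$ in the quotient $\bar S=S_{h,n}/J_{k-1}$: indeed $\bar S\bar e$ is the largest quotient of $Z^{\la^{(k)}}(V_h)\cong S_{h,n}e(\la^{(k)})$ killed by $J_{k-1}$, and since $S_{h,n}e(\la)\cong\bigoplus_\mu P_h(\mu)^{\oplus k_{\mu,\la}}$ with $k_{\mu,\la}\neq 0$ only for $\mu\geq\la$ and $k_{\la,\la}=\dim e(\la)L_h(\la)=1$, one gets $\bar S\bar e\cong\De_h(\la^{(k)})$ exactly once, whence $\bar e\bar S\bar e\cong\End_{\bar S}(\De_h(\la^{(k)}))^{\op}\cong F$ and the splitting $J_k/J_{k-1}\cong\bar S\bar e\otimes_F\bar e\bar S$ goes through; applying $\tau$ then identifies $\bar e\bar S$ with $\tilde\De_h(\la^{(k)})$ as you intended. (Alternatively, keep a general idempotent $e$ with $\bar e\bar S\bar e$ a matrix algebra $M_m(F)$ and divide it out in the tensor product.) Two further small remarks: the claim that $\bar S\bar e$ is $\De_h(\la^{(k)})$ rather than something smaller needs the observation that all composition factors of $\De_h(\la^{(k)})$ have highest weights $\leq\la^{(k)}$, hence survive in $\bar S$ even though the total order may have already removed some weights incomparable to $\la^{(k)}$; and your closing dimension count is unnecessary (and mildly circular), since the construction produces exactly one factor per $\la\in X_+(h,n)$ by design.
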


We have an algebra map $S_{h,n+l} \to S_{h,n} \otimes S_{h,l}$, 
which enables us to view the tensor product $M \otimes M'$ of an $S_{h,n}$-module $M$  and an $S_{h,l}$-module $M'$ as an $S_{h,n+l}$-module. 
Let $V_h = L_h((1))$ be the {\em natural module}. The $n$th tensor power $V_h^{\otimes n}$ can
be regarded as an $S_{h,n}$-module. We also have the {\em symmetric}, {\em divided} and {\em exterior powers}: 
$S^n(V_h) = \nabla_h((n))$, $Z^n(V_h) = \De_h((n))$, $\La^n(V_h) = L_h((1^n))$.
More generally, given $\nu = (n_1, \dots , n_a) \in X(h,n)$, define 
\begin{eqnarray}
\label{1.3.2}
S^\nu(V_h)&:=S^{n_1}(V_h)\otimes\dots\otimes S^{n_a}(V_h),
\\
\label{1.3.3}
Z^\nu(V_h)&:=Z^{n_1}(V_h)\otimes\dots\otimes Z^{n_a}(V_h),
\\
\label{1.3.4}
\La^\nu(V_h)&:=\La^{n_1}(V_h)\otimes\dots\otimes \La^{n_a}(V_h).
\end{eqnarray}
all of which can be regarded as $S_{h,n}$-modules.

\begin{Lemma} \label{1.3b}
For $\nu  \in X(h,n)$ we have:
\begin{enumerate}
\item[{\rm (i)}]  the left ideal $S_{h,n}e(\nu)$ of $S_{h,n}$ is isomorphic to $Z^\nu(V_h)$ as an $S_{h,n}$-module; 
\item[{\rm (ii)}] providing $h \geq n$, the left ideal $S_{h,n}\kappa({\mathtt y}_\nu)$ of $S_{h,n}$ is isomorphic to $\La^\nu(V_h)$ as
an $S_{h,n}$-module, where $\kappa : F\Si_n(V)\to S_{h,n}$ is the embedding of Lemma~\ref{1.2b}.
\end{enumerate}
\end{Lemma}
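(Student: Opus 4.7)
The plan is to realize both claims as consequences of the standard identification $V_h^{\otimes n}\cong\bigoplus_{\mu\in X(h,n)}\Perm^\mu$ as $F\Si_n$-modules (the $\mu$-summand corresponding to the $\mu$-weight space), under which $S_{h,n}\cong\End_{F\Si_n}(V_h^{\otimes n})$ in the standard way.

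For (i), the idempotent $e(\nu)=\phi^1_{\nu,\nu}$ becomes the projection of $V_h^{\otimes n}$ onto its $\nu$-weight space $\Perm^\nu$. Consequently the left ideal $S_{h,n}e(\nu)$ consists of exactly those endomorphisms that vanish on every $\Perm^\mu$ with $\mu\neq\nu$, which I identify with $\Hom_{F\Si_n}(\Perm^\nu,V_h^{\otimes n})$. Writing $\Perm^\nu=F\Si_n\otimes_{F\Si_\nu}F$ and applying the tensor--hom adjunction yields $(V_h^{\otimes n})^{\Si_\nu}$. Decomposing $\Si_\nu=\Si_{n_1}\times\cdots\times\Si_{n_a}$ acting on the corresponding tensor factors, the invariants factor as
$$(V_h^{\otimes n_1})^{\Si_{n_1}}\otimes\cdots\otimes(V_h^{\otimes n_a})^{\Si_{n_a}}=Z^{n_1}(V_h)\otimes\cdots\otimes Z^{n_a}(V_h)=Z^\nu(V_h),$$
and $S_{h,n}$-equivariance is immediate from the construction.

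For (ii), the hypothesis $h\geq n$ ensures $(1^n)\in X(h,n)$, so $\Perm^{(1^n)}=F\Si_n$ appears as a summand of $V_h^{\otimes n}$. Unwinding the definitions, $\kappa({\mathtt y}_\nu)=\sum_{w\in\Si_\nu}\sgn(w)\phi^w_{(1^n),(1^n)}$ vanishes outside $\Perm^{(1^n)}$ and acts on $\Perm^{(1^n)}=F\Si_n$ as right multiplication by ${\mathtt y}_\nu$. Its image is the left ideal $F\Si_n{\mathtt y}_\nu\subseteq F\Si_n$, and the surjection $F\Si_n\twoheadrightarrow F\Si_n{\mathtt y}_\nu$, $a\mapsto a{\mathtt y}_\nu$, identifies $F\Si_n{\mathtt y}_\nu$ with the signed permutation module $\SPerm^\nu=F\Si_n\otimes_{F\Si_\nu}\sgn$, since left multiplication by $w\in\Si_\nu$ sends ${\mathtt y}_\nu$ to $\sgn(w){\mathtt y}_\nu$. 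Thus every element of $S_{h,n}\kappa({\mathtt y}_\nu)$ is an endomorphism of $V_h^{\otimes n}$ that factors through $F\Si_n\twoheadrightarrow F\Si_n{\mathtt y}_\nu\cong\SPerm^\nu$. Conversely, every such endomorphism is realized in this form: given any $F\Si_n$-map $f\colon\SPerm^\nu\to V_h^{\otimes n}$, the composite $a\mapsto f(a{\mathtt y}_\nu)$ extends by zero on the remaining summands to an element $\phi\in\End_{F\Si_n}(V_h^{\otimes n})=S_{h,n}$ with $\phi\circ\kappa({\mathtt y}_\nu)$ realizing $f$. Hence
$$S_{h,n}\kappa({\mathtt y}_\nu)\cong\Hom_{F\Si_n}(\SPerm^\nu,V_h^{\otimes n})\cong\Hom_{F\Si_\nu}(\sgn,V_h^{\otimes n})=(V_h^{\otimes n})^{\sgn\text{-}\Si_\nu},$$
and the latter factors over $\Si_\nu=\Si_{n_1}\times\cdots\times\Si_{n_a}$ exactly as in (i) to give $\La^{n_1}(V_h)\otimes\cdots\otimes\La^{n_a}(V_h)=\La^\nu(V_h)$.

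The main subtle point is the reverse inclusion in (ii), namely that $S_{h,n}\kappa({\mathtt y}_\nu)$ really exhausts all $F\Si_n$-maps from $\SPerm^\nu$ into $V_h^{\otimes n}$; this uses crucially that $\Perm^{(1^n)}$ is a direct summand of $V_h^{\otimes n}$, which is precisely what the hypothesis $h\geq n$ provides. The verification of $S_{h,n}$-equivariance of the chain of identifications in both parts is routine but requires careful bookkeeping of the side conventions under which $S_{h,n}$ is defined as an endomorphism ring.
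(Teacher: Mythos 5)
The paper does not actually prove this lemma---it is quoted as standard background from \cite[Section 1]{BDK}---so there is no internal argument to compare against; I am judging your proposal on its own merits.

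Part (i) is essentially the standard argument and is correct: after identifying $V_h^{\otimes n}$ with $\bigoplus_{\mu\in X(h,n)}\Perm^\mu$, Frobenius reciprocity converts the relevant Hom-space into $(V_h^{\otimes n})^{\Si_\nu}$, which factors over $\Si_\nu=\Si_{n_1}\times\dots\times\Si_{n_a}$ into a tensor product of divided powers; this works in every characteristic. (Whether $S_{h,n}e(\nu)$ is literally $\Hom_{F\Si_n}(\Perm^\nu,V_h^{\otimes n})$ or $\Hom_{F\Si_n}(V_h^{\otimes n},\Perm^\nu)$ depends on the operator-side convention---with the paper's ``endomorphisms written on the right'' it is the latter---but the two are exchanged by the anti-automorphism $\tau$ and both come out isomorphic to $Z^\nu(V_h)$, so deferring this bookkeeping is harmless.)

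Part (ii) has a genuine gap in small characteristic, which is precisely the regime this paper needs. Your chain passes through the claim $S_{h,n}\kappa({\mathtt y}_\nu)\cong\Hom_{F\Si_n}(\SPerm^\nu,V_h^{\otimes n})\cong\{t\mid wt=\sgn(w)t\ \text{for all}\ w\in\Si_\nu\}$. In characteristic $2$ the sign character is trivial, so that last space is $(V_h^{\otimes n})^{\Si_\nu}=Z^\nu(V_h)$, of dimension $\prod_i\binom{h+n_i-1}{n_i}$, whereas $\La^\nu(V_h)$ has dimension $\prod_i\binom{h}{n_i}$; already for $h=n=2$ and $\nu=(2)$ these are $3$ and $1$. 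So the intermediate isomorphism is false, not merely unjustified. The failure is located in your surjectivity step: the recipe ``$a\mapsto f(a{\mathtt y}_\nu)$, then compose with $\kappa({\mathtt y}_\nu)$'' produces $|\Si_\nu|\cdot f$ because ${\mathtt y}_\nu^2=|\Si_\nu|\,{\mathtt y}_\nu$, so it realizes $f$ only when $|\Si_\nu|$ is invertible in $F$. More fundamentally, in characteristic $2$ the exterior power is not the space of sign-semiinvariants of the tensor space; it is the \emph{image} of the antisymmetrizer. The clean repair is to route (ii) through (i): by part (i) with $\nu=(1^n)$ one has $S_{h,n}e((1^n))\cong V_h^{\otimes n}$ as a bimodule over $S_{h,n}$ and $e((1^n))S_{h,n}e((1^n))\cong\kappa(F\Si_n)$, whence $S_{h,n}\kappa({\mathtt y}_\nu)=S_{h,n}e((1^n))\kappa({\mathtt y}_\nu)\cong V_h^{\otimes n}{\mathtt y}_\nu$, the image of place-permutation antisymmetrization. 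That image is $\La^\nu(V_h)$ in every characteristic, since ${\mathtt y}_\nu$ kills any basic tensor with a repeated entry inside a block (its terms cancel in pairs before any coefficients are taken) and the images of the remaining basic tensors are linearly independent.
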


A finite dimensional $S_{h,n}$-module M has a {\em standard} (resp.  {\em costandard}) filtration if $M$  has a filtration $0 = M_0 \subset M_1 \subset \dots\subset M_r=M$ such that each factor $M_i/M_{i-1}$ is isomorphic to a direct sum of copies of $\De_h(\la)$ (resp. $\nabla_h(\la)$) for some fixed $\la\in X_+(h,n)$ (depending on $i$). 

\begin{Lemma} \label{1.3c} 
If $M,M'$ are modules with standard (resp. costandard) filtrations 
then so is $M \otimes  M'$. 
\end{Lemma}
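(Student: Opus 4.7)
The plan is to dispense with the costandard case first by duality. The contravariant functor $M\mapsto M^\tau$ of Section 1.4 satisfies $\De_h(\la)^\tau\cong \nabla_h(\la)$ and commutes with the tensor product of polynomial $GL_h$-modules, so if the standard case is known then applying $\tau$ to $M^\tau\otimes (M')^\tau$ yields the costandard case. Hence it suffices to treat the $\De$-filtered case.

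Next I would reduce to the statement that $\De_h(\la)\otimes \De_h(\mu)$ admits a standard filtration for every pair of partitions $\la\in X_+(h,n)$, $\mu\in X_+(h,l)$. Given short exact sequences $0\to A\to B\to C\to 0$ of $S_{h,n}$-modules (and similarly for $M'$), tensoring over $F$ remains exact, so an induction on the lengths of the two filtrations, combined with the horseshoe-type construction that splices $\De$-filtrations of $A\otimes M'$ and $C\otimes M'$ into one of $B\otimes M'$, completes the reduction.

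The crux is therefore to produce a standard filtration of $\De_h(\la)\otimes \De_h(\mu)$, viewed as a module for $S_{h,n+l}$. I would invoke the standard cohomological characterization of $\De$-filtered modules for a quasi-hereditary algebra: $X$ has a standard filtration if and only if $\Ext^{1}_{S_{h,n+l}}(X,\nabla_h(\nu))=0$ for every $\nu\in X_+(h,n+l)$; this follows from the BGG-type reciprocity and the fact that $(X_+(h,n+l),\leq)$ is the weight poset. The task is then to verify the vanishing
\[
\Ext^{1}_{S_{h,n+l}}\bigl(\De_h(\la)\otimes\De_h(\mu),\,\nabla_h(\nu)\bigr)=0.
\]

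The main obstacle is this Ext vanishing, which I would derive by passing to the equivalent category of polynomial representations of $GL_h$ and using Kempf vanishing on the flag variety $GL_h/B$ together with the tensor identity; equivalently, this is the content of Donkin's theorem (proved in full generality by Mathieu) that tensor products of modules with good filtrations over a reductive group again have good filtrations. A more self-contained alternative, staying inside the Schur algebra framework, is to induct on $|\la|+|\mu|$ using a Pieri-type step: first establish that $V_h\otimes \De_h(\nu)$ has a standard filtration with sections $\De_h(\nu+\varepsilon_i)$ for the admissible $i$, and then bootstrap via the surjection $V_h^{\otimes |\la|}\twoheadrightarrow \De_h(\la)$ (and its analogue for $\mu$) combined with an analysis of the kernel via the quasi-hereditary structure. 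Either route establishes the lemma.
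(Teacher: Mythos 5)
The paper does not prove this lemma at all: it appears in the block of ``well-known basic facts'' about $S_{h,n}$ that are imported from the literature (the paper points to \cite[Section 1]{BDK}, which in turn rests on Donkin's book and ultimately on the Wang--Donkin--Mathieu theorem on good filtrations of tensor products). So there is no in-paper argument to compare against; what you have written is an attempt at the actual proof.

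Your reductions are sound and standard: the costandard case does follow from the standard case via $\tau$ (since $(M\otimes M')^\tau\cong M^\tau\otimes(M')^\tau$ and $\De_h(\la)^\tau\cong\nabla_h(\la)$), the dévissage to $\De_h(\la)\otimes\De_h(\mu)$ by induction on filtration length is fine, and the cohomological criterion $\Ext^1_{S_{h,n+l}}(X,\nabla_h(\nu))=0$ is the right tool (it is the same criterion the paper itself uses later, in the proof of Theorem~\ref{4.2f}). The problem is at the crux. Your primary justification of the Ext vanishing is to invoke ``Donkin's theorem (proved in full generality by Mathieu) that tensor products of modules with good filtrations again have good filtrations'' --- but that \emph{is} the lemma, so as written the argument is circular. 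The genuine mathematical content has to come from somewhere: either the geometric route (Kempf vanishing plus the tensor identity and an analysis of the $B$-socle filtration, which is Wang's and Donkin's argument and is far from a one-liner), or the combinatorial route you sketch second. That second route is the right idea for $GL_h$, but the bootstrap is underspecified: surjecting $V_h^{\otimes|\la|}$ onto $\De_h(\la)$ does not by itself help unless you also know the kernel is $\De$-filtered. The correct induction goes downward on the dominance order using the fact that $\La^{\la^\tr}(V_h)$ is $\De$-filtered with $\De_h(\la)$ as a distinguished section and all other sections $\De_h(\mu)$ with $\mu<\la$ (cf.\ Lemma~\ref{1.3d} and Theorem~\ref{4.5f} in the paper), together with the Pieri step for $\La^aV_h\otimes\De_h(\nu)$; that needs to be set up explicitly before the argument closes. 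So: right skeleton, but the load-bearing step is either cited circularly or left as a sketch.
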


In particular, Lemma~\ref{1.3c} implies that for any $\nu  \in X(h,n)$, the modules $S^\nu(V_h)$, and $\La^\nu(V_h)$ have costandard filtrations, while $Z^\nu(V_h)$ and $\La^\nu(V_h)$ have standard filtrations. 

\begin{Lemma} \label{1.3d} 
Let $h\geq n$ and $\la\in X_+(h,n)$, we have that the homomorphism space $\Hom_{S_{h,n}}(Z^\la(V_h), \La^{\la^\tr}(V_h))$ is one-dimensional, and the image of any non-zero such homomorphism is isomorphic to $\De_h(\la)$.
\end{Lemma}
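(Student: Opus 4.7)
The plan is first to compute the Hom dimension via a character and Ext-vanishing argument, and then to identify the image with $\De_h(\la)$ via the classical Carter--Lusztig construction of the Weyl module.

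For the dimension, by Lemma~\ref{1.3c}, $Z^\la(V_h)$ admits a standard filtration and $\La^{\la^\tr}(V_h)$ admits a costandard filtration; the multiplicities are read from the Schur expansions
$$\ch Z^\la(V_h)=h_\la=\sum_\mu K_{\mu,\la}\,s_\mu, \qquad \ch \La^{\la^\tr}(V_h)=e_{\la^\tr}=\sum_\mu K_{\mu^\tr,\la^\tr}\,s_\mu.$$
Combined with $\Hom(\De_h(\mu),\nabla_h(\nu))=\delta_{\mu,\nu}F$ and $\Ext^1(\De_h(\mu),\nabla_h(\nu))=0$, the usual formula $\dim\Hom(M,N)=\sum_\mu(M:\De_h(\mu))(N:\nabla_h(\mu))$ gives
$$\dim\Hom_{S_{h,n}}(Z^\la(V_h),\La^{\la^\tr}(V_h))=\sum_\mu K_{\mu,\la}\,K_{\mu^\tr,\la^\tr}.$$
Since $K_{\mu,\la}\ne 0$ forces $\mu\geq\la$ and $K_{\mu^\tr,\la^\tr}\ne 0$ forces $\mu\leq\la$ in dominance, only $\mu=\la$ contributes, and both Kostka numbers equal $1$; thus the Hom space is one-dimensional.

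To identify the image, I construct the map explicitly as the composition
$$\phi\colon Z^\la(V_h)\hookrightarrow V_h^{\otimes n}\twoheadrightarrow \La^{\la^\tr}(V_h),$$
where the first arrow is the row-wise divided-power inclusion corresponding to the row decomposition $V_h^{\otimes n}\cong V_h^{\otimes\la_1}\otimes\cdots\otimes V_h^{\otimes\la_k}$, and the second is column-wise antisymmetrization corresponding to the column decomposition $V_h^{\otimes n}\cong V_h^{\otimes\la^\tr_1}\otimes\cdots\otimes V_h^{\otimes\la^\tr_m}$. Applied to the element with $e_j$ in every box of the $j$-th row of the Young diagram of $\la$, the map $\phi$ produces $\bigotimes_s(e_1\wedge e_2\wedge\cdots\wedge e_{\la^\tr_s})$, which is nonzero in $\La^{\la^\tr}(V_h)$. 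By the classical Carter--Lusztig / Akin--Buchsbaum--Weyman construction of the Weyl module (see Green's monograph on polynomial representations of $GL_n$), $\im(\phi)\cong\De_h(\la)$. Combined with the one-dimensionality from the first step, every nonzero homomorphism is a scalar multiple of $\phi$ and has image isomorphic to $\De_h(\la)$.

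The main obstacle is the identification $\im(\phi)\cong\De_h(\la)$, which is the classical presentation of the Weyl module. A self-contained alternative is to show that $\phi$ factors through some surjection $Z^\la(V_h)\twoheadrightarrow\De_h(\la)$ (since the kernel of any such surjection has a standard filtration with factors $\De_h(\mu)$, $\mu>\la$, making $\Hom$ into $\La^{\la^\tr}(V_h)$ vanish by a parallel character count), and then to show that the induced map $\bar\phi\colon\De_h(\la)\to\La^{\la^\tr}(V_h)$ is injective by dualizing via $\tau$: its $\tau$-dual $\La^{\la^\tr}(V_h)\to\nabla_h(\la)$ must be (up to scalar) the canonical surjection onto $\nabla_h(\la)$ at the top of a costandard filtration of $\La^{\la^\tr}(V_h)$, which exists since $\la$ is maximal among $\mu$ with $K_{\mu^\tr,\la^\tr}\ne 0$.
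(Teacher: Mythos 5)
Your proof is correct. Note that the paper itself offers no proof of Lemma~\ref{1.3d}: it sits in the preliminary section on Schur algebras, whose contents are explicitly recalled from \cite[Section 1]{BDK} without argument, so there is no internal proof to compare against. Your two-step argument is the standard one for this classical fact, and both halves check out: the dimension count correctly combines the standard filtration of $Z^\la(V_h)$ (multiplicities $K_{\mu,\la}$, nonzero only for $\mu\geq\la$) with the costandard filtration of $\La^{\la^\tr}(V_h)$ (multiplicities $K_{\mu^\tr,\la^\tr}$, nonzero only for $\mu\leq\la$) via $\Ext$-orthogonality of standards against costandards, leaving the single term $K_{\la,\la}K_{\la^\tr,\la^\tr}=1$; and the identification of the image is the Carter--Lusztig/Green realization of the Weyl module. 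The only point where you are slightly cavalier is the explicit evaluation of $\phi$ on the canonical vector, which requires fixing compatible orderings of the tensor factors for the row and column decompositions of $V_h^{\otimes n}$ --- this is exactly the role played by the permutation $u_\la$ of Lemma~\ref{1.1d}, which reappears when the paper transports this lemma to the imaginary setting in Theorem~\ref{3.5e}(ii). Your self-contained fallback (factoring any nonzero map through the surjection $Z^\la(V_h)\onto\De_h(\la)$ coming from the summand $P_h(\la)$, then deducing injectivity of the induced map by $\tau$-duality against the canonical quotient $\La^{\la^\tr}(V_h)\onto\nabla_h(\la)$) is also sound and avoids invoking the explicit Weyl-module presentation altogether.
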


If $p>0$, then for and $h,n,r\in\Z_{\geq 0}$, there is a {\em Frobenius homomorphism} 
$$
F_r:S_{h,np^r}\to S_{h,n}, 
$$
twisting with which one gets the {\em Frobenius twist} functor
$$
\mod{S_{h,n}}\to\mod{S_{h,np^r}},\ M\mapsto M^{[r]}.
$$
For example $L(\la)^{[r]}\simeq L(p^r\la)$. The Steinberg tensor product theorem is:

\begin{Lemma} \label{1.3e} 
 Suppose that $\la\in X_+(h,n)$ has $p$-adic expansion
 $\la=\la(0) +p\la(1) +p^2\la(2) +\dots$. 
Then, $L_h(\la) \simeq L_h(\la(0))\otimes L_h(\la(1))^{[1]} \otimes L_h(\la(2))^{[2]} \otimes \dots$.
\end{Lemma}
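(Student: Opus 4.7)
The plan is to establish the Steinberg tensor product theorem by exhibiting a highest weight vector of weight $\la$ in the tensor product
$$N := L_h(\la(0))\otimes L_h(\la(1))^{[1]} \otimes \cdots \otimes L_h(\la(s))^{[s]}$$
and then showing $N$ is irreducible; since the quasi-hereditary structure summarized above characterizes $L_h(\la)$ up to isomorphism as the unique irreducible $S_{h,n}$-module of highest weight $\la$, this will force $N\cong L_h(\la)$.

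For the first step, I would record the standard compatibility of the Frobenius twist with the weight decomposition: directly from the definition of $F_r:S_{h,np^r}\to S_{h,n}$, the functor $M\mapsto M^{[r]}$ sends an $S_{h,n}$-module $M$ to an $S_{h,np^r}$-module whose $p^r\mu$-weight space equals $e(\mu)M$, and whose other weight spaces vanish. In particular each $L_h(\la(i))^{[i]}$ lies in degree $p^in(i)$, where $n(i):=|\la(i)|$, and has highest weight $p^i\la(i)$. Writing $\la=\la(0)+p\la(1)+\cdots+p^s\la(s)$ with each $\la(i)$ $p$-restricted gives $n=\sum_i p^in(i)$, and the iterated coproduct-type map $S_{h,n}\to S_{h,n(0)}\otimes S_{h,pn(1)}\otimes\cdots\otimes S_{h,p^sn(s)}$ of Section~1.4 makes $N$ into an $S_{h,n}$-module. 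A tensor product of highest weight vectors $v_0\otimes v_1\otimes\cdots\otimes v_s$ is then a weight vector of weight $\la$, and since every weight of $N$ has the form $\sum_i p^i\nu(i)$ with $\nu(i)\leq\la(i)$ in dominance, the dominance order on $X(h,n)$ yields $\nu\leq\la$, with equality forcing $\nu(i)=\la(i)$ for each $i$; as each $\la(i)$-weight space of $L_h(\la(i))$ is one-dimensional, so is the $\la$-weight space of $N$. Thus $N$ surjects onto $L_h(\la)$.

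The main obstacle, as always with Steinberg's theorem, is the irreducibility of $N$. I would prove this by induction on $s$, the case $s=0$ being vacuous. For $s\geq 1$, set $N':=L_h(\la(0))\otimes\cdots\otimes L_h(\la(s-1))^{[s-1]}$, which is irreducible of highest weight $\la-p^s\la(s)$ by inductive hypothesis, so $N=N'\otimes L_h(\la(s))^{[s]}$. The critical technical point is that because each $\la(0),\ldots,\la(s-1)$ is $p$-restricted, the $p$-adic digits of any weight appearing in $N'$ are controlled, so that every weight of $N$ admits a \emph{unique} decomposition as a weight of $N'$ plus $p^s$ times a weight of $L_h(\la(s))$. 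Consequently, any highest weight vector of a non-zero submodule $U\subseteq N$ must be a pure tensor $v'\otimes w^{[s]}$: the contributions from the two factors cannot be mixed.

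Given such a pure tensor, acting via the subalgebra $1\otimes S_{h,p^sn(s)}$ of the coproduct target and invoking irreducibility of $L_h(\la(s))$ yields $v'\otimes L_h(\la(s))^{[s]}\subseteq U$; then acting via $S_{h,n(0)}\otimes\cdots\otimes S_{h,p^{s-1}n(s-1)}\otimes 1$ and invoking irreducibility of $N'$ yields $U=N$, completing the induction. The delicate aspect throughout is the weight-decomposition uniqueness; this is where the $p$-restrictedness of the $\la(i)$ is genuinely needed, and it is essentially equivalent to the uniqueness of the $p$-adic expansion of $\la$ (guaranteed here by the paper's convention of viewing $\la$ inside $X_+(n,n)$, so that $\la_n\leq 1$).
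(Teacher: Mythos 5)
The paper offers no proof of this lemma: it is recalled verbatim from the literature (via \cite[Section 1]{BDK}, ultimately Steinberg/Green/Jantzen), so you are in effect attempting to reprove the Steinberg tensor product theorem from scratch. The first half of your argument is fine: the character of $N$ is $e^\la$ plus dominance-lower terms with $\dim e(\la)N=1$, so $L_h(\la)$ occurs in $N$ with multiplicity one and is its unique "top" constituent. The problem is the irreducibility step, where there is a genuine gap.

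Your key claim -- that $p$-restrictedness of $\la(0),\dots,\la(s-1)$ forces every weight of $N$ to decompose \emph{uniquely} as (weight of $N'$) $+\,p^s\cdot$(weight of $L_h(\la(s))$) -- is false. The weights of $L_h(\mu)$ for $p$-restricted $\mu$ are not themselves $p$-restricted in any useful sense: their entries can exceed $p$, and distinct weights can differ by a multiple of $p$. Concretely, take $p=2$, $h=3$, $\la(0)=(2,1,0)$ (which is $2$-restricted) and $\la(1)=(1,0,0)$, so $\la=(4,1,0)$. Then $L_3(2,1,0)$ has both $(2,1,0)$ and $(0,1,2)$ among its weights, and
$(2,1,0)+2\,(0,0,1)=(2,1,2)=(0,1,2)+2\,(1,0,0)$,
so the weight $(2,1,2)$ of $L_3(2,1,0)\otimes L_3(1,0,0)^{[1]}$ has two distinct decompositions. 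Even where uniqueness does hold, a weight vector lying in $(N')_{\mu'}\otimes\bigl(L_h(\la(s))^{[s]}\bigr)_{p^s\mu''}$ is a sum of pure tensors, not a pure tensor, unless one of those weight spaces happens to be one-dimensional; so the "must be a pure tensor" conclusion does not follow. What is actually needed -- and what every known proof supplies -- is the restriction to the first Frobenius kernel $G_1$ (equivalently the infinitesimal Schur algebra or restricted enveloping algebra): Curtis's theorem that $L_h(\la(0))$ stays irreducible over $G_1$ when $\la(0)$ is $p$-restricted, together with the fact that the twisted factors are trivial over $G_1$, after which one identifies $\Hom_{G_1}(L_h(\la(0)),N)$ with $L_h(\la(1))\otimes L_h(\la(2))^{[1]}\otimes\cdots$ as a module for the quotient and induces. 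That input cannot be replaced by weight combinatorics, so the proposal as written does not prove the lemma.
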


\section{Induction and restriction for Schur algebras}\label{SSIRSA}

For a composition $\chi = (h_1,\dots,h_a)\vDash h$ there is a natural {\em Levi Schur subalgebra} 
\begin{equation}\label{ELevi}
S_{\chi,n}\simeq \bigoplus_{n_1+\dots+n_a=n} S_{h_1,n_1}\otimes\dots\otimes S_{h_a,n_a}\subseteq S_{h,n},
\end{equation}
and the usual restriction, and  induction functors:
$$
\res^{S_{h,n}}_{S_{\chi,n}} :\mod{S_{h,n}} \to \mod{S_{\chi,n}}, \quad 
\ind^{S_{h,n}}_{S_{\chi,n}} :\mod{S_{\chi,n}} \to \mod{S_{h,n}}.
$$

Moreover, fix $l \leq h$ and embed $X(l, k)$ into $X(h, k)$ in the natural way. Let $e$ be the idempotent 
\begin{equation}\label{1.5.1}
e = e_{h,l}: =\sum_{\mu\in X(l, k)}e(\mu) \in S_{h,n}.
\end{equation}

\begin{Lemma} \label{LTruncSchur} 
We have $S_{l,n}\simeq eS_{h,n}e$. 
\end{Lemma}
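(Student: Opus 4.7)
The plan is to exploit the very definition of the Schur algebra as an endomorphism ring of a direct sum of permutation modules. Under this definition, the idempotents $e(\mu)$ are literally the projections onto the summands $\Perm^\mu$, so compressing by a sum of such idempotents corresponds to restricting attention to a subset of summands.

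More precisely, I would first observe that in
$$S_{h,n}=\End_{F\Si_n}\Big(\bigoplus_{\nu\in X(h,n)}\Perm^\nu\Big),$$
the element $e(\mu)=\phi^1_{\mu,\mu}$ acts as the identity on $\Perm^\mu$ and as zero on $\Perm^\nu$ for $\nu\neq\mu$. Consequently the idempotent $e=\sum_{\mu\in X(l,n)}e(\mu)$ is the projection onto the subspace $\bigoplus_{\nu\in X(l,n)}\Perm^\nu$ of $\bigoplus_{\nu\in X(h,n)}\Perm^\nu$, where $X(l,n)$ is embedded in $X(h,n)$ by padding with zeros. General nonsense about compressions of endomorphism rings then yields
$$eS_{h,n}e=\End_{F\Si_n}\Big(\bigoplus_{\nu\in X(l,n)}\Perm^\nu\Big),$$
and the right-hand side is, by definition, $S_{l,n}$.

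As a sanity check I would compare bases via Lemma~\ref{1.2a}: a basis vector $\phi^u_{\mu,\la}$ of $S_{h,n}$ satisfies $e(\nu)\phi^u_{\mu,\la}e(\sigma)=\delta_{\nu,\mu}\delta_{\sigma,\la}\phi^u_{\mu,\la}$, so $eS_{h,n}e$ has $F$-basis exactly those $\phi^u_{\mu,\la}$ with $\mu,\la\in X(l,n)$ and $u\in{}^\mu\D^\la$. The same indexing set gives the standard basis of $S_{l,n}$, and since the modules $\Perm^\nu$ are intrinsic to $\Si_n$ (not depending on the ambient $h$), the multiplication matches.

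There is no real obstacle here; the only thing to be slightly careful about is that the $\Perm^\nu$ are defined in terms of $\Si_\nu\subseteq\Si_n$ alone and hence are independent of whether one thinks of $\nu$ as a composition with $l$ or $h$ parts. Once that identification is in place, the isomorphism is immediate from the definition of $S_{l,n}$ and $S_{h,n}$.
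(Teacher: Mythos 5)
Your proof is correct, and it is the standard argument for this statement (the paper itself cites this as a known fact from \cite{BDK} and \cite{Green} rather than proving it). Identifying $e$ as the projection onto $\bigoplus_{\nu\in X(l,n)}\Perm^\nu$ inside $\bigoplus_{\nu\in X(h,n)}\Perm^\nu$, then invoking the general fact that compressing an endomorphism ring by a sum of orthogonal idempotent projections recovers the endomorphism ring of the corresponding summand, is exactly what is wanted; your observation that $\Perm^\nu$ depends only on $\Si_\nu\subseteq\Si_n$ and hence is insensitive to padding $\nu$ with zeros is precisely the point that makes the identification unambiguous, and the basis comparison via $\phi^u_{\mu,\la}$ is a clean confirmation.
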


Then, we have the Schur functor 
\begin{equation}\label{1.5.2}
\trun^{S_{h,n}}_{S_{l,n}} :\mod{S_{h,n}}\to\mod{S_{l,n}},\ M\mapsto eM
\end{equation}
and its left adjoint 
\begin{equation}\label{1.5.3}
\infl^{S_{h,n}}_{S_{l,n}} :\mod{S_{l,n}}\to\mod{S_{h,n}},\ N\mapsto S_{h,n}e\otimes_{eS_{h,n}e} N.
\end{equation}

\begin{Lemma} \label{1.5a} 
If $n\leq l\leq h$, then the functors $\trun^{S_{h,n}}_{S_{l,n}}$	and	$\infl^{S_{h,n}}_{S_{l,n}}$ are mutually quasi-inverse equivalences of categories.
\end{Lemma}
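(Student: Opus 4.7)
The plan is to invoke the standard Morita theorem for idempotent truncation: if $A$ is a finite dimensional algebra and $e \in A$ is an idempotent such that $A e A = A$, then the functors $M \mapsto eM$ and $N \mapsto Ae \otimes_{eAe} N$ are mutually quasi-inverse equivalences between $\mod{A}$ and $\mod{eAe}$. Taking $A = S_{h,n}$ and $e = e_{h,l}$, the isomorphism $eAe \cong S_{l,n}$ has already been recorded in Lemma~\ref{LTruncSchur}, so it only remains to verify the idempotent condition $S_{h,n}\, e\, S_{h,n} = S_{h,n}$ under the hypothesis $n \leq l \leq h$.

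To verify this, I would use the standard fact that a two-sided ideal $I = A e A$ equals $A$ if and only if $e$ is not annihilated by any simple $A$-module, i.e.\ $e L \neq 0$ for every simple $L \in \mod{A}$. In our setting the simple $S_{h,n}$-modules are the $L_h(\la)$ for $\la \in X_+(h,n)$. Since $\la$ is a partition of $n$, the number of nonzero parts of $\la$ is at most $n$, and the hypothesis $n \leq l$ then forces $\la$ (viewed as a sequence) to lie in $X(l,n) \subseteq X(h,n)$. Consequently the idempotent $e(\la)$ is one of the summands of $e = \sum_{\mu \in X(l,n)} e(\mu)$. Since $\la$ is the highest weight of $L_h(\la)$ we have $e(\la) L_h(\la) \neq 0$ (this is the highest weight space), and hence $e L_h(\la) \supseteq e(\la) L_h(\la) \neq 0$.

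With $AeA = A$ established, the Morita equivalence is immediate from the general result, and by construction the functors involved are precisely $\trun^{S_{h,n}}_{S_{l,n}}$ and $\infl^{S_{h,n}}_{S_{l,n}}$ as defined in \eqref{1.5.2} and \eqref{1.5.3}. The only real content of the proof is the one-line observation that $n \leq l$ forces every $\la \in X_+(h,n)$ to lie in $X(l,n)$, so there is no serious obstacle; the claim could alternatively be proved at the level of the double coset basis from Lemma~\ref{1.2a} by showing that each standard basis element $\phi^u_{\mu,\nu}$ lies in $S_{h,n}\, e\, S_{h,n}$, but the weight-space argument above is more transparent.
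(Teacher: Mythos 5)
Your proof is correct. The paper gives no proof of its own here—Lemma~\ref{1.5a} is recalled as a standard fact from \cite[Section 1]{BDK}—and your argument (Morita equivalence for the full idempotent $e=e_{h,l}$, with fullness $S_{h,n}\,e\,S_{h,n}=S_{h,n}$ verified by checking $e\,L_h(\la)\supseteq e(\la)L_h(\la)\neq 0$ for every $\la\in X_+(h,n)$, using that $\la\vdash n$ has at most $n\leq l$ parts) is precisely the standard argument used in that reference.
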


\begin{Lemma} \label{1.5b} 
Let $l<h$ and $\mu=(\mu_1,\dots,\mu_h)\in X_+(h,n)$.
\begin{enumerate}
\item[{\rm (i)}] If $\mu_{l+1}\neq 0$ then $\trun^{S_{h,n}}_{S_{l,n}} L_h(\mu) = \trun^{S_{h,n}}_{S_{l,n}} \De_h(\mu) = \trun^{S_{h,n}}_{S_{l,n}} \nabla_h(\mu) = 0$. 
\item[{\rm (ii)}] If $\mu_{l+1} = 0$, we may regard $\mu$  as an element of $X_+(l, n)$, and then we have $\trun^{S_{h,n}}_{S_{l,n}} L_h(\mu)\simeq L_l(\mu)$, $\trun^{S_{h,n}}_{S_{l,n}} \De_h(\mu)\simeq \De_l(\mu)$, and\, $\trun^{S_{h,n}}_{S_{l,n}} \nabla_h(\mu)\simeq \nabla_l(\mu)$.
\end{enumerate}
\end{Lemma}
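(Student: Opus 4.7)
The plan is to analyze the truncation functor $\trun^{S_{h,n}}_{S_{l,n}} = e(-)$, where $e=e_{h,l}$, via weight-space arguments. The essential input is the standard highest-weight property: every $\nu \in X(h,n)$ with $e(\nu) V \neq 0$ for $V \in \{L_h(\mu), \De_h(\mu), \nabla_h(\mu)\}$ satisfies $\nu^+ \leq \mu$ in dominance order. For (i), if $\mu_{l+1} \neq 0$ and $\nu \in X(l,n)$, then $\nu$ has last $h-l$ entries zero, so $\nu^+_1 + \cdots + \nu^+_l = n$, while $\mu_1 + \cdots + \mu_l = n - (\mu_{l+1} + \cdots + \mu_h) < n$; thus $\nu^+ \not\leq \mu$ and $e(\nu) V = 0$ for every $\nu \in X(l,n)$, giving $eV = 0$.

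For (ii), assume $\mu_{l+1} = 0$ so that $e(\mu)$ is a summand of $e$. I would treat the three modules in turn. For $L_h(\mu)$, I would invoke the general fact that for any idempotent $e$ in a ring $A$ and any simple $A$-module $M$ with $eM \neq 0$, $eM$ is a simple $eAe$-module: any non-zero $v \in N \subseteq eM$ satisfies $Av = M$, hence $eAev = eAv = eM$, forcing $N = eM$. So $eL_h(\mu)$ is simple over $S_{l,n} \cong eS_{h,n}e$, and since it has a non-zero $\mu$-weight space and no strictly higher weights, it must be $L_l(\mu)$. For $\De_h(\mu)$, the module is cyclic on its one-dimensional $\mu$-weight space, so $e\De_h(\mu) = (eS_{h,n}e) \cdot e(\mu)\De_h(\mu)$ is a cyclic $S_{l,n}$-module on a $\mu$-highest-weight vector, yielding a surjection $\De_l(\mu) \onto e\De_h(\mu)$. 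To upgrade this to an isomorphism I would compare weight space dimensions: for $\nu \in X(l,n)$, $\dim e(\nu)\De_h(\mu) = K_{\mu,\nu} = \dim e(\nu)\De_l(\mu)$ (a semistandard $\mu$-tableau of type $\nu$ uses only entries $\leq l$, so the Kostka count is independent of the ambient rank), and summing matches $\dim \De_l(\mu)$ with $\dim e\De_h(\mu)$.

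For $\nabla_h(\mu)$, I would use the contravariant duality $(-)^\tau$. Since $\tau$ fixes each $e(\nu)$, one has $\tau(e)=e$, and the natural restriction map gives an $eS_{h,n}e$-linear isomorphism $e(M^\tau) \cong (eM)^\tau$ for any $S_{h,n}$-module $M$. The algebra isomorphism $S_{l,n} \cong eS_{h,n}e$ intertwines the two anti-automorphisms $\tau$ (both act on basis elements by $\phi^u_{\mu,\la} \mapsto \phi^{u^{-1}}_{\la,\mu}$), so combining with $\De_h(\mu)^\tau \cong \nabla_h(\mu)$ gives $e\nabla_h(\mu) \cong (e\De_h(\mu))^\tau \cong \De_l(\mu)^\tau \cong \nabla_l(\mu)$.

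The main obstacle is the dimension comparison in the $\De$ case, which ultimately depends on the standard but non-trivial fact that Weyl module characters are given by Schur polynomials and are independent of the ambient rank. Once this is granted, and the routine commutation of $(-)^\tau$ with $e(-)$ is checked, the $\nabla$ case follows formally by duality.
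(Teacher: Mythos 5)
The paper does not prove this lemma: it is quoted as standard Schur-algebra theory, with the reader referred to \cite[Section 1]{BDK} (ultimately Green's book), so there is no in-paper proof to compare against. Your argument is correct and is essentially the standard one: part (i) by the partial-sum obstruction at index $l$ in the dominance order, the simple case of (ii) by the general $eAe$ fact for idempotent truncation, the Weyl-module case by the universal property of $\De_l(\mu)$ as a highest-weight cyclic module plus the rank-independence of the Kostka weight multiplicities, and the costandard case by commuting $\trun$ with the contravariant duality $\tau$ (which fixes $e$ and restricts correctly to $eS_{h,n}e\cong S_{l,n}$); the only step worth spelling out slightly more is why a cyclic module generated by a one-dimensional $\mu$-weight space with all weights $\leq\mu$ is a quotient of $\De_l(\mu)$, which follows from $\De_l(\mu)$ being the projective cover of $L_l(\mu)$ in the subcategory of modules with weights $\leq\mu$.
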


\begin{Lemma} \label{1.5d} 
If $\chi = (h_1,\dots,h_a) \vDash h$ and $\nu = (n_1,\dots,n_a) \vDash n$, with $h_r\geq n_r$ for all $r=1,\dots,a$, then 
$$
\ind_{S_{\chi,n}}^{S_{h,n}}(? \boxtimes\dots\boxtimes \,?)\qquad  \text{and}\qquad 
(\infl_{S_{h_1,n_1}}^{S_{h,n_1}} ?)\otimes\dots\otimes (\infl_{S_{h_a,n_a}}^{S_{h,n_a}} ?)
$$
are isomorphic functors from 
$\mod{S_{h_1,n_1}}\times\dots\times\mod{S_{h_a,n_a}}$ to $\mod{S_{h,n}}$.
\end{Lemma}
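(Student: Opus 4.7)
The plan is to verify that both functors agree, naturally, on a family of projective generators and then invoke right-exactness. Both functors are right-exact in each slot: the induction $\ind_{S_{\chi,n}}^{S_{h,n}} = S_{h,n}\otimes_{S_{\chi,n}}-$ is a tensor product; each inflation $\infl_{S_{h_r,n_r}}^{S_{h,n_r}}$ is of the form $S_{h,n_r}e\otimes_{e S_{h,n_r}e}-$ with $e=e_{h,h_r}$ (using Lemma~\ref{LTruncSchur}); and the tensor product of $S_{h,n_r}$-modules, viewed as an $S_{h,n}$-module via the algebra map introduced in Section~\ref{SSSchurRT}, is exact in each slot.

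By Lemma~\ref{1.3b}(i), the modules $Z^{\mu^{(r)}}(V_{h_r})\simeq S_{h_r,n_r}e(\mu^{(r)})$ indexed by $\mu^{(r)}\in X(h_r,n_r)$ form a family of projective generators for $\mod{S_{h_r,n_r}}$. Setting $\mu:=(\mu^{(1)},\dots,\mu^{(a)})\in X(h,n)$ (concatenation), the external box product on such a tuple is isomorphic to $S_{\chi,n}e(\mu)$, and so the left-hand functor produces
$$S_{h,n}\otimes_{S_{\chi,n}} S_{\chi,n}e(\mu)\;\simeq\; S_{h,n}e(\mu)\;\simeq\; Z^\mu(V_h).$$
For the right-hand functor, a direct computation using $S_{h_r,n_r}\simeq eS_{h,n_r}e$ gives $\infl_{S_{h_r,n_r}}^{S_{h,n_r}} S_{h_r,n_r}e(\mu^{(r)})\simeq S_{h,n_r}e(\mu^{(r)})\simeq Z^{\mu^{(r)}}(V_h)$, and tensoring these together and applying (\ref{1.3.3}) yields $Z^{\mu^{(1)}}(V_h)\otimes\dots\otimes Z^{\mu^{(a)}}(V_h)=Z^\mu(V_h)$. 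Thus both functors produce $Z^\mu(V_h)$ on the chosen tuple.

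To upgrade these pointwise isomorphisms to a natural one, I apply the adjunction $(\ind_{S_{\chi,n}}^{S_{h,n}},\res^{S_{h,n}}_{S_{\chi,n}})$: a natural transformation from the first functor to the second is equivalent to a natural map of $S_{\chi,n}$-modules
$$?\boxtimes\dots\boxtimes\,?\;\longrightarrow\;\res^{S_{h,n}}_{S_{\chi,n}}\bigl((\infl\,?)\otimes\dots\otimes(\infl\,?)\bigr),$$
which I define componentwise by the canonical inclusions $m_r\mapsto e_{h,h_r}\otimes m_r$. The computation above shows this is an isomorphism on the chosen projective generators, and right-exactness in each slot upgrades it to a natural isomorphism on all tuples. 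The principal technical obstacle is the $S_{\chi,n}$-equivariance of the proposed map: one must verify that the embedding $S_{\chi,n}\hookrightarrow S_{h,n}$ composed with the algebra map $S_{h,n}\to S_{h,n_1}\otimes\dots\otimes S_{h,n_a}$ carries each summand $\bigotimes_r S_{h_r,n_r}$ of $S_{\chi,n}$ into the corresponding tensor of the inclusions $S_{h_r,n_r}\hookrightarrow S_{h,n_r}$. Once this is established, everything else is routine bookkeeping with idempotents and tensor-hom adjunctions.
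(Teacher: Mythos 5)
First, note that the paper itself offers no proof of Lemma~\ref{1.5d}: it is one of the standard facts on Schur algebras recalled from \cite[Section 1]{BDK}, so there is no in-paper argument to compare yours against. Your overall strategy --- both functors are right-exact in each slot, so it suffices to produce a natural transformation between them and check it is an isomorphism on a family of projective generators, which you take to be the modules $S_{h_r,n_r}e(\mu^{(r)})\simeq Z^{\mu^{(r)}}(V_{h_r})$ --- is sound, and your pointwise computations (that $\ind_{S_{\chi,n}}^{S_{h,n}}$ applied to $S_{\chi,n}e(\mu)$ gives $S_{h,n}e(\mu)\simeq Z^\mu(V_h)$, and that inflation followed by tensoring also lands on $Z^\mu(V_h)$) are correct.

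The gap is that you defer exactly the step where the content of the lemma lives. You flag as ``the principal technical obstacle'' the compatibility of the algebra map $S_{h,n}\to S_{h,n_1}\otimes\dots\otimes S_{h,n_a}$ with the Levi embedding $S_{\chi,n}\into S_{h,n}$ and with the truncation idempotents $e_{h,h_r}$, and then write ``once this is established, everything else is routine'' --- but you never establish it. Without it you have no $S_{\chi,n}$-equivariant unit map, hence no natural transformation at all, and the rest of the argument has nothing to propagate. This compatibility is a true but nontrivial statement about the coproduct-like structure of $S_{h,n}$ (it must be checked either on the basis $\phi^u_{\mu,\la}$ or via the identification with polynomial representations of $GL_h$ restricted to the block-diagonal Levi subgroup), and it is essentially equivalent to the lemma itself. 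A second, smaller issue: even granting the natural transformation, you assert that ``the computation above shows this is an isomorphism on the chosen projective generators,'' but what you computed is that source and target are each abstractly isomorphic to $Z^\mu(V_h)$. That does not show the specific map $te(\mu)\mapsto \Delta(t)\cdot\bigl(e(\mu^{(1)})\otimes\dots\otimes e(\mu^{(a)})\bigr)$ is an isomorphism (for instance, it is not a priori clear that the generator $e(\mu^{(1)})\otimes\dots\otimes e(\mu^{(a)})$ generates the target over the image of $S_{h,n}$, since $\Delta$ is not surjective); one must identify the map with the canonical inclusion-induced isomorphism $Z^\mu(V_h)\subseteq V_h^{\otimes n}=V_h^{\otimes n_1}\otimes\dots\otimes V_h^{\otimes n_a}$ coming from Lemma~\ref{1.3b}(i). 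Both points are fixable, but as written the proposal records the shape of a proof rather than a proof.
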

 
 \section{Schur functors}\label{SSSF}
 Here we review the material of \cite[Section~ 3.1]{BDK} for future references. Let $\ImS$ be a finite dimensional $F$-algebra, $P\in\mod{\ImS}$ be a projective module, and $H = \End_\ImS(P)$, writing endomorphisms commuting with the left $\ImS$-action on the right. Define the functors: 
\begin{align*}
 \al&:=\Hom_{\ImS}(P,?):\mod{\ImS}\to \mod{H},
 \\
 \be&:=P\otimes_H?:\mod{H}\to \mod{\ImS}.
 \end{align*}
The $\al$ is exact, and $\be$ is left adjoint to $\al$.

Given an $\ImS$-module $V$, let $O_P(V )$ denote the largest submodule $V'$ of $V$  such that $\Hom_\ImS(P,V')=0$. Let $O^P(V)$ denote the submodule of $V$ generated by the images of all $\ImS$-homomorphisms from $P$ to $V$. 
Any $\ImS$-module homomorphism $V \to W$ sends $O_P(V )$ into $O_P(W )$ and $O^P(V)$ into $O^P(W)$, so we can view $O_P$ and $O^P$ as functors $\mod{\ImS}\to\mod{\ImS}$. Finally, any homomorphism $V \to W$ induces a well-defined $\ImS$-module homomorphism $V/O_P (V ) \to W/O_P (W)$. We thus obtain an exact functor $A_P : \mod{\ImS} \to \mod{\ImS}$ defined on objects by $V \mapsto V/O_P(V)$. The following two lemmas can be found for example in \cite[3.1a, 3.1c]{BDK}:

\begin{Lemma} \label{L3.1a}
The functors $\al\circ \be$ and $\al\circ A_P\circ\be$ are both isomorphic to the identity.
\end{Lemma}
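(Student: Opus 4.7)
The plan is to exhibit explicit natural transformations in both cases and then verify the isomorphism property by standard category-theoretic arguments based on the exactness of $\al$ and the right-exactness of $\be$.

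For the first isomorphism $\al\circ\be \cong \id_{\mod H}$, I would use the unit of the adjunction $\be\dashv\al$: for $W\in\mod H$, define
$$\eta_W : W \to \al(\be(W)) = \Hom_\ImS(P, P\otimes_H W), \qquad w \mapsto \bigl(p\mapsto p\otimes w\bigr).$$
This is manifestly natural in $W$. Checking that $\eta_H$ is an isomorphism is immediate, since $\be(H)=P\otimes_H H \cong P$ and $\al(P)=\End_\ImS(P)=H$, with $\eta_H$ becoming the identity. Since $P$ is projective, $\al$ is exact, and $\be$ is right exact as a tensor product, so the composite $\al\circ\be$ is right exact. Both functors also preserve finite direct sums, hence $\eta$ is an isomorphism at every finitely generated free $H$-module. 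Given an arbitrary $W\in\mod H$ with a presentation $H^m \to H^n \to W \to 0$, the naturality square
\begin{equation*}
\begin{array}{ccccccc}
H^m & \to & H^n & \to & W & \to & 0 \\
\downarrow & & \downarrow & & \downarrow & &  \\
\al\be(H^m) & \to & \al\be(H^n) & \to & \al\be(W) & \to & 0
\end{array}
\end{equation*}
has exact rows and the first two vertical arrows are isomorphisms, so the five-lemma (right-exact version) gives that $\eta_W$ is an isomorphism as well.

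For the second isomorphism, the key observation is that for any $V\in\mod\ImS$ the defining short exact sequence
$$0 \to O_P(V) \to V \to A_P(V) \to 0$$
has $\al(O_P(V))=\Hom_\ImS(P,O_P(V))=0$ by definition of $O_P$. Since $\al$ is exact, the induced map $\al(V)\to \al(A_P(V))$ is an isomorphism, naturally in $V$. Applying this with $V=\be(W)$ and composing with the first isomorphism yields
$$\al(A_P(\be(W))) \;\cong\; \al(\be(W)) \;\cong\; W,$$
natural in $W$, which is exactly what is claimed.

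There is no real obstacle here; the argument is a routine application of the unit of adjunction together with exactness. The only mild care needed is to note that $\al$ is exact (because $P$ is projective), which is what makes both steps work: it lets the five-lemma produce the first isomorphism from the known case $W=H$, and it ensures that factoring out $O_P$ is invisible to $\al$ in the second step.
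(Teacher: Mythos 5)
Your proof is correct and follows essentially the same route as the source the paper cites for this lemma (\cite[3.1a]{BDK}): the unit of the adjunction $\be\dashv\al$ is an isomorphism at $H$, hence at free modules, hence everywhere by right-exactness and a presentation; and since $\Hom_\ImS(P,O_P(V))=0$ by definition and $\al$ is exact, applying $\al$ to $V\onto A_P(V)$ gives a natural isomorphism, which handles the second functor. No gaps.
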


\begin{Lemma} \label{L3.1c} 
If $V, W \in \mod{\ImS}$ satisfy $O^P(V) = V$ and $O_P(W) = 0$, then $\Hom_{\ImS}(V , W ) \simeq \Hom_H(\al( V ) , \al( W ) )$.
\end{Lemma}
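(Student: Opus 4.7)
The plan is to construct the natural map $\Phi \colon \Hom_\ImS(V,W) \to \Hom_H(\al V,\al W)$, $f\mapsto\al(f)$, and prove it is bijective by exploiting the counit of the adjunction $(\be,\al)$ together with Lemma~\ref{L3.1a}.

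\textbf{Injectivity.} Suppose $f\in\Hom_\ImS(V,W)$ satisfies $\al(f)=0$, i.e.\ $f\circ\phi=0$ for every $\phi\colon P\to V$. By definition $O^P(V)$ is the sum of the images of all such $\phi$, so $V=O^P(V)$ forces $f(V)=0$.

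\textbf{Surjectivity.} Let $\varepsilon_V\colon \be\al V\to V$ and $\varepsilon_W\colon\be\al W\to W$ be the counits, $(p\otimes\phi)\mapsto\phi(p)$. Note that $\im\varepsilon_V=O^P(V)$, so the hypothesis $V=O^P(V)$ means $\varepsilon_V$ is surjective. Given $g\in\Hom_H(\al V,\al W)$, form
\[
h \;:=\; \varepsilon_W\circ \be(g)\colon \be\al V\longrightarrow W.
\]
I want to show $h$ factors through $\varepsilon_V$, i.e.\ that $h$ vanishes on $K:=\ker\varepsilon_V$. The resulting map $f\colon V\to W$ will then satisfy $\al(f)=g$ by a routine adjunction chase: applying $\al$ to $f\circ\varepsilon_V=h$ and using the canonical identifications $\al\varepsilon_V=\id_{\al V}$ and $\al\varepsilon_W=\id_{\al W}$ (which is exactly the content of Lemma~\ref{L3.1a}, $\al\be\cong\id$) together with $\al\be(g)=g$.

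\textbf{Vanishing of $h$ on $K$ (the main step).} Apply the exact functor $\al$ to the short exact sequence $0\to K\to \be\al V\xrightarrow{\varepsilon_V} V\to 0$; by Lemma~\ref{L3.1a} the rightmost map becomes the identity of $\al V$ after the canonical identification $\al\be\al V\cong \al V$, so
\[
\al K \;=\; \Hom_\ImS(P,K) \;=\; 0.
\]
Now consider the submodule $h(K)\subseteq W$. Any homomorphism $\psi\colon P\to h(K)$ lifts through the surjection $K\twoheadrightarrow h(K)$ to some $\tilde\psi\colon P\to K$ by projectivity of $P$; since $\Hom_\ImS(P,K)=0$, we get $\tilde\psi=0$ and hence $\psi=0$. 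Thus $\Hom_\ImS(P,h(K))=0$, so $h(K)\subseteq O_P(W)=0$, which gives $h|_K=0$ as required.

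\textbf{Where the difficulty lies.} Both hypotheses are used in an essential but asymmetric way: $V=O^P(V)$ provides surjectivity of $\varepsilon_V$ (and injectivity of $\Phi$), while $O_P(W)=0$ is what allows one to conclude that a submodule of $W$ having no nonzero maps from $P$ must be trivial. The subtle point is to produce the factorization $h=f\circ\varepsilon_V$ using these inputs simultaneously — combining exactness of $\al$, the adjunction isomorphism of Lemma~\ref{L3.1a}, and the projectivity of $P$ to lift maps into $h(K)$. Once this factorization is secured, identifying $\al(f)$ with $g$ is only a formal manipulation of unit/counit identities.
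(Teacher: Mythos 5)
Your argument is correct and complete: injectivity from $V=O^P(V)$, surjectivity by factoring $\varepsilon_W\circ\be(g)$ through $\varepsilon_V$ after showing $\al(K)=0$ (exactness of $\al$ plus Lemma~\ref{L3.1a}) and then $h(K)\subseteq O_P(W)=0$ via projectivity of $P$. The paper gives no proof, only the citation to \cite[3.1c]{BDK}, and your argument is essentially the standard one found there, so there is nothing to compare beyond noting that $\al(\varepsilon_V)$ is the identity only up to the natural isomorphism of Lemma~\ref{L3.1a} — a point you already handle correctly.
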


The main result on the functors $\al,\be$, proved for example in \cite[3.1d]{BDK},  is:

\begin{Theorem} \label{T3.1d}
The functors $\al$ and $A_P \circ\be$ induce mutually inverse equivalences of categories between $\mod{H}$ and the full subcategory 
of $\mod{\ImS}$ consisting of all $V \in\mod{\ImS}$ such that $O_P(V)=0$ and $O^P(V)=V$.
\end{Theorem}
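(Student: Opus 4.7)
The plan is to establish three things in sequence: that $A_P\circ\be$ lands in the full subcategory $\mathcal{C}\subseteq\mod{\ImS}$ cut out by the conditions $O_P(V)=0$ and $O^P(V)=V$; that $\al\circ (A_P\circ\be)\cong\id_{\mod{H}}$; and that $(A_P\circ\be)\circ\al\cong\id_{\mathcal{C}}$. The middle isomorphism is already supplied by Lemma~\ref{L3.1a}, so the real work is in the other two points.

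For the first point, fix $M\in\mod{H}$ and set $U:=\be(M)=P\otimes_H M$ and $V:=A_P(U)=U/O_P(U)$. The equality $O_P(V)=0$ holds by the very definition of $O_P(U)$ as the \emph{largest} submodule of $U$ annihilated by $\Hom_{\ImS}(P,-)$: the preimage in $U$ of any such submodule of $V$ would again be annihilated by $\Hom_{\ImS}(P,-)$ and so sit inside $O_P(U)$. For $O^P(V)=V$, note that every element of $U$ is a finite sum of tensors $p\otimes m$, and each such tensor lies in the image of the $\ImS$-homomorphism $P\to U,\ p'\mapsto p'\otimes m$; thus $O^P(U)=U$, and applying the quotient map $U\twoheadrightarrow V$ shows that $O^P(V)=V$.

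For the third point, take $V\in\mathcal{C}$ and consider the counit of the adjunction, $\phi:\be(\al(V))\to V,\ p\otimes f\mapsto f(p)$. Its image is precisely $O^P(V)=V$, so $\phi$ is surjective; set $U:=\be(\al(V))$ and $K:=\ker\phi$. My claim is that $K=O_P(U)$, which will give the desired natural isomorphism $A_P(U)=U/O_P(U)\iso V$. For the inclusion $K\subseteq O_P(U)$, apply the left exact functor $\al$ to $0\to K\to U\to V\to 0$; the resulting map $\al(U)\to\al(V)$ agrees, via the triangle identity for the adjunction $\be\dashv \al$, with the inverse of the unit isomorphism supplied by Lemma~\ref{L3.1a}, and in particular is an isomorphism, forcing $\al(K)=0$. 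For the reverse inclusion, $\phi(O_P(U))\subseteq V$ admits no nonzero homomorphism from $P$: such a map would lift, by projectivity of $P$, along the surjection $O_P(U)\twoheadrightarrow\phi(O_P(U))$ to a map $P\to O_P(U)$, which must vanish by definition of $O_P(U)$. Hence $\phi(O_P(U))\subseteq O_P(V)=0$, and so $O_P(U)\subseteq K$.

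The main obstacle I expect is the careful verification that $K=O_P(U)$; this is the one place where both hypotheses on $V$ are used essentially, namely $O^P(V)=V$ to get surjectivity of the counit and $O_P(V)=0$ together with the projectivity of $P$ to get $\phi(O_P(U))=0$. Naturality of the resulting isomorphism $A_P(\be(\al(V)))\iso V$ in $V$ is inherited from naturality of the counit $\phi$, and everything else reduces to formal manipulation of the adjunction together with Lemma~\ref{L3.1a}.
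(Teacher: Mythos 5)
Your proof is correct, and since the paper does not prove this statement itself but only cites \cite[3.1d]{BDK}, your argument---showing $A_P\circ\be$ lands in the subcategory, invoking Lemma~\ref{L3.1a} for one composite, and identifying the counit $\be\al(V)\to V$ as surjective (from $O^P(V)=V$) with kernel exactly $O_P(\be\al(V))$ (from $O_P(V)=0$ plus projectivity of $P$)---is essentially the standard one given there. The only point worth making explicit is your appeal to the triangle identity: it requires that the isomorphism $\al\be\cong\id$ of Lemma~\ref{L3.1a} is realized by the unit of the adjunction, which is indeed how that lemma is proved.
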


An easy consequence is the following relation between the irreducible modules, see \cite[3.1e]{BDK}:

\begin{Lemma} \label{L3.1e} 
Let $\{E_m \mid m \in M\}$ be a complete set of non-isomorphic irreducible $\ImS$-modules appearing in the head of $P$. For all $m\in M$, set $D_m := \al(E_m)$. Then, $\{D_m \mid m\in M\}$ is a complete irredundant set of irreducible $H$-modules, and $A_P \circ \beta(D_m) \simeq E_m$.
\end{Lemma}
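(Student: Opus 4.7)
The plan is to leverage Theorem~\ref{T3.1d} and Lemma~\ref{L3.1a} so that once simplicity of each $D_m$ is established, the remaining assertions follow formally from the category equivalence. First I observe that each $E_m$ lies in the subcategory considered in Theorem~\ref{T3.1d}: the hypothesis that $E_m$ appears in the head of $P$ gives $\al(E_m) \neq 0$, which combined with simplicity of $E_m$ forces $O_P(E_m) = 0$ (it is a submodule of the simple $E_m$ on which $\al$ vanishes) and $O^P(E_m) = E_m$ (it is a nonzero submodule of the simple $E_m$). Consequently the equivalence in Theorem~\ref{T3.1d} yields $A_P \circ \be(D_m) \cong E_m$, which is the second claim of the lemma.

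The main obstacle is proving that each $D_m = \al(E_m)$ is simple as an $H$-module. Given a nonzero $H$-submodule $D' \subseteq D_m$, I consider the $\ImS$-homomorphism $\varphi : \be(D') \to E_m$ adjoint to the inclusion $\iota : D' \hookrightarrow \al(E_m)$. Its image is a submodule of the simple module $E_m$, hence is either $0$ or $E_m$. The zero case is excluded, since by adjunction $\varphi = 0$ would force $\iota = 0$, contradicting $D' \neq 0$. Therefore $\varphi$ is surjective; applying the exact functor $\al$ and using the natural isomorphism $\al \circ \be \cong \id$ from Lemma~\ref{L3.1a} identifies $\al(\varphi)$ with $\iota$, so $\iota$ is surjective and $D' = D_m$.

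With irreducibility of the $D_m$ in hand, the remaining assertions follow quickly. For pairwise non-isomorphism, if $D_m \cong D_{m'}$ then applying $A_P \circ \be$ and using $A_P \circ \be(D_m) \cong E_m$ gives $E_m \cong E_{m'}$, forcing $m = m'$. For completeness, given any simple $H$-module $D$, the module $\be(D)$ is nonzero (since $\al \circ \be(D) \cong D$) and finitely generated over the finite-dimensional algebra $\ImS$, so admits a simple quotient $E$. Adjunction gives $0 \neq \Hom_{\ImS}(\be(D), E) \cong \Hom_H(D, \al(E))$, so $\al(E) \neq 0$ and $E \cong E_m$ for some $m$. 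The nonzero $H$-map $D \to D_m$ furnished by adjunction is then injective (as $D$ is simple) and surjective (as $D_m$ is simple), so $D \cong D_m$. The heart of the argument is the irreducibility step, which is handled cleanly by combining the adjunction with exactness of $\al$.
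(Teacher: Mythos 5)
Your proof is correct. Note that the paper itself gives no argument for this lemma---it is quoted from \cite[3.1e]{BDK}---so there is no in-text proof to compare against; your write-up is a valid self-contained derivation from Lemma~\ref{L3.1a}, Theorem~\ref{T3.1d} and the adjunction $\be\dashv\al$. The only step worth flagging is the identification of $\al(\varphi)$ with $\iota$ under $\al\circ\be\cong\id$: this requires that the natural isomorphism of Lemma~\ref{L3.1a} is (or can be taken to be) the unit of the adjunction, which does hold here because $P$ is a finitely generated projective $\ImS$-module, so the canonical map $D\to\Hom_{\ImS}(P,P\otimes_H D)$ is an isomorphism; with that in place the triangle identities give exactly the identification you use. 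For comparison, the classical route to simplicity of $D_m$ (the one implicit in \cite{BDK} and in Green's treatment of Schur functors) is to take any nonzero $f\in\Hom_{\ImS}(P,E_m)$, observe it is surjective since $E_m$ is simple, and use projectivity of $P$ to lift an arbitrary $g\in\Hom_{\ImS}(P,E_m)$ through $f$, showing $D_m$ is generated by any nonzero element; your adjunction-plus-exactness argument buys the same conclusion without invoking the lifting property directly, at the cost of the naturality bookkeeping above. The remaining steps (irredundancy via $A_P\circ\be$, and completeness via a simple quotient of $\be(D)$ and adjunction) are exactly as one would expect and are correct.
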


Finally, we will make use of the following more explicit description of the effect of the composite functor $A_P \circ \beta$ on left ideals of H, see \cite[3.1f]{BDK}:

\begin{Lemma} \label{L3.1f}
Suppose that every composition factor of the socle of $P$ also appears in its head. Then for any left ideal $J$ of $H$, we have $A_P \circ \beta(J) \simeq PJ$.
\end{Lemma}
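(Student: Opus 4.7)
The plan is to produce a natural surjection $\pi\colon \beta(J)\twoheadrightarrow PJ$ and then identify its kernel with $O_P(\beta(J))$; this will immediately give $A_P\circ\beta(J)=\beta(J)/O_P(\beta(J))\simeq PJ$. For the construction, I would define $\pi\colon P\otimes_H J\to P$ by $p\otimes j\mapsto pj$, using the right $H$-action on $P$ coming from $H=\End_\ImS(P)$. One checks it is well-defined and $H$-balanced, and its image is precisely the submodule $PJ\subseteq P$. Writing $K:=\ker\pi$, we have $\beta(J)/K\simeq PJ$ and the task reduces to proving $K=O_P(\beta(J))$.

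For the containment $O_P(\beta(J))\subseteq K$ the hypothesis on $P$ enters. Any nonzero submodule $V'\subseteq P$ contains a simple submodule $L\subseteq\soc V'\subseteq\soc P$; by hypothesis $L$ also appears in $\head P$, yielding a nonzero map $P\to L\hookrightarrow V'$. Hence $O_P(P)=0$, and since $PJ\subseteq P$ this forces $O_P(PJ)=0$ as well. The image $\pi(O_P(\beta(J)))$ is a submodule of $PJ$ admitting no nonzero morphism from $P$, so it vanishes, yielding $O_P(\beta(J))\subseteq K$.

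For the reverse inclusion I would apply the exact functor $\alpha=\Hom_\ImS(P,-)$ (exact since $P$ is projective) to the short exact sequence $0\to K\to\beta(J)\to PJ\to 0$ and compare with Lemma~\ref{L3.1a}. That lemma gives $\alpha(\beta(J))\simeq J$, and tracing the isomorphism shows the induced map $J\simeq\alpha(\beta(J))\to\alpha(PJ)$ sends $j$ to the morphism $p\mapsto pj$. This map is injective, because a nonzero $j\in H$ is by definition a nonzero endomorphism of $P$, so $Pj\neq 0$. Consequently $\alpha(K)=\Hom_\ImS(P,K)=0$, and therefore $K\subseteq O_P(\beta(J))$. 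Combining the two inclusions gives $K=O_P(\beta(J))$ and thus $A_P\circ\beta(J)\simeq PJ$. The argument is largely formal; the socle/head hypothesis is used at precisely one point (to force $O_P(P)=0$), and the only mildly subtle step is unwinding the isomorphism of Lemma~\ref{L3.1a} to see that the induced map into $\alpha(PJ)$ really is right multiplication.
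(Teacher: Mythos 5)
Your proof is correct, and it is essentially the standard argument: the paper itself gives no proof of this lemma but cites it from \cite[3.1f]{BDK}, where the proof runs along exactly these lines (multiplication map $P\otimes_H J\to PJ$, kernel identified with $O_P(\beta(J))$ using $O_P(P)=0$ on one side and exactness of $\al$ plus Lemma~\ref{L3.1a} on the other). The only step you state without justification --- that $\pi(O_P(\beta(J)))$ admits no nonzero morphism from $P$ --- is the functoriality of $O_P$ recorded in Section~\ref{SSSF}, which follows from projectivity of $P$ by lifting along the surjection $O_P(\beta(J))\twoheadrightarrow\pi(O_P(\beta(J)))$.
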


\chapter{Khovanov-Lauda-Rouquier algebras}\label{ChKLR}
\section{Lie theoretic notation}\label{SSLTN}
Throughout the paper 
$$\Car=(\cc_{ij})_{i,j\in I}$$ 
is a {\em Cartan matrix} of {\em untwisted affine type}, see \cite[\S 4, Table Aff 1]{Kac}. 
We have 
$$I=\{0,1,\dots,l\},$$ 
where $0$ is the affine vertex. 
Following \cite[\S 1.1]{Kac}, let $(\h,\Pi,\Pi^\vee)$ be a realization of the Cartan matrix $\Car$, so we have simple roots $\{\al_i\mid i\in I\}$, simple coroots $\{\al_i^\vee\mid i\in I\}$, and a bilinear form $(\cdot,\cdot)$ on $\h^*$ such that 
$$\cc_{ij}=\frac{2(\al_i,\al_j)}{(\al_i,\al_i)}$$ for all $i,j\in I$. We normalize $(\cdot,\cdot)$ so that $(\al_i,\al_i)=2$ if $\al_i$ is a short simple root. 

The {\em fundamental dominant weights} $\{\La_i\mid i\in I\}$ have the property $\lan\La_i,\al_j^\vee\ran=\de_{i,j}$, where $\lan\cdot,\cdot\ran$ is the natural pairing between $\h^*$ and $\h$. We have the {\em integral weight lattice} $P=\oplus_{i\in I}\Z\cdot\La_i$ and the set of {\em dominant weights} $P_+=\sum_{i\in I}\Z_{\geq 0}\cdot\La_i$. 
For $i\in I$ we define
\begin{equation}\label{EQuantumFactorials}
q_i:=q^{(\al_i,\al_i)/2},\quad   [n]_i:=[n]_{q_i},\quad [n]_i^!:=[n]_{q_i}^!.
\end{equation}

Denote 
$Q_+ := \bigoplus_{i \in I} \Z_{\geq 0} \cdot \al_i$. For $\alpha \in Q_+$, we write $\height(\alpha)$ for the sum of its 
coefficients when expanded in terms of the $\al_i$'s.


Let $\g'=\g(\Car')$ be the finite dimensional simple Lie algebra  whose Cartan matrix $\Car'$ corresponds to the subset of vertices $I':=I\setminus\{0\}$. The affine Lie algebra $\g=\g(\Car)$ is then obtained from $\g'$ by a procedure described in \cite[Section 7]{Kac}. We denote by $W$ (resp. $W'$) the corresponding {\em affine Weyl group} (resp. {\em finite Weyl group}). It is a Coxeter group with standard generators $\{r_i\mid i\in I\}$ (resp. $\{r_i\mid i\in I'\}$), see \cite[Proposition~3.13]{Kac}.

Let $\Phi'$ and $\Phi$ be the root systems of $\g'$ and $\g$ respectively. Denote by $\Phi'_+$ and $\Phi_+$ the set of {\em positive}\, roots in $\Phi'$ and $\Phi$, respectively, cf. \cite[\S 1.3]{Kac}. 
Let 
\begin{equation}\label{EDelta}
\de=a_0\al_0+a_1\al_1+\dots+a_l\al_l.
\end{equation}
By \cite[Table Aff 1]{Kac}, we always have 
\begin{equation}\label{Ea_0}
a_0=1.
\end{equation}
We have 
\begin{equation}\label{EHRoot}
\de-\al_0=\theta,
\end{equation} 
where $\theta$ is the highest root in the finite root system $\Phi'$. 
Finally,  
$\Phi_+=\Phi_+^\im\sqcup \Phi_+^\re
$, where
$$
\Phi_+^\im=\{n\de\mid n\in\Z_{>0}\}
$$
and 
\begin{align*}
\Phi_+^\re=\{\be+n\de\mid \be\in  \Phi'_+,\ n\in\Z_{\geq 0}\}\sqcup \{-\be+n\de\mid \be\in  \Phi'_+,\ n\in\Z_{> 0}\}.
\end{align*}

\section{The definition and first properties}\label{SSDefKLR}
Define the polynomials in the variables $u,v$ 
$$\{Q_{ij}(u,v)\in F[u,v]\mid i,j\in I\}$$ 
as follows. For the case where the Cartan matrix $\Car\neq {\tt A}_1^{(1)}$, 
choose signs $\eps_{ij}$ for all $i,j \in I$ with $\cc_{ij}
< 0$  so that $\eps_{ij}\eps_{ji} = -1$.
Then set: 
\begin{equation}\label{EArun}
Q_{ij}(u,v):=
\left\{
\begin{array}{ll}
0 &\hbox{if $i=j$;}\\
1 &\hbox{if $\cc_{ij}=0$;}\\
\eps_{ij}(u^{-\cc_{ij}}-v^{-\cc_{ji}}) &\hbox{if $\cc_{ij}<0$.}
\end{array}
\right.
\end{equation}
For type $A_1^{(1)}$ we define
\begin{equation}\label{EArun1}
Q_{ij}(u,v):=
\left\{
\begin{array}{ll}
0 &\hbox{if $i=j$;}\\
(u-v)(v-u) &\hbox{if $i\neq j$.}
\end{array}
\right.
\end{equation}

Fix 
$\al\in Q_+$ of height $d$. Let
\begin{equation}\label{EWords}
\words_\al=\{\bi=(i_1,\dots,i_d)\in I^d\mid \al_{i_1}+\dots+\al_{i_d}=\al\}. 
\end{equation}
The {\em KLR-algebra} $R_\al=R_\al(\O)$ is an associative graded unital $\O$-algebra, given by the generators
\begin{equation}\label{EKLGens}
\{1_\bi\mid \bi\in \words_\al\}\cup\{y_1,\dots,y_{d}\}\cup\{\psi_1, \dots,\psi_{d-1}\}
\end{equation}
and the following relations for all $\bi,\bj\in \words_\al$ and all admissible $r,t$:
\begin{equation}
1_\bi  1_\bj = \de_{\bi,\bj} 1_\bi ,
\quad{\textstyle\sum_{\bi \in \words_\alpha}} 1_\bi  = 1;\label{R1}
\end{equation}
\begin{equation}\label{R2PsiY}
y_r 1_\bi  = 1_\bi  y_r;\quad y_r y_t = y_t y_r;
\end{equation}
\begin{equation}
\psi_r 1_\bi  = 1_{s_r\bi} \psi_r;\label{R2PsiE}
\end{equation}
\begin{equation}
(y_t\psi_r-\psi_r y_{s_r(t)})1_\bi  
= \de_{i_r,i_{r+1}}(\de_{t,r+1}-\de_{t,r})1_\bi;
\label{R6}
\end{equation}
\begin{equation}
\psi_r^21_\bi  = Q_{i_r,i_{r+1}}(y_r,y_{r+1})1_\bi 
 \label{R4}
\end{equation}
\begin{equation} 
\psi_r \psi_t = \psi_t \psi_r\qquad (|r-t|>1);\label{R3Psi}
\end{equation}
\begin{equation}
\begin{split}
&(\psi_{r+1}\psi_{r} \psi_{r+1}-\psi_{r} \psi_{r+1} \psi_{r}) 1_\bi  
\\=
&
\de_{i_r,i_{r+2}}\frac{Q_{i_r,i_{r+1}}(y_{r+2},y_{r+1})-Q_{i_r,i_{r+1}}(y_r,y_{r+1})}{y_{r+2}-y_r}1_\bi.
\end{split}
\label{R7}
\end{equation}
The {\em grading} on $R_\al$ is defined by setting:
$$
\deg(1_\bi )=0,\quad \deg(y_r1_\bi )=(\al_{i_r},\al_{i_r}),\quad\deg(\psi_r 1_\bi )=-(\al_{i_r},\al_{i_{r+1}}).
$$

\vspace{2 mm}

It is pointed out in \cite{KL2} and \cite[\S3.2.4]{R} that up to isomorphism $R_\al$ depends only on the Cartan matrix and $\al$. 

Fix in addition a dominant weight $\La\in P_+$. The corresponding {\em cyclotomic KLR algebra} $R_\al^\La$ is the quotient of $R_\al$ by the following ideal:
\begin{equation}\label{ECyclot}
J_\al^\La:=(y_1^{\lan\La,\al_{i_1}^\vee\ran}1_\bi \mid \bi=(i_1,\dots,i_d)\in\words_\al). 
\end{equation}

For each element $w\in S_d$ fix a reduced expression $w=s_{r_1}\dots s_{r_m}$ and set 
$$
\psi_w:=\psi_{r_1}\dots \psi_{r_m}.
$$
In general, $\psi_w$ depends on the choice of the reduced expression of $w$. 

\begin{Theorem}\label{TBasis}{\rm \cite[Theorem 2.5]{KL1}, \cite[Theorem 3.7]{R}}  
The elements 
$$ \{\psi_w y_1^{m_1}\dots y_d^{m_d}1_\bi \mid w\in S_d,\ m_1,\dots,m_d\in\Z_{\geq 0}, \ \bi\in \words_\al\}
$$ 
form an $\O$-basis of  $R_\al$. 
\end{Theorem}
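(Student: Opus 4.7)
The plan is to prove the two standard halves of a PBW-type statement separately: first that the proposed set $\mathcal B = \{\psi_w y_1^{m_1}\cdots y_d^{m_d} 1_{\bi}\}$ spans $R_\al$ as an $\O$-module, then that it is $\O$-linearly independent. The spanning statement is extracted purely from the defining relations, while linear independence is obtained by exhibiting a faithful representation on a polynomial space on which the elements of $\mathcal B$ act by visibly independent operators.

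For the spanning part, I would argue by induction on the length of a word in the generators that any such word can be reduced to an $\O$-linear combination of elements of $\mathcal B$. Using (\ref{R2PsiE}) one can push all the idempotents $1_{\bj}$ to the right, so it suffices to straighten monomials in the $\psi$'s and $y$'s against a fixed $1_{\bi}$. The relation (\ref{R6}) lets one move every $y_t$ past every $\psi_r$ at the cost of lower-degree terms (in the number of $\psi$'s or in the sequence $(y_1,\ldots,y_d)$), so any element can be rewritten as a sum of terms of the form $\psi_{r_1}\cdots \psi_{r_k}\, y_1^{m_1}\cdots y_d^{m_d} 1_{\bj}$. To replace the $\psi$-word $\psi_{r_1}\cdots\psi_{r_k}$ by a term $\psi_w$ attached to a chosen reduced expression, use: (\ref{R4}) to collapse any square $\psi_r^2$ into a polynomial in $y_r, y_{r+1}$, thereby lowering the $\psi$-length; (\ref{R3Psi}) for distant commutations; and (\ref{R7}) to handle the braid-like move, whose error term again lowers $\psi$-length. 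A suitable induction (e.g.\ on $(k,\,\ell(w))$ with $w$ the image in $S_d$) then terminates in $\mathcal B$; the choice of reduced word for each $w$ is absorbed into the spanning statement.

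For linear independence, I would construct the \emph{polynomial representation} of $R_\al$ on $V_\al := \bigoplus_{\bi\in\words_\al} \O[y_1,\ldots,y_d]\cdot 1_{\bi}$, with $1_{\bj}$ acting as the projection onto the $\bj$-summand, $y_t$ acting by multiplication by the corresponding variable, and $\psi_r$ acting via an explicit formula involving divided differences together with the polynomials $Q_{i_r,i_{r+1}}$ appearing in (\ref{EArun})--(\ref{EArun1}); this is the representation defined in \cite{KL1} and \cite{R}. Checking that this assignment respects the defining relations (\ref{R1})--(\ref{R7}) is a finite, if tedious, verification carried out relation by relation and depending essentially on the definition of $Q_{ij}$. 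Once faithfulness of this module is not required, only linear independence of the action of $\mathcal B$: apply an arbitrary $\O$-linear relation $\sum c_{w,\bm,\bi}\,\psi_w y^{\bm} 1_{\bi} = 0$ to the vectors $1_{\bi}\in V_\al$; ordering the $w$'s by length and reading off the leading Demazure-type term (a signed sum over a reduced expression for $w$ applied to the monomial $y^{\bm}$), one recovers $c_{w,\bm,\bi} = 0$ for all triples, which gives linear independence.

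The main obstacle, and the only genuinely delicate point, is the verification of the braid-like relation (\ref{R7}) in the polynomial representation: one must check that the Demazure-type operators built from $Q_{ij}$ satisfy the correct three-strand identity, and this is where the specific shape of $Q_{ij}$ in (\ref{EArun})--(\ref{EArun1}) and the sign convention $\eps_{ij}\eps_{ji}=-1$ enter crucially. Everything else is bookkeeping with straightening relations or with induction on $\psi$-length. Since this verification, and the whole argument, is carried out in \cite[Theorem 2.5]{KL1} and \cite[Theorem 3.7]{R}, I would present the above as the structural outline and cite these references for the polynomial-representation computation.
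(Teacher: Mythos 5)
Your outline is correct and is essentially the argument of the cited sources: the paper gives no proof of Theorem~\ref{TBasis}, deferring entirely to \cite[Theorem 2.5]{KL1} and \cite[Theorem 3.7]{R}, and those references argue exactly as you describe --- spanning by straightening monomials against the relations (\ref{R2PsiE}), (\ref{R6}), (\ref{R4}), (\ref{R3Psi}), (\ref{R7}) with induction on $\psi$-length, and linear independence via the polynomial representation together with a leading-term argument along the length filtration. The only blemish is the garbled sentence beginning ``Once faithfulness of this module is not required''; what you mean (and what is true) is that faithfulness is not needed, only that the operators attached to distinct elements of your set $\mathcal{B}$ act linearly independently on $V_\al$.
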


There exists a homogeneous algebra anti-involution 
\begin{equation}\label{ECircledast}
\tau:R_\al\longrightarrow R_\al,\quad 1_\bi\mapsto 1_\bi,\quad y_r\mapsto y_r,\quad \psi_s\mapsto \psi_s  
\end{equation}
for all $\bi\in \words_\al,\ 1\leq r\leq d$, and $1\leq s<d$. If $M=\bigoplus_{d\in\Z}M_d$ is a finite dimensional 
graded $R_\al$-module, then the {\em graded dual}
$M^\circledast$ is the graded $R_\al$-module such that $(M^\circledast)_n:=\Hom_\O(M_{-n},\O)$, for all
$n\in\Z$, and the $R_\al$-action is given by $(xf)(m)=f(\tau(x)m)$, for all $f\in M^\circledast, m\in M, x\in
R_\al$.

\section{Basic representation theory of $R_\al$}\label{SSBasicRep} 
Let $H$ be any ($\Z$-)graded $F$-algebra. By a module $V$ 
over $H$, we {\em always} mean a graded left $H$-module.
We denote by $\Mod{H}$ the abelian category of all graded left $H$-modules, with 
morphisms being {\em degree-preserving} module homomorphisms, which we denote by $\Hom$.
Let $\mod{H}$ denote
the abelian subcategory of all
{\em finite dimensional}\, graded $H$-modules, and 
 $[\mod{H}]$ be  the corresponding Grothendieck group. Then $[\mod{H}]$ is an $\A$-module via 
$
q^m[M]:=[q^m M],
$ 
where $q^m M$ denotes the module obtained by 
shifting the grading up by $m$, i.e. 
$
(q^mM)_n:=M_{n-m}.
$ 
We denote by $\hom_H(M,N)$ the space of morphism in $\Mod{H}$, i.e. degree zero homogeneous $H$-module homomorphisms. Similarly we have $\ext_H^m(M,N)$. 

For $n \in \Z$, let
$
\Hom_H(M, N)_n := \hom_H(q^nM , N)
$
denote the space of all homomorphisms
that are homogeneous of degree $n$.
Set
$$
\Hom_H(M,N) := \bigoplus_{n \in \Z} \Hom_H(M,N)_n. 
$$
For graded $H$-modules $M$ and $N$ we write $M\cong N$ to mean that $M$ and $N$ are isomorphic as graded modules and $M\simeq N$ to mean that they are isomorphic as $H$-modules after we forget the gradings. 

For a finite dimensional 
graded vector space $V=\oplus_{n\in \Z} V_n$, its {\em graded dimension} is $\DIM \, V:=\sum_{n \in \Z}  (\dim V_n)q^n\in\A$. 
Given $M, L \in \mod{H}$ with $L$ irreducible, 
we write $[M:L]_q$ for the corresponding {\em  graded composition multiplicity},
i.e. 
$
[M:L]_q := \sum_{n \in \Z} a_n q^n,
$ 
where $a_n$ is the multiplicity
of $q^nL$ in a graded composition series of $M$.

Going back to the algebras $R_\al=R_\al(F)$, every irreducible graded $R_\al$-module is finite dimensional \cite[Proposition 2.12]{KL1}, and  there are finitely many irreducible modules in $\mod{R_\al}$ up to isomorphism and grading shift \cite[\S 2.5]{KL1}. A prime field is a splitting field for $R_{\al}$, see \cite[Corollary 3.19]{KL1}, so working with irreducible $R_\al$-modules we do not need to assume that $F$ is algebraically closed. 
Finally, for every irreducible module $L$, there is a unique choice of the grading shift so that we have $L^\circledast \iso L$ \cite[Section 3.2]{KL1}. When speaking of irreducible $R_\al$-modules we often assume by fiat that the shift has been chosen in this way. 

For $\bi\in \words_\al$ and $M\in\mod{R_\al}$, the {\em $\bi$-word space} of $M$ is
$
M_\bi:=1_\bi M.
$
We have 
$
M=\bigoplus_{\bi\in \words_\al}M_\bi.
$
We say that $\bi$ is a {\em word of $M$} if $M_\bi\neq 0$. A non-zero vector $v\in M_\bi$ is called a {\em vector of word $\bi$}. 
Note from the relations that 
$
\psi_r M_\bi\subset M_{s_r \bi}.
$

Let $M$ be a finite dimensional graded $R_\al$-module. Define the {\em $q$-character} of $M$ as follows: 
\begin{equation*}
\CH M:=\sum_{\bi\in \words_\al}(\DIM M_\bi) \bi \in \A\words_\al.
\end{equation*}
The $q$-character map $\CH: \mod{R_\al}\to \A\words_\al$ factors through to give an {\em injective} $ \A$-linear map from the Grothendieck group
$
\CH: [\mod{R_\al}]\to  \A\words_\al, 
$ see \cite[Theorem 3.17]{KL1}.

\section{Induction, coinduction, and duality for KLR algebras}

Given $\alpha, \beta \in Q_+$, we set $
R_{\alpha,\beta} := R_\alpha \otimes 
R_\beta$.  
Let $M \boxtimes N$ be 
the outer tensor product of the $R_\alpha$-module $M$ and the $R_\beta$-module 
$N$.
There is an injective homogeneous (non-unital) algebra homomorphism 
$R_{\alpha,\beta}\,\into\, R_{\alpha+\beta}$ 
mapping $1_\bi \otimes 1_\bj$ to $1_{\bi\bj}$,
where $\bi\bj$ is the concatenation of the two sequences. The image of the identity
element of $R_{\alpha,\beta}$ under this map is
$$
1_{\alpha,\beta}:= \sum_{\bi \in \words_\alpha,\ \bj \in \words_\beta} 1_{\bi\bj}.
$$ 

Let $\Ind_{\alpha,\beta}^{\alpha+\beta}$ and $\Res_{\alpha,\beta}^{\alpha+\beta}$
be the corresponding induction and restriction functors: 
\begin{align*}
\Ind_{\alpha,\beta}^{\alpha+\beta} &:= R_{\alpha+\beta} 1_{\alpha,\beta}
\otimes_{R_{\alpha,\beta}} ?:\mod{R_{\alpha,\beta}} \rightarrow \mod{R_{\alpha+\beta}},\\
\Res_{\alpha,\beta}^{\alpha+\beta} &:= 1_{\alpha,\beta} R_{\alpha+\beta}
\otimes_{R_{\alpha+\beta}} ?:\mod{R_{\alpha+\beta}}\rightarrow \mod{R_{\alpha,\beta}}.
\end{align*}
We often omit upper indices and write simply $\Ind_{\alpha,\beta}$ and $\Res_{\alpha,\beta}$. 

Note that $\Res_{\alpha,\beta}$ is just left multiplication by
the idempotent $1_{\alpha,\beta}$, so it is exact and sends finite dimensional modules to
finite dimensional modules. 
By \cite[Proposition 2.16]{KL1},
$1_{\alpha,\beta} R_{\alpha+\beta}$ is a free left $R_{\alpha,\beta}$-module of finite rank,
so $\Res_{\alpha,\beta}$ also sends finitely generated projectives to finitely generated projectives.
Similarly, $R_{\alpha+\beta} 1_{\alpha,\beta}$ is a  
free right $R_{\alpha, \beta}$-module of finite rank, so
$\Ind_{\alpha,\beta}$ is exact and sends finite dimensional modules to finite dimensional modules.
The functor $\Ind_{\alpha,\beta}$ is left adjoint to $\Res_{\alpha,\beta}$, and it sends finitely generated projectives to finitely generated projectives.

These functors have obvious generalizations to $n\geq 2$ factors: 
\begin{align*}
\Ind_{\ga_1,\dots,\ga_n}
:\mod{R_{\ga_1,\dots,\ga_n}} \rightarrow \mod{R_{\ga_1+\dots+\ga_n}},\\
\Res_{\ga_1,\dots,\ga_n}
:\mod{R_{\ga_1+\dots+\ga_n}}\rightarrow \mod{R_{\ga_1,\dots,\ga_n}}.
\end{align*}


If $M_a\in\Mod{R_{\ga_a}}$, for $a=1,\dots,n$, we define 
\begin{equation}\label{ECircProd}
M_1\circ\dots\circ M_n:=\Ind_{\ga_1,\dots,\ga_n}
M_1\boxtimes\dots\boxtimes M_n. 
\end{equation}
In view of \cite[Lemma 2.20]{KL1}, we have
\begin{equation}\label{ECharShuffle}
\CH(M_1\circ\dots\circ M_n)=\CH(M_1)\circ\dots\circ \CH(M_n).
\end{equation}

Finally, the functors of induction and restriction have parabolic analogues. For example, given a family $(\al^a_b)_{1\leq a\leq n,\ 1\leq b\leq m}$ of elements of $Q_+$, set 
$\sum_{a=1}^n\al^{a}_b=:\be_b$ for all $1\leq b\leq m$. Then we have obvious functors
$$
\Ind_{(\al^{1}_1,\dots,\al^{n}_{1});\dots;(\al^{1}_{m},\dots,\al^{n}_{m})}^{\be_1;\dots;\be_m}\qquad \text{and}\qquad \Res_{(\al^{1}_1,\dots,\al^{n}_{1});\dots;(\al^{1}_{m},\dots,\al^{n}_{m})}^{\be_1;\dots;\be_m}.
$$

While the induction functor 
$
\Ind_{\ga_1,\dots,\ga_n}
$ is left adjoint to the functor $\Res_{\ga_1,\dots,\ga_n}
$,  the {\em right}\, adjoint is given by the coinduction:
$$
\Coind_{\ga_1,\dots,\ga_n}=\Coind_{\ga_1,\dots,\ga_n}^{\ga_1+\dots+\ga_n}:=\Hom_{R_{\ga_1,\dots,\ga_n}}(1_{\ga_1,\dots,\ga_n}R_{\ga_1+\dots+\ga_n},\,?) 
$$
Induction and coinduction are related as follows:

For $\underline{\ga}:=(\ga_1,\dots,\ga_n)\in Q_+^n$, we denote
$$
d(\underline{\ga}):=\sum_{1\leq m<k\leq n}(\ga_m,\ga_k).
$$

\begin{Lemma} \label{LLV} {\rm \cite[Theorem 2.2]{LV}} 
Let $\underline{\ga}:=(\ga_1,\dots,\ga_n)\in Q_+^n$, and $V_m$ be a finite-dimensional $R_{\ga_m}$-module for $m=1,\dots,n$. 
Then 
$$
\Ind_{\ga_1,\dots,\ga_n}
V_1\boxtimes\dots\boxtimes V_n\cong 
q^{d(\underline{\ga})}\,
\Coind_{\ga_n,\dots,\ga_1}
V_n\boxtimes\dots\boxtimes V_1.
$$
\end{Lemma}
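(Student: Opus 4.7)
The plan is to build an explicit bimodule isomorphism mediated by a canonical block-reversing element, and then pass to modules via Frobenius reciprocity. Set $\alpha:=\ga_1+\dots+\ga_n$, $d:=\height(\alpha)$, $\underline\ga^{\op}:=(\ga_n,\dots,\ga_1)$, and let $w_\circ\in\Si_d$ be the unique element of minimal length with $w_\circ\Si_{\underline\ga}w_\circ^{-1}=\Si_{\underline\ga^{\op}}$---the permutation that reverses the blocks of sizes $\height(\ga_1),\dots,\height(\ga_n)$. The key intertwiner will be
$$b:=\psi_{w_\circ}1_{\underline\ga}=1_{\underline\ga^{\op}}\psi_{w_\circ}1_{\underline\ga}\in R_\alpha.$$

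The first step is to compute $\deg(b)=-d(\underline\ga)$. Each inversion of $w_\circ$ crosses a strand in some block $a$ with one in a later block $a'>a$, contributing $-(\al_{i_r},\al_{i_{r'}})$ to the degree by the KLR grading on $\psi_r 1_{\bi}$; collecting these contributions blockwise yields $-\sum_{a<a'}(\ga_a,\ga_{a'})=-d(\underline\ga)$, and the answer is independent of the choice of reduced expression for $w_\circ$.

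Next I would invoke Theorem~\ref{TBasis} to realize $R_\alpha 1_{\underline\ga}$ as a free right $R_{\underline\ga}$-module on $\{\psi_w 1_{\underline\ga}\mid w\in\D_d^{\underline\ga}\}$, and dually $1_{\underline\ga^{\op}}R_\alpha$ as a free left $R_{\underline\ga^{\op}}$-module on an analogous basis. One observes that $w_\circ$ is the unique element of maximal length in $\D_d^{\underline\ga}\cap{}^{\underline\ga^{\op}}\!\Si_d$. I would then define a candidate homomorphism
$$\Phi:\Ind_{\ga_1,\dots,\ga_n}(V_1\boxtimes\dots\boxtimes V_n)\longrightarrow q^{d(\underline\ga)}\Coind_{\ga_n,\dots,\ga_1}(V_n\boxtimes\dots\boxtimes V_1)$$
by sending $1\otimes v_1\otimes\dots\otimes v_n$ to the $R_{\underline\ga^{\op}}$-homomorphism $x\mapsto \sigma(\pi(xb))\cdot(v_n\otimes\dots\otimes v_1)$, where $\pi:R_\alpha 1_{\underline\ga}\twoheadrightarrow R_{\underline\ga}$ projects onto the coefficient of $\psi_e 1_{\underline\ga}=1_{\underline\ga}$ in the right-module basis and $\sigma:R_{\underline\ga}\iso R_{\underline\ga^{\op}}$ is the tensor-reversing algebra flip.

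Finally, I would check bijectivity. Both sides of $\Phi$ are finite-dimensional graded $F$-spaces of the same graded dimension up to the shift $q^{d(\underline\ga)}$, since both induction and coinduction are free of rank $|\D_d^{\underline\ga}|$ over the base (the coinduction rank coming from the symmetric form of Theorem~\ref{TBasis}); so it suffices to prove surjectivity, which holds because $\Coind_{\ga_n,\dots,\ga_1}(V_n\boxtimes\dots\boxtimes V_1)$ is generated as an $R_\alpha$-module by the ``evaluate at $b$'' homomorphism in its top degree component---and this is exactly the image of $1\otimes v$ under $\Phi$ as $v$ varies. The main technical obstacle is verifying well-definedness of $\Phi$ as an $R_\alpha$-homomorphism, which reduces to showing that $\pi$ is left $R_{\underline\ga^{\op}}$-equivariant in the appropriate sense: for $r\in R_{\underline\ga^{\op}}$, one must establish
$$r\cdot\psi_{w_\circ}\equiv \psi_{w_\circ}\cdot\sigma^{-1}(r)\quad\text{modulo lower pieces of the Mackey filtration,}$$
which is a careful application of the relations (\ref{R6}), (\ref{R4}), and (\ref{R7}), tracking how generators of $R_{\underline\ga^{\op}}$ commute past the long braid $\psi_{w_\circ}$ and checking that all correction terms lie in $\bigoplus_{w<w_\circ}\psi_w R_{\underline\ga}$, hence in $\ker\pi$.
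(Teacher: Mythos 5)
The paper does not actually prove this lemma---it is imported verbatim from \cite[Theorem 2.2]{LV}---so I am judging your argument on its own terms. Your overall strategy (an explicit intertwiner built from $\psi_{w_\circ}$ for the block-reversing permutation, the degree count $\deg(\psi_{w_\circ}1_{\underline\ga})=-d(\underline\ga)$, and a triangularity argument via Theorem~\ref{TBasis}) is exactly the standard route, and the degree computation for $b$ is correct. The problem is in the construction of $\Phi$ itself.

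The map $x\mapsto\sigma(\pi(xb))\cdot(v_n\otimes\dots\otimes v_1)$ is not a homomorphism of left $R_{\underline\ga^{\op}}$-modules, so $\Phi(1\otimes v)$ is not an element of the coinduced module. Your claimed commutation ``$r\cdot\psi_{w_\circ}\equiv\psi_{w_\circ}\cdot\sigma^{-1}(r)$ modulo $\ker\pi$'' fails in the very first nontrivial case: take $n=2$, $\ga_1=\ga_2=\al_i$, so $w_\circ=s_1$ and $R_{2\al_i}$ is the nil-Hecke algebra. Relation (\ref{R6}) gives $y_1\psi_1 1_{(i,i)}=\psi_1 y_2 1_{(i,i)}+1_{(i,i)}$, and the correction term $1_{(i,i)}$ is precisely the $\psi_e$-component in the right-module basis $\{1,\psi_1\}$ of $R_{2\al_i}1_{(i,i)}$ over $R_{\al_i,\al_i}$---it is \emph{not} in $\ker\pi$. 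Concretely, with $V_1=V_2=L(i)$ one gets $\Phi(1\otimes v)(y_1)=\pi(y_1\psi_1)v=v\neq 0$ while $y_1\cdot\Phi(1\otimes v)(1)=0$, so equivariance fails. (This example also shows the map, where it is nonzero, lands in degrees that do not occur in $\Coind$, which is a symptom of the same problem; incidentally, computing graded dimensions in this example shows the shift in this paper's conventions should be $q^{-d(\underline\ga)}$ rather than $q^{d(\underline\ga)}$, so be wary of trusting the degree bookkeeping of your $\Phi$ as a consistency check.) The repair is not to project onto the $\psi_e$-coefficient after multiplying by $b$, but to use the Mackey filtration of $1_{\underline\ga^{\op}}R_\al 1_{\underline\ga}$ as an $(R_{\underline\ga^{\op}},R_{\underline\ga})$-\emph{bimodule}: its top subquotient is generated by the image of $\psi_{w_\circ}$ and is isomorphic to a degree shift of the flip bimodule, and the quotient map onto it is equivariant on both sides by construction. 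Composing with the action on $V_n\boxtimes\dots\boxtimes V_1$ and applying the adjunction $\Hom_{R_\al}(\Ind W,-)\cong\Hom_{R_{\underline\ga}}(W,\Res-)$ produces the desired map, after which your triangularity/dimension argument for bijectivity goes through. This is essentially the proof in \cite{LV}.
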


\begin{Lemma} \label{LDualInd}
Let $\underline{\ga}:=(\ga_1,\dots,\ga_n)\in Q_+^n$, and $V_m$ be a finite dimensional $R_{\ga_m}$-module for $m=1,\dots,n$. Then 
$$
(V_1\circ\dots\circ V_n)^\circledast\cong 
q^{ d(\underline{\ga})}
(V_n^\circledast\circ\dots\circ V_1^\circledast).
$$
\end{Lemma}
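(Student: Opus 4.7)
The plan is to induct on $n$, reducing to the key case $n=2$. The base case $n=1$ is trivial.

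For $n=2$, I would apply Lemma~\ref{LLV} to rewrite the induction as a coinduction, obtaining
\[
V_1\circ V_2 \;\cong\; q^{(\ga_1,\ga_2)}\,\Coind_{\ga_2,\ga_1}(V_2\boxtimes V_1).
\]
Since $\tau$ fixes each of the generators $1_\bi$, $y_r$, $\psi_s$, it restricts to a homogeneous anti-involution of the parabolic subalgebra $R_{\ga_2,\ga_1}$, so $\circledast$ is well-defined on the parabolic side, and one has $(V_2\boxtimes V_1)^\circledast\cong V_2^\circledast\boxtimes V_1^\circledast$. It then suffices to establish the standard duality-exchange identity
\[
(\Coind_{\ga_2,\ga_1} N)^\circledast \;\cong\; \Ind_{\ga_2,\ga_1}(N^\circledast),
\]
(with grading shifts tracked carefully so that $(V_1\circ V_2)^\circledast$ comes out with the asserted shift $q^{(\ga_1,\ga_2)}$). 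This identity is a general consequence of Hom-tensor adjunction combined with the conversion between left and right $R_{\ga_2,\ga_1}$-modules via $\tau$: using $Ae\otimes_B M$ for $\Ind$ and $\Hom_B(eA,N)$ for $\Coind$ (with $e=1_{\ga_2,\ga_1}$ and $\tau(e)=e$), the anti-involution exchanges these two bimodule structures, while the vector-space dual exchanges $\otimes$ with $\Hom$. Combining the two steps gives $(V_1\circ V_2)^\circledast\cong q^{(\ga_1,\ga_2)}\,V_2^\circledast\circ V_1^\circledast$.

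For the inductive step $n\geq 3$, I would use associativity of induction to write
\[
V_1\circ\dots\circ V_n \;=\; (V_1\circ\dots\circ V_{n-1})\circ V_n,
\]
apply the $n=2$ case with the first factor $W:=V_1\circ\dots\circ V_{n-1}\in\mod{R_{\ga_1+\dots+\ga_{n-1}}}$ and the second factor $V_n$, and then apply the inductive hypothesis to $W^\circledast$. The accumulated grading shift is
\[
(\ga_1+\dots+\ga_{n-1},\ga_n)+d(\ga_1,\dots,\ga_{n-1}) \;=\; \sum_{m=1}^{n-1}(\ga_m,\ga_n)+\sum_{1\leq m<k\leq n-1}(\ga_m,\ga_k) \;=\; d(\underline{\ga}),
\]
which matches the claimed formula.

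The main obstacle is the $n=2$ case, specifically tracking the grading shift through the duality-exchange isomorphism. Although the unshifted isomorphism $(\Coind N)^\circledast\cong\Ind(N^\circledast)$ is a standard consequence of Hom-tensor duality, one must check that no additional degree twist is introduced when converting from the natural right-module structure on the dual of an induced module to a left-module structure via the anti-involution $\tau$; this ultimately rests on the facts that $\tau$ is homogeneous of degree zero, fixes the idempotent $1_{\underline\ga}$, and restricts to the parabolic subalgebra. Once this is in place, the combination with the shift in Lemma~\ref{LLV} yields the correct exponent $q^{d(\underline{\ga})}$.
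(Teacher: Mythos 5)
Your strategy is essentially the paper's: the printed proof is a one-liner, ``Follows from Lemma~\ref{LLV} by uniqueness of adjoint functors,'' which amounts to exactly your combination of Lemma~\ref{LLV} with the exchange $(\Coind N)^\circledast\cong\Ind(N^\circledast)$. (The paper gets that exchange abstractly: $\circledast\circ\Ind\circ\circledast$ is right adjoint to $\circledast\circ\Res\circ\circledast\cong\Res$, hence isomorphic to $\Coind$; your concrete Hom--tensor version with the anti-involution $\tau$ is an equivalent way to prove the same identity.) The reduction to $n=2$, the identity $(V_2\boxtimes V_1)^\circledast\cong V_2^\circledast\boxtimes V_1^\circledast$, and the additivity computation $(\ga_1+\dots+\ga_{n-1},\ga_n)+d(\ga_1,\dots,\ga_{n-1})=d(\underline\ga)$ in the inductive step are all fine.

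The one step you defer --- ``tracking the grading shift'' --- is, however, the entire content of the lemma, and it does not come out automatically in the way you assert. Since $(q^mM)_n=M_{n-m}$ and $(M^\circledast)_n=\Hom_\O(M_{-n},\O)$, one has $(q^mM)^\circledast\cong q^{-m}M^\circledast$; so dualizing the isomorphism of Lemma~\ref{LLV} as printed and then applying a shift-free exchange of $\Ind$ and $\Coind$ produces the exponent $-d(\underline\ga)$, not $+d(\underline\ga)$. The sign in the statement being proved is the correct one: for $V_1=V_2=L(i)$ one has $\CH\bigl(L(i)\circ L(i)\bigr)=(1+q^{-2})(ii)$, whose dual has character $(1+q^{2})(ii)=q^{(\al_i,\al_i)}\,\CH\bigl(L(i)\circ L(i)\bigr)$, matching $q^{+d}$. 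So before declaring the proof complete you must reconcile the sign conventions in the two ingredients (in particular, the direction of the shift in Lemma~\ref{LLV} under this paper's convention for $q^m$ and for the grading on $\Hom$-spaces); as written, a literal bookkeeping yields the wrong sign, and simply asserting that the shifts ``come out'' as claimed leaves the only nontrivial point unverified.
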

\begin{proof}
Follows from Lemma~\ref{LLV} by uniqueness of adjoint functors as in \cite[Theorem 3.7.5]{Kbook}
\end{proof}

\section{Crystal operators and extremal words}\label{SSCryst}
The theory of crystal operators has been developed in \cite{KL1}, \cite{LV} and \cite{KK} following ideas of Grojnowski \cite{Gr}, see also \cite{Kbook}. We review necessary facts for the reader's convenience. 

Let $\al\in Q_+$ and $i\in I$. It is known that $R_{n\al_i}$ is a nil-Hecke algebra with unique (up to a degree shift) irreducible module
$$
L(i^n)=q_i^{n(n-1)/2}L(i)^{\circ n}.
$$ 
Moreover, $\DIM L(i^n)=[n]^!_i$ 
We have functors 
\begin{align*}
&e_i: \mod{R_\al}\to\mod{R_{\al-\al_i}},\ M\mapsto \Res^{R_{\al-\al_i,\al_i}}_{R_{\al-\al_i}}\circ \Res_{\al-\al_i,\al_i}M,
\\
&f_i: \mod{R_\al}\to\mod{R_{\al+\al_i}},\ M\mapsto \Ind_{\al,\al_i}M\boxtimes L(i).
\end{align*}
If $L\in\mod{R_\al}$ is irreducible, we define
$$
\tilde f_i L:=\head (f_i L),\quad \tilde e_i L:=\soc (e_i L).
$$
A fundamental fact is that $\tilde f_i L$ is again irreducible and $\tilde e_i L$ is irreducible or zero. 
We refer to $\tilde e_i$ and $\tilde f_i$ as the crystal operators. These are operators on $B\cup\{0\}$, where $B$ is the set of isomorphism classes of the irreducible $R_\al$-modules for all $\al\in Q_+$.  
Define
$
\wt:B\to P,\ [L]\mapsto -\al
%
$
if $L\in\mod{R_\al}$.

\begin{Theorem} \label{TLV} {\rm \cite{LV}} 
$B$ with the operators $\tilde e_i,\tilde f_i$ and the function $\wt$ is the crystal graph of the negative part $U_q(\n_-)$ of the quantized enveloping algebra of $\g$.  
\end{Theorem}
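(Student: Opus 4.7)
The plan is to establish this via the now-standard categorification strategy: first verify that the operators $\tilde e_i,\tilde f_i$ endow $B$ with the structure of an abstract crystal, and then invoke Kashiwara--Saito's characterization of $B(\infty)$ to identify $B$ with the crystal of $U_q(\n_-)$.

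First I would nail down the basic properties of $\tilde e_i$ and $\tilde f_i$ on an irreducible $L\in\mod{R_\al}$. One shows that $f_i L$ has irreducible head, so $\tilde f_i L$ is well-defined and irreducible, by adjunction $\hom(f_i L, L')=\hom(L, e_i L')$, coupled with an analysis of the Mackey filtration on $\Res_{\al,\al_i}\Ind_{\al,\al_i}(L\boxtimes L(i))$. For $\tilde e_i L$, one uses that $e_i L$ is either zero or has a simple socle, which follows from the nil-Hecke structure of $R_{n\al_i}$ and the fact that the $\al_i$-string through $L$ is governed by $\vare_i(L):=\max\{n\geq 0 : \tilde e_i^n L\neq 0\}$. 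Key formulas needed are $\tilde e_i\tilde f_i L\cong L$, $\tilde f_i\tilde e_i L\cong L$ when $\tilde e_i L\neq 0$, and $\vare_i(\tilde f_i L)=\vare_i(L)+\de_{\text{something}}$; the cleanest derivation goes through Shuffle Lemma / Mackey Theorem calculations of $\Res_{\al-\al_i,\al_i}$ applied to $\Ind_{\al,\al_i}(L\boxtimes L(i))$.

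Next I would verify the Kashiwara crystal axioms. With $\wt[L]=-\al$ for $L\in\mod{R_\al}$, and $\vare_i$ as above and $\vfee_i=\vare_i+\lan\wt,\al_i^\vee\ran$, the usual axioms (C1)--(C5) follow from the relations just mentioned together with the compatibility $\vare_i(\tilde e_i L)=\vare_i(L)-1$ and $\wt(\tilde f_i L)=\wt(L)-\al_i$. At this stage $B$ is a $U_q(\g)$-crystal structure on the set of isomorphism classes of graded-shift equivalence classes of irreducibles.

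Finally, to identify $B$ with $B(\infty)$, I would apply the Kashiwara--Saito characterization: a normal, highest-weight crystal generated by a single element $b_0$ of weight $0$, on which every $\tilde e_i$ acts by $0$, and satisfying an additional ``$\star$-structure'' compatibility, is isomorphic to $B(\infty)$. Here $b_0=[\b1]$, the class of the trivial $R_0$-module, which clearly has all $\tilde e_i b_0=0$. The fact that $B$ is generated by $b_0$ under the $\tilde f_i$ amounts to: every irreducible $L$ is a quotient of some iterated $f_{i_1}\cdots f_{i_d}\b1$, which in turn is immediate from the cyclic structure of the category and the fact that every word $\bi=(i_1,\dots,i_d)\in\words_\al$ labels an element $1_\bi\in R_\al$ under which $L_\bi\neq 0$ for some such $\bi$. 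The $\star$-structure on $B$ is provided by the anti-involution $\tau$ of \eqref{ECircledast} (or equivalently by the opposite induction $f_i^\star L:=\head(\Ind_{\al_i,\al}L(i)\boxtimes L)$); its crystal-theoretic compatibility with the unstarred operators is checked by a parallel Mackey argument. Once this is in place, Kashiwara--Saito's theorem gives $B\cong B(\infty)$.

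The main obstacle will be the last step: verifying the $\star$-crystal axioms of Kashiwara--Saito, which require fairly subtle interaction between the two induction directions. Concretely, one has to prove that for $b\in B$ with $\vare_i(b),\vare_i^\star(b)>0$ the relation $\tilde e_i\tilde e_i^\star b=\tilde e_i^\star\tilde e_i b$ holds, and to control the ``commutation defect'' when these invariants sum correctly. This is where the Shuffle/Mackey filtration analysis of \cite{LV} does the real work; all the representation-theoretic input needed for it (Mackey Theorem, nil-Hecke structure on $R_{n\al_i}$, adjunctions between $\Ind$ and $\Res$/$\Coind$) has already been recorded in the preceding sections of the excerpt.
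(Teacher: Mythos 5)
The paper contains no proof of this statement: it is quoted verbatim from \cite{LV}, and your sketch is essentially a reconstruction of the Lauda--Vazirani argument — establish the operator identities of Proposition~\ref{PCryst1} by Mackey/shuffle analysis, check the abstract crystal axioms with $\vare_i$, $\fee_i=\vare_i+\lan\wt,\al_i^\vee\ran$ and $\wt$, and then identify the resulting crystal with $B(\infty)$ via the Kashiwara--Saito characterization using a second family of operators coming from induction on the opposite side. One genuine slip worth flagging: the anti-involution $\tau$ of (\ref{ECircledast}) cannot supply the $\star$-structure. The duality $\circledast$ it defines fixes every irreducible (with the canonical degree normalization $L^\circledast\cong L$), so it induces the \emph{identity} on $B$, whereas the $\star$-involution of $B(\infty)$ is nontrivial; if you tried to run the Kashiwara--Saito axioms with $\tau$ the verification would collapse. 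The correct categorical incarnation is the one you mention parenthetically, $\tilde f_i^\star L=\head(\Ind_{\al_i,\al}L(i)\boxtimes L)$ (equivalently, twisting by the strand-reversal automorphism of $R_\al$, which is not $\tau$), together with $\vare_i^\star$ measuring the longest $i$-head of words of $L$ as in the definition of $\eps_i^*$ in Section~\ref{SSCryst}; with that definition your outline matches what \cite{LV} actually carry out. The remaining deferrals (the precise statement $\vare_i(\tilde f_iL)=\vare_i(L)+1$, and the $\star$-compatibility computations) are real work but standard, and are exactly the content of the cited source.
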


For $M\in\mod{R_\al}$, define
$$
\eps_i(M)=\max\{k\geq 0\mid e_i^k(M)\neq 0\}.
$$
Then $\eps_i(M)=\max\{\eps_i(\bj)\mid\text{$\bj$ is a word of $M$}\}$, where for $\bj=(j_1,\dots,j_d)\in\words$, 
\begin{equation}\label{ETail}
\eps_i(\bj):=\max\{k\geq 0\mid j_{d-k+1}=\dots=j_d=i\}
\end{equation}
is the length of the longest $i$-tail of $\bj$. Define also
$$
\eps_i^*(M):=\max\{k\geq 0\mid j_1=\dots=j_k=i\ \text{for a word $\bj=(j_1,\dots,j_d)$ of $M$}\}
$$
to be the length of the longest $i$-head of the words of $M$. 

\begin{Proposition} \label{PCryst1} {\rm \cite{LV,KL1}} 
Let $L$ be an irreducible $R_\al$-module, $i\in I$, and $\eps=\eps_i(L)$. 
\begin{enumerate}
\item[{\rm (i)}] $e_iL$ is either zero or it has a simple socle; denote this socle $\tilde e_i L$ interpreted as $0$ if $e_i L=0$;
\item[{\rm (ii)}] $f_iL$ has simple head denoted $\tilde f_i L$;
\item[{\rm (iii)}] $\tilde e_i\tilde f_iL\simeq L$ and if $\tilde e_i L\neq 0$ then $\tilde f_i\tilde e_iL\simeq L$; 
\item[{\rm (iv)}] $\eps=\max\{k\geq 0\mid \tilde e_i^k(L)\neq 0\}$;
\item[{\rm (v)}] $\Res_{\al-\eps\al_i,\eps\al_i}L\simeq \tilde e_i^\eps L\boxtimes L(i^n)$.
\end{enumerate}
\end{Proposition}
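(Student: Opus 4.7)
The plan is to establish the parts in the order that follows the strategy of Lauda--Vazirani \cite{LV} and Khovanov--Lauda \cite{KL1}, with part (v) serving as the technical cornerstone. Fix $L$ irreducible over $R_\al$ and write $\eps = \eps_i(L)$.

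First I would prove (v). Since $R_{\eps\al_i}$ is a nil-Hecke algebra whose unique irreducible module (up to grading shift) is $L(i^\eps)$, every finite-dimensional $R_{\eps\al_i}$-module splits as a direct sum of shifts of $L(i^\eps)$. Consequently $V := \Res_{\al-\eps\al_i, \eps\al_i} L$ has the form $N \boxtimes L(i^\eps)$ for some $R_{\al-\eps\al_i}$-module $N$. The essential content is the simplicity of $N$. I would analyze the iterated restriction $\Res_{\al-(\eps+1)\al_i,\,\al_i,\,\eps\al_i} L$ via the Mackey theorem: if $e_i N \neq 0$ (equivalently, if $N$ admits any $i$-tail), one can extract an $(\eps+1)$-st $i$ from $L$, contradicting the maximality of $\eps$. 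Once $e_i N = 0$, a word-space argument combined with the fact that each simple quotient of $V$ pulls back to a quotient of $L$ forces $N$ to be irreducible.

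Parts (i) and (ii) then follow quickly. For (ii), any simple quotient $Q$ of $f_i L = \Ind_{\al,\al_i}(L \boxtimes L(i))$ gives, by Frobenius reciprocity, a nonzero map $L \boxtimes L(i) \to \Res_{\al,\al_i} Q$. Applying (v) to $Q$ with its own parameter $\eps' := \eps_i(Q) \geq 1$ and comparing $\Res_{\al-\al_i,\al_i} Q$ with $\Res_{\al-\eps'\al_i,\eps'\al_i} Q$ via Mackey pins down $Q$ up to isomorphism (it forces $\tilde e_i^{\eps'} Q \simeq \tilde e_i^{\,\eps'-1} L$). Thus $f_i L$ has a unique simple quotient $\tilde f_i L$ and $\tilde e_i \tilde f_i L \simeq L$, establishing the first half of (iii). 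Part (i) is obtained analogously, either via the dual adjunction applied to simple submodules of $e_i L$, or by invoking Lemma~\ref{LLV} to convert the socle statement for $e_i L$ into a head statement already handled.

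For the remaining half of (iii), suppose $\tilde e_i L \neq 0$. The inclusion $\tilde e_i L \hookrightarrow e_i L$ yields by adjunction a nonzero map $f_i \tilde e_i L \to L$, which must be surjective by simplicity of $L$; so $L$ lies in the head of $f_i \tilde e_i L$, and (ii) forces $L \simeq \tilde f_i \tilde e_i L$. Part (iv) follows from (v) together with the identity $\eps_i(\tilde e_i L) = \eps - 1$, which can be read off from the structure of $V$ by iterating. The principal obstacle throughout is the simplicity of $N$ in (v): one must rule out any ``extra'' $i$-tails beyond those already absorbed by the $L(i^\eps)$ factor, and this is precisely where careful Mackey bookkeeping and the nil-Hecke structure on $R_{\eps\al_i}$ are most heavily used.
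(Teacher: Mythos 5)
The paper does not actually prove this proposition; it is recalled verbatim from \cite{LV,KL1}, so the only thing to assess is whether your reconstruction is sound. In outline it follows the standard Grojnowski-style route of the cited references, but the foundational step of your argument for (v) contains a genuine error. You claim that ``every finite-dimensional $R_{\eps\al_i}$-module splits as a direct sum of shifts of $L(i^\eps)$'' and deduce the decomposition $\Res_{\al-\eps\al_i,\eps\al_i}L\cong N\boxtimes L(i^\eps)$ as a consequence. The nil-Hecke algebra is not semisimple on finite-dimensional modules: already for $\eps=1$ one has $R_{\al_i}\cong F[y_1]$, and $F[y_1]/(y_1^2)$ is an indecomposable module of length two. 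What is true is only that every composition factor of a finite-dimensional $R_{\eps\al_i}$-module is a shift of $L(i^\eps)$; the absence of extensions in $\Res_{\al-\eps\al_i,\eps\al_i}L$ is precisely where the maximality of $\eps$ must enter (one has to show that the positive-degree invariants in $y_{d-\eps+1},\dots,y_d$ act as zero, not merely nilpotently, on this restriction), and indeed for $\eps'<\eps_i(L)$ the analogous restriction generally does \emph{not} split. So the first sentence of your proof of (v) assumes a statement that is false in general and whose correct special case is essentially the content of (v) itself.

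A second, related concern is the logical architecture. In \cite{KL1} and \cite{LV} the simple-socle statement (i) (equivalently, via the duality $\circledast$, the simple-head statement for $e_iL$) and the full strength of (v) --- including the irreducibility of $N$ and the identification $N\cong\tilde e_i^{\eps}L$ --- are established by an intertwined induction, with Frobenius reciprocity and graded-multiplicity counts doing the work; your plan to prove (v) first and then harvest (i)--(iv) rests entirely on the flawed splitting step, and the phrase ``a word-space argument \dots forces $N$ to be irreducible'' hides exactly the part of the argument that is hard (one needs, e.g., that $\dim\Hom_{R_\al}(\Ind(K\boxtimes L(i^\eps)),L)$ is controlled by the graded dimension of the extremal $i^\eps$-tail word space). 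Your deductions of (ii), (iii), (iv) from (v) are the standard adjunction arguments and would be fine once (v) is genuinely established, but as written the proposal has a gap at its base.
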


Recall from (\ref{ECyclot}) the cyclotomic ideal $J_\al^\La$. We have an obvious  functor of inflation $\infl^\La:\mod{R_\al^\La}\to \mod{R_\al}$ and its left adjoint 
$$
\pr^\La:\mod{R_\al}\to \mod{R_\al^\La},\ M\mapsto M/J_\al^\La M.
$$

\begin{Lemma} \label{LPr} {\rm \cite[Proposition 2.4]{LV}} 
Let $L$ be an irreducible $R_\al$-module. Then $\pr^\La L\neq 0$ if and only if $\eps_i^*(L)\leq \lan\La,\al_i^\vee\ran$ for all $i\in I$. 
\end{Lemma}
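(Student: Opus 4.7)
The plan is to work with the equivalent formulation that $\pr^\La L\ne 0$ means $J_\al^\La\cdot L=0$ (since $L$ is irreducible), i.e.\ $y_1^{N_i}1_\bi$ annihilates $L$ for every $i\in I$ and every $\bi=(i,i_2,\dots,i_d)\in\words_\al$, where $N_i:=\lan\La,\al_i^\vee\ran$.

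First I would handle the direction ($\Rightarrow$): assuming $J_\al^\La\cdot L=0$, suppose for contradiction that $k:=\eps_i^*(L)>N_i$ for some $i$. By the dual (under the strand-reversing anti-involution, which interchanges $\tilde e_i$ with the dual crystal operator $\tilde e_i^*$) of Proposition~\ref{PCryst1}(v), one obtains an isomorphism
\[
\Res_{k\al_i,\al-k\al_i}L\;\simeq\;L(i^k)\boxtimes L''
\]
of $R_{k\al_i}\otimes R_{\al-k\al_i}$-modules, for some irreducible $L''$. The module $L(i^k)$ is the coinvariant algebra $F[y_1,\dots,y_k]/(e_1,\dots,e_k)_+$ of the nil-Hecke algebra $R_{k\al_i}$, and the Newton-type identity
\[
e_j(y_2,\dots,y_k)\;=\;e_j(y_1,\dots,y_k)-y_1\,e_{j-1}(y_2,\dots,y_k)
\]
together with $e_j(y_1,\dots,y_k)=0$ in the coinvariants gives, by induction on $j$, the relation $e_j(y_2,\dots,y_k)=(-y_1)^j$. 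Taking $j=k$ (the left-hand side vanishes as it involves only $k-1$ variables) yields $y_1^k=0$, while $j=k-1$ produces the nonzero Artin basis element $y_2\cdots y_k=(-1)^{k-1}y_1^{k-1}$, so $y_1^{k-1}\ne 0$. Since $N_i<k$ this forces $y_1^{N_i}\ne 0$ on $L(i^k)$, hence on $\Res_{k\al_i,\al-k\al_i}L\subseteq\Res_{\al_i,\al-\al_i}L$ (because $1_{k\al_i,\al-k\al_i}$ is a partial sum of the idempotents making up $1_{\al_i,\al-\al_i}$), contradicting $J_\al^\La\cdot L=0$.

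For the direction ($\Leftarrow$), assume $\eps_i^*(L)\le N_i$ for every $i$. I would argue by induction on $\height(\al)$, filtering $\Res_{\al_i,\al-\al_i}L$ by the descending chain of $F[y_1]$-stable subspaces
\[
\Res_{\al_i,\al-\al_i}L\supseteq\Res_{2\al_i,\al-2\al_i}L\supseteq\cdots\supseteq\Res_{k\al_i,\al-k\al_i}L\supseteq 0,
\]
with $k=\eps_i^*(L)$. The deepest stratum is annihilated by $y_1^k$ by the nil-Hecke calculation above, and each successive shallower quotient is identified via the crystal identities for $\tilde e_i^*$ and an application of the inductive hypothesis to a lower-height $R_{\al-\al_i}$-module, concluding that the nilpotency of $y_1$ on the whole of $\Res_{\al_i,\al-\al_i}L$ is bounded by $\eps_i^*(L)\le N_i$.

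The hard part will be the sufficiency direction. The forward direction reduces to a single local nilpotency calculation inside the nil-Hecke module $L(i^k)$, whereas the reverse direction requires that the crystal bound $\eps_i^*(L)\le N_i$ propagate to a uniform nilpotency bound on $y_1$ over \emph{all} of $\Res_{\al_i,\al-\al_i}L$ rather than merely on its deepest parabolic stratum. This reflects the deeper structural fact that the cyclotomic KLR algebras $R_\al^\La$ categorify the integrable highest-weight module $V(\La)$ with their irreducibles parametrized by the subcrystal $B(\La)\subseteq B(\infty)$ characterized precisely by the inequalities $\eps_i^*\le\lan\La,\al_i^\vee\ran$ for all $i\in I$.
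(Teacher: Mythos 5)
Your direction ($\Rightarrow$) is correct and complete: the identification $\Res_{k\al_i,\al-k\al_i}L\simeq L(i^k)\boxtimes L''$ via the $*$-analogue of Proposition~\ref{PCryst1}(v), together with the coinvariant-algebra computation showing $y_1^{k}=0\neq y_1^{k-1}$ on $L(i^k)$, does yield the contradiction. (The paper itself offers no proof of this lemma; it is quoted from \cite[Proposition~2.4]{LV}, so both directions must indeed be supplied.)

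The direction ($\Leftarrow$), however, has a genuine gap, and the filtration you propose cannot be repaired. What must be shown is that $y_1^{k}$ annihilates $V_1:=1_{\al_i,\al-\al_i}L$, where $k=\eps_i^*(L)$ (this exact power is needed, since $\La$ may be chosen with $\lan\La,\al_i^\vee\ran=k$). Your chain $V_1\supseteq V_2\supseteq\dots\supseteq V_k$ with $V_j=1_{j\al_i,\al-j\al_i}L$ does consist of $F[y_1]$-stable subspaces, but bounding the nilpotency degree of $y_1$ on each subquotient $V_j/V_{j+1}$ only bounds the degree on $V_1$ by the \emph{sum} of those degrees (already $F[y]/(y^2)$ has a two-step filtration with both subquotients killed by $y$). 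Your own forward-direction computation shows that the deepest stratum $V_k\simeq L(i^k)\boxtimes L''$ has $y_1$-nilpotency degree exactly $k$, so it alone exhausts the entire budget, and any additional nonzero stratum pushes the additive bound strictly above $k$; since $V_1\neq V_k$ whenever $L$ has a word beginning with $i$ but not with $i^k$, the argument as stated cannot close. A route that does work: set $N:=\tilde e_i^{*k}L$, so that $\eps_i^*(N)=0$ and $L$ is a quotient of $L(i^k)\circ N$. Because no word of $N$ begins with $i$, one checks from Theorem~\ref{TBasis} that $1_{\al_i,\al-\al_i}\bigl(L(i^k)\circ N\bigr)$ is the direct sum of the subspaces $\psi_w\otimes\bigl(L(i^k)\boxtimes N\bigr)$ over minimal coset representatives $w$ with $w(1)=1$, and on each such summand $y_1$ acts through its action on $L(i^k)$ (the first strand of $\psi_w$ crosses nothing, so $y_1\psi_w=\psi_w y_1$). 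Hence $y_1^{k}$ kills $1_{\al_i,\al-\al_i}(L(i^k)\circ N)$ and therefore its quotient $1_{\al_i,\al-\al_i}L$ --- the point being a direct sum decomposition, for which nilpotency degrees are compared by maximum rather than by sum.
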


Let $M\in\mod{R_\al}$ and $\bi= i_1^{a_1}\dots i_b^{a_b}$, with  $a_1,\dots,a_b\in\Z_{> 0}$, be a word of $M$. Then $\bi$ is {\em extremal} for $M$ if 
$$a_b=\eps_{i_b}(M),\ a_{b-1}=\eps_{i_{b-1}}(\tilde e_{i_b}^{a_b}M)\ ,\ \dots\ ,\  a_1=\eps_{i_1}(\tilde e_{i_2}^{a_2}\dots\tilde e_{i_b}^{a_b}M).
$$   
It follows that $i_k\neq i_{k+1}$ for all $k=1,\dots,b-1$.

\begin{Lemma} \label{LMultOneWeight} {\rm \cite[Lemma 2.10]{Kcusp}} 
Let $L$ be an irreducible $R_\al$-module, and $\bi=i_1^{a_1}\dots i_b^{a_b}\in\words_\al$ be an extremal word for $L$ with $i_k\neq i_{k+1}$. Set 
$N:=\sum_{m=1}^b a_m(a_m-1)(\al_{i_m},\al_{i_m})/4.$ 
Then 
$$\DIM L_\bi=\prod_{k=1}^b[a_k]^!_{i_k}\quad 
\text{and}\quad 
\dim 1_\bi L_N=\dim 1_\bi L_{-N}=1.$$ 
\end{Lemma}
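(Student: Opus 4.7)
The proof will proceed by induction on $b$, using Proposition~\ref{PCryst1}(v) to peel off the last block $i_b^{a_b}$ of the extremal word at each step.

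For the base case $b=1$, one has $\al = a_1\al_{i_1}$, so $R_\al$ is the nil-Hecke algebra on $a_1$ strands. Its unique (up to shift) irreducible is $L(i_1^{a_1})$, and the normalization $L^\circledast\cong L$ forces $L\cong L(i_1^{a_1})$ exactly. Since $L(i_1^{a_1})$ is supported on the single word $i_1^{a_1}$, the graded dimension of $L_\bi$ equals $\DIM L(i_1^{a_1}) = [a_1]_{i_1}^!$. A direct computation of $[a_1]_{i_1}^! = \prod_{k=1}^{a_1}[k]_{i_1}$ shows this Laurent polynomial is symmetric under $q\leftrightarrow q^{-1}$, and that its top (resp.\ bottom) monomial is $q^{\pm a_1(a_1-1)(\al_{i_1},\al_{i_1})/4}$ with coefficient $1$, matching $\pm N$.

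For the inductive step, set $\eps := a_b = \eps_{i_b}(L)$ (where the equality is the defining property of an extremal word) and $\bi' := i_1^{a_1}\cdots i_{b-1}^{a_{b-1}}$. Proposition~\ref{PCryst1}(v) gives
$$\Res_{\al - \eps\al_{i_b},\,\eps\al_{i_b}} L \;\cong\; \tilde e_{i_b}^{\eps} L \,\boxtimes\, L(i_b^{\eps}).$$
Projecting onto the word idempotent $1_\bi = 1_{\bi'}\otimes 1_{i_b^{\eps}}$ yields
$$L_\bi \;\cong\; (\tilde e_{i_b}^{\eps} L)_{\bi'} \otimes L(i_b^{\eps})$$
as graded vector spaces, because $L(i_b^{\eps})$ is concentrated in the single word $i_b^{\eps}$. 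By definition, $\bi'$ is an extremal word for the irreducible module $\tilde e_{i_b}^{\eps}L$ (which is again self-dual up to degree shift, hence normalized), so the inductive hypothesis applies to it. Multiplying graded dimensions gives
$$\DIM L_\bi \;=\; \Bigl(\prod_{k=1}^{b-1}[a_k]_{i_k}^!\Bigr)\cdot [a_b]_{i_b}^! \;=\; \prod_{k=1}^{b}[a_k]_{i_k}^!,$$
which is the first assertion.

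For the second assertion, I would use that both factors in the tensor decomposition are, by induction and by the base-case computation respectively, symmetric Laurent polynomials whose extremal monomials $q^{\pm N'}$ and $q^{\pm a_b(a_b-1)(\al_{i_b},\al_{i_b})/4}$ (where $N' := \sum_{m=1}^{b-1}a_m(a_m-1)(\al_{i_m},\al_{i_m})/4$) occur with coefficient $1$. Since degrees add under $\otimes$, the top (resp.\ bottom) graded piece of $L_\bi$ has dimension $1\cdot 1 = 1$ and sits in degree $\pm(N' + a_b(a_b-1)(\al_{i_b},\al_{i_b})/4) = \pm N$, as claimed. The only real technical point is the clean compatibility of the restriction-functor isomorphism with the word-space decomposition and grading, which is immediate from the definitions in Section~\ref{SSBasicRep}; no serious obstacle is expected.
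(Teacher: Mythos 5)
Your argument is correct and is essentially the proof of this lemma in the cited source \cite[Lemma 2.10]{Kcusp}: the base case is the nil-Hecke computation and the inductive step peels off the last block via Proposition~\ref{PCryst1}(v), exactly as you do (the present paper only quotes the statement and gives no proof of its own). The one point worth making fully explicit is that Proposition~\ref{PCryst1}(v) is stated only up to an ungraded isomorphism ($\simeq$), so you need that $\Res$ commutes with $\circledast$ (since $\tau$ fixes $1_{\be,\ga}$, each word space of a self-dual irreducible has bar-invariant graded dimension), which forces the grading shift to vanish once all three modules carry the self-dual normalization your induction propagates.
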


\section{Mackey Theorem}\label{SSMackey}
We state a slight generalization of Mackey Theorem of Khovanov and Lauda \cite[Proposition~2.18]{KL1}. First some notation. Given $\underline{\kappa}=(\kappa_1,\dots,\kappa_N)\in Q_+^N$, and a permutation $x\in \Si_N$, we denote 
$$
x\underline{\kappa}:=(\kappa_{x^{-1}(1)},\dots,\kappa_{x^{-1}(N)}). 
$$
Correspondingly, define the integer
$$
s(x,\underline{\kappa}):=-\sum_{1\leq m<k\leq N,\ x(m)>x(k)}(\kappa_m,\kappa_k).
$$

Writing $R_{\underline{\kappa}}$ for $R_{\kappa_1,\dots,\kappa_N}$, there is an obvious natural algebra isomorphism 
$$
\phi^x:R_{x\underline{\kappa}}\to R_{\underline{\kappa}}
$$
permuting the components. Composing with this isomorphism, we get a functor
$$
\mod{R_{\underline{\kappa}}}\to \mod{R_{x\underline{\kappa}}},\  M\mapsto {}^{\phi^x}M.
$$
Making an additional shift, we get a functor 
$$
\mod{R_{\underline{\kappa}}}\to \mod{R_{x\underline{\kappa}}},\  M\mapsto {}^xM:=
q^{ s(x,\underline{\kappa})}
({}^{\phi^x}M).
$$

For the purposes of the following theorem, let us fix 
$$\underline{\ga}=(\ga_1,\dots,\ga_n)\in Q_+^n\quad \text{and}\quad \underline{\be}=(\be_1,\dots,\be_m)\in Q_+^m$$ 
with 
$$
\ga_1+\dots+\ga_n=\be_1+\dots+\be_m=:\al.
$$ 
Denote $d_b^a:=\height(\al_b^a)$ and $d:=\height(\al)$. 

Let $\D(\underline{\be},\underline{\ga})$ be the set of all tuples $\underline{\al}=(\al^a_b)_{1\leq a\leq n,\ 1\leq b\leq m}$ of elements of $Q_+$ such that  $\sum_{b=1}^m\al^{a}_b=\ga^a$ for all $1\leq a\leq n$ and $\sum_{a=1}^n\al^{a}_b=\be_b$ for all $1\leq b\leq m$. 

For each $\underline{\al}\in \D(\underline{\be},\underline{\ga})$, we define permutations $x(\underline{\al})\in\Si_{mn}$ and $x(\underline{\al})\in\Si_{d}$. The permutation $x(\underline{\al})$ maps  
$$
(\al^{1}_1,\dots,\al^{1}_{m},\al^{2}_1,\dots,\al^{2}_{m}\dots,\al^{n}_{1},\dots,\al^{n}_{m})
$$
to
$$
(\al^{1}_1,\dots,\al^{n}_{1},\al^{1}_2,\dots,\al^{n}_{2},\dots,\al^{1}_{m},\dots,\al^{n}_{m}).
$$
On the other hand, $w(\underline{\al})$ is the corresponding permutation of  the blocks of sizes $d_b^a$.

\begin{Example} 
{\rm 
Assume that $n=2$, $m=3$, and all $d_b^a=2$. Then $x(\underline{\al})\in \Si_6$ is the permutation which maps $1\mapsto 1, 2\mapsto 3,3\mapsto 5,4\mapsto 2, 5\mapsto 4, 6\mapsto 6$, which can be illustrated by the following picture:
$$
x(\underline{\al})
=\begin{braid}\tikzset{baseline=7mm}
  \draw (0,3)node[above]{$\al^1_1$}--(0,-3);
  \draw (1,3)node[above]{$\al^1_2$}--(2,-3);
  \draw (2,3)node[above]{$\al^1_3$}--(4,-3);
 \draw (3,3)node[above]{$\al^2_1$}--(1,-3);
  \draw (4,3)node[above]{$\al^2_2$}--(3,-3);
  \draw (5,3)node[above]{$\al^2_3$}--(5,-3);
\end{braid}.
$$
On the other hand, $w(\underline{\al})\in\Si_{12}$ is the corresponding block permutation:
$$
w(\underline{\al})
=\begin{braid}\tikzset{baseline=7mm}
  \draw (0,3)node[above]{$1$}--(0,-3);
  \draw (1,3)node[above]{$2$}--(1,-3);
  \draw (2,3)node[above]{$3$}--(4,-3);
 \draw (3,3)node[above]{$4$}--(5,-3);
  \draw (4,3)node[above]{$5$}--(8,-3);
  \draw (5,3)node[above]{$6$}--(9,-3);
\draw (6,3)node[above]{$7$}--(2,-3);
  \draw (7,3)node[above]{$8$}--(3,-3);
  \draw (8,3)node[above]{$9$}--(6,-3);
  \draw (9,3)node[above]{$10$}--(7,-3);
  \draw (10,3)node[above]{$11$}--(10,-3);
  \draw (11,3)node[above]{$12$}--(11,-3);
\end{braid}.
$$
}
\end{Example}


Let $M\in\mod{R_{\underline{\ga}}}$. We can now consider the $R_{\al^{1}_1,\dots,\al^{n}_{1};\dots;\al^{1}_{m},\dots,\al^{n}_{m}}$-module 
$${}^{x(\underline{\al})}\big(\Res_{\al^{1}_1,\dots,\al^{1}_{m};\dots;\al^{n}_{1},\dots,\al^{n}_{m}}^{\ga_1;\dots;\ga_n}
\,M \big).$$
Finally, let $\leq$ be a total order refining the Bruhat order on $\Si_d$, and 
for $\underline{\al}\in\D(\underline{\be},\underline{\ga})$, consider the submodules 
\begin{align*}
F_{\leq \underline{\al}}(M)&:=\sum_{w\in \D(\underline{\be},\underline{\ga}),\ w\leq w(\underline{\al})}R_{\underline{\be}}\psi_w\otimes 1_{\underline{\al}}M
\subseteq \Res^\al_{\underline{\be}}\Ind^\al_{\underline{\ga}} M,
\\
F_{< \underline{\al}}(M)&:=\sum_{w\in \D(\underline{\be},\underline{\ga}),\ w< w(\underline{\al})}R_{\underline{\be}}\psi_w\otimes 1_{\underline{\al}}M
\subseteq \Res^\al_{\underline{\be}}\Ind^\al_{\underline{\ga}} M.
\end{align*}

\begin{Theorem} \label{TMackeyKL}
Let $$\underline{\ga}=(\ga_1,\dots,\ga_n)\in Q_+^n\quad \text{and}\quad \underline{\be}=(\be_1,\dots,\be_m)\in Q_+^m$$ 
with 
$$
\ga_1+\dots+\ga_n=\be_1+\dots+\be_m=:\al,
$$ 
and $M\in\mod{R_{\underline{\ga}}}$. With the notation as above, the filtration $(F_{\leq \underline{\al}}(M))_{\underline{\al}\in \D(\underline{\be},\underline{\ga})}$ is a filtration of  $\Res_{\underline{\be}}\,\Ind_{\underline{\ga}} M$ as an $R_{\underline{\be}}$-module. Moreover, the subquotients of the filtration are:  
\begin{align*}
F_{\leq \underline{\al}}(M)/F_{<\underline{\al}}(M)
&\cong
\Ind_{x(\underline{\al})\cdot\underline{\al}}^{\underline{\be}}
\big({}^{x(\underline{\al})}\big(\Res_{\underline{\al}}^{\underline{\ga}}
\,M \big)\big).
\\
&= \Ind_{\al^{1}_1,\dots,\al^{n}_{1};\dots;\al^{1}_{m},\dots,\al^{n}_{m}}^{\be_1;\dots;\be_m}
{}^{x(\underline{\al})}\big(\Res_{\al^{1}_1,\dots,\al^{1}_{m};\dots;\al^{n}_{1},\dots,\al^{n}_{m}}^{\ga_1;\dots;\ga_n}
\,M \big).
\end{align*}
\end{Theorem}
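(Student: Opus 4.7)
The approach I would take is to extend the proof of the Khovanov--Lauda Mackey theorem \cite[Proposition~2.18]{KL1} (the case $n=m=2$) directly to the $n\times m$ setting; one could alternatively iterate the two-block statement using transitivity of induction and restriction, but the direct proof produces precisely the indexing set $\D(\underline{\be},\underline{\ga})$ and the Bruhat filtration as they appear in the statement.

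First, I would establish a bijection between $\D(\underline{\be},\underline{\ga})$ and the set of minimal-length $(\Si_{\underline{\be}},\Si_{\underline{\ga}})$-double coset representatives in $\Si_d$, where $\Si_{\underline{\ga}}$ is the parabolic subgroup corresponding to the composition $(\height(\ga_1),\dots,\height(\ga_n))$ and similarly for $\Si_{\underline{\be}}$. A double coset is specified by the matrix of block sizes $(\height(\al^a_b))$, and its shortest representative is the block permutation $w(\underline{\al})$. The length of $w(\underline{\al})$ counts block-block crossings, and when weighted by the bilinear form values on the strands it gives precisely $s(x(\underline{\al}),\underline{\al})$, which is crucial for the degree shift tracking later.

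Second, I would set up the filtration. By Theorem~\ref{TBasis}, together with the idempotent decomposition of $1_{\underline{\ga}}$ refining each $\ga_a$ as $\sum_b\al^a_b$, the bimodule $1_{\underline{\be}}R_\al 1_{\underline{\ga}}$ is spanned by elements of the form $\psi_w\cdot p\cdot 1_{\bi}$, where $p$ is a polynomial in the $y$-variables and each $w$ can be written as a product of a shortest double coset representative $w(\underline{\al})$ with elements from $\Si_{\underline{\be}}$ (on the left) and $\Si_{\underline{\ga}}$ (on the right). Using relations (\ref{R2PsiE}), (\ref{R3Psi}), (\ref{R4}) and (\ref{R7}) to straighten products into this normal form---moving the right $\Si_{\underline{\ga}}$-tail into the right $R_{\underline{\ga}}$-action on $M$ and absorbing the left $\Si_{\underline{\be}}$-head into the left $R_{\underline{\be}}$-action---shows that each $F_{\leq\underline{\al}}(M)$ is an $R_{\underline{\be}}$-submodule, since the error terms arising in $\psi$-straightening involve permutations strictly below $w(\underline{\al})$ in the Bruhat order and hence lie in $F_{<\underline{\al}}(M)$.

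Third, I would identify each subquotient by constructing the map
\begin{equation*}
\Ind_{x(\underline{\al})\cdot\underline{\al}}^{\underline{\be}}\,{}^{x(\underline{\al})}\bigl(\Res_{\underline{\al}}^{\underline{\ga}} M\bigr)\;\longrightarrow\; F_{\leq\underline{\al}}(M)/F_{<\underline{\al}}(M),\quad r\otimes v\mapsto r\,\psi_{w(\underline{\al})}\otimes v.
\end{equation*}
The twist by $\phi^{x(\underline{\al})}$ is forced because $w(\underline{\al})$ rearranges the blocks from $(\al^1_1,\dots,\al^1_m,\dots,\al^n_1,\dots,\al^n_m)$ into the order $(\al^1_1,\dots,\al^n_1,\dots,\al^1_m,\dots,\al^n_m)$, while the grading shift $q^{s(x(\underline{\al}),\underline{\al})}$ matches $\deg\psi_{w(\underline{\al})}$ by the first step. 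Surjectivity is immediate from the definition of $F_{\leq\underline{\al}}(M)$, and injectivity follows from a graded dimension count: both sides are free right $R_{x(\underline{\al})\cdot\underline{\al}}$-modules of the same rank by the basis theorem and the double-coset decomposition of $\Si_d$. The main obstacle is the simultaneous tracking of the bimodule twist and the grading shift---the straightening relations must be executed carefully enough to identify each subquotient with exactly the stated induced module and to confirm that $s(x(\underline{\al}),\underline{\al})$ is the correct total degree of $\psi_{w(\underline{\al})}$ on the relevant word spaces. This is the identical bookkeeping performed by Khovanov--Lauda in the $2\times 2$ setting; the extension requires only the additivity of $s(x,\underline{\kappa})$ under composition of block permutations, which is immediate from its definition as a signed sum over inversions.
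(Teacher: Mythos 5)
Your proposal is correct and takes essentially the same route as the paper: the paper's proof consists of citing \cite[Proposition~2.18]{KL1} for the case $m=n=2$ and remarking that "the general case can be proved by the same argument or deduced from the case $m=n=2$ by induction," and your outline is precisely a careful execution of that same argument in the general $n\times m$ setting (double-coset combinatorics, Bruhat filtration via the basis theorem, and the degree/twist bookkeeping identifying each subquotient). The only loose phrasing is in the injectivity step, where the rank comparison should be made for $R_{\underline{\be}}1_{x(\underline{\al})\cdot\underline{\al}}$ as a free right module over the parabolic (yielding a vector-space spanning/independence argument for the subquotient), exactly as in Khovanov--Lauda.
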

\begin{proof}
For $m=n=2$ this follows from \cite[Proposition~2.18]{KL1}. The general case can be proved by the same argument or deduced from the case $m=n=2$ by induction. 
\end{proof}

\section{Convex preorders and root partitions}\label{SSConPreRP} 
We now describe the theory of cuspidal systems from \cite{Kcusp}.  
Recall the notion of a convex preorder on $\Phi_+$ from (\ref{EPO1})--(\ref{EPO3}). 
General theory of cuspidal systems is valid for an arbitrary convex preorder, but for the theory of imaginary representations we will need an additional assumption that the preorder is balanced, see (\ref{EBalanced}), (\ref{EBalancedEquiv}).

Recall that $I'=\{1,\dots,l\}$. We will consider the set $\Par$ of $l$-multipartitions 
$$\ula=(\la^{(1)},\dots,\la^{(l)}),$$ 
where each $\la^{(i)}=(\la^{(i)}_1,\la^{(i)}_2,\dots)$ is a usual partition. 
We denote 
$$|\ula|:=\sum_{i\in I'}|\la^{(i)}|.$$ 
For $n\in \Z_{\geq 0}$, the set of all $\ula\in\Par$ such that $|\ula|=n$ is denoted $\Par_n$. 

Recall the totally ordered set $\Psi$ defined in  (\ref{EPsi}). 
Denote by $\Seq$ the set of all finitary  tuples  $M=(m_\rho)_{\rho\in\Psi}\in \Z_{\geq 0}^\Psi$ of non-negative integers. The left lexicographic order on $\Seq$ is denoted $\leq_l$ and the right lexicographic order on $\Seq$ is denoted $\leq_r$. We will use the following {\em bilexicographic} partial order on $\Seq$: 
$$
M\leq N\qquad\text{if and only if}\qquad M\leq_l N \ \text{and}\ M\geq_r N.
$$

Let  
$$\pi=(M,\umu)=(\rho_1^{m_1},\dots,\rho_s^{m_s},\umu,\rho_{-t}^{m_{-t}},\dots,\rho_{-1}^{m_{-1}})
$$ 
be a root partition as in (\ref{ERP}), so that $M\in \Seq$ and $\umu\in\Par_{m_\de}$. For  $\rho\in\Psi$, we define
$
M_\rho:=m_\rho\rho$, and consider a tuple   
$
|M|=(M_\rho)_{\rho\in\Psi}\in Q_+^\Psi$. Ignoring trivial terms, we can also write   
$$|M|=  (m_1\rho_1,\dots,m_s\rho_s,m_\de\de,m_{-t}\rho_{-t},\dots,m_{-1}\rho_{-1}).
$$ 
Then we have a parabolic subalgebra 
$$
R_{|M|}=R_{m_1\rho_1,\dots,m_s\rho_s,m_\de\de,m_{-t}\rho_{-t},\dots,m_{-1}\rho_{-1}}\subseteq R_\al.
$$
We will use the following partial order on the set $\Pi(\al)$ of root partitions of $\al$:
\begin{equation}\label{EBilex}
(M,\umu)\leq (N,\unu)\ \text{if and only if}\  M\leq N\ \text{and if $M=N$ then}\ \umu=\unu.
\end{equation}

\section{Cuspidal systems and standard modules}
Let $\preceq$ be an arbitrary convex preorder on $\Phi_+$. 
Recall the definition of a cuspidal system 
$$
\{L_\rho\mid \rho\in \Phi_+^\re\}\cup\{L(\umu)\mid \umu\in\Par\}
$$
from $\S\ref{SConCus}$.

For every $\al\in Q_+$ and $\pi=(M,\umu)\in\Pi(\al)$ as in (\ref{ERP}), we define an integer
\begin{equation}\label{EShift}
\shift(\pi)=\shift(M,\umu):=\sum_{\rho\in \Phi_+^\re} (\rho,\rho)m_\rho(m_\rho-1)/4,
\end{equation}
the irreducible $R_{|M|}$-module
\begin{equation}
L_{\pi}=L_{M,\umu}:=
q^{\shift(\pi)}\,
L_{\rho_1}^{\circ m_1} \boxtimes \dots\boxtimes L_{\rho_s}^{\circ m_s}\boxtimes  L(\umu)\boxtimes L_{\rho_{-t}}^{\circ m_{-t}}\boxtimes L_{\rho_{-1}}^{m_{-1}} , 
\end{equation}
and the {\em standard module}
\begin{equation}\label{EStand}
\Stand(\pi)=\Stand(M,\umu):=
q^{\shift(\pi)}\,
L_{\rho_1}^{\circ m_1} \circ \dots\circ L_{\rho_s}^{\circ m_s}\circ L(\umu)\circ L_{\rho_{-t}}^{\circ m_{-t}}\circ L_{\rho_{-1}}^{m_{-1}}.
\end{equation}
Note that $\Stand(M,\umu)=\Ind_{|M|} L_{M,\umu}$.

\begin{Theorem} \label{THeadIrr} {\rm \cite{Kcusp}} 
Given a convex preorder there exists a corresponding cuspidal system $\{L_\rho\mid \rho\in \Phi_+^\re\}\cup\{L(\ula)\mid \ula\in\Par\}$. Moreover: 
\begin{enumerate}
\item[{\rm (i)}] For every root partition $\pi$, the standard module  
$
\Stand(\pi)
$ has irreducible head; denote this irreducible module $L(\pi)$. 

\item[{\rm (ii)}] $\{L(\pi)\mid \pi\in \Pi(\al)\}$ is a complete and irredundant system of irreducible $R_\al$-modules up to isomorphism and degree shift.

\item[{\rm (iii)}] For every root partition  $\pi$, we have $L(\pi)^\circledast\cong L(\pi)$.  

\item[{\rm (iv)}] For all $\pi,\si\in\Pi(\al)$, we have that $[\Stand(\pi):L(\pi)]_q=1$, and $[\Stand(\pi):L(\si)]_q\neq 0$ implies $\pi\leq \si$. 

\item[{\rm (v)}] For all $(M,\umu),(N,\unu)\in\Pi(\al)$, we have that $\Res_{|M|}L(M,\umu)\cong L_{M,\umu}$ and $\Res_{|N|}L(M,\umu)\neq 0$ implies $N\leq M$.  

\item[{\rm (vi)}] The induced module $L_\rho^{\circ n}$ is irreducible for all $\rho\in\Phi^\re_+$ and $n\in\Z_{>0}$. 
\end{enumerate}
\end{Theorem}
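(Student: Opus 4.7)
The plan is to prove all six parts simultaneously by first constructing cuspidal modules and then analyzing restriction of standard modules via Mackey's theorem. For each real positive root $\rho\in\Phi_+^\re$, I would first identify an irreducible $R_\rho$-module $L_\rho$ via its extremal word (using Theorem~\ref{TLV} and crystal operators on the empty module $\b1$), and verify cuspidality (Cus1) using the Mackey filtration together with convexity of $\preceq$: if $\Res_{\be,\ga}L_\rho\neq 0$ for $\be+\ga=\rho$, then the appearance of a tensor factor in some restricted subquotient forces a decomposition of $\rho$ into lesser/greater pieces, and convexity rules out all violations. For imaginary modules $L(\umu)$, one postulates their existence from $R_{n\de}$ and must produce exactly $|\Par_n|$ non-isomorphic cuspidal irreducibles for $R_{n\de}$; these are labeled (non-canonically at this stage) by $\umu\in\Par_n$.

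For the structural parts (i), (iv), (v): I would analyze $\Res_{|M|}\Stand(M,\umu)$ via the Mackey theorem (Theorem~\ref{TMackeyKL}). Using the cuspidality conditions (Cus1) and (Cus2) on each tensor factor, plus convexity, most Mackey summands vanish, leaving only a single diagonal term equal to $L_{M,\umu}=L_{\rho_1}^{\circ m_1}\boxtimes\cdots\boxtimes L(\umu)\boxtimes\cdots\boxtimes L_{\rho_{-1}}^{m_{-1}}$ with multiplicity one. By Frobenius reciprocity,
\begin{equation*}
\Hom_{R_\al}(\Stand(M,\umu),\Stand(N,\unu))\cong\Hom_{R_{|M|}}(L_{M,\umu},\Res_{|M|}\Stand(N,\unu));
\end{equation*}
combining the Mackey analysis with the bilexicographic ordering (\ref{EBilex}) shows this vanishes unless $(M,\umu)\leq(N,\unu)$, and equals $F$ on the diagonal. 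Standard arguments then force $\Stand(\pi)$ to have an irreducible head $L(\pi)$ appearing with multiplicity one, with all other composition factors $L(\si)$ satisfying $\si>\pi$ in the opposite convention of ordering---this establishes (i), (iv) and (v).

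For (ii) I would combine the non-isomorphism of the $L(\pi)$ (which follows from the distinct restriction behavior in (v)) with a counting argument: the number of graded irreducibles of $R_\al$ is governed by the crystal graph of $U_q(\n_-)$ (Theorem~\ref{TLV}), and by standard PBW considerations adapted to affine type, this count equals $|\Pi(\al)|$. For (iii), self-duality of the cuspidal constituents together with Lemma~\ref{LDualInd} gives $\Stand(\pi)^\circledast\cong\Stand(\pi)$ up to a grading shift that is absorbed precisely by the combinatorial $\shift(\pi)$ defined in (\ref{EShift}); passing to the head yields $L(\pi)^\circledast\cong L(\pi)$. Finally, for (vi), the irreducibility of $L_\rho^{\circ n}$ reduces, via the convex preorder restricted to the rank-one subsystem generated by $\rho$, to the well-known finite-type statement that a cuspidal module for a positive real root behaves like a fundamental representation whose induction powers are simple.

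The main obstacle will be constructing the imaginary cuspidal modules $L(\umu)$ in the correct quantity $|\Par_n|$ for each $n$, since imaginary roots come in infinite multiplicity along $\Z\de$ and the preorder is degenerate on them (by (\ref{EPO3})); one cannot simply label them by their extremal words as in the real case. This is why the theorem only asserts existence of \emph{some} labeling, and why the subsequent program of imaginary Schur--Weyl duality is needed to make the labeling canonical. Secondarily, verifying that the Mackey summands vanish in the presence of imaginary factors requires a careful version of (Cus2) restricting to non-imaginary decompositions $n\de=\be+\ga$ with $\be,\ga\notin\Phi_+^\im$.
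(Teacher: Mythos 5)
First, a point of reference: the paper does not actually prove this theorem — it is quoted from \cite{Kcusp} (it is the Main Theorem there, restated as Theorem~1 of the Introduction) — so your sketch must be measured against the argument of that reference. Your architecture for parts (i), (iii)--(v) is essentially the right one and matches \cite{Kcusp}: control $\Res_{|N|}\Stand(M,\umu)$ by the Mackey filtration together with convexity and the cuspidality conditions, deduce $\Res_{|M|}\Stand(M,\umu)\cong L_{M,\umu}$ with multiplicity one, obtain the irreducible head by Frobenius reciprocity, and get (iii) from self-duality of the cuspidal factors, Lemma~\ref{LDualInd} and the shift (\ref{EShift}); the count $|\Pi(\al)|=\dim U_q(\n_-)_{-\al}$ via the affine PBW theorem is likewise the correct closing step for (ii).

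The genuine gap is the existence of the cuspidal system, which your plan treats as a preliminary step (``first construct cuspidal modules, then analyze standard modules'') and then concedes in the last paragraph that you cannot carry out for the imaginary modules. This is not a peripheral obstacle: producing the $|\Par_n|$ imaginary irreducibles satisfying (Cus2), and indeed producing $L_\rho$ for every real $\rho$ and an \emph{arbitrary} convex preorder, is precisely the existence assertion of the theorem, and it cannot be separated from the classification. (Your construction of $L_\rho$ ``via its extremal word and crystal operators applied to $\b1$'' also does not work in affine type for a general convex preorder; there is no Lyndon-type dictionary available here.) In \cite{Kcusp} existence and classification are proved by a single induction on $\height(\al)$: assuming everything at smaller heights, the standard modules attached to root partitions of $\al$ involving at least two distinct $\preceq$-classes already yield pairwise non-isomorphic irreducible heads; comparison with $|\Pi(\al)|$ shows exactly how many irreducibles remain unaccounted for (one new module when $\al$ is a real root, the new imaginary modules when $\al=n\de$), and these leftovers are then shown to satisfy (Cus1), resp.\ (Cus2), again by Mackey plus convexity. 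Without this intertwining your linear plan is circular. Two smaller points: your treatment of (vi) is too vague — the actual argument shows the non-identity Mackey terms of $\Res_{\rho,\dots,\rho}L_\rho^{\circ n}$ carry strictly positive degree shifts because $(\rho,\rho)>0$, so the degree-zero endomorphism algebra is one-dimensional and self-duality forces irreducibility; and the inequality in (iv) appears with opposite orientations in Theorem~1 of the Introduction and in Theorem~\ref{THeadIrr}, so your hedge about ``the opposite convention of ordering'' should be resolved explicitly against (\ref{EBilex}).
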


\section{Colored imaginary tensor spaces}\label{SMinusc}
{\em From now on we assume that the fixed convex preorder we are working with is balanced}, so that $\al_i\succ n\de\succ\al_0$ for all $i\in I'$ and $n\in\Z_{>0}$. It turns out that the theory of imaginary representations is independent of the choice of a {\em balanced} convex preorder.  
Denote 
$$
e:=\height(\de).
$$
Recall the irreducible imaginary representations of $R_{n\de}$ defined by the property (Cus2) in \S\ref{SConCus}. 
The irreducible imaginary representations of $R_\de$ are called {\em minuscule imaginary representations}.
The minuscule imaginary representations can be {\em canonically} labeled by the elements of $\Par_1$ as explained below. 

\begin{Lemma} \label{L150813}%
{\rm \cite[Lemma 5.1]{Kcusp}} 
Let $L$ be an irreducible $R_\de$-module. The following are equivalent: 
\begin{enumerate}
\item[{\rm (i)}] $L$ is minuscule imaginary; 
\item[{\rm (ii)}] $L$ factors through to the cyclotomic quotient $R_\de^{\La_0}$;
\item[{\rm (iii)}] we have $i_1=0$ for any word $\bi=(i_1,\dots,i_e)$ of $L$.
\end{enumerate}
\end{Lemma}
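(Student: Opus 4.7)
The plan is to prove $(ii)\Leftrightarrow(iii)$ directly from Lemma~\ref{LPr}, then derive $(i)\Rightarrow(iii)$ from the cuspidal axiom Cus2, and finally close the loop via $(iii)\Rightarrow(i)$ by a counting argument based on the cyclotomic categorification of the basic representation $V(\La_0)$.

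For $(ii)\Leftrightarrow(iii)$, I apply Lemma~\ref{LPr} with $\La=\La_0$, noting $\lan\La_0,\al_i^\vee\ran=\de_{i,0}$. The lemma's criterion $\eps_i^*(L)\le\de_{i,0}$ for all $i$ splits into $\eps_i^*(L)=0$ for every $i\in I'$---which is precisely condition (iii), that no word of $L$ starts with a letter of $I'$---together with $\eps_0^*(L)\le 1$. The latter is automatic since $a_0=1$ by (\ref{Ea_0}), so every word in $\words_\de$ contains the letter $0$ exactly once. Next, for $(i)\Rightarrow(iii)$: if $L$ is minuscule imaginary and admits a word $\bi=(i_1,\dots,i_e)$ with $i_1\in I'$, then $1_\bi L\neq 0$ yields $\Res_{\al_{i_1},\de-\al_{i_1}}L\neq 0$; neither $\al_{i_1}$ nor $\de-\al_{i_1}$ is a positive multiple of $\de$, so both lie in $Q_+\setminus\Phi_+^\im$, and Cus2 then forces $\al_{i_1}$ to be a sum of real roots $\prec\de$. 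However, the balanced hypothesis (\ref{EBalanced}) dictates that every such root has the shape $-\be+n\de$ with $\be\in\Phi'_+$ and $n\ge 1$, hence $\al_0$-coefficient at least $1$, whereas $\al_{i_1}$ has $\al_0$-coefficient zero---a contradiction.

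The main, and hardest, step is $(iii)\Rightarrow(i)$, which I handle by counting. Theorem~\ref{THeadIrr} produces exactly $l=|\Par_1|$ pairwise non-isomorphic minuscule imaginary modules $L(\umu(1)),\dots,L(\umu(l))$ (up to grading shift), and the implication $(i)\Rightarrow(iii)$ just proved shows that each of them factors through $R_\de^{\La_0}$. On the other hand, the cyclotomic categorification theorem (Khovanov-Lauda, Brundan-Kleshchev, Kang-Kashiwara) identifies the graded Grothendieck group of $\mod{R_{n\de}^{\La_0}}$ (as an $\A$-module) with the $(\La_0-n\de)$-weight space of $V(\La_0)$; for $n=1$ the classical formula $\dim V(\La_0)_{\La_0-\de}=l$ for the basic representation of untwisted affine type $X_l^{(1)}$ bounds the number of irreducibles of $R_\de^{\La_0}$ (up to shift) by $l$. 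Consequently the $l$ minuscule imaginary modules exhaust all such irreducibles, and any irreducible $L$ satisfying (iii) must be one of the $L(\umu(i))$, giving (i). The principal obstacle is precisely this last step, which depends on two substantial external inputs---the cyclotomic categorification theorem and the classical weight-multiplicity formula for $V(\La_0)$---without which one would instead need to construct an explicit equivalence between the minuscule imaginary modules and the simple modules of $R_\de^{\La_0}$ by hand.
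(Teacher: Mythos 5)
This lemma is not proved in the paper at all --- it is imported verbatim from \cite[Lemma 5.1]{Kcusp} --- so there is no in-text argument to compare against; I can only assess your proof on its own terms, and as far as I can see it is correct. The equivalence $(ii)\Leftrightarrow(iii)$ via Lemma~\ref{LPr} with $\La=\La_0$ is clean: for irreducible $L$ the cyclotomic quotient is all-or-nothing, the conditions $\eps_i^*(L)=0$ for $i\in I'$ amount exactly to (iii), and $\eps_0^*(L)\le 1$ is indeed automatic from $a_0=1$. The implication $(i)\Rightarrow(iii)$ is also sound: $1_\bi L\neq 0$ with $i_1\in I'$ does give $\Res_{\al_{i_1},\de-\al_{i_1}}L\neq 0$ with both components in $Q_+\setminus\Phi_+^\im$, and under the balanced preorder every real root $\prec\de$ has $\al_0$-coefficient at least $1$, so (Cus2) as stated in the paper rules this out. (Note this step silently uses the balancedness hypothesis, which is in force throughout the relevant sections, and it depends on the paper's stated orientation of (Cus2) --- that the \emph{left} restriction component is a sum of roots \emph{less} than $\de$; with the opposite convention $\al_{i_1}$ would itself be a root $\succ\de$ and the contradiction would evaporate, so it is worth being explicit that you are using the convention as printed.)

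The only substantive exposure is in $(iii)\Rightarrow(i)$. Your counting scheme is valid --- the $l$ pairwise non-isomorphic modules $L(\umu(i))$, $\umu(i)\in\Par_1$, all satisfy (ii) by the first two steps, so it suffices to bound the number of irreducible $R_\de^{\La_0}$-modules by $l$ --- but both inputs are genuinely external: the identification of that number with $\dim V(\La_0)_{\La_0-\de}$ (via the Lauda--Vazirani crystal $B\cong B(\infty)$ together with Lemma~\ref{LPr}, which carves out $B(\La_0)$), and the weight-multiplicity computation $\dim V(\La_0)_{\La_0-\de}=l$ for the basic representation of every untwisted affine type, not just the simply-laced ones where the Frenkel--Kac construction makes it transparent. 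You flag this dependence yourself, which is the right instinct; if you want the argument self-contained you would need to either justify the multiplicity formula in the non-simply-laced cases or replace the count by an explicit identification of the irreducible $R_\de^{\La_0}$-modules, which is essentially what Proposition~\ref{L3912} (also quoted from \cite{Kcusp}) accomplishes.
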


We always consider $R_\al^{\La_0}$-modules as $R_\al$-modules via $\infl^{\La_0}$. 

\begin{Proposition} \label{L3912} {\rm \cite[Lemma 5.2, Corollary 5.3]{Kcusp}} 
Let $i\in I'$. 
\begin{enumerate}
\item[{\rm (i)}] The cuspidal module $L_{\de-\al_i}$ factors through $R_{\de-\al_i}^{\La_0}$ and it is the only irreducible $R_{\de-\al_i}^{\La_0}$-module. 
\item[{\rm (ii)}] The minuscule imaginary modules are exactly 
$$\{L_{\de,i}:=\tilde f_i L_{\de-\al_i}\mid i\in I'\}.$$ 
\item[{\rm (iii)}] $e_j L_{\de,i}=0$ for all $j\in I\setminus\{ i\}$. Thus, for each $i\in I'$, the minuscule imaginary module $L_{\de,i}$ can be characterized uniquely up to isomorphism as the irreducible $R_\de^{\La_0}$-module such that $i_e=i$ for all words $\bi=(i_1,\dots,i_e)$ of $L_{\de,i}$. 
\end{enumerate}
\end{Proposition}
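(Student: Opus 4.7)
The plan is to combine the cyclotomic factoring criterion of Lemma~\ref{LPr} with the cuspidality axioms (Cus1)--(Cus2), balancedness of the preorder, and the crystal calculus from \S\ref{SSCryst}.

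\textbf{Part (i): factoring through.} I would verify the criterion of Lemma~\ref{LPr}: $\eps_j^*(L_{\de-\al_i}) \leq \lan\La_0,\al_j^\vee\ran = \de_{j,0}$ for all $j \in I$. If a word of $L_{\de-\al_i}$ begins with $j_1$, then $\Res_{\al_{j_1},\,\de-\al_i-\al_{j_1}}L_{\de-\al_i}\neq 0$, so (Cus1) forces $\al_{j_1}\prec\de-\al_i$. For $j_1\in I'$, balancedness gives $\al_{j_1}\succ\de\succ\de-\al_i$, a contradiction; hence $j_1=0$ and $\eps_j^*(L_{\de-\al_i})=0$ for $j\in I'$. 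To force $\eps_0^*(L_{\de-\al_i})\leq 1$, observe that a word beginning with $00$ would exhibit a factor of weight $\de-\al_i-2\al_0$; but using $\de=\al_0+\theta$ and $a_0=1$ one computes $\de-\al_i-2\al_0=-\al_0+\theta-\al_i$, which is not in $Q_+$. So this is automatic from weight considerations, and Lemma~\ref{LPr} gives the factoring.

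\textbf{Part (i): uniqueness.} By Theorem~\ref{THeadIrr}(ii), every irreducible $R_{\de-\al_i}$-module is $L(\pi)$ for some $\pi=(M,\umu)\in\Pi(\de-\al_i)$. Since $|\umu|\de$ is a summand of $\de-\al_i$ lying in $Q_+$, the inequality $|\umu|\geq 1$ would force $-\al_i\in Q_+$, impossible; so $\umu=\emptyset$ and $\pi$ is a partition into real roots only. If $\pi\neq((\de-\al_i)^1,\emptyset)$, write $\pi=(\rho_1^{m_1},\dots,\rho_{-1}^{m_{-1}})$ with largest real-root component $\rho_s$. By Theorem~\ref{THeadIrr}(v), words of $L(\pi)$ begin with a word of $L_{\rho_s}$; if $\rho_s\in\Phi^\re_\succ$ the leading letter lies in $I'$ (argued as above for $L_{\rho_s}$), violating the cyclotomic bound. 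If all $\rho_u\in\Phi^\re_\prec$ then $\rho_s\prec\de-\al_i$, and convexity (\ref{EPO2}) combined with Theorem~\ref{THeadIrr}(iv)--(v) produces an extremal word beginning with more than one $0$, again a contradiction (weight-impossible, as above). Hence $\pi=((\de-\al_i)^1,\emptyset)$ and $L(\pi)=L_{\de-\al_i}$ is the unique irreducible.

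\textbf{Part (ii).} First I show $L_{\de,i}:=\tilde f_i L_{\de-\al_i}\in\mod{R_\de^{\La_0}}$ for each $i\in I'$: since $\tilde f_i$ (with $i\neq 0$) appends an $i$ that shuffles into words of $L_{\de-\al_i}$ without affecting the first letter $0$ or the count of $0$'s, the Lemma~\ref{LPr} criterion remains satisfied. By Lemma~\ref{L150813}, $L_{\de,i}$ is therefore minuscule imaginary. Conversely, if $L$ is any minuscule imaginary, Lemma~\ref{L150813} places it in $\mod{R_\de^{\La_0}}$; since $L$ has weight $\de$ with exactly one $0$ (forced at the start) and $l$ letters from $I'$, some $\tilde e_j$ with $j\in I'$ is nonzero on $L$, producing an irreducible $R_{\de-\al_j}^{\La_0}$-module which by part (i) is $L_{\de-\al_j}$. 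Hence $L\cong\tilde f_j L_{\de-\al_j}=L_{\de,j}$, and the map $i\mapsto L_{\de,i}$ is a bijection between $I'$ and the set of minuscule imaginaries (both having cardinality $l=|\Par_1|$).

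\textbf{Part (iii) and main obstacle.} For $j\in I'\setminus\{i\}$, if $e_j L_{\de,i}\neq 0$, then $\tilde e_j L_{\de,i}$ is a nonzero irreducible $R_{\de-\al_j}^{\La_0}$-module, hence by (i) equals $L_{\de-\al_j}$; then $L_{\de,i}\cong\tilde f_j L_{\de-\al_j}=L_{\de,j}$, contradicting the injectivity from (ii). For $j=0$, every word of $L_{\de,i}$ begins with $0$ (Lemma~\ref{L150813}(iii)) and contains precisely $a_0=1$ occurrence of $0$ (as $\de$ has $\al_0$-coefficient $1$); since $e=\height(\de)\geq 2$, the terminal letter cannot be $0$, giving $e_0 L_{\de,i}=0$. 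The ``Thus'' characterization follows by reversing the argument: any irreducible $L\in\mod{R_\de^{\La_0}}$ with $i_e=i$ universally has $\tilde e_i L\neq 0$, which by (i) equals $L_{\de-\al_i}$, whence $L\cong L_{\de,i}$. The main technical obstacle is the uniqueness clause in (i); handling non-trivial root partitions cleanly requires a careful combination of balancedness, the convexity inequality (\ref{EPO2}), and the restriction structure of Theorem~\ref{THeadIrr}(v).
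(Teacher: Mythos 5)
The paper cites [Kcusp, Lemma 5.2, Corollary 5.3] for this proposition and does not reprove it here, so I can only assess your argument on its own terms. Your overall framework --- the cyclotomic factoring criterion of Lemma~\ref{LPr}, cuspidality (Cus1)--(Cus2), balancedness, and the crystal calculus of \S\ref{SSCryst} --- is the right one, and your treatment of part~(i) (factoring through), part~(iii), and the converse direction of part~(ii) is essentially sound. But there are two genuine gaps.

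\emph{Gap in the uniqueness step of part~(i).} You assert that if the largest root $\rho$ with $m_\rho>0$ lies in $\Phi^\re_\succ$, then the leading letter of any word of $L_\rho$ lies in $I'$, ``argued as above for $L_\rho$.'' That reasoning gives only $\al_{j_1}\prec\rho$ from (Cus1), which does \emph{not} rule out $j_1=0$: already in type ${\tt A}_1^{(1)}$ the cuspidal module $L_{\al_1+\de}$ has extremal word $011$, starting with $0$, and $\al_1+\de\succ\de$. What actually rescues the argument is a separate $\al_0$-coefficient count: since $\de-\al_i$ has $\al_0$-coefficient $1=a_0$, any $\rho=\be+n\de\in\Phi^\re_\succ$ with $n\geq 1$ appearing in $\pi$ must have $m_\rho n\leq 1$, and then the remainder $\de-\al_i-\rho=-\al_i-\be$ is not in $Q_+$; so the only roots from $\Phi^\re_\succ$ that can occur have $n=0$, i.e.\ lie in $\Phi'_+$ and have no $\al_0$-component at all. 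Only then does ``leading letter in $I'$'' follow. A similar $\al_0$-count is needed to close the case where all components lie in $\Phi^\re_\prec$, which you leave at the level of ``produces an extremal word beginning with more than one $0$.''

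\emph{Gap in the forward direction of part~(ii).} The claim that $\tilde f_i$ with $i\in I'$ ``appends an $i$ that shuffles into words of $L_{\de-\al_i}$ without affecting the first letter $0$'' is not justified and is exactly the nontrivial point. Words of $L_{\de-\al_i}\circ L(i)$ certainly include shuffles that place the $i$ in front; you need to show those words do not occur in the head $\tilde f_i L_{\de-\al_i}$. The generic crystal bound $\eps_i^*(\tilde f_i L)\leq\eps_i^*(L)+1$ gives only $\eps_i^*(L_{\de,i})\leq 1$, which is insufficient ($\leq 0$ is needed for Lemma~\ref{LPr}). One correct route is the bicrystal commutation $\tilde e_i^*\tilde f_i L_{\de-\al_i}=\tilde f_i\tilde e_i^*L_{\de-\al_i}=0$; another is the Kang--Kashiwara cyclotomic categorification together with the weight computation $\lan\La_0-(\de-\al_i),\al_i^\vee\ran=2>0$, so that $\tilde f_i$ acts nontrivially inside $B(\La_0)$. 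Either way, part~(ii) needs an actual crystal-theoretic input, not the shuffle heuristic; without it the cardinality argument at the end of~(ii) and the injectivity used in~(iii) are both dangling.
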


For each $i\in I'$, we refer to the minuscule module $L_{\de,i}$ described in Proposition~\ref{L3912} as the minuscule module of {\em color~$i$}. Let 
\begin{equation}\label{EMuJ}
\umu(i):=(\emptyset,\dots,\emptyset,(1),\emptyset,\dots,\emptyset)\in\Par_1\qquad(i\in I')
\end{equation} 
be the $l$-multipartition of $1$ with $(1)$ in the $i$th component. 
We associate to it the minuscule module $L_{\de,i}$:
\begin{equation}\label{EMinLabel}
L(\umu(i)):=L_{\de,i}\qquad(i\in I'). 
\end{equation}

\begin{Lemma} \label{LEpsLDe} {\rm \cite[Lemma 5.4]{Kcusp}} 
Let $i\in I'$. Then $\eps_i(L_{\de,i})=1$.
\end{Lemma}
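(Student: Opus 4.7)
The plan is to combine a standard crystal identity, the cuspidality of $L_{\de-\al_i}$, and an elementary $\al_0$-coefficient bookkeeping argument.

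First I would establish the easy inequality. By definition $L_{\de,i}=\tilde f_i L_{\de-\al_i}$, and Proposition~\ref{PCryst1}(iii) gives $\tilde e_i L_{\de,i}\cong L_{\de-\al_i}\neq 0$, so $\eps_i(L_{\de,i})\geq 1$. Next I would invoke the general crystal identity $\eps_i(\tilde f_i L) = \eps_i(L)+1$ for any irreducible $L$, which follows at once from $\tilde e_i^{\,k}\tilde f_i L\cong \tilde e_i^{\,k-1}L$ (using Proposition~\ref{PCryst1}(iii)). Applied to $L=L_{\de-\al_i}$, this reduces the lemma to the claim that $\eps_i(L_{\de-\al_i})=0$, i.e.\ $e_i L_{\de-\al_i}=0$.

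Suppose for contradiction that $\eps_i(L_{\de-\al_i})\geq 1$. Then $\de-2\al_i\in Q_+$ and $\Res_{\de-2\al_i,\al_i}L_{\de-\al_i}\neq 0$. Because $L_{\de-\al_i}$ is cuspidal for the real root $\de-\al_i$, property (Cus1) forces $\de-2\al_i$ to be a sum of positive roots, each strictly $\prec\de-\al_i$ in the convex preorder.

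The main step is to rule out such a decomposition. Since the preorder is balanced, $\de-\al_i = -\al_i+\de\in\Phi^\re_\prec$ and in particular $\de-\al_i\prec\de$. By (\ref{EPO3}) no imaginary root is $\prec\de$, so every positive root $\rho\prec\de-\al_i$ lies in $\Phi^\re_\prec=\{-\be+n\de\mid \be\in\Phi'_+,\ n\geq 1\}$. Let $c_0(\cdot)$ denote the coefficient of $\al_0$ when a root is expanded in simple roots. Since $a_0=1$ (by (\ref{Ea_0})) and $\be\in\Phi'_+$ involves no $\al_0$, each such $\rho$ satisfies $c_0(\rho)=n\geq 1$, while $c_0(\de-2\al_i)=1$ as $i\in I'$. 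Hence the putative decomposition has exactly one summand $-\be+\de$, forcing $\be=2\al_i$. This contradicts the fact that $\Phi'$ is a reduced finite-type root system, in which $2\al_i$ is never a root.

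The main obstacle—and the step where both the balanced-preorder hypothesis and the reducedness of $\Phi'$ enter essentially—is the last $c_0$-bookkeeping. The crystal identity and the invocation of (Cus1) are formal.
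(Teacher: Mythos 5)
Your proof is correct. The paper itself does not reprove this lemma (it simply cites \cite[Lemma 5.4]{Kcusp}), and your argument is the standard one used there: the crystal identity $\eps_i(\tilde f_iL)=\eps_i(L)+1$ reduces the claim to $\eps_i(L_{\de-\al_i})=0$, which follows from cuspidality of $L_{\de-\al_i}$ together with the observation that every root $\prec\de-\al_i$ lies in $\Phi^{\re}_{\prec}$ and hence has $\al_0$-coefficient at least $1$, so $\de-2\al_i$ cannot be a sum of such roots except as a single root $-2\al_i+\de$, which is impossible since $\Phi'$ is reduced.
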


The minuscule modules are defined over $\Z$, see \cite[Remark 5.5]{Kcusp}. To be more precise, for each $i\in I'$, there exists an $R_\de(\Z)$-module $L_{\de,i,\Z}$ which is free finite rank over $\Z$ and such that $L_{\de,i,\Z}\otimes F$ is the minuscule imaginary module $L_{\de,i,F}$ over $R_\de(F)$ for any ground field $F$. In particular,
\begin{equation}\label{EEndL}
\End_{R_\de}(L_{\de,i,\O})\cong \O.
\end{equation}


The {\em imaginary tensor space of color $i$} is the $R_{n\de}$-module 
\begin{equation}\label{EMNI}
M_{n,i}:=L_{\de,i}^{\circ n}.
\end{equation}
In this definition we allow $n$ to be zero, in which case $M_{0,i}$ is the trivial module over the trivial algebra $R_0$. A composition factor of $M_{n,i}$ is called an {\em irreducible imaginary module of \em color $i$}. Color is well-defined in the following sense: if $n>0$ and $L$ is an  irreducible imaginary  $R_{n\de}$-module of color $i$, then $L$ cannot be irreducible imaginary of color $j\in I'$. Indeed, every word appearing in the character of $M_{n,i}$, and hence in the character of $L$, ends on $i$.

\begin{Lemma} \label{LD0} {\rm \cite[Lemma 5.7]{Kcusp}} 
Any composition factor of $M_{n_1,1}\circ\dots\circ M_{n_l,l}$ is imaginary.
\end{Lemma}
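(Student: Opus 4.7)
The plan is to show that if $L$ is an irreducible composition factor of $V := M_{n_1,1}\circ\dots\circ M_{n_l,l}$, then its root partition $\pi = (M,\umu) \in \Pi(n\de)$ (from Theorem~\ref{THeadIrr}) has $M = 0$, i.e.\ no real-root coefficients. I will assume that some $m_\rho > 0$ for a real root $\rho$ and derive a contradiction by combining the Mackey Theorem~\ref{TMackeyKL}, the cuspidality of the minuscule modules (Proposition~\ref{L3912}), the characterization of $L(M,\umu)$ via restriction (Theorem~\ref{THeadIrr}(v)), and a projection onto the finite root lattice along $\de$.

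First I will analyze restrictions of $V$. Applying Mackey to $\Res_{\be,\ga}V$ for any decomposition $n\de = \be + \ga$, the subquotients are induced from outer tensor products of terms $\Res_{\al^a_1,\al^a_2} L_{\de,c(a)}$ indexed by the $n := n_1 + \dots + n_l$ tensor positions $a$ (with color $c(a) \in I'$), subject to $\al^a_1 + \al^a_2 = \de$, $\sum_a \al^a_1 = \be$, and $\sum_a \al^a_2 = \ga$. Since each $L_{\de,c}$ is minuscule imaginary, property (Cus2) allows only three possibilities at each position: $(\al^a_1,\al^a_2) = (0,\de)$, $(\de,0)$, or both nonzero with $\al^a_1$ a sum of real roots $\prec\de$ and $\al^a_2$ a sum of real roots $\succ\de$. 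Summing over $a$, it follows that $\Res_{\be,\ga}V \neq 0$ forces $\be$ to be a sum of positive imaginary roots and real roots $\prec\de$, and $\ga$ to be a sum of positive imaginary roots and real roots $\succ\de$.

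Second, assume $L \cong L(M,\umu)$ with $M \neq 0$ is a composition factor of $V$. Writing $M$ in the form of \eqref{ERP}, by symmetry I may assume some $\rho_k \succ \de$ has $m_k > 0$, and take $k = 1$. By Theorem~\ref{THeadIrr}(v), $\Res_{|M|}L \cong L_{M,\umu}$, whose first tensor factor is $L_{\rho_1}^{\circ m_1} \neq 0$. Coarsening the parabolic by transitivity of restriction yields $\Res_{m_1\rho_1,\, n\de - m_1\rho_1}L \neq 0$, and since $L$ is a composition factor of $V$ and $\Res$ is exact, $\Res_{m_1\rho_1,\, n\de - m_1\rho_1}V \neq 0$ as well.

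Third, I will contradict step one via projection along $\de$. Let $\varpi\colon \bigoplus_{i\in I}\Z\al_i \to \bigoplus_{i\in I'}\Z\al_i$ be the $\Z$-linear map with $\varpi(\al_0) := -\theta$ and $\varpi(\al_i) := \al_i$ for $i \in I'$; by \eqref{EHRoot} this satisfies $\varpi(\de) = 0$. Under the balanced assumption, writing $\rho_1 = \be + k\de$ with $\be \in \Phi'_+$ and $k \geq 0$, we get $\varpi(m_1\rho_1) = m_1\be \in \bigl(\sum_{i\in I'}\Z_{\geq 0}\al_i\bigr) \setminus \{0\}$. But step one forces $m_1\rho_1$ to be a sum of positive imaginary roots (killed by $\varpi$) together with real roots $\prec\de$ of the form $-\be'+k'\de$ (each sent by $\varpi$ to $-\be'$), so $\varpi(m_1\rho_1) \in -\sum_{i\in I'}\Z_{\geq 0}\al_i$, a contradiction. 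The case where instead some $\rho_{-k} \prec \de$ has $m_{-k} > 0$ is handled symmetrically by restricting to $R_{n\de - m_{-1}\rho_{-1},\, m_{-1}\rho_{-1}}$. The main obstacle is the bookkeeping in the Mackey analysis of step one; once the $(\be,\ga)$-constraint on nonvanishing restrictions of $V$ is secured, the projection in step three reduces the problem to an elementary identity in the finite root lattice.
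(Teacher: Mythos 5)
Your argument is correct. Note that the paper itself does not prove this lemma --- it is quoted from \cite[Lemma 5.7]{Kcusp} --- so there is no in-text proof to compare against; your reconstruction (Mackey analysis of $\Res_{\be,\ga}$ of the mixed tensor space using (Cus2) for the minuscule factors, combined with Theorem~\ref{THeadIrr}(v) and the projection $\varpi$ killing $\de$ to rule out a nontrivial real part in the root partition) is a valid, self-contained proof along exactly the lines one expects from the cited source. The only point worth tightening is the phrase ``take $k=1$'': one should take $\rho_1$ to be the $\preceq$-largest real root with $m_{\rho_1}>0$, which is harmless since the trivial factors in $L_{M,\umu}$ do not affect the nonvanishing of the coarsened restriction.
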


The following theorem provides a `reduction to one color':

\begin{Theorem} \label{TD1} {\rm \cite[Theorem 5.10]{Kcusp}} 
Suppose that for each $n\in\Z_{\geq 0}$ and $i\in I'$, we have an irredundant family $\{L_i(\la)\mid\la\vdash n\}$ of irreducible imaginary $R_{n\de}$-modules of color $i$. For a multipartition $\ula=(\la^{(1)},\dots,\la^{(l)})\in\Par_n$, define $$L(\ula):=L_1(\la^{(1)})\circ\dots\circ L_l(\la^{(l)}).$$ 
Then $\{L(\ula)\mid\ula\in\Par_n\}$ is a complete and irredundant system of irreducible imaginary $R_{n\de}$-modules. In particular, the given modules $\{L_i(\la)\mid\la\vdash n\}$ give all the irreducible imaginary modules of color $i$ up to isomorphism. 
\end{Theorem}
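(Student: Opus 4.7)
The plan is to proceed in three steps: first show each $L(\ula)$ is imaginary, then compute the restriction $\Res_\nu L(\ula)$ for $\nu=(n_1\de,\dots,n_l\de)$ with $n_i:=|\la^{(i)}|$ and identify it with the irreducible outer tensor product $L_1(\la^{(1)})\boxtimes\dots\boxtimes L_l(\la^{(l)})$, and finally deduce irreducibility, distinctness, and completeness by counting against Theorem~\ref{THeadIrr}(ii).

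The first step is immediate: each $L_i(\la^{(i)})$ is by hypothesis a composition factor of $M_{n_i,i}=L_{\de,i}^{\circ n_i}$, so $L(\ula)$ is a subquotient of $M_{n_1,1}\circ\dots\circ M_{n_l,l}$, and by Lemma~\ref{LD0} every composition factor of $L(\ula)$ is imaginary. For the restriction computation I would apply the Mackey theorem (Theorem~\ref{TMackeyKL}) to $\Res_\nu\Ind_\nu(L_1(\la^{(1)})\boxtimes\dots\boxtimes L_l(\la^{(l)}))$: its filtration is indexed by matrices $(\al^a_b)_{a,b=1}^l$ in $Q_+$ with row sums $\sum_b\al^a_b=n_a\de$ and column sums $\sum_a\al^a_b=n_b\de$, and each subquotient is built from the factors $\Res_{\al^a_1,\dots,\al^a_l}L_a(\la^{(a)})$. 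I expect only the diagonal matrix $\al^a_b=\de_{ab}n_a\de$ to produce a nonzero subquotient, which will then equal $L_1(\la^{(1)})\boxtimes\dots\boxtimes L_l(\la^{(l)})$ (irreducible as an outer tensor product of irreducibles). Ruling out off-diagonal matrices will use the color property $e_jL_{\de,a}=0$ for $j\neq a$ (Proposition~\ref{L3912}(iii)) together with the ``starts with $0$'' property of minuscule modules (Lemma~\ref{L150813}); propagating these through an iterated Mackey decomposition of $\Res L_{\de,a}^{\circ n_a}$ and tracking the column sum constraints should force any off-diagonal $\al^a_b\neq 0$ (with $a\neq b$) to cause a color-mismatch and a vanishing factor.

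Given this restriction calculation, irreducibility of $L(\ula)$ will follow from Frobenius reciprocity: the head of $L(\ula)=\Ind_\nu(L_1(\la^{(1)})\boxtimes\dots\boxtimes L_l(\la^{(l)}))$ must have restriction containing the irreducible $L_1(\la^{(1)})\boxtimes\dots\boxtimes L_l(\la^{(l)})$ and hence equal to all of $\Res_\nu L(\ula)$, and exactness of $\Res_\nu$ will then force $L(\ula)$ itself to equal this head. Distinctness will follow because $L(\ula)\cong L(\umu)$ forces $|\la^{(i)}|=|\mu^{(i)}|$ for all~$i$ (else the restrictions live on different parabolics) and then the irredundance of $\{L_i(\la)\mid\la\vdash n\}$ yields $\la^{(i)}=\mu^{(i)}$. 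Completeness, and the final ``color-$i$'' assertion (obtained by specializing to multipartitions supported at position~$i$), will follow by counting: Theorem~\ref{THeadIrr}(ii) together with condition (Cus2) produces exactly $|\Par_n|$ irreducible imaginary $R_{n\de}$-modules, matching the number of $l$-multipartitions of~$n$. The main obstacle is the Mackey vanishing step: propagating the color constraint through iterated restrictions of $L_{\de,a}^{\circ n_a}$ and carefully tracking the matrix entries $\al^a_b$ across the row and column constraints is the technical heart of the argument.
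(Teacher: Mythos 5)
The paper states Theorem~\ref{TD1} as a citation from \cite[Theorem 5.10]{Kcusp} and does not reprove it here (Proposition~\ref{PT2'} later strengthens it, but its proof relies on Theorem~\ref{TD1}), so I can only assess your argument on internal grounds. There are two real problems.

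The Mackey restriction claim is wrong. Your cuspidality-plus-column-sum argument should indeed eliminate matrices $(\al^a_b)$ having some entry not in $\Z_{\geq 0}\de$ (the height-versus-$\alpha_0$-coefficient comparison forces equality), but among matrices with $\al^a_b = m^a_b\de$ the diagonal one is far from the only one producing a nonzero subquotient. Take $l=2$, $n_1=n_2=1$ and the anti-diagonal matrix $\al^1_2=\al^2_1=\de$, $\al^1_1=\al^2_2=0$. The inner restrictions $\Res_{0,\de}L_1(\la^{(1)})$ and $\Res_{\de,0}L_2(\la^{(2)})$ are identities; the block permutation $x(\underline{\al})$ then swaps the two nonzero slots, and the outer inductions are again identities, so the Mackey subquotient is $L_2(\la^{(2)})\boxtimes L_1(\la^{(1)})\neq 0$. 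No word ever has to be cut at a non-multiple-of-$\de$ weight, so the color property $e_jL_{\de,a}=0$ $(j\neq a)$ gives no vanishing. In general every nonnegative-integer matrix $(m^a_b)$ with row sums $n_a$ and column sums $n_b$ contributes, so $\HCR_\nu^n L(\ula)$ is strictly larger than $L_1(\la^{(1)})\boxtimes\dots\boxtimes L_l(\la^{(l)})$. What is true is that the diagonal term is the bottom layer of the Mackey filtration and the unique layer landing in the block of $\mod{R_{\nu,\de}}$ in which the $b$-th factor has color~$b$; to make the argument run you must project to that colored block, not use the full restriction.

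Second, even after the fix, the step ``exactness of $\Res_\nu$ will then force $L(\ula)$ itself to equal this head'' is not a valid inference. Exactness only gives you that the (colored) restriction of $\operatorname{rad}L(\ula)$ vanishes; it does not imply $\operatorname{rad}L(\ula)=0$ unless you separately establish that the colored restriction functor is faithful on composition factors of the mixed tensor space $M_{n_1,1}\circ\dots\circ M_{n_l,l}$. That faithfulness (for instance by showing every such composition factor carries a nonzero Gelfand--Graev or extremal word in the appropriate configuration) is precisely where the real work lies and is missing. As written, your argument only shows that $L(\ula)$ has irreducible head with a prescribed colored restriction; irreducibility of $L(\ula)$ itself, and hence the counting deduction you then want to make against Theorem~\ref{THeadIrr}(ii), is not yet justified.
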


\begin{Corollary} \label{CColorUB} 
Suppose that for each $n\in\Z_{\geq 0}$ and for each $i=1,\dots,l$, we have an irredundant family $\{L_i(\la)\mid\la\vdash n\}$ of  irreducible imaginary $R_{n\de}$-modules of color $i$. Then each irreducible imaginary $R_{n\de}$-module of color $i$ is isomorphic to one of the modules $L_i(\la)$ for some $\la\vdash n$.  
\end{Corollary}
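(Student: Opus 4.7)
The plan is to deduce the corollary from Theorem~\ref{TD1} applied to the given families, combined with a color-detection argument at the level of formal characters.

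First I would observe that for any $j\in I'$ and any partition $\la$, every word of $L_j(\la)$ ends in the letter $j$. This follows because, by hypothesis, $L_j(\la)$ is a composition factor of $M_{|\la|,j}=L_{\de,j}^{\circ |\la|}$, so by (\ref{ECharShuffle}) the character of $M_{|\la|,j}$ is a quantum shuffle of $|\la|$ copies of $\CH L_{\de,j}$; and by Proposition~\ref{L3912}(iii) every word of $L_{\de,j}$ ends in $j$, so the same is true of every word in any shuffle of such words. In particular, the given irreducible imaginary $R_{n\de}$-module $L$ of color $i$ has all its words ending in $i$.

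Next I would apply Theorem~\ref{TD1} to the given families $\{L_i(\la)\mid\la\vdash n\}$ for $i\in I'$: this produces $\{L(\ula)\mid\ula\in\Par_n\}$ as a complete irredundant system of irreducible imaginary $R_{n\de}$-modules. Hence $L\cong L(\ula_0)$ for some unique $\ula_0=(\la^{(1)}_0,\dots,\la^{(l)}_0)\in\Par_n$, with
$$L(\ula_0)=L_1(\la^{(1)}_0)\circ\dots\circ L_l(\la^{(l)}_0).$$

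The final step is to force $\la^{(j)}_0=\emptyset$ for all $j\neq i$. Suppose for contradiction that $\la^{(j)}_0\neq\emptyset$ for some $j\neq i$. By (\ref{ECharShuffle}), $\CH L(\ula_0)$ equals the quantum shuffle of the $\CH L_k(\la^{(k)}_0)$. By the first paragraph, each word of $L_j(\la^{(j)}_0)$ ends in $j$, and one checks that in any shuffle of nonempty words one may arrange the tail of the shuffled word to coincide with the tail of a chosen factor; since the quantum shuffle product has coefficients in $\Z_{\geq 0}[q,q^{-1}]$ (no cancellation), this contributes a word of $L(\ula_0)$ ending in $j\neq i$, contradicting the conclusion of the first paragraph. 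Therefore only $\la^{(i)}_0$ can be nonempty, in which case $L\cong L(\ula_0)= L_i(\la^{(i)}_0)$ (using that $L_k(\emptyset)$ is the trivial module over $R_0$, which acts as the identity for $\circ$). The only subtle point to verify carefully is the tail-detection claim for quantum shuffles, but this is immediate from the positivity of the shuffle product coefficients.
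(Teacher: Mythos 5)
Your proof is correct and follows essentially the same route as the paper: apply Theorem~\ref{TD1} to write $L\cong L_1(\mu^{(1)})\circ\dots\circ L_l(\mu^{(l)})$ for some $l$-multipartition, then force $\mu^{(j)}=\emptyset$ for $j\neq i$ by inspecting the last letters of words appearing in the formal character. The paper states the last step tersely (``for otherwise $j$ would arise as a last letter of some word\dots''), relying on the observation made just before Lemma~\ref{LD0} that every word of a color-$j$ module ends in $j$; you re-derive that observation from Proposition~\ref{L3912}(iii) and (\ref{ECharShuffle}) and spell out the tail-detection step via positivity of the shuffle coefficients, which is a correct and somewhat more self-contained elaboration but not a different argument.
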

\begin{proof}
Let $L$ be an irreducible imaginary $R_{n\de}$-module of color $i$. By Theorem~\ref{TD1}, 
we must have $L\simeq  L_1(\mu^{(1)})\circ\dots\circ L_l(\mu^{(l)})$ for some multipartition $(\mu^{(1)},\dots,\mu^{(l)})\in\Par_n$. It remains to note that $\mu^{(j)}=\emptyset$ for all $j\neq i$, for otherwise $j$ would arise as a last letter of some word arising in the character of $L$, giving a contradiction.  
\end{proof}

If the Cartan matrix $\Car$ is symmetric, then the minuscule representations can be described very explicitly as certain special homogeneous representations, see \cite[Sections 5.4,5.5]{Kcusp}.

\begin{Lemma} \label{Lw(i)} {\rm \cite[Lemma 5.16]{Kcusp}} 
Let $i\in I'$. Then we can write $\La_0-\de+\al_i=w(i)\La_0$ for a unique $w(i)\in W$ which is $\La_0$-minuscule.
\end{Lemma}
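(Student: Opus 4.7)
The plan is to prove existence of $w(i)$ by explicit construction of a $\La_0$-minuscule reduced expression, and to prove uniqueness by induction on length, using the rigidity of the rightmost reflection in any $\La_0$-minuscule reduced expression.

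For existence, I would construct $w(i)$ step by step. Since $\de$ is $W$-invariant and $(\La_0,\al_i)=0$ for $i\in I'$, the target $\La_0-\de+\al_i$ is a level-one weight. The first step is forced: because $\langle\La_0,\al_j^\vee\rangle=1$ forces $j=0$, the only admissible $\La_0$-minuscule initial reflection is $r_0$, producing $\mu_1:=r_0\La_0=\La_0-\al_0$. Inductively, at stage $k$ one has an intermediate weight $\mu_k=\La_0-\nu_k$ for some $\nu_k\in Q_+$ with $\nu_k<\de-\al_i$, and one must select $i_{k+1}\in I$ with $\langle\mu_k,\al_{i_{k+1}}^\vee\rangle=1$ so that extending the reduced expression by $r_{i_{k+1}}$ preserves the $\La_0$-minuscule property; after $\height(\de)-1$ such steps one lands at $\La_0-\de+\al_i$, and the resulting product $r_{i_{\height(\de)-1}}\cdots r_{i_1}$ is the required $w(i)$.

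For uniqueness, suppose $w,w'$ are both $\La_0$-minuscule with $w\La_0=w'\La_0$. Fix $\La_0$-minuscule reduced expressions of each; by the initial-step rigidity noted above, both expressions must begin (in the order reflections are applied to $\La_0$) with $r_0$, so write $w=w_1r_0$ and $w'=w_1'r_0$, where $w_1$ and $w_1'$ are $(\La_0-\al_0)$-minuscule of strictly smaller length. Since
$$
w_1(\La_0-\al_0)=w\La_0=w'\La_0=w_1'(\La_0-\al_0),
$$
induction on length (with the analogous rigidity of the first reflection applied to the highest weight $\La_0-\al_0$) yields $w_1=w_1'$, and hence $w=w'$.

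The principal obstacle I anticipate is the inductive step of the existence argument, namely verifying that at each stage $\mu_k$ a simple reflection $r_{i_{k+1}}$ is available with $\langle\mu_k,\al_{i_{k+1}}^\vee\rangle=1$ and that the sequence of such choices can be arranged to terminate at $\La_0-\de+\al_i$ after exactly $\height(\de)-1$ steps. In simply-laced affine types this is transparent and is closely related to the theory of homogeneous representations developed in \cite[Sections 5.4, 5.5]{Kcusp}; for the non-simply-laced affine types, where the coefficients $a_j$ of simple roots in $\de$ can exceed one, the combinatorial bookkeeping is more delicate and the argument seems to require type-by-type verification through the untwisted affine Dynkin diagrams.
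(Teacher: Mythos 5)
The paper does not prove this lemma at all --- it is quoted verbatim from \cite[Lemma 5.16]{Kcusp} --- so your proposal has to stand on its own, and it has two genuine gaps. The first is in the existence argument: the inductive step you defer (that at each stage an admissible reflection exists and the choices can be steered so as to terminate at $\La_0-\de+\al_i$ after exactly $\height(\de)-1$ steps) \emph{is} the entire content of the existence claim, and it is not carried out. Moreover, your plan to settle the non-simply-laced types by a case check through the affine diagrams cannot succeed, because the statement is false there: $W$ preserves the invariant bilinear form, and since $(\de,\de)=(\de,\al_i)=(\La_0,\al_i)=0$ for $i\in I'$ and $(\La_0,\de)=a_0(\al_0,\al_0)/2=(\al_0,\al_0)/2$, one computes
$$
(\La_0-\de+\al_i,\La_0-\de+\al_i)-(\La_0,\La_0)=(\al_i,\al_i)-(\al_0,\al_0)=(\al_i,\al_i)-(\theta,\theta),
$$
which is nonzero whenever $\al_i$ is short (e.g.\ $i=l$ in type ${\tt B}_l^{(1)}$), so $\La_0-\de+\al_i\notin W\La_0$ for any $w$, minuscule or not. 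The lemma carries the standing hypothesis of \cite[\S5.4--5.5]{Kcusp} that $\Car$ is symmetric, and the present paper only invokes it in that setting. In the symmetric case this same norm computation is the missing key ingredient: it shows $\La_0-\de+\al_i$ has maximal norm among weights $\leq\La_0$, hence lies in $W\La_0$, after which one takes $w(i)$ of minimal length in its coset modulo the stabilizer and checks $l(w(i))=\height(\de-\al_i)$, which is equivalent to the minuscule property; building the path one reflection at a time is not the efficient route.

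The second gap is in the uniqueness argument. The ``rigidity of the first reflection'' holds only at the very first step, where dominance of $\La_0$ and $\lan\La_0,\al_j^\vee\ran=\de_{j,0}$ force $j=0$. It fails immediately afterwards: already in type ${\tt A}_2^{(1)}$ one has $\lan\La_0-\al_0,\al_1^\vee\ran=\lan\La_0-\al_0,\al_2^\vee\ran=1$, so both $r_1$ and $r_2$ are admissible second steps, and two $\La_0$-minuscule reduced expressions of the same element generally differ by commutations rather than agreeing letter by letter. Hence the inductive step ``$w_1(\La_0-\al_0)=w_1'(\La_0-\al_0)$ implies $w_1=w_1'$ by rigidity of the first reflection'' is unsupported, and its mechanism is simply false. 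Uniqueness is true, but for a different reason, essentially Stembridge's theory \cite{S2}: every reduced expression of a $\La_0$-minuscule element $w$ satisfies the minuscule condition, so no reduced expression of $w$ can end (on the side applied first to $\La_0$) in an $r_j$ with $\lan\La_0,\al_j^\vee\ran=0$; therefore $w\al_j>0$ for all $j\in I'$, i.e.\ $w$ is the unique minimal-length representative of its coset $wW'$ modulo the stabilizer $W'$ of $\La_0$, and $l(w)=\height(\La_0-w\La_0)$ fixes its length. Two $\La_0$-minuscule elements with the same image of $\La_0$ thus lie in the same coset of $W'$ and are both the minimal-length representative of it, so they coincide.
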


By the theory of homogeneous representations \cite[Sections 5.4,5.5]{Kcusp}, the minuscule element $w(i)$ constructed in Lemma~\ref{Lw(i)} is of the form $w_{C(i)}$ for some strongly homogeneous component $C(i)$ of $G_{\de-\al_i}$. 

\begin{Lemma} \label{L3912_2} {\rm \cite[Lemma 5.17]{Kcusp}} 
Let $i\in I'$, $d:=e-1=\height(\de-\al_i)$ and $\bj=(j_1,\dots,j_{d})\in C(i)$. Then the cuspidal module $L_{\de-\al_i}$ is the homogeneous module $L(C(i))$, and we have:
\begin{enumerate}
\item[{\rm (i)}] $j_1=0$;
\item[{\rm (ii)}] $j_d$ is connected to $i$ in the Dynkin diagram, i.e. $a_{j_d,i}=-1$
\item[{\rm (iii)}] if $j_b=i$ for some $b$, then there are at least three indices $b_1,b_2,b_3$ such that $b<b_1<b_2<b_3\leq d$ such that $a_{i,b_1}=a_{i,b_2}=a_{i,b_3}=-1$. 
\end{enumerate}  
\end{Lemma}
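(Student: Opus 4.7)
The plan is to exploit the $\La_0$-minuscule property of $w(i) = w_{C(i)}$ granted by Lemma~\ref{Lw(i)}: any $\bj = (j_1,\dots,j_d) \in C(i)$ yields a reduced expression $w(i) = r_{j_d}\cdots r_{j_1}$, and writing $w_k := r_{j_k}\cdots r_{j_1}$ (with $w_0 := 1$), the minuscule condition provides
$$\lan w_{k-1}\La_0,\ \al_{j_k}^\vee\ran = 1 \qquad (1 \le k \le d),$$
while the reflection recursion gives $w_k\La_0 = w_{k-1}\La_0 - \al_{j_k}$ and in particular $w_d\La_0 = \La_0 - \de + \al_i$. Every assertion of the lemma will flow from evaluating this condition at strategic indices and tracking pairings with $\al_i^\vee$.

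First I would dispatch (i) and the identification $L(C(i)) \simeq L_{\de-\al_i}$ in one stroke. The $k=1$ case of the minuscule condition reads $\lan \La_0, \al_{j_1}^\vee \ran = 1$; since this pairing is $\de_{j_1,0}$, we force $j_1 = 0$, which is (i). Consequently every word of the homogeneous module $L(C(i))$ begins with $0$. Because all $y_r$ act as zero on a homogeneous module (see \cite[\S5.4]{Kcusp}), the cyclotomic relation $y_1^{\lan \La_0, \al_0^\vee \ran} 1_\bj = y_1 \cdot 1_\bj$ annihilates $L(C(i))$, so $L(C(i))$ factors through $R_{\de-\al_i}^{\La_0}$. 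By Proposition~\ref{L3912}(i) this cyclotomic quotient has unique irreducible module $L_{\de-\al_i}$, giving the asserted identification.

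For (ii), the $k=d$ case together with $\lan \de, \al_j^\vee \ran = 0$ for all $j$, $\lan \La_0, \al_j^\vee \ran = \de_{j,0}$, and $\lan \al_{j_d}, \al_{j_d}^\vee \ran = 2$ collapses the condition $\lan w_{d-1}\La_0, \al_{j_d}^\vee \ran = 1$ (using $w_{d-1}\La_0 = \La_0 - \de + \al_i + \al_{j_d}$) to $\lan \al_i, \al_{j_d}^\vee \ran = -1$. In the symmetric setting this is $\cc_{j_d,i} = -1$, exactly the Dynkin-neighbor condition; in particular $j_d \ne i$.

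For (iii), track the integer sequence $h_k := \lan w_k\La_0, \al_i^\vee \ran$: then $h_0 = 0$, $h_d = 2$, $h_k - h_{k-1} = -\cc_{i, j_k}$, and every occurrence of $i$ at position $k$ forces $h_{k-1} = 1$ (hence $h_k = -1$) by the minuscule condition. Enumerate the positions $b = b^{(1)} < b^{(2)} < \cdots < b^{(m)} \le d$ carrying the letter $i$ from $b$ onward. On each inner interval $(b^{(s)}, b^{(s+1)})$, $h$ must climb from $-1$ to $1$, and on the tail $(b^{(m)}, d]$ from $-1$ to $2$; since every non-$i$ increment lies in $\{0,1\}$ in the simply-laced case, each inner interval contributes at least $2$ positions with $\cc_{i, j_k} = -1$ and the tail contributes at least $3$, yielding the required three positions $b < b_1 < b_2 < b_3 \le d$. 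The main technical point is precisely this combinatorial bookkeeping in (iii): one must aggregate contributions across all occurrences of $i$, and the simply-laced hypothesis is essential for bounding each non-$i$ increment by $1$.
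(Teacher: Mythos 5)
Your proof is correct, and since the paper only cites this result from \cite[Lemma 5.17]{Kcusp}, the minuscule-weight bookkeeping you use (evaluating $\lan w_{k-1}\La_0,\al_{j_k}^\vee\ran=1$ at $k=1$, at $k=d$, and tracking $h_k=\lan w_k\La_0,\al_i^\vee\ran$ from $h_0=0$ to $h_d=2$ with drops of $2$ at each occurrence of $i$) is exactly the expected argument. The only steps left implicit are that the minuscule condition transfers to \emph{every} $\bj\in C(i)$ (which holds because $\La_0$-minuscule elements are fully commutative and the condition is preserved under commuting moves) and that in (ii) the case $j_d=0$ is excluded since it would force $\lan\al_i,\al_0^\vee\ran=-2$, impossible in the simply-laced setting where this lemma lives.
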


Now we can describe the minuscule modules as homogeneous modules:

\begin{Proposition} \label{PMinSL} {\rm \cite[Proposition 5.19]{Kcusp}} 
Let $i\in I'$. The set of concatenations
$$
C_i:=\{\bj i\mid \bj\in C(i)\}
$$ 
is a homogeneous component of $G_\de$, and the corresponding homogeneous $R_{\de}$-module $L(C_i)$ is isomorphic to the minuscule imaginary module $L_{\de,i}$. 
\end{Proposition}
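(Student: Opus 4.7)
The plan is to treat the two claims separately: first that $C_i$ really is a homogeneous connected component of $G_\de$, and then that the corresponding homogeneous module $L(C_i)$ coincides with the minuscule imaginary module $L_{\de,i}$.

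For the first claim, I would verify the combinatorial criterion (\ref{ENC}) of Lemma~\ref{LHomComp} for every word $\bi = \bj i$ with $\bj = (j_1,\dots,j_d) \in C(i)$. For positions $1 \leq r \leq d-1$ the conditions hold because $C(i)$ itself is (strongly) homogeneous. Only the two conditions touching position $d+1=e$ need attention. The inequality $j_d \neq i$ follows from Lemma~\ref{L3912_2}(ii), since $\cc_{j_d,i} = -1$ and in particular $j_d \neq i$. For the second condition at $r = d-1$, I would observe that if $j_{d-1}$ were equal to $i$, then Lemma~\ref{L3912_2}(iii) would demand three indices $d-1 < b_1 < b_2 < b_3 \leq d$ (impossible), so $j_{d-1} \neq i$ and the condition is vacuous. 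To see $C_i$ is a full component, I would note that an admissible transposition of position $d$ in $\bj i$ requires $\cc_{j_d,i} = 0$, which fails; hence no edge of $G_\de$ exits $C_i$, while edges within the first $d$ positions stay inside $C_i$ by the homogeneity of $C(i)$.

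For the second claim, I would use the characterization of $L_{\de,i}$ given by Proposition~\ref{L3912}(iii) as the unique irreducible $R_\de^{\La_0}$-module all of whose words end in $i$. By Theorem~\ref{Thomog}(i)--(ii), $L(C_i)$ is irreducible and its words are exactly the elements of $C_i$, which by construction all end in $i$. It then remains to show that $L(C_i)$ factors through the cyclotomic quotient $R_\de^{\La_0}$. For this I would invoke Lemma~\ref{LPr}: I must verify $\eps_j^*(L(C_i)) \leq \lan\La_0,\al_j^\vee\ran = \de_{j,0}$ for every $j \in I$. Every word of $L(C_i)$ has the form $\bj i$ with $\bj \in C(i)$, and $j_1 = 0$ by Lemma~\ref{L3912_2}(i); this forces $\eps_j^*(L(C_i)) = 0$ for $j \neq 0$. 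Moreover $j_2 \neq j_1 = 0$ by condition (\ref{ENC}) on $C(i)$, so no word starts with $00$, giving $\eps_0^*(L(C_i)) \leq 1$. Hence $L(C_i)$ descends to $R_\de^{\La_0}$, it remains irreducible there, and the uniqueness part of Proposition~\ref{L3912}(iii) yields $L(C_i) \cong L_{\de,i}$.

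The main technical obstacle is really the verification at the junction position $d$ in part one: the conditions at and immediately before the appended letter $i$ are precisely what Lemma~\ref{L3912_2} was designed to control, so the argument hinges on extracting exactly the right consequences (namely $j_d \sim i$ in the Dynkin diagram and the ``three-neighbour'' obstruction preventing $i$ from occurring near the end of $\bj$) from that lemma. Once these are in hand, both halves of the proposition reduce to routine checks against the criteria already established.
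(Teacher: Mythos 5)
Your argument is correct. Note, though, that this paper gives no proof of the proposition at all --- it is quoted verbatim from \cite[Proposition 5.19]{Kcusp} --- so there is nothing in-text to compare against; your reconstruction (check the Khovanov--Lauda--Ram homogeneity criterion at the junction position using Lemma~\ref{L3912_2}(ii),(iii), then identify $L(C_i)$ with $L_{\de,i}$ via Lemma~\ref{LPr} and the uniqueness in Proposition~\ref{L3912}(iii)) is exactly the kind of argument the cited source runs, and every step checks out in the simply-laced setting this section assumes. One housekeeping caveat: the homogeneity criterion and the basic theorem on homogeneous modules that you cite live in a section of this source that is commented out (inside an \verb|\iffalse| block), so in the compiled paper those references resolve to nothing; you should cite them directly from Kleshchev--Ram's paper on homogeneous representations instead.
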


\begin{Example} \label{ETypeA}
{\rm 
Let  $\Car={\tt A}_l^{(1)}$ and $i\in I'$. Then $L_{\de,i}$ is the homogeneous irreducible $R_{\de}$-module 
with character 
$$
\CH L_{\de,i}=0\big((1,2,\dots, i-1)\circ (l,l-1,\dots,i+1)\big)i.
$$
For example, $L_{\de,1}$ and $L_{\de,l}$ are $1$-dimensional with characters
$$
\CH L_{\de,1}=(0,l,l-1,\dots,1),\quad
\CH L_{\de,l}=(01\dots l),
$$
while for $l\geq 3$, the module $L_{\de,l-1}$ is $(l-2)$-dimensional with character
$$
\CH L_{\de,l-1}=\sum_{r=0}^{l-3}(0,1,\dots,r, l,r+1,\dots,l-1).
$$
}
\end{Example} 

\begin{Example} \label{ExTypeD}
{\rm 
Let  $\Car={\tt D}_l^{(1)}$ and $i\in I'$. By Proposition~\ref{PMinSL}, we have that $L_{\de,i}$ is the homogeneous module $L(C_i)$, where $C_i$ is the connected component in $G_\de$ containing the following word:
\begin{equation}\label{EIBTypeD}
\left\{
\begin{array}{ll}
(0,2,3,\dots, l-2,l,l-1,l-2,\dots,i+1,1,2,\dots,i) &\hbox{if $i\leq l-1$,}\\
(0,2,3,\dots, l-2,l-1, 1,2,\dots,l-2,l) &\hbox{if $i=l$.}
\end{array}
\right.
\end{equation}

}
\end{Example}

If the Cartan matrix $\Car$ is non-symmetric, the explicit construction of the minuscule representations $L_{\de,i}$ is more technical. It is explained in Chapter~\ref{ChLast}.

\chapter{Imaginary Schur-Weyl duality}\label{ITTCA}
Fix $i\in I'$, and recall from (\ref{EMNI}) the  imaginary tensor space $M_{n,i}=L_{\de,i}^{\circ n}$ of color $i$. We are going to study irreducible imaginary $R_{n\de}$-modules of color $i$, i.e. composition factors of $M_{n,i}$. Since $i$ is going to be fixed throughout, we usually simplify our notation and write $M_n$ for $M_{n,i}$, $L_\de$ for $L_{\de,i}$, etc. Recall that we denote by $e$ the height of null-root~$\de$. 

\section{Imaginary tensor space and its parabolic analogue}\label{SSImTenSp}
{\em Throughout}\, we fix an extremal word 
\begin{equation}\label{EBWt}
\bi:=(i_1,\dots,i_e)
\end{equation}
of $L_\de$ 
so that the top degree component $(1_\bi L_\de)_N$ of the word space $1_\bi L_\de$ is $1$-dimensional, see Lemma~\ref{LMultOneWeight}. 
To be more precise, for a symmetric Cartan matrix $\Car$, the module $L_\de$ is homogeneous by Proposition~\ref{PMinSL}, i.e. all its word spaces are $1$-dimensional, and we can take $\bi$ to be an arbitrary word of $L_\de$. For non-symmetric $\Car$, we make a specific choice of $\bi$ as in (\ref{EITypeB}), (\ref{EITypeC}), (\ref{EITypeF}), (\ref{EITypeG}) in types ${\tt B_l^{(1)},C_l^{(1)},F_4^{(1)},G_2^{(1)}}$ respectively. 
 
Pick a non-zero vector $v\in (1_\bi L_\de)_N$.  
Recall that $L_\de$ is defined over $\Z$, so we may assume that $(1_\bi L_\de)_N=\O\cdot v$. Denote 
\begin{equation}\label{Ev_n}
v_n:=v\otimes\dots\otimes v\in L_\de^{\boxtimes n}.
\end{equation}
We identify $L_\de^{\boxtimes n}$ with the submodule 
$1\otimes L_\de^{\boxtimes n}\subseteq M_n=\Ind_{\de,\dots,\de}L_\de^{\boxtimes n}$, so $v_n$ can will be considered as an element of $M_n$. 
Note that $v_n$ generates $M_n$ as an $R_{n\de}$-module, and that $$v_n\in (1_{\bi^n}M_n)_{nN}.$$ 
By degrees,
$$
y_rv_n=0\qquad(1\leq r\leq n). 
$$

More generally, let $\nu=(n_1,\dots,n_a)\vDash n$. 
Consider the parabolic subalgebra 
\begin{equation}\label{ERNuDe}
R_{\nu,\de}:=R_{n_1\de,\dots,n_a\de}\subseteq R_{n\de},
\end{equation}
and consider the $R_{\nu,\de}$-module 
$$
\Mde_\nu:=\Mde_{n_1}\boxtimes\dots\boxtimes \Mde_{n_a},
$$
with generator 
$$v_\nu:=v_{n_1}\otimes \dots\otimes v_{n_a}.$$ 
By transitivity of induction this module embeds naturally into $M_n$ as an $R_{\nu,\de}$-submodule. 

\begin{Lemma} \label{LMSelfDual}
$
\Mde_\nu^\circledast\cong \Mde_\nu. 
$ In particular, every composition factor of the socle of $\Mde_\nu$ appears in its head. 
\end{Lemma}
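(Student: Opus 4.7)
The plan is to reduce the self-duality of $\Mde_\nu$ to the single fact that the minuscule imaginary module $L_\de=L_{\de,i}$ is $\circledast$-self-dual, exploiting the fact that the null root $\de$ is isotropic.

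First I would observe that by associativity of the induction product, the induced $R_{n\de}$-module $\Mde_\nu = \Mde_{n_1}\circ\cdots\circ \Mde_{n_a}$ is canonically isomorphic to $L_\de^{\circ n}=\Mde_n$, so it suffices to establish $\Mde_n^\circledast\cong \Mde_n$ as graded $R_{n\de}$-modules.

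Next I would invoke Lemma~\ref{LDualInd}: for finite dimensional $V_r\in\mod{R_{\ga_r}}$ one has
$$(V_1\circ\dots\circ V_n)^\circledast\cong q^{d(\underline{\ga})}\big(V_n^\circledast\circ\dots\circ V_1^\circledast\big),\qquad d(\underline{\ga})=\sum_{m<k}(\ga_m,\ga_k).$$
Specializing to $V_r=L_\de$ and $\ga_r=\de$, the shift $d(\underline{\ga})=\binom{n}{2}(\de,\de)$ vanishes, because in untwisted affine type $\de$ is the null root and so $(\de,\de)=0$. For the internal factors, $L_\de=L(\umu(i))$ belongs to the cuspidal system of Theorem~\ref{THeadIrr}, so part (iii) of that theorem gives $L_\de^\circledast\cong L_\de$ (in our chosen normalization of the grading). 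Substituting yields $(L_\de^{\circ n})^\circledast\cong L_\de^{\circ n}$, i.e.\ $\Mde_n^\circledast\cong \Mde_n$, which combined with the first step proves $\Mde_\nu^\circledast\cong \Mde_\nu$.

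For the ``in particular'' clause, I would use that $\circledast$ is an exact contravariant endofunctor of $\mod{R_{n\de}}$ sending simples to simples (since irreducible $R_{n\de}$-modules satisfy $L^\circledast\cong L$ in their canonical grading, by the conventions of Section~\ref{SSBasicRep}). Hence $\circledast$ interchanges head and socle: $\soc(M)\cong \head(M^\circledast)^\circledast$ for any finite dimensional $M$. Applied to the self-dual module $M=\Mde_\nu$ this gives $\soc(\Mde_\nu)\cong \head(\Mde_\nu)^\circledast$; since each simple summand is self-dual, $\head(\Mde_\nu)$ and $\soc(\Mde_\nu)$ have the same graded composition factors with the same multiplicities, and in particular every composition factor of the socle already appears in the head.

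I do not anticipate a serious obstacle: the argument is essentially bookkeeping, resting on the vanishing $(\de,\de)=0$ and on the self-duality of the minuscule cuspidal module. The one point requiring care is to match the grading normalization used in the definition of $L_{\de,i}$ with the one provided by Theorem~\ref{THeadIrr}(iii), so that the isomorphism $\Mde_\nu^\circledast\cong \Mde_\nu$ holds on the nose (without a residual degree shift) and the head/socle comparison is ungraded-multiplicity preserving as well as graded.
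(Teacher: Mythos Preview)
Your argument is essentially correct, but the first paragraph misreads the definition of $\Mde_\nu$. In the paper $\Mde_\nu:=\Mde_{n_1}\boxtimes\dots\boxtimes \Mde_{n_a}$ is the outer tensor product, an $R_{\nu,\de}$-module, not the induction product $\Mde_{n_1}\circ\cdots\circ \Mde_{n_a}$ (the latter is $\Mde_n$ itself, an $R_{n\de}$-module). So your sentence ``the induced $R_{n\de}$-module $\Mde_\nu=\Mde_{n_1}\circ\cdots\circ \Mde_{n_a}$ is canonically isomorphic to $\Mde_n$'' is literally false. The correct reduction is even simpler: since duality distributes over outer tensor products, $(V_1\boxtimes\cdots\boxtimes V_a)^\circledast\cong V_1^\circledast\boxtimes\cdots\boxtimes V_a^\circledast$, it suffices to show $\Mde_m^\circledast\cong \Mde_m$ for each $m$, which is exactly what your second paragraph establishes.

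From that point on your proof is fine and is the natural one: Lemma~\ref{LDualInd} plus $(\de,\de)=0$ plus $L_\de^\circledast\cong L_\de$ (Theorem~\ref{THeadIrr}(iii)) gives $\Mde_m^\circledast\cong \Mde_m$, and the head/socle consequence follows from contravariance of $\circledast$ and self-duality of simples. The paper itself does not give an argument here at all---it simply cites \cite[Lemma~5.6]{Kcusp}---so your write-up (once the reduction step is corrected) is more self-contained than what the paper provides.
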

\begin{proof}
This is \cite[Lemma 5.6]{Kcusp}. 
\end{proof}

We denote by $e\nu$ the composition 
$$e\nu:=(en_1,\dots,en_a)\vDash en.$$ 
The following lemma immediately follows from the Basis Theorem~\ref{TBasis}:

\begin{Lemma} \label{LMBasis}
Let $\nu\vDash n$. Then 
$$M_\nu=\bigoplus_{w\in \D_{e\nu}^{(e^n)}}\psi_{w}\otimes L_\de^{\boxtimes n}$$ 
as  $\O$-modules. In particular, 
$$M_n=\bigoplus_{w\in \D_{en}^{(e^n)}}\psi_{w}\otimes L_\de^{\boxtimes n}$$ 
as  $\O$-modules.  
\end{Lemma}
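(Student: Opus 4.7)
The plan is to derive the $\O$-basis of $\Mde_\nu$ directly from transitivity of induction combined with the Basis Theorem \ref{TBasis}, obtaining the $M_n$ formula as the $\nu=(n)$ specialization. First I would use transitivity: since $\Mde_{n_r}=\Ind_{\de,\ldots,\de}^{n_r\de} L_\de^{\boxtimes n_r}$ for each $r$, one has
\[
\Mde_\nu=\Mde_{n_1}\boxtimes\cdots\boxtimes \Mde_{n_a} \;\cong\; R_{\nu,\de}\,1_{(\de)^n}\otimes_{R_{(\de)^n}} L_\de^{\boxtimes n},
\]
where $R_{(\de)^n}:=R_{\de,\ldots,\de}$ (with $n$ factors) sits inside $R_{\nu,\de}$. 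The substantive claim is then that $R_{\nu,\de}\,1_{(\de)^n}$ is a free right $R_{(\de)^n}$-module with basis $\{\psi_w 1_{(\de)^n}\mid w\in\D_{e\nu}^{(e^n)}\}$; tensoring with $L_\de^{\boxtimes n}$ over $R_{(\de)^n}$ yields the claimed decomposition of $\Mde_\nu$, and the $M_n$ statement follows by setting $\nu=(n)$.

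To prove the free-module claim I would argue one factor at a time. Inside a single $R_{n_r\de}$, Theorem \ref{TBasis} writes every element of $R_{n_r\de}\,1_{(\de)^{n_r}}$ uniquely as an $\O$-linear combination of monomials $\psi_u y^\bm 1_\bi$ with $u\in\Si_{en_r}$, $\bm\in\Z_{\geq 0}^{en_r}$, and $\bi\in\words_{(\de)^{n_r}}$. Factoring $u=w\sigma$ with $w\in\D_{en_r}^{(e^{n_r})}$ and $\sigma\in\Si_{(e^{n_r})}$ of compatible lengths, the relations \eqref{R2PsiE}, \eqref{R6}, \eqref{R7} let one rewrite $\psi_u y^\bm 1_\bi$ as $\psi_w\cdot r$ modulo terms $\psi_{w'}(\cdots)$ with $\ell(w')<\ell(w)$, where $r\in R_{(\de)^{n_r}}$. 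Induction on $\ell(u)$ then expresses every element of $R_{n_r\de}\,1_{(\de)^{n_r}}$ as a sum $\sum_w \psi_w r_w$ with $w\in\D_{en_r}^{(e^{n_r})}$; uniqueness of such an expression follows from a second appeal to Theorem \ref{TBasis} by counting dimensions in each word space. Taking outer tensor products over $r=1,\ldots,a$ gives a free right $R_{(\de)^n}$-module basis of $R_{\nu,\de}\,1_{(\de)^n}$ indexed by $\prod_r \D_{en_r}^{(e^{n_r})}$, which, in view of the factorization $\Si_{e\nu}=\Si_{en_1}\times\cdots\times\Si_{en_a}$ and the identity $\D_{e\nu}^{(e^n)}=\D_{en}^{(e^n)}\cap\Si_{e\nu}$ recorded in \eqref{E12411}, is precisely $\D_{e\nu}^{(e^n)}$.

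The hard part is really only the rewriting step above: one must verify that the ``lower terms'' produced by \eqref{R7} when $\psi_u$ is non-reduced remain inside $\sum_{\ell(w')<\ell(w)}\psi_{w'}R_{(\de)^n}$, so that the induction on length closes. Beyond this routine chase with the KLR relations, the lemma is an immediate consequence of the Basis Theorem and uses nothing specific to imaginary representations beyond the very definition $\Mde_\nu=\Mde_{n_1}\boxtimes\cdots\boxtimes \Mde_{n_a}$.
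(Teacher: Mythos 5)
Your proof is correct and follows the same route as the paper, which simply observes that the decomposition is an immediate consequence of the Basis Theorem~\ref{TBasis}. The freeness of $R_{\nu,\de}1_{\de^n}$ as a right $R_{\de^n}$-module on the basis $\{\psi_w 1_{\de^n}\mid w\in\D_{e\nu}^{(e^n)}\}$, which is the only substantive step you elaborate, is already available as the quoted fact \cite[Proposition 2.16]{KL1}, so your rewriting argument just re-derives that known result.
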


Define 
\begin{equation}\label{EV_n}
V_n:=\Res_{\de,\dots,\de}M_n.
\end{equation}
More generally, for a composition $\nu=(n_1,\dots,n_a)\vDash n$, set 
$$V_\nu:=\Res^{n_1\de;\dots;n_a\de}_{\de,\dots,\de}M_\nu\cong V_{n_1}\boxtimes \dots\boxtimes V_{n_a}.$$  
Clearly $v_\nu\in V_\nu$. 

To describe $V_n$ and $V_\nu$, we introduce 
the {\em block permutation group} $B_n$ as the subgroup of $\Si_{en}$ generated by the {\em block permutations} $w_1,\dots,w_{n-1}$, where $w_r$ is the product of transpositions 
\begin{equation}\label{Ew_r}
w_r:=\prod_{b=re-e+1}^{re}(b,b+e)\qquad\qquad
\qquad(1\leq r<n).
\end{equation}
The group $B_n$ is isomoprphic to the symmetric group $\Si_n$ via 
$$
\iota:\Si_n\iso \B_n,\ s_r\mapsto w_r \qquad(1\leq r<n).
$$
Note that each element $\iota(w)\in B_n$ belongs to $\D_{en}^{(e^n)}$. 
For example, if $n=2$ then, in terms of Khovanov-Lauda diagrams \cite{KL1} we have 
$$
\psi_{w_1}1_{\bi^{2}}=\begin{braid}\tikzset{baseline=7mm}
  \draw (0,4)node[above]{$i_1$}--(6,-4);
  \draw (1,4)node[above]{$i_2$}--(7,-4);
  \draw[dots] (1.3,4)--(4.9,4);
  \draw[dots] (7.4,4)--(10.7,4);
  \draw (5,4)node[above]{$i_e$}--(11,-4);
 \draw (6,4)node[above]{$i_1$}--(0,-4);
  \draw (7,4)node[above]{$i_2$}--(1,-4);
 \draw[dots] (1.3,-4)--(4.9,-4);
  \draw[dots] (7.4,-4)--(10.7,-4);
  \draw (11,4)node[above]{$i_e$}--(5,-4);
\end{braid}.
$$
Define 
$$
\si_r:=\psi_{w_{r}}\qquad(1\leq r<n),
$$
and
$$
\si_{w}:=\si_{r_1}\dots \si_{r_m}\qquad(w\in \Si_n),
$$
where we have picked a reduced decomposition $w=s_{r_1}\dots s_{r_m}$. 

Let us write $\de^n$ for $(\de,\dots,\de)$ with $n$ terms. By definition, $V_n=\Res_{\de^n}M_n$ is an $R_{\de^n}$-module. 

\begin{Proposition}\label{PEndNew}
We have: 
\begin{enumerate}
\item[{\rm (i)}] As an $R_{\de^n}$-module, $V_n$ has a filtration with $n!$ composition factors $\cong L_\de^{\boxtimes n}$. 
\item[{\rm (ii)}] As an $\O$-module, $V_n=\bigoplus_{w\in\Si_n} V(w)$, where $V(w):=\si_{w}\otimes L_\de^{\boxtimes n}$. 
\item[{\rm (iii)}] $1_{\bi^n}M_n=\oplus_{w\in\Si_n} (\si_w \otimes (1_\bi L_\de)^{\boxtimes n})$
\item[{\rm (iv)}] $(1_{\bi^n}M_n)_{nN}$ is the top degree component of the weight space $1_{\bi^n}M_n$, and $(1_{\bi^n}M_n)_{nN}=\oplus_{w\in\Si_n}\O\cdot (\si_w \otimes v_n)$. 
\end{enumerate}
\end{Proposition}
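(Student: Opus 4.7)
The plan is to establish (ii) first via a direct combinatorial analysis, then to read off (iii) and (iv) by restricting to the weight $\bi^n$ and to the top graded component, and finally to derive (i) via the Mackey Theorem~\ref{TMackeyKL} with the same combinatorial input. The principal obstacle will be a combinatorial lemma characterising minimal coset representatives that preserve block starts.

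The crucial input for (ii) is that the null-root $\de$ has $\al_0$-coefficient $a_0 = 1$ by~\eqref{Ea_0}, so every word of $L_\de$ contains exactly one $0$, placed at position $1$ by Lemma~\ref{L150813}(iii). Hence every word of $L_\de^{\boxtimes n}$ and every word in $\words_{\de^n}$ has $0$'s precisely at the block-start positions $B := \{1, e+1, \ldots, (n-1)e+1\}$. Starting from Lemma~\ref{LMBasis}, one writes $V_n = 1_{\de^n} M_n = \bigoplus_{u \in \D_{en}^{(e^n)}} \psi_u \otimes 1_{u^{-1}(\words_{\de^n})} L_\de^{\boxtimes n}$; then the $u$-summand is non-zero precisely when $u^{-1}(B) = B$. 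The combinatorial claim is that any $u \in \D_{en}^{(e^n)}$ preserving $B$ setwise lies in $\iota(\Si_n)$. Indeed, being a minimal coset representative forces $u$ to be increasing on each top block $\{(a-1)e+1,\ldots,ae\}$; combined with the induced bijection $u((a-1)e+1) = (\sigma(a)-1)e+1$ for a unique $\sigma \in \Si_n$, an induction on blocks shows $u((a-1)e+j) = (\sigma(a)-1)e+j$ for all $j$, so $u = \iota(\sigma)$. This yields (ii).

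Parts (iii) and (iv) are then immediate. Restricting (ii) to the weight $\bi^n$ and using that $\iota(w)^{-1}\bi^n = \bi^n$ (all blocks are identical) gives (iii). For (iv), a direct computation yields $\deg(\sigma_r 1_{\bi^n}) = -\sum_{j,k=1}^{e}(\al_{i_j},\al_{i_k}) = -(\de,\de) = 0$, since the block-swap introduces $e^2$ crossings between two copies of $\bi$; hence $\deg(\sigma_w 1_{\bi^n}) = 0$ for every $w \in \Si_n$. Since $(1_\bi L_\de)^{\boxtimes n}$ has top graded component the rank-one submodule $\O\cdot v_n$ of degree $nN$, combining with (iii) shows $(1_{\bi^n} M_n)_{nN} = \bigoplus_{w \in \Si_n} \O\cdot(\sigma_w \otimes v_n)$, proving (iv).

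For (i), apply Theorem~\ref{TMackeyKL} to $V_n = \Res^{n\de}_{\de^n} \Ind^{n\de}_{\de^n} L_\de^{\boxtimes n}$. The Mackey filtration is indexed by matrices $(\al^a_b) \in \D(\de^n,\de^n)$, and a non-zero subquotient requires $\Res^\de_{\al^a_1,\ldots,\al^a_n} L_\de \neq 0$ for every row~$a$. The same $\al_0$-coefficient reasoning---combined now with the column-sum constraints---forces each contributing $(\al^a_b)$ to be a permutation matrix $(\al^a_b) = (\de\cdot\delta_{b,\sigma(a)})$: the first non-zero entry of each row must carry the unique $\al_0$ of that row, yielding a bijection $\sigma\in\Si_n$; a column-by-column induction then rules out any entries beyond the first. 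Each surviving subquotient is $\Ind^{\de^n}_{\de^n}({}^{x(\underline{\al})} L_\de^{\boxtimes n}) \cong L_\de^{\boxtimes n}$, giving the required filtration of length $n!$.
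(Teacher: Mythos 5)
Your proof is correct and follows the paper's strategy almost exactly: (ii) by the word argument resting on Lemma~\ref{L150813}(iii) and Lemma~\ref{LMBasis}, then (iii) and (iv) read off from (ii), and (i) via the Mackey Theorem~\ref{TMackeyKL}. The one substantive divergence is in (i): the paper kills the non-permutation Mackey terms by invoking the cuspidality property (Cus2) of $L_\de$ (and cites $(\de,\de)=0$ for the triviality of the grading shifts), whereas you rerun the $\al_0$-coefficient count; both work, and yours has the merit of being self-contained, though you should still record that the shift $s(x,\underline{\kappa})$ vanishes because $(\de,\de)=0$. One small point in (ii): non-vanishing of the $u$-summand directly yields only that each target block of length $e$ contains exactly one element of $u(B)$, not yet that $u(B)=B$, and your subsequent induction presupposes that block starts map to block starts. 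The upgrade is easy: since $u$ is increasing on each source block, $u^{-1}(\{1,\dots,be\})$ is a union of initial segments of source blocks, exactly $b$ of which are nonempty (one $0$ per target block), so each has length $e$; this forces $u$ to permute whole blocks, which is the same counting your column-by-column induction performs in (i). So the gap is cosmetic, and the paper's own proof of (ii) is in any case terser than yours.
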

\begin{proof}
(i) follows by an application of the Mackey Theorem~\ref{TMackeyKL}, using the property (Cus2) of $L_\de$ and the fact that $(\de,\de)=0$ to deduce that all grading shifts are trivial. 

(ii) is proved by a word argument. Indeed, given words $\bi^{(1)},\dots,\bi^{(n)}$ of $L_\de$, we have $i^{(1)}_1=\dots=i^{(n)}_n=0$ by Lemma~\ref{L150813}(iii). So, the only shuffles of $\bi^{(1)},\dots,\bi^{(n)}$ which lie in $V_n$ are permutations of these words. So the result follows from Lemma~\ref{LMBasis}. 

(iii) follows from (ii), and (iv) follows from (iii).  
\end{proof}

\begin{Corollary} \label{CEndMDegZero} 
All $R_{n\de}$-endomorphisms of $M_n$ are of degree zero, and $\dim\operatorname{End}_{R_{n\de}}(M_n)\leq n!$.
\end{Corollary}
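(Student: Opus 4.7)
The plan is to combine Frobenius reciprocity with the filtration of $V_n$ furnished by Proposition~\ref{PEndNew}(i). Since $M_n = \Ind_{\de,\dots,\de} L_\de^{\boxtimes n}$, the adjunction between induction and restriction yields a graded isomorphism
\[
\End_{R_{n\de}}(M_n) \;\cong\; \Hom_{R_{\de,\dots,\de}}(L_\de^{\boxtimes n},\, V_n).
\]
By Proposition~\ref{PEndNew}(i), $V_n$ admits an $R_{\de,\dots,\de}$-module filtration whose $n!$ successive quotients are all graded-isomorphic to $L_\de^{\boxtimes n}$, with no degree shift.

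Applying the left-exact functor $\Hom_{R_{\de,\dots,\de}}(L_\de^{\boxtimes n}, -)$ to the short exact sequences coming from this filtration yields, degreewise,
\[
\dim \Hom_{R_{\de,\dots,\de}}(L_\de^{\boxtimes n}, V_n)_d \;\leq\; n! \cdot \dim \End_{R_{\de,\dots,\de}}(L_\de^{\boxtimes n})_d
\]
for every $d \in \Z$. It therefore suffices to show that $\End_{R_{\de,\dots,\de}}(L_\de^{\boxtimes n}) = \O$ concentrated in degree $0$. From (\ref{EEndL}) we have $\End_{R_\de}(L_\de) = \O$ already living in degree $0$, so $L_\de$ is absolutely irreducible; the outer tensor product $L_\de^{\boxtimes n}$ is then likewise absolutely irreducible over $R_\de^{\otimes n} = R_{\de,\dots,\de}$ by a standard tensor product argument.

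Substituting back gives $\End_{R_{n\de}}(M_n)_d = 0$ for $d \neq 0$ and $\dim \End_{R_{n\de}}(M_n)_0 \leq n!$, which are precisely the two assertions. The only point requiring any care is verifying that absolute irreducibility is preserved by outer tensor products in the graded setting; in our situation this can also be checked by hand, using Lemma~\ref{LMultOneWeight} applied factor by factor to observe that the top-degree word-$\bi^n$ component of $L_\de^{\boxtimes n}$ is the one-dimensional line $\O\cdot v_n$, which any endomorphism must preserve up to scalar.
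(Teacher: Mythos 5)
Your proof is correct and follows essentially the same route as the paper's (terse) argument: adjunction reduces the computation to $\Hom_{R_{\de^n}}(L_\de^{\boxtimes n}, V_n)$, and the filtration of $V_n$ with $n!$ ungraded-shifted copies of $L_\de^{\boxtimes n}$ from Proposition~\ref{PEndNew}(i) together with $\End_{R_{\de^n}}(L_\de^{\boxtimes n})=\O$ in degree $0$ gives the bound and the degree statement. You spell out the one point the paper leaves implicit, namely that $\End_{R_{\de^n}}(L_\de^{\boxtimes n})$ is one-dimensional and concentrated in degree zero, and both of the justifications you offer (absolute irreducibility of the outer tensor product via (\ref{EEndL}), or the explicit observation that the top degree of the word space $1_{\bi^n}L_\de^{\boxtimes n}$ is spanned by $v_n$, which generates) are valid.
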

\begin{proof}
This follows from the adjointness of the functors $\Ind$ and $\Res$ and Proposition~\ref{PEndNew}(i). 
\end{proof}

\section{Action of $\Si_n$ on $M_n$}\label{SSMainConj}

In this section, we prove the key fact that $\End_{R_{n\de}}(M_n)$ is isomorphic to the group algebra of the symmetric group $\Si_n$.  We distinguish between the cases where the Cartan matrix $\Car$ is symmetric and non-symmetric. The symmetric case can be handled nicely using the work \cite{KKK}. For the non-symmetric case we have to appeal to the computations made in Chapter~\ref{ChLast}\footnote{Originally, we also handled the simply-laced cases via lengthy case-by-case computations. However, Peter  McNamara has explained to us how to avoid this using \cite{KKK}.}. 

Assume in this paragraph that $\Car$ is symmetric. We review the Kang-Kashiwara-Kim intertwiners \cite{KKK} adapted to our needs. 
Definition~1.4.5 of \cite{KKK} yields a {\em non-zero} $R_{2\de}$-homomorphism 
$$
\tau: M_2\to q^{(\de,\de)-2(\de,\de)_n+2s}M_2,
$$
where $(\cdot,\cdot)_n$ and $s$ are as in \cite[\S1.3,(1.4.8)]{KKK}. (This homomorphism would be denoted $r_{L_\de,L_\de}$ in \cite{KKK}.) Since $(\de,\de)=0$, and all endomorphisms of $M_2$ are of degree zero by Corollary~\ref{CEndMDegZero}, it follows that $s=(\de,\de)_n$ and we actually have $\tau:M_2\to M_2$. 
Now, it follows from \cite[Proposition~1.4.4(iii)]{KKK}, the adjointness of $\Ind$ and $\Res$, and Proposition~\ref{PEndNew}(ii) that 
\begin{equation}\label{ETauCv1v2}
\tau(v^1\otimes v^2)=\si_1\cdot (v^{2}\otimes v^{1})+c(v^1,v^2)\,v^1\otimes v^2\qquad(v^1,v^2\in L_\de)
\end{equation}
for some $c(v^1,v^2)\in\O$. In particular 
\begin{equation}\label{ETauC}
\tau(v_2)=(\si_{1}+c)v_2 
\end{equation}
for some constant $c\in\O$. 

Even if $\Car$ is not symmetric, there is an endomorphism $\tau$ of $M_2$ with the property (\ref{ETauC}), see Chapter~\ref{ChLast}. So from now on we use it in all cases. 
We note that the elements $\tau_r$ go back to \cite{KMR}, where  a special case of Theorem~\ref{TEndMn} below is checked, see \cite[Theorem 4.13]{KMR}. 


Inserting the endomorphism $\tau$ into the $r$th and $r+1$st positions in $M_n=L_\de^{\circ n}$, yields endomorphisms
\begin{equation}\label{ETauPsiC}
\tau_r:M_n\to M_n,\ v_n\mapsto (\si_{r}+c)v_n\qquad(1\leq r<n).
\end{equation}

We always consider the group algebra $\O\Si_n$ as a graded algebra concentrated in degree zero. 

\begin{Theorem} \label{TEndMn} 
The endomorphisms $\tau_r$ satisfy the usual Coxeter relations of the standard generators of the symmetric group $\Si_n$, i.e. $\tau_r^2=1$, $\tau_r\tau_s=\tau_s\tau_r$ for $|r-s|>1$, and $\tau_r\tau_{r+1}\tau_r=\tau_{r+1}\tau_r\tau_{r+1}$. This defines a (degree zero) homomorphism
$$
F\Si_n\to\End_{R_{n\de}}(M_n)^\op
$$
which is an isomorphism. 
\end{Theorem}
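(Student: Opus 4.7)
My plan is two-fold: first verify that the $\tau_r$ satisfy the Coxeter relations of $\Si_n$, yielding a well-defined homomorphism $\phi:F\Si_n\to \End_{R_{n\de}}(M_n)^\op$; then show $\phi$ is an isomorphism. Since $M_n$ is cyclically generated as an $R_{n\de}$-module by $v_n$, any endomorphism is determined by its action on $v_n$, and each Coxeter relation reduces to an identity in $M_n$ applied to $v_n$ -- equivalently, by the ``local'' nature of the $\tau_r$, to an identity in $M_2$ or $M_3$.

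The commutation $\tau_r\tau_s=\tau_s\tau_r$ for $|r-s|>1$ is immediate, since $\tau_r$ and $\tau_s$ are obtained by inserting the intertwiner $\tau$ into disjoint pairs of tensor factors of $M_n$. The involution relation $\tau_r^2=1$ reduces to $\tau^2v_2=v_2$ in $M_2$, and the braid relation $\tau_r\tau_{r+1}\tau_r=\tau_{r+1}\tau_r\tau_{r+1}$ reduces to equality of both sides on $v_3\in M_3$. In the symmetric case, both identities follow from the inversion and braid identities for the Kang-Kashiwara-Kim intertwiners \cite{KKK} applied to the minuscule imaginary module $L_\de$; all grading shifts appearing in the relevant KKK formulas are trivial because $(\de,\de)=0$ and Corollary~\ref{CEndMDegZero} forces endomorphism degrees to vanish. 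In the non-symmetric case, the corresponding $\tau$ is constructed in Chapter~\ref{ChLast} and the required square and braid identities are verified there.

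For the isomorphism claim, Corollary~\ref{CEndMDegZero} already provides the bound $\dim\End_{R_{n\de}}(M_n)\le n!=\dim F\Si_n$, so it suffices to establish injectivity of $\phi$. Since each $\tau_r$ commutes with the $R_{n\de}$-action, an easy induction using $\tau_rv_n=(\si_r+c)v_n$ shows that for any reduced expression $w=s_{r_1}\cdots s_{r_k}$,
\[
\phi(w)v_n = (\si_{r_1}+c)(\si_{r_2}+c)\cdots(\si_{r_k}+c)\,v_n.
\]
Expanding the product, and using $y_rv_n=0$ to kill all $y$-correction terms produced by KLR relations when reordering the $\si$'s, one obtains
\[
\phi(w)v_n = \si_w v_n + \sum_{u\in\Si_n,\ \ell(u)<\ell(w)} b_u\,\si_u v_n
\]
for some scalars $b_u\in F$. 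By Proposition~\ref{PEndNew}(iv), $\{\si_u v_n\mid u\in\Si_n\}$ is a basis of the top degree component $(1_{\bi^n}M_n)_{nN}$, so this length-triangularity makes $\{\phi(w)v_n\mid w\in\Si_n\}$ linearly independent, proving $\phi$ is injective and hence bijective.

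The main obstacle is the braid relation for $\tau$. In the symmetric case it is an instance of the hexagon-type identity for the KKK intertwiners and is essentially painless; in the non-symmetric case, $\tau$ itself must first be constructed as a Kang-Kashiwara-Kim style deformation, and the braid identity is the most delicate ingredient, requiring the technical machinery developed in Chapter~\ref{ChLast}.
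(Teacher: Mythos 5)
Your proposal is correct and follows essentially the same route as the paper: the Coxeter relations are obtained from the Kang--Kashiwara--Kim intertwiner identities in the symmetric case (with the quadratic relation coming from the normalization forced by $y_sv_n=0$ and the vanishing of grading shifts) and from Chapter~\ref{ChLast} in the non-symmetric case, and the isomorphism follows by combining the bound of Corollary~\ref{CEndMDegZero} with the triangularity $\tau_w(v_n)=\si_w(v_n)+\sum c_u\si_u v_n$ and the linear independence of the $\si_w v_n$ from Proposition~\ref{PEndNew}. The only cosmetic difference is that you order the lower-order terms by length rather than by the Bruhat order, which works equally well.
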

\begin{proof}
If $\Car$ is symmetric, we use the elements $\phi_{w}$ from \cite[Lemma~1.3.1(iii)]{KKK}. 
Then $\tau_r$'s satisfy braid relations, as noted in \cite[p.16]{KKK}. For the quadratic relations, by definition, $\tau_r^2$ maps $v_n$ to $((z'-z)^{-2s}\phi_{w_r}^2 v_n)|_{z=z'=0}$, where the action is taking place in $(L_\de)_z\circ (L_\de)_{z'}$ and we consider $v_n$ as a vector of $(L_\de)_z\circ (L_\de)_{z'}$ in the obvious way. 
Since $y_sv_n=0$ in $L_\de$  we have $y_sv_n=zv_n$ in $(L_\de)_z$ for all $s$. So the product in the right hand side of \cite[Lemma~1.3.1(iv)]{KKK} is easily seen to act with the scalar $(z'-z)^{2(\de,\de)_n}$ on the vector $v_n\in (L_\de)_z\circ (L_\de)_{z'}$. Since we already know that $s=(\de,\de)_n$, it follows that $\tau_r^2v_n=v_n$. Since $v_n$ generates $M_n$ as an $R_{n\de}$-module, we deduce that $\tau_r^2=1$. 

If $\Car$ is not symmetric, then we check in Proposition~\ref{NSLbraidrels} that the $\tau_r$ still satisfy the quadratic and braid relations. 

For an arbitrary $\Car$ let $w\in\Si_n$ with reduced decomposition $w=s_{r_1}\dots s_{r_m}$. Then in view of (\ref{ETauPsiC}), for $\tau_w:=\tau_{r_1}\dots\tau_{r_m}$ (the product in $\End_{R_{n\de}}(M_n)^\op$), we have
\begin{equation}\label{ETauAction}
\tau_w(v_n)=(\si_{r_1}+c)\dots(\si_{r_m}+c)v_n.
\end{equation}
It follows that 
\begin{equation}\label{ETauSiTriangular}
\tau_w(v_n)=\si_w(v_n)+\sum_{u<w}c_u\si_uv_n\qquad(c_u\in\O),
\end{equation}
where $<$ is the Bruhat order. In view of Proposition~\ref{PEndNew}(ii), the elements $\{\tau_w\mid w\in\Si_n\}$ are linearly independent, and the result follows from Corollary~\ref{CEndMDegZero}.  
\end{proof}





In view of the theorem, we can now consider $M_n$ as an  $(R_{n\de},\O\Si_n)$-bimodule, with the right action $mw=\tau_w(m)$ for $m\in M_n$ and $w\in\Si_n$, where the linear transform action $\tau_w$ is defined by (\ref{ETauAction}).

\begin{Corollary} \label{CVnDec}
We have:
\begin{enumerate}
\item[{\rm (i)}] As $R_{\de^n}$-modules, 
$V_n \cong  \bigoplus_{w\in\Si_n}L_\de^{\boxtimes n}w\cong (L_\de^{\boxtimes n})^{\oplus n!}.$
\item[{\rm (ii)}] As $\O$-modules, 
$$1_{\bi^n}M_n=\oplus_{w\in\Si_n}(1_\bi L_\de)^{\boxtimes n}w\quad \text{and}\quad  
(1_{\bi^n}M_n)_{nN}=\oplus_{w\in\Si_n}\O\cdot  v_nw.
$$
\end{enumerate}
\end{Corollary}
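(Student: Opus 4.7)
The plan is to apply the right $\Si_n$-action of Theorem~\ref{TEndMn} to the Mackey direct-sum decomposition $V_n = \bigoplus_{w\in\Si_n} V(w)$ from Proposition~\ref{PEndNew}(ii), and to exploit the resulting Bruhat-triangular structure together with a rank count. Let $\Si_n$ act on $L_\de^{\boxtimes n}$ by place permutation of the $n$ tensor factors, denoted $u \mapsto \pi_w(u)$ for $w \in \Si_n$; since all factors are copies of $L_\de$, this action preserves $(1_\bi L_\de)^{\boxtimes n}$, and since all factors of $v_n$ are the single vector $v$, it fixes the top-degree line $\O\cdot v_n$. The key lemma, to be proved by induction on $\ell(w)$ starting from the base case (\ref{ETauCv1v2}), is that for every $w\in\Si_n$ and $u\in L_\de^{\boxtimes n}$,
\begin{equation}\label{ETriangFormulaPlan}
\tau_w(u) \in \si_w\otimes\pi_w(u) + \bigoplus_{w'<w}V(w'),
\end{equation}
where $<$ is the Bruhat order. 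The inductive step uses the $R_{n\de}$-linearity of $\tau_{w'}$ combined with the Mackey theorem (Theorem~\ref{TMackeyKL}), which controls how left multiplication by $\si_r \in R_{n\de}$ carries the Mackey subquotient $V(w')$ isomorphically onto $V(s_r w')$ modulo $\bigoplus_{w''<s_r w'}V(w'')$ whenever $\ell(s_r w') = \ell(w')+1$.

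Given (\ref{ETriangFormulaPlan}), part (i) follows by induction on the Bruhat order: at each step the leading term $\si_w\otimes\pi_w(u)$ sweeps out $V(w)$ as $u$ ranges over $L_\de^{\boxtimes n}$ (because $\pi_w$ is an $\O$-automorphism), so $\sum_{w'\leq w} L_\de^{\boxtimes n}\cdot w' = \bigoplus_{w'\leq w}V(w')$, and at the longest element $w_0$ we recover $V_n$. A rank comparison using Proposition~\ref{PEndNew}(ii) shows both sides are $\O$-free of rank $n!\cdot\operatorname{rk}_\O L_\de^{\boxtimes n}$, so the sum is direct and we obtain $V_n = \bigoplus_w L_\de^{\boxtimes n}\cdot w$. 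Each summand $L_\de^{\boxtimes n}\cdot w = \tau_w(L_\de^{\boxtimes n})$ is an $R_{\de^n}$-submodule of $V_n$ isomorphic to $L_\de^{\boxtimes n}$, since $\tau_w$ is an $R_{n\de}$-linear (hence $R_{\de^n}$-linear) bijection of $M_n$.

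For part (ii), I would restrict (\ref{ETriangFormulaPlan}) to $u\in(1_\bi L_\de)^{\boxtimes n}$; since $\pi_w$ preserves this subspace and $\tau_w$ commutes with the idempotent $1_{\bi^n} \in R_{n\de}$, the correction terms $u_{w'}$ lie in $(1_\bi L_\de)^{\boxtimes n}$ as well. The same Bruhat induction together with Proposition~\ref{PEndNew}(iii) then yields $1_{\bi^n}M_n = \bigoplus_w (1_\bi L_\de)^{\boxtimes n}\cdot w$. The second equation follows by extracting the degree-$nN$ component: each $\tau_w$ is homogeneous of degree $0$ by Corollary~\ref{CEndMDegZero}, and $\big((1_\bi L_\de)^{\boxtimes n}\big)_{nN} = \O\cdot v_n$ by the choice of extremal word $\bi$ and Lemma~\ref{LMultOneWeight}, so $\big((1_\bi L_\de)^{\boxtimes n}\cdot w\big)_{nN} = \O\cdot v_n w$, and summing over $w$ gives $(1_{\bi^n}M_n)_{nN} = \bigoplus_w \O\cdot v_n w$. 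The main obstacle is the verification of (\ref{ETriangFormulaPlan}); fortunately only the leading $V(w)$-component needs to be tracked through the induction, and this is accessible via the Mackey filtration and $R_{n\de}$-linearity without having to compute the lower Bruhat corrections explicitly.
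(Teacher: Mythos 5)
Your proposal is correct and takes essentially the same route as the paper: the corollary is deduced from the Bruhat-triangularity of the operators $\tau_w$ against the decomposition $V_n=\bigoplus_{w}V(w)$ of Proposition~\ref{PEndNew}(ii), which is exactly the content of the paper's one-line proof via Theorem~\ref{TEndMn}, and your key triangularity formula is the paper's Lemma~\ref{LTriang}. The only small caveat is that your proposed base case (\ref{ETauCv1v2}) is established only for symmetric Cartan matrices; in general one should start from (\ref{ETauSiTriangular}) for the generator $v_n$ and obtain the statement for arbitrary $u=(x_1\otimes\cdots\otimes x_n)v_n$ by $R_{n\de}$-linearity, which is how Lemma~\ref{LTriang} is proved.
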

\begin{proof}
Since $L_\de^{\boxtimes n}$ is irreducible as an $R_{\de^n}$-module, the result now follows from Theorem~\ref{TEndMn} and  Proposition~\ref{PEndNew}.
\end{proof}

Let $u_0\in\Si_{2e}$ be the minimal length such that
$$
u_0\bi^2=\bi^{\{2\}}:=(i_1,i_1,i_2,i_2,\dots,i_e,i_e).
$$

\begin{Example} 
{\rm 
If $\Car$ is symmetric, we know that $L_\de$ is homogeneous, and so $i_m\neq i_{m+1}$ for all $1\leq m<e$. So in that case, we have that 
\begin{equation}\label{Eu_0}
u_0: n\mapsto
\left\{
\begin{array}{ll}
2n-1 &\hbox{if $1\leq n\leq e$,}\\
2(n-e) &\hbox{if $e< n\leq 2e$.}
\end{array}
\right.
\end{equation}
In terms of Khovanov-Lauda diagrams, 
$$
\psi_{u_0}1_{\bi}=\begin{braid}\tikzset{baseline=7mm}
  \draw (0,4)node[above]{$i_1$}--(0,0);
  \draw (1,4)node[above]{$i_2$}--(2,0);
  \draw[dots] (1.3,4)--(4.9,4);
  \draw[dots] (3.2,0)--(9.1,0);
  \draw (5,4)node[above]{$i_{e}$}--(9.3,0);
 \draw (6,4)node[above]{$i_1$}--(1,0);
  \draw (7,4)node[above]{$i_2$}--(2.8,0);
  \draw[dots] (7.1,4)--(9.9,4);
  \draw (10.2,4)node[above]{$i_{e}$}--(10.2,0);
\end{braid}.
$$
}
\end{Example}

Note that in all cases, we can write
\begin{equation}\label{EU'}
\si_1=\psi_{u'}\psi_{u_0}
\end{equation}
for some $u'\in\Si_{2e}$.

\begin{Lemma} \label{Lcd}
The constant $c$ appearing in (\ref{ETauC}) is equal to $\pm 1$. Moreover, $\psi_{u_0}\si_1v_2=-2c\psi_{u_0}v_2$. 
\end{Lemma}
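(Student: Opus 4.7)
The plan is to combine the relation $\tau_1^2=\mathrm{id}$ from Theorem~\ref{TEndMn} with a direct KLR-algebra computation. First, apply $\tau_1^2=\mathrm{id}$ to $v_2$: using $\tau_1 v_2=(\sigma_1+c)v_2$ and $R_{2\delta}$-linearity of $\tau_1$,
$$v_2 \;=\; \tau_1^2 v_2 \;=\; (\sigma_1+c)^2 v_2 \;=\; \sigma_1^2 v_2 + 2c\,\sigma_1 v_2 + c^2 v_2,$$
so $\sigma_1^2 v_2 = (1-c^2)v_2 - 2c\,\sigma_1 v_2$. By Corollary~\ref{CVnDec}(ii), $\{v_2,\sigma_1 v_2\}$ is linearly independent in the top graded component $(1_{\bi^2}M_2)_{2N}$, so this is the unique decomposition.

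The core of the proof is a direct computation. Writing $\sigma_1 = \psi_{u'}\psi_{u_0}$ via \eqref{EU'}, we have $\psi_{u_0}\sigma_1 v_2 = \psi_{u_0}\psi_{u'}\psi_{u_0}\,v_2$. The concatenation $\psi_{u_0}\psi_{u'}$ is non-reduced since $u_0 u'$ fixes the word $\bi^{\{2\}}$. Applying the quadratic relation $\psi_r^2 1_\bj = Q_{j_r,j_{r+1}}(y_r,y_{r+1})\,1_\bj$ at each collision point, then commuting the resulting $y$-polynomials past the remaining $\psi$'s via \eqref{R6} so that they act on $v_2$ (where $y_r v_2=0$ annihilates all terms of positive $y$-degree), yields a scalar identity
$$\psi_{u_0}\sigma_1 v_2 \;=\; \mu\,\psi_{u_0} v_2$$
for some $\mu\in\O$. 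Tracking coefficients carefully through this reduction, and matching them with the explicit form of $c$ coming from the KKK construction of $\tau$ (in the simply-laced case, aided by the homogeneity of $L_\delta$ from Proposition~\ref{PMinSL}) or the construction of Chapter~\ref{ChLast} (in the non-simply-laced case), identifies $\mu=-2c$, which is the second assertion of the lemma.

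Finally, $R_{2\delta}$-linearity of $\tau_1$ gives
$$\tau_1(\psi_{u_0} v_2) \;=\; \psi_{u_0}\tau_1(v_2) \;=\; \psi_{u_0}(\sigma_1+c)v_2 \;=\; (\mu+c)\,\psi_{u_0} v_2 \;=\; -c\,\psi_{u_0} v_2,$$
exhibiting $\psi_{u_0} v_2$ (which is nonzero by Lemma~\ref{LMBasis}) as an eigenvector of $\tau_1$ with eigenvalue $-c$. Since $\tau_1^2=\mathrm{id}$, we conclude $c^2=1$, i.e., $c=\pm 1$. The main obstacle is the detailed combinatorial identification $\mu=-2c$ in the second step, which depends on the Cartan type, the choice of extremal word $\bi$, and the signs $\eps_{ij}$ in \eqref{EArun}.
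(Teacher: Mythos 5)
Your first and last steps are fine, and you have assembled all the ingredients of the paper's argument, but the middle of your proof contains a genuine gap: the identification $\mu=-2c$ \emph{is} the second assertion of the lemma, and you do not prove it --- you defer it to a ``detailed combinatorial identification'' depending on the Cartan type, the extremal word, and the signs $\eps_{ij}$, to be matched against ``the explicit form of $c$ coming from the KKK construction.'' No explicit form of $c$ is available to match against: $c$ is defined only through the abstract existence of the intertwiner in (\ref{ETauC}), and the paper explicitly declines to compute it (the Remark following the lemma records the formula $c=\prod_{r<s}\eps_{i_r,i_s}$ for symmetric $\Car$ precisely as something that is neither proved nor needed). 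So the step you yourself label ``the main obstacle'' is not overcome; it is the missing proof.

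The fix is short and uses only what you already have. Once you know $\psi_{u_0}\si_1 v_2=\mu\,\psi_{u_0}v_2$ for \emph{some} scalar $\mu$ --- which follows most cleanly not from your proposed KLR-relation reduction (whose error terms from the braid relations you would also have to control) but from the observation that both $\psi_{u_0}\si_1v_2$ and $\psi_{u_0}v_2$ lie in the top degree component of the word space $(M_2)_{\bi^{\{2\}}}$, which is one-dimensional and spanned by $\psi_{u_0}v_2\neq 0$ --- multiply on the left by $\psi_{u'}$ from (\ref{EU'}) to obtain $\si_1^2v_2=\mu\,\si_1v_2$. Comparing with your first identity $\si_1^2v_2=(1-c^2)v_2-2c\,\si_1v_2$ and using the linear independence of $v_2$ and $\si_1v_2$ that you already invoked, you get $1-c^2=0$ and $\mu=-2c$ simultaneously, with no type-by-type computation and no knowledge of the value of $c$. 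This is exactly the paper's proof: the relation $\mu=-2c$ is \emph{derived} from the quadratic relation rather than verified against it.
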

\begin{proof}
By Theorem~\ref{TEndMn}, we have $\tau_1^2=1$. It follows that $(\si_1+c)^2v_2=v_2$ or $\si_1^2v_2=(-2c\si_1+1-c^2)v_2$. 

On the other hand, note that $\psi_{u_0}v_2\neq 0$ spans the top degree component of the word space $(M_2)_{\bi^{\{2\}}}$. It follows that $\psi_{u_0}\si_1v_2=d\psi_{u_0}v_2$ for some constant $d$. Multipluing on the left with $\psi_{u'}$ as in (\ref{EU'}), this yields $\si_1^2v_2=d\si_1v_2$. Comparing with the previous paragraph, we conclude that $1-c^2=0$ and $d=-2c$. 
\end{proof}

\begin{Lemma} \label{LTriang}
Let $w\in\Si_n$, and $v^1,\dots,v^n\in L_\de$. Then 
$$
(v^1\otimes\dots\otimes v^n)w\equiv \si_w(v^{w1}\otimes \dots\otimes v^{wn})\pmod{\sum_{u<w}\si_u\otimes L_\de^{\boxtimes n}}
$$
and 
$$
\si_w(v^1\otimes\dots\otimes v^n)\equiv (v^{w^{-1}1}\otimes \dots\otimes v^{w^{-1}n})w\pmod{\sum_{u<w}L_\de^{\boxtimes n}u}.
$$
\end{Lemma}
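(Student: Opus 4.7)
The plan is to prove the first congruence by induction on $\ell(w)$ and then derive the second from it. The base case $w=1$ is trivial. For the inductive step, choose a simple reflection $s_r$ with $\ell(ws_r)=\ell(w)-1$, set $w':=ws_r$, and write
\[
(v^1\otimes\cdots\otimes v^n)\,w = \tau_r\bigl((v^1\otimes\cdots\otimes v^n)\,w'\bigr).
\]
By the inductive hypothesis, the argument of $\tau_r$ equals $\si_{w'}(v^{w'1}\otimes\cdots\otimes v^{w'n})$ modulo $\sum_{u<w'}V(u)$, where $V(u):=\si_u\otimes L_\de^{\boxtimes n}$. Since $\tau_r$ is $R_{n\de}$-linear it commutes past left multiplication by $\si_{w'}$, so applying the defining formula (\ref{ETauCv1v2}) to a general tensor, together with the identity $v^{w'(s_rk)}=v^{wk}$, yields
\[
\tau_r\bigl(\si_{w'}(v^{w'1}\otimes\cdots\otimes v^{w'n})\bigr) = \si_{w'}\si_r(v^{w1}\otimes\cdots\otimes v^{wn}) + c'\,\si_{w'}(v^{w'1}\otimes\cdots\otimes v^{w'n}).
\]
The crucial algebraic identity $\si_{w'}\si_r=\si_w$ holds because the embedding $\iota:\Si_n\into\Si_{en}$ multiplies lengths by the constant factor $e^2$, so $\ell(w)=\ell(w')+1$ forces $\iota(w)=\iota(w')w_r$ to be a reduced factorization in $\Si_{en}$, whence $\psi_{\iota(w')}\psi_{w_r}=\psi_{\iota(w)}$; the remaining $c'$-term lies in $V(w')\subseteq\sum_{u<w}V(u)$.

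To complete the induction we must check that $\tau_r\bigl(\sum_{u<w'}V(u)\bigr)\subseteq\sum_{u<w}V(u)$. The same computation gives $\tau_r(\si_u\otimes\xi) = c''(\si_u\otimes\xi) + \si_u\si_r\otimes S_r\xi$, where $S_r$ swaps the $r$th and $(r+1)$st tensor factors. The first summand lies in $V(u)\subseteq\sum_{v<w}V(v)$ since $u<w'<w$. For the second, the case $us_r>u$ is handled exactly as above, yielding $\si_u\si_r=\si_{us_r}$, while the lifting property of the Bruhat order (combined with $u<w'=ws_r<w$) forces $us_r<w$. The remaining case $us_r<u$ is the main technical obstacle: here $\si_u\si_r=\psi_{\iota(us_r)}\psi_{w_r}^2$ is non-reduced, and one must verify $\psi_{w_r}^2\otimes L_\de^{\boxtimes n}\subseteq V(1)+V(s_r)$. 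This can be established by expanding $\psi_{w_r}^2$ via the quadratic KLR relation (\ref{R4}) and exploiting the fact that every $y_s$ lies in the parabolic subalgebra $R_{\de^n}$ and therefore slides freely across the tensor product, reducing all surviving $\psi_x$-contributions to those with $x$ fixing positions outside the two blocks acted on by $w_r$. Alternatively, it may be deduced from the fact that $\tau_r$ preserves the decomposition $V_n=1_{\de^n}M_n=\bigoplus_{v\in\Si_n}V(v)$ (by $R_{n\de}$-linearity) together with $\tau_r^2=\id$, which forces $\tau_r$ to respect the pairing of each $s_r$-orbit $\{V(v),V(vs_r)\}$.

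The second congruence then follows from the first: substituting $v^k\mapsto v^{w^{-1}k}$ in the first congruence yields
\[
\si_w(v^1\otimes\cdots\otimes v^n)\equiv(v^{w^{-1}1}\otimes\cdots\otimes v^{w^{-1}n})\,w\pmod{\sum_{u<w}V(u)},
\]
and a straightforward downward induction on Bruhat order, applying the first congruence to each $u<w$, shows the equality of filtrations $\sum_{u<w}V(u)=\sum_{u<w}L_\de^{\boxtimes n}\,u$.
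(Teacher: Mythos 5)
Your induction hinges on applying the formula (\ref{ETauCv1v2}), $\tau(v^1\otimes v^2)=\si_1\cdot(v^2\otimes v^1)+c(v^1,v^2)\,v^1\otimes v^2$, to an \emph{arbitrary} tensor $v^1\otimes v^2$. But that formula is established in the paper only when the Cartan matrix $\Car$ is symmetric (it comes from the Kang--Kashiwara--Kim $R$-matrix theory, which assumes symmetry). For non-simply-laced types the endomorphism $\tau$ is constructed in Chapter~\ref{ChLast}, and the only property available is (\ref{ETauC}): its value on the single generating vector $v_2=v_1\otimes v_1$. Since Lemma~\ref{LTriang} must hold in all untwisted affine types, your inductive step has no basis in the non-symmetric case. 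This is exactly why the paper records the stronger statement (\ref{ETauSiTriangularv1vn}) only as a remark \emph{for symmetric} $\Car$, proved by the Bruhat induction you are attempting. The paper's actual proof of the lemma avoids the issue entirely: it starts from (\ref{ETauSiTriangular}) (the triangularity of $\tau_w$ on the generator $v_n$, valid in all types), writes $v^k=x_kv_1$ with $x_k\in R_\de$ using irreducibility of $L_\de$, uses $R_{n\de}$-linearity of the right $\Si_n$-action to pull $x_1\otimes\cdots\otimes x_n\in R_{\de^n}\subseteq R_{n\de}$ outside, and then commutes this parabolic element past $\si_u=\psi_{\iota(u)}$ using the KLR relations, which permutes the $x_k$'s and produces only lower Bruhat terms.

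A secondary weakness: your treatment of the error terms in the case $us_r<u$ is not complete. The first suggested route (expanding $\psi_{w_r}^2$ by the quadratic relation) is plausible but unexecuted, and the alternative is unsound: $R_{n\de}$-linearity of $\tau_r$ does not imply that $\tau_r$ preserves the $\O$-module decomposition $V_n=\bigoplus_{v}V(v)$ of Proposition~\ref{PEndNew}(ii), since the summands $V(v)=\si_v\otimes L_\de^{\boxtimes n}$ are not submodules for any of the relevant algebras, so there is no reason $\tau_r$ should ``respect the pairing of each $s_r$-orbit.'' Even granting the symmetric case, this step would need the explicit quadratic-relation computation to be carried out.
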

\begin{proof}
The second statement follows from the first. Further, note that the first statement in the special case where $v^1=\dots= v^n=v_1$ is contained in (\ref{ETauSiTriangular}). For the general case, write  $v^1=x_1v_1,\dots, v^n=x_nv_1$ for some $x_1,\dots,x_n\in R_\de$. Then, using (\ref{ETauSiTriangular}) and considering $x_1\otimes \dots\otimes x_n\in R_{\de^n}$ as an element of $R_{n\de}\supseteq R_{\de^n}$, we get  
\begin{align*}
(v^1\otimes\dots\otimes v^n)w&=(x_1v_1\otimes\dots\otimes x_nv_1)w
\\
&=(x_1\otimes\dots\otimes x_n)((v_1\otimes\dots\otimes v_1)w)
\\
&=(x_1\otimes\dots\otimes x_n)(\si_w(v_1\otimes \dots\otimes v_1)+\sum_{u<w}c_u\si_u(v_1\otimes \dots\otimes v_1)).
\end{align*}
Using the fact that all our vectors belong to $V_n$, Proposition~\ref{PEndNew}(ii),  and the relations in $R_{n\de}$, we have for any $u\leq w$ that 
\begin{align*}
(x_1\otimes\dots\otimes x_n)\si_u(v_1\otimes \dots\otimes v_1)
&\equiv
\si_u(x_{u1}v_1\otimes\dots\otimes x_{un}v_1)
\pmod{\sum_{x<u}\si_x\otimes L_\de^{\boxtimes n}}
\\
&\equiv
\si_u(v^{u1}\otimes\dots\otimes v^{un})
\pmod{\sum_{x<u}\si_x\otimes L_\de^{\boxtimes n}},
\end{align*}
which proves the lemma. 
\end{proof}

\begin{Remark} 
{\rm 
If $\Car$ is symmetric, then an induction on the Bruhat order and (\ref{ETauCv1v2}) allow us to strengthen Lemma~\ref{LTriang} as follows: for any $v^1,\dots,v^n\in L_\de$, we have 
\begin{equation}\label{ETauSiTriangularv1vn}
(v^1\otimes\dots\otimes v^n)w=\si_w(v^{w1}\otimes \dots\otimes v^{wn})+\sum_{u<w}c_u\si_u(v^{u1}\otimes \dots\otimes v^{un})
\end{equation}
for some scalars $c_u\in\O$ (depending on $v^1,\dots,v^n$). 
}
\end{Remark}

If $c=-1$, it will be convenient to change the sign, so let us redefine $\tau_r$ so that   
\begin{equation}\label{ETau}
\tau_r(v_n):=(c\si_{r}+1)v_n \qquad(1\leq r<n).
\end{equation}

\begin{Remark} 
{\rm 
The constant $c$ in general depends on the choice of the signs $\eps_{i,j}$ in the definition of the KLR algebra. For symmetric $\Car$, it can be proved that $c=\prod_{1 \leq r < s \leq e} \epsilon_{i_r,i_s}.$
We are not going to need this result. 
}
\end{Remark}

\section{Imaginary Schur algebras}

A key role in this paper is played by the {\em imaginary Schur algebra}
$$
\ImS_n=\ImS_{n,\O}:=R_{n\de}/\operatorname{Ann}_{R_{n\de}}(\Mde_n),
$$
and its parabolic analogue for $\nu=(n_1,\dots,n_a)\vDash n$:
\begin{equation}\label{ECNu}
\ImS_\nu=\ImS_{\nu,\O}:=R_{\nu,\de}/\operatorname{Ann}_{R_{\nu,\de}}(\Mde_\nu)
\cong \ImS_{n_1}\otimes\dots\otimes \ImS_{n_a}. 
\end{equation}

Modules over $R_{n\de}$ which factor through $\ImS_n$ will be called {\em imaginary modules}. Thus the category of imaginary $R_{n\de}$-modules is the same as the category of $\ImS_n$-modules. 

We make use of the following useful criterion:

\begin{Lemma} \label{LSchub}  {\bf (`Schubert's Criterion')}
Let $A$ be a (graded) algebra and $0 \to Z \to P \to M \to 0$ be a short exact sequence of (graded) $A$-modules with $P$ (graded) projective. If every (degree zero) $A$-module homomorphism from $P$ to $M$ annihilates $Z$, then $M$ is a (graded) projective  $A/\Ann_A(M)$-module.
\end{Lemma}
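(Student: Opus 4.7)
The plan is to show that the canonical $\bar A$-linear surjection $\bar\pi : \bar P \twoheadrightarrow M$ is an isomorphism, where $\bar A := A/\Ann_A(M)$ and $\bar P := P/\Ann_A(M)\cdot P$. Since $P$ is projective over $A$, the base change $\bar P = P \otimes_A \bar A$ is projective over $\bar A$ (concretely, if $P \oplus Q \cong A^{(I)}$ as graded $A$-modules then $\bar P \oplus \bar Q \cong \bar A^{(I)}$ as graded $\bar A$-modules), so once $\bar\pi$ is shown to be an isomorphism, $M$ is identified with a graded projective $\bar A$-module.

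First I would verify the setup: since $\pi(\Ann_A(M)\cdot P) = \Ann_A(M)\cdot M = 0$, the inclusion $\Ann_A(M)\cdot P \subseteq Z$ holds automatically, and $\bar\pi$ is well-defined with kernel $\bar Z := Z/\Ann_A(M)\cdot P$. The task thus reduces to proving $\bar Z = 0$.

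Assume for contradiction that there is a nonzero homogeneous $\bar z \in \bar Z$. Using graded projectivity of $\bar P$, I would embed $\bar P$ as a direct summand of a graded free module $\bigoplus_i \bar A\{n_i\}$ and project onto a coordinate in which $\bar z$ has a nonzero component to obtain a homogeneous $\bar A$-linear map $\bar\chi : \bar P \to \bar A$ with $\bar a := \bar\chi(\bar z)$ a nonzero homogeneous element of $\bar A$. Because the $\bar A$-action on $M$ is faithful by the very construction of $\bar A$, there is a homogeneous $m \in M$ with $\bar a \cdot m \neq 0$, selected so that the $\bar A$-linear composite
$$\psi : \bar P \xrightarrow{\bar\chi} \bar A \xrightarrow{-\cdot m} M$$
is of degree zero. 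Precomposing with the quotient $P \twoheadrightarrow \bar P$ yields a degree-zero $A$-module homomorphism $\phi : P \to M$ that fails to annihilate a lift $\tilde z \in Z$ of $\bar z$, contradicting the hypothesis. Hence $\bar Z = 0$, $\bar\pi$ is an isomorphism, and $M \cong \bar P$ is graded projective over $\bar A$.

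The main delicacy is the grading bookkeeping in the construction of $\psi$: one must select $m$ of the homogeneous degree that cancels the grading shift of $\bar\chi$ so that the hypothesis on degree-zero maps can be invoked. This uses the faithfulness of the $\bar A$-action on $M$ together with the freedom in choosing the coordinate projection; once in place, the rest is a formal argument exploiting projectivity of $\bar P$ and the canonical quotient $P \twoheadrightarrow \bar P$.
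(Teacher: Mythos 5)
Your overall strategy is the right one, and it is essentially the argument of \cite[Lemma 3.2a]{BDK} that the paper invokes in lieu of its own proof: reduce everything to the identity $Z=\Ann_A(M)\cdot P$ by pairing coordinate functionals of a free module containing $P$ as a summand with elements of $M$. The formal part of your reduction is fine: $\Ann_A(M)\cdot P\subseteq Z$ because $\pi$ kills it, $\bar P=P/\Ann_A(M)P$ is graded projective over $\bar A$, and $\ker\bar\pi=\bar Z$, so it suffices to show $\bar Z=0$.

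The gap is exactly at the point you flag as ``the main delicacy.'' Faithfulness of the $\bar A$-action gives you \emph{some} homogeneous $m$ with $\bar a\cdot m\neq 0$, but it gives no control over $\deg m$, and the freedom in the choice of coordinate does not rescue this: it can happen that every homogeneous $m$ with $\bar a m\neq 0$ lies in the wrong degree, so the composite $\psi$ is homogeneous of nonzero degree and the hypothesis, read as a statement about degree-zero maps only, cannot be invoked. Indeed, with the degree-zero-only hypothesis the statement is false: take $A=F[x]/(x^2)$ with $\deg x=2$, $P=A\oplus q^2A$, $M=(A/(x))\oplus q^2A$, and $\pi$ the obvious surjection, so that $Z=xA\oplus 0$ and $\Ann_A(M)=0$. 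Any degree-zero homomorphism $P\to M$ sends the degree-$0$ generator of $A$ into $M_0=(A/(x))_0$ and the degree-$2$ generator of $q^2A$ into $M_2=(q^2A)_2$, hence kills $(x,0)$ and all of $Z$; yet $M$ is not (graded) projective over $\bar A=A$. The lemma is really about \emph{all homogeneous} homomorphisms $P\to M$ --- which is what the paper actually verifies in both of its applications, in Theorem~\ref{TSchub} and in the proof of Theorem~\ref{3.4g} --- and with that hypothesis your argument closes immediately: drop the degree-matching requirement, take any homogeneous $m$ with $\bar a m\neq 0$, and the resulting homogeneous $\phi:P\to M$ fails to annihilate a lift of $\bar z$. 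So the fix is not to engineer a degree-zero map, which cannot always be done, but to use the stronger form of the hypothesis.
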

\begin{proof}
The proof given in \cite[Lemma 3.2a]{BDK} goes through for the graded setting. 
\end{proof}

Now we can prove our first key result. 

\begin{Theorem} \label{TSchub}
$\Mde_\nu$ is a projective $\ImS_\nu$-module, and 
\begin{equation}\label{ESWOne}
\End_{\ImS_{\nu}}
(\Mde_\nu)\cong \O\Si_\nu.
\end{equation}
\end{Theorem}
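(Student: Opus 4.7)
The plan splits into two parts, handled in parallel: the identification $\End_{\ImS_\nu}(\Mde_\nu) \cong \O\Si_\nu$ and the projectivity of $\Mde_\nu$ over $\ImS_\nu$. For the endomorphism identity, I would begin from the tensor product decomposition $\Mde_\nu = \Mde_{n_1} \boxtimes \cdots \boxtimes \Mde_{n_a}$ as a module over $R_{\nu,\de} = R_{n_1\de} \otimes \cdots \otimes R_{n_a\de}$. Each factor $\Mde_{n_b}$ is finite dimensional over $\O$ with $\End_{R_{n_b\de}}(\Mde_{n_b}) \cong \O\Si_{n_b}$ by Theorem~\ref{TEndMn}, so the standard external-tensor-product identity produces $\End_{R_{\nu,\de}}(\Mde_\nu) \cong \O\Si_{n_1} \otimes \cdots \otimes \O\Si_{n_a} \cong \O\Si_\nu$. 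Since $\ImS_\nu$ is, by definition, the quotient of $R_{\nu,\de}$ acting faithfully on $\Mde_\nu$, one has $\End_{R_{\nu,\de}}(\Mde_\nu) = \End_{\ImS_\nu}(\Mde_\nu)$, yielding (\ref{ESWOne}).

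For projectivity I would apply Schubert's Criterion (Lemma~\ref{LSchub}). Set $P := q^{nN} R_{\nu,\de}\, 1_{\bi^n}$, a graded free (hence projective) $R_{\nu,\de}$-module, and let $\pi : P \twoheadrightarrow \Mde_\nu$ be the degree-zero surjection sending $1_{\bi^n}$ to the cyclic generator $v_\nu$; put $Z := \ker \pi$. By the criterion it suffices to show that every degree-zero homomorphism $\phi : P \to \Mde_\nu$ annihilates $Z$. Such $\phi$ is determined by $\phi(1_{\bi^n}) \in (1_{\bi^n} \Mde_\nu)_{nN}$, and I would argue that this subspace is the top-degree part of the weight space $1_{\bi^n} \Mde_\nu$ with basis $\{v_\nu \cdot w \mid w \in \Si_\nu\}$ — this is the parabolic analogue of Proposition~\ref{PEndNew}(iv) and Corollary~\ref{CVnDec}(ii), obtained by tensoring the single-factor statements across $b = 1, \dots, a$. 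Consequently $\phi = \sum_{w \in \Si_\nu} c_w\, \tau_w \circ \pi$ for some scalars $c_w \in \O$, where $\tau_w \in \End_{R_{\nu,\de}}(\Mde_\nu)$ is the right-multiplication-by-$w$ endomorphism assembled from Theorem~\ref{TEndMn} on each factor. Each $\tau_w \circ \pi$ manifestly kills $Z$, so $\phi$ does too, and Schubert's Criterion concludes that $\Mde_\nu$ is projective over $R_{\nu,\de}/\Ann_{R_{\nu,\de}}(\Mde_\nu) = \ImS_\nu$.

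The main technical step is the parabolic extension of the top-degree weight space description invoked above, namely, the identification of $(1_{\bi^n} \Mde_\nu)_{nN}$ as the top of $1_{\bi^n} \Mde_\nu$ with basis $\{v_\nu \cdot w\}_{w \in \Si_\nu}$. This should follow routinely: degrees are additive across $\boxtimes$, each single-factor weight space $1_{\bi^{n_b}} \Mde_{n_b}$ has top degree $n_b N$ with basis indexed by $\Si_{n_b}$ by Proposition~\ref{PEndNew}(iv) and Corollary~\ref{CVnDec}(ii), and the right $\Si_\nu = \Si_{n_1} \times \cdots \times \Si_{n_a}$-action on $\Mde_\nu$ restricts componentwise to the right $\Si_{n_b}$-actions on the individual factors. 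Beyond this bookkeeping, no argument genuinely new to the parabolic setting should be needed.
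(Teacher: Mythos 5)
Your proposal is correct and follows essentially the same route as the paper: Schubert's Criterion applied to the surjection $q^{nN}R\,1_{\bi^n}\onto \Mde$ sending $1_{\bi^n}\mapsto v$, with the top-degree component of the word space $(1_{\bi^n}\Mde)_{nN}=\bigoplus_w \O\cdot vw$ (Proposition~\ref{PEndNew}(iv), Corollary~\ref{CVnDec}(ii)) forcing every degree-zero map out of $P$ to factor through $\pi$ via $\sum_w c_w\tau_w$, and Theorem~\ref{TEndMn} for the endomorphism algebra. The only cosmetic difference is that the paper reduces at the outset to $\nu=(n)$, whereas you run the same argument directly in the parabolic case by tensoring the single-factor statements; both are fine.
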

\begin{proof}
The second statement comes from Theorem~\ref{TEndMn}. It suffices to prove the first statement  for the special case $\nu=(n)$. We will apply Schubert's Criterion to see that $\Mde_n$ is projective as a $\ImS_n$-module. 
Let
$
P:=q^{ nN} R_{n\de}1_{\bi^n}
$
Then we have a (homogeneous) surjection 
$
\pi:P\onto \Mde_n,\ 1_{\bi^n}\mapsto v_n.
$

To verify the assumptions in Lemma~\ref{LSchub}, it suffices to show that every (homogeneous) homomorphism $\phi:P\to M$ can be written as $\phi=f\circ \pi$ for $f\in  \operatorname{end}_{R_{n,\de}}
(\Mde_n)$. Since $P$ is generated by $1_{\bi^n}$, it suffices to prove that $\phi(1_{\bi^n})=f(\pi(1_{\bi^n}))=f(v_n)$. By Proposition~\ref{PEndNew}(iv) and Corollary~\ref{CVnDec}(ii), the vector 
$\phi(1_{\bi^n})\in (1_{\bi^n}M_n)_{nN}$ can be written as a linear combination 
$$\phi(1_{\bi^n})\in (1_{\bi^n}M_n)_{nN}
=\sum_{w\in\Si_n}c_wv_nw
=\sum_{w\in\Si_n}c_w\tau_w(v_n)
\qquad(c_w\in\O). 
$$
So we can take $f=\sum_{w\in\Si_n}c_w\tau_w$. 
Now apply Schubert's Criterion to see that $\Mde_n$ is projective. 
\end{proof}

\section{Characteristic zero theory}
In this section, we assume that $\O=F$. 
If the characteristic of $F$ is zero or greater than $n$, the imaginary Schur algebra is semisimple and Morita equivalent to $F\Si_n$:

\begin{Theorem} \label{TChar0}
Assume that $\cha F=0$ or $\cha F>n$. Then $\ImS_n$ is semisimple, $M_n$ is a projective generator for $\ImS_n$, and $\ImS_n$ is Morita equivalent to $F\Si_n$. 
\end{Theorem}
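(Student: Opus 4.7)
The plan is to leverage Theorem~\ref{TSchub}, which already yields everything needed to apply the classical Morita theorem except one missing ingredient: that $M_n$ is a \emph{generator} for $\ImS_n$. We know $M_n$ is a projective $\ImS_n$-module with $\End_{\ImS_n}(M_n) \cong F\Si_n$. Under our hypothesis on $\cha F$, Maschke's theorem gives that $F\Si_n$ is semisimple, with precisely $p(n)$ pairwise non-isomorphic simple modules (one for each partition of $n$). Once $M_n$ is shown to be a progenerator, the Morita theorem yields the equivalence $\mod{\ImS_n}\simeq\mod{F\Si_n}$ implemented by $\ga_n = \Hom_{\ImS_n}(M_n,-)$, and semisimplicity of $\ImS_n$ follows because semisimplicity of the module category is a Morita invariant.

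To show $M_n$ is a generator, I would apply Lemma~\ref{L3.1e} with $\ImS = \ImS_n$, $P = M_n$, and $H = F\Si_n$: the functor $\al = \ga_n$ induces a bijection between the non-isomorphic simple $\ImS_n$-modules appearing in the head of $M_n$ and the full set of non-isomorphic simple $F\Si_n$-modules. Hence the head of $M_n$ contains at least $p(n)$ pairwise non-isomorphic simples. On the other hand, Corollary~\ref{CColorUB} provides the crucial a priori upper bound in the opposite direction: there are at most $p(n)$ non-isomorphic irreducible imaginary $R_{n\de}$-modules of color $i$, i.e.\ at most $p(n)$ non-isomorphic simple $\ImS_n$-modules. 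Combining the two counts forces every simple $\ImS_n$-module to appear in the head of $M_n$, so $M_n$ is indeed a projective generator.

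The main (and essentially only) nontrivial point is the matching of these two counts. Without the cuspidal upper bound of Corollary~\ref{CColorUB} --- which itself rests on the classification of irreducible imaginary modules in Theorems~\ref{THeadIrr} and~\ref{TD1} --- a projective module with endomorphism ring $F\Si_n$ need not be a generator even when $F\Si_n$ is semisimple, and one could not conclude semisimplicity of $\ImS_n$. Everything else is routine Morita bookkeeping, applied to the triple $(\ImS_n, M_n, F\Si_n)$ provided by Theorem~\ref{TSchub} and Theorem~\ref{TEndMn}.
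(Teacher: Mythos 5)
Your proof is correct, but it takes a genuinely different and heavier route than the paper's, and the claim in your final paragraph that Corollary~\ref{CColorUB} is indispensable is mistaken.

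The paper's proof does not count simple modules at all. It observes that $F\Si_n = \End_{\ImS_n}(M_n)$ is semisimple by Maschke, then applies Lemma~\ref{L3.1f} (whose hypothesis is licensed by the self-duality $M_n^\circledast \cong M_n$ of Lemma~\ref{LMSelfDual}): writing $F\Si_n = \bigoplus_k J_k$ as a sum of simple left ideals, one gets $M_n = \bigoplus_k M_n J_k$ with each $M_n J_k \cong A_P\circ\beta(J_k)$ a simple $\ImS_n$-module by Lemma~\ref{L3.1e}. Thus $M_n$ is a semisimple $\ImS_n$-module, and since $M_n$ is faithful over $\ImS_n$ by the very definition of $\ImS_n$, the algebra $\ImS_n$ is semisimple; the progenerator and Morita statements are then immediate.

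Your route instead establishes the progenerator property by matching the count $p(n)$ of simples in the head of $M_n$ (from Lemma~\ref{L3.1e} and Maschke) against the bound coming from Corollary~\ref{CColorUB}. This is a valid argument, but a few things are worth flagging. First, it imports the cuspidal-systems classification (Theorems~\ref{THeadIrr} and~\ref{TD1}, i.e.\ results from \cite{Kcusp}), whereas the paper's proof stays entirely within the self-contained Schur-functor machinery of Section~\ref{SSSF} plus $M_n^\circledast\cong M_n$. Second, calling CColorUB an ``a priori'' upper bound overstates it: its hypothesis is precisely an irredundant family of $p(n)$ irreducibles, so the bound is conditional on the lower bound you just produced; the logic still closes (the family from $\head(M_n)$ feeds into the hypothesis), but the two counts are not independent. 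Third, your assertion that without CColorUB ``one could not conclude semisimplicity of $\ImS_n$'' is false --- faithfulness together with self-duality of $M_n$ is all the extra input the paper needs. In effect you are proving the $\cha F > n$ special case of Theorem~\ref{3.4g}(ii) as an intermediate step rather than bypassing it.
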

\begin{proof}
Under the assumptions on the characteristic, the endomorphism algebra of the $\ImS_n$-module $M_n$, which we know is isomorphic to $F\Si_n$, is semisimple. In view of Lemma~\ref{L3.1f}, we conclude that $M_n$ is semisimple as an $\ImS_n$-module. By definition, the imaginary Schur algebra $\ImS_n$ is semisimple, and the theorem now follows from Morita theory. 
\end{proof}

The theorem defines a Morita equivalence 
$$
\ga_n:\mod{\ImS_n}\to\mod{F\Si_n}. 
$$
One can easily show that for $N\in \mod{\ImS_n}$ and $M\in \mod{\ImS_m}$, there is a functorial isomorhism $\ga_{n+m}(N\circ M)\cong \ind_{\Si_n\times \Si_m}^{\Si_{n+m}}\ga_n(N)\boxtimes \ga_m(M)$. We will not do it now, since more general result will be obtained (for an arbitrary ground) field in Section~\ref{SIndMorita} using Schur algebras.


\section{Imaginary induction and restriction}\label{ChImInd}
Throughout the section $\nu=(n_1,\dots,n_a)\vDash n$. Recall the parabolic subalgebra $R_{\nu,\de}\subseteq R_{n\de}$ from (\ref{ERNuDe}).

Consider the functors of {\em imaginary induction} and {\em imaginary restriction}: 
\begin{align*}
\HCI_\nu^n&:=\Ind_{n_1\de,\dots,n_a\de}^{n\de}: \mod{R_{\nu,\de}}\to \mod{R_{n\de}},
\\
\HCR^n_\nu&:=\Res_{n_1\de,\dots,n_a\de}^{n\de}: \mod{R_{n\de}}\to \mod{R_{\nu,\de}}.
\end{align*}
Let $\words_{\nu,\de}\subseteq \words_{n\de}$ be the set of the concatenations  
$\bj=\bj(1)\dots\bj(a)$ such that $\bj(b)\in \words_{n_b\de}$ for all $b=1,\dots,a$. Set
$$
1_{\nu,\de}:=\sum_{\bj\in \words_{\nu,\de}}1_\bj.
$$
Then $1_{\nu,\de}$ is the identity in $R_{\nu,\de}$ and  
$\HCR^n_\nu M=1_{\nu,\de}M$. 
The functor $\HCI_\nu^n$ is left adjoint to the functor $\HCR_\nu^n$. The following result (partially) describes the right adjoint:

\begin{Lemma} \label{LOp}
For a composition $\nu=(n_1,\dots,n_a)\vDash n$ consider the opposite composition $\nu^\op:=(n_a,\dots,n_1)\vDash n$. Let $V$ be an $R_{n\de}$-module,  and $V_b$ be an $R_{\nu_b\de}$-module for $b=1,\dots,a$. Then there is a functorial isomorphism
$$
\Hom_{R_{\nu,\de}}(\HCR^n_\nu V,V_1\boxtimes\dots\boxtimes V_a)
\cong\Hom_{R_{n\de}}(V,\HCI_{\nu^\op}^nV_a\boxtimes\dots\boxtimes V_1).
$$
\end{Lemma}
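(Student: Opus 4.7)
The plan is to reduce the statement to the standard adjunction between restriction and coinduction, and then to convert coinduction into reversed induction via Lemma~\ref{LLV}.

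First, recall that for the parabolic decomposition $R_{\nu,\de}\subseteq R_{n\de}$, the functor $\HCR^n_\nu$ has a right adjoint given by coinduction
$$\Coind_{n_1\de,\dots,n_a\de}^{n\de}=\Hom_{R_{\nu,\de}}(1_{\nu,\de}R_{n\de},\,?).$$
Applied to $V_1\boxtimes\dots\boxtimes V_a$, this yields a functorial isomorphism
$$\Hom_{R_{\nu,\de}}(\HCR^n_\nu V,\,V_1\boxtimes\dots\boxtimes V_a)\cong\Hom_{R_{n\de}}\bigl(V,\,\Coind_{n_1\de,\dots,n_a\de}^{n\de}(V_1\boxtimes\dots\boxtimes V_a)\bigr).$$

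Next, I would apply Lemma~\ref{LLV} to rewrite the coinduction as an induction with the factors reversed. Taking $\underline{\ga}=(n_a\de,\dots,n_1\de)$ and the modules $V_a,\dots,V_1$ in that order, the lemma gives
$$\Ind_{n_a\de,\dots,n_1\de}^{n\de}(V_a\boxtimes\dots\boxtimes V_1)\cong q^{d(\underline{\ga})}\,\Coind_{n_1\de,\dots,n_a\de}^{n\de}(V_1\boxtimes\dots\boxtimes V_a).$$
The key observation is that the shift $d(\underline{\ga})$ vanishes: since $\de$ is the null root we have $(\de,\de)=0$, so $(n_b\de,n_c\de)=n_bn_c(\de,\de)=0$ for all $b,c$, and hence $d(\underline{\ga})=0$. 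Thus the coinduction is canonically identified with $\HCI_{\nu^\op}^n(V_a\boxtimes\dots\boxtimes V_1)$, and substituting gives exactly the claimed isomorphism.

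The proof is essentially a two-line assembly from Lemma~\ref{LLV} and the standard restriction-coinduction adjunction; the only thing to verify is that no grading shift appears, which follows immediately from $(\de,\de)=0$. There is no real obstacle here, provided we have Lemma~\ref{LLV} available (which is stated earlier in the paper).
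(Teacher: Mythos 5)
Your proof is correct and is exactly the route the paper takes: the paper's proof consists of the single line ``This follows from Lemma~\ref{LLV},'' and your write-up simply supplies the implicit details (the restriction--coinduction adjunction plus the observation that the shift $d(\underline{\ga})$ vanishes since $(\de,\de)=0$). No issues.
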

\begin{proof}
This follows from Lemma~\ref{LLV}.
\end{proof}


In view of Theorem~\ref{TEndMn}, $\Mde_\nu$ is an $(R_{\nu,\de},\O\Si_\nu)$-bimodule, so we can regard 
$\HCI_\nu^n \Mde_\nu$ as an $(R_{n\de},\O\Si_\nu)$-bimodule. Similarly, $\Mde_n$ is an $(R_{n\de},\O\Si_n)$-bimodule, so we can regard 
$\HCR_\nu^n \Mde_n$ as an $(R_{\nu,\de},\O\Si_n)$-bimodule. 
Recall that we identify $L_\de^{\boxtimes n}$ with a natural $R_{\de^n}$-submodule of $M_n$.

\begin{Lemma} \label{3.2f}
We have:
\begin{enumerate}
\item[{\rm (i)}] $\HCI_\nu^n \Mde_\nu\cong \Mde_n$ as  $(R_{n\de},\O\Si_\nu)$-bimodules. 
\item[{\rm (ii)}] $\HCR_\nu^n \Mde_n\cong \Mde_\nu\otimes_{\O\Si_\nu}\O\Si_n$ as  $(R_{\nu,\de},\O\Si_n)$-bimodules. 
\item[{\rm (iii)}] 
We have the following decompositions of $\O$-modules: 
$$
\HCR_\nu^n \Mde_n=\bigoplus_{x\in \D_{e\nu}^{(e^n)},\ y\in {}^\nu\D_n} \psi_{x}\si_y L_\de^{\boxtimes n}=\bigoplus_{x\in \D_{e\nu}^{(e^n)},\ y\in {}^\nu\D_n} \psi_{x} L_\de^{\boxtimes n}y
$$
\end{enumerate}
\end{Lemma}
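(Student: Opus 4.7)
Part (i) is essentially transitivity of induction: writing $\Mde_\nu = \Mde_{n_1} \boxtimes\dots\boxtimes \Mde_{n_a} = \Ind_{\de^n}^{n_1\de,\dots,n_a\de}(L_\de^{\boxtimes n})$ and applying $\HCI_\nu^n = \Ind_{n_1\de,\dots,n_a\de}^{n\de}$ yields an $R_{n\de}$-isomorphism $\HCI_\nu^n \Mde_\nu \cong \Mde_n$ under which the distinguished vector $v_\nu$ is identified with $v_n$. To upgrade this to a bimodule isomorphism, I observe that $\Si_\nu$ is generated by those $s_r$ with $r$ not at a $\nu$-boundary, and for such $r$ the endomorphism $\tau_r$ acts on both sides by the same formula $\tau_r(v_n) = (c\si_r+1)v_n$ of (\ref{ETau}) on the generator. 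Since both $\Mde_n$ and $\HCI_\nu^n \Mde_\nu$ are generated by this vector as left $R_{n\de}$-modules and the right $\O\Si_\nu$-action commutes with the left action, the $\O\Si_\nu$-actions on the two sides agree.

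For part (iii), Lemma~\ref{LMBasis} gives $\Mde_\nu = \bigoplus_{x \in \D_{e\nu}^{(e^n)}} \psi_x L_\de^{\boxtimes n}$ as $\O$-modules, and since $1_{\nu,\de} \in R_{n\de}$ commutes with the right $\Si_n$-action on $\Mde_n$, every translate $\Mde_\nu y$ with $y \in \Si_n$ lies in $\HCR_\nu^n \Mde_n$. To establish $\HCR_\nu^n \Mde_n = \bigoplus_{y \in {}^\nu\D_n} \Mde_\nu y$, I would invoke Theorem~\ref{TMackeyKL} with $\underline\ga = (\de,\dots,\de)$ and $\underline\be = (n_1\de,\dots,n_a\de)$: the parametrizing set $\D(\underline\be,\underline\ga)$ is in natural bijection with ${}^\nu\D_n$ (each row of $\underline\al$ must place a single $\de$ in some column, and the column-sum constraints identify such data with a coset of $\Si_\nu$ in $\Si_n$), and every Mackey subquotient is isomorphic to $\Mde_\nu$ with no grading shift because $(\de,\de)=0$. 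This yields $\dim_\O \HCR_\nu^n \Mde_n = |{}^\nu\D_n|\cdot\dim_\O \Mde_\nu$, which forces both the directness and the surjectivity of $\sum_{y} \Mde_\nu y$. The alternative presentation using $\si_y L_\de^{\boxtimes n}$ then follows from Lemma~\ref{LTriang}: the vectors $L_\de^{\boxtimes n} y$ and $\si_y L_\de^{\boxtimes n}$ differ by lower-order Bruhat-order terms, so a length induction (with the prefactor $\psi_x$) shows the two families span the same $\O$-module.

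Finally, (ii) falls out of (iii). The map $\mu : \Mde_\nu \otimes_{\O\Si_\nu} \O\Si_n \to \HCR_\nu^n \Mde_n$, $m \otimes w \mapsto mw$, is a well-defined bimodule homomorphism by the above observation. Using the left $\O\Si_\nu$-module decomposition $\O\Si_n = \bigoplus_{y\in{}^\nu\D_n} \O\Si_\nu y$, the source of $\mu$ decomposes as $\bigoplus_y \Mde_\nu \otimes y$, which by (iii) maps bijectively onto $\HCR_\nu^n \Mde_n = \bigoplus_y \Mde_\nu y$. The main obstacle throughout is the surjectivity/directness assertion in (iii): it is the Mackey dimension count, combined with the vanishing of grading shifts provided by $(\de,\de)=0$, that ensures the right $\Si_n$-translates of $\Mde_\nu$ exhaust $\HCR_\nu^n \Mde_n$ with no redundancy.
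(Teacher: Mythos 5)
Your part (i) matches the paper's argument (transitivity of induction plus agreement of the right actions on the cyclic generator), and your reduction of (ii) to (iii) is also the paper's route. For (iii), however, you take a genuinely different path, and it has a gap. The paper does not invoke the Mackey theorem at all here: it starts from $M_n=\bigoplus_{w\in\D_{en}^{(e^n)}}\psi_w L_\de^{\boxtimes n}$ (Lemma~\ref{LMBasis}) and selects, by a word argument, exactly those summands lying in $1_{\nu,\de}M_n$, namely those with $w=x\iota(y)$ for $x\in\D_{e\nu}^{(e^n)}$, $y\in{}^\nu\D_n$. The first ($\si_y$-form) decomposition and its directness therefore come for free, and the passage to the $L_\de^{\boxtimes n}y$ form is then done by the triangularity of Lemma~\ref{LTriang} combined with a Bruhat-maximality argument.

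Two steps in your version need repair. First, $\D(\underline{\be},\underline{\ga})$ is \emph{not} in bijection with ${}^\nu\D_n$: it contains all tuples $(\al^a_b)$ with row sums $\de$, including those that split $\de$ nontrivially. What is true is that the subquotients attached to non-concentrated rows vanish; this is exactly the content of the Imaginary Mackey Theorem~\ref{TMackey}, whose proof requires the cuspidality property of $L_\de$ and which in the paper appears only \emph{after} this lemma, so you must supply that vanishing argument rather than assert it as a property of the index set. Second, and more seriously, the equality $\dim_\O\HCR_\nu^n M_n=|{}^\nu\D_n|\cdot\dim_\O M_\nu$ does not by itself force $\sum_y M_\nu y$ to be direct or to exhaust: a containment $\sum_i W_i\subseteq V$ with $\sum_i\dim W_i=\dim V$ gives nothing unless you independently know either directness or surjectivity. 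The paper supplies directness by assuming a nontrivial relation $\sum\psi_x v_{x,y}\,y=0$, choosing $xy$ Bruhat-maximal, and using Lemma~\ref{LTriang} to extract a leading term $\psi_x\si_y v_{x,y}$ plus strictly lower terms, contradicting Lemma~\ref{LMBasis}. You can close your gap the same way, or by running your own final step in the opposite direction: the family $\psi_x\si_y L_\de^{\boxtimes n}=\psi_{x\iota(y)}L_\de^{\boxtimes n}$ is direct because it is a subfamily of the Lemma~\ref{LMBasis} decomposition, and the unitriangular change of spanning sets from Lemma~\ref{LTriang} then transfers both the span and the directness to the translates $\psi_x L_\de^{\boxtimes n}y$.
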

\begin{proof}
(i) By transitivity of induction, $\HCI_\nu^n \Mde_\nu\cong \Mde_n$ as  $R_{n\de}$-modules. By definition, the isomorphism is compatible with the right $\O\Si_\nu$-module structures. 

(iii) Recall the decomposition
$M_n=\bigoplus_{w\in \D_{en}^{(e^n)}}\psi_{w} L_\de^{\boxtimes n}$ from Lemma~\ref{LMBasis}. Note, using word argument, that $\psi_{w} L_\de^{\boxtimes n}\subseteq \HCR_\nu^n \Mde_n=1_{\nu,n}\Mde_n$ for $w\in \D_{en}^{(e^n)}$ if and only if $w$ can be written (uniquely) as $w=x\iota(y)$, where $x\in \D_{e\nu}^{(e^n)}$, $y\in {}^\nu\D_n$, and $\ell(w)=\ell(x)+\ell(\iota(y))$, and otherwise 
$\psi_{w} L_\de^{\boxtimes n}\cap 1_{\nu,n}\Mde_n=0$. 
This gives the first decomposition.

To deduce the second decomposition from the first, observe by a  word argument, that each $\psi_x L_\de^{\boxtimes n}y\subseteq \HCR_\nu^n \Mde_n$. Next, note using Lemma~\ref{LMBasis}  that each $\psi_x\si_y L_\de^{\boxtimes n}\cong L_\de^{\boxtimes n}$ as vector spaces. As $\psi_x L_\de^{\boxtimes n}\to \psi_x L_\de^{\boxtimes n} y$ is an invertble linear transformation, we also have that $\psi_x L_\de^{\boxtimes n}y\cong L_\de^{\boxtimes n}$ as vector spaces. Now, by dimensions, it suffices to prove that the sum $\sum_{x\in \D_{e\nu}^{(e^n)}, y\in {}^\nu\D_n} \psi_{x} L_\de^{\boxtimes n}y$ is direct. Well, if 
\begin{equation}\label{E111213}
\sum_{x\in \D_{e\nu}^{(e^n)}, y\in {}^\nu\D_n} \psi_{x} v_{x,y}y=0
\end{equation} 
with $v_{x,y}\in L_\de^{\boxtimes n}$, let $x,y$ be chosen so that $xy\in\Si_{en}$ is Bruhat maximal with $v_{x,y}\neq 0$. Rewriting the left hand side of (\ref{E111213}) 
using Lemma~\ref{LTriang}, gives   
$
\psi_x\si_y v_{x,y}+(*)=0$,
where 
$$(*)\in \sum_{x'\in \D_{e\nu}^{(e^n)},\ y'\in {}^\nu\D_n,\ x'y'\not\geq xy}\psi_{x'}\si_{y'}L_\de^{\boxtimes n}.
$$
We get a contradiction. 

(ii) follows from (iii). 
\end{proof}

In view of Lemma~\ref{3.2f}(ii), the $R_{\nu,\de}$-action on $\HCR_\nu^n \Mde_n$ factors through the quotient $\ImS_\nu$, so $\HCR_\nu^n \Mde_n$ is a $\ImS_\nu$-module in a natural way. 
In Corollary~\ref{CHCIC} we will prove a stronger result that 
the functor $\HCI_\nu^n$ sends $\ImS_\nu$-modules to $\ImS_n$-modules and the functor $\HCR_\nu^n$ sends $\ImS_n$-modules to $\ImS_\nu$-modules.

\begin{Corollary} \label{CFunPerm}
The following pairs of functors are isomorphic:
\begin{enumerate}
\item[{\rm (i)}] $\HCI_\nu^n\circ (\Mde_\nu\otimes_{\O\Si_\nu}\, ?)$ and $(\Mde_n\otimes_{\O\Si_n}\, ?)\circ \ind_{\Si_\nu}^{\Si_n}\,:\, \mod{\O\Si_\nu}\to \mod{R_{n,\de}}$. 
\item[{\rm (ii)}] $\HCR_\nu^n\circ (\Mde_n\otimes_{\O\Si_n}\, ?)$ and $(\Mde_\nu\otimes_{\O\Si_\nu}\, ?)\circ \res_{\Si_\nu}^{\Si_n}\,:\, \mod{\O\Si_n}\to \mod{R_{\nu,\de}}$. 
\end{enumerate}
\end{Corollary}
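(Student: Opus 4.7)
The plan is to deduce both statements directly from the bimodule identifications in Lemma~\ref{3.2f} together with associativity of the tensor product and standard description of induction/restriction for symmetric groups (namely $\ind_{\Si_\nu}^{\Si_n} V = \O\Si_n \otimes_{\O\Si_\nu} V$ and $\res_{\Si_\nu}^{\Si_n} W = W$ viewed via the inclusion $\O\Si_\nu \hookrightarrow \O\Si_n$). Both parts are formal once one has the right bimodule structures in hand, so there should be no serious obstacle; the main thing to check is that all bimodule actions are compatible at each step.

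For part (i), given $V \in \mod{\O\Si_\nu}$, I would compute
\[
\HCI_\nu^n(\Mde_\nu \otimes_{\O\Si_\nu} V) \;=\; R_{n\de}1_{\nu,\de} \otimes_{R_{\nu,\de}} \bigl(\Mde_\nu \otimes_{\O\Si_\nu} V\bigr) \;\cong\; \bigl(R_{n\de}1_{\nu,\de} \otimes_{R_{\nu,\de}} \Mde_\nu\bigr) \otimes_{\O\Si_\nu} V,
\]
using associativity of tensor product and the fact that the $\O\Si_\nu$-action on $\Mde_\nu$ commutes with the $R_{\nu,\de}$-action (so the outer tensor product over $\O\Si_\nu$ makes sense). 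By Lemma~\ref{3.2f}(i), the parenthesized factor is $\HCI_\nu^n\Mde_\nu \cong \Mde_n$ as $(R_{n\de},\O\Si_\nu)$-bimodules, giving $\Mde_n \otimes_{\O\Si_\nu} V$. Rewriting $\Mde_n \otimes_{\O\Si_\nu} V \cong \Mde_n \otimes_{\O\Si_n} \O\Si_n \otimes_{\O\Si_\nu} V \cong \Mde_n \otimes_{\O\Si_n} \ind_{\Si_\nu}^{\Si_n}V$ completes (i). Naturality in $V$ is immediate since every step is a natural isomorphism.

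For part (ii), given $W \in \mod{\O\Si_n}$, the functor $\HCR_\nu^n$ is left multiplication by the idempotent $1_{\nu,\de}$, which commutes with the right $\O\Si_n$-action on $\Mde_n$. Hence
\[
\HCR_\nu^n(\Mde_n \otimes_{\O\Si_n} W) \;=\; 1_{\nu,\de}\bigl(\Mde_n \otimes_{\O\Si_n} W\bigr) \;\cong\; \bigl(1_{\nu,\de}\Mde_n\bigr) \otimes_{\O\Si_n} W \;=\; \HCR_\nu^n\Mde_n \otimes_{\O\Si_n} W.
\]
By Lemma~\ref{3.2f}(ii) this is isomorphic to $(\Mde_\nu \otimes_{\O\Si_\nu} \O\Si_n) \otimes_{\O\Si_n} W \cong \Mde_\nu \otimes_{\O\Si_\nu} W$, where $W$ on the right is regarded as $\res_{\Si_\nu}^{\Si_n}W$. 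Once again naturality in $W$ is transparent. In both parts the only real content lies in Lemma~\ref{3.2f}; everything else is bookkeeping of bimodule structures.
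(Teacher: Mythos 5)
Your proof is correct and follows essentially the same route as the paper: both parts reduce to the bimodule isomorphisms of Lemma~\ref{3.2f} plus associativity of tensor products, with your chain of isomorphisms in (i) simply being the paper's read in the reverse direction (and you make explicit the step of commuting $\HCI_\nu^n = R_{n\de}1_{\nu,\de}\otimes_{R_{\nu,\de}}?$ past $\otimes_{\O\Si_\nu}$, which the paper leaves implicit). No gaps.
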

\begin{proof}
(i) Take $N\in \mod{\O\Si_\nu}$. Using Lemma~\ref{3.2f}(i), we have natural isomorphisms
\begin{align*}
\Mde_n\otimes_{\O\Si_n} \ind_{\Si_\nu}^{\Si_n}N&=\Mde_n\otimes_{\O\Si_n} \O\Si_n\otimes_{\O\Si_\nu}N
\cong \Mde_n\otimes_{\O\Si_\nu}N
\\
&\cong (\HCI^n_\nu \Mde_\nu)\otimes_{\O\Si_\nu}N
\cong \HCI^n_\nu (\Mde_\nu\otimes_{\O\Si_\nu}N),
\end{align*}
as required.

(ii) 
Using Lemma~\ref{3.2f}(ii), for an $\O\Si_n$-module $N$, we have natural isomorphisms
\begin{align*}
\HCR_\nu^n (\Mde_n\otimes_{\O\Si_n} N)&\cong(\HCR_\nu^n \Mde_n)\otimes_{\O\Si_n} N
\cong (\Mde_\nu\otimes_{\O\Si_\nu}\O\Si_n)\otimes_{\O\Si_n} N
\\
&\cong \Mde_\nu\otimes_{\O\Si_\nu}  \res_{\Si_\nu}^{\Si_n} N,
\end{align*}
as required.
\end{proof}

We need a version of the Mackey Theorem for imaginary induction and restriction. Recall the notation from Section~\ref{SSCosetRep}. In particular, given two compositions $\la,\mu\vDash n$ and $x\in {}^\la\D^\mu_n$ we have compositions $\la\cap x\mu$ and $x^{-1}\la\cap\mu$. Moreover, the corresponding parabolic algebras $R_{\la\cap x\mu,\de}$ and $R_{x^{-1}\la\cap\mu,\de}$ are naturally isomorphic  via an isomorphism 
\begin{equation}\label{EPiX}
\Pi_x:R_{\la\cap x\mu,\de}\iso R_{x^{-1}\la\cap\mu,\de},
\end{equation}
which permutes the components. Composing with this isomorphism we get a functor 
$$
\mod{R_{x^{-1}\la\cap\mu,\de}}\to \mod{R_{\la\cap x\mu,\de}},\ M\mapsto {}^xM.
$$
Note that we do not need any grading shifts. With this notation, we have:

\begin{Theorem} \label{TMackey} 
{\bf (Imaginary Mackey Theorem)}
Let $\la,\mu\vDash n$, and $M$ be an $\ImS_{\mu}$-module. Then there is a filtration of $\HCR^n_\la\HCI^n_\mu M$ with subfactors 
$$ 
\HCI_{\la\cap x\mu}^{\la}{}^{x}(\HCR^\mu_{x^{-1}\la\cap\mu} M)
\qquad(x\in{}^\la\D^\mu_n).
$$
\end{Theorem}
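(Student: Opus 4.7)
The plan is to invoke the general KLR Mackey Theorem~\ref{TMackeyKL} to obtain a filtration of $\Res^{n\de}_{\la\de}\Ind^{n\de}_{\mu\de}M$, and then exploit the imaginary hypothesis on $M$ via property (Cus2) to kill every subquotient that does not correspond to a double coset.

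First I would apply Theorem~\ref{TMackeyKL} with $\underline{\ga}=(\mu_1\de,\dots,\mu_q\de)$ and $\underline{\be}=(\la_1\de,\dots,\la_p\de)$, where $p$ and $q$ are the numbers of parts of $\la$ and $\mu$. This gives a filtration of $\HCR^n_\la\HCI^n_\mu M$ indexed by tuples $\underline{\al}=(\al^c_d)_{1\leq c\leq q,\,1\leq d\leq p}$ satisfying $\sum_d\al^c_d=\mu_c\de$ and $\sum_c\al^c_d=\la_d\de$, with $\underline{\al}$-subquotient $\Ind_{x(\underline{\al})\cdot\underline{\al}}\,{}^{x(\underline{\al})}(\Res_{\underline{\al}}M)$.

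The substantive step is to show that this subquotient vanishes unless every $\al^c_d$ is a non-negative integer multiple of $\de$. Since $\Res_{\underline{\al}}$ is exact, it is enough to check this on composition factors of $M$ in $\mod{\ImS_\mu}$. The identification $\ImS_\mu\cong\ImS_{\mu_1}\otimes\dots\otimes\ImS_{\mu_q}$ combined with Theorem~\ref{TD1} expresses each such factor as an outer tensor product $L(\unu^{(1)})\boxtimes\dots\boxtimes L(\unu^{(q)})$, so the restriction splits as a tensor product and the problem reduces to showing, for every $c$, that $\Res^{\mu_c\de}_{\al^c_1,\dots,\al^c_p}L(\unu^{(c)})\neq 0$ forces $\al^c_d\in\Z_{\geq 0}\de$ for all $d$. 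Put $S^c_k:=\sum_{d\leq k}\al^c_d$. Applying $\Res^{\mu_c\de}_{S^c_k,\,\mu_c\de-S^c_k}$ yields a coarsening that is still nonzero, so by (Cus2) each $S^c_k$ is either a non-negative integer multiple of $\de$ or a nonzero sum of real roots lying in $\Phi^\re_{\prec}$. Define $\partial(\eta):=\eta-a\de$, where $a$ is the $\al_0$-coefficient of $\eta$; since $a_0=1$ by (\ref{Ea_0}), $\partial$ lands in $Q':=\bigoplus_{i\in I'}\Z\al_i$. A real root $-\be'+n\de\in\Phi^\re_{\prec}$ has $\partial(-\be'+n\de)=-\be'\in-Q'_+$, where $Q'_+:=\bigoplus_{i\in I'}\Z_{\geq 0}\al_i$; thus in both cases $\partial(S^c_k)\in-Q'_+$, and $\partial(S^c_k)=0$ precisely when $S^c_k\in\Z_{\geq 0}\de$. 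The column-sum identity $\sum_c S^c_k=(\la_1+\dots+\la_k)\de$ gives $\sum_c\partial(S^c_k)=0$, and strong convexity of $Q'_+$ (a sum of elements of $-Q'_+$ vanishes only when every summand is zero) forces $\partial(S^c_k)=0$ for all $c$ and $k$. Consequently every $S^c_k$, and hence every $\al^c_d=S^c_d-S^c_{d-1}$, is a non-negative integer multiple of $\de$.

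Finally I would match the surviving subquotients to the claimed ones. Writing $\al^c_d=d^c_d\de$ gives a matrix $(d^c_d)$ with row sums $\mu_c$ and column sums $\la_d$. The assignment $x\mapsto\bigl(|B^\la_d\cap x(B^\mu_c)|\bigr)$, where $B^\la_d$ and $B^\mu_c$ are the consecutive-integer blocks of $\{1,\dots,n\}$ determined by $\la$ and $\mu$, is a standard bijection between such matrices and ${}^\la\D^\mu_n$. Under this bijection the composition $x(\underline{\al})\cdot\underline{\al}$ matches $\la\cap x\mu$ and the composition $\underline{\al}$ matches $x^{-1}\la\cap\mu$ via (\ref{EDJ1}), while the block permutation $x(\underline{\al})$ realises the component-permuting isomorphism underlying the functor ${}^x(\cdot)$ given by $\Pi_x$ in (\ref{EPiX}). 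Translating, the surviving subquotients are precisely $\HCI^\la_{\la\cap x\mu}\,{}^x(\HCR^\mu_{x^{-1}\la\cap\mu}M)$ indexed by $x\in{}^\la\D^\mu_n$.

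The main obstacle is the vanishing step above: the remainder is bookkeeping translating between the KLR and imaginary languages, but the vanishing genuinely exploits the imaginary nature of $M$ through (Cus2) combined with strong convexity of the finite-type positive root cone.
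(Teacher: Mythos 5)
Your proof is correct and follows the same route as the paper, whose own proof is the single sentence ``this follows from the usual Mackey Theorem~\ref{TMackeyKL} using the fact that all composition factors of $M$ are imaginary in the sense of (Cus2)''; your $\partial$-and-convexity argument correctly supplies the vanishing step that the paper leaves implicit. The only detail worth adding is that the grading shifts $s(x(\underline{\al}),\cdot)$ appearing in Theorem~\ref{TMackeyKL} all vanish because every component of $\underline{\al}$ that survives is a multiple of $\de$ and $(\de,\de)=0$, which is why no degree shifts occur in the statement.
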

\begin{proof}
This follows from the usual Mackey Theorem~\ref{TMackeyKL} using the fact that all composition factors of $M$ are imaginary in the sense of (Cusp2). 
\end{proof}

We conjecture that instead of a filtration, in the Imaginary Mackey Theorem we actually have a {\em direct sum}:

\begin{Conjecture} \label{CMackey} 
{\bf (Strong Imaginary Mackey)}
Let $\la,\mu\vDash n$, and $M$ be an $\ImS_{\mu}$-module. Then 
$$
\HCR^n_\la\HCI^n_\mu M\cong \bigoplus_{x\in{}^\la\D^\mu_n} 
\HCI_{\la\cap x\mu}^{\la}{}^{x}(\HCR^\mu_{x^{-1}\la\cap\mu} M).
$$
\end{Conjecture}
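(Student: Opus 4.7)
The Imaginary Mackey Theorem~\ref{TMackey} already produces a filtration of $\HCR^n_\la\HCI^n_\mu M$ whose subquotients are precisely the summands appearing on the right hand side of the conjectured isomorphism, so the plan is to show that this filtration splits as an internal direct sum. My strategy is to verify the conjecture first for a class of ``permutation-type'' modules where the computation reduces to the classical Mackey decomposition for symmetric groups, and then bootstrap to arbitrary $M$ via the Morita picture developed in Chapters~\ref{SISDEP}--\ref{SSIM}.

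\smallskip
\noindent\emph{Step 1.} Suppose $M\cong \Mde_\mu\otimes_{\O\Si_\mu}N$ for some $N\in\mod{\O\Si_\mu}$. Applying Corollary~\ref{CFunPerm}(i) and then Corollary~\ref{CFunPerm}(ii) yields a natural isomorphism
\[
\HCR^n_\la\HCI^n_\mu M\;\cong\;\Mde_\la\otimes_{\O\Si_\la}\res^{\Si_n}_{\Si_\la}\ind^{\Si_n}_{\Si_\mu}N,
\]
and the classical Mackey formula for symmetric groups, combined with \eqref{EDJ1} to identify $\Si_\la\cap x\Si_\mu x^{-1}$ with $\Si_{\la\cap x\mu}$, provides a genuine direct-sum decomposition
\[
\res^{\Si_n}_{\Si_\la}\ind^{\Si_n}_{\Si_\mu}N\;\cong\;\bigoplus_{x\in{}^\la\D^\mu_n}\ind^{\Si_\la}_{\Si_{\la\cap x\mu}}\bigl({}^x\res^{\Si_\mu}_{\Si_{x^{-1}\la\cap\mu}}N\bigr).
\]
Applying the parabolic analogues of Corollary~\ref{CFunPerm}(i)--(ii) inside $\la$ and $\mu$ then rewrites each summand as $\HCI^\la_{\la\cap x\mu}{}^{x}\HCR^\mu_{x^{-1}\la\cap\mu}M$; here the conjugation by $x$ on the group-algebra side matches the component-permutation isomorphism $\Pi_x$ of~\eqref{EPiX} on the KLR side. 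This proves the conjecture whenever $M$ is of permutation type.

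\smallskip
\noindent\emph{Step 2.} For general $M\in\mod{\ImS_\mu}$, I would invoke the projective generator $Z=\bigoplus_{\nu\in X(h,n)}\Zde^\nu$ of $\ImS_n$ from Theorem~\ref{3.4g} (for $h\geq n$) together with the Morita equivalence $\al_{h,n},\be_{h,n}$ of Theorem~\ref{3.4c}. Each $\Zde^\nu$ embeds in the permutation module $\Mde_\nu$, so Step~1 produces a direct-sum decomposition on $Z$; the aim is to descend this decomposition to $M$ through a projective presentation $Q_1\to Q_0\to M\to 0$ in $\mod{\ImS_\mu}$, using exactness of $\HCI^n_\mu$ and $\HCR^n_\la$ to track the splittings along $Q_1\to Q_0$.

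\smallskip
\noindent\emph{The main obstacle} is exactly this naturality issue in Step~2: in positive characteristic, not every $\ImS_\mu$-module has the form $\Mde_\mu\otimes_{\O\Si_\mu}N$, so existence of splittings on a projective generator does not automatically transport to arbitrary $M$. The cleanest resolution would be to establish the vanishing
\[
\Ext^1_{R_{\la,\de}}\!\bigl(\HCI^\la_{\la\cap x\mu}{}^{x}\HCR^\mu_{x^{-1}\la\cap\mu}M,\;\HCI^\la_{\la\cap y\mu}{}^{y}\HCR^\mu_{y^{-1}\la\cap\mu}M\bigr)=0
\]
for distinct $x,y\in{}^\la\D^\mu_n$, which would force the Mackey filtration of Theorem~\ref{TMackey} to split for all $M$. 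I would try to obtain this vanishing via Frobenius reciprocity (using the adjunction of Lemma~\ref{LOp}) together with a Gelfand-Graev word-support analysis showing that distinct double cosets contribute disjoint supports; executing this uniformly in $\cha F$ is the chief technical hurdle.
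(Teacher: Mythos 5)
You should be aware at the outset that the statement you are trying to prove is stated in the paper as Conjecture~\ref{CMackey}; the authors offer no proof of it, only the weaker filtration result (Theorem~\ref{TMackey}). So there is no argument in the paper to compare yours against, and any complete proof would be new. Your proposal, as you yourself acknowledge, is not complete, and the gap it leaves open is precisely the content of the conjecture.

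Concretely: Step~1 is sound as far as it goes. For $M$ of the form $\Mde_\mu\otimes_{\O\Si_\mu}N$, the chain of isomorphisms via Corollary~\ref{CFunPerm}(i)--(ii) and the classical Mackey decomposition for $\O\Si_n$ (which is a genuine direct sum, coming from the double-coset decomposition of the $(\O\Si_\la,\O\Si_\mu)$-bimodule $\O\Si_n$) does produce a splitting, modulo a careful check that the resulting summands are identified with the subquotients of the filtration in Theorem~\ref{TMackey} compatibly with $\Pi_x$. But Step~2 does not close the argument. The functor $\be=\Mde_\mu\otimes_{\O\Si_\mu}?$ is not essentially surjective when $\cha F>0$ precisely because $\Mde_\mu$ is not a projective generator, and the projective generator $\bigoplus_\nu\Zde^\nu$ is \emph{not} of permutation type: $\Zde^\nu$ is a submodule of $\Mde_n$, and a submodule of a module admitting a direct-sum decomposition does not inherit that decomposition. (Its dual $\Sdot^{\nu^{\op}}$ is of the form $\Mde_n\otimes_{\O\Si_n}(\text{induced sign})$ by Lemma~\ref{3.3d}, but dualizing does not help you here because the two outer terms of the conjectured isomorphism are not interchanged by $\circledast$ in an obviously compatible way.) So your bootstrap from projectives to arbitrary $M$ never gets off the ground: you would first need the splitting, naturally in $M$, on all of $\operatorname{add}(Z)$, and Step~1 only gives it on $\operatorname{add}(\Mde_\mu)$. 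The Ext-vanishing you propose at the end would indeed suffice, but you give no argument for it, and establishing it uniformly in $\cha F$ is essentially equivalent to the conjecture itself. The word-support heuristic will not suffice either, since distinct double cosets can contribute overlapping word supports in general.
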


\chapter{Imaginary Howe duality}\label{SISDEP}
The imaginary Schur-Weyl duality described in the previous section is not quite sufficient to describe the composition factors of $M_n$ at least when the characteristic of the ground field is positive. The problem is that even though $M_n$ is a projective module over the imaginary Schur algebra, it is not in general a projective generator. 
We construct the desired projective generator as a direct sum  $Z=\bigoplus_{\nu\in X(h,n)} Z^\nu$ of `imaginary divided powers' modules, and the endomorphism algebra of $Z$ turns out to be the classical Schur algebra $S_{h,n}$. This leads to an equivalence of module categories for the imaginary and the classical Schur algebras. First, we need to develop a theory of ``Gelfand-Grave modules''.

Throughout the chapter, $\nu=(n_1,\dots,n_a)\vDash n\in \Z_{>0}$. 

\section{Gelfand-Graev modules}\label{SIGGR}

Denote by $w_0$ the longest element of $\Si_n$, and for $i\in I$,  consider the element
$$
\ga_{n,i}:=\psi_{w_0}\prod_{m=1}^ny_m^{m-1}\in R_{n\al_i},
$$
and the $R_{n\al_i}$-module 
$$
\Ga_{n,i}:=q_i^{-n(n-1)/2}R_{n\al_i}\ga_{n,i}.
$$

The following is well-known:

\begin{Proposition} \label{PGGId} 
The algebra $R_{n\al_i}$ is isomorphic to the affine nil-Hecke algebra and has unique (up to isomorphism and degree shift) irreducible module, denoted  $L(i^n)$ with formal character $[n]_i^!(i^n)$. Moreover:
\begin{enumerate}
\item[{\rm (i)}] $\ga_{n,i}$ is a primitive idempotent in $R_{n\al_i}$. In particular, 
$
\Ga_{n,i}
$
is a projective indecomposable $R_{n\al_i}$-module. In fact, $\Ga_{n,i}$ is the projective cover of $L(i^n)$. 
\item[{\rm (ii)}] $\Ga_{n,i}$  is isomorphic to the polynomial representation of the affine nil-Hecke algebra $R_{n\al_i}$ (with degree shifted down by $(\al_i,\al_i)n(n-1)/4)$; in particulr, $\Ga_{n,i}$ has an $\O$-basis 
$$\{y_1^{b_1}\dots y_n^{b_n}\ga_{n,i}\mid b_1,\dots, b_n\in\Z_{\geq 0} \},$$ and the formal character $q_i^{-n(n-1)/2}(1-q_i^2)^{-n}(i^n)$. 
\item[{\rm (iii)}] Let $(m_1,\dots,m_s)\vDash n$. Then 
$$\Res_{m_1\al_i,\dots,m_s\al_i}\Ga_{n,i}\cong 
q_i^{-n(n-1)/2+\sum_{i=1}^s m_i(m_i-1)/2}
\Ga_{m_1,i}\,\boxtimes\dots\boxtimes\, \Ga_{m_s,i}.$$ 
\end{enumerate}
\end{Proposition}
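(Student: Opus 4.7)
Since $\words_{n\al_i}$ contains the single word $\bi = i^n$, only the idempotent $1_\bi$ survives and, since $Q_{ii}(u,v) = 0$, the relations (\ref{R1})--(\ref{R7}) restricted to this word are exactly the defining relations of the affine nil-Hecke algebra $NH_n$, graded so that $\deg y_r = (\al_i,\al_i)$ and $\deg \psi_r = -(\al_i,\al_i)$. Thus $R_{n\al_i} \cong NH_n$ as graded algebras. The uniqueness of $L(i^n)$ and its formal character $[n]_i^!(i^n)$ then follow from the classical description of $NH_n$ as $\End_{F[y]^{\Si_n}}(P_n)$, where $P_n := F[y_1,\dots,y_n]$: the unique simple module is $P_n/F[y]^{\Si_n}_+\cdot P_n$, of graded dimension $[n]_i^!$.

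For (i) and (ii), I would realize $NH_n$ faithfully on $P_n$ by letting $y_r$ act as multiplication and $\psi_r$ as the divided difference $\partial_r : f \mapsto (f - s_r f)/(y_r - y_{r+1})$. The fundamental nil-Hecke identity $\partial_{w_0}(y_1^0 y_2^1 \cdots y_n^{n-1}) = \pm 1$ shows that the action of $\ga_{n,i}$ on $P_n$ is a rank-one projection (up to sign) onto the line of constants, so after absorbing the sign $\ga_{n,i}$ is a non-zero idempotent. Since $L(i^n)$ is the unique simple $R_{n\al_i}$-module, any non-zero idempotent is primitive, and $\Ga_{n,i}$ is the projective cover of $L(i^n)$. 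The map $f \mapsto f\ga_{n,i}$ defines an injective $NH_n$-homomorphism $P_n \hookrightarrow NH_n \ga_{n,i}$; surjectivity follows from the KLR basis theorem (Theorem~\ref{TBasis}) applied to $NH_n \ga_{n,i}$ by absorbing any $\psi_w$ on the left via the divided difference action, and the shift $q_i^{-n(n-1)/2}$ in the definition of $\Ga_{n,i}$ is precisely what makes this a degree-preserving isomorphism. The basis and the formal character $q_i^{-n(n-1)/2}(1-q_i^2)^{-n}(i^n)$ in (ii) are then immediate from the corresponding data for $P_n$.

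For (iii), the parabolic subalgebra $R_{m_1\al_i,\dots,m_s\al_i}$ is identified with $NH_{m_1} \otimes \cdots \otimes NH_{m_s}$, and under the isomorphism of (ii) the restriction of $P_n$ to this subalgebra decomposes as the outer tensor product $P_{m_1} \boxtimes \cdots \boxtimes P_{m_s}$, obtained by partitioning the variables $y_1,\dots,y_n$ into the obvious blocks. Reapplying (ii) to each factor identifies this with $\Ga_{m_1,i} \boxtimes \cdots \boxtimes \Ga_{m_s,i}$; matching the overall shift $q_i^{-n(n-1)/2}$ against $\prod_{r=1}^s q_i^{-m_r(m_r-1)/2}$ gives exactly the correction factor $q_i^{-n(n-1)/2 + \sum_r m_r(m_r-1)/2}$ stated.

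The main obstacle is bookkeeping rather than anything conceptual: one must pin down the precise sign in the nil-Hecke identity in order to verify $\ga_{n,i}^2 = \ga_{n,i}$, and keep careful track of grading shifts when passing between $\Ga_{n,i}$ and $P_n$, especially when matching them up in (iii). Apart from this, every ingredient is classical for the nil-Hecke algebra and its polynomial representation.
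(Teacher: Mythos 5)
Your strategy---identify $R_{n\al_i}$ with the affine nil-Hecke algebra and exploit its faithful polynomial representation $P_n$---is exactly what the references the paper cites for (i) and (ii) do, and your (iii) is a direct-decomposition variant of the paper's one-line ``follows by characters.'' However, the argument you give for (i) contains two substantive errors. First, $\ga_{n,i}=\psi_{w_0}\prod_m y_m^{m-1}$ acting on $P_n$ is \emph{not} a rank-one projection onto the constants: $\psi_{w_0}$ maps $P_n$ onto the infinite-dimensional ring $P_n^{\Si_n}$ of symmetric polynomials, and the image of $\ga_{n,i}$ is all of $P_n^{\Si_n}$. What is true---and what you actually need---is that $\psi_{w_0}(y_1^0y_2^1\cdots y_n^{n-1})=1$ exactly (no $\pm$ sign, in the conventions forced by relation~(\ref{R6})); combined with the twisted Leibniz rule $\psi_{w_0}(hg)=\psi_{w_0}(h)\,g$ for symmetric $g$, this gives $\ga_{n,i}^2f=\psi_{w_0}\bigl(y_1^0\cdots y_n^{n-1}\cdot\psi_{w_0}(y_1^0\cdots y_n^{n-1}f)\bigr)=\psi_{w_0}(y_1^0\cdots y_n^{n-1}f)=\ga_{n,i}f$, hence $\ga_{n,i}^2=\ga_{n,i}$ by faithfulness of $P_n$. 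You cannot ``absorb a sign'' into an idempotent: had the constant come out $-1$, the element $\ga_{n,i}$ as defined in the paper would simply fail to be idempotent and the proposition would be false as stated.

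Second, ``any non-zero idempotent is primitive because $L(i^n)$ is the unique simple'' is not a valid principle: for $n>1$ the identity of $R_{n\al_i}$ is a non-zero idempotent that decomposes into $n!$ mutually orthogonal primitives, since $R_{n\al_i}$ is isomorphic as a left module over itself to $n!$ shifted copies of a single indecomposable projective. The correct route is to establish (ii) first and then observe that $\End_{R_{n\al_i}}(\Ga_{n,i})\cong\End_{R_{n\al_i}}(P_n)\cong P_n^{\Si_n}$ (an endomorphism commuting with all $y_r$ is multiplication by some $g\in P_n$, and commuting with all $\psi_r$ forces $g$ symmetric), which is a graded-local ring; this gives indecomposability of $\Ga_{n,i}$ and hence primitivity of $\ga_{n,i}$. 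Your arguments for (ii) and (iii) are sound once (i) is repaired, and your proof of (iii) is arguably cleaner than the paper's, which deduces the isomorphism indirectly from a character computation together with projectivity of the restriction and uniqueness of the indecomposable projective up to grading shift.
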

\begin{proof}
For (i) and (ii), see for example \cite[section 2.2]{KL1} or \cite[Theorem~4.12]{KLM}.  Part (iii) follows easily from (i) and (ii) by characters.  
\end{proof}

Now, recall the word $\bi=(i_1,\dots,i_e)$ from (\ref{EBWt}). We rewrite:
\begin{equation}\label{EGGNewWay}
\bi=j_1^{m_1}\dots j_r^{m_r}
\end{equation}
with $j_k\neq j_{k+1}$ for all $r=1,2,\dots,r-1$. 
Define 
the {\em Gelfand-Graev idempotent}: 
$$\ga_{n,\de}:=\ga_{nm_1,j_1}\otimes \ga_{nm_2,j_2}\otimes\dots\otimes \ga_{nm_r,j_r}\in R_{nm_1\al_{j_1},nm_2\al_{j_2},\dots,nm_r\al_{j_r}}\subseteq R_{n\de}.$$
By Proposition~\ref{PGGId}(i), 
$$\Ga_{n}:=
\prod_{k=1}^r q_{j_k}^{-nm_k(nm_k-1)/2}
R_{n\de}\ga_{n,\de}\cong\Ga_{nm_1,j_1}\circ \Ga_{nm_2,j_2}\circ\dots\circ \Ga_{nm_r,j_r}
$$
is a projective $R_{n\de}$-module which we refer to as the {\em Gelfand-Graev}\, module. 
By Proposition~\ref{PGGId}(ii),
$$
\CH\Ga_{n}=\prod_{k=1}^r q_{j_k}^{-nm_k(nm_k-1)/2}(1-q_{j_k}^2)^{-nm_k}\, j_1^{nm_1}\circ\, j_2^{nm_2}\,\circ\,\dots\,\circ\, j_r^{nm_r}.
$$

More generally, 
we consider the {\em (parabolic) Gelfand-Graev idempotent} 
\begin{equation}\label{EGGId}
\ga_{\nu,\de}:=\ga_{n_1,\de}\otimes\dots\otimes \ga_{n_a,\de}\in R_{\nu,\de}
\subseteq R_{n\de},
\end{equation} 
and the projective $R_{\nu,\de}$-module 
$$
\Ga_{\nu}:= \prod_{b=1}^a\prod_{k=1}^r q_{j_k}^{-n_bm_k(n_bm_k-1)/2}R_{\nu,\de}\ga_{\nu,\de}
\cong\Ga_{n_1}\boxtimes \dots\boxtimes \Ga_{n_a}
$$
with  character
\begin{align*}
\CH \Ga_{\nu}=&
\prod_{b=1}^a\prod_{k=1}^r q_{j_k}^{-n_bm_k(n_bm_k-1)/2}(1-q_{j_k}^2)^{-n_bm_k}
\\
&
\times 
(j_1^{n_1m_1} 
\circ \dots \circ  j_r^{n_1m_r})\dots(j_1^{n_am_1}  
\circ \dots \circ  j_r^{n_am_r}).
\end{align*}


\begin{Lemma} \label{LMMult1} 
We have  
$$\Res_{nm_1\al_{j_1},\dots,nm_r\al_{j_r}}M_n\simeq L(j_1^{nm_1})\boxtimes\dots\boxtimes L(j_r^{nm_r}).$$
\end{Lemma}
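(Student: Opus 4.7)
My plan combines a nil-Hecke composition-factor reduction with an induction on $n$ via the Mackey theorem. Since $R_{nm_1\al_{j_1}}\otimes\cdots\otimes R_{nm_r\al_{j_r}}$ is a tensor product of nil-Hecke algebras, each of which has a unique irreducible module up to degree shift by Proposition~\ref{PGGId}, every composition factor of the restriction $N := \Res_{nm_1\al_{j_1},\ldots,nm_r\al_{j_r}}M_n$ is isomorphic (up to shift) to $L(j_1^{nm_1})\boxtimes\cdots\boxtimes L(j_r^{nm_r})$, whose graded dimension equals $\prod_k[nm_k]^!_{j_k}$. It therefore suffices to match graded dimensions and verify the grading shift.

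For the base case $n=1$, each part $m_k\al_{j_k}$ of the parabolic is supported on a single simple root, so the only length-$m_k$ word of weight $m_k\al_{j_k}$ is $j_k^{m_k}$; hence $\Res_{m_1\al_{j_1},\ldots,m_r\al_{j_r}}L_\delta = 1_{\bi}L_\delta$, which has graded dimension $\prod_k[m_k]^!_{j_k}$ centered around zero degree by Lemma~\ref{LMultOneWeight}, matching the target. For $n\geq 2$, I would write $M_n = \Ind_{\delta,(n-1)\delta}(L_\delta\boxtimes M_{n-1})$ and apply Mackey's Theorem~\ref{TMackeyKL}. The filtration is indexed by pairs of tuples $(a_k),(b_k)\in\mathbb{Z}_{\geq 0}^r$ with $a_k+b_k=nm_k$, $\sum_k a_k\al_{j_k}=\delta$, and $\sum_k b_k\al_{j_k}=(n-1)\delta$. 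Combining the inductive hypothesis for $M_{n-1}$ with the crucial uniqueness observation discussed below, only the pair $a_k=m_k$, $b_k=(n-1)m_k$ yields a nonzero subquotient. After the Mackey permutation, this subquotient equals
\[L(j_1^{m_1})\circ L(j_1^{(n-1)m_1})\,\boxtimes\,\cdots\,\boxtimes\,L(j_r^{m_r})\circ L(j_r^{(n-1)m_r}),\]
and each factor $L(j_k^{m_k})\circ L(j_k^{(n-1)m_k})$ identifies with $L(j_k^{nm_k})$ by a direct character calculation (using $[m_k]^!_{j_k}[(n-1)m_k]^!_{j_k}\binom{nm_k}{m_k}_{q_{j_k}}=[nm_k]^!_{j_k}$) together with the uniqueness of the nil-Hecke irreducible.

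The main obstacle is the uniqueness claim that $\Res_{(a_k\al_{j_k})_k}L_\delta$ vanishes unless $(a_k)=(m_k)$, equivalently, that the only word of $L_\delta$ of block form $j_1^{a_1}\cdots j_r^{a_r}$ with $\sum_k a_k\al_{j_k}=\delta$ is the extremal word $\bi$ itself. For symmetric Cartan types this follows from the homogeneous structure of $L_\delta$ in Proposition~\ref{PMinSL}: by condition~\eqref{ENC} no word of $L_\delta$ contains two equal adjacent letters, and since all $m_k=1$ in the symmetric case (the extremal word of a homogeneous module has no repeated adjacent letters), the weight equation $\sum_k a_k\al_{j_k}=\delta$ forces $(a_k)=(m_k)$ after eliminating the configurations with some $a_k\geq 2$. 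For non-symmetric types, the analogous uniqueness will be established using the explicit construction of the minuscule imaginary representations carried out in Chapter~\ref{ChLast}.
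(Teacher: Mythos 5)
Your argument is correct and is essentially the paper's proof: the paper disposes of this lemma in one line (``an application of the Mackey Theorem or a character computation''), and you have carried out the Mackey route in detail, correctly isolating the one nontrivial input, namely that $\Res_{a_1\al_{j_1},\dots,a_r\al_{j_r}}L_\de$ vanishes unless $(a_k)=(m_k)$, which together with the inductive hypothesis and the standard nil-Hecke fact $L(j^{a})\circ L(j^{b})\simeq L(j^{a+b})$ kills all but one Mackey layer. Your verification of that input is complete for symmetric $\Car$ (homogeneity of $L_\de$ forbids equal adjacent letters, all $m_k=1$, and the weight count does the rest), but for non-symmetric types it is only deferred to the explicit descriptions of $L_{\de,i}$ in Chapter~\ref{ChLast}; that check is true and routine (between two occurrences of a letter $j$ in the chosen extremal word there always sits a Dynkin neighbour of $j$, which no admissible move can push past $j$), but it is a genuinely case-by-case verification that should be recorded rather than left implicit.
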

\begin{proof}
The lemma is obtained by an application of the Mackey Theorem or a character computation. 
\end{proof}

\begin{Proposition} \label{2.5e} 
We have:
\begin{enumerate}
\item[{\rm (i)}] $\HCR^n_\nu \Ga_{n}\cong \Ga_{\nu}\oplus X$, where $X$ is a projective module over $R_{\nu,\de}$ such that $\Hom_{R_{\nu,\de}}(X,M_\nu)=0$.
\item[{\rm (ii)}] $\Hom_{R_{n\de}}(\Ga_{n},\Mde_n)\cong \O$. 
\item[{\rm (iii)}] We have an isomorphism of right modules over $\O\Si_n=\End_{R_{n\de}}(\Mde_n)$:  
$$\Hom_{R_{n\de}}(\Ga_{n},\Mde_n)\cong \sgn_{\Si_n}.$$ 
\end{enumerate} 
\end{Proposition}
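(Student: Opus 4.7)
The plan is to prove the three parts in the order (ii), (i), (iii), since (i) and (iii) will both leverage (ii).

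For (ii), I will exploit that $\Ga_n\cong\Ga_{nm_1,j_1}\circ\cdots\circ\Ga_{nm_r,j_r}$ is parabolically induced from $R_{nm_1\al_{j_1},\dots,nm_r\al_{j_r}}$. Frobenius reciprocity converts the Hom space into
$$\Hom_{R_{nm_1\al_{j_1},\dots,nm_r\al_{j_r}}}\bigl(\Ga_{nm_1,j_1}\boxtimes\cdots\boxtimes\Ga_{nm_r,j_r},\,\Res^{n\de}_{nm_1\al_{j_1},\dots,nm_r\al_{j_r}}\Mde_n\bigr).$$
By Lemma~\ref{LMMult1} the restriction is the irreducible $L(j_1^{nm_1})\boxtimes\cdots\boxtimes L(j_r^{nm_r})$, so the Hom factors as $\prod_k\Hom_{R_{nm_k\al_{j_k}}}(\Ga_{nm_k,j_k},L(j_k^{nm_k}))$, and each factor is one-dimensional because $\Ga_{m,j}$ is the projective cover of $L(j^m)$ in the affine nil-Hecke algebra (Proposition~\ref{PGGId}(i)). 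A routine check of the degree shifts places the Hom space in degree zero, giving $\O$.

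For (i), I will iterate Proposition~\ref{PGGId}(iii) together with the Mackey theorem (Theorem~\ref{TMackeyKL}) to decompose $\HCR_\nu^n\Ga_n$ as a direct sum of projective $R_{\nu,\de}$-modules. The ``diagonal'' Mackey term, where each $nm_k$ is split according to $\nu$, contributes precisely $\Ga_\nu=\Ga_{n_1}\boxtimes\cdots\boxtimes\Ga_{n_a}$; the rest forms $X$. Projectivity of every term ensures the Mackey filtration splits. To show $\Hom_{R_{\nu,\de}}(X,\Mde_\nu)=0$, I combine Lemma~\ref{LOp} with Lemma~\ref{3.2f}(i) to identify
$$\Hom_{R_{\nu,\de}}(\HCR_\nu^n\Ga_n,\Mde_\nu)\cong\Hom_{R_{n\de}}\bigl(\Ga_n,\HCI_{\nu^{\op}}^n\Mde_{\nu^{\op}}\bigr)=\Hom_{R_{n\de}}(\Ga_n,\Mde_n)\cong\O$$
by (ii), while $\Hom(\Ga_\nu,\Mde_\nu)\cong\O$ by (ii) applied factorwise; hence $\Hom(X,\Mde_\nu)=0$.

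For (iii), (ii) gives $\Hom(\Ga_n,\Mde_n)\cong\O$, so the right $\Si_n$-action is by a character $\chi$; the relations $\tau_r^2=1$ and the braid relation force $\chi(s_r)\in\{\pm1\}$ and $\chi(s_r)=\chi(s_{r+1})$, so $\chi\in\{\triv,\sgn\}$. I reduce to $n=2$: applying (i) with $\nu=(2,n-2)$ together with Lemmas~\ref{LOp} and~\ref{3.2f}(i) will yield a natural isomorphism $\Hom(\Ga_n,\Mde_n)\cong\Hom(\Ga_2,\Mde_2)\otimes\Hom(\Ga_{n-2},\Mde_{n-2})$, which is $\Si_2$-equivariant for $\Si_2=\langle s_1\rangle\hookrightarrow\Si_n$ because $\si_1=\psi_{w_1}\in R_{2\de}$ acts on $\Mde_n\cong\Mde_2\circ\Mde_{n-2}$ only on the first tensor factor. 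For $n=2$ I observe first that $\bi^{\{2\}}=\gg^{(2)}$ directly from the structure of $\bi$, so $\psi_{u_0}v_2$ is a nonzero element of $1_{\gg^{(2)}}\Mde_2\cong L(j_1^{2m_1})\boxtimes\cdots\boxtimes L(j_r^{2m_r})$; this is irreducible as an $R_{2m_1\al_{j_1},\dots}$-module, so the $\Si_2$-action on it is by a scalar coinciding with $\chi(s_1)$. Applying $\psi_{u_0}$ to $\tau_1(v_2)=(c\si_1+1)v_2$ and invoking Lemma~\ref{Lcd} together with $c^2=1$,
$$\tau_1(\psi_{u_0}v_2)=c\psi_{u_0}\si_1v_2+\psi_{u_0}v_2=c(-2c)\psi_{u_0}v_2+\psi_{u_0}v_2=-\psi_{u_0}v_2,$$
giving $\chi(s_1)=-1$ and $\chi=\sgn$.

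The hardest step will be verifying the $\Si_2$-equivariance of the reduction isomorphism in (iii): the isomorphism passes through the adjunction of Lemma~\ref{LOp} and the direct-sum decomposition from (i), and the action of $\tau_1$ must be shown to restrict compatibly to the first tensor factor; this will ultimately reduce to the locality of $\si_1\in R_{2\de}$ inside $R_{n\de}$ with respect to the decomposition $\Mde_n\cong\Mde_2\circ\Mde_{n-2}$.
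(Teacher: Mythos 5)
Parts (i) and (ii) of your proposal match the paper's proof essentially step for step: Frobenius reciprocity plus Lemma~\ref{LMMult1} and the projective-cover property from Proposition~\ref{PGGId} for (ii), and the Mackey/Proposition~\ref{PGGId}(iii) decomposition into projective summands combined with the adjunction of Lemma~\ref{LOp} and part (ii) for (i). For (iii), your key computation --- applying $\psi_{u_0}$ to $\tau_1(v_2)=(c\si_1+1)v_2$ and invoking Lemma~\ref{Lcd} together with $c^2=1$ to get eigenvalue $-1$ --- is exactly the paper's, but you reach it by a longer route. The paper works directly in $M_n$ for every $n$ and every $r$: the image of the generator of $\Ga_n$ lies in the word space $1_{\gg^{(n)}}M_n$, every vector there has the form $\psi_x\psi_u v_n$, and $\psi_u$ factors as $\psi_{u'}\psi_{u_0(r)}$ where $u_0(r)$ is the local copy of $u_0$ sitting at block positions $r,r+1$; the $n=2$ computation then applies verbatim inside $M_n$, with no character-theoretic argument and no reduction to $n=2$ through adjunctions. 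Your flagged equivariance step is the price of the detour: it is salvageable (Lemma~\ref{3.2f}(i) supplies the bimodule identification $\HCI_\nu^n M_\nu\cong M_n$, and the $\nu$ versus $\nu^{\op}$ bookkeeping in Lemma~\ref{LOp} only changes which generator $s_r$ your computation lands on, which your observation that $\chi(s_r)=\chi(s_{r+1})$ renders harmless), but the paper's local factorization of $\psi_u$ makes the whole reduction unnecessary, and is the cleaner argument.
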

\begin{proof}
(ii) By Frobenius Reciprocity, $\Hom_{R_{n\de}}(\Ga_{n},\Mde_n)$ is isomorphic to  
$$
\Hom_{R_{nm_1\al_{j_1},\dots,nm_r\al_{j_r}}}(\Ga_{nm_1,j_1}\boxtimes\dots\boxtimes\Ga_{nm_r,j_r},\Res_{nm_1\al_{j_1},\dots,nm_r\al_{j_r}}\Mde_n).
$$
By Proposition~\ref{PGGId}, $\Ga_{nm_1,j_1}\boxtimes\dots\boxtimes\Ga_{nm_r,j_r}$ is the projective cover of $L:=L(j_1^{nm_1})\boxtimes\dots\boxtimes L(j_r^{nm_r})$. The result now follows from (\ref{EEndL}) and  Lemma~\ref{LMMult1}. 

(i) By the Mackey Theorem and Proposition~\ref{PGGId}(iii), the module  
$$
\HCR^n_\nu \Ga_{n}=\Res_{n_1\de,\dots,n_a\de}\,\Ind_{nm_1\al_{j_1},\dots,nm_r\al_{j_r}}\Ga_{nm_1,j_1}\boxtimes\dots\boxtimes \Ga_{nm_r,j_r}
$$
has filtration with factors of the form 
$$
(\Ga_{m_{1,1},j_1}\circ \Ga_{m_{1,2},j_2}\circ\dots\circ \Ga_{m_{1,r},j_r})\boxtimes\dots\boxtimes (\Ga_{m_{a,1},j_1}\circ \Ga_{m_{a,2},j_2}\circ\dots\circ \Ga_{m_{a,r},j_r}),
$$
where $\sum_{s=1}^r m_{t,s}\al_{j_s}=n_t\de$ for all $t=1,\dots,a$, $\sum_{t=1}^am_{t,s}=nm_s$ for all $s=1,\dots,r$, 
and we ignore grading shifts. All of this modules are projective, so we actually have a direct sum. One of the terms is $\Ga_\nu$---it corresponds to taking $m_{t,s}=n_tm_s$ for all $1\leq t\leq a,1\leq s\leq r$. 

Now, note, using Lemma~\ref{LOp} that 
\begin{align*}
\Hom_{R_{\nu,\de}}(\HCR^n_\nu \Ga_n,M_\nu)=
\Hom_{R_{\nu,\de}}(\HCR^n_\nu \Ga_n,M_{n_1}\boxtimes\dots\boxtimes M_{n_a})
\\
\cong \Hom_{n\de}(\Ga_n,\HCI^n_{\nu^\op}(M_{n_a}\boxtimes\dots\boxtimes M_{n_1}))
\cong
\Hom_{n\de}(\Ga_n,M_n).
\end{align*}
By part (ii), the latter Hom-space is isomorphic to $\O$. On the other hand, again by (ii), we have that 
$
\Hom_{\nu,\de}(\Ga_\nu,M_\nu)\cong \O.
$ 
We conclude that $\Ga_\nu$ appears in $\HCR^n_\nu \Ga_n$ with graded multiplicity $1$, and other projective summands do not have non-trivial homomorphisms to $M_\nu$, as required.


(iii) Note that $\Ga_{n}$ is generated by a vector of the word $\bj:=(j_1^{nm_1},\dots,j_r^{nm_r})$. Under any homomorphism from $\Ga_{n}$ to $\Mde_n$, this generating vector is mapped to a vector in the word space $1_\bj\Mde_n$. So, it suffices to show that an arbitrary $w\in \Si_n$ acts on the whole word space $1_\bj\Mde_n$ with the scalar $\sgn(w)$. Let $u$ be the shortest element of $\Si_{ne}$ such that $u\cdot \bi^n=\bj$. Then any other vector in $1_\bj\Mde_n$ can be written in the form 
$
\{\psi_x\psi_uv_n\}
$
for some $x$. So it suffices to prove that $\psi_uv_ns_r=-\psi_uv_n$ for an arbitrary simple generator $s_r$ of $\Si_n$ with $1\leq r<n$.  

Recall the definition of $u_0\in\Si_{2e}$ from Lemma~\ref{Lcd}. For $1\leq r<n$, let 
$$\phi_r:\Si_{2e}\simeq\Si_{1^{(r-1)e}}\times \Si_{2e}\times\Si_{1^{(n-r-1)e}}\into \Si_{ne}$$ 
be the natural embedding and  $u_0(r):=\phi_r(u_0)$. 

There exists $u'\in\Si_d$ such that 
$\psi_u=\psi_{u'}\psi_{u_0(r)}.$ 
 So by Lemma~\ref{Lcd}, we have 
\begin{align*}
\psi_u\si_r(v_n)=\psi_{u'}\psi_{u_0(r)}\si_{r}v_n=-2c\psi_{u'}\psi_{u_0(r)}=-2c\psi_uv_n.
\end{align*}
Therefore, using (\ref{ETau}), we get 
\begin{align*}
\psi_uv_ns_r&=\psi_u\tau_r(v_n)=\psi_u(c\si_r+1)(v_n)
\\&=c\psi_u\si_r(v_n)+\psi_uv_n=-2\psi_uv_n+\psi_uv_n=-\psi_uv_n,
\end{align*}
completing the proof. 
\end{proof}

\begin{Remark} 
{\rm 
In type ${\tt A}_l^{(1)}$, we can strengthen Proposition~\ref{2.5e}(i) to claim that 
$\HCR^n_\nu \Ga_{n}\cong \Ga_{\nu}$.  Indeed, in this case each simple root appears in $\de$ with multiplicity one, from which one can easily deduce that $\CH\HCR^n_\nu \Ga_{n}=\CH \Ga_{\nu}$.
}
\end{Remark}

\section{Imaginary symmetric, divided,  and exterior powers}
Let 
$${\mathtt x}_n:=\sum_{g\in\Si_n}g\qquad\text{and}\qquad {\mathtt y}_n:=\sum_{g\in\Si_n}\sgn(g)g.
$$
Define  {\em imaginary symmetric, divided, and exterior powers}\, as the following $R_{n\de}$-modules: 
\begin{align*}
\Sde_n&:=\Mde_n/\spa\{mg-\sgn(g)m\mid g\in\Si_n,\ m\in \Mde_n\},
\\
\Zde_n&:=\{m\in \Mde_n\mid mg-\sgn(g)m=0\ \text{for all $g\in\Si_n$}\},
\\
\Lade_n&:=\Mde_n {\mathtt x}_n.
\end{align*} 
These $R_{n\de}$-modules factor through the quotient $\ImS_n$ to induce well-defined $\ImS_n$-modules. It is perhaps unfortunate that our symmetric powers correspond to the sign representation and our exterior powers correspond to the trivial representation; curiously this is the same phenomenon as for finite $GL_n$, cf. \cite[section 3.3]{BDK}. 

Note that $\Lade_n=\Mde_n{\mathtt x}_n\neq 0$ and $\Mde_n{\mathtt y}_n\neq 0$ for example by Theorem~\ref{TEndMn}. Finally, by definition, $\Mde_n {\mathtt y}_n$ is 
a submodule of $\Zde_n$. 
Recall the word $\bi$ from (\ref{EBWt}). 

\begin{Lemma} \label{LLaIrr}
We have $1_{\bi^n}\Lade_n=(1_\bi L_\de)^{\boxtimes n}{\mathtt x}_n$ and $1_{\bi^n}\Mde_n{\mathtt y}_n=(1_\bi L_\de)^{\boxtimes n}{\mathtt y}_n$. Moreover, if $\O=F$ (i.e. $\O$ is a field) then $\Lade_n$ and $\Mde_n{\mathtt y}_n$ 
are irreducible $R_{n\de}$-modules. 
\end{Lemma}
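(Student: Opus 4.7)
The plan is to separate the proof into two independent parts: the word-space identities and the irreducibility claim.

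For the two identities, I would use the $\O$-module decomposition from Corollary~\ref{CVnDec}(ii), namely $1_{\bi^n}\Mde_n = \bigoplus_{w\in\Si_n}(1_\bi L_\de)^{\boxtimes n}w$. Every $m\in 1_{\bi^n}\Mde_n$ is then uniquely of the form $m = \sum_w v_w w$ with $v_w\in(1_\bi L_\de)^{\boxtimes n}$, and the elementary identities $w{\mathtt x}_n={\mathtt x}_n$ and $w{\mathtt y}_n=\sgn(w){\mathtt y}_n$ in $\O\Si_n$ give
\[
m{\mathtt x}_n = \Big(\sum_w v_w\Big){\mathtt x}_n \in (1_\bi L_\de)^{\boxtimes n}{\mathtt x}_n,\qquad
m{\mathtt y}_n = \Big(\sum_w \sgn(w)\,v_w\Big){\mathtt y}_n \in (1_\bi L_\de)^{\boxtimes n}{\mathtt y}_n.
\]
The reverse inclusions are trivial since $(1_\bi L_\de)^{\boxtimes n}\subseteq 1_{\bi^n}\Mde_n$.

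For irreducibility over a field $F$, I would treat $\Lade_n$ in detail; the argument for $\Mde_n{\mathtt y}_n$ is strictly parallel (using ${\mathtt y}_n$ in place of ${\mathtt x}_n$). First, $\Lade_n = \Mde_n{\mathtt x}_n = R_{n\de}\bigl(v_n{\mathtt x}_n\bigr)$ is cyclic, because $\Mde_n = R_{n\de}\,v_n$ and right multiplication by ${\mathtt x}_n$ commutes with the left $R_{n\de}$-action; the generator $v_n{\mathtt x}_n = \sum_w v_n w$ is nonzero by the linear independence of $\{v_n w\}_{w\in\Si_n}$ in $(1_{\bi^n}\Mde_n)_{nN}$ (Corollary~\ref{CVnDec}(ii)). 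The key structural input is the computation $\Res_{\de^n}\Lade_n\cong L_\de^{\boxtimes n}$: the bimodule identification $V_n = \Res_{\de^n}\Mde_n \cong L_\de^{\boxtimes n}\otimes_F F\Si_n$ provided by Corollary~\ref{CVnDec}(i), combined with $F\Si_n{\mathtt x}_n = F{\mathtt x}_n$, gives $\Res_{\de^n}\Lade_n = V_n{\mathtt x}_n \cong L_\de^{\boxtimes n}\otimes F{\mathtt x}_n$, which is an irreducible $R_{\de^n}$-module.

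Given any nonzero $R_{n\de}$-submodule $N\subseteq\Lade_n$, its restriction $\Res_{\de^n}N$ is then an $R_{\de^n}$-submodule of the irreducible module $\Res_{\de^n}\Lade_n$, so it is either $0$ or all of $\Res_{\de^n}\Lade_n$; in the latter case $v_n{\mathtt x}_n\in N$ and $N=\Lade_n$ by cyclicity. The main obstacle is therefore to rule out $\Res_{\de^n}N = 0$. I would approach this by showing that every irreducible imaginary composition factor $L$ of $\Mde_n$ satisfies $\Res_{\de^n}L\neq 0$, exploiting the extremality of $\bi$ and the minuscule characterization of $L_\de$ (every word of $L_\de$ starts with $0$ by Lemma~\ref{L150813}(iii) and ends with $i$ by Proposition~\ref{L3912}(iii)), combined with a Mackey-theoretic analysis of how the cyclic generator $v_n{\mathtt x}_n$ propagates under the $R_{n\de}$-action, to force every nonzero submodule to have nontrivial $1_{\de^n}$-component.
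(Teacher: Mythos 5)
The word-space identities are proved exactly as in the paper: both arguments read off $1_{\bi^n}\Mde_n{\mathtt x}_n=(1_\bi L_\de)^{\boxtimes n}{\mathtt x}_n$ and $1_{\bi^n}\Mde_n{\mathtt y}_n=(1_\bi L_\de)^{\boxtimes n}{\mathtt y}_n$ from the free right $\O\Si_n$-module structure of Corollary~\ref{CVnDec}(ii). Your irreducibility argument, however, takes a genuinely different route from the paper's, and as written it has a gap precisely at the step you yourself flag as ``the main obstacle.'' Your dichotomy is fine: $\Res_{\de^n}\Lade_n\cong L_\de^{\boxtimes n}$ is irreducible over $R_{\de^n}$ and $\Lade_n$ is cyclic on $v_n{\mathtt x}_n$, so any nonzero submodule $N$ with $\Res_{\de^n}N\neq 0$ equals $\Lade_n$. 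But the remaining claim --- that no nonzero submodule of $\Lade_n$ is killed by $1_{\de^n}$ --- is not established, and the route you sketch (extremal words, the minuscule characterization, ``Mackey-theoretic propagation'') is unlikely to close it: this is a statement about \emph{all} composition factors of $\Mde_n$, and a purely word-combinatorial attack is doomed in general, since for instance the word $\bi^n$ itself need not occur in a composition factor $L(\la)$ when $\la$ is not $p$-restricted (cf.\ Corollary~\ref{CCharLLa}). Moreover the results that would deliver the claim wholesale (Theorem~\ref{3.4g}, Corollary~\ref{CCharLLa}) come later and depend on this lemma, so invoking them would be circular.

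The gap is fillable, but by a structural rather than combinatorial argument: a nonzero submodule $N\subseteq\Lade_n\subseteq\Mde_n$ contains an irreducible submodule $L$; by Lemma~\ref{LMSelfDual} every constituent of $\soc\Mde_n$ occurs in $\head\Mde_n$, so $\Hom_{R_{n\de}}(\Mde_n,L)\neq 0$, and Frobenius reciprocity for $\Ind_{\de^n}\dashv\Res_{\de^n}$ gives $\Hom_{R_{\de^n}}(L_\de^{\boxtimes n},\Res_{\de^n}L)\neq 0$, whence $1_{\de^n}N\supseteq\Res_{\de^n}L\neq 0$. With that insertion your proof goes through. For comparison, the paper avoids the restriction analysis entirely: it uses the projectivity of $\Mde_n$ over $\ImS_n$ (Theorem~\ref{TSchub}) together with Lemma~\ref{LMSelfDual}, and applies the Schur-functor formalism --- Lemma~\ref{L3.1f} identifies $\Mde_n{\mathtt x}_n$ with $A_P\circ\beta(F\Si_n{\mathtt x}_n)$, and Lemma~\ref{L3.1e} makes this irreducible because the one-dimensional left ideal $F\Si_n{\mathtt x}_n$ is an irreducible $F\Si_n$-module. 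That argument is shorter and reuses machinery needed elsewhere; your approach, once repaired, has the merit of exhibiting $\Res_{\de^n}\Lade_n$ explicitly.
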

\begin{proof}
We prove the lemma for $\Lade_n$, the argument for $\Mde_n{\mathtt y}_n$ being similar. 

By Corollary~\ref{CVnDec}, the word space $1_{\bi^n}M_n$ is isomorphic to free right module over $\O\Si_n$ with basis $(1_\bi L_\de)^{\boxtimes n}$, and under this isomorphism the generator $v_n\in M_n$ corresponds to $1\in \O\Si_n$. Therefore $1_{\bi^n}\Lade_n=1_{\bi^n}M_n{\mathtt x}_n=(1_\bi L_\de)^{\boxtimes n}{\mathtt x}_n$. 

We know that $\Mde_n$ is a projective $\ImS_n$-module and every composition factor of its socle appears in its head, see Theorem~\ref{TSchub} and Lemma~\ref{LMSelfDual}. Also, left ideal $F\Si_n{\mathtt x}_n$ is an irreducible $F\Si_n$-module. Using these remarks, the irreducibility of $\Lade_n=\Mde_n{\mathtt x}_n$ follows from Lemmas~\ref{L3.1f} and \ref{L3.1e}. 
\end{proof}

\begin{Lemma} \label{LSZ}
We have $\Sde_n\cong \Mde_n\otimes_{\O\Si_n}\sgn_{\Si_n}$. Moreover, if $\O=F$ then:
\begin{enumerate}
\item[{\rm (i)}] $\Sde_n$ has simple head isomorphic to $\Mde_n {\mathtt y}_n$, 
and no other composition factors of $\Sde_n$ are isomorphic to quotients of $\Mde_n$.
\item[{\rm (ii)}] $\Zde_n\cong (\Sde_n)^\circledast$. 
\item[{\rm (iii)}] $\Zde_n$ has simple socle isomorphic to $\Mde_n {\mathtt y}_n$, 
and no other composition factors of $\Zde_n$ are isomorphic to submodules of $\Mde_n$.
\end{enumerate}
\end{Lemma}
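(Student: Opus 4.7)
The first statement is immediate from the universal property of tensor products: writing $\Mde_n$ as a right $\O\Si_n$-module and $\sgn_{\Si_n}$ as a left $\O\Si_n$-module, the relations $mg \otimes 1 = m \otimes g \cdot 1 = \sgn(g)(m \otimes 1)$ inside $\Mde_n \otimes_{\O\Si_n} \sgn_{\Si_n}$ coincide with the relations $mg \equiv \sgn(g)\,m$ defining $\Sde_n$, so the quotient map from $\Mde_n$ induces an isomorphism.

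For part (i), I would invoke the Schur-functor formalism of Section~\ref{SSSF} with $P := \Mde_n$ and $H := \End_{\ImS_n}(\Mde_n)$, identified with $F\Si_n$ via Theorem~\ref{TEndMn}, so that $\Sde_n = \be(\sgn_{\Si_n})$ and $\al(\Sde_n) \cong \sgn_{\Si_n}$ by Lemma~\ref{L3.1a}. The cyclic left ideal $F\Si_n {\mathtt y}_n \subseteq F\Si_n$ is isomorphic to $\sgn_{\Si_n}$ (since $g{\mathtt y}_n = \sgn(g){\mathtt y}_n$), and Lemma~\ref{LMSelfDual} ensures the socle-in-head hypothesis of Lemma~\ref{L3.1f}; that lemma then gives $A_P(\Sde_n) \cong \Mde_n \cdot F\Si_n {\mathtt y}_n = \Mde_n{\mathtt y}_n$, which is irreducible by Lemma~\ref{LLaIrr}. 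Now exactness of $\al$ applied to any composition series of $\Sde_n$ yields a filtration of the one-dimensional module $\al(\Sde_n) = \sgn_{\Si_n}$ by $\al$-images of the successive quotients; irreducibility of $\sgn_{\Si_n}$ forces exactly one composition factor $L$ of $\Sde_n$ to satisfy $\al(L) \neq 0$, and then $\al(L) \cong \sgn_{\Si_n}$, which by Lemma~\ref{L3.1e} pins down $L \cong \Mde_n{\mathtt y}_n$. Since every irreducible quotient of $\Sde_n$ is a fortiori a quotient of $\Mde_n$ and so has $\al(L) \neq 0$, this simultaneously identifies the head of $\Sde_n$ as $\Mde_n{\mathtt y}_n$ and shows that no other composition factor is a quotient of $\Mde_n$.

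For part (ii), the plan is to apply hom-tensor adjunction: $\Sde_n^\circledast = (\Mde_n \otimes_{F\Si_n} \sgn_{\Si_n})^\circledast \cong \Hom_{F\Si_n}(\sgn_{\Si_n}, \Mde_n^\circledast)$, and then transport along the self-duality $\Mde_n^\circledast \cong \Mde_n$ from Lemma~\ref{LMSelfDual} to identify the right-hand side with $\Hom_{F\Si_n}(\sgn_{\Si_n}, \Mde_n) = \Zde_n$. The main obstacle is that Lemma~\ref{LMSelfDual} records self-duality only as $\ImS_n$-modules, whereas the adjunction chain needs an isomorphism of $(\ImS_n, F\Si_n)$-bimodules. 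To promote it, one must check that the natural left $F\Si_n$-action on $\Mde_n^\circledast$ defined by $(gf)(m) = f(mg)$, after conversion to a right action through the inversion anti-involution on $F\Si_n$, matches the original right $F\Si_n$-action on $\Mde_n$ under some choice of self-duality pairing. This is arranged using the KLR anti-involution $\tau$ from~(\ref{ECircledast}), which descends to $\ImS_n$ and, under the identification $\End_{\ImS_n}(\Mde_n)^{\op} \cong F\Si_n$ of Theorem~\ref{TEndMn}, induces the inversion anti-involution on the group algebra.

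Finally, part (iii) follows by dualizing (i). Applying $\circledast$ to the surjection $\Sde_n \twoheadrightarrow \Mde_n{\mathtt y}_n$ constructed in (i), and using (ii) together with the fact that the irreducible $\Mde_n{\mathtt y}_n$ is $\circledast$-self-dual by the grading convention of Section~\ref{SSBasicRep}, we obtain an embedding $\Mde_n{\mathtt y}_n \hookrightarrow \Sde_n^\circledast \cong \Zde_n$ whose image is the socle. Any composition factor of $\Zde_n$ that embeds into $\Mde_n$ corresponds, via $\circledast$ and the self-duality of $\Mde_n$, to a composition factor of $\Sde_n$ that is a quotient of $\Mde_n$, and by (i) the only such factor is $\Mde_n{\mathtt y}_n$ itself.
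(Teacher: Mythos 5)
The first statement and part (i) essentially follow the paper's approach: the identification $\Sde_n\cong M_n\otimes_{F\Si_n}\sgn_{\Si_n}$ is the same bookkeeping, and part (i) uses the same Schur-functor machinery from Section~\ref{SSSF} together with Lemmas~\ref{L3.1f}, \ref{LMSelfDual}, \ref{LLaIrr}; your composition-series exactness argument is a small variant of the paper's quotient argument (the paper works with $O_P(\Sde_n)$ directly) and both are valid. Part (iii) is also handled the same way, by dualizing via (i) and (ii).

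The gap is in part (ii). After reducing via tensor-hom adjunction and self-duality of $M_n$ to the question of what the induced anti-automorphism $\si$ of $F\Si_n\cong\End_{\ImS_n}(M_n)$ does to $\sgn$, you simply \emph{assert} that the KLR anti-involution $\tau$ ``induces the inversion anti-involution on the group algebra.'' That is precisely the nontrivial point, and it is not established. A priori, $\si$ composed with $\sgn$ is a one-dimensional character of $F\Si_n$, so $\sgn\circ\si$ is either $\sgn$ (what you want) or $\triv$; the second possibility corresponds to $\si(g)=\sgn(g)\,g^{-1}$, a perfectly good algebra anti-automorphism of $F\Si_n$ that does not preserve the group basis, and nothing you say rules it out. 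The paper deliberately avoids claiming $\si$ is inversion (it never determines $\si$ itself): it proves only the weaker statement $\sgn\circ\si=\sgn$, and does so by an argument that genuinely relies on part (i) --- if instead $\sgn\circ\si=\triv$, then $\Sde_n^\circledast$ would contain $\Lade_n=M_n{\mathtt x}_n$ in its socle, so $M_n{\mathtt x}_n$ would appear in the head of $\Sde_n$, which by (i) forces $M_n{\mathtt x}_n\cong M_n{\mathtt y}_n$ and hence, by Lemma~\ref{L3.1e}, $\triv\cong\sgn$ as $F\Si_n$-modules (only possible in characteristic $2$, where there is nothing to prove). Your proposal omits this entirely, so the isomorphism $\Sde_n^\circledast\cong\Zde_n$ is not actually established. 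Fixing this would require either carrying out the paper's reduction-to-$\sgn\circ\si$ argument, or an explicit computation of the contravariant pairing on $M_n$ and of $\si(\tau_r)$ --- the latter is not trivial, since $\tau(\psi_{w_r})$ equals $\psi_{w_r}$ only up to lower-order terms in the Bruhat order.
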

\begin{proof}
Write $M:= \Mde_n$ for short. By definition, $\Mde\otimes_{\O\Si_n}\sgn_{\Si_n}$ is the quotient of $\Mde\otimes_{\O}\sgn_{\Si_n}$ by 
$
\spa\{mg\otimes 1-\sgn(g)m\otimes 1\mid g\in \Si_n,\ m\in \Mde\}. 
$
If we identify $\Mde\otimes_{\O}\sgn_{\Si_n}$	and $\Mde$ as $\O$-modules in the natural way, this immediately gives the first statement.

(i) Let $\al$, $\beta$ and $A_P$ be the functors defined in Section~\ref{SSSF}, taking the projective module $P$ to be the $\ImS_n$-module $\Mde$ of Theorem~\ref{TSchub}. Then, the previous paragraph shows that $\Sde_n\cong \beta(\sgn_{\Si_n})$. 
As 
every composition factor of the socle of $\Mde$ appears in its head by  
Lemma~\ref{LMSelfDual}, we conclude using 
Lemmas~\ref{L3.1f} and \ref{LLaIrr} that 
$$
A_P\circ\be(\sgn_{\Si_n})\cong A_P(\Sde_n)\cong \Mde {\mathtt y}_n
$$
is an irreducible $\ImS_n$-module. We deduce that $\Mde {\mathtt y}_n$ appears in the head of $\Sde_n$ and no other composition factors of $\Sde_n$ appear in the head of $\Mde$. Since $\Sde_n$ is a quotient of $\Mde$, this means that $\Sde_n$    has simple head.

(ii) In view of Lemma~\ref{LMSelfDual}, we choose some isomorphism $\phi : M \to M^\circledast$ of $R_{n\de}$-modules. This choice induces an isomorphism $\kappa : \End_{R_{n\de}}(M) \to \End_{R_{n\de}}(M^\circledast)$ with 
$f\kappa(\theta) = \phi((\phi^{-1}f)\theta)$ for all $f \in M^\circledast$ and $\theta\in \End_{R_{n\de}}(M)$, writing endomorphisms on the right. 
On	the	other hand, there is a natural anti-isomorphism $\sharp : \End_{R_{n\de}}(M) \to \End_{R_{n\de}}(M^\circledast)$ defined by letting $\theta^\sharp$ be the dual map to $\theta \in \End_{R_{n\de}}(M)$. 
Now if we set $\si:=\kappa^{-1}\circ \sharp$, we have defined an anti-automorphism of $F\Si_n = \End_{R_{n\de}}(M)$. Define a non-degenerate bilinear form on $M$ by 
$(v,w):=\phi(v)(w)$ for $v,w\in M$. For any $h\in F\Si_n$ we have 
$$
(v\si(h),w) = (\phi^{-1}(\phi(v)h^\sharp),w) = (\phi(v)h^\sharp)(w) = \phi(v)(wh) = (v,wh). 
$$
By definition, $\Sde_n=M/\spa\{vh-\sgn(h)v\mid h\in F\Si_n,\ v\in M\}$. So
\begin{align*}
\Sde_n^\circledast &\cong\{w\in M\mid (w,vh-\sgn(h)v)=0\ \text{for all $v\in M,h\in F\Si_n$}\}
\\
&=\{w\in M\mid (w\si(h)-\sgn(h)w,v)=0\ \text{for all $v\in M,h\in F\Si_n$}\}
\\
&=\{w\in M\mid wh=\sgn(\si(h))w\ \text{for all $h\in F\Si_n$}\}.
\end{align*}

To complete the proof, it remains to show that $\sgn(\si(h)) = \sgn(h)$ for all $h\in F\Si_n$. 
We can consider $\sgn \circ\si$ as a linear representation of $F\Si_n^\op$, so we either have $\sgn \circ\si = \sgn$ as required, or  $\sgn \circ\si = \id$. In the latter case, $\Sde_n^\circledast$ contains $\Lade_n$ as an irreducible submodule, see Lemma~\ref{LLaIrr}, whence $\Sde_n$ contains $M{\mathtt x}_n$ in its head. But this is not so by (i), unless  $M{\mathtt x}_n \cong M{\mathtt y}_n$, in which case, applying Lemma~\ref{L3.1e}, the sign representation of $F\Si_n$ is isomorphic to its trivial representation and we are done. 

(iii) This follows from (ii) by dualizing, using (i).
\end{proof}

\section{Parabolic analogues}
For $\nu=(n_1,\dots,n_a)\vDash n$, let 
$${\mathtt x}_\nu:=\sum_{g\in\Si_\nu}g\qquad\text{and}\qquad {\mathtt y}_\nu:=\sum_{g\in\Si_\nu}\sgn(g)g.
$$
We have the parabolic analogues of symmetric, divided and exterior powers, namely the  $\ImS_\nu$-modules 
\begin{align*}
\Sde_\nu&:=\Mde_\nu/\spa\{mg-\sgn(g)m\mid g\in\Si_\nu,\ m\in \Mde_\nu\},
\\
\Zde_\nu&:=\{m\in \Mde_\nu\mid mg-\sgn(g)m=0\ \text{for all $g\in\Si_\nu$}\},
\\
\Lade_\nu&:=\Mde_\nu {\mathtt x}_\nu.
\end{align*} 
In view of (\ref{ECNu}), if $\nu=(n_1,\dots,n_a)$ then $\Sde_\nu\cong \Sde_{n_1}\boxtimes\dots\boxtimes \Sde_{n_a}$, and similarly for $Z,\La$. In view of this observation, the basic properties of $\Sde_\nu, \Zde_\nu$ and $\Lade_\nu$ follow directly from Lemmas \ref{LLaIrr} and \ref{LSZ}.

\begin{Lemma} \label{L3.3c}
For any $\nu\vDash n$, we have $\HCR^n_\nu \Sde_n\cong \Sde_\nu$ and $\HCR^n_\nu \Zde_n\cong \Zde_\nu$. 
\end{Lemma}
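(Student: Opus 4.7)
The isomorphism $\HCR^n_\nu \Sde_n \cong \Sde_\nu$ follows almost immediately from the machinery already established. By the first statement of Lemma~\ref{LSZ} (which holds over arbitrary $\O$), we have $\Sde_n \cong \Mde_n \otimes_{\O\Si_n} \sgn_{\Si_n}$ and similarly $\Sde_\nu \cong \Mde_\nu \otimes_{\O\Si_\nu} \sgn_{\Si_\nu}$. Applying Corollary~\ref{CFunPerm}(ii) with $N = \sgn_{\Si_n}$, and observing that $\res^{\Si_n}_{\Si_\nu} \sgn_{\Si_n} = \sgn_{\Si_\nu}$, we obtain
$$
\HCR^n_\nu \Sde_n \;\cong\; \HCR^n_\nu \bigl(\Mde_n \otimes_{\O\Si_n} \sgn_{\Si_n}\bigr) \;\cong\; \Mde_\nu \otimes_{\O\Si_\nu} \sgn_{\Si_\nu} \;\cong\; \Sde_\nu.
$$

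For the $\Zde$ statement, one option is to dualize via Lemma~\ref{LSZ}(ii), noting that the anti-involution $\tau$ fixes $1_{\nu,\de}$ and so $\HCR^n_\nu$ commutes with $\circledast$; but this requires $\O = F$. I plan instead to give a direct argument valid over any $\O$. Using Lemma~\ref{3.2f}(iii), decompose
$$
\HCR^n_\nu \Mde_n = \bigoplus_{y \in {}^\nu\D_n} \Mde_\nu y,
$$
where right multiplication by $y$ is an $R_{\nu,\de}$-module isomorphism $\Mde_\nu \iso \Mde_\nu y$ (since right $\Si_n$-action commutes with left $R_{n\de}$-action). Let $\pi : \HCR^n_\nu \Mde_n \to \Mde_\nu$ be the projection onto the $y = 1$ summand, an $R_{\nu,\de}$-module map; and define $\phi : \Mde_\nu \to \Mde_n$ by $\phi(v) := \sum_{y \in {}^\nu\D_n} \sgn(y)\, vy$, which is also $R_{\nu,\de}$-equivariant since $R_{\nu,\de}$-action commutes with right $\Si_n$-action.

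The argument then has three checks. First, for $m \in \Zde_n$ and $g \in \Si_\nu$, write $1_{\nu,\de}m = \sum_y v_y y$; using that $(1_{\nu,\de}m)g = \sgn(g)(1_{\nu,\de}m)$ and that $y \in {}^\nu\D_n$ satisfies $yg \in \Si_\nu \cdot 1$ only when $y = 1$, the $y = 1$ component yields $v_1 g = \sgn(g) v_1$, so $\pi$ maps $\HCR^n_\nu \Zde_n$ into $\Zde_\nu$. Second, for $v \in \Zde_\nu$ and $g \in \Si_n$, decomposing $yg = u_y y'$ uniquely with $u_y \in \Si_\nu$, $y' \in {}^\nu\D_n$, and using the identity $\sgn(y)\sgn(g) = \sgn(u_y)\sgn(y')$ together with $vu_y = \sgn(u_y)v$, a direct reindexing yields $\phi(v)g = \sgn(g)\phi(v)$, so $\phi(\Zde_\nu) \subseteq \Zde_n$; since $vy \in \Mde_\nu y \subseteq 1_{\nu,\de}\Mde_n$, we have $1_{\nu,\de}\phi(v) = \phi(v)$. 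Third, $\pi \circ \phi = \id$ is immediate; for $\phi \circ \pi = \id$, take $1_{\nu,\de}m = \sum_y v_y y$ with $m \in \Zde_n$ and apply $(1_{\nu,\de}m)\cdot y' = \sgn(y')(1_{\nu,\de}m)$, which for each $y' \in {}^\nu\D_n$ (so $1 \cdot y' \in \Si_\nu y'$ with $u = 1$) isolates the relation $v_{y'} = \sgn(y') v_1$, giving $1_{\nu,\de}m = \phi(v_1) = \phi(\pi(1_{\nu,\de}m))$.

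The only nontrivial point is the bookkeeping of the coset-decomposition $yg = u_y y'$ under the right $\Si_n$-action on $\HCR^n_\nu \Mde_n$ transported through Lemma~\ref{3.2f}(ii); once the sign identity is verified, both checks are mechanical.
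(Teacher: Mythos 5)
Your proof of the first isomorphism is exactly the paper's: Lemma~\ref{LSZ} gives $\Sde_n\cong \Mde_n\otimes_{\O\Si_n}\sgn_{\Si_n}$ and Corollary~\ref{CFunPerm}(ii) does the rest. For the second isomorphism you diverge. The paper simply dualizes: $\Zde_n\cong\Sde_n^\circledast$ by Lemma~\ref{LSZ}(ii), and $\HCR^n_\nu$ commutes with $\circledast$, so the $\Zde$ statement follows from the $\Sde$ statement in one line. You correctly observe that this shortcut is only available when $\O=F$ (Lemma~\ref{LSZ}(ii) is proved over a field), and you instead argue directly from the decomposition $\HCR^n_\nu\Mde_n=\bigoplus_{y\in{}^\nu\D_n}\Mde_\nu y$ of Lemma~\ref{3.2f}, building the mutually inverse maps $\pi$ (projection to the $y=1$ component) and $\phi(v)=\sum_y\sgn(y)\,vy$ and checking the coset bookkeeping $yg=u_yy'$. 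I verified the three checks: the key points — that $yg\in\Si_\nu$ forces $y=1$, that $y\mapsto y'$ permutes ${}^\nu\D_n$, the sign identity $\sgn(y)\sgn(u_y)=\sgn(g)\sgn(y')$, and the injectivity of right multiplication by $y'$ used to cancel it in $v_1y'=\sgn(y')v_{y'}y'$ — are all correct, and both maps are $R_{\nu,\de}$-equivariant because the left and right actions commute. What your route buys is validity over $\O=\Z$ as well as over a field, which the paper's proof does not literally give (the paper's base-change section handles $\Zde_{n,\Z}$ separately in Lemma~\ref{LIntegral}); what it costs is length, since the duality argument is a one-liner. Both are sound.
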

\begin{proof}
We prove the first statement, the second one then follows from Lemma \ref{LSZ}(ii) since the restriction functor $\HCR^n_\nu$ commutes with duality. By Lemma~\ref{LSZ}, we have $\Sde_n \cong \Mde_n \otimes_{F\Si_n} \sgn_{\Si_n}$ and $\Sde_\nu \cong \Mde_\nu \otimes_{F\Si_\nu}\nolinebreak \sgn_\nu$. Now, using Corollary~\ref{CFunPerm}(ii), we get
$$ \HCR^n_\nu \Sde_n
\cong \HCR^n_\nu(\Mde_n \otimes_{F\Si_n} \sgn_{\Si_n})
\cong \Mde_\nu \otimes_{F\Si_\nu} (\res^{\Si_n}_{\Si_\nu} \sgn_{\Si_n}) \cong \Mde_\nu\otimes_{F\Si_\nu}\sgn_{\Si_\nu},
$$
which is isomorphic to $\Sde_\nu$, as required. 
\end{proof}

Define $R_{n\de}$-modules 
\begin{align*}
\Sdot^\nu&:=\Mde_n/\spa\{mg-\sgn(g)m\mid g\in\Si_\nu,\ m\in \Mde_n\},
\\
\Zdot^\nu&:=\{m\in \Mde_n\mid mg=\sgn(g)m\ \text{for all $g\in\Si_\nu$}\},
\\
\Ladot^\nu&:=\Mde_n {\mathtt x}_\nu.
\end{align*}
If we identify $\Mde_n=\HCI^n_\nu \Mde_\nu$ as $(R_{n\de},\O\Si_\nu)$-bimodules as in Lemma~\ref{3.2f}(i), it is easy to check that the quotient $\Sdot^\nu$ of $\Mde_n$ is identified with the quotient $\HCI^n_\nu \Sde_\nu$ of $\HCI^n_\nu 
\Mde_\nu$. Similarly we get the analogous results for $\Zde$ and $\Lade$. Thus:
\begin{equation}\label{EUpperNu}
\Sdot^\nu\cong \HCI_\nu^n \Sde_\nu,
\quad 
\Zdot^\nu\cong \HCI_\nu^n \Zde_\nu,
\quad
\Ladot^\nu\cong \HCI_\nu^n \Lade_\nu.
\end{equation}

Note that $\Zdot^\nu$ contains $\Mde_n {\mathtt y}_\nu$ as a submodule. 

\begin{Lemma} \label{3.3d}
We have 
$\Sdot^\nu\cong \Mde_n\otimes_{\O\Si_n}(\ind^{\Si_n}_{\Si_\nu}\sgn_{\Si_\nu}).$ 
Moreover, if $\O=F$ then:
\begin{enumerate}
\item[{\rm (i)}] $(\Sdot^\nu)^\circledast\cong \Zdot^{\nu^\op}$, where $\nu^\op=(n_a,\dots,n_1)$ is the opposite composition; 
\item[{\rm (ii)}] No composition factors of $\Zdot^\nu/\Mde_n {\mathtt y}_\nu$ are isomorphic to submodules of $\Mde_n$. 
\end{enumerate}
\end{Lemma}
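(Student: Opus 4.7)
My plan is to handle the three assertions in sequence, with (ii) being the substantive one. For the first isomorphism, I would just expand using associativity of tensor products: $\Mde_n\otimes_{\O\Si_n}(\ind^{\Si_n}_{\Si_\nu}\sgn_{\Si_\nu}) \cong \Mde_n\otimes_{\O\Si_n}\O\Si_n\otimes_{\O\Si_\nu}\sgn_{\Si_\nu} \cong \Mde_n\otimes_{\O\Si_\nu}\sgn_{\Si_\nu}$, and observe that the latter is precisely $\Sdot^\nu$ by the same argument as used in the first line of the proof of Lemma~\ref{LSZ}. For (i), I would combine~(\ref{EUpperNu}) with Lemma~\ref{LDualInd}, noting that $(\de,\de)=0$ forces every pairing $(n_a\de,n_b\de)$ and hence the grading shift $d(\underline{\ga})$ to vanish, and using $\Sde_{n_b}^\circledast \cong \Zde_{n_b}$ from Lemma~\ref{LSZ}(ii), to obtain $(\Sdot^\nu)^\circledast \cong \Zde_{n_a}\circ\dots\circ\Zde_{n_1} \cong \HCI_{\nu^\op}^n \Zde_{\nu^\op} \cong \Zdot^{\nu^\op}$.

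The real content is (ii), and I would set it up by applying the exact functor $\al = \Hom_{\ImS_n}(\Mde_n,?)$ of Section~\ref{SSSF}. Since $\Mde_n$ is simultaneously projective over $\ImS_n$ (Theorem~\ref{TSchub}) and self-dual (Lemma~\ref{LMSelfDual}), every composition factor of the socle of $\Mde_n$ also appears in its head. Consequently, any irreducible submodule $L \hookrightarrow \Mde_n$ lies in the socle, hence also in the head, and satisfies $\al L \neq 0$. Writing $Q := \Zdot^\nu/\Mde_n{\mathtt y}_\nu$ and using exactness of $\al$ applied to $0 \to \Mde_n{\mathtt y}_\nu \to \Zdot^\nu \to Q \to 0$, it thus suffices to prove $\al Q = 0$, equivalently that the inclusion $\Mde_n{\mathtt y}_\nu \hookrightarrow \Zdot^\nu$ is a surjection after $\al$.

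Via the identification $F\Si_n \cong \End_{\ImS_n}(\Mde_n)^\op$ of Theorem~\ref{TEndMn}, where $x = \sum c_w w$ corresponds to $m\mapsto mx$ with right action as in~(\ref{ETau}), a direct check shows that $\al\Zdot^\nu$ is the subspace of those $x\in F\Si_n$ with $xh = \sgn(h)x$ for all $h\in\Si_\nu$. Decomposing $F\Si_n = \bigoplus_{d\in D} d\cdot F\Si_\nu$ along a set of left coset representatives and noting that the $\sgn$-isotypic subspace of $F\Si_\nu$ (for the right action) is spanned by ${\mathtt y}_\nu$, one identifies this subspace with $F\Si_n\cdot{\mathtt y}_\nu$. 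But each such $x = y{\mathtt y}_\nu$ produces the endomorphism $m\mapsto my{\mathtt y}_\nu$, which manifestly factors through $\Mde_n{\mathtt y}_\nu$, so the surjectivity (in fact, equality) $\al(\Mde_n{\mathtt y}_\nu) = \al\Zdot^\nu$ follows and (ii) is proved.

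The only obstacle I foresee is bookkeeping: specifically, correctly translating the identification $\Zdot^\nu \hookrightarrow \Mde_n$ into a condition on $x\in F\Si_n$, which requires care about left versus right actions, and verifying that $F\Si_n$ acts faithfully on $\Mde_n$ (which is automatic from Theorem~\ref{TEndMn}). No additional deep input is needed beyond the self-duality of $\Mde_n$, the projectivity/endomorphism-algebra package of Chapter~\ref{ITTCA}, and the formal yoga of the functors $\al, \be, A_P$ from Section~\ref{SSSF}.
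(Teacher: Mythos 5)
Your proof of the first statement and of (i) matches the paper's (the paper routes the first statement through Corollary~\ref{CFunPerm}(i) and (\ref{EUpperNu}) rather than raw associativity of tensor products, but that is cosmetic; for (i) you both use (\ref{EUpperNu}), Lemma~\ref{LDualInd} with the vanishing shift from $(\de,\de)=0$, and Lemma~\ref{LSZ}(ii)). For (ii), however, you take a genuinely different and correct route. The paper never touches $\Zdot^\nu$ directly: it applies Lemma~\ref{L3.1f} to the left ideal $F\Si_n{\mathtt y}_\nu=\ind^{\Si_n}_{\Si_\nu}\sgn_{\Si_\nu}$ to get $A_P\circ\be(F\Si_n{\mathtt y}_\nu)\cong \Mde_n{\mathtt y}_\nu$, concludes that $\Sdot^\nu\cong\be(F\Si_n{\mathtt y}_\nu)$ is an extension of $\Mde_n{\mathtt y}_\nu$ by a module with no composition factors in the head (equivalently, by Lemma~\ref{LMSelfDual}, the socle) of $\Mde_n$, and then transports this statement to $\Zdot^{\nu^\op}$ by dualizing via part~(i). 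You instead work with $\Zdot^\nu$ head-on: you reduce (ii) to $\al(Q)=0$ via exactness of $\al=\Hom_{\ImS_n}(\Mde_n,?)$ and self-duality of $\Mde_n$, and then compute $\al(\Zdot^\nu)$ explicitly inside $\End_{\ImS_n}(\Mde_n)\cong F\Si_n$ as $\{x\mid xh=\sgn(h)x\ \forall h\in\Si_\nu\}=F\Si_n{\mathtt y}_\nu$, from which the factorization through $\Mde_n{\mathtt y}_\nu$ is immediate. (Your identification of that isotypic subspace with $F\Si_n{\mathtt y}_\nu$ is valid in all characteristics: the condition $c_{gh}=\sgn(h)c_g$ forces $x\in F\Si_n\cdot{\mathtt y}_\nu$ coset by coset.) Your argument buys independence of (ii) from (i) and avoids the $A_P\circ\be$ machinery of Lemma~\ref{L3.1f}, at the cost of redoing by hand the left-ideal computation that Lemma~\ref{L3.1f} packages; the paper's version is shorter given that toolkit but makes (ii) logically dependent on (i) and on Lemma~\ref{LDualInd}. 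Both rest on the same pillars: projectivity of $\Mde_n$ over $\ImS_n$ (Theorem~\ref{TSchub}), self-duality (Lemma~\ref{LMSelfDual}), and $\End_{\ImS_n}(\Mde_n)\cong F\Si_n$ (Theorem~\ref{TEndMn}).
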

\begin{proof}
By Lemma~\ref{LSZ}, we have $\Sde_\nu \cong \Mde_\nu \otimes_{F\Si_\nu}\nolinebreak \sgn_{\Si_\nu}$. Therefore, using Corollary~\ref{CFunPerm}(i) and (\ref{EUpperNu}) , we get that 
$$\Mde_n\otimes_{\O\Si_n}(\ind^{\Si_n}_{\Si_\nu}\sgn_{\Si_\nu})\cong \HCI_\nu^n \Sde_\nu\cong 
\Sdot^\nu.
$$

(i) Using (\ref{EUpperNu}), Lemma~\ref{LDualInd}, and Lemma~\ref{LSZ}(ii), we have 
\begin{align*}
(\Sde^\nu)^\circledast
&\cong 
I_\nu^n(\Sde_{n_1}\boxtimes\dots\boxtimes \Sde_{n_a})^\circledast
\\
&\cong I_{\nu^\op}^n(\Sde_{n_a}^\circledast\boxtimes\dots\boxtimes \Sde_{n_1}^\circledast)
\\
&\cong I_{\nu^\op}^n(\Zde_{n_a}\boxtimes\dots\boxtimes \Zde_{n_1})
\cong \Zde^{\nu^\op}.
\end{align*}

(ii) Let $\al$, $\beta$ and $A_P$ be the functors defined in Section~\ref{SSSF}, taking the projective module $P$ to be the $\ImS_n$-module $\Mde_\nu$ of Theorem~\ref{TSchub}. Now, $\ind^{\Si_n}_{\Si_\nu}\sgn_{\Si_\nu}$ is the left ideal $F\Si_n {\mathtt y}_\nu$  of $F\Si_n$. So, by Lemmas~\ref{LMSelfDual} and~\ref{L3.1f}, we get $A_P \circ\beta(\ind^{\Si_n}_{\Si_\nu}\sgn_{\Si_\nu})\cong\Mde_n {\mathtt y}_\nu$. 
Using the first statement of the lemma and the definition of the functor $A_P \circ \beta$, we see that $\Sdot^\nu\cong \beta(\ind^{\Si_n}_{\Si_\nu}\sgn_{\Si_\nu})$ is an extension of $\Mde_n {\mathtt y}_\nu$ and a module having no composition factors in common with the head (or equivalently by Lemma~\ref{LMSelfDual} the socle) of $\Mde_n$. Now (ii) follows on dualizing using (i) and Lemma~\ref{LDualInd}.
\end{proof}

\section{Schur algebras as endomorphism algebras}\label{SEA}
Recalling the theory of Schur algebras from Section~\ref{SSchurPrel}, 
fix an integer $h \geq n$ and let $S_{h,n}=S_{h,n,\O}$ denote classical the Schur algebra, always considered as a graded algebra in a {\em trivial} way, i.e. concentrated in degree zero. 

Recall the elements $\phi^u_{\mu,\la}$ from (\ref{EPhiULaMu}) and $g_{\mu,\la}^u$ from (\ref{EGUMuLa}). 
Our first connection between $R_{n\de}$ and the Schur algebra arises as follows:

\begin{Theorem} \label{3.4a}
Let $\O=F$. Then there is an algebra isomorphism 
$$
S_{h,n}\iso \End_{\ImS_n}\left(\textstyle\bigoplus_{\nu\in X(h,n)}\Ladot^\nu\right),
$$
under which the natural basis element $\phi^u_{\mu,\la}$ of $S_{h,n}$ maps to the endomorphism which is zero on the summands $\Ladot^\nu$ for $\nu\neq\mu$ and sends $\Ladot^\mu$ into $\Ladot^\la$ via the homomorphism induced by right multiplication in $\Mde_n$ by $g_{\mu,\la}^u$. 
\end{Theorem}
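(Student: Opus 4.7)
The plan is to define the map $\Phi : S_{h,n} \to \End_{\ImS_n}\bigl(\bigoplus_\nu \Ladot^\nu\bigr)$ directly on basis elements as prescribed in the statement, and then to prove it is an algebra isomorphism by a weight-space analysis on $\Mde_n$. Well-definedness of the image of $\phi^u_{\mu,\la}$ follows from the observation that ${\mathtt x}_\mu g^u_{\mu,\la} = \sum_{w \in \Si_\mu u \Si_\la} w$ is right-$\Si_\la$-invariant and hence lies in $F\Si_n {\mathtt x}_\la$, so that right multiplication by $g^u_{\mu,\la}$ carries $\Ladot^\mu = \Mde_n {\mathtt x}_\mu$ into $\Ladot^\la = \Mde_n {\mathtt x}_\la$. $\ImS_n$-equivariance is automatic from Theorem~\ref{TEndMn}, and the homomorphism property of $\Phi$ follows from associativity of the right $F\Si_n$-action on $\Mde_n$ combined with the standard multiplication rules in $S_{h,n}$.

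For injectivity I would suppose $\sum_{u,\mu,\la} c^u_{\mu,\la} \Phi(\phi^u_{\mu,\la}) = 0$; restricting to source $\Ladot^\mu$ and projecting onto target $\Ladot^\la$, this forces $\Mde_n \cdot \bigl({\mathtt x}_\mu \sum_u c^u_{\mu,\la} g^u_{\mu,\la}\bigr) = 0$. Evaluating on the generator $v_n$ and invoking Corollary~\ref{CVnDec}(ii)---which exhibits the orbit $v_n \cdot F\Si_n$ as a free right $F\Si_n$-module of rank one---yields the identity $\sum_u c^u_{\mu,\la} {\mathtt x}_\mu g^u_{\mu,\la} = 0$ in $F\Si_n$. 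Since these summands are supported on disjoint double cosets $\Si_\mu u \Si_\la$, they are linearly independent and all coefficients $c^u_{\mu,\la}$ vanish.

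For surjectivity, take $\phi_{\mu,\la} \in \Hom_{\ImS_n}(\Ladot^\mu, \Ladot^\la)$. Since $v_n{\mathtt x}_\mu$ generates $\Ladot^\mu$ as an $\ImS_n$-module, the map is determined by $\phi_{\mu,\la}(v_n{\mathtt x}_\mu) \in (1_{\bi^n}\Ladot^\la)_{nN}$. A first auxiliary step---which should be straightforward once one notes that the block-permutation action of $F\Si_n$ preserves both the word $\bi^n$ and the degree $nN$ (using $(\de,\de)=0$ and the identical block structure of $\bi^n$)---is to identify this top-degree word space with $v_n \cdot F\Si_n {\mathtt x}_\la$, letting me write $\phi_{\mu,\la}(v_n{\mathtt x}_\mu) = v_n y$ for a unique $y \in F\Si_n {\mathtt x}_\la$, so that $\phi_{\mu,\la}$ is given by $m{\mathtt x}_\mu \mapsto my$.

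The main obstacle will be to pin down which $y$ give rise to well-defined $\ImS_n$-maps; the required condition is $m{\mathtt x}_\mu = 0 \Rightarrow my = 0$ for every $m \in \Mde_n$. Using freeness of the $v_n$-orbit, I expect this to reduce to the purely algebraic statement that the right annihilator of ${\mathtt x}_\mu$ in $F\Si_n$---spanned by $\{w-w\sigma : w\in\Si_n,\ \sigma\in\Si_\mu\}$---kills $y$ under left multiplication, forcing $y = \sigma y$ for all $\sigma\in\Si_\mu$, i.e.\ left-$\Si_\mu$-invariance. Combined with $y \in F\Si_n {\mathtt x}_\la$, this places $y$ in ${\mathtt x}_\mu F\Si_n \cap F\Si_n {\mathtt x}_\la$, a space with basis $\{{\mathtt x}_\mu g^u_{\mu,\la}\}_{u \in {}^\mu \D^\la_n}$. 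Expanding $y = \sum_u c_u {\mathtt x}_\mu g^u_{\mu,\la}$ and matching against $\Phi(\phi^u_{\mu,\la})(v_n{\mathtt x}_\mu) = v_n \cdot {\mathtt x}_\mu g^u_{\mu,\la}$ then yields $\phi_{\mu,\la} = \Phi\bigl(\sum_u c_u \phi^u_{\mu,\la}\bigr)$, establishing surjectivity.
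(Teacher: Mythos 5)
Your proposal takes a genuinely different and more hands-on route than the paper's. The paper's proof is short: for $P=\Mde_n$ it invokes the Schur-functor machinery of Section~\ref{SSSF} (Lemmas~\ref{LMSelfDual}, \ref{L3.1f} and Theorem~\ref{T3.1d}) to identify $\Ladot^\nu\cong A_P\circ\beta(\Perm^\nu)$ and then transports $S_{h,n}=\End_{F\Si_n}\bigl(\bigoplus_\nu\Perm^\nu\bigr)$ across the equivalence. You instead build the isomorphism by hand: well-definedness and multiplicativity via the identity ${\mathtt x}_\mu g^u_{\mu,\la}=\sum_{w\in\Si_\mu u\Si_\la}w$ and the multiplication rule in $S_{h,n}$; injectivity by evaluating on the generator and using the freeness of the orbit $v_n\cdot F\Si_n$ from Corollary~\ref{CVnDec}(ii); surjectivity by pinning $y$ into the bi-invariant space ${\mathtt x}_\mu F\Si_n\cap F\Si_n{\mathtt x}_\la$, whose basis is $\{{\mathtt x}_\mu g^u_{\mu,\la}\}_{u\in{}^\mu\D^\la_n}$. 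All those auxiliary facts are correct, and the payoff is a self-contained argument that does not require the $A_P$ formalism; the cost is length and the need to redo the bookkeeping that the Schur-functor equivalence packages once and for all.

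There is one gap. In the surjectivity step you place $\phi_{\mu,\la}(v_n{\mathtt x}_\mu)$ in $(1_{\bi^n}\Ladot^\la)_{nN}$, which silently assumes $\phi_{\mu,\la}$ is homogeneous of degree zero. But $\End_{\ImS_n}$ in the paper's convention aggregates all degrees; indeed the remark after the theorem deduces $\End=\operatorname{end}$ as a \emph{consequence} of the theorem, so this point is part of the content. A degree $d>0$ map is harmless (it sends $v_n{\mathtt x}_\mu$ above the top degree $nN$ of $1_{\bi^n}\Mde_n$ and must be zero), but for $d<0$ the image lands in a nonzero graded piece $(1_{\bi^n}\Ladot^\la)_{nN+d}$ that is not a free $F\Si_n$-orbit, and your argument says nothing about it. The fix is cheap: precomposing with the degree-zero surjection $\Mde_n\twoheadrightarrow\Ladot^\mu$, $m\mapsto m{\mathtt x}_\mu$, and postcomposing with the inclusion $\Ladot^\la\hookrightarrow\Mde_n$ embeds $\Hom_{\ImS_n}(\Ladot^\mu,\Ladot^\la)$ as a graded subspace of $\End_{R_{n\de}}(\Mde_n)$, which by Corollary~\ref{CEndMDegZero} is concentrated in degree zero. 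With that observation inserted at the start of the surjectivity argument, the proof goes through.
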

\begin{proof}
Let $A_P \circ\beta$ denote the equivalence of categories from Theorem~\ref{T3.1d}, for the projective $\ImS_n$-module $P=\Mde_n$, see Theorem~\ref{TSchub}. By Lemmas~\ref{LMSelfDual} and \ref{L3.1f}, we have
$$
\textstyle\bigoplus_{\nu\in X(h,n)}\Ladot^\nu\cong A_P\circ\beta\,\left(\textstyle\bigoplus_{\nu\in X(h,n)}\Perm^\nu\right). 
$$
So the endomorphism algebras of $\bigoplus_{\nu\in X(h,n)}\Ladot^\nu$ and $\bigoplus_{\nu\in X(h,n)}\Perm^\nu$ are isomorphic. The latter is $S_{h,n}$ by definition. It remains to check that the image of $\phi^u_{\mu,\la}$ under the functor $A_P \circ\beta$  is precisely the endomorphism described. This follows using Lemma~\ref{L3.1f} one more time.
\end{proof}

Note in the theorem above and in the similar results below that the algebra $S_{h,n}$ acts with degree zero homogeneous endomorphisms, so in particular we have 
$$
\textstyle\End_{\ImS_n}\left(\bigoplus_{\nu\in X(h,n)}\Ladot^\nu\right)=\operatorname{end}_{\ImS_n}\left(\bigoplus_{\nu\in X(h,n)}\Ladot^\nu\right).
$$

Recalling that $S_{h,n}$ can also be described as the endomorphism algebra 
$$\End_{F\Si_n}(\bigoplus_{\nu\in X(h,n)}\SPerm^\nu),$$ and the elements (\ref{ESUMuLa}), the same argument as in the proof of Theorem~\ref{3.4a} shows:

\begin{Proposition} \label{3.4b}
Let $\O=F$. Then there is an algebra isomorphism 
$$
\textstyle S_{h,n}\iso \End_{\ImS_n}\left(\bigoplus_{\nu\in X(h,n)}\Mde_n {\mathtt y}_\nu\right),
$$
under which the natural basis element $\phi^u_{\mu,\la}$ of $S_{h,n}$ maps to the endomorphism which is zero on the summands $\Mde_n {\mathtt y}_\nu$ for $\nu\neq\mu$ and sends $\Mde_n {\mathtt y}_\mu$ into $\Mde_n {\mathtt y}_\la$ via the homomorphism induced by right multiplication in $\Mde_n$ by	$s_{\mu,\la}^u$. 
\end{Proposition}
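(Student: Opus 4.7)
The plan is to mimic the proof of Theorem~\ref{3.4a} essentially verbatim, replacing the permutation module $\Perm^\nu$ by the signed permutation module $\SPerm^\nu = F\Si_n{\mathtt y}_\nu$ throughout. Recall from Theorem~\ref{TEndMn} that $H := \End_{\ImS_n}(\Mde_n) \cong F\Si_n$, and from Theorem~\ref{TSchub} that $P := \Mde_n$ is a projective $\ImS_n$-module. By Lemma~\ref{LMSelfDual}, every composition factor of the socle of $P$ also appears in its head, so the hypothesis of Lemma~\ref{L3.1f} is satisfied.

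First I would observe that $\SPerm^\nu = F\Si_n{\mathtt y}_\nu$ is naturally a left ideal of $H = F\Si_n$. Applying Lemma~\ref{L3.1f} to this left ideal gives
\[
A_P\circ\beta(\SPerm^\nu) \;\cong\; P\cdot F\Si_n{\mathtt y}_\nu \;=\; \Mde_n{\mathtt y}_\nu,
\]
where the last equality uses that $\Mde_n$ is a right $F\Si_n$-module via Theorem~\ref{TEndMn}. Taking the direct sum over $\nu\in X(h,n)$, we obtain
\[
\textstyle A_P\circ\beta\!\left(\bigoplus_{\nu\in X(h,n)}\SPerm^\nu\right) \;\cong\; \bigoplus_{\nu\in X(h,n)}\Mde_n{\mathtt y}_\nu.
\]
Since $A_P\circ\beta$ is, by Theorem~\ref{T3.1d}, an equivalence between $\mod{H}$ and the appropriate full subcategory of $\mod{\ImS_n}$, it is fully faithful, so it induces an isomorphism of endomorphism algebras
\[
\textstyle \End_{F\Si_n}\!\left(\bigoplus_{\nu\in X(h,n)}\SPerm^\nu\right) \;\iso\; \End_{\ImS_n}\!\left(\bigoplus_{\nu\in X(h,n)}\Mde_n{\mathtt y}_\nu\right).
\]
The left-hand side is identified with $S_{h,n}$ via Lemma~\ref{1.2c}, which gives the asserted algebra isomorphism.

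For the explicit description of the image of $\phi^u_{\mu,\la}$, recall from Lemma~\ref{1.2c} that, under the identification $S_{h,n}\cong\End_{F\Si_n}(\bigoplus_\nu\SPerm^\nu)$, the basis element $\phi^u_{\mu,\la}$ corresponds to the morphism which is zero on $\SPerm^\nu$ for $\nu\neq\mu$ and sends $\SPerm^\mu \to \SPerm^\la$ via right multiplication by $s^u_{\mu,\la}$ inside $F\Si_n$. Tracing this morphism through the functor $A_P\circ\beta$ and the identification $A_P\circ\beta(F\Si_n{\mathtt y}_\nu) \cong \Mde_n{\mathtt y}_\nu$, which is compatible with right multiplication by elements of $F\Si_n$, we see that $\phi^u_{\mu,\la}$ acts as zero on $\Mde_n{\mathtt y}_\nu$ for $\nu\neq\mu$ and sends $\Mde_n{\mathtt y}_\mu$ to $\Mde_n{\mathtt y}_\la$ via right multiplication by $s^u_{\mu,\la}$, as claimed. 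The only genuinely new bookkeeping compared with Theorem~\ref{3.4a} is the use of the signed (rather than unsigned) version of Schur's description of $S_{h,n}$ from Lemma~\ref{1.2c}; no further obstacle arises.
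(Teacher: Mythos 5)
Your proposal is correct and is essentially the paper's own argument: the paper simply remarks that ``the same argument as in the proof of Theorem~\ref{3.4a} shows'' the result, and what you have written is precisely that argument with $\Perm^\nu$ replaced by $\SPerm^\nu$ and Lemma~\ref{1.2c} supplying the signed description of $S_{h,n}$. The key steps --- Lemma~\ref{L3.1f} giving $A_P\circ\beta(F\Si_n{\mathtt y}_\nu)\cong \Mde_n{\mathtt y}_\nu$, full faithfulness of $A_P\circ\beta$ from Theorem~\ref{T3.1d}, and tracing $\phi^u_{\mu,\la}$ through the functor --- match the paper's route exactly.
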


Our final endomorphism algebra result is as follows: 

\begin{Theorem} \label{3.4c}
Let $\O=F$. Then there is an algebra isomorphism 
$$
\textstyle S_{h,n}\iso \End_{\ImS_n}\left(\bigoplus_{\nu\in X(h,n)}\Zdot^\nu\right),
$$
under which the natural basis element $\phi^u_{\mu,\la}$ of $S_{h,n}$ maps to the endomorphism which is zero on the summands  $\Zdot^\nu$ for $\nu\neq\mu$ and sends $\Zdot^\mu$ into $\Zdot^\la$ via the homomorphism induced by right multiplication in $\Mde_n$ by	$s_{\mu,\la}^u$. 
\end{Theorem}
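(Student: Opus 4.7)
First I would verify that right multiplication in $\Mde_n$ by $s^u_{\mu,\la}$ defines an $\ImS_n$-module map $\Zdot^\mu \to \Zdot^\la$, after which extending by zero on all other summands yields the candidate endomorphism $\kappa(\phi^u_{\mu,\la}) \in \End_{\ImS_n}(\bigoplus_\nu \Zdot^\nu)$. Fix $m \in \Zdot^\mu$ and $h \in \Si_\la$; setting $w' = wh$ in the defining sum gives
\[
m\, s^u_{\mu,\la}\, h \;=\; \sgn(h) \sum_{w' \in (\Si_\mu u \Si_\la \cap {}^\mu D)\,h} \sgn(w')\, m w'.
\]
Each $w'$ has a unique decomposition $w' = g x$ with $g \in \Si_\mu$ and $x \in \Si_\mu u \Si_\la \cap {}^\mu D$, and the identity $m g = \sgn(g) m$ collapses $\sgn(w') m w'$ to $\sgn(x) m x$. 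Since right multiplication by $h$ permutes the $\Si_\mu$-cosets inside $\Si_\mu u \Si_\la$, the assignment $w \mapsto x_{wh}$ is a bijection of $\Si_\mu u \Si_\la \cap {}^\mu D$ to itself, so the sum reduces to $\sgn(h)\, m\, s^u_{\mu,\la}$, confirming well-definedness.

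Next, I would establish that $\phi^u_{\mu,\la} \mapsto \kappa(\phi^u_{\mu,\la})$ is an injective algebra homomorphism by restricting these endomorphisms to the submodules $\Mde_n {\mathtt y}_\nu \subseteq \Zdot^\nu$. By Proposition~\ref{3.4b}, right multiplication by $s^u_{\mu,\la}$ already carries $\Mde_n {\mathtt y}_\mu$ into $\Mde_n {\mathtt y}_\la$ and the resulting endomorphisms of $\bigoplus_\nu \Mde_n {\mathtt y}_\nu$ form a faithful copy of $S_{h,n}$. Hence the restriction sends any linear combination or composition of the $\kappa(\phi^u_{\mu,\la})$ to the corresponding combination or composition of the Schur basis of Proposition~\ref{3.4b}, and both linear independence and multiplicativity transport back to the $\kappa(\phi^u_{\mu,\la})$.

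For surjectivity I would use $\circledast$-duality together with the bijection $\nu \leftrightarrow \nu^\op$ on $X(h,n)$. By Lemma~\ref{3.3d}(i),
\[
\textstyle\bigoplus_\nu \Zdot^\nu \;\cong\; \bigoplus_\nu (\Sdot^\nu)^\circledast \;\cong\; \bigl(\bigoplus_\nu \Sdot^\nu\bigr)^\circledast,
\]
so contravariant duality gives $\End_{\ImS_n}(\bigoplus_\nu \Zdot^\nu) \cong \End_{\ImS_n}(\bigoplus_\nu \Sdot^\nu)^{\op}$. Since Lemma~\ref{3.3d} identifies $\Sdot^\nu$ with $\be(\SPerm^\nu)$ for the functor $\be = \Mde_n \otimes_{F\Si_n} ?$, the adjunction between $\be$ and $\al = \Hom_{\ImS_n}(\Mde_n, ?)$ together with the isomorphism $\al \circ \be \cong \id$ from Lemma~\ref{L3.1a} yields
\[
\End_{\ImS_n}\!\bigl(\textstyle\bigoplus_\nu \Sdot^\nu\bigr) \;\cong\; \End_{F\Si_n}\!\bigl(\textstyle\bigoplus_\nu \SPerm^\nu\bigr) \;\cong\; S_{h,n}
\]
by Lemma~\ref{1.2c}. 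A dimension comparison then forces the injective $\kappa$ to be an isomorphism. I expect the main technical hurdle to be the coset manipulation in the first step verifying that $\kappa(\phi^u_{\mu,\la})$ actually carries $\Zdot^\mu$ into $\Zdot^\la$; once that well-definedness is in hand, all remaining steps are formal consequences of results already in place.
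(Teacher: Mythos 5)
Your overall strategy matches the paper's: verify well-definedness of $\kappa(\phi^u_{\mu,\la})$, establish injectivity by restricting to $\bigoplus_\nu \Mde_n{\mathtt y}_\nu$ and invoking Proposition~\ref{3.4b}, then obtain surjectivity from the dimension count $\dim\End_{\ImS_n}(\bigoplus_\nu \Zdot^\nu) = \dim S_{h,n}$ via the $\circledast$-duality of Lemma~\ref{3.3d}(i) and the $\al\circ\be\cong\id$ isomorphism. Your surjectivity computation is the paper's computation packaged as a single endomorphism-ring isomorphism rather than Hom-space by Hom-space.

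Two remarks on the remaining steps. Your well-definedness argument deviates from the paper's and is, if anything, more direct: you compute $m\,s^u_{\mu,\la}\,h = \sgn(h)\,m\,s^u_{\mu,\la}$ for all $m\in\Zdot^\mu$ by the coset decomposition $w' = gx$ with $g\in\Si_\mu$, $x\in{}^\mu\D_n$, using the defining relation $mg=\sgn(g)m$ on $\Zdot^\mu$ and the bijection $w\mapsto x_{wh}$ on $\Si_\mu u\Si_\la\cap{}^\mu\D$. This is correct and avoids the paper's indirect route, which only checks that $s^u_{\mu,\la}(s_r+1)$ kills the smaller submodule $\Mde_n{\mathtt y}_\mu$ and then extends to $\Zdot^\mu$ via Lemma~\ref{3.3d}(ii).

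The one genuine gap is in the claim that ``multiplicativity transports back.'' Linear independence does transport back for free, but to conclude $\kappa(\phi_1\phi_2)=\kappa(\phi_1)\kappa(\phi_2)$ from the fact that both sides restrict to the same endomorphism of $\bigoplus_\nu \Mde_n{\mathtt y}_\nu$, you need the restriction map from the algebra of endomorphisms of $\bigoplus_\nu \Zdot^\nu$ preserving $\bigoplus_\nu \Mde_n{\mathtt y}_\nu$ to $\End_{\ImS_n}(\bigoplus_\nu \Mde_n{\mathtt y}_\nu)$ to be injective. This is not automatic; it follows from Lemma~\ref{3.3d}(ii) (no composition factor of $\Zdot^\nu/\Mde_n{\mathtt y}_\nu$ is a submodule of $\Mde_n$), which forces any endomorphism of $\bigoplus_\nu \Zdot^\nu$ vanishing on $\bigoplus_\nu \Mde_n{\mathtt y}_\nu$ to vanish outright. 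You should cite that lemma at this point, as the paper does.
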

\begin{proof}
First, we check that the endomorphisms in the statement of the theorem are well-defined.
For this we need to see that, as submodules of $\Mde_n$, $\Zdot^\mu s_{\mu,\la}^u\subset \Zdot^\la$. To prove this, it suffices by definition of $\Zdot^\la$ to prove that $\Zdot^\mu s_{\mu,\la}^u(s_r-1)=0$ for all simple transpositions $s_r\in \Si_\la$. Right multiplication by $s_{\mu,\la}^u(s_r-1)$ yields an $R_{n\de}$-module homomorphism from $\Zdot^\mu$ to $\Mde_n$. Considering two cases: where $\mu=(1^n)$ and $\mu\neq (1^n)$, we see that the element $s_{\mu,\la}^u(s_r-1)$ always annihilates the submodule $\Mde_n {\mathtt y}_\mu$ of $\Zdot^\mu$. So in fact, $s_{\mu,\la}^u(s_r-1)$ must annihilate all of $\Zdot^\mu$ by Lemma~\ref{3.3d}(ii).

Let $S$ be the subalgebra of $\End_{R_{n\de}}\Big(\bigoplus_{\nu\in X(h,n)}\Zdot^\nu\Big)$ consisting of all endomorphisms which preserve the subspace $\bigoplus_{\nu\in X(h,n)}\Mde_n{\mathtt y}_\nu\subseteq \bigoplus_{\nu\in X(h,n)}\Zdot^\nu$. Restriction gives an algebra homomorphism 
$$
\textstyle S\to \End_{R_{n\de}}\left(\bigoplus_{\nu\in X(h,n)}\Mde_n{\mathtt y}_\nu\right),
$$
which is injective by Lemma~\ref{3.3d}(ii) and surjective by the previous paragraph and Proposition~\ref{3.4b}. This shows in particular that the endomorphisms of the module	$\bigoplus_{\nu\in X(h,n)}\Zdot^\nu$ defined in the statement of the theorem are linearly independent and span $S$. It remains to check using dimensions that $S$  equals all of $\End_{R_{n\de}}\Big(\bigoplus_{\nu\in X(h,n)}\Zdot^\nu\Big)$. On expanding the direct sums, this will follow if we can show that 
$$
\dim\Hom_{R_{n\de}}(\Zdot^\mu,\Zdot^\la) = \dim\Hom_{F\Si_n}(\SPerm^\la,\SPerm^\mu)  
$$
for all $\la,\mu\in X(h, k)$. We calculate using Lemmas~\ref{3.3d} and \ref{L3.1a}: 
\begin{align*}
\Hom_{R_{n\de}}(\Zdot^\mu,\Zdot^\la)&\cong \Hom_{R_{n\de}}(\Sdot^{\la^\op},\Sdot^{\mu^\op})
\\
&\cong\Hom_{R_{n\de}}(\beta(\SPerm^{\la^\op}),\beta(\SPerm^{\mu^\op}))
\\
&\cong\Hom_{F\Si_n}(\SPerm^{\la^\op},\al\circ\beta(\SPerm^{\mu^\op}))
\\
&\cong\Hom_{F\Si_n}(\SPerm^{\la^\op},\SPerm^{\mu^\op})
\\
&\cong\Hom_{F\Si_n}(\SPerm^{\la},\SPerm^{\mu}),
\end{align*}
as desired.
\end{proof}

\section{Projective generator for imaginary Schur algebra}\label{SIP}
Recalling the idempotents $\ga_{n,\de}$ and $\ga_{\nu,\de}$ from Section~\ref{SIGGR}, we  introduce the following temporary notation:  
\begin{equation}\label{EY}
\Yde_n:=R_{n\de}\ga_{n}\Mde_n,\quad \Yde_\nu:=R_{\nu,\de}\ga_{\nu}\Mde_\nu.
\end{equation}
Later it will turn out that $\Yde_n=\Zde_n$ and $\Yde_\nu=\Zde_\nu$. 
It easy to see that 
$$\Yde_\nu\cong \Yde_{n_1}\boxtimes\dots\boxtimes \Yde_{n_a}.$$

Recall for the next lemma that by definition, $\Zde_\nu$ is a submodule of $\Mde_\nu$.

\begin{Lemma} \label{L34d}
If $\O=F$, then:  
\begin{enumerate}
\item[{\rm (i)}] $\Yde_\nu$ is the image of any non-zero element of the one dimensional space $\Hom_{R_{\nu,\de}}(\Ga_{\nu},\Zde_\nu)$. Moreover, the latter $\Hom$-space is concentrated in degree zero. 
\item[{\rm (ii)}] $\HCR^n_\nu \Yde_n\cong \Yde_\nu$. 
\end{enumerate}
\end{Lemma}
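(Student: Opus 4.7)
For part (i), since $\Ga_\nu \cong \Ga_{n_1}\boxtimes\cdots\boxtimes\Ga_{n_a}$ and $\Mde_\nu \cong \Mde_{n_1}\boxtimes\cdots\boxtimes\Mde_{n_a}$, Proposition~\ref{2.5e}(ii) applied to each factor yields
$$\Hom_{R_{\nu,\de}}(\Ga_\nu,\Mde_\nu) \cong \bigotimes_{b=1}^a \Hom_{R_{n_b\de}}(\Ga_{n_b},\Mde_{n_b}) \cong \O,$$
concentrated in degree zero. By Proposition~\ref{2.5e}(iii) the right $\O\Si_\nu$-action on this Hom space is via $\sgn_{\Si_\nu}$, so for any non-zero $\phi$ we have $\phi(x)g = \sgn(g)\phi(x)$ for all $g \in \Si_\nu$, forcing $\im\phi \subseteq \Zde_\nu$. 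Thus $\Hom_{R_{\nu,\de}}(\Ga_\nu,\Zde_\nu) = \Hom_{R_{\nu,\de}}(\Ga_\nu,\Mde_\nu)$ is one-dimensional in degree zero. Since $\Ga_\nu$ is (up to grading shift) generated by the idempotent $\ga_{\nu,\de}$, the correspondence $\phi \mapsto \phi(\ga_{\nu,\de}) \in \ga_{\nu,\de}\Mde_\nu$ forces $\ga_{\nu,\de}\Mde_\nu$ to be one-dimensional; hence $\im\phi = R_{\nu,\de}\phi(\ga_{\nu,\de}) = R_{\nu,\de}\ga_{\nu,\de}\Mde_\nu = \Yde_\nu$.

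For part (ii), fix a non-zero $\phi:\Ga_n\to\Mde_n$ with $\im\phi = \Yde_n$. Exactness of $\HCR^n_\nu$ yields a surjection $\bar\phi:\HCR^n_\nu\Ga_n\twoheadrightarrow\HCR^n_\nu\Yde_n$. By Proposition~\ref{2.5e}(i), $\HCR^n_\nu\Ga_n \cong \Ga_\nu \oplus X$ with $\Hom_{R_{\nu,\de}}(X,\Mde_\nu)=0$, while Lemma~\ref{3.2f}(ii) gives $\HCR^n_\nu\Mde_n \cong \bigoplus_{y \in {}^\nu\D_n} \Mde_\nu\cdot y$ as $R_{\nu,\de}$-modules with each summand isomorphic to $\Mde_\nu$. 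Hence $\Hom_{R_{\nu,\de}}(X,\HCR^n_\nu\Mde_n)=0$, so $\bar\phi|_X = 0$ and $\HCR^n_\nu\Yde_n = \bar\phi(\Ga_\nu)$. Writing $\bar\phi|_{\Ga_\nu}$ as a tuple $(\phi_y)_y$ with $\phi_y:\Ga_\nu\to\Mde_\nu$, each $\phi_y$ is by part~(i) a scalar multiple of the canonical surjection $\Ga_\nu\twoheadrightarrow\Yde_\nu$; so if some $\phi_y\neq 0$ then all non-zero components share the same kernel $K := \ker(\Ga_\nu\twoheadrightarrow\Yde_\nu)$, and consequently $\bar\phi(\Ga_\nu) \cong \Ga_\nu/K \cong \Yde_\nu$.

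It remains to verify $\bar\phi|_{\Ga_\nu}\neq 0$. By Frobenius reciprocity for the adjoint pair $(\HCR^n_\nu,\Coind^n_\nu)$, $\Hom_{R_{\nu,\de}}(\HCR^n_\nu\Ga_n,\Mde_\nu)\cong\Hom_{R_{n\de}}(\Ga_n,\Coind^n_\nu\Mde_\nu)$. Applying Lemma~\ref{LLV} with $V_i := \Mde_{n_{a+1-i}}$ (and noting $(\de,\de)=0$ trivializes the grading shift) together with Lemma~\ref{3.2f}(i) yields $\Coind^n_\nu\Mde_\nu \cong \HCI^n_{\nu^\op}\Mde_{\nu^\op} \cong \Mde_n$, so this Hom space is one-dimensional. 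The Frobenius bijection identifies $\phi$ with $\eps_{\Mde_\nu}\circ\bar\phi$, where the counit $\eps_{\Mde_\nu}:\HCR^n_\nu\Coind^n_\nu\Mde_\nu\to\Mde_\nu$ corresponds, under the decomposition $\HCR^n_\nu\Mde_n\cong\bigoplus_y\Mde_\nu\cdot y$, to the projection onto the $y=1$ summand. Since $\phi\neq 0$, this composite is non-zero, so $\phi_1\neq 0$, yielding $\HCR^n_\nu\Yde_n\cong\Yde_\nu$. The main technical obstacle is the careful tracking of the coinduction counit through the chain of isomorphisms $\Coind^n_\nu\Mde_\nu\cong\Mde_n$ to identify it with the explicit $y=1$ projection; this is a routine but tedious verification via the explicit description of the isomorphism in Lemma~\ref{LLV}.
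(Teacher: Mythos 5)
Your proof is correct, and part (i) is essentially the paper's argument (the paper likewise reduces to Proposition~\ref{2.5e}(ii),(iii); your extra observation that $\ga_{\nu,\de}\Mde_\nu$ is one-dimensional and hence $\im\phi=R_{\nu,\de}\ga_{\nu,\de}\Mde_\nu=\Yde_\nu$ is a clean way to pin down the image directly from the definition of $\Yde_\nu$). In part (ii) you follow the same skeleton as the paper -- split $\HCR^n_\nu\Ga_n\cong\Ga_\nu\oplus X$ via Proposition~\ref{2.5e}(i), kill $X$, and analyze the restriction to $\Ga_\nu$ -- but you diverge in two places. First, to identify the image with $\Yde_\nu$, the paper uses Lemma~\ref{L3.3c} ($\HCR^n_\nu\Zde_n\cong\Zde_\nu$) to view the restricted map as landing in $\Zde_\nu$ and then quotes part (i); you instead avoid Lemma~\ref{L3.3c} altogether by decomposing $\HCR^n_\nu\Mde_n$ into copies of $\Mde_\nu$ and noting that all components are scalar multiples of the canonical surjection, so they share the kernel $K$ and the image is $\Ga_\nu/K\cong\Yde_\nu$. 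This is a perfectly good (and slightly more self-contained) alternative. Second, for the non-vanishing: the paper simply observes $\Hom_{R_{n\de}}(\Yde_n,\Mde_n)\neq 0$ and applies the adjunction of Lemma~\ref{LOp} to $\Yde_n$ itself to get $\Hom_{R_{\nu,\de}}(\HCR^n_\nu\Yde_n,\Mde_\nu)\neq 0$, whence $\HCR^n_\nu\Yde_n\neq 0$; you instead apply the $(\HCR,\Coind)$ adjunction to $\Ga_n$ and track the counit. The paper's route is shorter and avoids the counit entirely.

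On that last point, the ``main technical obstacle'' you flag -- identifying the counit with the $y=1$ projection -- is not actually needed. All your common-kernel argument requires is that $\bar\phi|_{\Ga_\nu}\neq 0$, i.e.\ that \emph{some} $\phi_y$ is non-zero. Since the adjunction is a bijection and $\phi\neq 0$, the map $\eps_{\Mde_\nu}\circ\bar\phi$ is non-zero; it vanishes on $X$, so $\eps_{\Mde_\nu}\circ\bar\phi|_{\Ga_\nu}\neq 0$, and this already forces $\bar\phi|_{\Ga_\nu}\neq 0$ regardless of which linear combination of the projections $\eps_{\Mde_\nu}$ turns out to be. So you can delete the tedious verification without loss.
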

\begin{proof}
(i) By Proposition~\ref{2.5e}, we have $\Hom_{R_{\nu,\de}}(\Ga_{\nu},\Mde_\nu)\cong F$, and the image of any non-zero map in this homomorphism space is contained in $\Zde_\nu$. 

(ii) 
By (i), $\Yde_n$ is a non-zero submodule of $\Mde_n$, so $\Hom_{R_{n\de}}(\Yde_n,\Mde_n)\neq 0$. Now, using Lemmas~\ref{3.2f}(i) and~\ref{LOp}, we get 
$$0\neq \Hom_{R_{n\de}}(\Yde_n,\Mde_n)\cong 
\Hom_{R_{n\de}}(\Yde_n,\HCI^n_{\nu^\op}\Mde_{\nu^\op})
\cong \Hom_{R_{\nu,\de}}(\HCR^n_\nu \Yde_n,\Mde_\nu).
$$
In particular, $\HCR^n_\nu \Yde_n\neq 0$. 

Now let $\theta : \Ga_n \to \Zde_n$ be a non-zero homomorphism. By (i), we have  $\im \,\theta=\Yde_n$. By Proposition~\ref{2.5e}, we have $\HCR^n_\nu\Ga_n\cong \Ga_\nu\oplus X$ for $X$ with $\Hom_{R_{\nu,\de}}(X,\Mde_\nu)=0$, and by Lemma~\ref{L3.3c}, we have $\HCR^n_\nu\Zde_n\cong \Zde_\nu$.
So, applying the exact functor $\HCR^n_\nu$ to $\theta$ and restricting to $\Ga_\nu$, 
we obtain a homomorphism $\bar\theta : \Ga_\nu \to \Zde_\nu$  with image $\HCR^n_\nu \Yde_n$, which is non-zero by the previous paragraph. By (i), the image of $\bar\theta$ is $\Yde_\nu$.
\end{proof}

By Proposition~\ref{2.5e}, we have that $Y_\nu=\im\, {\mathtt p}_\nu$ for a map
$
{\mathtt p}_\nu:\Ga_\nu\to M_\nu
$
which spans the one-dimensional space $\Hom_{R_{\nu,\de}}(\Ga_{\nu},\Mde_\nu)$. By functoriality, this map induces a map
$$
{\mathtt p}^\nu:\HCI_\nu^n\Ga_\nu\to \HCI_\nu^n Y_\nu.
$$

\begin{Lemma} \label{LGaY}
Any map $f:\HCI_\nu^n\Ga_\nu\to M_n$ factors through ${\mathtt p}^\nu$, i.e. there exists a unique map $\bar f:\HCI_\nu^n Y_\nu\to M_n$ such that $f=\bar f\circ {\mathtt p}^\nu$. In particular, for any submodule $N\subseteq M$, we have $\DIM \Hom_{R_{n\de}}(\HCI_\nu^n\Ga_\nu, N)=\DIM \Hom_{R_{n\de}}(\HCI_\nu^n Y_\nu, N)$. 
\end{Lemma}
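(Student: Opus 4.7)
The plan is to translate the factorization problem via Frobenius reciprocity into a statement about $\Hom_{R_{\nu,\de}}(\Ga_\nu, \HCR_\nu^n \Mde_n)$, and then exploit the decomposition of $\HCR_\nu^n \Mde_n$ from Lemma~\ref{3.2f}(iii) together with Proposition~\ref{2.5e}(ii). First set $K := \ker {\mathtt p}_\nu \subseteq \Ga_\nu$; exactness of $\HCI_\nu^n$ applied to $0 \to K \to \Ga_\nu \to \Yde_\nu \to 0$ yields
\[
0 \to \HCI_\nu^n K \to \HCI_\nu^n \Ga_\nu \xrightarrow{{\mathtt p}^\nu} \HCI_\nu^n \Yde_\nu \to 0,
\]
so existence and uniqueness of $\bar f$ both reduce to the single claim that every map $f : \HCI_\nu^n \Ga_\nu \to \Mde_n$ vanishes on $\HCI_\nu^n K$.

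By the adjunction between $\HCI_\nu^n$ and $\HCR_\nu^n$, such an $f$ corresponds to some $\tilde f : \Ga_\nu \to \HCR_\nu^n \Mde_n$, and $f|_{\HCI_\nu^n K} = 0$ translates to $\tilde f|_K = 0$. Lemma~\ref{3.2f}(iii) provides an $R_{\nu,\de}$-module decomposition $\HCR_\nu^n \Mde_n = \bigoplus_{y \in {}^\nu \D_n} \Mde_\nu\, y$, where right multiplication by $y$ is an $R_{\nu,\de}$-module isomorphism $\Mde_\nu \iso \Mde_\nu\, y$ (since the right $\Si_n$-action of Theorem~\ref{TEndMn} commutes with the left $R_{n\de}$-action, hence with the $R_{\nu,\de}$-action). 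Write $\tilde f = \sum_y \tilde f_y$ with $\tilde f_y : \Ga_\nu \to \Mde_\nu\, y$. Under the identification $\Mde_\nu\, y \cong \Mde_\nu$, Proposition~\ref{2.5e}(ii) tells us that $\Hom_{R_{\nu,\de}}(\Ga_\nu, \Mde_\nu)$ is a free $\O$-module of rank one spanned in degree zero by ${\mathtt p}_\nu$, so each $\tilde f_y$ is a scalar multiple of ${\mathtt p}_\nu$ and therefore factors through $\Yde_\nu = \im {\mathtt p}_\nu$. In particular $\tilde f_y|_K = 0$; summing over $y$ gives $\tilde f|_K = 0$, finishing the first part.

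For the last statement, surjectivity of ${\mathtt p}^\nu$ makes precomposition
\[
({\mathtt p}^\nu)^* : \Hom_{R_{n\de}}(\HCI_\nu^n \Yde_\nu, N) \to \Hom_{R_{n\de}}(\HCI_\nu^n \Ga_\nu, N)
\]
a graded injection. Given any $f$ on the right, compose with the inclusion $N \hookrightarrow \Mde_n$ and apply the first part to obtain $\bar f : \HCI_\nu^n \Yde_\nu \to \Mde_n$ with $\bar f \circ {\mathtt p}^\nu = f$; surjectivity of ${\mathtt p}^\nu$ then forces $\im \bar f = \im f \subseteq N$, so $\bar f$ actually lies in $\Hom(\HCI_\nu^n \Yde_\nu, N)$. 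Hence $({\mathtt p}^\nu)^*$ is a graded bijection, and the two graded dimensions agree.

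The main obstacle I expect is ensuring that each summand $\Mde_\nu\, y$ inside the decomposition of $\HCR_\nu^n \Mde_n$ really is $R_{\nu,\de}$-isomorphic to $\Mde_\nu$ in a way compatible with the rank-one Hom computation; this compatibility rests on commutation of the right $\Si_n$-action from Theorem~\ref{TEndMn} with the left $R_{n\de}$-action together with the explicit basis description in Lemma~\ref{3.2f}(iii), and without this step the reduction to Proposition~\ref{2.5e}(ii) would not be immediate.
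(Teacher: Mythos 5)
Your proof is correct and follows essentially the same approach as the paper's: apply the $(\HCI,\HCR)$-adjunction to translate $f$ into a map $\Ga_\nu\to\HCR_\nu^n M_n$, decompose $\HCR_\nu^n M_n$ into copies of $M_\nu$, and use the rank-one statement from Proposition~\ref{2.5e}(ii) to see that the adjoint factors through $\Yde_\nu=\im\,{\mathtt p}_\nu$, whence $f$ factors through ${\mathtt p}^\nu$. The paper's proof cites Lemma~\ref{3.2f}(ii) ($\HCR_\nu^n M_n\cong M_\nu\otimes_{\O\Si_\nu}\O\Si_n$) rather than part (iii), but these yield the same $R_{\nu,\de}$-decomposition into copies of $M_\nu$; you simply spell out the verification—left $R_{n\de}$-action commuting with the right $\Si_n$-action—that the paper treats as implicit.
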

\begin{proof}
By adjointness of $\HCI_\nu^n$ and $\HCR_\nu^n$, the map $f$ is functorially induced by a map $f_\nu:\Ga_\nu\to\HCR_\nu^n M_n$. By Lemma~\ref{3.2f}(ii), we have $\HCR_\nu^n M_n\cong M_\nu\otimes_{\O\Si_\nu}\O\Si_n$. By Proposition~\ref{2.5e}(ii), the map $f_\nu$ factors through ${\mathtt p}_\nu$, i.e. there exists $\bar f_\nu: Y_\nu\to \HCR_\nu^nM_n$ such that $f_\nu=\bar f_\nu\circ {\mathtt p}_\nu$. Now take $\bar f$ to be functorially induced by $\bar f_\nu$.  
\end{proof}

\begin{Lemma} \label{L3.4f}
Let $\O=F$. For any $\la,\mu\vDash n$, all of the spaces
\begin{align}
\label{E341}&\Hom_{R_{n\de}}(\HCI_\la^n\Yde_\la, \HCI_\mu^n\Yde_\mu),
\\
\label{E342}
&\Hom_{R_{n\de}}(\HCI_\la^n\Ga_\la, \HCI_\mu^n\Yde_\mu),
\\
\label{E343}
&\Hom_{R_{n\de}}(\HCI_\la^n\Ga_\la, \HCI_\mu^n\Zde_\mu)
\end{align}
have (graded) dimension equal to $|{}^\la\D^\mu_n|$.
\end{Lemma}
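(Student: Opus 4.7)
The plan is to compute all three graded dimensions using Frobenius reciprocity together with the Imaginary Mackey Theorem~\ref{TMackey}. In every case one applies adjunction to rewrite the outer Hom as a Hom over the Levi subalgebra $R_{\lambda,\de}$, then filters the restricted--induced module via Mackey. Crucially, in the imaginary setting the relevant pairings are of the form $(n_i\de, n_j\de) = n_i n_j(\de,\de) = 0$, so the grading shifts appearing in the Mackey twists $^x(?)$ are trivial and graded dimensions add correctly along the filtration.

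I would begin with~(\ref{E342}). Frobenius reciprocity gives
$$\Hom(\HCI_\lambda^n \Ga_\lambda,\, \HCI_\mu^n Y_\mu) \cong \Hom_{R_{\lambda,\de}}(\Ga_\lambda,\, \HCR^n_\lambda \HCI^n_\mu Y_\mu),$$
and Theorem~\ref{TMackey} filters the right-hand argument with subfactors $\HCI^\lambda_{\lambda\cap x\mu}\, {}^x(\HCR^\mu_{x^{-1}\lambda\cap\mu} Y_\mu)$ indexed by $x\in{}^\lambda\D^\mu_n$. Projectivity of $\Ga_\lambda$ (Proposition~\ref{PGGId}(i)) makes $\Hom(\Ga_\lambda,-)$ exact, so graded dimensions add. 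For a single subfactor, Lemma~\ref{L34d}(ii) applied componentwise identifies $\HCR^\mu_{x^{-1}\lambda\cap\mu} Y_\mu$ with $Y_{x^{-1}\lambda\cap\mu}$. Another round of Frobenius reciprocity reduces the Hom to $\Hom_{R_{\lambda\cap x\mu,\de}}(\HCR^\lambda_{\lambda\cap x\mu}\Ga_\lambda,\, {}^x Y_{x^{-1}\lambda\cap\mu})$. Proposition~\ref{2.5e}(i) decomposes $\HCR^\lambda_{\lambda\cap x\mu}\Ga_\lambda \cong \Ga_{\lambda\cap x\mu} \oplus X'$ with $X'$ admitting no nonzero maps into any module whose composition factors embed in the corresponding $M$; since ${}^x Y_{x^{-1}\lambda\cap\mu}$ sits inside the imaginary tensor space $M_{\lambda\cap x\mu}$, the $X'$ summand contributes zero. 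Finally Lemma~\ref{L34d}(i) says $\DIM \Hom(\Ga_{\lambda\cap x\mu}, Y_{\lambda\cap x\mu}) = 1$ (concentrated in degree zero), and summing over $x\in{}^\lambda\D^\mu_n$ yields the claimed graded dimension. For~(\ref{E343}) the argument is identical, invoking Lemma~\ref{L3.3c} instead of Lemma~\ref{L34d}(ii) to identify $\HCR^\mu_{x^{-1}\lambda\cap\mu} Z_\mu \cong Z_{x^{-1}\lambda\cap\mu}$, and using the other half of Lemma~\ref{L34d}(i) which guarantees that $\Hom(\Ga_\tau, Z_\tau)$ is also one dimensional concentrated in degree zero.

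The equality of (\ref{E341}) with (\ref{E342}) comes essentially for free from Lemma~\ref{LGaY}. Since $\HCI^n_\mu$ is exact, $\HCI^n_\mu Y_\mu$ sits as a submodule of $\HCI^n_\mu \Mde_\mu \cong \Mde_n$ by Lemma~\ref{3.2f}(i), so Lemma~\ref{LGaY} applies directly and gives
$$\DIM \Hom(\HCI^n_\lambda \Ga_\lambda,\, \HCI^n_\mu Y_\mu) = \DIM \Hom(\HCI^n_\lambda Y_\lambda,\, \HCI^n_\mu Y_\mu).$$

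The main obstacle I anticipate is the bookkeeping around the Mackey twist ${}^x$: I need to identify ${}^x Y_{x^{-1}\lambda\cap\mu}$ (and its $Z$-analogue) unambiguously as a submodule of a copy of $M_{\lambda\cap x\mu}$ so that the auxiliary summand $X'$ of Proposition~\ref{2.5e}(i) really does contribute zero, and to verify that all degree shifts vanish. Both points hinge on the fact that $(\de,\de)=0$ together with the symmetry of imaginary tensor spaces under permutation of the parts of a composition, which is exactly what makes the imaginary setting rigid enough for the bound coming from the Mackey filtration to collapse to an equality.
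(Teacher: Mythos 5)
Your overall strategy is exactly the paper's: reduce (\ref{E341}) to (\ref{E342}) via Lemma~\ref{LGaY}, handle (\ref{E343}) by the same argument with Lemma~\ref{L3.3c} in place of Lemma~\ref{L34d}(ii), and compute (\ref{E342}) by adjunction plus the Imaginary Mackey Theorem, using projectivity of $\Ga_\la$ to sum graded dimensions over the filtration and Proposition~\ref{2.5e} together with Lemma~\ref{L34d} to show each subfactor contributes exactly $F$. Your observations that the $X'$ summand dies because ${}^xY_{x^{-1}\la\cap\mu}$ embeds in the relevant imaginary tensor space, and that all grading shifts vanish because $(\de,\de)=0$, are both correct and both used in the paper.

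There is, however, one step that does not follow as you state it. You write that ``another round of Frobenius reciprocity'' reduces $\Hom_{R_{\la,\de}}(\Ga_\la,\ \HCI^{\la}_{\la\cap x\mu}Y_{\la\cap x\mu})$ to $\Hom_{R_{\la\cap x\mu,\de}}(\HCR^{\la}_{\la\cap x\mu}\Ga_\la,\ Y_{\la\cap x\mu})$. But $\HCI$ is the \emph{left} adjoint of $\HCR$; Frobenius reciprocity moves an induction out of the first argument, not out of the second. To move it out of the second argument you need the \emph{right} adjoint of restriction, i.e.\ coinduction, and the comparison of induction with coinduction (Lemma~\ref{LLV}, packaged here as Lemma~\ref{LOp}) reverses the order of the factors: one lands in $\Hom_{R_{\nu,\de}}(\HCR^{\la}_{\nu}\Ga_\la,\ Y_\nu)$ where $\nu$ is obtained from $\la\cap x\mu$ by reversing the parts of the refinement inside each part of $\la$ (this is exactly the composition the paper introduces at this point). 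The slip is repairable rather than fatal --- $\Ga_\nu$ and $Y_\nu$ are just reorderings of the outer tensor factors of $\Ga_{\la\cap x\mu}$ and $Y_{\la\cap x\mu}$, and the degree shift in Lemma~\ref{LLV} vanishes since all pairings $(n_i\de,n_j\de)$ are zero --- but as written the adjunction you invoke is the wrong one, and the order reversal it forces needs to be acknowledged before you can apply Proposition~\ref{2.5e}(i),(ii) and Lemma~\ref{L34d}(i) to conclude $H_x\cong F$.
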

\begin{proof}
We consider only (\ref{E342}), since by Lemma~\ref{LGaY}, the result for (\ref{E341}) follows from that for (\ref{E342}), and the proof for (\ref{E343}) is similar to that for (\ref{E342}), using that $\HCR^n_\la\Zde_n\cong \Zde_\la$ 
according to Lemma~\ref{L3.3c}. 


By adjointness of $\HCI_\la^n$ and $\HCR^n_\la$, we have
$$
\Hom_{R_{n\de}}(\HCI_\la^n\Ga_\la, \HCI_\mu^n\Yde_\mu)\cong
\Hom_{R_{\la,\de}}(\Ga_\la, \HCR_\la^n\HCI_\mu^n\Yde_\mu).
$$
Since $\Ga_\la$ is projective, the Imaginary Mackey Theorem~\ref{TMackey}
show that 
$$
\DIM \Hom_{R_{\la,\de}}(\Ga_\la, \HCR_\la^n\HCI_\mu^n\Yde_\mu)=\sum_{x\in {}^\la\D^\mu_n} \DIM H_x,$$
where 
$$
H_x=\Hom_{R_{\la,\de}}(\Ga_\la, \HCI_{\la\cap x\mu}^{\la}{}^{x}(\HCR^\mu_{x^{-1}\la\cap\mu} \Yde_\mu))
$$
By Lemma~\ref{L34d}, we have $\HCR^\mu_{x^{-1}\la\cap\mu} \Yde_\mu\cong \Yde_{x^{-1}\la\cap\mu}$, so 
$$
H_x
\cong \Hom_{R_{\la,\de}}(\Ga_\la, \HCI_{\la\cap x\mu}^{\la}\Yde_{\la\cap x\mu}). 
$$
Note that the composition $\la\cap x\mu$ is a refinement of $\la$. Denote by $\nu$ the composition obtained by from $\la\cap x\mu$  by taking the parts of this refinement within each part $\la_m$ of $\la$ in the opposite order. 
By Lemmas~\ref{LOp} and \ref{L34d} and Proposition~\ref{2.5e}(i)(ii), we now have
\begin{align*}
H_x
 \cong\Hom_{R_{\nu,\de}}(\HCR^\la_{\nu}\Ga_\la, \Yde_{\nu})
  \cong\Hom_{R_{\nu,\de}}(\Ga_\nu, \Yde_{\nu})\cong F. 
\end{align*}
This completes the proof. 
\end{proof}

Recall that $X_+(n)$ can be identified with the set of the partitions of $n$. The following theorem is the main result of the chapter:

\begin{Theorem} \label{3.4g}
If $\O=F$ 
then: 
\begin{enumerate}
\item[{\rm (i)}] The submodules $\Zde_n$ and $\Yde_n$ of $\Mde_n$ coincide. So $\Zde_n$ can be characterized as the image of any non-zero homomorphism from $\Ga_n$ to $\Mde_n$.

\item[{\rm (ii)}] The number of non-isomorphic composition factors  of the $R_{n\de}$-module $\Mde_n$ is equal to $|X_+(n)|$.

\item[{\rm (iii)}] $\Zdot^\nu$ is a projective $\ImS_n$-module, for all $\nu\vDash n$. Moreover, for any $h\geq n$, we have that $\bigoplus_{\nu\in X(h,n)}\Zdot^\nu$ is a projective generator for $\ImS_n$. 
\end{enumerate}
\end{Theorem}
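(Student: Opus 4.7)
The plan is to prove (iii), (i), and (ii) in tandem via a Schubert-criterion argument built on Lemma~\ref{L3.4f}. The central observation is that Lemma~\ref{L3.4f} combined with the inclusion $\HCI^n_\mu \Yde_\mu \hookrightarrow \HCI^n_\mu \Zde_\mu$---obtained from Proposition~\ref{2.5e}(iii) and~(\ref{EUpperNu})---forces every homomorphism $\HCI^n_\lambda \Ga_\lambda \to \HCI^n_\mu \Zde_\mu$ to factor through $\HCI^n_\mu \Yde_\mu$ for all $\lambda,\mu \vDash n$, since $\HCI^n_\lambda \Ga_\lambda$ is a projective $R_{n\delta}$-module and the Hom-spaces in~(\ref{E342}) and~(\ref{E343}) are shown to have the same graded dimension.

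Specializing $\lambda = \mu = \nu$ and applying Schubert's criterion (Lemma~\ref{LSchub}) to the surjection $\HCI^n_\nu \Ga_\nu \twoheadrightarrow \HCI^n_\nu \Yde_\nu$ shows that $\HCI^n_\nu \Yde_\nu$ is projective over $R_{n\delta}/\Ann_{R_{n\delta}}(\HCI^n_\nu \Yde_\nu)$. To upgrade this to projectivity over $\ImS_n$ and identify $\HCI^n_\nu \Yde_\nu$ with $\Zdot^\nu$---which yields (i) via exactness of induction, since the case $\nu=(n)$ recovers $Y_n = Z_n$---I analyze the quotient $Q_\nu := \Zdot^\nu / \HCI^n_\nu \Yde_\nu \cong \HCI^n_\nu(Z_\nu/Y_\nu)$: the factoring property gives $\Hom(\HCI^n_\lambda \Ga_\lambda, Q_\nu) = 0$ for every $\lambda$, and specializing to $\lambda=(1^n)$---where $\HCI^n_{(1^n)}\Ga_{(1^n)} = \Ga_1^{\circ n}$ surjects canonically onto $M_n$ via the projective cover $\Ga_1 \twoheadrightarrow L_\delta$---this rules out any nontrivial homomorphism from $M_n$ to $Q_\nu$. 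Faithfulness of $M_n$ as an $\ImS_n$-module then ensures that every irreducible $\ImS_n$-module occurs as a composition factor of $M_n$, so $Q_\nu$ can contain no $\ImS_n$-composition factor and must vanish.

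For the projective generator claim, observe that $M_n = \Zdot^{(1^n)}$ is a direct summand of $\bigoplus_\nu \Zdot^\nu$, whose endomorphism algebra is $S_{h,n}$ by Theorem~\ref{3.4c}. Since $S_{h,n}$ has $|X_+(h,n)| = |X_+(n)|$ non-isomorphic simple modules and $\bigoplus_\nu \Zdot^\nu$ is projective, its head contains $|X_+(n)|$ non-isomorphic simples. Corollary~\ref{CColorUB} supplies the matching upper bound on the number of simple $\ImS_n$-modules, proving (ii) and forcing every simple $\ImS_n$-module to appear in the head of $\bigoplus_\nu \Zdot^\nu$, completing (iii). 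The principal obstacle is the bootstrapping in the previous paragraph: Schubert's criterion delivers projectivity only over the quotient by $\Ann(\HCI^n_\nu \Yde_\nu)$ rather than over $\ImS_n$ directly, so one must simultaneously establish $\HCI^n_\nu \Yde_\nu = \Zdot^\nu$, and the apparent circularity is broken by the composition-factor analysis of $Q_\nu$ via the faithfulness of $M_n$.
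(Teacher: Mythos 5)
Your reduction steps are sound up to a point: the identification of the dimensions in (\ref{E341})--(\ref{E343}) via Lemma~\ref{L3.4f} does force every homomorphism $\HCI^n_\la\Ga_\la\to\Zdot^\mu$ to land in $\HCI^n_\mu\Yde_\mu$, and hence $\Hom_{R_{n\de}}(\HCI^n_\la\Ga_\la,Q_\nu)=0$ for all $\la$, so in particular $\Hom_{R_{n\de}}(\Mde_n,Q_\nu)=0$. The gap is in the very next inference, which is supposed to break your acknowledged circularity: from $\Hom_{R_{n\de}}(\Mde_n,Q_\nu)=0$ you conclude $Q_\nu=0$ by appealing to faithfulness of $\Mde_n$ over $\ImS_n$. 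Faithfulness only guarantees that every irreducible $\ImS_n$-module occurs as a \emph{composition factor} of $\Mde_n$; the vanishing of $\Hom(\Mde_n,Q_\nu)$ only rules out socle constituents of $Q_\nu$ that occur in the \emph{head} of $\Mde_n$. To pass from one to the other you would need every irreducible $\ImS_n$-module to appear in the head of $\Mde_n$, i.e.\ that $\Mde_n$ is a projective generator for $\ImS_n$ --- which is false in general (cf.\ Theorem~\ref{TChar0} and the discussion around it; this failure is precisely why the modules $\Zdot^\nu$ are introduced). Indeed, Lemma~\ref{3.3d}(ii) already tells you that no composition factor of $\Zdot^\nu/\Mde_n{\mathtt y}_\nu$ (hence of $Q_\nu$) occurs in the socle, equivalently head, of $\Mde_n$, so $\Hom(\Mde_n,Q_\nu)=0$ carries no new information and cannot detect whether $Q_\nu$ vanishes.

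The paper closes this gap with a counting argument that must come \emph{before} the conclusion $\Yde=\Zde$: working with the full sums $\Yde=\bigoplus_\nu\HCI^n_\nu\Yde_\nu\subseteq\Zde$ and $\Ga=\bigoplus_\nu\HCI^n_\nu\Ga_\nu$, one first shows $\End_{R_{n\de}}(\Yde)\cong S_{h,n}$ (using the dimension count together with the injectivity of the restriction $\End(\Zde)\to\End(\bigoplus_\nu\Mde_n{\mathtt y}_\nu)$ from Theorem~\ref{3.4c} and Proposition~\ref{3.4b}); Fitting's lemma then gives exactly $|X_+(n)|$ non-isomorphic irreducibles in the head of $\Yde$, and Corollary~\ref{CColorUB} caps the number of composition factors of $\Mde_n$ at $|X_+(n)|$, so \emph{every} composition factor of $\Mde_n$ (hence of $\Zde/\Yde$) lies in the head of the projective module $\Ga$; only then does $\Hom(\Ga,\Zde/\Yde)=0$ force $\Zde=\Yde$. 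Your final paragraph invokes essentially this count, but only after assuming projectivity of $\bigoplus_\nu\Zdot^\nu$, so the circle is not actually broken. A secondary issue: Schubert's criterion applied summand-by-summand only yields projectivity of $\HCI^n_\nu\Yde_\nu$ over $R_{n\de}/\Ann_{R_{n\de}}(\HCI^n_\nu\Yde_\nu)$, and even after proving $\HCI^n_\nu\Yde_\nu=\Zdot^\nu$ this annihilator can strictly contain $\Ann_{R_{n\de}}(\Mde_n)$ for a single $\nu$; you need to run the criterion on the whole direct sum, which contains $\Mde_n=\Zdot^{(1^n)}$ as a summand and therefore has the correct annihilator.
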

\begin{proof}
Fix some $h\geq n$ and set
\begin{align*}
\Zde&:=\textstyle\bigoplus_{\nu\in X(h,n)}\HCI_\nu^n\Zdot_\nu=\bigoplus_{\nu\in X(h,n)}\Zdot^\nu, 
\\ 
\Yde&:=\textstyle\bigoplus_{\nu\in X(h,n)}\HCI_\nu^n\Yde_\nu,
\\ 
\Ga&:=\textstyle\bigoplus_{\nu\in X(h,n)}\HCI_\nu^n\Ga_\nu.
\end{align*}
As $\Yde_\nu$ is a non-zero submodule of $\Zde_\nu$, it contains the simple socle $\Mde_\nu {\mathtt y}_\nu$ of $\Zde_\nu$ as a submodule, see Lemma~\ref{LSZ}(iii). 
Applying $\HCI_\nu^n$ to the inclusions $\Mde_\nu {\mathtt y}_\nu\subseteq \Yde_\nu\subseteq \Zde_\nu$, we see that
$
\Mde_n {\mathtt y}_\nu\subseteq \HCI_\nu^n\Yde_\nu\subseteq \HCI_\nu^n\Zde_\nu
$
as naturally embedded submodules of $\Mde_n$. So 
$$\textstyle\bigoplus_{\nu\in X(h,n)}\Mde_n {\mathtt y}_\nu\subseteq \Yde\subseteq \Zde.$$

Also observe that $\Yde$ is a quotient of $\Ga$, since each $\Yde_\nu$ is a quotient of $\Ga_\nu$.
By Lemmas~\ref{L3.4f} and~\ref{1.2a}, we have
\begin{align*}
\dim\Hom_{R_{n\de}}(\Yde,\Yde)=&\dim \Hom_{R_{n\de}}(\Ga,\Yde)=\dim \Hom_{R_{n\de}}(\Ga,Z)
\\=&\sum_{\la,\mu\in X(h,n)}|{}^\la\D^\mu_n|=\dim S_{h,n}.
\end{align*}

Since $\Ga$ is projective it contains the projective cover $P$ of $\Yde$ as a summand. Now the equality 
$\dim\Hom_{R_{n\de}}(\Yde,\Yde) = \dim\Hom_{R_{n\de}}(\Ga,\Yde)$ implies that  
$$\dim\Hom_{R_{n\de}}(\Yde,\Yde)=\dim\Hom_{R_{n\de}}(P,\Yde).$$ 
This verifies the condition in Lemma~\ref{LSchub}, so $\Yde$ is a projective $R_{n\de}/ \Ann_{R_{n\de}} (\Yde )$-module.

As $\Hom_{R_{n\de}}(\Ga, \Yde)$ and $\Hom_{R_{n\de}}(\Ga, \Zde)$ have the same dimension, every $R_{n\de}$-homomorphism from $\Ga$ to $\Zde$ has image lying in $\Yde$ . So since $\Yde$ is a quotient of $\Ga$, we can describe $\Yde$ alternatively as the subspace of $\Zde$ spanned by the images of all $R_{n\de}$-homomorphisms from $\Ga$ to $\Zde$. This description implies that $\Yde$ is stable under all $R_{n\de}$-endomorphisms of $\Zde$. So, restriction gives a well-defined map
$\End_{R_{n\de}}(\Zde) \to \End_{R_{n\de}}(\Yde ).
$
It is injective since we know from Theorem~\ref{3.4c} and Proposition~\ref{3.4b}  that the homomorphism 
$\End_{R_{n\de}}(\Zde) \to \End_{R_{n\de}}\left(\textstyle\bigoplus_{\nu\in X(h,n)} M_n{\mathtt y}_\nu\right)$ 
induced by restriction is injective. Since $\End_{R_{n\de}}(\Zde)\cong S_{h,n}$  and $\End_{R_{n\de}}(\Yde )$ has the same dimension as $S_{h,n}$, we deduce that $\End_{R_{n\de}}(\Yde )\cong S_{h,n}$.

For $h \geq n$, the number of irreducible representations of $S_{h,n}$ is equal to $|X_+(n)|$. Combining what we have already proved with Fitting's lemma \cite[1.4]{Land}, we deduce that $\Yde$ has the same amount of non-isomorphic irreducible modules appearing in its head. It follows in particular that $\Mde$ has at least $|X_+(n)|$ non-isomorphic composition factors. Since $n$ and $i$ are arbitrary, we can now apply Corollary~\ref{CColorUB} to conclude that $\Mde$ has exactly $|X_+(n)|$ non-isomorphic composition factors, and we have proved (ii). 

Since $\Yde$ is a direct sum of submodules of $\Mde_n$, 
the assumption on the number of composition factors 
now implies that every irreducible constituent of $\Mde_n$ appears in the head of $\Yde$. Hence every irreducible constituent of $\Mde_n$ appears in the head of the projective $R_{n\de}$-module $\Ga$. Now we know that every homomorphism from $\Ga$ to $\Zde$ has image lying in $\Yde$, while every composition factor of $\Zde/\Yde$ appears in the head of the projective module $\Ga$. This shows $\Zde = \Yde$, and we have proved (i). 

Further, observe that $\Mde_n\cong \Zdot^{(1^n)}$ is a summand of $\Yde = \Zde$, hence $\Ann_{R_{n\de}}(\Yde) = \Ann_{R_{n\de}}(\Mde)$.
In other words, $R_{n\de}/\Ann_{R_{n\de}}(\Yde) = \ImS_n$. We have already shown that $\Yde$ is a projective $R_{n\de}/\Ann_{R_{n\de}}(\Yde)$-module, which means that $\Zde$ and all its summands are projective $\ImS_n$-modules. Taking $h$ large enough, this shows in particular that $\HCI_\nu^n\Zde_\nu = \Zdot^\nu$ is projective for each $\nu\vDash n$.

Finally, to show that every irreducible $\ImS_n$-module appears in the head of $\Zde$, note that $\Mde_n$ is a faithful $\ImS_n$-module, and so every irreducible $\ImS_n$-module appears as some composition factor of $\Mde_n$, and we have seen that a copy of every composition factor of $\Mde_n$ does appear in the head of $\Yde=\Zde$.
\end{proof}

\begin{Corollary} \label{CHCIC}
The functor $\HCI_\nu^n$ sends $\ImS_\nu$-modules to $\ImS_n$-modules and the functor $\HCR_\nu^n$ sends $\ImS_n$-modules to $\ImS_\nu$-modules.
\end{Corollary}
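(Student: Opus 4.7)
The plan is to reduce both statements to the elementary observation that an $R_{n\de}$-module is an $\ImS_n$-module if and only if it is annihilated by $\Ann_{R_{n\de}}(M_n)$; hence the class of $\ImS_n$-modules (and similarly of $\ImS_\nu$-modules, using $\ImS_\nu \cong \ImS_{n_1} \otimes \cdots \otimes \ImS_{n_a}$ and $\Mde_\nu = \Mde_{n_1} \boxtimes \cdots \boxtimes \Mde_{n_a}$) is closed under submodules, quotients, and arbitrary direct sums with degree shifts.

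For $\HCI_\nu^n$, I will use that Theorem~\ref{3.4g}(iii), applied to each factor, produces a projective generator for $\ImS_\nu$ given by the direct sum over tuples $(\mu_1,\dots,\mu_a)$ with $\mu_b \in X(h,n_b)$ of the modules $\Zdot^{\mu_1} \boxtimes \cdots \boxtimes \Zdot^{\mu_a}$. Each such summand embeds into $\Mde_\nu$ since $\Zdot^{\mu_b} \subseteq \Mde_{n_b}$ for every $b$, so applying the exact functor $\HCI_\nu^n$ yields an embedding $\Zdot^{\mu_1} \circ \cdots \circ \Zdot^{\mu_a} \hookrightarrow \Mde_{n_1} \circ \cdots \circ \Mde_{n_a} = \Mde_n$, exhibiting $\HCI_\nu^n(\Zdot^{\mu_1} \boxtimes \cdots \boxtimes \Zdot^{\mu_a})$ as a submodule of $\Mde_n$ and hence as an $\ImS_n$-module. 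Any $\ImS_\nu$-module $V$ is a quotient of a direct sum of degree shifts of this generator, so by exactness of $\HCI_\nu^n$ the module $\HCI_\nu^n V$ is a quotient of a direct sum of $\ImS_n$-modules, and is therefore itself an $\ImS_n$-module.

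For $\HCR_\nu^n$, by Lemma~\ref{3.2f}(ii) the module $\HCR_\nu^n \Mde_n \cong \Mde_\nu \otimes_{\O\Si_\nu} \O\Si_n$ is, as an $R_{\nu,\de}$-module, a direct sum of copies of $\Mde_\nu$, and so is an $\ImS_\nu$-module. Given an $\ImS_n$-module $V$, Theorem~\ref{3.4g}(iii) writes $V$ as a quotient of a direct sum of degree shifts of $\bigoplus_{\mu \in X(h,n)} \Zdot^\mu \subseteq \Mde_n$; applying the exact functor $\HCR_\nu^n$ I obtain $\HCR_\nu^n V$ as a quotient of a direct sum of degree shifts of submodules of the $\ImS_\nu$-module $\HCR_\nu^n \Mde_n$, and hence as an $\ImS_\nu$-module.

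There is no serious obstacle here: the argument amounts to recognizing that Theorem~\ref{3.4g}(iii) supplies projective generators that embed naturally inside the ambient modules $\Mde_\nu$ and $\Mde_n$, and that both induction and restriction are exact and interact compatibly with these ambient modules---the former via transitivity of induction, and the latter via Lemma~\ref{3.2f}(ii).
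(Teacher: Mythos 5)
Your proposal is correct and follows essentially the same route as the paper: exactness reduces each statement to the projective generator from the parabolic analogue of Theorem~\ref{3.4g}(iii), which embeds into the appropriate tensor space, and then Lemma~\ref{3.2f} identifies $\HCI_\nu^n M_\nu$ with $M_n$ (respectively controls $\HCR_\nu^n M_n$); closure of $\ImS_n$-modules (resp.\ $\ImS_\nu$-modules) under submodules, quotients, and shifted direct sums finishes the argument. You spell out the ``proved similarly'' direction for $\HCR_\nu^n$ more explicitly than the paper does, but there is no difference in substance.
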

\begin{proof}
We prove the first statement, the second statement is proved similarly. 
Since $\HCI_\nu^n$ is exact, it suffices to check the first statement on projective $\ImS_\nu$-modules. In turn, since
according to the parabolic analogue of Theorem~\ref{3.4g}, every indecomposable projective $\ImS_\nu$-module is a submodule	of $M_\nu$, we just	need to check that $\HCI_\nu^n	M_\nu$ is	a $\ImS_n$-module. But this is clear since $\HCI_\nu^n	M_\nu\cong M_n$ by Lemma~\ref{3.2f}. 
\end{proof}

\chapter{Morita equaivalence}\label{SSIM}
In this chapter we begin to exploit the Morita context provided by Theorem~\ref{3.4g}. Throughout the chapter, except in \S\ref{SBaseChange}, we assume that $\O=F$. 

\section{Morita equivalence functors}
Let $h\geq n$, and $\Zde=\bigoplus_{\nu\in X(h,n)}\Zdot^\nu$. We always regard $\Zde$ as a $(\ImS_n,S_{h,n})$-bimodule, with $S_{h,n}$ acting as in Theorem~\ref{3.4c}. Define the functors
\begin{align*}
\al_{h,n}&: \mod{\ImS_n}\to\mod{S_{h,n}},\quad V\mapsto \Hom_{\ImS_n}(Z,V)
\\
\be_{h,n}&:\mod{S_{h,n}}\to \mod{\ImS_n},\quad W\mapsto Z\otimes_{S_{h,n}}W.
\end{align*}

\begin{Proposition}
The functors $\al_{h,n}$ and $\beta_{h,n}$ are mutually inverse equivalences of categories between $\mod{\ImS_n}$ and $\mod{S_{h,n}}$. 
\end{Proposition}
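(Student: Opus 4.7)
The plan is to invoke the standard Morita theorem: if $P$ is a finitely generated projective generator of $\mod{A}$ with $\End_A(P)^{\op} \cong B$, then $\Hom_A(P,?)$ and $P\otimes_B ?$ are mutually inverse equivalences between $\mod{A}$ and $\mod{B}$ (see e.g. \cite[Theorem 3.54(iv)]{CRI}, which was cited by the authors immediately after Theorem 4 in the introduction). Here we take $A=\ImS_n$, $P=Z$, and $B=S_{h,n}$.

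First I would verify the three hypotheses. Finite generation of $Z$ is immediate since $Z$ is finite dimensional. Projectivity of $Z$ as an $\ImS_n$-module, together with the fact that $Z$ is a generator for $\mod{\ImS_n}$, is exactly the content of Theorem~\ref{3.4g}(iii). The endomorphism algebra identification $\End_{\ImS_n}(Z)\cong S_{h,n}$ is Theorem~\ref{3.4c}; note that the isomorphism provided there is an algebra isomorphism to $S_{h,n}$, and one must check that the induced right $S_{h,n}$-module structure on $Z$ agrees with the one used in the definitions of $\al_{h,n}$ and $\be_{h,n}$ (this is built into the formulation of Theorem~\ref{3.4c} via the basis elements $\phi^u_{\mu,\la}$ acting by right multiplication on $\Mde_n$, so no sign/opposite subtlety arises beyond standard bookkeeping).

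With these ingredients in place, the conclusion follows from standard Morita theory: the adjoint pair
\begin{equation*}
\be_{h,n}=Z\otimes_{S_{h,n}}?\quad\dashv\quad\al_{h,n}=\Hom_{\ImS_n}(Z,?)
\end{equation*}
have unit and counit that are isomorphisms precisely because $Z$ is a projective generator with the stated endomorphism algebra. Concretely, the counit $\be_{h,n}\circ\al_{h,n}(V)\to V$ is an isomorphism for every $V\in\mod{\ImS_n}$ because $Z$ is a projective generator (so the functor $\al_{h,n}$ is exact, faithful, and reflects surjections), and the unit $W\to\al_{h,n}\circ\be_{h,n}(W)$ is an isomorphism because it holds tautologically for $W=S_{h,n}$ (using $\Hom_{\ImS_n}(Z,Z)\cong S_{h,n}$) and both functors commute with finite direct sums and cokernels, so the general case follows by taking a presentation $S_{h,n}^{a}\to S_{h,n}^{b}\to W\to 0$.

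There is no real obstacle here; this is bookkeeping on top of Theorems~\ref{3.4c} and~\ref{3.4g}(iii). The only point one should state carefully is that the equivalence preserves graded structure, since all modules here are graded: this is automatic because $Z$ is a graded $(\ImS_n,S_{h,n})$-bimodule, $S_{h,n}$ is concentrated in degree zero, and the endomorphism identification of Theorem~\ref{3.4c} uses degree-zero homogeneous maps only. Thus $\al_{h,n}$ and $\be_{h,n}$ descend to mutually quasi-inverse equivalences on the categories of graded modules, as required.
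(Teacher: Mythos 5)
Your proposal is correct and follows essentially the same route as the paper, which simply cites Theorem~\ref{3.4g}(iii) (that $Z$ is a projective generator) together with the endomorphism identification of Theorem~\ref{3.4c} and standard Morita theory. Your additional verification of the unit/counit isomorphisms and the graded bookkeeping is a more detailed write-up of the same argument, not a different one.
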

\begin{proof}
This follows from the fact that $Z$ is a projective generator for $\ImS_n$ proved in Theorem~\ref{3.4g}(iii). 
\end{proof}

Recall from Chapter~\ref{SPrel} that $S_{h,n}$ is a quasi-hereditary algebra with weight poset $X_+(h, n)$ partially ordered by the dominance order $\leq$. We can identify $X_+(h, n)$ with the set $X_+(n)$ of partitions of $n$, since $h\geq n$. 
Also, for $\la\in X_+(n)$, the algebra $S_{h,n}$ has the irreducible module $L_h(\la)$, the standard module $\De_h(\la)$  and the costandard module $\nabla_h(\la)$. 
For all $\la \in X_+(n)$, define the (graded) $\ImS_n$-modules: 
\begin{eqnarray}
\Lde(\la) &:= \be_{h,n}(L_h(\la)),
\label{3.5.3}
\\
\Dede(\la) &:= \be_{h,n}(\Delta_h(\la)),
\label{3.5.4}
\\
\nade(\la) &:= \be_{h,n}(\nabla_h(\la)).
\label{3.5.5}
\end{eqnarray}

Since $\beta_{h,n}$ is a Morita equivalence, the imaginary Schur algebra $\ImS_n$ is a quasi-hereditary algebra with weight poset $X_+(n)$ partially ordered by $\leq$. 
Moreover, 
$\{\Lde(\la)\}$, $\{\Dede(\la)\}$ and $\{\nade(\la)\}$ for all $\la\in X_+(n)$ give the irreducible, standard and costandard $\ImS_n$-modules. 
The following 
facts follow 
from  Morita equivalence. 

\begin{Lemma} \label{3.5b} 
Let $\la,\mu\in X_+(n)$. Then:
\begin{enumerate} 
\item[{\rm (i)}] $\Dede(\la)$ has simple head isomorphic to $\Lde(\la)$, and all other composition factors are of the form $\Lde(\nu)$ for $\nu<\la$.
\item[{\rm (ii)}]  $[\Dede(\la) : \Lde(\mu)] = [\De_h(\la) : L_h(\mu)]$. 
\end{enumerate}
\end{Lemma}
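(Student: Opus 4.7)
The plan is to deduce both parts directly from the fact that $\beta_{h,n}$ is a Morita equivalence between $\mod{S_{h,n}}$ and $\mod{\ImS_n}$, so that every module-theoretic invariant (simple quotients, composition series, multiplicities, submodule structure) transfers between the two sides. The assertions for $\Dede(\la)$ are thus transported from the corresponding well-known assertions for the standard module $\De_h(\la)$ over the classical Schur algebra.

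First I would recall that for the quasi-hereditary algebra $S_{h,n}$ with weight poset $(X_+(h,n),\leq)$, the standard module $\De_h(\la)$ has simple head isomorphic to $L_h(\la)$, and all other composition factors are of the form $L_h(\nu)$ with $\nu<\la$; this is a standard property of standard modules in a quasi-hereditary algebra and is part of the facts collected in Section~\ref{SSSchurRT}. Next, since $\beta_{h,n}$ is an equivalence of abelian categories, it is exact and sends irreducibles to irreducibles: by definition $\beta_{h,n}(L_h(\mu))=\Lde(\mu)$ for every $\mu\in X_+(n)$, and these exhaust the irreducible $\ImS_n$-modules up to isomorphism. In particular $\beta_{h,n}$ preserves heads and sends any composition series of $\De_h(\la)$ to a composition series of $\Dede(\la)=\beta_{h,n}(\De_h(\la))$ of the same length, with factors $\beta_{h,n}(L_h(\mu))=\Lde(\mu)$ appearing with the same multiplicity. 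Applying this to the composition data for $\De_h(\la)$ recalled above yields part~(i).

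For part~(ii), the same observation gives, term by term in any composition series,
\[
[\Dede(\la):\Lde(\mu)] \;=\; [\beta_{h,n}(\De_h(\la)):\beta_{h,n}(L_h(\mu))] \;=\; [\De_h(\la):L_h(\mu)],
\]
which is the required equality.

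There is no real obstacle here: the work has already been done in proving that $\beta_{h,n}$ is an equivalence of categories and in identifying $\Lde(\la)$ and $\Dede(\la)$ as the images of $L_h(\la)$ and $\De_h(\la)$ respectively. The only thing worth being mildly careful about is to note that in the graded setting one should verify that $\beta_{h,n}$ is a graded equivalence (which follows from the construction $\beta_{h,n}=Z\otimes_{S_{h,n}}?$, bearing in mind that $S_{h,n}$ is concentrated in degree zero and the $(\ImS_n,S_{h,n})$-bimodule structure on $Z$ is graded), so that graded composition multiplicities transfer as well; once this is noted, both (i) and (ii) are immediate consequences of the classical facts for $S_{h,n}$.
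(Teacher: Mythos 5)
Your proposal is correct and matches the paper's own justification: the paper presents Lemma~\ref{3.5b} as an immediate consequence of the Morita equivalence $\be_{h,n}$ together with the standard quasi-hereditary facts for $S_{h,n}$ recalled in Section~\ref{SSSchurRT}, which is precisely your argument. Your added remark about the equivalence being compatible with the grading is a reasonable point of care but does not change the substance.
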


We next explain why the definitions (\ref{3.5.3})-(\ref{3.5.5}) are independent of the choice of $h \geq n$. Take $h \geq l \geq n$ and, using Lemma~\ref{LTruncSchur}, identify $S_{l,n}$ with the subalgebra $eS_{h,n}e$ of $S_{h,n}$, where $e$ is the idempotent of (\ref{1.5.1}). 
Recall an equivalence of categories from Lemma~\ref{1.5a}:
$$\infl_{S_{l,n}}^{S_{h,n}}	: \mod{S_{l,n}} \to \mod{S_{h,n}}: M\mapsto S_{h,n}e \otimes_{eS_{h,n}e} M.
$$

\begin{Lemma} \label{3.5c}
The functors $\be_{h,n} \circ \infl_{S_{l,n}}^{S_{h,n}}$ and $\be_{l,n}$ from $\mod{S_{l,n}}$ to $\mod{\ImS_n}$ are isomorphic.
\end{Lemma}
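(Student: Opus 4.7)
The plan is to compute both functors explicitly and identify them via the natural isomorphism $Z_h e \cong Z_l$, where $Z_h := \bigoplus_{\nu\in X(h,n)}\Zdot^\nu$ is the projective generator used to define $\be_{h,n}$, and similarly for $Z_l$.

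First I would unravel definitions. Since $\infl_{S_{l,n}}^{S_{h,n}} M = S_{h,n}e \otimes_{S_{l,n}} M$ and $\be_{h,n}(N) = Z_h \otimes_{S_{h,n}} N$, associativity of the tensor product gives a functorial isomorphism
\begin{equation*}
\be_{h,n}\circ \infl_{S_{l,n}}^{S_{h,n}}(M) \;\cong\; Z_h \otimes_{S_{h,n}} S_{h,n} e \otimes_{S_{l,n}} M \;\cong\; (Z_h e) \otimes_{S_{l,n}} M,
\end{equation*}
so everything reduces to identifying $Z_h e$ with $Z_l$ as $(\ImS_n, S_{l,n})$-bimodules.

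Next I would analyse the right action of $e = \sum_{\mu\in X(l,n)} e(\mu)$ on $Z_h$. By Theorem~\ref{3.4c}, the basis element $\phi^1_{\mu,\mu}=e(\mu)$ of $S_{h,n}$ acts by zero on $Z^\nu$ for $\nu\neq \mu$, and on $Z^\mu$ by right multiplication by $s^1_{\mu,\mu} = 1$, i.e.\ as the identity. Hence right multiplication by $e$ is precisely the projection of $Z_h = \bigoplus_{\nu\in X(h,n)}\Zdot^\nu$ onto the subsum $\bigoplus_{\nu\in X(l,n)}\Zdot^\nu$ along the natural embedding $X(l,n)\hookrightarrow X(h,n)$. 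Thus, as a left $\ImS_n$-module, $Z_h e = Z_l$.

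Finally I would check that this identification is compatible with the right $S_{l,n}$-actions. On $Z_h e$ the right action of $eS_{h,n}e = S_{l,n}$ is by multiplication inside $S_{h,n}$; on $Z_l$ it is the one prescribed by Theorem~\ref{3.4c} applied to $S_{l,n}$. Both actions send the basis element $\phi^u_{\mu,\la}$ (with $\mu,\la\in X(l,n)$ and $u\in{}^\mu\D_n^\la$) to the endomorphism acting by zero on $Z^\nu$ for $\nu\neq\mu$ and by right multiplication on $M_n$ by $s^u_{\mu,\la}$ on $Z^\mu$. So the two right actions coincide and $Z_h e \cong Z_l$ as $(\ImS_n,S_{l,n})$-bimodules. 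Tensoring with $M$ over $S_{l,n}$ yields the desired functorial isomorphism $\be_{h,n}\circ\infl_{S_{l,n}}^{S_{h,n}} \cong \be_{l,n}$.

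There is no serious obstacle: the only thing that could go wrong is a subtle mismatch of the two right $S_{l,n}$-actions, but Lemma~\ref{LTruncSchur} ensures the algebra identification $S_{l,n}=eS_{h,n}e$ is exactly the one implicit in Theorem~\ref{3.4c}, so this check is routine.
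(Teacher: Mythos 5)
Your proposal is correct and follows essentially the same route as the paper: both arguments reduce to the associativity isomorphism $Z \otimes_{S_{h,n}} S_{h,n}e \otimes_{eS_{h,n}e} M \cong Ze \otimes_{eS_{h,n}e} M$ together with the identification of $Ze$ with $\bigoplus_{\nu\in X(l,n)}\Zdot^\nu$ as an $(\ImS_n,S_{l,n})$-bimodule. The paper simply asserts this last identification, whereas you verify it explicitly from the description of the action in Theorem~\ref{3.4c}; this is a harmless elaboration, not a different method.
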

\begin{proof}
The module $\bigoplus_{\la\in X(l,n)}\Zde^\la$ is precisely the $(\ImS_n,S_{l,n})$-subbimodule $Ze$ of $Z$. So $\be_{l,n}$ is isomorphic to $Ze\otimes_{eS_{h,n}e} ?$. Now we have the functorial isomorphisms 
$$
Z \otimes_{S_{h,n}} (S_{h,n}e \otimes_{eS_{h,n}e} M) \cong (Z \otimes_{S_{h,n}} S_{h,n}e) \otimes_{eS_{h,n}e} M \cong Ze \otimes_{eS_{h,n}e} M
$$ 
for any $M \in \mod{S_{l,n}}$. 
\end{proof}

By Lemmas~\ref{1.5a} and \ref{1.5b}, we have that $L_h(\la) \cong \infl^{S_{h,n}}_{S_{l,n}} L_l(\la)$. Hence Lemma~\ref{3.5c} yields $\beta_{l,n}(L_l(\la)) \cong\be_{h,n}(L_h(\la))$ as $\ImS_n$-modules. So, the definition (\ref{3.5.3}) is independent of the choice of $h\geq n$, and a similar argument gives independence of $h$ for (\ref{3.5.4}) and (\ref{3.5.5}).

In conclusion of this section, we make a small detour to mixed imaginary tensor spaces of \S\ref{SImRep}. For $\bn=(n_1,\dots,n_l)\in\Z_{\geq 0}^l$, the corresponding mixed tensor space is defined as 
$$M_\bn:=M_{n_1,1}\circ\dots\circ M_{n_l,l},$$ 
where $M_{n_i,i}$ is a colored space of color $i$ for each $i\in I'$, which can be considered as a module over the (color $i$) imaginary Schur algebra 
$\ImS_{n_i,i}:=R_{n_i\de}/\operatorname{Ann}_{R_{n_i\de}}(M_{n_i,i})$. 
Let $n=n_1+\dots+n_l$, and define the {\em mixed imaginary Schur algebra}
$$\ImS_\bn:=R_{n\de}/\operatorname{Ann}_{R_{n\de}}(M_\bn). 
$$
Moreover, for $h_i\geq n_i$ and $\nu\in X(n_i,h_i)$ we have defined modules
$Z^\nu_i$ (previously denoted for brevity $Z^\nu$ since we had $i\in I'$ fixed), and 
$$
Z(i,n_i)=\bigoplus_{\nu\in X(n_i,n_i)} Z^\nu_i\qquad(i\in I').
$$
We have functors $\al_{h_i,n_i,i}: = \Hom_{\ImS_{n_i,i}}(Z(n_i,i),?)$ and $\be_{h_i,n_i,i}=Z(i,n_i)\otimes_{S_{h_i,n_i}}?$. 
Set 
$$
Z_\bn=Z(1,n_1)\circ\dots\circ Z(l,n_l).
$$
The following result strengthens Theorem 2 from the Introduction. 

\begin{Proposition} \label{PT2'}
We have
\begin{enumerate}
\item[{\rm (i)}] $Z_\bn$ is a projective generator for $\ImS_\bn$. 
\item[{\rm (ii)}] $\End_{\ImS_\bn}(Z_\bn)\cong \End_{\ImS_{n_1,1}}(Z(n_1,1))\otimes\dots\otimes \End_{\ImS_{n_l,l}}(Z(n_l,l))\cong S_{\bh,\bn}:=
S_{h_1,n_1}\otimes\dots\otimes S_{h_l,n_l}$. 
\item[{\rm (iii)}] The functors $\al_{\bh,\bn}: = \Hom_{\ImS_\bn}(Z_\bn,?):\mod{\ImS_\bn}\to\mod{S_{\bh,\bn}}$ and $\be_{\bh,\bn}= Z_\bn\otimes_{S_{\bh,\bn}} ?:\mod{S_{\bh,\bn}}\to \mod{\ImS_\bn},$ are mutually inverse equivalences of categories between $\mod{\ImS_\bn}$ and $\mod{S_{\bh,\bn}}$. 
\item[{\rm (iv)}] There is a functorial isomorphism
$$
\be_{\bh,\bn}(W_1\otimes \dots\otimes W_l)\cong  \be_{h_1,n_1,1}(W_1)\circ\dots\circ  \be_{h_l,n_l,l}(W_l)
$$
for $W_1\in\mod{S_{h_1,n_1}},\dots,W_l\in\mod{S_{h_l,n_l}} $. 
\end{enumerate}
\end{Proposition}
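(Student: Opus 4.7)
The plan is to deduce all four parts from the one-color results (Theorems~\ref{3.4c}, \ref{3.4g}) by showing that the imaginary induction functor
\[
\HCI_\bn^n:\ \mod{(\ImS_{n_1,1}\otimes \dots \otimes \ImS_{n_l,l})}\ \to\ \mod{\ImS_\bn}
\]
is an equivalence of abelian categories sending $Z(1,n_1)\boxtimes\dots\boxtimes Z(l,n_l)$ to $Z_\bn$. The key technical input will be the following \emph{colored Mackey vanishing}: for any $x\in{}^\bn\D^\bn_n$ with $x\neq 1$ and any $V_i,V_i'\in\mod{\ImS_{n_i,i}}$,
\[
\Hom_{R_{\bn,\de}}\!\big(V_1\boxtimes\dots\boxtimes V_l,\ \HCI_{\bn\cap x\bn}^\bn\,{}^x\!\big(\HCR^\bn_{x^{-1}\bn\cap\bn}(V_1'\boxtimes\dots\boxtimes V_l')\big)\big)=0. \qquad(*)
\]
The justification will be a character argument: every composition factor of $V_i$ is imaginary of color $i$ by Theorem~\ref{TD1}, so all words appearing in $\CH V_i$ end in $i$, forcing a specific block-by-block color pattern in $\CH(V_1\boxtimes\dots\boxtimes V_l)$. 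A non-identity $x$ shuffles letters among blocks of distinct colors, producing a subfactor whose color pattern in at least one parabolic component is incompatible with that of $V_1\boxtimes\dots\boxtimes V_l$; this forces the Hom to vanish.

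Granted $(*)$, I would derive the first isomorphism in (ii) from Frobenius reciprocity and the Imaginary Mackey Theorem~\ref{TMackey}:
\[
\End_{\ImS_\bn}(Z_\bn)\cong\Hom_{R_{\bn,\de}}\!\big(\textstyle\boxtimes_i Z(i,n_i),\ \HCR_\bn^n\HCI_\bn^n(\boxtimes_i Z(i,n_i))\big)\cong\textstyle\bigotimes_i\End_{\ImS_{n_i,i}}(Z(i,n_i)),
\]
since only the $x=1$ term of the Mackey filtration survives by $(*)$. The second isomorphism in (ii) then follows from Theorem~\ref{3.4c} applied in each color. For (i), the same Mackey computation will show that $\HCI_\bn^n$ restricted to $\mod{(\ImS_{n_1,1}\otimes\dots\otimes\ImS_{n_l,l})}$ is fully faithful; it is moreover exact and induces a bijection on isomorphism classes of simples by Theorem~\ref{TD1}. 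An induction on composition length---using full faithfulness to transport extension classes via $\Ext^1$---will then show it is essentially surjective, hence an equivalence. Since $Z(1,n_1)\boxtimes\dots\boxtimes Z(l,n_l)$ is a projective generator of $\mod{(\ImS_{n_1,1}\otimes\dots\otimes\ImS_{n_l,l})}$ (as an outer tensor product of projective generators by Theorem~\ref{3.4g}(iii)), part (i) will follow.

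Part (iii) is then standard Morita theory from (i) and (ii). For (iv), since $\HCI_\bn^n=R_{n\de}\,1_{\bn,\de}\otimes_{R_{\bn,\de}}-$, associativity of tensor products gives a natural isomorphism
\[
\HCI_\bn^n\big(\textstyle\boxtimes_i Z(i,n_i)\big)\otimes_{S_{\bh,\bn}}(W_1\otimes\dots\otimes W_l) \cong \HCI_\bn^n\big(\textstyle\boxtimes_i(Z(i,n_i)\otimes_{S_{h_i,n_i}} W_i)\big),
\]
which upon unfolding the definitions of $\be_{\bh,\bn}$ and $\be_{h_i,n_i,i}$ yields the stated functorial isomorphism. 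The main obstacle will be the colored Mackey vanishing $(*)$: while intuitive from characters, its rigorous justification requires careful bookkeeping of how the twisting functor ${}^x$ and the restriction/induction interact with the color structure of words for arbitrary double coset representatives $x$, well beyond the easy block-permutation cases.
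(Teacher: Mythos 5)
Your overall architecture coincides with the paper's, which simply invokes Theorem~\ref{3.4c}, Theorem~\ref{3.4g}(iii), the Mackey Theorem, and transitivity of induction; your parts (ii), (iii) and (iv) are sound. In particular the vanishing claim $(*)$ is correct and really is the crux of (ii): to make it rigorous, observe that for $x\neq 1$ some component $j$ of the Mackey subquotient is an induction product containing a nonzero factor all of whose constituents have color $i\neq j$ (parabolic restrictions of a color-$i$ module are color $i$ in every slot, by Mackey applied to $M_{n_i,i}$), while by Lemma~\ref{LOp} the Hom out of $V_1\boxtimes\dots\boxtimes V_l$ transports to a Hom out of a parabolic restriction of $V_j$, which is color $j$ in every slot; since color is well defined the Hom vanishes, and only the $x=1$ (Bruhat-minimal, hence bottom) layer of the filtration survives.

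The genuine gap is in (i). An exact, fully faithful functor inducing a bijection on isomorphism classes of simples need not be essentially surjective: full faithfulness of an exact functor gives \emph{injectivity} of the induced maps on $\Ext^1$, not surjectivity, so the essential image need not be closed under extensions and your induction on composition length does not close. (The functor $\mod{F}\to\mod{F[x]/(x^2)}$ sending $V$ to $V$ with $x$ acting by zero is exact, fully faithful and bijective on simples, yet misses the regular module.) Consequently neither the claimed equivalence nor the projectivity of $Z_\bn$ over $\ImS_\bn$ is established. Note also that projectivity cannot be extracted by naive adjunction, because $\HCR^n_\bn$ does \emph{not} carry $\mod{\ImS_\bn}$ into $\mod{(\ImS_{n_1,1}\otimes\dots\otimes\ImS_{n_l,l})}$ --- the non-identity Mackey layers of $\HCR^n_\bn M_\bn$ permute the colors --- so $\Hom_{R_{\bn,\de}}\big(\boxtimes_i Z(i,n_i),\HCR^n_\bn(\,\cdot\,)\big)$ is not visibly exact. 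The intended route is the one the paper gestures at with its citation of Theorem~\ref{3.4g}(iii): rerun the dimension count of Lemma~\ref{L3.4f} and the Schubert-criterion argument of Theorem~\ref{3.4g} with Gelfand--Graev modules of the appropriate colors, thereby proving directly that $Z_\bn$ is projective and that every simple $\ImS_\bn$-module (classified by Theorem~\ref{TD1}) occurs in its head. Once (i) and (ii) are in place, your (iii) and (iv) go through as written.
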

\begin{proof}
Part (i) follows from Theorem~\ref{3.4g}(iii). Part (ii) follows from Theorem~\ref{3.4c} and Mackey Theorem. Part (iii) follows from part (i). Part (iv) follows from the definitions and transitivity of induction.
\end{proof}


\section{Induction and Morita equivalence}\label{SIndMorita}
In this section we prove a key result that our Morita equivalence `intertwines' imaginary induction and tensor products for usual Schur algebras. 
We fix an integer $h\geq n$, and a composition $\nu=(n_1,\dots, n_a)\vDash n$. 
Recall the Morita equivalence $\be_{h,n}:\mod{S_{h,n}}\to \mod{\ImS_n}$.

\begin{Theorem} \label{4.2a}
We have an isomorphism of functors from $\mod{S_{h,n_1}}\times\dots\times \mod{S_{h,n_a}}$ to 
$\mod{\ImS_n}$: 
\begin{align*}
\HCI_\nu^n (\be_{h,n_1}?\boxtimes\dots\boxtimes \be_{h,n_a} ?) \cong 
\be_{h,n}(?\otimes\dots\otimes  ?) .
\end{align*}
\end{Theorem}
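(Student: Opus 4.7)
The plan is to reduce the theorem to an isomorphism of bimodules that can then be verified after applying the Morita inverse $\al_{h,n}$.

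First I would unwind both functors. Writing $Z(h,m) := \bigoplus_{\mu \in X(h,m)} \Zdot^\mu$ so that $\be_{h,m}(V) = Z(h,m) \otimes_{S_{h,m}} V$, and setting $Z(h,\nu) := Z(h,n_1) \boxtimes \dots \boxtimes Z(h,n_a)$ and $S_{h,\nu} := S_{h,n_1} \otimes \dots \otimes S_{h,n_a}$, the LHS is functorially isomorphic to $\HCI_\nu^n(Z(h,\nu)) \otimes_{S_{h,\nu}} (W_1 \otimes \dots \otimes W_a)$, since $\HCI_\nu^n = R_{n\de}1_{\nu,\de} \otimes_{R_{\nu,\de}} -$ commutes with the right $S_{h,\nu}$-action on $Z(h,\nu)$. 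Using the algebra map $\Delta: S_{h,n} \to S_{h,\nu}$ from Section~\ref{SSSA} to pull back $W_1 \otimes \dots \otimes W_a$ to an $S_{h,n}$-module, the RHS becomes $(Z(h,n) \otimes_{S_{h,n}} S_{h,\nu}) \otimes_{S_{h,\nu}} (W_1 \otimes \dots \otimes W_a)$. Thus it suffices to construct a natural isomorphism of $(\ImS_n, S_{h,\nu})$-bimodules
\[ \Phi:\ \HCI_\nu^n(Z(h,\nu)) \iso Z(h,n) \otimes_{S_{h,n}} S_{h,\nu}. \]

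By Corollary~\ref{CHCIC} the LHS is an $\ImS_n$-module, and the RHS equals $\be_{h,n}(S_{h,\nu})$, so both are comparable after applying the Morita equivalence $\al_{h,n}$. Tautologically, $\al_{h,n}(Z(h,n) \otimes_{S_{h,n}} S_{h,\nu}) \cong S_{h,\nu}$ as $(S_{h,n}, S_{h,\nu})$-bimodules. For the LHS, by Frobenius reciprocity for the adjoint pair $(\HCI_\nu^n, \HCR_\nu^n)$,
\[ \al_{h,n}(\HCI_\nu^n Z(h,\nu)) = \Hom_{\ImS_n}(Z(h,n), \HCI_\nu^n Z(h,\nu)) \cong \Hom_{\ImS_\nu}(\HCR_\nu^n Z(h,n), Z(h,\nu)), \]
naturally as $(S_{h,n}, S_{h,\nu})$-bimodules: the left $S_{h,n}$-action comes from the right $S_{h,n}$-action on $Z(h,n)$ (which persists through $\HCR_\nu^n$), and the right $S_{h,\nu}$-action comes from $Z(h,\nu)$.

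\emph{The main obstacle} is identifying this last Hom space with $S_{h,\nu}$ as a bimodule. I would write $Z(h,n) = \bigoplus_{\la \in X(h,n)} \HCI_\la^n \Zde_\la$ and apply the Imaginary Mackey Theorem~\ref{TMackey} summand-by-summand to each $\HCR_\nu^n \HCI_\la^n \Zde_\la$, obtaining a filtration indexed by $x \in {}^\nu\D_n^\la$ with subquotients $\HCI_{\nu \cap x\la}^\nu\bigl({}^x(\HCR_{x^{-1}\nu \cap \la}^\la \Zde_\la)\bigr)$; by Lemma~\ref{L3.3c} the inner restriction simplifies to $\Zde_{x^{-1}\nu \cap \la}$ and the whole subquotient becomes, up to the twist by $x$, a divided-power module of the form $\Zdot^{\nu \cap x\la}$ over $\ImS_\nu$. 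Graded dimension counts paralleling Lemma~\ref{L3.4f} and the proof of Theorem~\ref{3.4g} then show that $\sum_{\la, x} \dim \Hom_{\ImS_\nu}(\Zdot^{\nu \cap x\la}, Z(h,\nu))$ equals $\dim S_{h,\nu}$, with individual terms matching the standard basis $\{\phi^u_{\mu,\la}\}$ of $S_{h,\nu}$ from Lemma~\ref{1.2a}. The remaining task is to verify that this identification respects the left $S_{h,n}$- and right $S_{h,\nu}$-actions, which is natural from the construction; the Strong Imaginary Mackey Conjecture~\ref{CMackey} would upgrade the filtration to a direct sum and make the bimodule comparison entirely canonical, but careful bookkeeping suffices without it.
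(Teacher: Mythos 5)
Your overall strategy (reduce to a bimodule isomorphism, then test it after applying the Morita inverse $\al_{h,n}$) is reasonable, but it differs from the paper's route and leaves the decisive step unresolved.

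Two concrete problems. First, the adjunction as written is the wrong way around: $\HCI_\nu^n$ is \emph{left} adjoint to $\HCR_\nu^n$, so $\Hom_{\ImS_n}(Z(h,n),\HCI_\nu^n Z(h,\nu))$ is not $\Hom_{\ImS_\nu}(\HCR_\nu^n Z(h,n),Z(h,\nu))$. You need the coinduction side, supplied by Lemma~\ref{LOp}, which forces the opposite composition $\nu^{\op}$ to appear: the identity you actually get is
\[
\Hom_{\ImS_n}(Z(h,n),\,\HCI_\nu^n Z(h,\nu)) \cong \Hom_{\ImS_{\nu^{\op}}}(\HCR_{\nu^{\op}}^n Z(h,n),\; Z(h,n_a)\boxtimes\cdots\boxtimes Z(h,n_1)).
\]
This is fixable (no grading shift arises because $(\de,\de)=0$), but as stated the step is false. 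Second, and more seriously, the final claim that the identification of this Hom space with $S_{h,\nu}$ ``respects the left $S_{h,n}$- and right $S_{h,\nu}$-actions, which is natural from the construction'' is exactly the content of the theorem, and you do not prove it. What needs to be checked is that the left $S_{h,n}$-action (precomposition with $\HCR$ of right multiplication on $Z(h,n)$) factors through the coproduct map $S_{h,n}\to S_{h,\nu}$ and then agrees with left multiplication in $S_{h,\nu}$. The Mackey/dimension count gives you the right graded dimension, but matching the bimodule structure element-by-element against the standard basis $\phi^u_{\mu,\la}$ is a delicate combinatorial task, and nothing in your sketch actually does it.

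The paper sidesteps this by a structural trick: instead of working with $Z(h,n_1)\boxtimes\cdots\boxtimes Z(h,n_a)$ (same $h$ in every factor), it fixes a composition $\chi=(h_1,\dots,h_a)\vDash h$ with $h_k\ge n_k$ and works with $Z^{\chi,\nu}:=Z(h_1,n_1)\boxtimes\cdots\boxtimes Z(h_a,n_a)$. Transitivity of induction then identifies $\HCI_\nu^n Z^{\chi,\nu}$ with the summand $Ze_\nu$ of $Z$, so the right action of $S_{\chi,\nu}\subset e_\nu S_{h,n}e_\nu$ is literally restriction of the right $S_{h,n}$-action --- no Mackey filtration or dimension count needed, and the bimodule comparison becomes essentially tautological (Claim~1 of the paper's proof). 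The Schur-algebra side is then handled by the classical statement (Lemma~\ref{1.5d}) that induction from the Levi $S_{\chi,n}$ is the tensor product of inflations. In short: your approach replaces a transitivity-of-induction identity by a Mackey dimension count, which gives the size but not the structure; the missing bimodule verification is the heart of the proof. (One small positive remark on your sketch: you do not need Conjecture~\ref{CMackey} to split the filtration, since all subquotients in the relevant Imaginary Mackey filtration are projective $\ImS_\nu$-modules; but this observation does not by itself yield the bimodule statement.)
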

\begin{proof}
Choose $\chi = (h_1,...,h_a) \vDash h$ with $h_k \geq n_k$ for $k=1,\dots,a$, and denote $S_{\chi,\nu}:=S_{h_1,n_1} \otimes\dots\otimes  S_{h_a,n_a}.$ 
Write $X(\chi,\nu)$ for the set of all compositions $\ga = (g_1,\dots,g_h) \in X(h,n)$ such that 
$$
\ga_1\hspace{-1mm}:= (g_1,\dots,g_{h_1}), \ga_2\hspace{-1mm}:= (g_{h_1+1},\dots,g_{h_1+h_2}),  \dots, 
 \ga_a\hspace{-1mm} := (g_{h_1+\dots+h_{a-1}+1},\dots,g_h)
$$ 
satisfy $\ga_k \in X(h_k,n_k)$  for each $k = 1,\dots,a$. 

Consider the set of triples:
$$
\Omega = \{ (\ga ,\de ,u) \mid \ga , \de \in X (\chi ,\nu),\ u \in {}^\ga \D_\nu^\de\} .
$$
For a triple $(\ga ,\de ,u)\in\Omega$, we have $\ga_k,\de_k \in X(h_k,n_k)$ for each $k = 1,\dots,a$, and $u = (u_1,\dots,u_a) \in \Si_\nu = \Si_{n_1} \times\dots\times\Si_{n_a}$  with each $u_k \in {}^{\ga_k}\D^{\de_k}_{n_k}$. So we have
the element
$$\bar\phi^u_{\ga,\de}:= \phi^{u_1}_{\ga_1,\de_1}\otimes\dots\otimes \phi^{u_a}_{\ga_a,\de_a}\in S_{\chi,\nu}.$$ 
Then $\{\bar\phi^u_{\ga,\de} \mid (\ga ,\de ,u)\in\Omega\}$ is a basis for $S_{\chi,\nu}$.

Recall the $(\ImS_n,S_{h,n})$-bimodule 
$\Zde=\textstyle \bigoplus_{\la\in X(h,n)}I_\la^n \Zde_\la= \bigoplus_{\la\in X(h,n)} \Zde^\la,$ and define also the $(\ImS_\nu,S_{\chi,\nu})$-bimodule
$$
\Zde^{\chi,\nu}:=\textstyle\bigoplus_{\la\in X(\chi,\nu)}I_\la^\nu \Zde_\la\cong\left(\bigoplus_{\la_1\in X(h_1,n_1)} \Zde^{\la_1}\right)\boxtimes\dots\boxtimes \left(\bigoplus_{\la_a\in X(h_a,n_a)} \Zde^{\la_a}\right).
$$
Then $\HCI_\nu^n \Zde^{\chi,\nu}$ is an $(\ImS_n,S_{\chi,\nu})$-bimodule in a natural way. Moreover, by transitivity of induction, we have $\HCI_\nu^n \Zde^{\chi,\nu}\cong \bigoplus_{\la\in X(\chi,\nu)} \Zde^\la$, 
so $\HCI_\nu^n \Zde^{\chi,\nu}$ can be identified with the summand $Ze_\nu$ of the $(\ImS_n,S_{h,n})$-bimodule $\Zde$, where $e_\nu$ is the idempotent
$$
e_\nu:= \sum_{\la\in X(\chi,\nu)}e(\la)\in S_{h,n}. 
$$

Identifying $e_\nu S_{h,n}e_\nu$ with $\End_{\ImS_n}(\Zde e_\nu)$, we obtain an algebra embedding of $S_{\chi,\nu}$ into $e_\nu S_{h,n}e_\nu$. By definition of the actions of $S_{\chi,\nu}$ and $e_\nu S_{h,n}e_\nu$ on $\Zde e_\nu$ and Lemma~\ref{3.2f}(i), this embedding maps the basis element $\bar\phi^u_{\ga,\de}\in S_{\chi,\nu}$ to $\phi^u_{\ga,\de}\in e_\nu S_{h,n}e_\nu$, for all $(\ga ,\de ,u)\in\Omega$. In other words:

\vspace{2 mm}
\noindent
{\bf Claim 1.} {\em 
Identifying $S_{\chi,\nu}$ with a subalgebra $e_\nu S_{h,n}e_\nu$ via the map $\bar\phi^u_{\ga,\de}\mapsto\phi^u_{\ga,\de}$, the $(\ImS_n,S_{\chi,\nu})$-bimodule $\HCI_\nu^n \Zde^{\chi,\nu}$ is isomorphic to $\Zde e_\nu$, regarding the latter as a $(\ImS_n,S_{\chi,\nu})$-bimodule by restricting the natural action of $e_\nu S_{h,n}e_\nu$ to $S_{\chi,\nu}$. 
}
\vspace{2mm}

Now let $S_{\chi,n}$ be the Levi subalgebra of $S_{h,n}$ as in (\ref{ELevi}). Then $e_\nu$ is the central idempotent of $S_{\chi,n}$ such that $e_\nu S_{\chi,n}e_\nu\cong S_{\chi,\nu}$. So in fact, the embedding of $S_{\chi,\nu}$ into $e_\nu S_{\chi,n}e_\nu$ from Claim 1 identifies $S_{\chi,\nu}$ with the summand $e_\nu S_{\chi,n}e_\nu$ of $S_{\chi,n}$. Making this identification, define the functor
$$
I = S_{h,n}e_\nu\otimes_{e_\nu S_{\chi,n}e_\nu} ?: \mod{S_{\chi,\nu}} \to \mod{S_{h,n}}.
$$
Using associativity of tensor product, the functor $I$ can be thought of as the composite of the natural inflation functor $S_{\chi,n}e_\nu\otimes_{e_\nu S_{\chi,n}e_\nu} ?: \mod{S_{\chi,\nu}} \to \mod{S_{\chi,n}}$ followed by ordinary induction 
$\ind^{S_{h,n}}_{S_{\chi,n}} : \mod{S_{\chi,n}} \to \mod{S_{h,n}}$ as defined in Section~\ref{SSIRSA}. In view of this, the following fact follows immediately from Lemma~\ref{1.5d}:

\vspace{2 mm}
\noindent
{\bf Claim 2.} {\em 
The following functors from $\mod{S_{h_1,n_1}}\times\dots\times\mod{S_{h_a,n_a}}$ to $\mod{S_{h,n}}$ are isomorphic:
\begin{equation*}
I(? \boxtimes\dots\boxtimes\,?) 
\qquad\text{and}\qquad
\infl_{S_{h_1,n_1}}^{S_{h,n_1}}?\otimes\dots\otimes \infl_{S_{h_a,n_a}}^{S_{h,n_a}}? .
\end{equation*}
}
\vspace{2mm}

Now note that from Claim 1 and associativity of tensor product we have  the natural isomorphisms
\begin{align*}
\HCI_\nu^n (\Zde^{\chi,\nu}\otimes_{S_{\chi,\nu}} N)
&\cong 
(\HCI_\nu^n \Zde^{\chi,\nu})\otimes_{S_{\chi,\nu}} N
\cong Ze_\nu\otimes_{S_{\chi,\nu}} N
\\
&\cong Ze_\nu\otimes_{e_\nu S_{\chi,n}e_\nu} N
\cong Z\otimes_{S_{h,n}}S_{h,n}e_\nu\otimes_{e_\nu S_{\chi,n}e_\nu} N,
\end{align*}
which is $\beta_{h,n}(I(N))$.
Thus we have proved:

\vspace{2 mm}
\noindent
{\bf Claim 3.} {\em 
The functors $\HCI_\nu^n\circ (\Zde^{\chi,\nu}\otimes_{S_{\chi,\nu}} ?)$ and $\beta_{h,n} \circ I$ from $\mod{S_{\chi,\nu}}$ to $\mod{\ImS_n}$ are isomorphic.
}
\vspace{2mm}

We have the isomorphism of functors from $\mod{S_{h_1,n_1}}\times\dots\times \mod{S_{h_a,n_a}}$ to $\mod{\ImS_\nu}$: 
$$
\beta_{h_1,n_1} ?\boxtimes\dots\boxtimes \beta_{h_a,n_a} ?\cong Z_{\chi,\nu}\otimes_{S_{\chi,\nu}}(?\boxtimes\dots\boxtimes \,?).
$$
So, in view of Claims 2 and 3, we have 

\vspace{2 mm}
\noindent
{\bf Claim 4.} {\em 
There is an isomorphism of functors 
$$
\HCI_\nu^n(\beta_{h_1,n_1} ?\boxtimes\dots\boxtimes \beta_{h_a,n_a} ?)\cong \be_{h,n}(\infl_{S_{h_1,n_1}}^{S_{h,n_1}}?\,\otimes\dots\otimes\, \infl_{S_{h_a,n_a}}^{S_{h,n_a}}?)
$$
from $\mod{S_{h_1,n_1}}\times\dots\times \mod{S_{h_a,n_a}}$ to $\mod{\ImS_n}$.
}
\vspace{2mm}

Finally, by Lemma~\ref{1.5a}, the functors 
$$\infl_{S_{h_k,n_k}}^{S_{h,n_k}}: \mod{S_{h_k,n_k}} \to \mod{S_{h,n_k}}\qquad(1\leq k\leq a)
$$ are equivalences of categories. By Lemma~\ref{3.5c}, the functors $\beta_{h,n_k}\circ \infl_{S_{h_k,n_k}}^{S_{h,n_k}}$ and $\beta_{h,n_k}$ are isomorphic. The theorem follows on combining these statements and Claim 4.
\end{proof}

As a first application, we get the commutativity of induction product on the category of imaginary representations:

\begin{Corollary} \label{CCircComm}
Let $M\in\mod{\ImS_m}$ and $N\in\mod{\ImS_n}$. Then $M\circ N\cong N\circ M$. 
\end{Corollary}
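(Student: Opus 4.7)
The plan is to reduce the claim to commutativity of tensor products of classical Schur algebra modules via Theorem~\ref{4.2a}. Fix $h \geq m+n$. Since $\be_{h,m}$ and $\be_{h,n}$ are Morita equivalences, we may write $M \cong \be_{h,m}(M')$ and $N \cong \be_{h,n}(N')$ for some $M' \in \mod{S_{h,m}}$ and $N' \in \mod{S_{h,n}}$.

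Applying Theorem~\ref{4.2a} to the composition $\nu = (m,n) \vDash m+n$ gives
$$
M \circ N \;=\; \HCI^{m+n}_{(m,n)}(\be_{h,m}(M') \boxtimes \be_{h,n}(N')) \;\cong\; \be_{h,m+n}(M' \otimes N'),
$$
and similarly applying it to $\nu^\op = (n,m)$ gives
$$
N \circ M \;\cong\; \be_{h,m+n}(N' \otimes M').
$$
So it suffices to produce an isomorphism $M' \otimes N' \cong N' \otimes M'$ of $S_{h,m+n}$-modules.

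This last step is standard: the tensor product structure on $S_{h,m+n}$-modules is induced by the coproduct-type algebra homomorphism $S_{h,m+n} \to S_{h,m} \otimes S_{h,n}$ recalled in Section~\ref{SSSchurRT}, and the corresponding tensor product of polynomial representations of $GL_h$ is well known to be symmetric via the coordinate swap map, which is $S_{h,m+n}$-equivariant. Concretely, the linear map $M' \otimes N' \to N' \otimes M',\ x \otimes y \mapsto y \otimes x$ intertwines the $S_{h,m+n}$-actions obtained from the two coproducts, because the coproduct of $S_{h,m+n}$ is cocommutative up to this swap. This yields the required isomorphism, and combining the three displayed isomorphisms above gives $M \circ N \cong N \circ M$.

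The only step requiring any care is the commutativity of $\otimes$ on Schur algebra modules; once one identifies $\mod{S_{h,k}}$ with the category of degree-$k$ polynomial representations of $GL_h$, this is immediate from the symmetry of the tensor category of $GL_h$-representations. No new computation is needed beyond Theorem~\ref{4.2a}.
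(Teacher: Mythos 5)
Your proof is correct and follows essentially the same route as the paper: write $M=\be_{h,m}(M')$, $N=\be_{h,n}(N')$, apply Theorem~\ref{4.2a} to the compositions $(m,n)$ and $(n,m)$, and invoke the symmetry $M'\otimes N'\cong N'\otimes M'$ of the tensor product of Schur algebra modules. The only difference is that you spell out why the swap map is $S_{h,m+n}$-equivariant, which the paper takes for granted.
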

\begin{proof}
For sufficiently large $h$ we have $M=\be_{h,m}(V)$, $N=\be_{h,n}(W)$, 
$M\circ N=\HCI^{m+n}_{(m,n)}(M\boxtimes N)$, and $N\circ M=\HCI^{m+n}_{(n,m)}(N\boxtimes M)$. Now the result follows from $V\otimes W\cong W\otimes V$ and the theorem. 
\end{proof} 

As a second application we establish a version of Steinberg Tensor Product Theorem. 
Let $p=\cha F>0$ and $\la\vdash n$. Considered $\la$ as an element of $X_+(n,n)$. Recall from Section~\ref{SSParComp}, that there exists a unique $p$-adic expansion 
$$
\la=\la(0)+p\la(1)+p^2\la(2)\dots
$$
such that the partitions $\la(0)\vdash m_0,\la(1)\vdash m_1,\la(2)\vdash m_2,\dots$ are all $p$-restricted. With this notation we have: 

\begin{Theorem} 
{\bf (Imaginary Steinberg Tensor Product Theorem)} 
Let 
$n_r:=p^rm_r$ for $r=0,1,2,\dots$, and consider the composition  $$\nu=(n_0,n_1,n_2,\dots)\vDash n.$$ 
Then 
$$
L(\la)=\HCI_{\nu}^n\big(L(\la(0))\boxtimes L(p\la(1))\boxtimes L(p^2\la(2))\boxtimes\dots\big).
$$ 
\end{Theorem}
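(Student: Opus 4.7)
The plan is to reduce this theorem directly to the classical Steinberg tensor product theorem (Lemma~\ref{1.3e}) via the Morita equivalence $\be_{h,n}$ and the compatibility of this equivalence with imaginary induction established in Theorem~\ref{4.2a}.

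First I would fix some $h \geq n$ so that $h \geq n_r$ for each $r$ as well, and unravel the definitions: by (\ref{3.5.3}), the irreducible $\ImS_n$-module $L(\la)$ is $\be_{h,n}(L_h(\la))$, and similarly $L(p^r\la(r)) = \be_{h,n_r}(L_h(p^r\la(r)))$. Next, I would apply classical Steinberg (Lemma~\ref{1.3e}) together with the remark in the text that $L_h(\mu)^{[r]} \simeq L_h(p^r\mu)$ to obtain the isomorphism of $S_{h,n}$-modules
$$L_h(\la) \simeq L_h(\la(0)) \otimes L_h(p\la(1)) \otimes L_h(p^2\la(2)) \otimes \cdots,$$
where the tensor product is taken via the map $S_{h,n} \to S_{h,n_0} \otimes S_{h,n_1} \otimes \cdots$ described in Section~\ref{SSIRSA}.

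Then I would apply $\be_{h,n}$ to both sides and invoke Theorem~\ref{4.2a} (with $a$ equal to the number of nonzero terms in the $p$-adic expansion, and the composition $\nu = (n_0,n_1,\ldots)$) to identify
$$\be_{h,n}\bigl(L_h(\la(0)) \otimes L_h(p\la(1)) \otimes \cdots\bigr) \cong \HCI_\nu^n\bigl(\be_{h,n_0}(L_h(\la(0))) \boxtimes \be_{h,n_1}(L_h(p\la(1))) \boxtimes \cdots\bigr).$$
The left-hand side is $L(\la)$ by step one, and each factor on the right-hand side is $L(p^r\la(r))$, yielding the desired isomorphism.

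There is essentially no serious obstacle, since all the substantive work has already been done: the Morita equivalence is $\be_{h,n}$ (Chapter~\ref{SSIM}) and the key compatibility with imaginary induction is Theorem~\ref{4.2a}. The only mild point to check is that the $p$-adic expansion really does produce a composition $\nu \vDash n$ (this follows from $\sum_r p^r m_r = |\la| = n$) and that each $n_r \leq n$ so that the uniform choice of $h \geq n$ is valid for all the modules simultaneously; this is ensured by Lemma~\ref{3.5c}, which guarantees the functors $\be_{h,n_r}$ are independent of the choice of $h \geq n_r$.
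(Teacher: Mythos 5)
Your proposal is correct and follows exactly the paper's route: the paper's proof is simply the one-line observation that the result "comes from Theorem~\ref{4.2a} and Lemma~\ref{1.3e}", i.e. classical Steinberg for $S_{h,n}$ transported through the Morita equivalence $\be_{h,n}$ and its compatibility with imaginary induction. Your additional checks (that $\nu\vDash n$ and that the choice of $h$ is uniform, via Lemma~\ref{3.5c}) are fine but routine.
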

\begin{proof}
This comes from Theorem~\ref{4.2a} and Lemma~\ref{1.3e}. 
\end{proof} 

As a third application, we prove that imaginary induction and restriction respect standard and costandard filtrations. 
A filtration $0=M_0 \subset M_1 \subset\dots\subset M_b =M$ 
of an $\ImS_n$-module $M$ is called {\em standard} (resp. {\em costandard})
if for each $k = 1, \dots , b$, the quotient $M_k /M_{k-1}$ is isomorphic to $\De(\la)$  (resp. $\nabla(\la)$) for some $\la\vdash n$ (depending on $k$). Similarly, a filtration $0=M_0 \subset M_1 \subset\dots\subset M_b =M$ 
of an $\ImS_{\nu}$-module $M$ is called {\em standard} (resp. {\em costandard})
if for each $k = 1, \dots , b$, the quotient $M_k /M_{k-1}$ is isomorphic to $\De(\la_1)\boxtimes\dots\boxtimes\De(\la_a)$  (resp. $\nabla(\la)(\la_1)\boxtimes\dots\boxtimes\nabla(\la_a)$) for some $\la_1\vdash n_1,\dots \la_a\vdash n_a$ (depending on $k$).

\begin{Theorem} \label{4.2f} 
We have
\begin{enumerate}
\item[{\rm (i)}] The functor $\HCI_\nu^n$	sends $\ImS_\nu$-modules with standard (resp. costandard) filtrations to $\ImS_n$-modules with standard (resp. costandard) filtrations.
\item[{\rm (ii)}] The functor $\HCR_\nu^n$	sends $\ImS_n$-modules with standard (resp. costandard) filtrations to $\ImS_\nu$-modules with standard (resp. costandard) filtrations. 
\end{enumerate}
\end{Theorem}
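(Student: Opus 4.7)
The plan is to reduce everything to facts about classical Schur algebras via the Morita equivalence of Section~4.1 and the key compatibility isomorphism of Theorem~\ref{4.2a}. Throughout, fix $h\geq n$ so that the equivalences $\al_{h,n},\be_{h,n}$ and their parabolic versions $\al_{h,\nu},\be_{h,\nu}$ (using $S_{h,\nu}:=S_{h,n_1}\otimes\dots\otimes S_{h,n_a}$) are available, and observe that under $\al_{h,\nu}$ a standard (resp.\ costandard) $\ImS_\nu$-module corresponds to an external tensor product of standards (resp.\ costandards), since standard modules over a tensor product algebra are tensor products of standards.

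For part (i), suppose $M\in\mod{\ImS_\nu}$ has a standard filtration. Then $\al_{h,\nu}(M)$ has a filtration with sections $\De_h(\la^{(i)}_1)\boxtimes\dots\boxtimes\De_h(\la^{(i)}_a)$. Applying the exact functor $\HCI_\nu^n$ and invoking Theorem~\ref{4.2a}, the induced module $\HCI_\nu^n M$ acquires a filtration with sections $\be_{h,n}(\De_h(\la^{(i)}_1)\otimes\dots\otimes\De_h(\la^{(i)}_a))$, where the tensor product is formed inside $\mod{S_{h,n}}$ via the coproduct $S_{h,n}\to S_{h,n_1}\otimes\dots\otimes S_{h,n_a}$. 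By Lemma~\ref{1.3c}, each such tensor product has a standard filtration over $S_{h,n}$, and applying the exact functor $\be_{h,n}$ and refining produces a standard filtration of $\HCI_\nu^n M$. The costandard case is identical, again by Lemma~\ref{1.3c}.

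For part (ii), I will first handle the costandard case by an Ext-adjointness argument. The functor $\HCI_\nu^n$ sends projectives to projectives: by Theorem~\ref{3.4g}(iii) (and its parabolic analogue) a projective generator of $\ImS_\nu$ is $Z^{h,\nu}:=\bigoplus_{\la\in X(h,\nu)}\Zdot^\la$, and by transitivity of induction $\HCI_\nu^n Z^{h,\nu}$ is a direct summand of $Z=\bigoplus_{\la\in X(h,n)}\Zdot^\la$, hence projective. Since $\HCI_\nu^n$ is moreover exact, we obtain the adjunction isomorphism
\[
\Ext^1_{\ImS_\nu}(N,\HCR_\nu^n M)\;\cong\;\Ext^1_{\ImS_n}(\HCI_\nu^n N,M).
\]
By the standard quasi-hereditary criterion, $\HCR_\nu^n M$ will have a costandard filtration provided this $\Ext^1$ vanishes whenever $N$ is a standard $\ImS_\nu$-module $\De(\mu_1)\boxtimes\dots\boxtimes\De(\mu_a)$. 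But for such $N$ the right-hand side vanishes by part (i): $\HCI_\nu^n N$ has a standard filtration while $M$ has a costandard filtration.

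Finally, the standard case of (ii) will follow from the costandard case by graded duality. The anti-involution $\tau$ and the functor $(?)^\circledast$ of \eqref{ECircledast} descend to $\ImS_n$-modules, since $M_n^\circledast\cong M_n$ by Lemma~\ref{LMSelfDual} and therefore $\Ann_{R_{n\de}}(M_n)$ is $\tau$-stable; and $(?)^\circledast$ commutes with $\HCR_\nu^n$, with no grading shift, because $(\de,\de)=0$. It remains to identify $\Dede(\la)^\circledast\cong\nade(\la)$: since $\Dede(\la)$ has head $L(\la)$ (Lemma~\ref{3.5b}) and $L(\la)^\circledast\cong L(\la)$ (Theorem~\ref{THeadIrr}(iii) applied to the irreducibles of $\ImS_n$), $\Dede(\la)^\circledast$ has simple socle $L(\la)$ and the same composition factor multiplicities as $\Dede(\la)$, hence as $\nade(\la)$, so $\Dede(\la)^\circledast\cong\nade(\la)$ by the usual characterization of costandards in a quasi-hereditary algebra. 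Now if $M$ has a standard filtration, $M^\circledast$ has a costandard filtration, so by the costandard case $\HCR_\nu^n(M^\circledast)\cong(\HCR_\nu^n M)^\circledast$ has a costandard filtration, and dualizing back gives a standard filtration on $\HCR_\nu^n M$. The main technical point to be careful about is the verification of these duality compatibilities (preservation under $\HCR_\nu^n$, and the identification $\Dede(\la)^\circledast\cong\nade(\la)$); everything else is a formal consequence of Theorem~\ref{4.2a} and Lemma~\ref{1.3c}.
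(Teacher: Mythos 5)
Your proof is correct and follows essentially the same route as the paper's: part (i) reduces to Lemma~\ref{1.3c} via Theorem~\ref{4.2a}, and part (ii) handles the costandard case by the cohomological criterion and an Ext-adjunction, with the standard case obtained by duality. The one minor divergence is in justifying the Ext-isomorphism: you argue that $\HCI_\nu^n$ is exact and preserves projectives (your explicit projective-generator argument is a bit more elaborate than needed, and the statement that $\HCI_\nu^n Z^{h,\nu}$ is literally a summand of $Z$ overlooks that concatenating compositions multiplies the number of parts, though Theorem~\ref{3.4g}(iii) still gives projectivity; the clean route is simply that $\HCI_\nu^n$ is left adjoint to the exact $\HCR_\nu^n$), whereas the paper argues dually that $\HCR_\nu^n$ preserves injectives and cites \cite[I.4.1(3)]{Jantzen}; both are valid. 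You also spell out in full the duality compatibilities ($\HCR_\nu^n$ commutes with $\circledast$, $\Dede(\la)^\circledast\cong\nade(\la)$) that the paper leaves implicit in the phrase ``follows by dualizing''—note that $\Dede(\la)^\circledast\cong\nade(\la)$ is already Lemma~\ref{LL(la)SelfDual}, so you could have simply cited it.
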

\begin{proof}
(i) It suffices to check that  
$\HCI_\nu^n(\De(\la_1)\boxtimes\dots\boxtimes\De(\la_a))$ 
has a  standard filtration, for arbitrary $\la_1\vdash n_1,\dots, \la_a\vdash n_a$. By  Theorem~\ref{4.2a},
$$
\HCI_\nu^n(\De(\la_1)\boxtimes\dots\boxtimes\De(\la_a))\cong \be_{h,n}(\De_h(\la_1)\otimes\dots\otimes\De_h(\la_a)). 
$$
So the result follows since $\De_h(\la_1)\otimes\dots\otimes\De_h(\la_a)$ has a standard filtration as an $S_{h,n}$-module by Lemma~\ref{1.3c}.
This proves (i) in the case of standard filtrations; the result for costandard 
filtrations is proved similarly. 

(ii) We prove (ii) in the case of costandard filtrations; the analogous result for standard 
filtrations follows by dualizing. Take $N \in\mod{\ImS_n}$ with a costandard filtration. Using the cohomological criterion for costandard filtrations \cite[A2.2(iii)]{Donkin}, we need to show that $\EXT^1_{\ImS_\nu}(M,\HCR_\nu^n N)=0$ for all $M \in\mod{\ImS_\nu}$ with a standard filtration. For such $M$, by (i) and the cohomological criterion for costandard filtrations, we have $\EXT^1_{\ImS_\nu}(\HCI_\nu^n M, N)=0$. So the result follows from the following:

\vspace{2 mm}
\noindent
{\bf Claim.} {\em 
For $M \in\mod{\ImS_\nu}$ and $N \in\mod{\ImS_n}$, we have 
$
\EXT^1_{\ImS_\nu}(M,\HCR_\nu^n N)\cong \EXT^1_{\ImS_\nu}(\HCI_\nu^n M, N).
$
}

To prove the claim, the adjoint functor property gives us an isomorphism
of functors $\Hom_{\ImS_\nu} (M, ?) \circ \HCR_\nu^n	\cong \Hom_{\ImS_n}	(\HCI_\nu^n M, ?).$ 
Since $\HCR_\nu^n$ is exact and sends injectives to injectives (being adjoint to the exact functor $\HCI_\nu^n$), an application of \cite[I.4.1(3)]{Jantzen} completes the proof of the claim. 
\end{proof}

\begin{Corollary} \label{4.2h} 
Let $n=(n_1,\dots,n_a)\vDash n$, $\la\vdash n$, and $\la_1 \vdash n_1, \dots , \la_a\vdash n_a$.  Then both of
\begin{enumerate}
\item[{\rm (i)}] the multiplicity of $\De(\la)$ in a standard filtration of $\HCI_\nu^n(\De(\la_1) \boxtimes\dots\boxtimes \De(\la_a))$,
\item[{\rm (ii)}] the multiplicity of $\De(\la_1) \boxtimes\dots\boxtimes \De(\la_a)$ in a standard filtration of $\HCR^n_\nu\De(\la)$ 
\end{enumerate}
are given by the Littlewood-Richardson rule.
\end{Corollary}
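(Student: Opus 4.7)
The plan is to reduce both statements to the classical Littlewood--Richardson rule for Schur algebras via the Morita equivalence $\be_{h,n}$ and Theorem~\ref{4.2a}.

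For (i), I would first invoke Theorem~\ref{4.2a} (applied with the inverse equivalence $\al_{h,n_k}$ to identify the arguments), together with the defining identities $\be_{h,n_k}(\De_h(\la_k))=\De(\la_k)$ from (\ref{3.5.4}), to obtain the isomorphism
$$
\HCI_\nu^n(\De(\la_1)\boxtimes\dots\boxtimes\De(\la_a))\cong \be_{h,n}(\De_h(\la_1)\otimes\dots\otimes\De_h(\la_a)).
$$
Since $\be_{h,n}$ is a Morita equivalence between quasi-hereditary algebras identifying standards with standards, the multiplicity of $\De(\la)$ in a standard filtration of the left-hand side equals the multiplicity of $\De_h(\la)$ in a standard filtration of the tensor product on the right. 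Such a standard filtration exists by Lemma~\ref{1.3c}, and the classical Schur algebra theory (via Schur--Weyl duality with the symmetric group, or directly from the character formulas) gives these multiplicities as the Littlewood--Richardson coefficients $c^\la_{\la_1,\dots,\la_a}$.

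For (ii), I would use the standard Ext-characterization of filtration multiplicities: if $N$ has a standard filtration in $\mod{\ImS_\nu}$, then for any multipartition $(\la_1,\dots,\la_a)$,
$$
[N:\De(\la_1)\boxtimes\dots\boxtimes\De(\la_a)]=\dim\Hom_{\ImS_\nu}\bigl(N,\nabla(\la_1)\boxtimes\dots\boxtimes\nabla(\la_a)\bigr),
$$
using that $\EXT^1$ vanishes between standards and costandards in the parabolic (tensor product) quasi-hereditary setting. Apply this with $N=\HCR^n_\nu\De(\la)$, which has a standard filtration by Theorem~\ref{4.2f}(ii). By Lemma~\ref{LOp} this Hom-space is isomorphic to $\Hom_{\ImS_n}\bigl(\De(\la),\HCI^n_{\nu^\op}(\nabla(\la_a)\boxtimes\dots\boxtimes\nabla(\la_1))\bigr)$, and Corollary~\ref{CCircComm} allows us to reorder the induction product so that we may replace the target by $\HCI^n_\nu(\nabla(\la_1)\boxtimes\dots\boxtimes\nabla(\la_a))$.

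Theorem~\ref{4.2a} (applied to costandards, using $\be_{h,n_k}(\nabla_h(\la_k))=\nabla(\la_k)$ from (\ref{3.5.5})) together with the Morita equivalence then yields
$$
\dim\Hom_{\ImS_n}\bigl(\De(\la),\HCI^n_\nu(\nabla(\la_1)\boxtimes\dots\boxtimes\nabla(\la_a))\bigr)
=\dim\Hom_{S_{h,n}}\bigl(\De_h(\la),\nabla_h(\la_1)\otimes\dots\otimes\nabla_h(\la_a)\bigr).
$$
By Lemma~\ref{1.3c} the tensor product on the right has a costandard filtration, so the Hom-dimension counts the multiplicity of $\nabla_h(\la)$ in such a filtration, which is the classical Littlewood--Richardson coefficient $c^\la_{\la_1,\dots,\la_a}$ (agreeing with part (i) in the Grothendieck group, as it must). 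The main subtlety to handle carefully is the bookkeeping of the opposite composition $\nu^\op$ introduced by the right adjoint in Lemma~\ref{LOp}; Corollary~\ref{CCircComm} is the essential tool that makes this harmless.
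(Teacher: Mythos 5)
Your proof is correct and takes essentially the same route as the paper's: part (i) is exactly the paper's argument (Theorem~\ref{4.2a} plus the classical fact about tensor-product multiplicities of standard modules over the Schur algebra), and part (ii) is a careful unpacking of the paper's one-line appeal to ``adjointness and the usual properties of standard and costandard filtrations.'' The only cosmetic difference is that in (ii) you work through the right adjoint of imaginary restriction (Lemma~\ref{LOp}, hence the $\nu^\op$ bookkeeping resolved by Corollary~\ref{CCircComm}) rather than deducing (ii) from (i) via the left adjunction $\Hom_{\ImS_n}(\HCI^n_\nu M,N)\cong\Hom_{\ImS_\nu}(M,\HCR^n_\nu N)$ applied with $N=\nabla(\la)$; both versions rest on the same $\Hom(\De,\nabla)$ pairing and are equally valid.
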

\begin{proof}
The modules in (i) and (ii) have standard filtrations by Theorem~\ref{4.2f}. Now (i) follows from Theorem~\ref{4.2a} and the classical fact about tensor product multiplicities over the Schur algebra, and (ii) follows from (i) and adjointness, together with the usual properties of standard and costandard filtrations.
\end{proof}

\section{Alternative definitions of standard modules}
Our goal now is to give two alternative definitions of the standard module $\Dede(\la)$  without reference to the Schur algebra and Morita equivalence. Recall from (\ref{1.3.3}) and (\ref{1.3.4}) the modules $Z^\nu(V_h)$ and $\La^\nu(V_h)$ for the classical Schur algebra $S_{h,n}$.

\begin{Lemma} \label{3.5d} 
For $\nu\vDash n$, we have $\Zde^\nu\cong \be_{h,n}(Z^\nu(V_h))$ and $\Lade^\nu \cong \beta_{h,n}(\La^\nu(V_h))$.
\end{Lemma}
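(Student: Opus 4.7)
The plan is to reduce both statements to the corresponding descriptions of $Z^\nu(V_h)$ and $\La^\nu(V_h)$ as explicit left ideals in $S_{h,n}$, and then compute $\be_{h,n}$ applied to these ideals by pulling the $S_{h,n}$-action through Theorem~\ref{3.4c}. By Lemma~\ref{1.3b}, we have isomorphisms of left $S_{h,n}$-modules $Z^\nu(V_h) \cong S_{h,n} e(\nu)$ and $\La^\nu(V_h) \cong S_{h,n} \kappa({\mathtt y}_\nu)$. Since $\be_{h,n} = Z \otimes_{S_{h,n}} ?$, we get functorial isomorphisms
$$\be_{h,n}(Z^\nu(V_h)) \cong Z \otimes_{S_{h,n}} S_{h,n} e(\nu) \cong Z\,e(\nu)$$
and
$$\be_{h,n}(\La^\nu(V_h)) \cong Z \otimes_{S_{h,n}} S_{h,n} \kappa({\mathtt y}_\nu) \cong Z\,\kappa({\mathtt y}_\nu).$$
It remains to identify these right ideals of $Z = \bigoplus_{\mu \in X(h,n)} \Zde^\mu$ with $\Zde^\nu$ and $\Lade^\nu$ respectively.

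For the first, recall $e(\nu) = \phi^1_{\nu,\nu}$. By Theorem~\ref{3.4c}, this element acts as zero on $\Zde^\mu$ for $\mu \neq \nu$, and on $\Zde^\nu$ by right multiplication in $\Mde_n$ by $s^1_{\nu,\nu} = \sum_{w \in \Si_\nu \cdot \Si_\nu \cap {}^\nu\D} \sgn(w)\,w = 1$, since the only minimal length $(\Si_\nu, \Si_\nu)$-double coset representative inside $\Si_\nu$ is the identity. Thus $e(\nu)$ acts as the projection onto the $\nu$-summand, yielding $Z\,e(\nu) = \Zde^\nu$ as required.

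For the second, expand $\kappa({\mathtt y}_\nu) = \sum_{w \in \Si_\nu} \sgn(w)\,\phi^w_{(1^n),(1^n)}$. By Theorem~\ref{3.4c}, each $\phi^w_{(1^n),(1^n)}$ annihilates every summand $\Zde^\mu$ with $\mu \neq (1^n)$, so $Z\,\kappa({\mathtt y}_\nu) = \Zde^{(1^n)}\,\kappa({\mathtt y}_\nu)$. But $\Zde^{(1^n)} = \HCI^n_{(1^n)} \Zde_1 = L_\de^{\circ n} = \Mde_n$. Moreover, for $w \in \Si_n = \Si_{(1^n)} w\,\Si_{(1^n)}$ the element $s^w_{(1^n),(1^n)}$ of (\ref{ESUMuLa}) is simply $\sgn(w)\,w$, so $\phi^w_{(1^n),(1^n)}$ acts on $\Mde_n$ via right multiplication by $\sgn(w)\,w$. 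Therefore $\kappa({\mathtt y}_\nu)$ acts on $\Mde_n$ by right multiplication by
$$\sum_{w \in \Si_\nu} \sgn(w)^2\,w = \sum_{w \in \Si_\nu} w = {\mathtt x}_\nu,$$
giving $Z\,\kappa({\mathtt y}_\nu) = \Mde_n\,{\mathtt x}_\nu = \Lade^\nu$.

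The argument is essentially formal bookkeeping once one has Theorem~\ref{3.4c} and Lemma~\ref{1.3b} in hand; the only point requiring attention is making sure one uses the sign-twisted basis $s^u_{\mu,\la}$ (Lemma~\ref{1.2c}) rather than $g^u_{\mu,\la}$ when tracking how $\kappa({\mathtt y}_\nu)$ acts on $Z$, since the two squared signs cancel to produce ${\mathtt x}_\nu$ rather than ${\mathtt y}_\nu$. This is the one place where a sign miscalculation could go wrong, but the cancellation is exactly what is needed to match the definition $\Lade^\nu = \Mde_n {\mathtt x}_\nu$.
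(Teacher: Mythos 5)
Your proof is correct and follows essentially the same route as the paper: both reduce to the left-ideal descriptions $Z^\nu(V_h)\cong S_{h,n}e(\nu)$ and $\La^\nu(V_h)\cong S_{h,n}\kappa({\mathtt y}_\nu)$ from Lemma~\ref{1.3b}, and then identify the image under $\be_{h,n}$ with $Ze(\nu)$ and $Z\kappa({\mathtt y}_\nu)$ using the explicit action from Theorem~\ref{3.4c}, including the same sign cancellation turning $\kappa({\mathtt y}_\nu)$ into right multiplication by ${\mathtt x}_\nu$ on $M_n$. The only difference is that the paper invokes Lemma~\ref{L3.1f} to justify $\be_{h,n}(J)\cong ZJ$ for a left ideal $J$, whereas you compute $Z\otimes_{S_{h,n}}J$ directly; for the non-idempotent generator $\kappa({\mathtt y}_\nu)$ this implicitly uses that $Z$ is projective (hence flat) as a right $S_{h,n}$-module, which holds by the Morita context but deserves a word.
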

\begin{proof}
By Lemma~\ref{1.3b}(i), we have $Z^\nu(V_h)\cong S_{h,n}e(\nu)$. So, by  Lemma~\ref{L3.1f}, we get  
$\beta_{h,n}(S_{h,n}e(\nu)) \cong \Zde e(\nu)$, which is precisely the summand $\Zde^\nu$ of $\Zde$ by the definition of the action  from Theorem~\ref{3.4c}. 

For the second statement, using the embedding $\kappa$ from Lemma~\ref{1.2b} and Lemma~\ref{1.3b}(ii), we have $\La^\nu(V_h)\cong S_{h,n}\kappa({\mathtt y}_\nu)$. So, by Lemma~\ref{L3.1f}, we get  
$\beta_{h,n}(\La^\nu(V_h)) \cong \Zde\kappa({\mathtt y}_\nu)$. By definition of  $\kappa$, together with Theorem~\ref{3.4c}, we have $\Zde\kappa({\mathtt y}_\nu)=M_n\sgn({\mathtt y}_\nu) =M_n {\mathtt x}_\nu =\Lade^\nu$, as required.
\end{proof}

In view of Lemma~\ref{LMSelfDual}, the antiautomorphism $\tau$ of $R_{n\de}$ factors through to give a (homogeneous) antiautomorphism 
$$\tau:\ImS_n\to \ImS_n,$$ 
which leads to the notion of contravariant duality $\circledast$ on $\mod{\ImS_n}$.  
We have a (not necessarily degree zero) isomorphism $L(\la)^\circledast\cong L(\la)$ for each $\la\in X_+(n)$, since this is true even	 as $R_{n\de}$-modules. We now prove a stronger result:

\begin{Lemma} \label{LL(la)SelfDual}
For all $\la\in X_+(n)$ we have $\Lde(\la)^\circledast \cong \Lde(\la)$ and $\Dede(\la)^\circledast \cong \nade(\la)$. 
\end{Lemma}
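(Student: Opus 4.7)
The plan is to show the Morita equivalence $\be_{h,n}$ intertwines the two contravariant dualities, producing a graded functorial isomorphism
\[
\be_{h,n}(W)^\circledast \;\cong\; \be_{h,n}(W^\tau) \qquad (W\in\mod{S_{h,n}}).
\]
Granted this, both statements of the lemma follow at once from the classical facts $L_h(\la)^\tau\cong L_h(\la)$ and $\De_h(\la)^\tau\cong\nabla_h(\la)$ recalled in Section~\ref{SSSchurRT}.

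I would deduce this functorial intertwiner from a graded bimodule isomorphism $Z^\circledast\cong {}^\tau Z$ of $(\ImS_n,S_{h,n})$-bimodules, where $Z=\bigoplus_{\nu\in X(h,n)}\Zdot^\nu$ is the Morita bimodule of Theorem~\ref{3.4c} and ${}^\tau Z$ denotes $Z$ with right $S_{h,n}$-action twisted by the anti-involution $\tau$ of $S_{h,n}$; the passage from such a bimodule identification to the functorial isomorphism is a standard Hom--tensor adjunction. The key input is Lemma~\ref{3.3d}(i), giving $(\Sdot^\nu)^\circledast\cong\Zdot^{\nu^\op}$ as graded $\ImS_n$-modules. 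Summing over $\nu\in X(h,n)$ and using that $\nu\mapsto\nu^\op$ permutes this set produces a graded left $\ImS_n$-module isomorphism $\bigl(\bigoplus_\nu\Sdot^\nu\bigr)^\circledast\cong Z$. From the description $\Sdot^\nu\cong\Mde_n\otimes_{F\Si_n}\SPerm^\nu$ in Lemma~\ref{3.3d}, the direct sum $\bigoplus_\nu\Sdot^\nu$ carries a natural graded right $S_{h,n}$-action induced from the identification $S_{h,n}\cong\End_{F\Si_n}\bigl(\bigoplus_\nu\SPerm^\nu\bigr)$ of Lemma~\ref{1.2c}, under which the basis element $\phi^u_{\mu,\la}$ acts by right multiplication by $s^u_{\mu,\la}$ of~(\ref{ESUMuLa}).

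The main obstacle is verifying that the right $S_{h,n}$-action on $Z$ inherited from this isomorphism with $\bigl(\bigoplus_\nu\Sdot^\nu\bigr)^\circledast$ coincides, after twisting by $\tau$, with the action of Theorem~\ref{3.4c}. Since $\tau(\phi^u_{\mu,\la})=\phi^{u^{-1}}_{\la,\mu}$, this ought to reduce to the statement that the KLR anti-involution $\tau$ of $R_{n\de}$ corresponds under the isomorphism $\End_{R_{n\de}}(\Mde_n)\cong F\Si_n$ of Theorem~\ref{TEndMn} to the standard anti-involution $w\mapsto w^{-1}$ of $F\Si_n$. I expect this compatibility to follow from the explicit formulas (\ref{ETauPsiC})--(\ref{ETau}) for the intertwiners $\tau_r$, together with the identities $\tau(\psi_r)=\psi_r$ and $\tau(y_k)=y_k$ from~(\ref{ECircledast}).
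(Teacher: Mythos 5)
Your route is genuinely different from the paper's. The paper's proof is a short character argument: since $M_n=\Zdot^{(1^n)}\cong\be_{h,n}(V_h^{\otimes n})$ and $V_h^{\otimes n}$ lives in degree zero on the Schur algebra side, every $L(\la)$, $\la\in X_+(n)$, occurs in $M_n$ with multiplicity $m_\la\in\Z_{>0}$ (a constant, not a Laurent polynomial); by Lemma~\ref{LMSelfDual} the character $\CH M_n=\sum_\la m_\la\,\CH L(\la)$ is bar-invariant, and linear independence of the $\CH L(\la)$ then forces each $\CH L(\la)$ to be bar-invariant, giving $L(\la)^\circledast\cong L(\la)$ with no shift. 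The statement for $\Dede(\la)$ follows from general quasi-hereditary theory once the grading of $L(\la)$ is pinned down. This avoids entirely the bimodule bookkeeping your plan requires.

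Your plan (the BDK-style intertwining of $\be_{h,n}$ with the two contravariant dualities) is viable in principle, but two of its steps are genuinely incomplete rather than routine. First, Lemma~\ref{3.3d}(i) gives $(\Sdot^\nu)^\circledast\cong\Zdot^{\nu^\op}$, so your identification $\bigl(\bigoplus_\nu\Sdot^\nu\bigr)^\circledast\cong Z$ reindexes the summands by $\nu\mapsto\nu^\op$. At the level of the right $S_{h,n}$-action this is not harmless: the dual of a map $\Sdot^\mu\to\Sdot^\la$ lands in $\Hom(\Zdot^{\la^\op},\Zdot^{\mu^\op})$, so what you obtain a priori is $Z^\circledast\cong{}^{\tau\circ\theta}Z$, where $\theta$ is the ``reversal'' automorphism of $S_{h,n}$ permuting the weight idempotents $e(\nu)\mapsto e(\nu^\op)$; you must additionally argue that twisting by $\theta$ fixes all isomorphism classes of $S_{h,n}$-modules (true, since weights of $S_{h,n}$-modules are permutation-invariant, but it needs saying). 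Second, the compatibility you only ``expect'' --- that the anti-automorphism of $F\Si_n=\End_{\ImS_n}(\Mde_n)$ induced by $\circledast$ is $w\mapsto w^{-1}$ --- is established nowhere in the paper. In the proof of Lemma~\ref{LSZ}(ii) the authors introduce exactly this anti-automorphism $\si$ and prove only the much weaker fact $\sgn\circ\si=\sgn$, precisely because the full identification requires showing each intertwiner $\tau_r$ is self-adjoint for the contravariant form on $\Mde_n$, a computation with the $\si_r=\psi_{w_r}$ that is not done (and is not needed for the paper's argument). Until these two points are settled, your proposal is a plan rather than a proof.
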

\begin{proof}
By Lemma~\ref{3.5d}, $M_n=Z^{(1^n)}\cong \be_{h,n}(V_h^{\otimes n})$. 
So the only (graded) composition factors of $M_n$ are of the form $L(\la)$ for $\la\in X_+(n)$ and each such $L(\la)$ occurs with some non-zero {\em graded}\, multiplicity $m_\la\in \Z_{>0}$. The formal characters of the modules $L(\la)$ are linearly independent. Hence, since $\CH M_n=\sum_{\la\in X_+(n)}m_\la\CH L(\la)$ is bar-invariant, by Lemma~\ref{LMSelfDual}, we conclude that each $\CH L(\la)$ is bar-invariant, which immediately implies that $L(\la)^\circledast\cong L(\la)$. It follows from a general theory of quasi-hereditary algebras that $\De(\la)^\circledast$ is the costandard module $\nabla(\la)$ up to a degree shift, and now the first statement of the lemma implies the second one.  
\end{proof}

Now we obtain the desired characterizations of $\Dede(\la)$. Recall the element $u_\la\in\Si_n$ from Lemma~\ref{1.1d}.

\begin{Theorem} \label{3.5e} 
Let $\la\vdash n$. Then: 
\begin{enumerate}
\item[{\rm (i)}] 
$\Hom_{\ImS_n}(\Zdot^{\la},\Ladot^{\la^\tr})\cong F$, and the image of any non-zero homomorphism in $\Hom_{\ImS_n}(\Zdot^{\la},\Ladot^{\la^\tr})$ is isomorphic to $\Dede(\la)$; 
\item[{\rm (ii)}] $\Dede(\la)$ is isomorphic to the submodule $\Zdot^{\la}u_{\la^\tr} {\mathtt x}_{\la^\tr}$ of $M_n$.
\end{enumerate}
\end{Theorem}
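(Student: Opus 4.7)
The plan is to transfer both parts to the classical Schur algebra through the Morita equivalence $\be_{h,n}\colon\mod{S_{h,n}}\iso\mod{\ImS_n}$ for any fixed $h\ge n$. By Lemma~\ref{3.5d} we have $\Zdot^\la\cong\be_{h,n}(Z^\la(V_h))$ and $\Ladot^{\la^\tr}\cong\be_{h,n}(\La^{\la^\tr}(V_h))$, so the equivalence yields
\[
\Hom_{\ImS_n}(\Zdot^\la,\Ladot^{\la^\tr})\;\cong\;\Hom_{S_{h,n}}(Z^\la(V_h),\La^{\la^\tr}(V_h)).
\]
Lemma~\ref{1.3d} then says the right hand side is one dimensional and the image of any nonzero map is isomorphic to $\De_h(\la)$. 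Applying $\be_{h,n}$ and recalling $\Dede(\la)=\be_{h,n}(\De_h(\la))$ from (\ref{3.5.4}) gives part~(i).

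For part~(ii), the strategy is to exhibit a specific nonzero element in the one-dimensional Hom space of part~(i) whose image is exactly $\Zdot^\la u_{\la^\tr}{\mathtt x}_{\la^\tr}$. The natural candidate is the right multiplication map
\[
\varphi\colon \Zdot^\la\to\Ladot^{\la^\tr},\qquad m\mapsto m\,u_{\la^\tr}{\mathtt x}_{\la^\tr}.
\]
Because $M_n$ carries a commuting right $\O\Si_n$-action by Theorem~\ref{TEndMn}, this is an $\ImS_n$-module homomorphism; and its image lies in $M_n{\mathtt x}_{\la^\tr}=\Ladot^{\la^\tr}$ by construction. Once $\varphi\neq 0$ is established, part~(i) forces $\Zdot^\la u_{\la^\tr}{\mathtt x}_{\la^\tr}=\im\varphi\cong\Dede(\la)$, which is the claim.

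To show $\varphi\neq 0$, I will evaluate it on $v_n{\mathtt y}_\la$, which lies in $\Zdot^\la$ since the submodule $M_n{\mathtt y}_\la\subseteq\Zdot^\la$ directly from the definition of $\Zdot^\la$ as the $\sgn$-invariants for $\Si_\la$. By Corollary~\ref{CVnDec}(ii), the top degree piece $(1_{\bi^n}M_n)_{nN}$ is a rank-one free right $\O\Si_n$-module on the generator $v_n$, so the map $\O\Si_n\to(1_{\bi^n}M_n)_{nN},\ h\mapsto v_n h$, is injective. Hence it suffices to verify that ${\mathtt y}_\la u_{\la^\tr}{\mathtt x}_{\la^\tr}\ne 0$ in $\O\Si_n$, and this is exactly Lemma~\ref{1.1d} applied with $\la$ replaced by $\la^\tr$ (using $(\la^\tr)^\tr=\la$), which identifies ${\mathtt y}_\la u_{\la^\tr}{\mathtt x}_{\la^\tr}$ as the generator of the free rank-one $\O$-module ${\mathtt y}_\la \O\Si_n{\mathtt x}_{\la^\tr}$.

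No serious obstacle is anticipated. The only delicate point is keeping the conjugation $\la\leftrightarrow\la^\tr$ consistent when invoking Lemma~\ref{1.1d}, since the roles of the row and column parabolic (signed) symmetrizers are interchanged relative to the statement of that lemma; the rest of the argument is formal Morita theory together with the explicit top-degree calculation on the cyclic generator $v_n$.
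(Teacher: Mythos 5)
Your proposal is correct and follows essentially the same route as the paper: part (i) is the identical Morita-equivalence argument via Lemma~\ref{3.5d} and Lemma~\ref{1.3d}, and part (ii) uses the same right-multiplication map by $u_{\la^\tr}{\mathtt x}_{\la^\tr}$ together with Lemma~\ref{1.1d} (with $\la$ replaced by $\la^\tr$) to see it is non-zero. The only cosmetic difference is that the paper deduces non-vanishing from the faithfulness of $M_n$ as an $F\Si_n$-module, whereas you evaluate on the explicit top-degree generator $v_n{\mathtt y}_\la$ via Corollary~\ref{CVnDec}(ii) --- these are the same underlying fact.
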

\begin{proof}
(i) follows from Lemma~\ref{3.5d}, the definition (\ref{3.5.4}), and Lemma~\ref{1.3d}, since $\be_{h,n}$ is an equivalence of categories.

(ii) Note that  $\Zdot^{\la}$ contains $M_n{\mathtt y}_{\la}$ as a submodule. Moreover, as $M_n$ is a faithful $F\Si_n$-module and ${\mathtt y}_{\la}u_{\la^\tr} {\mathtt x}_{\la^\tr}\neq 0$ by Lemma~\ref{1.1d}, we conclude that $M_n{\mathtt y}_{\la}u_{\la^\tr} {\mathtt x}_{\la^\tr}\neq 0$. Hence $\Zdot^{\la}u_{\la^\tr} {\mathtt x}_{\la^\tr}\neq 0$. Finally, observe that $\Zdot^{\la}u_{\la^\tr} {\mathtt x}_{\la^\tr}$ is both a homomorphic image of $\Zdot^{\la}$ and a submodule of $\Ladot^{\la^\tr}$. So the result follows from (i). 
\end{proof}

We will write $\tilde M$ for the {\em right}\, $\ImS_n$-module obtained from $M \in\mod{\ImS_n}$ by twisting the left action into a right action using the antiautomorphism $\tau$ of $\ImS_n$. In this way, we	obtain right $\ImS_n$-modules $\tilde\Lde(\la)$, $\tilde\Dede(\la)$, and $\tilde\nade(\la)$.

\begin{Theorem} \label{3.5g} 
We have 
\begin{enumerate}
\item[{\rm (i)}] $\ImS_n$ has a filtration as a $(\ImS_n,\ImS_n)$-bimodule with factors isomorphic  to $\Dede(\la) \otimes\tilde\Dede(\la)$, each appearing once for each $\la\vdash n$ and ordered in any way refining the dominance order on partitions so that factors  corresponding to most dominant $\la$ appear at the bottom of the filtration.

\item[{\rm (ii)}] $Z$ has a filtration as a $(\ImS_n,S_{h,n})$-bimodule with factors
$\Dede(\la) \otimes\tilde \De_h(\la)$ appearing once for each $\la\vdash n$ and ordered in any way refining the dominance order so that factors corresponding to most dominant $\la$ appear at the bottom of the filtration.
\end{enumerate}
\end{Theorem}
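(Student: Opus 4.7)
The plan is to deduce both parts from Lemma~\ref{1.2e}, the analogous bimodule filtration for the classical Schur algebra, by transferring through the Morita equivalence of Theorems~\ref{3.4c} and~\ref{3.4g}. The progenerator $Z$ is an $(\ImS_n, S_{h,n})$-bimodule that is projective as a left $\ImS_n$-module and as a right $S_{h,n}$-module, so both functors $Z \otimes_{S_{h,n}} ?$ and $? \otimes_{S_{h,n}} Z^*$ are exact, where $Z^* := \Hom_{\ImS_n}(Z, \ImS_n)$ is the dual $(S_{h,n}, \ImS_n)$-bimodule satisfying $Z \otimes_{S_{h,n}} Z^* \cong \ImS_n$ as $(\ImS_n, \ImS_n)$-bimodules.

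I will prove~(ii) first. Applying the exact functor $Z \otimes_{S_{h,n}} ?$ to the $(S_{h,n}, S_{h,n})$-bimodule filtration of $S_{h,n}$ from Lemma~\ref{1.2e} yields a filtration of $Z \otimes_{S_{h,n}} S_{h,n} \cong Z$ by $(\ImS_n, S_{h,n})$-subbimodules, preserving the dominance ordering. The subquotients are
$$ Z \otimes_{S_{h,n}} (\De_h(\la) \otimes \tilde \De_h(\la)) \;\cong\; \be_{h,n}(\De_h(\la)) \otimes \tilde\De_h(\la) \;=\; \Dede(\la) \otimes \tilde\De_h(\la) $$
by definition~(\ref{3.5.4}), with the left $\ImS_n$-action coming from $Z$ and the right $S_{h,n}$-action preserved on the second tensor factor.

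For~(i), I would apply the further exact functor $? \otimes_{S_{h,n}} Z^*$ to the filtration from~(ii); equivalently, apply $Z \otimes_{S_{h,n}} ? \otimes_{S_{h,n}} Z^*$ to the filtration of $S_{h,n}$ directly. Since $Z \otimes_{S_{h,n}} Z^* \cong \ImS_n$, this produces a filtration of $\ImS_n$ as an $(\ImS_n, \ImS_n)$-bimodule with subquotients $\Dede(\la) \otimes (\tilde \De_h(\la) \otimes_{S_{h,n}} Z^*)$, each occurring once in the prescribed order.

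The main obstacle is the identification of the right-hand tensor factor $\tilde \De_h(\la) \otimes_{S_{h,n}} Z^*$ with $\tilde\Dede(\la)$ as right $\ImS_n$-modules. The core issue is compatibility of the contravariant twist with Morita transfer: one must verify that the antiautomorphism $\tau$ on $\ImS_n$ (induced from~(\ref{ECircledast}) via Lemma~\ref{LMSelfDual}) matches the antiautomorphism $\tau$ on $S_{h,n}$ from Lemma~\ref{1.2c} under the identification $\ImS_n \cong \End_{S_{h,n}}(Z)$. This can be checked directly from the explicit description of the $S_{h,n}$-action in Theorem~\ref{3.4c}, noting that the basis element $\phi^u_{\mu,\la}$ acts via right multiplication by $s^u_{\mu,\la}$ and that the KLR antiautomorphism sends $s^u_{\mu,\la}$ to $s^{u^{-1}}_{\la,\mu}$ in $F\Si_n$; once this symmetry is established, a standard Morita argument identifies $\tilde \De_h(\la) \otimes_{S_{h,n}} Z^* \cong \widetilde{\be_{h,n}(\De_h(\la))} = \tilde\Dede(\la)$, completing the proof.
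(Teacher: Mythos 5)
Your part (ii) is exactly the paper's argument: apply the exact functor $Z\otimes_{S_{h,n}}?$ to the bimodule filtration of Lemma~\ref{1.2e} and identify the subquotients via $\be_{h,n}$. For part (i), however, you take a genuinely different route. The paper simply observes that $\ImS_n$ is quasi-hereditary with a duality $\tau$ fixing the simples (Morita equivalence plus Lemma~\ref{LL(la)SelfDual}) and invokes the general bimodule-filtration theorem for such algebras, exactly as in \cite[after (1.2e)]{BDK}; you instead transfer the $S_{h,n}$-bimodule filtration through the two-sided Morita functor $Z\otimes_{S_{h,n}}?\otimes_{S_{h,n}}Z^*$. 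Your route is valid, but the one delicate point is precisely the one you flag: identifying $\tilde\De_h(\la)\otimes_{S_{h,n}}Z^*$ with $\tilde\Dede(\la)$. Your proposed verification---matching the two antiautomorphisms $\tau$ by inspecting the action of $\phi^u_{\mu,\la}$ via $s^u_{\mu,\la}$---is not quite complete as stated, since the KLR anti-involution acts on $R_{n\de}$ rather than directly on $\End_{\ImS_n}(M_n)\cong F\Si_n$, and the induced anti-automorphism of $F\Si_n$ is only pinned down indirectly in the paper (cf.\ the proof of Lemma~\ref{LSZ}(ii)). The cleaner way to close this gap is the abstract one: the right Morita equivalence carries right standard modules to right standard modules, and for any quasi-hereditary algebra with an anti-automorphism fixing the isomorphism classes of simples, the right standard module is the $\tau$-twist of the left one; both hypotheses hold for $\ImS_n$ by Lemma~\ref{LL(la)SelfDual}. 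With that substitution your argument is complete; what it buys over the paper's is a uniform derivation of (i) and (ii) from the single classical input Lemma~\ref{1.2e}, at the cost of having to track the duality through the Morita context rather than quoting the general theorem for $\ImS_n$ directly.
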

\begin{proof}
(i) This follows from the general theory of quasi-hereditary algebras, as is explained for example after \cite[(1.2e)]{BDK}.

(ii) The functor $Z\otimes_{S_{h,n}} ?$ can be viewed as an exact functor from the category of $(S_{h,n},S_{h,n})$-bimodules to the category of $(\ImS_n,S_{h,n})$-bimodules. We have: 
$$Z\otimes_{S_{h,n}} (\Dede_h(\la) \otimes\tilde \De_h(\la))\cong (Z\otimes_{S_{h,n}} \Dede_h(\la))\otimes \tilde \De_h(\la)\cong \Dede(\la) \otimes\tilde \De_h(\la).$$ 
So applying $Z\otimes_{S_{h,n}} ?$ to the filtration of Lemma~\ref{1.2e} gives the result.
\end{proof}

\section{Base change}\label{SBaseChange}
Recall that $\O$ denotes the ground ring which is always assumed to be either $\Z$ or $F$.   
The algebras $S_{n,h}$, $\ImS_n$, and the modules $\Mde_n, \Zde^\la$, etc. are all defined over $\Z$, although in many results proved in the previous sections we have assumed that $\O$ is a field. We now work over $\O=\Z$, and study the base change from $\Z$ to $F$. 
Throughout this section, it will also be convenient to denote by $K$ a field of characteristic zero and use notation like $\ImS_{n,\Z}$, $\ImS_{n,F}$, $\ImS_{n,K}$, 
etc. to specify the ring over which the objects are considered.

\begin{Lemma} \label{LIntegral}
We have:
\begin{enumerate}
\item[{\rm (i)}] $\Mde_{n,\Z}$ is a $\Z$-free module of finite rank with $ \Mde_{n,\Z}\otimes_\Z F\cong \Mde_{n,F}$; 
\item[{\rm (ii)}] $\Zde_{n,\Z}$ is a $\Z$-free module of finite rank with $\Zde_{n,\Z}\otimes_\Z F\cong \Zde_{n,F}$. Moreover, $\Zde_{n,\Z}=\Yde_{n,\Z}:=R_{n\de}(\Z)\ga_n\Mde_{n,\Z}$. 
\end{enumerate}
\end{Lemma}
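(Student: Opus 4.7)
\emph{Proof plan.} For part (i), I will apply Lemma~\ref{LMBasis} with $\O = \Z$ to obtain the $\Z$-module direct sum decomposition $\Mde_{n,\Z} = \bigoplus_{w \in \D_{en}^{(e^n)}}\psi_w \otimes L_{\de,\Z}^{\boxtimes n}$. Since $L_{\de,\Z}$ is $\Z$-free of finite rank (as recorded just after Lemma~\ref{LEpsLDe}, via Proposition~\ref{PMinSL} for symmetric types and Chapter~\ref{ChLast} otherwise), so is $L_{\de,\Z}^{\boxtimes n}$ and hence so is $\Mde_{n,\Z}$. Tensoring this decomposition with $F$ and matching it against the corresponding decomposition of $\Mde_{n,F}$ from Lemma~\ref{LMBasis} yields $\Mde_{n,\Z}\otimes_\Z F \cong \Mde_{n,F}$.

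For the first assertions of (ii), I will realize $\Zde_{n,\Z}$ as the kernel of the $\Z$-linear map
$$\phi : \Mde_{n,\Z}\to \Mde_{n,\Z}^{n-1},\qquad m \mapsto (m s_r + m)_{r=1}^{n-1},$$
so that $\Zde_{n,\Z}$ is a submodule of the $\Z$-free module $\Mde_{n,\Z}$ of finite rank and is therefore itself $\Z$-free of finite rank (since $\Z$ is a PID). The image $\im\phi \subseteq \Mde_{n,\Z}^{n-1}$ is also $\Z$-free, so the short exact sequence $0 \to \Zde_{n,\Z}\to \Mde_{n,\Z}\to \im\phi \to 0$ splits as $\Z$-modules; tensoring with $F$ preserves exactness and gives $\Zde_{n,\Z}\otimes_\Z F \cong \ker(\phi\otimes_\Z F) = \Zde_{n,F}$.

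For the equality $\Zde_{n,\Z} = \Yde_{n,\Z}$, the inclusion $\supseteq$ follows by noting that the explicit computation at the end of the proof of Proposition~\ref{2.5e}(iii) --- showing that every element of the word space $1_\bj \Mde_n$ satisfies $v s_r = -v$ --- only uses the integrally defined elements $\psi$, $\si$, $\tau$ and the constant $c \in \{\pm 1\}$ from Lemma~\ref{Lcd}, and so goes through verbatim over $\Z$. Since $\ga_n \Mde_{n,\Z} \subseteq 1_\bj \Mde_{n,\Z}$ and $\Zde_{n,\Z}$ is stable under the commuting $R_{n\de}$-action, this gives $\Yde_{n,\Z} = R_{n\de,\Z}\ga_n \Mde_{n,\Z} \subseteq \Zde_{n,\Z}$. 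For the reverse inclusion, set $Q := \Zde_{n,\Z}/\Yde_{n,\Z}$, a finitely generated $\Z$-module. For any field $F$, the image of $\Yde_{n,\Z}\otimes_\Z F$ in $\Mde_{n,F}$ is the subset $R_{n\de,\Z}\ga_n \Mde_{n,\Z}$ regarded inside $\Mde_{n,F}$, which equals $R_{n\de,F}\ga_n \Mde_{n,F} = \Yde_{n,F}$; by Theorem~\ref{3.4g}(i) this is $\Zde_{n,F}$, which in turn equals $\Zde_{n,\Z}\otimes_\Z F$ by the previous paragraph. Thus the map $\Yde_{n,\Z}\otimes_\Z F \to \Zde_{n,\Z}\otimes_\Z F$ induced by inclusion is surjective, i.e.\ $Q\otimes_\Z F = 0$ for every field $F$, which forces $Q = 0$ by the structure theorem for finitely generated abelian groups. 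The main obstacle will be checking cleanly that the word-space calculation of Proposition~\ref{2.5e}(iii) and the Coxeter relations for $\tau_r$ used in Lemma~\ref{Lcd} genuinely survive the passage to $\Z$; once that is granted, the remainder is a routine base-change argument.
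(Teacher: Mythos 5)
Your part (i) and the overall architecture of part (ii) match the paper, but there is a genuine gap in the middle step of (ii). You claim that because $0\to \Zde_{n,\Z}\to \Mde_{n,\Z}\to \im\phi\to 0$ splits (with $\im\phi$ free over the PID $\Z$), tensoring with $F$ yields $\Zde_{n,\Z}\otimes_\Z F\cong\ker(\phi\otimes_\Z F)=\Zde_{n,F}$. The splitting only gives that $\Zde_{n,\Z}\otimes_\Z F\to \Mde_{n,F}$ is \emph{injective} with image inside $\Zde_{n,F}$; it does not give surjectivity onto $\ker(\phi\otimes_\Z F)$, because $\phi\otimes_\Z F$ factors as $\Mde_{n,F}\twoheadrightarrow \im\phi\otimes_\Z F\to \Mde_{n,\Z}^{n-1}\otimes_\Z F$ and the second map need not be injective (the cokernel of $\im\phi\hookrightarrow \Mde_{n,\Z}^{n-1}$ can have $p$-torsion). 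Taking isotypic vectors simply does not commute with reduction mod $p$ in general: for the trivial $\Z\Si_2$-lattice the integral sign-isotypic part is $0$ while over $\FF_2$ it is everything. So the displayed isomorphism cannot be had by pure base change; it genuinely needs input about this particular module.

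The paper supplies that input exactly where your third paragraph does: from Proposition~\ref{2.5e}(iii) one gets $\Yde_{n,\Z}\subseteq \Zde_{n,\Z}$ (your observation that this proposition is already stated over $\O\in\{\Z,F\}$ is correct), the image of $\Yde_{n,\Z}\otimes_\Z F$ in $\Mde_{n,F}$ is $\Yde_{n,F}$, and Theorem~\ref{3.4g}(i) gives $\Yde_{n,F}=\Zde_{n,F}$; chaining these with the injectivity you already have forces $\Zde_{n,\Z}\otimes_\Z F\iso \Zde_{n,F}$. So your proof is repairable by reordering: delete the unjustified identification in your second paragraph, keep only the injectivity statement there, and let the Gelfand--Graev comparison in your third paragraph do double duty, first establishing the base-change isomorphism for $\Zde_n$ and then (as you wrote) killing $Q=\Zde_{n,\Z}/\Yde_{n,\Z}$. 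With that rearrangement your argument coincides with the paper's, the only cosmetic difference being that the paper realizes $\Zde_{n,\Z}$ as the pure sublattice $\Zde_{n,K}\cap\Mde_{n,\Z}$ rather than as $\ker\phi$.
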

\begin{proof}
(i) comes from the Lemma~\ref{LMBasis}. 

(ii) By (i), $\Mde_{n,\Z}$ is a lattice in $\Mde_{n,K}$, and by definition, we have $\Zde_{n,\Z}=\Zde_{n,K}\cap \Mde_{n,\Z}$. So $\Zde_{n,\Z}$ is a lattice in $\Zde_{n,K}$, and also a direct summand of the $\Z$-module $\Mde_{n,\Z}$.  Hence the natural map 
$$i : \Zde_{n,\Z}\otimes_\Z F \to \Mde_{n,\Z}\otimes_\Z F=\Mde_{n,F}$$ is injective. Since the action of $\Si_n$ on $\Mde_n$ is compatible with base change, we have $\im\,i\subseteq\Zde_{n,F}$.

Recall the submodule $\Yde_n=R_{n\de}\ga_n\Mde_n$ from (\ref{EY}). Note that the natural (not necessarily injective) map from $\Yde_{n,\Z}\otimes_\Z F$ to $\Mde_{n,F}$ has image $\Yde_{n,F}$. Now by Propsosition~\ref{2.5e}(iii), $\Yde_{n,\Z} \subseteq \Zde_{n,\Z}$, so $\Yde_{n,F}\subseteq \im \,i$. 
By Theorem~\ref{3.4g}(i), $\Yde_{n,F}=\Zde_{n,F}$, so by the previous paragraph, the map $i:\Zde_{n,\Z}\otimes_\Z F\to \Zde_{n,F}$ is an isomorphism. 

Finally, the embedding $\Yde_{n,\Z}\to \Zde_{n,\Z}$ has to be an isomorphism, since  otherwise for some field $F$ the induced map $\Yde_{n,\Z}\otimes_\Z F\to \Zde_{n,\Z}\otimes_\Z F$ is not surjective, and so the composition $$\Yde_{n,\Z}\otimes_\Z F\to \Zde_{n,\Z}\otimes_\Z F\to \Zde_{n,F}=Y_{n,F}$$ is not surjective, giving a contradiction.  
\end{proof}

Since induction commutes with base change, we deduce: 

\begin{Corollary} \label{4.1b} 
Let $\nu\vDash n$. Then 
$\Zde^\nu_\Z$ is a $\Z$-free module of finite rank	with
$\Zde^\nu_\Z\otimes_\Z F\cong \Zde^\nu_F$.
\end{Corollary}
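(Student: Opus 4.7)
The plan is to combine Lemma~\ref{LIntegral}(ii) with the definition $\Zde^\nu \cong \HCI_\nu^n(\Zde_{n_1}\boxtimes\dots\boxtimes\Zde_{n_a})$ from (\ref{EUpperNu}), and reduce the claim to two standard facts: tensor products of finite-rank free $\Z$-modules remain finite-rank free, and the induction functor commutes with base change.

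First, I would observe that by Lemma~\ref{LIntegral}(ii), each individual $\Zde_{n_b,\Z}$ is $\Z$-free of finite rank with $\Zde_{n_b,\Z}\otimes_\Z F \cong \Zde_{n_b,F}$. Taking outer tensor products over $\Z$, the parabolic divided power
\[
\Zde_{\nu,\Z} := \Zde_{n_1,\Z}\boxtimes\dots\boxtimes\Zde_{n_a,\Z}
\]
is therefore $\Z$-free of finite rank as a $\Z$-module, and base change gives $\Zde_{\nu,\Z}\otimes_\Z F \cong \Zde_{\nu,F}$ as $R_{\nu,\de}(F)$-modules, since both the $R_{\nu,\de}$-action and the outer tensor product are compatible with ground ring extension.

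Next I would apply the induction functor. By definition,
\[
\Zde^\nu_\Z \;\cong\; R_{n\de}(\Z)\,1_{\nu,\de} \otimes_{R_{\nu,\de}(\Z)} \Zde_{\nu,\Z}.
\]
The key point is that $R_{n\de}(\Z)\,1_{\nu,\de}$ is a free right $R_{\nu,\de}(\Z)$-module of finite rank (this is the integral form of the parabolic induction statement following Theorem~\ref{TBasis}, cf.\ \cite[Proposition 2.16]{KL1}, and the basis is independent of the ground ring). Consequently $\Zde^\nu_\Z$ is a finite direct sum of copies of $\Zde_{\nu,\Z}$ as a $\Z$-module, hence $\Z$-free of finite rank.

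Finally, base change: since $R_{n\de}(\Z)\,1_{\nu,\de}\otimes_\Z F \cong R_{n\de}(F)\,1_{\nu,\de}$ and $R_{\nu,\de}(\Z)\otimes_\Z F \cong R_{\nu,\de}(F)$, associativity of tensor product yields
\[
\Zde^\nu_\Z \otimes_\Z F \;\cong\; R_{n\de}(F)\,1_{\nu,\de}\otimes_{R_{\nu,\de}(F)} (\Zde_{\nu,\Z}\otimes_\Z F) \;\cong\; \HCI_\nu^n(\Zde_{\nu,F}) \;\cong\; \Zde^\nu_F,
\]
as required. There is no real obstacle here; the only subtlety worth stating explicitly is the freeness of $R_{n\de}(\Z)\,1_{\nu,\de}$ as a right module over the parabolic subalgebra $R_{\nu,\de}(\Z)$, which is the ingredient that makes induction exact and compatible with base change simultaneously.
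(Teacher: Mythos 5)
Your proposal is correct and is essentially the paper's own argument: the paper deduces the corollary from Lemma~\ref{LIntegral}(ii) with the one-line remark that induction commutes with base change, and your write-up simply expands that remark, correctly identifying the key ingredient (freeness of $R_{n\de}(\Z)1_{\nu,\de}$ as a finite-rank right $R_{\nu,\de}(\Z)$-module) that makes both the freeness and the base-change isomorphism work.
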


Now we can define standard modules over $\Z$. For $\la\vdash n$, set 
$$\De_\Ze(\la) =\Zde^{\la}_\Ze u_{\la^\tr} {\mathtt x}_{\la^\tr}.$$
Compare this to Theorem~\ref{3.5e}(ii), in which we worked over a field.

\begin{Theorem} \label{4.1c} 
$\De_\Z(\la)$ is $\Z$-free of finite rank with $\De_\Z(\la)\otimes_\Z F\cong \De_F(\la)$. 
Moreover, the formal characters of $\De_\Ze(\la)$ and $\De_F(\la)$ are the same. 
\end{Theorem}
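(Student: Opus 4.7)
The plan is to realize $\De_\Z(\la)$ as the image of an explicit $\Z$-linear map, use flatness of a characteristic zero field over $\Z$ to identify its base change with $\De_K(\la)$ in characteristic zero, and then use the $(\ImS_n,S_{h,n})$-bimodule filtration of $Z$ to promote a character \emph{inequality} to an \emph{equality} in arbitrary characteristic.

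First I would observe that $\De_\Z(\la) = Z^\la_\Z u_{\la^\tr}{\mathtt x}_{\la^\tr}$ is the image of the $\Z$-linear map $\phi_\Z : Z^\la_\Z \to M_{n,\Z}$ given by right multiplication by $u_{\la^\tr}{\mathtt x}_{\la^\tr}$. Since $M_{n,\Z}$ is $\Z$-free of finite rank by Lemma~\ref{LIntegral}(i), its submodule $\De_\Z(\la)$ is too. For a characteristic zero field $K$, flatness of $K$ over $\Z$ makes the inclusion $\De_\Z(\la) \hookrightarrow M_{n,\Z}$ remain injective after $\otimes_\Z K$, and the image is
$$\De_\Z(\la) \otimes_\Z K \;=\; (Z^\la_\Z \otimes_\Z K)\, u_{\la^\tr}{\mathtt x}_{\la^\tr} \;=\; Z^\la_K\, u_{\la^\tr}{\mathtt x}_{\la^\tr} \;=\; \De_K(\la),$$
using Corollary~\ref{4.1b} and Theorem~\ref{3.5e}(ii). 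In particular $\CH \De_\Z(\la)$ (defined via graded $\Z$-ranks of word spaces) equals $\CH \De_K(\la)$, and since $\De_\Z(\la)$ is $\Z$-free one has $\CH(\De_\Z(\la) \otimes_\Z F) = \CH \De_K(\la)$ for every field $F$.

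For an arbitrary $F$, tensoring $\phi_\Z$ with $F$ and applying Theorem~\ref{3.5e}(ii) over $F$ produces a surjection $\De_\Z(\la) \otimes_\Z F \twoheadrightarrow \De_F(\la)$, which yields the coefficient-wise inequality $\CH \De_F(\la) \le \CH \De_K(\la)$ in $\A\cdot \langle I\rangle$. To upgrade it to equality, I would apply $\CH$ to the $(\ImS_n,S_{h,n})$-bimodule filtration of $Z_F$ from Theorem~\ref{3.5g}(ii), obtaining
$$\CH Z_F \;=\; \sum_{\la \vdash n} \CH \De_F(\la)\cdot d_\la^F, \qquad d_\la^F := \dim_F \De_{h,F}(\la).$$
Here $\CH Z_F = \sum_{\nu \in X(h,n)} \CH Z^\nu_F$ is independent of $F$ by Corollary~\ref{4.1b}, and $d_\la^F$ is independent of $F$ by the classical theory of Weyl modules for $S_{h,n}$. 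Subtracting the identity for $F$ from that for $K$ yields $\sum_\la \bigl(\CH \De_K(\la) - \CH \De_F(\la)\bigr)\, d_\la = 0$; every term has $\A$-coefficients in $\Z_{\ge 0}[q,q^{-1}]$ and every $d_\la$ is a positive integer, forcing $\CH \De_F(\la) = \CH \De_K(\la) = \CH(\De_\Z(\la)\otimes_\Z F)$ for each $\la$. The surjection $\De_\Z(\la) \otimes_\Z F \twoheadrightarrow \De_F(\la)$ is then a map between graded vector spaces of equal dimension in every word space, and so must be an isomorphism. The main obstacle is exactly this promotion from inequality to equality: a priori $\De_\Z(\la)$ could fail to be a pure $\Z$-submodule of $M_{n,\Z}$, admitting $p$-torsion in the cokernel and producing a strict drop $\CH \De_{\mathbb F_p}(\la) < \CH \De_K(\la)$; it is the global bookkeeping supplied by the bimodule filtration of Theorem~\ref{3.5g}(ii), together with the already-established integrality of the $Z^\nu$'s, that rules this out.
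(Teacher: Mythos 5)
Your proof is correct and takes essentially the same approach as the paper: establish a surjection $\De_\Z(\la)\otimes_\Z F \twoheadrightarrow \De_F(\la)$, then use the bimodule filtration of $Z$ from Theorem~\ref{3.5g}(ii) together with the $\Z$-integrality of the $Z^\nu$ (Corollary~\ref{4.1b}) to promote the resulting inequality to equality. The only cosmetic difference is that you apply $\CH$ to the filtration and run the comparison at the level of graded characters, whereas the paper compares total dimensions and observes afterward that the character statement follows; the underlying positivity/counting argument is identical.
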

\begin{proof}
By definition $\Zde^{\la}_\Ze$ is a submodule of $M_{n,\Z}$, and  $\De_\Z(\la)$ is a submodule of the torsion free $\Z$-module $M_{n,\Z}$, so $\De_\Z(\la)$ is torsion free. There is a natural map $\De_\Z(\la)\otimes_\Z K \to M_{n,K}$,  which is injective since  $K$ is flat over $\Z$. It is easy to check that the image of this map is precisely $\De_K(\la)$, proving that $\De_\Z(\la)$ is a $\Z$-lattice in $\De_K(\la)$. In particular, $\De_\Z(\la)$ has rank equal to $\dim \De_K(\la)$. 
By Theorem~\ref{3.5g}(ii) with $h = n$, we have  
$$
\sum_{\nu\in X(n,n)} \dim\Zde^\nu_K = \sum_{\la\vdash n}(\dim \De_K(\la))(\dim \De_{n,K}(\la)),
$$
where $\De_{n,K}(\la)$ denotes the 
standard module for the Schur algebra  $S_{n,n,K}$. In view of Corollary~\ref{4.1b},  
$\dim\Zde^\nu_K = \dim\Zde^\nu_F$, and it is well-known that the dimensions of standard modules for the Schur algebra do not depend on the ground field. So 
$$
\sum_{\nu\in X(n,n)} \dim\Zde^\nu_F = \sum_{\la\vdash n}(\dim \De_K(\la))(\dim \De_{n,F}(\la)).
$$

There is a natural map $i : \De_\Z(\la)\otimes_\Z F \to \Mde_{n,F}$ with image  $\De_F(\la)$ induced by the embedding $\De_\Z(\la) \to \Mde_{n,\Z}$. So $\dim \De_K(\la) \geq \dim \De_F(\la)$ . On the other hand, applying Theorem~\ref{3.5g}(ii) over $F$, we have that
$$
\sum_{\nu\in X(n,n)} \dim\Zde^\nu_F = \sum_{\la\vdash n}(\dim \De_F(\la))(\dim \De_{n,F}(\la)).
$$
Comparing with our previous expression,	we conclude that $\dim \De_F(\la)=\dim \De_K(\la)$  for all $\la\vdash n$. Hence  $i$ is injective. The result about the characters is now clear. 
\end{proof} 

We now show that the imaginary Schur algebra and its Morita equivalence with a classical Schur algebra 
are defined over $\Z$. 

\begin{Theorem} \label{4.1d}
We have:
\begin{enumerate}
\item[{\rm (i)}] $\ImS_{n,\Z}$ is $\Z$-free of finite rank with $\ImS_{n,\Z}\otimes_\Z F\cong \ImS_{n,F}$;
\item[{\rm (ii)}] $Z^\nu_\Z$ is a projective $\ImS_{n,\Z}$-module for each $\nu\vDash n$;
\item[{\rm (iii)}] $\End_{\ImS_{n,\Z}}\big(\bigoplus_{\nu\in X(n,h)} \Zde^\nu_\Z\big) \cong S_{h,n,\Z}$;
\item[{\rm (iv)}] $\bigoplus_{\nu\in X(n,h)} \Zde^\nu_\Z$ is a projective generator for $\ImS_{n,\Z}$, so $\ImS_{n,\Z}$ is Morita equivalent to $S_{h,n,\Z}$ for $h\geq n$.
\end{enumerate}
\end{Theorem}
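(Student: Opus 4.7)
The plan is to prove the four parts in the order (i), (iii), (ii), (iv), bootstrapping each from the field-case results of Chapters~\ref{ITTCA}--\ref{SSIM} together with the integrality results already established in Lemma~\ref{LIntegral}, Corollary~\ref{4.1b}, and Theorem~\ref{4.1c}. For (i), $\ImS_{n,\Z}$ is by definition the image of $R_{n\de,\Z}$ in $\End_\Z(\Mde_{n,\Z})$, and since $\Mde_{n,\Z}$ is $\Z$-free of finite rank by Lemma~\ref{LIntegral}(i), $\ImS_{n,\Z}$ sits inside the $\Z$-free module $\End_\Z(\Mde_{n,\Z})$ of finite rank and is therefore itself $\Z$-free of finite rank. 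The surjection $\ImS_{n,\Z}\otimes_\Z F\twoheadrightarrow \ImS_{n,F}$ (from right exactness of $-\otimes F$) is an isomorphism by a rank comparison: Theorem~\ref{3.5g}(i) gives $\dim_F \ImS_{n,F}=\sum_{\la\vdash n}(\dim_F \Dede_F(\la))^2$, which by Theorem~\ref{4.1c} is independent of $F$ and equals $\dim_\Q\ImS_{n,\Q}=\operatorname{rank}_\Z\ImS_{n,\Z}$.

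For (iii), set $E_\Z:=\End_{\ImS_{n,\Z}}\!\bigl(\bigoplus_{\nu\in X(h,n)}\Zde^\nu_\Z\bigr)$, which is $\Z$-free of finite rank as a $\Z$-submodule of $\End_\Z(\bigoplus_\nu \Zde^\nu_\Z)$. The integral formulas (\ref{EPhiULaMu}) and (\ref{ESUMuLa}), via the recipe of Theorem~\ref{3.4c}, define an algebra homomorphism $\Phi_\Z\colon S_{h,n,\Z}\to E_\Z$. Flatness of $\Q$ over $\Z$ together with Corollary~\ref{4.1b} yields $E_\Z\otimes_\Z\Q\cong \End_{\ImS_{n,\Q}}\!\bigl(\bigoplus_\nu \Zde^\nu_\Q\bigr)\cong S_{h,n,\Q}$ by Theorem~\ref{3.4c}, so $\Phi_\Z\otimes\Q$ is an isomorphism, whence $\Phi_\Z$ is injective and $\operatorname{rank}_\Z E_\Z=\operatorname{rank}_\Z S_{h,n,\Z}$. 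Let $C:=\operatorname{coker}\Phi_\Z$, a finitely generated $\Z$-module with $C\otimes\Q=0$, hence torsion. For each prime $p$, the composition
\[
S_{h,n,\FF_p}\xrightarrow{\Phi_\Z\otimes\FF_p} E_\Z\otimes\FF_p\longrightarrow \End_{\ImS_{n,\FF_p}}\!\bigl({\textstyle\bigoplus}_\nu \Zde^\nu_{\FF_p}\bigr)
\]
equals the isomorphism $\Phi_{\FF_p}$ of Theorem~\ref{3.4c}, so $\Phi_\Z\otimes\FF_p$ is an injection of $\FF_p$-spaces of equal dimension, hence surjective. Thus $C\otimes\FF_p=0$ for every prime $p$, which together with $C$ being finitely generated and torsion forces $C=0$.

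For (ii), I invoke Schubert's Criterion (Lemma~\ref{LSchub}), which is purely ring-theoretic and therefore valid over $\Z$, and adapt the argument of Theorem~\ref{TSchub}: any $R_{n\de,\Z}$-homomorphism from $P:=q^{nN}R_{n\de,\Z}1_{\bi^n}$ to $\Mde_{n,\Z}$ sends $1_{\bi^n}$ into the top-degree component of $1_{\bi^n}\Mde_{n,\Z}$, which by Proposition~\ref{PEndNew}(iv) and Corollary~\ref{CVnDec}(ii) is $\bigoplus_{w\in\Si_n}\Z\cdot v_nw$, and each $v_nw=\tau_w(v_n)$ is hit by the integral endomorphism $\tau_w$ from Theorem~\ref{TEndMn}. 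Hence $\Mde_{n,\Z}\cong \Zde^{(1^n)}_\Z$ is projective over $\ImS_{n,\Z}$. For arbitrary $\nu$, the isomorphism $\Phi_\Z$ of (iii) identifies the idempotent $e(\nu)\in S_{h,n,\Z}$ with an integral idempotent in $E_\Z$ projecting $\bigoplus_\mu \Zde^\mu_\Z$ onto the $\Z$-direct summand $\Zde^\nu_\Z$; it therefore suffices to show $\bigoplus_\mu \Zde^\mu_\Z$ is projective, which I do by rerunning the Schubert argument of Theorem~\ref{3.4g} over $\Z$ with projective cover $\bigoplus_\nu\HCI^n_\nu\Ga_{\nu,\Z}$, replacing the $F$-dimension count of Lemma~\ref{L3.4f} by the corresponding $\Z$-rank count obtained by base change to $\Q$ (permissible because all the $\Hom$-modules in play are $\Z$-free as submodules of $\Z$-free ambient $\Hom$-spaces).

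Finally, (iv) is an application of Morita theory: once (ii) and (iii) are in hand, $Z_\Z:=\bigoplus_{\nu\in X(h,n)}\Zde^\nu_\Z$ is a $\Z$-free projective $\ImS_{n,\Z}$-module with $\End_{\ImS_{n,\Z}}(Z_\Z)\cong S_{h,n,\Z}$, and it is a progenerator because the Morita-context map $Z_\Z\otimes_{S_{h,n,\Z}}\Hom_{\ImS_{n,\Z}}(Z_\Z,\ImS_{n,\Z})\to \ImS_{n,\Z}$ is a morphism of $\Z$-free finitely generated modules of matching rank whose $\Q$-reduction and every $\FF_p$-reduction are isomorphisms by Theorem~\ref{3.4g}(iii), hence is itself an isomorphism. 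The main obstacle I foresee is the careful bookkeeping in step (iii) to exclude hidden $p$-torsion in $\operatorname{coker}\Phi_\Z$, and the parallel $\Z$-rank adaptation of Lemma~\ref{L3.4f} in step (ii); both rest on the single clean principle that a finitely generated torsion $\Z$-module whose $\FF_p$-reduction vanishes for every prime $p$ must itself be zero, combined with the $\Z$-freeness of all $\Hom$-spaces involved.
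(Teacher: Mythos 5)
Your treatments of (i), (iii) and (iv) are essentially the paper's own arguments: (i) by embedding $\ImS_{n,\Z}$ in $\End_\Z(\Mde_{n,\Z})$ and comparing ranks via Theorems~\ref{3.5g}(i) and~\ref{4.1c}; (iii) by producing the integral span of the $\phi^u_{\mu,\la}$ inside $E_\Z$ and killing the torsion cokernel one prime at a time (the paper phrases this as $Q_\Z\otimes_\Z F=0$ for every $F$, which is your $C\otimes\FF_p=0$); (iv) by reducing the generator property to the field case. These are fine, modulo the small point in (iii) that you should justify, as the paper does via $\Zde^\nu_\Z=\Zde^\nu_K\cap \Mde_{n,\Z}$, why right multiplication by $s^u_{\mu,\la}\in\Z\Si_n$ carries $\Zde^\mu_\Z$ into $\Zde^\la_\Z$.

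Part (ii) is where you depart from the paper, and your route has a genuine gap. Rerunning the proof of Theorem~\ref{3.4g} over $\Z$ requires, at two separate points, the implication ``an injective map between $\Hom$-spaces of equal size is surjective'': once to verify the hypothesis of Schubert's Criterion (the inclusion $\Hom(\Yde_\Z,\Yde_\Z)\hookrightarrow\Hom(P,\Yde_\Z)$ must be onto) and once to conclude that every homomorphism $\Ga_\Z\to\Zde_\Z$ has image in $\Yde_\Z$. Over a field this follows from equality of dimensions; over $\Z$, equality of ranks of an inclusion of free $\Z$-modules only gives a finite-index sublattice, and the cokernel can have $p$-torsion precisely at the primes where the modular picture degenerates --- which is exactly the situation this theorem is meant to control. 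So ``base change to $\Q$'' cannot substitute for Lemma~\ref{L3.4f}; you would have to re-establish surjectivity after reduction mod every prime, at which point you are back to the field-case theorem anyway. (The invocation of a projective cover of $\Yde_\Z$ over the infinite-rank $\Z$-algebra $R_{n\de}(\Z)$ is a further unjustified step.) The paper's proof of (ii) is a one-liner that sidesteps all of this: by Corollary~\ref{4.1b}, $\Zde^\nu_\Z\otimes_\Z F\cong\Zde^\nu_F$ is projective over $\ImS_{n,F}$ for every field $F$ by Theorem~\ref{3.4g}, and since $\ImS_{n,\Z}$ is a $\Z$-order by (i) and $\Zde^\nu_\Z$ is a lattice, projectivity of all reductions lifts to projectivity over $\Z$ (the ``Universal Coefficients'' argument). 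You should replace your (ii) by this; only the projectivity of $\Mde_{n,\Z}=\Zde^{(1^n)}_\Z$ itself is available by your direct Schubert argument, since Theorem~\ref{TSchub} is already proved over $\O=\Z$.
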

\begin{proof}
(i) By definition, $\ImS_{n,\Z}$ is the $\Z$-submodule of $\End_\Z(M_{n,\Z})$ spanned by the images of the $\Z$-basis elements of $R_{n\de,\Z}$ which are of the form
$\psi_w y_1^{b_1} \dots y_d^{b_d} 1_\bi$. Since the degree of each $y_r$ is $2$, all but finitely many such elements act as zero, so $\ImS_{n,\Z}$ is finitely generated over $\Z$, whence $\ImS_{n,\Z}$ is a lattice in $\ImS_{n,K}$.

The natural inclusion
$\ImS_{n,\Z}\into \End_\Z(\Mde_{n,\Z})$ yields a map $$\ImS_{n,\Z}\otimes_\Z F\to \End_\Z(\Mde_{n,\Z})\otimes F\cong\End_F(\Mde_n(F))$$ whose image is $\ImS_{n,F}$. This map is injective since $\dim \ImS_{n,K} = \dim \ImS_{n,F}$ by  Theorems~\ref{3.5g}(i) and \ref{4.1c}.
 
 (ii) By Corollary~\ref{4.1b}, we have $\Zde^\nu_\Z\otimes_\Z F\cong \Zde^\nu_F$, which is a projective $\ImS_{n,F}$-module by Theorem~\ref{3.4g}. Therefore, in view of the Universal Coefficients Theorem, $\Zde^\nu_\Z$ is a projective $\ImS_{n,\Z}$-module. 
 
(iii) Denote $$E_\O:= \End_{\ImS_{n,\O}}\big( \bigoplus_{\nu\in X(n,h)} \Zde^\nu_\O\big).$$ By Theorem~\ref{3.4c}, we have $E_F \cong S_{h,n,F}$. Moreover, $E_\Z$ is an $\O$-lattice in $E_K$, and there is a natural embedding $E_\Z\otimes_\Z F \to E_F$, cf. \cite[Lemma~14.5]{Land}. The last embedding is an isomorphism by dimension. 
So we can identify $E_\Z\otimes K$ with $E_K$ and $E_\Z\otimes_\Z F$ with $E_F$. 

Now, the basis element $\phi_{\mu,\la}^u$ of $E_K\cong S_{h,n,K}$ acts as zero on all summands except $\Zde^\mu_K$, on which it is induced by the right multiplication by $s_{\mu,\la}^u$. By definition,  $\Zde^\nu_\Z= \Zde^\nu_K\cap M_{n,\Z}$. Also, $s_{\mu,\la}^u\in \Z\Si_n$, therefore $s_{\mu,\la}^u$ stabilizes $M_{n,\Z}$. Hence $\Zde^\mu_\Z s_{\mu,\la}\subseteq \Zde^\la_\Z$, so each $\phi^u_{\mu,\la} \in E_K$ restricts to give a well-defined element of $E_\Z$. We have constructed a isomorphic copy $S_\Z$ of $S_{h,n,\Z}$ in $E_\Z$, namely, the $\Z$-span of the standard basis elements $\phi^u_{\mu,\la}\in  S_{h,n,K}$. 

It remains to show that $S_\Z=E_\Z$. We have a short exact sequence of $\Z$-modules: 
$$
0\to S_\Z\to E_\Z\to Q_\Z\to 0,
$$ and we need to prove that $Q_\Z=0$, for which it suffices to prove that $Q_\Z\otimes_\Z F=0$. Tensoring with $F$, we have an exact sequence
$$
S_\Z\otimes_\Z F\stackrel{i}{\to} E_F\to Q_\Z\otimes_\Z F\to 0.
$$
The map $i$ sends $1\otimes \phi^u_{\mu,\la}$ to the corresponding endomorphism $\phi^u_{\mu,\la}$ defined as in Theorem~\ref{3.4c}. Hence, $i$ is injective, so $i$ is an isomorphism by dimensions, and $Q_\Z\otimes_\Z F= 0$, as required. 

(iv) By (ii), $\bigoplus_{\nu\in X(n,h)} \Zde^\nu_\Z$ is a projective $\ImS_{n,\Z}$-module. For $h\geq n$, it is a projective generator, because this is so on tensoring with $F$, using (i) and Theorem~\ref{3.4g}. 
\end{proof}

\section{Ringel duality and double centralizer properties}

Let $S$ be a quasi-hereditary algebra with weight poset $(\La_+,\leq)$ and standard modules $\De(\la)$. Recall that a (finite dimensional) $S$-module  is called {\em tilting}\, if it has both a standard and a costandard filtrations. By \cite{Ringel}, for each $\la\in\La_+$, there exists a unique indecomposable tilting module $T(\la)$ such that $[T(\la) : \De(\la)] = 1$ and $[T(\la) : \De(\mu)] = 0$ unless $\mu \leq\la$. Furthermore, every tilting module is isomorphic to a direct sum of  indecomposable tilting modules $T(\la)$. A {\em full tilting}\, module is a tilting module that contains every $T (\la),\ \la\in\La_+,$ as a summand. Given a full tilting module $T$, the {\em Ringel dual}\, of $S$ relative to $T$ is the algebra $S^*:= \End_S(T)^{\op}$. Writing endomorphisms on the right, $T$ is naturally a right $\End_S(T)$-module, hence a left $S^*$-module. Ringel \cite{Ringel} showed that $S^*$ is also a quasi-hereditary algebra with weight poset $\La_+$, but ordered with the opposite order. 
We will need the following known result (for references see \cite[Section 4.5]{BDK}). 

\begin{Lemma} \label{4.5a} 
Regarded as a left $S^*$-module, $T$ is a full tilting module for $S^*$. Moreover, the Ringel dual $\End_{S^*}(T)^\op$ of $S^*$ relative to $T$ is isomorphic to $S$.
\end{Lemma}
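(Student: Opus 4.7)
The plan is to follow the classical Ringel duality argument, using the characterization of tilting modules in terms of $\Ext$-vanishing and the double centralizer property of a full tilting module. Throughout, I write endomorphisms on the right so that $T$ is naturally a left $S^*$-module.

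First I would establish the basic homological identities. Since $T$ is a full tilting module for $S$, by Ringel's theory we have $\Ext^i_S(T,T)=0$ for $i>0$, and moreover $\Ext^i_S(\Delta(\la),T)=\Ext^i_S(T,\nabla(\la))=0$ for $i>0$ and any $\la\in\Lambda_+$. This follows from the cohomological criterion for standard/costandard filtrations (e.g.\ \cite[A2.2]{Donkin}). I would then define, for each $\la\in\Lambda_+$, the $S^*$-modules
$$
\Delta^*(\la):=\Hom_S(T,\nabla(\la)),\qquad \nabla^*(\la):=\Hom_S(\Delta(\la),T)^*,
$$
and verify that these are standard and costandard modules for $S^*$ with respect to the opposite order on $\Lambda_+$. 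The verification is a standard induction on $\la$ using the fact that $\Hom_S(T,?)$ is exact on modules with costandard filtrations (by the $\Ext$-vanishing above) and converts a costandard filtration of $T$ into a filtration of $\Hom_S(T,T)=S^*$ whose factors correspond bijectively with the indecomposable summands of $T$.

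Next I would show that $T$ has both a standard filtration and a costandard filtration over $S^*$. Applying $\Hom_S(T,?)$ to a costandard filtration of $T$ (as a right $S$-module this is automatic from tiltingness) produces, by exactness and the identification above, a filtration of $T$ as an $S^*$-module whose sections are of the form $\Delta^*(\la)$; each $\Delta^*(\la)$ appears as often as $\nabla(\la)$ does in the fixed costandard filtration of $T$, i.e. once for each $\la\in\Lambda_+$ (after accounting for multiplicities in $T=\bigoplus_\la T(\la)^{\oplus n_\la}$). Dually, applying a suitable Hom-functor to a standard filtration of $T$ yields a costandard filtration of $T$ as $S^*$-module. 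Hence $T$ is an $S^*$-module with standard and costandard filtrations, i.e.\ tilting, and since every $\Delta^*(\la)$ appears as a section it must contain every indecomposable tilting $T^*(\la)$ as a summand; thus $T$ is a full tilting module for $S^*$.

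Finally, to identify $\End_{S^*}(T)^\op$ with $S$, I would use the canonical algebra homomorphism
$$
\Phi:S\longrightarrow \End_{S^*}(T)^\op,
$$
given by left multiplication of $S$ on $T$ (this is well defined since left and right actions commute). The hard part, and the main obstacle, is proving that $\Phi$ is an isomorphism: this is the double centralizer property for the full tilting module $T$. I would prove injectivity by exhibiting $S$ as a submodule of some direct sum of copies of $T$ using the standard filtration of $S$ (each $\Delta(\la)$ embeds in $T(\la)$), so that any $s\in\ker\Phi$ must act as zero on a faithful module. For surjectivity, I would use the $\Ext^1$-vanishing $\Ext^1_S(T,T)=0$ together with the standard-filtration of $S$ coming from quasi-heredity (Lemma~\ref{1.2e} style) to apply Ringel's criterion: any $S^*$-endomorphism of $T$ is induced by some element of $S$ because the functor $\Hom_S(T,?)$ is fully faithful on the subcategory of modules with standard filtrations, and $S$ itself lies in this subcategory. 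This establishes $\End_{S^*}(T)^\op\cong S$, completing the proof and showing that Ringel duality is an involution.
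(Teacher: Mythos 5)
The paper does not actually prove Lemma~\ref{4.5a}; it cites it as a known fact with references to \cite[Section~4.5]{BDK} (which in turn goes back to Ringel and Donkin). So there is no paper proof to compare against, and the question is purely whether your sketch is correct.

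Your outline follows the standard Ringel-duality template, and the double-centralizer part is broadly in the right spirit, but there is a concrete error in the key step where you try to produce filtrations of $T$ over $S^*$. You write that applying $\Hom_S(T,?)$ to a costandard filtration of $T$ (over $S$) ``produces a filtration of $T$ as an $S^*$-module.'' That is not what it produces: $\Hom_S(T,T)=\End_S(T)=S^{*}$, so applying the exact functor $\Hom_S(T,?)$ to the costandard filtration of $T$ yields a $\Delta^*$-filtration of $S^*$ as a left $S^*$-module --- which is exactly what shows $S^*$ is quasi-hereditary, but says nothing yet about $T$ as an $S^*$-module. To get a filtration of $T$ itself over $S^*$, the right move is to apply the \emph{contravariant} functor $\Hom_S(?,T)$ to the standard ($\Delta$-)filtration of $S$ as a left $S$-module; this functor is exact on $\mathcal F(\Delta_S)$ because $\Ext^1_S(\Delta(\la),T)=0$, and it sends $S$ to $\Hom_S(S,T)=T$, giving a filtration of $T$ (as a left $S^*$-module) with sections $\Hom_S(\Delta(\la),T)$. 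One then has to identify these sections with the appropriate (co)standard $S^*$-modules, and produce the second filtration by a separate argument (e.g.\ via an injective cogenerator or a duality), before one can conclude that $T$ is tilting over $S^*$. The ``dually'' step you invoke is symmetric in form but suffers from the same confusion about which object one is filtering, and your definition $\nabla^*(\la):=\Hom_S(\Delta(\la),T)^*$ introduces a vector-space dual that changes the module side and is not justified. These are genuine gaps: the filtration half of the argument needs to be reworked around $\Hom_S(?,T)$ rather than $\Hom_S(T,?)$. Your argument for $\End_{S^*}(T)^{\op}\cong S$ is closer to correct, but injectivity should be phrased via Ringel's approximation ($S\in\mathcal F(\Delta)$ embeds into some $T^{\oplus m}$ because $\mathcal F(\Delta)$ has enough Ext-injectives in $\operatorname{add}T$), not via ``each $\Delta(\la)$ embeds in $T(\la)$,'' since a $\Delta$-filtration does not directly give a direct-sum embedding.
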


Applying Ringel's theorem first to the Schur algebra $S_{h,n}$, we obtain the indecomposable tilting modules $\{T_h(\la)\mid \la \in X_+(h,n)\}$ of $S_{h,n}$. 
For $h\geq n$, define 
\begin{equation}\label{ETilt}
\Tde(\la) := \beta_{h,n}(T_h(\la))\qquad(\la\vdash n).
\end{equation}
Since $\be_{h,n}$ is Morita equivalence, $\{\Tde(\la) \mid \la\vdash n\}$ are the indecomposable tilting modules for $\ImS_n$.

\begin{Lemma} \label{4.5b} 
The indecomposable tilting modules for $\ImS_n$ are precisely the indecomposable summands of $\Lade^\nu$ for all $\nu\vDash n$. Furthermore, for $\la\vdash n$, the module $\Tde(\la)$ occurs exactly once as a summand of $\Lade^{\la^\tr}$, and if $\Tde(\mu)$ is a summand of $\Lade^{\la^\tr}$ for some $\mu\vdash\la$, then $\mu\leq\la$.
\end{Lemma}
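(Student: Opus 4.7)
The strategy is to transport the question along the Morita equivalence $\be_{h,n}:\mod{S_{h,n}}\to \mod{\ImS_n}$. By Lemma~\ref{3.5d}, $\Lade^\nu \cong \be_{h,n}(\La^\nu(V_h))$; since $\be_{h,n}$ identifies $\De_h(\la)\leftrightarrow\De(\la)$ and $\nabla_h(\la)\leftrightarrow\nade(\la)$, it sends tilting modules to tilting modules, and by (\ref{ETilt}) it sends $T_h(\la)$ to $\Tde(\la)$. Hence the lemma is equivalent to the corresponding statement about $S_{h,n}$: the indecomposable tilting $S_{h,n}$-modules are exactly the indecomposable summands of the various $\La^\nu(V_h)$, and $T_h(\la)$ occurs once in $\La^{\la^\tr}(V_h)$ while every other summand $T_h(\mu)$ satisfies $\mu\leq\la$.

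First, $\La^\nu(V_h)$ is a tilting $S_{h,n}$-module by Lemma~\ref{1.3c}, so all its indecomposable summands are indecomposable tiltings, and I may write $\La^{\la^\tr}(V_h)\cong\bigoplus_\mu T_h(\mu)^{\oplus m_{\la,\mu}}$. The plan is to compute $[\La^{\la^\tr}(V_h):\De_h(\mu)]$ in any standard filtration using formal characters, and then extract the $m_{\la,\mu}$ from the upper-unitriangular relation between characters of tiltings and standards.

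The character computation is classical: $\ch(\La^\nu(V_h))=e_\nu$ and $\ch(\De_h(\mu))=s_\mu$, so via the Hall pairing $[\La^\nu(V_h):\De_h(\mu)]=\langle e_\nu,s_\mu\rangle=K_{\mu^\tr,\nu}$ (alternatively, by exhibiting a standard filtration indexed by semistandard $\mu^\tr$-tableaux of content $\nu$). Specializing to $\nu=\la^\tr$ yields
\begin{equation*}
[\La^{\la^\tr}(V_h):\De_h(\mu)]=K_{\mu^\tr,\la^\tr},
\end{equation*}
which equals $1$ for $\mu=\la$ and is nonzero only when $\mu^\tr\geq\la^\tr$, i.e.\ $\mu\leq\la$.

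Finally, using $[T_h(\nu):\De_h(\mu)]=\delta_{\mu,\nu}$ together with $[T_h(\nu):\De_h(\mu)]\neq 0\Rightarrow \mu\leq\nu$, a downward induction on the dominance order (peel off one copy of $T_h(\mu)$ for each $\mu$ maximal with $m_{\la,\mu}\neq 0$) determines $m_{\la,\la}=1$ and $m_{\la,\mu}=0$ unless $\mu\leq\la$. Varying $\la$ exhausts every indecomposable tilting, giving the ``precisely'' clause. Pulling everything back through $\be_{h,n}$ gives the lemma. The main obstacle is the character/Kostka computation; while standard, it requires careful tracking of the conjugation-reversal of dominance, which is why the lemma pairs $\la$ with $\La^{\la^\tr}$ rather than $\La^\la$.
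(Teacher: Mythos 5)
Your proof is correct and takes essentially the same route as the paper: reduce to the classical Schur algebra via the Morita equivalence $\be_{h,n}$ together with Lemma~\ref{3.5d}, which identifies $\Lade^\nu$ with $\be_{h,n}(\La^\nu(V_h))$ and $\Tde(\la)$ with $\be_{h,n}(T_h(\la))$. The only difference is that the paper simply cites Donkin for the classical statement about $T_h(\la)$ inside $\La^{\la^\tr}(V_h)$, whereas you reprove it via the Kostka-number computation $[\La^{\la^\tr}(V_h):\De_h(\mu)]=K_{\mu^\tr,\la^\tr}$ and the unitriangularity of tilting characters — a correct, if redundant, addition.
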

\begin{proof}
By \cite[Section 3.3(1)]{Donkin}, 
$T_h(\la)$ occurs exactly once as a summand of $\La^{\la^\tr}(V_h)$, and if $T_h(\mu)$ is a summand of $\La^{\la^\tr}(V_h)$ then $\mu\leq\la$.
Now the result follows on applying the functor $\beta_{h,n}$ and using Lemma~\ref{3.5d}.
\end{proof}

\begin{Corollary} \label{4.5c} 
For $\la\vdash n$, the module $\Tde(\la)$ is the unique
indecomposable summand of $\Lade^{\la^\tr}$ containing a submodule isomorphic to $\De(\la)$.
\end{Corollary}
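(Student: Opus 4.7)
The plan is to establish existence and uniqueness separately, relying on the general theory of tilting modules together with Lemma~\ref{4.5b} and the results from Section~\ref{SBaseChange}.

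For existence, I would argue that $\Tde(\la)$ itself contains $\De(\la)$ as a submodule. Since $\Tde(\la)$ is a tilting module, it has a standard filtration in which $\De(\la)$ appears exactly once and all other factors are of the form $\De(\mu)$ with $\mu<\la$. One can always refine such a standard filtration so that factors corresponding to the most dominant weights appear at the bottom (as in Theorem~\ref{3.5g}(i)); hence $\De(\la)$ embeds into $\Tde(\la)$ as a submodule. Combined with Lemma~\ref{4.5b}, which says $\Tde(\la)$ occurs (exactly once) as a summand of $\Lade^{\la^\tr}$, this gives an indecomposable summand of $\Lade^{\la^\tr}$ containing $\De(\la)$ as a submodule. (Alternatively, one can start from the explicit embedding $\De(\la) \hookrightarrow \Lade^{\la^\tr}$ coming from Theorem~\ref{3.5e}(ii), decompose $\Lade^{\la^\tr}$ into its indecomposable tilting summands, and project, then argue that the projection to $\Tde(\la)$ must be the injective component by uniqueness below.)

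For uniqueness, suppose $\Tde(\mu)$ is another indecomposable summand of $\Lade^{\la^\tr}$ which contains a submodule isomorphic to $\De(\la)$. By Lemma~\ref{4.5b} we must have $\mu\leq\la$, and by assumption $\mu\neq\la$, so $\mu<\la$. The key observation is that the composition factors of $\Tde(\mu)$ are all of the form $\Lde(\nu)$ with $\nu\leq\mu$: indeed, $\Tde(\mu)$ has a standard filtration with factors $\De(\nu)$ for $\nu\leq\mu$, and by Lemma~\ref{3.5b}(i) each such $\De(\nu)$ has composition factors $\Lde(\ka)$ with $\ka\leq\nu\leq\mu$. In particular $\Lde(\la)$ does not appear as a composition factor of $\Tde(\mu)$. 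On the other hand $\Lde(\la)$ is the head, hence a composition factor, of $\De(\la)$, so $\De(\la)$ cannot embed into $\Tde(\mu)$, a contradiction.

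The main point to verify carefully is the tilting-theoretic fact that $\De(\la)$ sits as a submodule at the bottom of some standard filtration of $\Tde(\la)$; this is standard for quasi-hereditary algebras once one orders the filtration so that the most dominant weight appears at the bottom. The rest is a straightforward composition-factor comparison using Lemmas~\ref{4.5b} and \ref{3.5b}(i).
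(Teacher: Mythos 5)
Your proof is correct. It reaches the same conclusion as the paper but organizes the argument differently. The paper's proof is two lines: it quotes Theorem~\ref{3.5e}(i) to say that $\Lade^{\la^\tr}$ has a \emph{unique} submodule isomorphic to $\De(\la)$, and then quotes Lemma~\ref{4.5b} together with the assertion that $\Hom_{\ImS_n}(\De(\la),M)=0$ for every summand $M$ other than the single copy of $\Tde(\la)$; the unique embedded copy of $\De(\la)$ must therefore sit inside that summand. You instead establish existence intrinsically, via the standard tilting-theoretic fact that $\De(\la)$ embeds at the bottom of a suitably reordered standard filtration of $\Tde(\la)$ (this is sound: $\ext^1(\De(\la),\De(\mu))=0$ for $\mu<\la$ lets you push the unique top factor $\De(\la)$ to the bottom), and you prove the Hom-vanishing for the other summands $\Tde(\mu)$, $\mu<\la$, by the composition-factor observation that $\Lde(\la)$ does not occur in $\Tde(\mu)$. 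Your route has the advantage of not invoking Theorem~\ref{3.5e} at all and of making explicit the reason the other summands are excluded, at the cost of using the (standard but unstated in the paper) reordering fact for standard filtrations; the paper's route gets uniqueness of the embedded $\De(\la)$ in the whole of $\Lade^{\la^\tr}$ for free from Theorem~\ref{3.5e}(i), which is slightly more information than the corollary needs.
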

\begin{proof}
By Theorem~\ref{3.5e}(i), $\Lade^{\la^\tr}$ has a unique submodule isomorphic to $\De(\la)$. By Lemma~\ref{4.5b}, $\Lade^{\la^\tr}$ has a unique summand isomorphic to $\Tde(\la)$ and for any other summand $M$ of $\Lade^{\la^\tr}$, we have $\Hom_{\ImS_n}(\De(\la),M) = 0$. 
\end{proof}

\begin{Theorem} \label{4.5d}
{\bf (Imaginary Ringel Duality)}
Let $h\geq n$. The $\ImS_n$-module $T :=\bigoplus_{\nu\in X(h,n)} \Lade^\nu$ is a full tilting module. 
Moreover, the Ringel dual $\ImS_n^*$ 	of $\ImS_n$ relative to $T$ is precisely the algebra $S_{h,n}^\op$ where $S_{h,n}$ acts on $T$ as in Theorem~\ref{3.4a}.
\end{Theorem}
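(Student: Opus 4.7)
The plan is to deduce this Ringel duality statement for $\ImS_n$ by transport of structure across the Morita equivalence $\be_{h,n}:\mod{S_{h,n}}\to \mod{\ImS_n}$ from the corresponding classical fact about the Schur algebra. First, by Lemma~\ref{3.5d}, I identify
$$T \;\cong\; \be_{h,n}\!\Bigl(\bigoplus_{\nu\in X(h,n)}\La^\nu(V_h)\Bigr).$$
Since the Morita equivalence $\be_{h,n}$ sends $\De_h(\la)\mapsto\Dede(\la)$ and $\nabla_h(\la)\mapsto\nade(\la)$ by definition, and is exact, a standard (resp.\ costandard) filtration of any $S_{h,n}$-module $W$ transports to a standard (resp.\ costandard) filtration of $\be_{h,n}(W)$.

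Second, I argue that $T$ is tilting. Since $(1^n)$ is minimal in the dominance order on $X_+(h,n)$, we have $\La^n(V_h)=L_h((1^n))=\De_h((1^n))=\nabla_h((1^n))$, so each exterior power $\La^n(V_h)$ trivially admits both a standard and a costandard filtration. By Lemma~\ref{1.3c} applied inductively, each $\La^\nu(V_h)$ therefore has both a standard and a costandard filtration (this is also noted explicitly after Lemma~\ref{1.3c}). Applying $\be_{h,n}$ and the previous paragraph, each $\Lade^\nu$ has both a standard and a costandard filtration as an $\ImS_n$-module, hence $T$ is a tilting module.

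Third, to see that $T$ is \emph{full}, I invoke Lemma~\ref{4.5b}: for every $\la\vdash n$ the indecomposable tilting module $\Tde(\la)$ occurs as a direct summand of $\Lade^{\la^\tr}\subseteq T$. Thus $T$ contains every indecomposable tilting module of $\ImS_n$, so $T$ is a full tilting module, and consequently the Ringel dual $\ImS_n^{*}=\End_{\ImS_n}(T)^{\op}$ is well-defined.

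Finally, the identification of the Ringel dual is immediate from Theorem~\ref{3.4a}, which furnishes a canonical algebra isomorphism $S_{h,n}\iso \End_{\ImS_n}(T)$; taking opposite algebras yields $\ImS_n^{*}\cong S_{h,n}^{\op}$, with $S_{h,n}$ acting on $T$ exactly as in Theorem~\ref{3.4a}. There is no serious obstacle here beyond making sure that the Morita-equivalence transport indeed preserves the tilting property; the main conceptual content is already contained in Lemmas~\ref{3.5d} and~\ref{4.5b} together with Theorem~\ref{3.4a}.
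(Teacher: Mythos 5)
Your proof is correct and in essence follows the same route as the paper: the paper simply cites Lemma~\ref{4.5b} (which already asserts that the indecomposable tilting modules of $\ImS_n$ are precisely the indecomposable summands of the $\Lade^\nu$, with every $\Tde(\la)$ occurring) to conclude that $T$ is a full tilting module, and then reads off the Ringel dual from Theorem~\ref{3.4a}. Your extra paragraph showing directly (via Lemma~\ref{3.5d}, transport of standard/costandard filtrations under the Morita equivalence, and Lemma~\ref{1.3c}) that each $\Lade^\nu$ is a tilting module is sound but redundant, since that is already contained in Lemma~\ref{4.5b}: a module all of whose indecomposable summands are tilting modules is itself tilting.
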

\begin{proof}
By Lemma~\ref{4.5b}, $T$ is a full tilting module. The second statement is a restatement of Theorem~\ref{3.4a}.
\end{proof}

So far we have only had `halves' of imaginary Schur-Weyl and Howe dualities, namely:  $\End_{\ImS_n}(\Mde_n)\cong F\Si_n$ and $\End_{\ImS_n}\Big(\bigoplus_{\nu\in X(h,n)} \Lade^\nu \Big)\cong S_{h,n}$. Now we can finally establish the `second halves'.

\begin{Theorem} \label{4.5e} 
{\bf (Double Centralizer Properties)} 
Let $h \geq n$. Then: 
\begin{enumerate}
\item[{\rm (i)}] $\End_{\ImS_n}\Big(\bigoplus_{\nu\in X(h,n)} \Lade^\nu \Big)\cong S_{h,n}$  and $\End_{S_{h,n}}\Big(\bigoplus_{\nu\in X(h,n)} \Lade^\nu \Big)\cong \ImS_{n}$,
where the right $S_{h,n}$-action is as in Theorem~\ref{3.4a}; 
\item[{\rm (ii)}] $\End_{\ImS_n}(\Mde_n)\cong F\Si_n$  and $\End_{F\Si_n}(\Mde_n)\cong \ImS_{n}$ where the right $F\Si_n$-action is as in Theorem~\ref{TSchub}. 
\end{enumerate}
\end{Theorem}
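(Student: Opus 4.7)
The plan is to handle the two new halves separately, each reducing to a general principle combined with results already established earlier in the paper.

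For the second statement of (i), the key tool is abstract Ringel duality (Lemma~\ref{4.5a}). Theorem~\ref{4.5d} presents $T := \bigoplus_{\nu \in X(h,n)} \Lade^\nu$ as a full tilting module for $\ImS_n$ whose Ringel dual $\ImS_n^*$ equals $S_{h,n}^{\op}$, with $S_{h,n}$ acting on $T$ on the right as in Theorem~\ref{3.4a}. Lemma~\ref{4.5a} then guarantees that $T$ is also a full tilting module for $\ImS_n^* = S_{h,n}^{\op}$ and that its Ringel dual as such is isomorphic to $\ImS_n$; rewriting left $S_{h,n}^{\op}$-actions as right $S_{h,n}$-actions yields $\End_{S_{h,n}}(T) \cong \ImS_n$. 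This step is essentially formal, requiring only care with op's.

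For the second statement of (ii), I compose two Morita equivalences. By Theorems~\ref{3.4c} and~\ref{3.4g}(iii), the $(\ImS_n, S_{h,n})$-bimodule $Z = \bigoplus_{\nu \in X(h,n)} \Zdot^\nu$ realises the Morita equivalence $\be_{h,n} = Z \otimes_{S_{h,n}} (-) : \mod{S_{h,n}} \to \mod{\ImS_n}$. Classical Schur--Weyl duality (cf.~Section~\ref{SSSA}) for $h \geq n$ likewise realises the Morita equivalence $V_h^{\otimes n} \otimes_{F\Si_n}(-) : \mod{F\Si_n} \to \mod{S_{h,n}}$: the weight-space summands $e(\nu)V_h^{\otimes n} \cong \Perm^\nu$ exhaust the classical permutation modules, so $V_h^{\otimes n}$ is a right $F\Si_n$-progenerator, and $\End_{F\Si_n}(V_h^{\otimes n}) \cong S_{h,n}$. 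The composite $(\ImS_n, F\Si_n)$-bimodule $Z \otimes_{S_{h,n}} V_h^{\otimes n}$ therefore also implements a Morita equivalence, and the usual abstract consequence (cf.~\cite[Theorem 3.54(iv)]{CRI}, as cited in the introduction) gives $\End_{F\Si_n}(Z \otimes_{S_{h,n}} V_h^{\otimes n}) \cong \ImS_n$.

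It remains to identify this composite with $\Mde_n$ equipped with the $F\Si_n$-action specified in Theorem~\ref{TSchub}. Combining Lemma~\ref{1.3b}(i) and Lemma~\ref{3.5d} gives $\be_{h,n}(V_h^{\otimes n}) \cong \Zdot^{(1^n, 0^{h-n})}$, which collapses to $\Mde_n$ because the Young subgroup $\Si_{(1^n, 0^{h-n})}$ is trivial. The main point requiring care --- and the one genuine obstacle I foresee --- is confirming that the right $F\Si_n$-action transported onto $\Mde_n$ through this Morita composition and the intrinsic action of Theorem~\ref{TEndMn} determine the same subalgebra of $\End_F(\Mde_n)$. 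Since both actions commute with the left $\ImS_n$-action and each furnishes an algebra isomorphism $F\Si_n \stackrel{\sim}{\to} \End_{\ImS_n}(\Mde_n)$ --- the Morita one by construction, the TEndMn one by the first half of (ii) --- both factor onto the same subalgebra $\End_{\ImS_n}(\Mde_n) \subseteq \End_F(\Mde_n)$ and hence have identical centralisers. Therefore $\End_{F\Si_n}(\Mde_n) \cong \ImS_n$ regardless of which of the two right $F\Si_n$-actions is used, and in particular for the one specified by Theorem~\ref{TSchub}.
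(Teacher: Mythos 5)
Part (i) of your proposal is fine and coincides with the paper's proof (Theorem~\ref{4.5d} plus Lemma~\ref{4.5a}). The problem is in part (ii). Your argument rests on the claim that $V_h^{\otimes n}$ is a progenerator for $F\Si_n$, so that $V_h^{\otimes n}\otimes_{F\Si_n}(-)$ is a Morita equivalence $\mod{F\Si_n}\to\mod{S_{h,n}}$. This is false in characteristic $p\leq n$: the weight spaces $e(\nu)V_h^{\otimes n}\cong\Perm^\nu=\ind_{F\Si_\nu}^{F\Si_n}\triv$ are permutation modules, which are \emph{not} projective over $F\Si_n$ unless $p\nmid|\Si_\nu|$; the summand $\Perm^{(1^n)}\cong F\Si_n$ makes $V_h^{\otimes n}$ a generator but not a progenerator. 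Indeed $F\Si_n$ and $S_{h,n}$ cannot be Morita equivalent in general ($S_{h,n}$ is quasi-hereditary, hence of finite global dimension and with simples indexed by all partitions, whereas $F\Si_n$ has infinite global dimension and fewer simples for $p\leq n$). Consequently your ``composite Morita equivalence'' would force $\mod{F\Si_n}\simeq\mod{\ImS_n}$, contradicting the paper's own observation that $\ga_n$ is \emph{not} an equivalence and that $\Mde_n$ is not a projective generator for $\ImS_n$ (cf.\ Theorem~\ref{TChar0} and the surrounding discussion). The citation of \cite[Theorem 3.54(iv)]{CRI} therefore does not apply. Your closing paragraph (two right actions of $F\Si_n$ with the same image in $\End_F(\Mde_n)$ have the same centralizer) is sound, but it only transfers the double centralizer property between the two actions; it cannot create it.

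The step can be repaired, but not by Morita theory on the $F\Si_n$ side: what you actually need is only the single equality $\End_{F\Si_n}(V_h^{\otimes n})\cong S_{h,n}$, which holds by the very definition of $S_{h,n}$ in Section~\ref{SSSA}, combined with the fact that $Z$ \emph{is} a progenerator as a right $S_{h,n}$-module. Writing $Z\cong eS_{h,n}^{\oplus k}$ for an idempotent $e\in M_k(S_{h,n})$ gives $Z\otimes_{S_{h,n}}V_h^{\otimes n}\cong e(V_h^{\otimes n})^{\oplus k}$ as a right $F\Si_n$-module, whence $\End_{F\Si_n}(Z\otimes_{S_{h,n}}V_h^{\otimes n})\cong eM_k(S_{h,n})e\cong\End_{S_{h,n}}(Z)\cong\ImS_n$; one must then still match the resulting $F\Si_n$-action with that of Theorem~\ref{TSchub}, which involves the sign twist $w\mapsto\kappa(\sgn(w)w)$. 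The paper takes a different but related route: it applies part (i) to the full tilting module $T=\bigoplus_\nu\Lade^\nu$, shows using the self-duality of $S_{h,n}e\cong V_h^{\otimes n}$ for $e=e((1^n))$ that the socle and head of $\tilde T$ lie in the head of $S_{h,n}e$, and then invokes Lemma~\ref{L3.1c} to get $\ImS_n\cong\End_{eS_{h,n}e}(Te)$ before identifying $Te$ with $\Mde_n$ as a bimodule.
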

\begin{proof}
(i) Combine Theorem~\ref{4.5d} with Lemma~\ref{4.5a}.

(ii) By Theorem~\ref{TSchub}, we know already that $F\Si_n \cong  \End_{\ImS_n}(\Mde_n)$. Let $e:=e((1^n))\in S_{h,n}$, see (\ref{EIdSchur}). 
We know that $T_h(\la)$ is a summand of $\La^{\la^\tr}(V_h)$, for any $\la\in X_+(h,n)$.  So, by Lemma~\ref{1.3b}(ii), $T_h(\la)$ is both a submodule and a quotient of the $S_{h,n}$-module $S_{h,n}e$. Moreover, $S_{h,n}e \cong V_n^{\otimes n}$, so $S_{h,n}e$ is self-dual. From this one deduces a (well-known) fact that every composition factor of both the socle and the head of $T_h(\la)$ belongs to the head of the projective $S_{h,n}$-module $S_{h,n}e$.  

Now let $T :=\bigoplus_{\nu\in X(h,n)} \Lade^\nu$, and set $\tilde T$ to be the left $S_{h,n}$-module obtained
from the right module $T$ by twisting with $\tau$. Then $\tilde T$ is a full tilting module for $S_{h,n}$ by Lemma~\ref{4.5a} and Theorem~\ref{4.5d}. So, by the previous paragraph, every composition factor of the socle and the head of $\tilde T$ belongs to the head of $S_{h,n}e$. By Lemma~\ref{L3.1c}, we deduce that $\End_{S_{h,n}}(\tilde T) \cong \End_{eS_{h,n}e}(e\tilde T)$. Switching to right actions, and using (i), we have now shown that
$\ImS_n \cong \End_{eS_{h,n}e}(Te).$
So, to prove (ii), it suffices to show that $\End_{F\Si_n}(\Mde_n) \cong \End_{eS_{h,n}e}(Te)$. 

As a left $\ImS_n$-module, $M_n \cong Te$. Recall the map $\kappa$ from  Lemma~\ref{1.2b} and  the action of $S_{h,n}$ on $T$ from Theorem~\ref{3.4a}. One now easily checks that the $(\ImS_n , eS_{h,n} e)$-bimodule $T e$ is isomorphic to the $(\ImS_n , F\Si_n)$-bimodule $M_n$, if we identify $F\Si_n$ with $eS_{h,n}e$, so that $w \mapsto \kappa(\sgn(w)w)$ for each $w \in \Si_n$. In view of this, we have 
$\End_{F\Si_n}(\Mde_n) \cong \End_{eS_{h,n}e}(Te)$.  
\end{proof}

We conclude this section with imaginary analogues of well-known results of Donkin and Mathiew-Papadopoulo. 
For $\la\vdash n$ we denote by $\Pde(\la)$ the projective cover of $\Lde(\la)$ in the category $\mod{\ImS_n}$. Similarly, for $\mu \in X_+(h,n)$, let $P_h(\mu)$ denote the projective cover of $L_h(\mu)$ in the category $\mod{S_{h,n}}$. Using Morita equivalence, we get:
\begin{equation}\label{4.5.3}
\Pde(\la)=\beta_{h,n}(P_h(\la))\qquad(\la\vdash n).
\end{equation}

Recall the generalized Kostka numbers $k_{\la,\mu}$ from (\ref{EWtSp}).

\begin{Theorem} \label{4.5f} 
For $\nu\in X(h,n)$ we have:
\begin{enumerate}
\item[{\rm (i)}] $\Zde^\nu\cong \bigoplus_{\la\vdash n}\Pde(\la)^{\oplus k_{\la,\nu}}$;
\item[{\rm (ii)}] $\Lade^\nu\cong \bigoplus_{\la\vdash n}\Tde(\la^\tr)^{\oplus k_{\la,\nu}}$.
\end{enumerate}
\end{Theorem}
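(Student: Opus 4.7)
The plan is to transfer both identities along the Morita equivalence $\be_{h,n}:\mod{S_{h,n}}\to\mod{\ImS_n}$ to corresponding statements about the classical Schur algebra, and then to verify those directly. First I would apply Lemma~\ref{3.5d} to identify $\Zde^\nu \cong \be_{h,n}(Z^\nu(V_h))$ and $\Lade^\nu \cong \be_{h,n}(\La^\nu(V_h))$. Since $\be_{h,n}$ sends $P_h(\la)\mapsto \Pde(\la)$ (by (\ref{4.5.3})) and $T_h(\mu)\mapsto \Tde(\mu)$ (by (\ref{ETilt})), the theorem reduces to the two $S_{h,n}$-module decompositions
\[
Z^\nu(V_h) \;\cong\; \bigoplus_{\la\vdash n} P_h(\la)^{\oplus k_{\la,\nu}}
\qquad\text{and}\qquad
\La^\nu(V_h) \;\cong\; \bigoplus_{\la\vdash n} T_h(\la^\tr)^{\oplus k_{\la,\nu}}.
\]

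For the projective decomposition, the argument is a direct Hom calculation. By Lemma~\ref{1.3b}(i), $Z^\nu(V_h) \cong S_{h,n}e(\nu)$ is a projective $S_{h,n}$-module. Writing its decomposition as $\bigoplus_\la P_h(\la)^{\oplus m_\la}$ and applying $\Hom_{S_{h,n}}(-,L_h(\la))$ to both sides yields
\[
m_\la \;=\; \dim\Hom_{S_{h,n}}\bigl(S_{h,n}e(\nu),L_h(\la)\bigr) \;=\; \dim e(\nu)L_h(\la) \;=\; k_{\la,\nu},
\]
which proves (i).

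For the tilting decomposition, this is the classical theorem of Donkin \cite{Donkin}. It rests on the Ringel self-duality of $S_{h,n}$ with respect to the full tilting module $\bigoplus_\mu \La^\mu(V_h)$: the Ringel dual is again $S_{h,n}$, but the indecomposable tilting $T_h(\la^\tr)$ is matched to the indecomposable projective $P_h(\la)$, with the transpose appearing because Ringel duality reverses the dominance order on $X_+(h,n)$. Transporting the projective decomposition of $S_{h,n}e(\nu)\cong Z^\nu(V_h)$ through this self-duality then forces the stated decomposition of $\La^\nu(V_h)$, giving (ii).

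The hard part will be the transpose-labelling in the Schur algebra Ringel self-duality that powers the tilting decomposition. This is deep classical input from \cite{Donkin}; any self-contained derivation would need to establish that the equivalence $\Hom_{S_{h,n}}\bigl(\bigoplus_\mu \La^\mu(V_h),-\bigr)$ between tilting $S_{h,n}$-modules and projective $S_{h,n}^*$-modules implements the assignment $T_h(\la^\tr)\mapsto P_h(\la)$. One route uses the Frobenius/Koszul structure of $S_{h,n}$; an alternative is to combine Lemma~\ref{4.5b} (which provides the expected upper triangularity of the decomposition in the dominance order) with a character-theoretic bookkeeping based on the determinantal identity $e_\nu=\sum_\la K_{\la,\nu}\, s_{\la^\tr}$ and the independence of the Grothendieck class of each $T_h(\mu)$ from the ground field.
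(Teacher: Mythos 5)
Your proposal follows exactly the same route as the paper: reduce via the Morita equivalence $\be_{h,n}$ and Lemma~\ref{3.5d} to the corresponding statements about $Z^\nu(V_h)$ and $\La^\nu(V_h)$ over the classical Schur algebra, then invoke the known decompositions there. For (i) you supply the elementary $\Hom$-count against $L_h(\la)$ using $Z^\nu(V_h)\cong S_{h,n}e(\nu)$, which is precisely the content of the lemma the paper cites from Donkin, and for (ii) you correctly identify that the input is classical Ringel self-duality of $S_{h,n}$ (the paper cites Mathieu--Papadopoulo, you cite Donkin's book; both are standard sources for $\La^\nu(V_h)\cong\bigoplus_\la T_h(\la^\tr)^{\oplus k_{\la,\nu}}$). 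The argument is correct; your sketch of the Ringel-duality mechanism behind (ii), including the transpose shift coming from the order-reversal, is the right picture, and your remark that making it fully self-contained would require care with the labelling is honest but immaterial since, as in the paper, it suffices to cite the established decomposition.
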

\begin{proof}

(i) By \cite[Lemma 3.4(i)]{DoTil}, we have $Z^\nu(V_h) \cong \bigoplus_{\la\vdash n}P_h(\la)^{\oplus k_{\la,\nu}}$. Now apply the equivalence of categories $\beta_{h,n}$, using (\ref{4.5.3}) and Lemmas~\ref{3.5d}.

(ii) By \cite[Corollary 2.3]{MP}, we have $\La^\nu(V_h) \cong \bigoplus_{\la\vdash n}T_h(\la^\tr)^{\oplus k_{\la,\nu}}$. Now apply the equivalence of categories $\beta_{h,n}$, using (\ref{ETilt}) and Lemmas~\ref{3.5d}.
\end{proof}

\chapter{On formal characters of imaginary modules}\label{SFCIM}
Throughout the chapter we assume that $\O=F$ unless otherwise stated, and $h\geq n$.

\section{Weight multiplicities in $\La^{\la^\tr}(V_h)$}
Let $h\geq n$, $\la\in X_+(h,n)$, and $\mu\in X(h,n)$. Recall the notion of a column strict $\la$-tableau from Section~\ref{SSSchurRT}. 
Denote
\begin{equation}\label{EClamu}
\co_{\la,\mu}:=\sharp\{\text{column strict $\la$-tableaux of type $\mu$}\}.
\end{equation}
Recalling the $S_{h,n}$-module $\La^{\la^\tr}(V_h)$ defined in (\ref{1.3.4}), the following equality for its weight multiplicities is clear:
$$
\dim e(\mu)\La^{\la^\tr}(V_h)=\co_{\la,\mu}.
$$


The following combinatorial result is easy to check using the definition of $\co_{\la,\mu}$: 

\begin{Lemma} \label{LLaChar}
Let $\la\in X_+(h,n)$, $\mu\in X(h,n)$. If the last column of the partition $\la$ has height $l$, and $\bar\la\in X_+(h, n-l)$ is the partition obtained from $\la$ by deleting this last column, then 
$$
\co_{\la,\mu}=\sum_{\stackrel{1\leq s_1<\dots<s_l\leq h,}{\mu-\eps_{s_1}-\dots-\eps_{s_l}\in X(h,n)}}\co_{\bar\la,\mu-\eps_{s_1}-\dots-\eps_{s_l}}.
$$
\end{Lemma}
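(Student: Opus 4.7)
The plan is to prove this by exhibiting an explicit bijection between the set counted by $\co_{\la,\mu}$ and the disjoint union of sets counted by $\co_{\bar\la,\mu-\eps_{s_1}-\dots-\eps_{s_l}}$ on the right hand side. The key observation is that column strictness of a tableau is a condition internal to each individual column, so removing or appending an entire column respects the column strict property automatically.

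The map goes as follows. Let $T$ be a column strict $\la$-tableau of type $\mu$. Read off the entries of the last column of $T$ from top to bottom; since $T$ is column strict and this column has height $l$, these entries form a strictly increasing sequence $1\leq s_1<\dots<s_l\leq h$. Deleting this last column from $T$ produces a filling $\bar T$ of the Young diagram $\bar\la$. Because column strictness is a per-column condition, $\bar T$ is a column strict $\bar\la$-tableau, and its type is obtained from $\mu$ by subtracting one occurrence of each entry we removed, i.e.\ $\mu-\eps_{s_1}-\dots-\eps_{s_l}$. Non-negativity of this composition is automatic since these entries were present in $T$, so $\mu-\eps_{s_1}-\dots-\eps_{s_l}\in X(h,n-l)\subseteq X(h,n)$ in the natural sense.

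The inverse map is equally direct: given $1\leq s_1<\dots<s_l\leq h$ with $\mu-\eps_{s_1}-\dots-\eps_{s_l}\in X(h,n)$ and a column strict $\bar\la$-tableau $\bar T$ of type $\mu-\eps_{s_1}-\dots-\eps_{s_l}$, append to $\bar T$ a new rightmost column of height $l$ with entries $s_1,\dots,s_l$ read from top to bottom. The resulting $\la$-tableau $T$ has type $\mu$; the new column is column strict by choice of the $s_i$, and all other columns are unchanged from $\bar T$, so $T$ is column strict.

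These maps are mutually inverse and partition the set counted by $\co_{\la,\mu}$ according to the content $(s_1,\dots,s_l)$ of the last column, yielding the claimed identity. There is no real obstacle here --- the only thing to be careful about is making sure non-negativity of $\mu-\eps_{s_1}-\dots-\eps_{s_l}$ is correctly encoded (it is, since the constraint $\mu-\eps_{s_1}-\dots-\eps_{s_l}\in X(h,n)$ in the sum is exactly the non-negativity condition), so that the index set of the sum matches the image of the forward map exactly.
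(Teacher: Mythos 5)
Your bijection is correct and is exactly the "easy check from the definition" that the paper has in mind (the paper gives no written proof, merely asserting the identity follows from the definition of $\co_{\la,\mu}$): since $\co_{\la,\mu}$ counts tableaux subject only to the per-column strictness condition, splitting off the last column and recording its strictly increasing entry set $s_1<\dots<s_l$ is a bijection onto the right-hand side, and you correctly note that the condition $\mu-\eps_{s_1}-\dots-\eps_{s_l}\in X(h,n)$ in the index set is just the non-negativity requirement (the target really lies in $X(h,n-l)$). No gaps.
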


Recall the classical Kostka numbers from (\ref{EKostka}).

\begin{Lemma} \label{LColNK} 
Let $\la,\mu\vdash n$. Then $\co_{\la,\mu}=\sum_{\nu\vdash n}K_{\nu^\tr,\mu}K_{\nu,\la^\tr}$. 
\end{Lemma}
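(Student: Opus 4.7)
The plan is to recognize this as a standard identity for Kostka numbers and prove it by a symmetric-function calculation (with a bijective alternative via RSK).

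First, I would translate $\co_{\la,\mu}$ into a coefficient of an elementary symmetric polynomial. A strictly increasing column of height $k$ with entries in $\{1,\dots,h\}$ is the same as a $k$-subset of $\{1,\dots,h\}$, so the generating function for such columns by content is $e_k(x_1,\dots,x_h)$. A column strict $\la$-tableau is a tuple of strictly increasing columns of heights $\la^\tr_1,\la^\tr_2,\dots$, so
$$
\sum_{\mu\in X(h,n)}\co_{\la,\mu}\,x^\mu \;=\; \prod_j e_{\la^\tr_j}(x_1,\dots,x_h) \;=\; e_{\la^\tr}(x_1,\dots,x_h),
$$
and hence $\co_{\la,\mu}=[x^\mu]\,e_{\la^\tr}$.

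Next, I would expand $e_{\la^\tr}$ in the Schur basis. Iterating the dual Pieri rule (each multiplication by $e_k$ adds a vertical strip of size $k$) gives, for any composition $\alpha$,
$$
e_\alpha \;=\; \sum_{\nu} K_{\nu^\tr,\alpha}\, s_\nu,
$$
since sequences of vertical strips of sizes $\alpha_1,\alpha_2,\dots$ filling up to $\nu$ are in bijection (by transposition) with semistandard $\nu^\tr$-tableaux of type $\alpha$. Applying this with $\alpha=\la^\tr$ and then using the combinatorial definition $s_\nu=\sum_\mu K_{\nu,\mu}m_\mu$, so that $[x^\mu]s_\nu=K_{\nu,\mu}$, yields
$$
\co_{\la,\mu} \;=\; [x^\mu]\,e_{\la^\tr} \;=\; \sum_{\nu\vdash n} K_{\nu^\tr,\la^\tr}\,K_{\nu,\mu}.
$$
Reindexing the sum by $\nu\mapsto\nu^\tr$ converts this to the claimed form $\sum_{\nu\vdash n}K_{\nu^\tr,\mu}\,K_{\nu,\la^\tr}$.

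As a sanity check (and as an alternative proof), I would note the bijective interpretation: a column strict $\la$-tableau of type $\mu$ is encoded by the $0,1$-matrix $A=(A_{i,j})$ with $A_{i,j}=1$ iff $i$ occurs in column $j$; this has column sums $\la^\tr$ and row sums $\mu$. Dual RSK then bijects such $0,1$-matrices with pairs $(P,Q)$ where $P$ is semistandard of shape $\nu^\tr$ and type $\mu$ and $Q$ is semistandard of shape $\nu$ and type $\la^\tr$, and summing over $\nu\vdash n$ recovers the formula. The only genuine content is the expansion $e_\alpha=\sum_\nu K_{\nu^\tr,\alpha}s_\nu$ (equivalently, the dual RSK bijection); both are standard, so I do not anticipate any real obstacle.
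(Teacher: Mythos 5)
Your proof is correct and is essentially the paper's argument recast in symmetric-function language: the paper decomposes $\La^{\la^\tr}(V_h)$ over $\C$ as $\bigoplus_{\nu\vdash n}\De_h(\nu^\tr)^{\oplus K_{\nu,\la^\tr}}$ and passes to $\mu$-weight-space dimensions, which is exactly your identity $e_{\la^\tr}=\sum_{\nu}K_{\nu^\tr,\la^\tr}s_\nu$ followed by extracting the coefficient of $x^\mu$. No gaps.
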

\begin{proof}
We have the well-known fact that over $\C$ the module 
$\La^{\la^\tr}(V_h)$ decomposes as
$\La^{\la^\tr}(V_h)=\bigoplus_{\nu\vdash n}\De_h(\nu^\tr)^{\oplus K_{\nu,\la^\tr}}$. Passing to the dimensions of the $\mu$-weight spaces in the last equality yields the lemma.  
\end{proof}

\section{Gelfand-Graev words and shuffles}\label{SSGGWSh}
Recall from (\ref{EBWt}) that we have fixed an extremal word $\bi=i_1 \dots i_e$ of $L_\de$. Recall that $i_1=0$ and $i_e=i$.
As in (\ref{EGGNewWay}), we also write $\bi$ in the form
$$
\bi=j_1^{m_1}\dots j_r^{m_r}\qquad(\text{with}\ j_k\neq j_{k+1}\ \text{for all $1\leq k<r$}).
$$
Note that always $m_1=1$. 
We define the {\em Gelfand-Graev words} (of {\em type $\bi$}):
$$
\gg^{(n)}=\gg^{(n)}_\bi:=i_1^n i_2^n\dots i_e^n=j_1^{m_1n}\dots j_r^{m_rn}\in\words_{n\de}
$$
for any $n\in\Z_{>0}$ and, more generally, for any composition $\mu\in X(h, n)$, set:
\begin{equation}\label{EGGWcopy}
\gg^\mu:=\gg^{(\mu_1)}\dots\gg^{(\mu_h)}\in\words_{n\de}
\end{equation}

\begin{Lemma} \label{LCombGG}
Let $n=l_1+\dots+l_a$ for some $l_1,\dots,l_a\in\Z_{>0}$. Suppose that for each $1\leq c\leq a$, we are given a word $\bj^{(c)}$ of the imaginary tensor space $\Mde_{l_c}$. Assume that a Gelfand-Graev word $\gg^\mu$ of type $\bi$ appears as a summand in the shuffle product $\bj^{(1)}\circ \dots\circ \bj^{(a)}$. Then $\bj^{(1)}, \dots, \bj^{(a)}$ are all Gefand-Graev words of type $\bi$. 
\end{Lemma}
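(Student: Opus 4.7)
The approach is induction on the number $h$ of non-zero parts of $\mu$. Fix any interleaving realizing $\gg^\mu$ as a shuffle of the $\bj^{(c)}$'s; this gives, for each $c$, an order-preserving injection of the letters of $\bj^{(c)}$ into the positions of $\gg^\mu$. Because the positions of cycle $s$, namely the letters of the $\gg^{(\mu_s)}$-block inside $\gg^\mu=\gg^{(\mu_1)}\gg^{(\mu_2)}\cdots\gg^{(\mu_h)}$, form a contiguous interval, monotonicity forces each $\bj^{(c)}$ to decompose as a concatenation $\bj^{(c)}=\bj^{(c,1)}\bj^{(c,2)}\cdots\bj^{(c,h)}$, where $\bj^{(c,s)}$ is the subword of $\bj^{(c)}$ landing in cycle $s$. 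Within cycle $s$, each letter $j_k$ occupies a single contiguous block of length $m_k\mu_s$, so $\bj^{(c,s)}$ is forced to have the form $j_1^{\xi^{(c,s)}_1}j_2^{\xi^{(c,s)}_2}\cdots j_r^{\xi^{(c,s)}_r}$, with $\sum_c\xi^{(c,s)}_k=m_k\mu_s$ for every $k$.

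The base case $h=1$ is immediate: the decomposition collapses to $\bj^{(c)}=j_1^{\xi^{(c,1)}_1}\cdots j_r^{\xi^{(c,1)}_r}$, and the requirement $\bj^{(c)}\in\words_{l_c\delta}$ forces $\xi^{(c,1)}_k=m_k l_c$ (provided the indices $j_1,\ldots,j_r$ are distinct, with obvious bookkeeping otherwise), so that $\bj^{(c)}=\gg^{(l_c)}$.

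For the inductive step ($h\ge 2$), the core claim is that each $\bj^{(c,h)}$ is itself a Gelfand-Graev word $\gg^{(\nu_h^{(c)})}$ for some $\nu_h^{(c)}\geq 0$. Granting this, $\bj^{(c,h)}$ has weight $\nu_h^{(c)}\delta$ and hence the prefix $\bj^{(c)}_\mathrm{pre}:=\bj^{(c,1)}\cdots\bj^{(c,h-1)}$ has weight $(l_c-\nu_h^{(c)})\delta$. Applying the Mackey Theorem~\ref{TMackey} to $\Res^{l_c\delta}_{(l_c-\nu_h^{(c)})\delta,\,\nu_h^{(c)}\delta}\Mde_{l_c}$, one sees that every composition factor of this restriction is a tensor product $V\boxtimes W$ of imaginary modules of color $i$, with $V$ (resp.\ $W$) a subquotient of $\Mde_{l_c-\nu_h^{(c)}}$ (resp.\ $\Mde_{\nu_h^{(c)}}$); in particular $\bj^{(c)}_\mathrm{pre}\in\CH \Mde_{l_c-\nu_h^{(c)}}$. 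The induced shuffle of the $\bj^{(c)}_\mathrm{pre}$'s realizes $\gg^{\mu'}$ with $\mu'=(\mu_1,\ldots,\mu_{h-1})$, to which the inductive hypothesis applies, giving $\bj^{(c)}_\mathrm{pre}=\gg^{(\nu_1^{(c)},\ldots,\nu_{h-1}^{(c)})}$, and concatenation yields $\bj^{(c)}=\gg^{(\nu_1^{(c)},\ldots,\nu_h^{(c)})}$.

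The main obstacle is establishing the core claim $\bj^{(c,h)}=\gg^{(\nu_h^{(c)})}$, equivalently that the tuple $(\xi_1^{(c,h)},\ldots,\xi_r^{(c,h)})$ is a non-negative integer multiple of $(m_1,\ldots,m_r)$. The plan is to realize $\bj^{(c)}$ as a shuffle of $l_c$ individual words from $\CH L_\delta$ and analyze how each such $L_\delta$-factor distributes its letters across the cycles of $\gg^\mu$: since every word of $L_\delta$ begins with $j_1=0$ by Lemma~\ref{L150813}(iii) and ends with $j_r=i$ by Proposition~\ref{L3912}(iii), any $L_\delta$-factor crossing a cycle boundary of $\gg^\mu$ must place a $j_1$-letter at the start of the new cycle, which yields tight constraints on factor-to-cycle assignments. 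Combined with the total count constraint $\sum_c\xi_k^{(c,h)}=m_k\mu_h$, this should produce the required proportionality; a symmetric argument splitting off the first cycle instead of the last would work equally well.
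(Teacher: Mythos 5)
Your overall architecture is the same as the paper's: decompose each $\bj^{(c)}$ across the cycles of $\gg^\mu$, show that the piece landing in a single cycle is itself Gelfand--Graev, note (via restriction/Mackey) that the remaining piece is a word of a smaller imaginary tensor space, and induct on the number of cycles. (The paper peels off the first cycle rather than the last and reduces to $a=2$ first; those differences are cosmetic.)

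The problem is that your ``core claim'' --- that $(\xi_1^{(c,s)},\dots,\xi_r^{(c,s)})$ is proportional to $(m_1,\dots,m_r)$ --- is the entire content of the lemma, and you do not prove it; you only sketch a plan and assert it ``should'' work. The same gap already infects your base case: when $j_1,\dots,j_r$ are not pairwise distinct (which happens, e.g., for the word (\ref{EIBTypeD}) in type ${\tt D}_l^{(1)}$), the constraint $\bj^{(c)}\in\words_{l_c\de}$ only pins down the sums $\sum_{k:\,j_k=j}\xi_k$ for each letter $j$, not the individual $\xi_k$, and the ``obvious bookkeeping'' is exactly the missing argument. The paper closes this step by a two-sided squeeze: the shuffle gives the exact identity $\sum_c\xi_k^{(c,s)}=m_k\mu_s=m_k\sum_c\xi_1^{(c,s)}$ (using $m_1=1$ and $j_1=0$), while the fact that each $\bj^{(c)}$ is a word of $\Mde_{l_c}$ gives the one-sided bound $\xi_k^{(c,s)}\le m_k\,\xi_1^{(c,s)}$ for every $c$ --- each $0$ in the relevant prefix opens a new $L_\de$-factor, and an open factor can contribute at most $m_k$ letters $j_k$ at that stage --- so an excess for one $c$ forces a deficit for another, contradicting the same bound there. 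Your observation that every $0$ must begin a new $L_\de$-factor is the right starting point, but as stated it only controls \emph{which} factors are active in a given cycle, not \emph{how many} $j_k$'s an active factor may contribute; without the per-factor upper bound the proportionality does not follow.
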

\begin{proof}
Clearly we may assume that $a=2$. Then we may write $\bj^{(1)}=\bk^{(1)}\bl^{(1)}$ and $\bj^{(2)}=\bk^{(2)}\bl^{(2)}$, so that $\gg^{(\mu_1)}$ appears in the shuffle product $\bk^{(1)}\circ \bk^{(2)}$. Recall that $\gg^{(\mu_1)}=j_1^{m_1\mu_1}\dots j_r^{m_r\mu_1}$. It follows that $\bk^{(1)}=j_1^{a_1}\dots j_r^{a_r}$, $\bk^{(2)}=j_1^{b_1}\dots j_r^{b_r}$ with $a_k+b_k=m_k\mu_1$ for all $k=1,\dots,r$. Note that  $a_2+b_2=m_2\mu_1=m_2(a_1+b_1)$. We claim that  $a_2=a_1m_2$ and $b_2=b_1m_2$. Indeed, otherwise either $a_2>a_1m_2$ or $b_2>b_1m_2$. But the first inequality  contradicts the fact that $\bj^{(1)}=\bk^{(1)}\bl^{(1)}$ is a word of $M_{l_1}$, and the second inequality contradicts the fact that $\bj^{(2)}=\bk^{(2)}\bl^{(2)}$ is a word of $M_{l_2}$. Continuing this way, we see that 
 $a_k=m_ka_1$ and $b_k=m_kb_1$ for all $k=1,\dots,r$. In other words, $\bk^{(1)}=\gg^{(a_1)}$ and $\bk^{(2)}=\gg^{(b_1)}$. Now the Gelfand-Graev word $\gg^{(\mu_2)}\dots\gg^{(\mu_h)}$ appears in the shuffle product $\bl^{(1)}\circ \bl^{(2)}$. Moreover, $\bl^{(1)}$ and $\bl^{(2)}$ are words of $\Mde_{l_1-a_1}$ and $\Mde_{l_2-b_1}$, respectively. So we can apply induction on the length of $\mu$. 
\end{proof}

For $n\in\Z_{\geq 0}$, we denote 
$$
c(n):=\prod_{k=1}^r [m_kn]^!_{j_k}\in\A,
$$
Note by Lemma~\ref{LMultOneWeight} that $\DIM1_\bi L_\de=c(1)$, since we have chosen $\bi$ to be an extremal word in $L_\de$. 
For $\mu\in X(h, n)$, denote 
\begin{equation}\label{Emla}
c(\mu):=c(\mu_1)\dots c(\mu_h)=\prod_{m=1}^h\prod_{k=1}^r [m_k\mu_m]^!_{j_k}\in\A. 
\end{equation}
In the simply laced types all $m_k=1$, and so $c(\mu)=([\mu_1]_q^!\dots [\mu_h]_q^!)^e$.

Recall the numbers $\co_{\la,\mu}$ defined in (\ref{EClamu}).

\begin{Proposition} \label{PGGinShuffle}
Let $\la\in X_+(h,n)$ and $\mu\in X(h,n)$. Set $\la^\tr=(l_1,\dots,l_a)$. Then $\gg^\mu$ appears in the quantum shuffle product
\begin{equation}\label{EPGGinShuffle}
\bi^{l_1}\circ\dots\circ \bi^{l_a}
\end{equation}
with the coefficient $c(\mu)\co_{\la,\mu}/c(1)^n$. 
\end{Proposition}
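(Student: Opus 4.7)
My plan is to analyze directly which shuffles of the concatenated words $\bi^{l_1},\dots,\bi^{l_a}$ give $\gg^\mu$, and then sum their quantum shuffle weights. In any such shuffle the $l_c$-fold concatenation structure of $\bi^{l_c}$ partitions input word $c$'s contribution to $\gg^\mu$ into $l_c$ consecutive ``copies of $\bi$,'' each reading as $j_1^{m_1}\cdots j_r^{m_r}$. The first step I would carry out is to show that every such copy of $\bi$ lies entirely within a single block $\gg^{(\mu_m)}$ of $\gg^\mu$, and that no input word contributes more than one copy to the same block. The second assertion is forced because two copies from the same input word inside block $m$ would put a $j_r\to j_1$ transition inside that block, contradicting the fact that block $m$ of $\gg^\mu$ has all $j_1$'s strictly before all $j_r$'s. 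The first assertion I would deduce by applying Lemma~\ref{LCombGG} blockwise, viewing the portion of $\gg^\mu$ coming from a given block as a shuffle of the pieces donated by each input word.

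Each shuffle therefore produces a $0$-$1$ matrix $B=(B_{c,m})_{1\le c\le a,\,1\le m\le h}$ with $B_{c,m}=1$ exactly when input word $c$ donates a copy of $\bi$ to block $m$. Counting letters of each type in each block yields $\sum_m B_{c,m}=l_c$ and $\sum_c B_{c,m}=\mu_m$. The number of such matrices equals $\co_{\la,\mu}$: sending $B$ to the $\la$-tableau whose $c$-th column (of height $l_c=\la^\tr_c$) consists of $\{m:B_{c,m}=1\}$ written in increasing order from top to bottom is a bijection with column strict $\la$-tableaux of type~$\mu$.

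The final step is to sum the quantum shuffle weights. With $B$ fixed, the only remaining freedom is, for each block $m$ and each letter type $j_k$, to distribute the $m_k\mu_m$ positions of type $j_k$ in block $m$ among the $\mu_m=\sum_cB_{c,m}$ contributing input words, with each contributor receiving $m_k$ positions. I expect the sum of shuffle weights over these choices to factor as
\begin{equation*}
\prod_{m=1}^h\prod_{k=1}^r\frac{[m_k\mu_m]^!_{j_k}}{([m_k]^!_{j_k})^{\mu_m}},
\end{equation*}
with each factor arising as a quantum multinomial $\binom{m_k\mu_m}{m_k,\dots,m_k}_{q_{j_k}}$. The main obstacle will be proving this factorization: cross-type and cross-block inversions must be shown either to be fixed by the data of $B$ or to cancel when summed over the remaining choices. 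This reduction uses that the types $j_k,j_{k+1}$ are distinct and that within a block all $j_k$'s precede all $j_{k'}$'s for $k<k'$, so only same-type swaps within a single block contribute varying $q$-weights, which is exactly the quantum multinomial. Finally, summing over matrices and using $\prod_{m,k}[m_k\mu_m]^!_{j_k}=c(\mu)$ together with $\prod_{m,k}([m_k]^!_{j_k})^{\mu_m}=\prod_k([m_k]^!_{j_k})^n=c(1)^n$ yields the claimed coefficient $c(\mu)\co_{\la,\mu}/c(1)^n$.
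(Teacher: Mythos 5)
Your strategy is genuinely different from the paper's. The paper proceeds by induction on the number $a$ of columns of $\la$ (i.e.\ parts of $\la^\tr$): it peels off the last column, uses Lemma~\ref{LCombGG} to see that only Gelfand--Graev words $\gg^\nu$ appearing in $\bi^{l_1}\circ\dots\circ\bi^{l_{a-1}}$ can contribute, computes the coefficient of $\gg^\mu$ in $\gg^\nu\circ\bi^{l_a}$ as $c(\mu)/\bigl(c(\nu)c(1)^{l_a}\bigr)$, and closes the induction with the recursion for $\co_{\la,\mu}$ in Lemma~\ref{LLaChar}. Your direct, non-inductive classification of the contributing shuffles by $0$--$1$ matrices, and the bijection of those matrices with column strict $\la$-tableaux of type $\mu$, is correct and arguably more illuminating: it exhibits the answer as a sum over tableaux of explicit products of quantum multinomials, rather than hiding the combinatorics in an induction. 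Your claims (1) and (2) are sound; the blockwise proportionality argument you allude to is exactly the mechanism in the proof of Lemma~\ref{LCombGG} (using that $m_1=1$ and that the only prefixes of $\bi^{l_c}$ of Gelfand--Graev form are $\emptyset$ and $\bi$ itself), and your $j_r\to j_1$ observation correctly rules out two copies in one block.

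The gap is in the weight computation, and it is slightly more serious than you indicate. First, it does not suffice that the cross-block contributions be ``fixed by the data of $B$'': if they were fixed but depended nontrivially on $B$, the sum over matrices would not produce $\co_{\la,\mu}$ times a constant. You need them to be trivial, and they are, because for a letter $y$ in block $m'$ and a full copy of $\bi$ lying in block $m\neq m'$, either no pair or every pair is inverted, contributing $q^{-(\de,\al_{j})}=1$. Second, within a block the varying same-type sum is \emph{not} the balanced quantum multinomial: summing $q_{j_k}^{-2\cdot\mathrm{inv}}$ over interleavings of $\mu_m$ groups of $m_k$ letters gives $q_{j_k}^{-m_k^2\binom{\mu_m}{2}}\,[m_k\mu_m]^!_{j_k}/([m_k]^!_{j_k})^{\mu_m}$, an unbalanced Gaussian multinomial. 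The fixed cross-type contributions inside the block are likewise not $1$: for each of the $\binom{\mu_m}{2}$ pairs of copies they contribute $\prod_{k'<k}q^{-m_km_{k'}(\al_{j_k},\al_{j_{k'}})}=\prod_k q_{j_k}^{m_k^2}$, where the last equality uses $(\de,\de)=0$. These two correction factors cancel exactly, yielding your displayed product; but both steps require the identities $(\de,\al_j)=0$ and $(\de,\de)=0$, which you must invoke explicitly to make the ``main obstacle'' go away. With those two observations supplied, your argument is complete and correct.
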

\begin{proof}
We apply induction on $a$. If $a=1$, then (\ref{EPGGinShuffle}) is just the concatenation $\bi^{l_1}=\gg^{(1^n)}$, and the result is clear since $c((1^n))=c(1)^n$ and $\co_{(1^n),(1^n)}=1$. For the inductive step, let $a>1$ and denote by $\bar\la\in X_+(h,n-l_a)$ the partition obtained from $\la$ by deleting its last column. By the inductive assumption, for any $\nu\in X(h, n-l_a)$ we have that  $\gg^\nu$ appears in the quantum shuffle product
$$
S:=\bi^{l_1}\circ\dots\circ\bi^{l_{a-1}}
$$
with coefficient $c(\nu)\co_{\bar\la,\nu}/c(1)^{n-l_a}$. Now  (\ref{EPGGinShuffle}) is 
$
S\circ \bi^{l_a}.
$
By Lemma~\ref{LCombGG}, if the word $\bj$ appearing in $S$ has the property that some Gelfand-Graev word $\gg^\mu$ appears in the shuffle product $\bj\circ (\bi^{(b)})^{l_a}$, then the word $\bj$ must itself be Gelfand-Graev, i.e. $\bj=\gg^\nu$ for some $\nu\in X(h,n-l_a)$. Moreover, note that $\gg^\mu$ appears in $\gg^\nu\circ \bi^{l_a}$ if and only if $\mu$ is of the form $\mu=\nu+\eps_{s_1}+\dots+\eps_{s_{l_a}}$ for some $1\leq s_1<\dots<s_{l_a}\leq h$, in which case $\gg^\mu$ appears in $\gg^\nu\circ \bi^{l_a}$ with the coefficient $c(\mu)/c(\nu)c(1)^{l_a}$.  Now the result follows in view of Lemma~\ref{LLaChar}. 
\end{proof}

Recall Gelfand-Graev modules $\Ga_n\cong\Ga_{nm_1,j_1}\circ\dots\circ \Ga_{nm_r,j_r}$ from Section~\ref{SIGGR}. 

\begin{Lemma} \label{LDimGG}
Let $M\in\mod{R_{n\de}}$ and $\mu\in X(n,h)$. Then 
$$\DIM M_{\gg^\mu}=c(\mu)\,\DIM\Hom_{R_{n\de}}(\Ga_{\mu_1}\circ \dots\circ \Ga_{\mu_h},M).$$
\end{Lemma}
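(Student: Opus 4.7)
The plan is to reduce the lemma to an iterated application of a single basic fact about nil--Hecke algebras. By Proposition~\ref{PGGId}, each $\Ga_{N,j}$ is (up to the degree shift in its definition) the projective cover of the unique irreducible module $L(j^N)$ of the nil--Hecke algebra $R_{N\al_j}$, with $\ga_{N,j}$ a primitive idempotent. I will first isolate the following one--color statement, which is essentially contained in the proof of Proposition~\ref{2.5e}:
\emph{for any $N\in\Z_{>0}$, any $j\in I$, and any $X\in\mod{R_{N\al_j}}$, one has}
\[
\DIM X_{(j^N)} = [N]^!_{j}\,\DIM \Hom_{R_{N\al_j}}(\Ga_{N,j},X).
\]
This is proved by writing $\Hom_{R_{N\al_j}}(\Ga_{N,j},X)\cong q_j^{N(N-1)/2}\ga_{N,j}X$ and comparing graded dimensions using $\DIM R_{N\al_j}=[N]^!_j(1-q_j^2)^{-N}$ versus $\DIM\Ga_{N,j}=q_j^{-N(N-1)/2}(1-q_j^2)^{-N}$; equivalently, the nil--Hecke regular module decomposes (with appropriate graded shifts) as a direct sum of $[N]^!_j\cdot q_j^{N(N-1)/2}$ copies of $\Ga_{N,j}$. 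Checking this on $X=L(j^N)$ and $X=\Ga_{N,j}$ recovers the identity above.

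Next I would unpack the Gelfand--Graev modules. By (\ref{EGGId}), transitivity of induction, and the factorization $\bi=j_1^{m_1}\cdots j_r^{m_r}$, one can write
\[
\Ga_{\mu_1}\circ\cdots\circ\Ga_{\mu_h}\;\cong\;\Ind^{R_{n\de}}_{A}\bigl(\boxtimes_{m=1}^{h}\boxtimes_{k=1}^{r}\Ga_{\mu_m m_k,\,j_k}\bigr),
\]
where $A:=\bigotimes_{m=1}^{h}\bigotimes_{k=1}^{r}R_{\mu_m m_k\al_{j_k}}$ is the parabolic subalgebra of $R_{n\de}$ determined by the Gelfand--Graev word $\gg^\mu$. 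Applying Frobenius reciprocity,
\[
\Hom_{R_{n\de}}(\Ga_{\mu_1}\circ\cdots\circ\Ga_{\mu_h},M)\;\cong\;\Hom_{A}\bigl(\boxtimes_{m,k}\Ga_{\mu_m m_k,j_k},\,\Res^{R_{n\de}}_{A}M\bigr).
\]
Crucially, $1_A=1_{\gg^\mu}$ inside $R_{n\de}$ because the only word of $A$--weight in $\words_{n\de}$ is $\gg^\mu$ itself; hence $\Res^{R_{n\de}}_{A}M=M_{\gg^\mu}$ as graded vector spaces, and this is a module over the tensor product of nil--Hecke algebras $R_{\mu_m m_k\al_{j_k}}$.

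Finally, iterating the one--color fact over the indices $(m,k)$ (for instance by peeling off one tensor factor at a time via $\Hom_{A_1\otimes A_2}(P_1\boxtimes P_2,N)\cong \Hom_{A_1}(P_1,\Hom_{A_2}(P_2,N))$), the graded dimension of the Hom space on the right picks up exactly the factor $\prod_{m,k}[\mu_m m_k]^!_{j_k}=\prod_m c(\mu_m)=c(\mu)$ in the denominator, giving
\[
\DIM\Hom_{R_{n\de}}(\Ga_{\mu_1}\circ\cdots\circ\Ga_{\mu_h},M)\;=\;\frac{\DIM M_{\gg^\mu}}{c(\mu)},
\]
which is the asserted identity. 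The main technical nuisance will be bookkeeping of the degree shifts: the shifts built into the definitions of $\Ga_n$ and $\Ga_{N,j}$ are tuned precisely so that the $q_{j_k}^{\pm N(N-1)/2}$ factors cancel against $\deg\ga_{N,j}=0$ and against the shifts implicit in Frobenius reciprocity, and one must verify that after iteration the only surviving factor is $c(\mu)=\prod_{m,k}[\mu_m m_k]^!_{j_k}$.
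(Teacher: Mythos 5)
Your proposal is correct and takes essentially the same route as the paper: both pass via Frobenius reciprocity to the parabolic determined by $\gg^\mu$ (where the only surviving word is $\gg^\mu$ itself) and then invoke the fact that the $\Ga_{N,j}$ are projective covers of the nil-Hecke irreducibles $L(j^N)$, whose graded dimensions produce the factor $c(\mu)$. Your iterated one-color identity is just the paper's single step $\DIM M_{\gg^\mu}=c(\mu)[\Res_{\si}M:L]_q$ unwound one tensor factor at a time.
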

\begin{proof}
Let 
$$\si=(\mu_1m_1\al_{j_1},\dots,\mu_1m_r\al_{j_r},\dots,\mu_hm_1\al_{j_1},\dots,\mu_hm_r\al_{j_r}).$$ 
Consider the irreducible module
$$
L:=L(j_1^{\mu_1m_1})\boxtimes\dots\boxtimes L(j_r^{\mu_1m_r})\boxtimes  \dots\boxtimes L(j_1^{\mu_hm_1})\boxtimes\dots\boxtimes L(j_r^{\mu_hm_r})
$$
over the parabolic $R_\si$. 
Note that 
$\DIM M_{\gg^\mu}=c(\mu)[\Res_\si M:L]_q.$ 
Since $\Ga_{m,i}$ is the projective cover of $L(i^m)$ by Proposition~\ref{PGGId}(i), $[\Res_\si M:L]_q$ equals
$$\DIM\Hom_{R_\si}(\Ga_{\mu_1m_1,j_1}\boxtimes\dots\boxtimes \Ga_{\mu_1m_r,j_r}\boxtimes  \dots\boxtimes \Ga_{\mu_hm_1,j_1}\boxtimes\dots\boxtimes \Ga_{\mu_hm_r,j_r},\Res_{\si}M),$$
and the result follows by the adjunction of $\Ind$ and $\Res$. 
\end{proof}

\section{Gelfand-Graev fragment of the formal character of $\Dede(\la)$}
Recall Gelfand-Graev words $\gg^\mu=\gg^\mu_\bi$ from  Section~\ref{SSGGWSh}, scalars $c(n),c(\mu)\in\A$ from (\ref{Emla}), and $R_{n\de}$-modules $L(\la), \De(\la),  T(\la)$ from (\ref{3.5.3}), (\ref{3.5.4}), (\ref{ETilt}). 

\begin{Lemma} \label{L1^n}
We have $\Lde((1^n))=\Dede((1^n))=\Tde((1^n))=\Lade_n$, the Gelfand Graev word $\gg^{(1^n)}=\bi^{n}$ appears in $\CH \Dede((1^n))$ with multiplicity $c((1^n))=c(1)^n$, and $\gg^{\mu}$ with $\mu\in X(n,n)\setminus\{(1^n)\}$ does not appear in $\CH \Dede((1^n))$.  
\end{Lemma}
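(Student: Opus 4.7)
The plan is to split the lemma into three stages. First I would identify the four named modules with $\Lade_n$. Since $(1^n)$ is the minimum of $(X_+(h,n),\leq)$, the Schur-algebra modules $L_h((1^n))$, $\De_h((1^n))$, $\nabla_h((1^n))$, and $T_h((1^n))$ all coincide with $\La^n(V_h)$. Applying the Morita equivalence $\be_{h,n}$ together with Lemma~\ref{3.5d} at $\nu=(n)$ then identifies each of $\Lde((1^n))$, $\Dede((1^n))$, and $\Tde((1^n))$ with $\be_{h,n}(\La^n(V_h))=\Lade^{(n)}=\Lade_n$.

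Second, I would establish the structural identity $\HCR^n_\mu \Lade_n\cong \Lade_\mu$ for any $\mu=(\mu_1,\ldots,\mu_h)\vDash n$. By Corollary~\ref{4.2h}(ii) applied to $\la=(1^n)$, the module $\HCR^n_\mu\De((1^n))$ carries a standard filtration in which $\De(\la^{(1)})\boxtimes\cdots\boxtimes\De(\la^{(h)})$ (with $\la^{(k)}\vdash \mu_k$) appears with multiplicity $c^{(1^n)}_{\la^{(1)},\ldots,\la^{(h)}}$. Iterating the Pieri rule $s_{(1^a)}\cdot s_\nu=\sum_\rho s_\rho$ (the sum ranging over $\rho$ obtained from $\nu$ by adding a vertical $a$-strip) shows that this coefficient is $1$ when every $\la^{(k)}=(1^{\mu_k})$ and is $0$ otherwise. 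Hence the filtration collapses to its unique layer $\Lade_{\mu_1}\boxtimes\cdots\boxtimes\Lade_{\mu_h}=\Lade_\mu$, and taking graded word-space dimensions yields
\[
\DIM(\Lade_n)_{\gg^\mu}=\prod_{k=1}^{h}\DIM(\Lade_{\mu_k})_{\gg^{(\mu_k)}}.
\]

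The main obstacle lies in the third stage: computing the individual factors $\DIM(\Lade_t)_{\gg^{(t)}}$. The cases $t=0$ and $t=1$ are immediate (giving $1$ and $c(1)$ respectively, the latter from Lemma~\ref{LMultOneWeight} because $\bi$ is extremal). For $t\geq 2$ I would recycle the computation buried inside the proof of Proposition~\ref{2.5e}(iii): writing $u\in\Si_{te}$ for the shortest element with $u\cdot\bi^t=\gg^{(t)}$, every vector of $1_{\gg^{(t)}}\Mde_t$ has the form $\psi_x\psi_uv_t$, and a direct check shows $\psi_uv_t\cdot s_r=-\psi_uv_t$ for every simple generator $s_r$ of $\Si_t$. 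Thus the right $\Si_t$-action on $1_{\gg^{(t)}}\Mde_t$ is the sign representation. Because this right action commutes with the left action of $1_{\gg^{(t)}}$, this forces
\[
1_{\gg^{(t)}}\Lade_t=1_{\gg^{(t)}}\Mde_t\cdot{\mathtt x}_t=0\qquad(t\geq 2),
\]
since $\sum_{w\in\Si_t}\sgn(w)=0$ for $t\geq 2$.

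Assembling the pieces: for $\mu=(1^n)\in X(n,n)$ every factor equals $c(1)$, yielding total multiplicity $c(1)^n=c((1^n))$. For any other $\mu\in X(n,n)$, the constraint $\sum_{k=1}^{n}\mu_k=n$ with $n$ parts forces some $\mu_k\geq 2$ by pigeonhole, and the corresponding factor vanishes.
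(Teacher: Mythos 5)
Your proposal is correct, and while the first part (identifying $\Lde((1^n))=\Dede((1^n))=\Tde((1^n))=\Lade_n$ via the Morita equivalence) coincides with the paper's argument, your treatment of the vanishing statement is genuinely different. The paper first reduces to characteristic zero via Theorem~\ref{4.1c}, then converts $\DIM\De((1^n))_{\gg^\mu}$ into a Hom-space $\Hom_{R_{n\de}}(\Ga_{\mu_1}\circ\dots\circ\Ga_{\mu_n},\Lade_n)$ by Lemma~\ref{LDimGG}, and kills it by playing the statement of Proposition~\ref{2.5e}(iii) (the space is $\sgn_{\Si_\mu}\otimes_{F\Si_\mu}F\Si_n$, annihilated by ${\mathtt x}_n$) against the fact that ${\mathtt x}_n$ is invertible on $\Lade_n=M_n{\mathtt x}_n$ in characteristic zero. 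You instead stay in arbitrary characteristic: you establish $\HCR^n_\mu\Lade_n\cong\Lade_\mu$ from Corollary~\ref{4.2h}(ii) and the dual Pieri rule (the coefficient of $s_{(1^n)}$ in $s_{\la^{(1)}}\cdots s_{\la^{(h)}}$ is $1$ if every $\la^{(k)}$ is a column and $0$ otherwise), factor the word space $1_{\gg^\mu}\Lade_n$ accordingly, and kill each factor with $\mu_k\geq 2$ by observing that the computation inside the proof of Proposition~\ref{2.5e}(iii) shows $\Si_t$ acts on $1_{\gg^{(t)}}M_t$ by sign, so that $1_{\gg^{(t)}}M_t{\mathtt x}_t=0$ since $\sum_{w\in\Si_t}\sgn(w)=0$ for $t\geq 2$ (true over any field, as $t!/2-t!/2=0$ in $\Z$). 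Both arguments ultimately hinge on the same sign computation; yours buys a characteristic-free proof at the cost of invoking the Littlewood--Richardson machinery of Chapter~\ref{SSIM}, whereas the paper's stays closer to the Gelfand--Graev module formalism and keeps the restriction isomorphism for $\Lade$ out of the picture. Your structural identity $\HCR^n_\mu\Lade_n\cong\Lade_\mu$ is a nice byproduct not stated explicitly in the paper (Lemma~\ref{L3.3c} covers only $\Sde$ and $\Zde$).
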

\begin{proof}
It well-known for the usual Schur algebras that $T_n((1^n))=\De_n((1^n))=L_n(1^n)$. By applying the Morita equivalence $\be_{n,n}$ and Lemma~\ref{4.5b}, we now obtain  
$
\Lde((1^n))=\Dede((1^n))=\Tde((1^n))=\La^{(n)}=\La_n,
$
and the first two statements follow from Lemma~\ref{LLaIrr}. 

For the last statement, let $\mu\in X(n,n)\setminus\{(1^n)\}$. We have to prove that $\De((1^n))_{\gg^\mu}=0$. In view of Theorem~\ref{4.1c}, we may assume that $F$ has characteristic zero. By Lemma~\ref{LDimGG}, we need to prove that 
$\Hom_{R_{n\de}}(\Ga_{\mu_1}\circ\dots\circ \Ga_{\mu_n},\Lade_n)=0.$ 
We have by adjunction of $\Ind$ and $\Res$ and Lemma~\ref{3.2f}(ii):
$$\Hom_{R_{n\de}}(\Ga_{\mu_1}\circ\dots\circ \Ga_{\mu_n},\Mde_n)\cong 
\Hom_{R_{\mu,\de}}(\Ga_{\mu_1}\boxtimes\dots\boxtimes \Ga_{\mu_n},\Mde_\mu \otimes_{F\Si_\mu} F\Si_n).
$$
By Proposition~\ref{2.5e}(iii), the latter space is isomorphic as a right $F\Si_n$-module to $\sgn_{\Si_\mu}\otimes_{F\Si_\mu}F\Si_n$. This module is annihilated by the (right) multiplication by ${\mathtt x}_n$. Therefore the right multiplication by ${\mathtt x}_n$ annihilates the image of any non-zero homomorphism from $\Ga_{\mu_1}\circ\dots\circ \Ga_{\mu_n}$ to $\Mde_n$. On the other hand, the right multiplication by ${\mathtt x}_n$ acts as an automorphism of $\Lade_n=\Mde_n{\mathtt x}_n$ since $F$ has characteristic zero. This implies that $\Hom_{R_{n\de}}(\Ga_{\mu_1}\circ\dots\circ \Ga_{\mu_n},\Lade_n)=0$. 
\end{proof}

Recall that for a composition $\mu\vDash n$ we denote by $\mu^+\vdash n$ the unique partition obtained from $\mu$ by a permutation of its parts. 

\begin{Corollary} \label{CGGFrLade}
Let $\la\vdash n$ and $\mu\vDash n$. Then $\gg^\mu$ appears in $
\CH\Lade^{\la^\tr}$ 
with the coefficient $c(\mu)\co_{\la,\mu}$. In particular, $\gg^\mu$ appears in $
\CH\Lade^{\la^\tr}$ 
with the same coefficient as $\gg^{\mu^+}$.
\end{Corollary}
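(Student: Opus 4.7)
My plan is to compute $\CH \Lade^{\la^\tr}$ as a shuffle of the characters of $\Lade_{l_1}, \dots, \Lade_{l_a}$, where $\la^\tr = (l_1, \dots, l_a)$, and then track only the Gelfand-Graev contributions.

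First, I would use the isomorphism $\Lade^{\la^\tr} \cong \HCI^n_{\la^\tr}\Lade_{\la^\tr}$ from (\ref{EUpperNu}) together with $\Lade_{\la^\tr} \cong \Lade_{l_1}\boxtimes\cdots\boxtimes\Lade_{l_a}$, and the character formula (\ref{ECharShuffle}) for induction products, to write
\[
\CH \Lade^{\la^\tr} \;=\; \CH \Lade_{l_1}\,\circ\,\cdots\,\circ\,\CH \Lade_{l_a}.
\]
Next, I would extract the coefficient of the Gelfand-Graev word $\gg^\mu$ in this shuffle. By Lemma~\ref{LCombGG}, if $\gg^\mu$ appears in a shuffle of words $\bj^{(1)}\circ\cdots\circ\bj^{(a)}$ with each $\bj^{(c)}$ a word of $\Lade_{l_c}\subseteq\Mde_{l_c}$, then every $\bj^{(c)}$ must itself be a Gelfand-Graev word. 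So only the Gelfand-Graev part of each $\CH\Lade_{l_c}$ contributes.

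The second key input is Lemma~\ref{L1^n}: among Gelfand-Graev words, the only one appearing in $\CH \Lade_{l_c}=\CH \Dede((1^{l_c}))$ is $\gg^{(1^{l_c})}=\bi^{l_c}$, with coefficient $c(1)^{l_c}$. Hence the Gelfand-Graev piece of the shuffle product is simply
\[
c(1)^{l_1}\cdots c(1)^{l_a}\cdot\bigl(\bi^{l_1}\circ\cdots\circ\bi^{l_a}\bigr)\;=\;c(1)^{n}\cdot\bigl(\bi^{l_1}\circ\cdots\circ\bi^{l_a}\bigr),
\]
and I can read off the coefficient of $\gg^\mu$ directly from Proposition~\ref{PGGinShuffle}, which gives it as $c(\mu)\co_{\la,\mu}/c(1)^{n}$. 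Multiplying by $c(1)^n$ yields the desired coefficient $c(\mu)\co_{\la,\mu}$.

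For the ``in particular'' statement, I would observe that both $c(\mu)=\prod_{m=1}^n\prod_{k=1}^r[m_k\mu_m]^!_{j_k}$ (from (\ref{Emla})) and $\co_{\la,\mu}$ (the number of column strict $\la$-tableaux of type $\mu$) depend only on the multiset of parts of $\mu$, hence only on $\mu^+$. There is no real obstacle here: the argument is an assembly of Lemmas~\ref{LCombGG}, \ref{L1^n} and Proposition~\ref{PGGinShuffle}, with the only mild subtlety being the appeal to Lemma~\ref{LCombGG} to justify ignoring all non-Gelfand-Graev words in the factors $\CH\Lade_{l_c}$.
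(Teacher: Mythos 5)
Your proposal is correct and follows essentially the same route as the paper's proof: decompose $\Lade^{\la^\tr}$ as $\De(1^{l_1})\circ\dots\circ\De(1^{l_a})$, use Lemma~\ref{LCombGG} to restrict to Gelfand-Graev factor words, Lemma~\ref{L1^n} to identify those as $\bi^{l_c}$ with multiplicity $c(1)^{l_c}$, and Proposition~\ref{PGGinShuffle} to extract the coefficient. If anything, your explicit bookkeeping of the factor $c(1)^n$ is slightly more careful than the paper's.
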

\begin{proof}
Let $\la^\tr=(l_1,\dots,l_a)$. Then in view of Lemma~\ref{L1^n}, we have 
$$\Lade^{\la^\tr}=\Lade_{l_1}\circ\dots\circ \La_{l_a}=\De(1^{l_1})\circ\dots\circ\De(1^{l_a}).$$ 
So if $\gg^\mu$ appears in $
\CH\Lade^{\la^\tr}$, then $\gg^\mu$ appears in the shuffle product $\bj^{(1)}\circ\dots\circ \bj^{(a)}$, where $\bj^{(c)}$ is a word of $\De(1^{l_c})$ for all $c=1,\dots,a$. 
By Lemma~\ref{LCombGG}, each $\bj^{(c)}$ is a Gelfand-Graev word (of type $\bi$). By Lemma~\ref{L1^n}, we have $\bj^{(c)}=\gg^{(1^{l_c})}=\bi^{l_c}$ for $c=1,\dots,a$. 
Now the result follows from  Proposition~\ref{PGGinShuffle}. The second statement comes by noticing that $\co_{\la,\mu}=\co_{\la,\mu^+}$ and $c(\mu)=c(\mu^+)$. 
\end{proof}

Recall the Kostka numbers $K_{\la,\mu}$ from (\ref{EKostka}). The matrix $K:=(K_{\la,\mu})_{\la,\mu\vdash n}$ is unitriangular, in particular it is invertible. Let 
$$N=(N_{\la,\mu})_{\la,\mu\vdash n}:=K^{-1}$$ be the inverse matrix.

\begin{Lemma} \label{LLaDeRel}
Let $\la\vdash n$. We have: 
\begin{enumerate}
\item[{\rm (i)}] $\displaystyle \CH\Lade^{\la^\tr}=\sum_{\mu\vdash n}K_{\mu^\tr,\la^\tr}\,\CH\Dede(\mu)=\CH\Dede(\la)+\sum_{\mu<\la}K_{\mu^\tr,\la^\tr}\,\CH\Dede(\mu)$; 
\item[{\rm (ii)}] $\displaystyle \CH\Dede(\la)=\sum_{\mu\vdash n}N_{\mu^\tr,\la^\tr}\,\CH\Lade^{\mu^\tr}=\CH\Lade^{\la^\tr}+\sum_{\mu<\la}N_{\mu^\tr,\la^\tr}\,\CH\Lade^{\mu^\tr}$.
\end{enumerate}
\end{Lemma}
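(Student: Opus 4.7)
The plan is to transport the problem through the Morita equivalence $\be_{h,n}:\mod{S_{h,n}}\to\mod{\ImS_n}$ and then invoke a classical symmetric function identity on the Schur algebra side.

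First I would establish (i). By Lemma~\ref{3.5d} we have $\Lade^{\la^\tr}\cong\be_{h,n}(\La^{\la^\tr}(V_h))$, and by definition $\Dede(\mu)=\be_{h,n}(\De_h(\mu))$. Since $\be_{h,n}$ is an exact equivalence of categories, it induces an isomorphism of Grothendieck groups which preserves classes of standard modules. Hence the multiplicity of $[\Dede(\mu)]$ in $[\Lade^{\la^\tr}]$ equals the multiplicity of $[\De_h(\mu)]$ in $[\La^{\la^\tr}(V_h)]$. Because $\La^{\la^\tr}(V_h)$ has a standard filtration (Lemma~\ref{1.3c}), this multiplicity is well-defined. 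Alternatively, using Lemma~\ref{L1^n} which identifies $\Lade_l = \De((1^l))$, one writes
$$\Lade^{\la^\tr} = \HCI_{\la^\tr}^n\big(\Dede((1^{l_1}))\boxtimes\dots\boxtimes \Dede((1^{l_a}))\big),$$
where $\la^\tr=(l_1,\dots,l_a)$; Theorem~\ref{4.2f}(i) then ensures a standard filtration, and Corollary~\ref{4.2h} identifies the multiplicities via the Littlewood--Richardson rule.

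Next I would compute these multiplicities. Under the formal character map sending $\De_h(\mu)$ to the Schur function $s_\mu$ and $\La^\nu(V_h)$ to the elementary symmetric function $e_\nu$, the relation reduces to the classical identity
$$e_{\la^\tr}=\sum_{\mu\vdash n} K_{\mu^\tr,\la^\tr}\, s_\mu$$
in the ring of symmetric functions (equivalently, the iterated dual Pieri rule). This yields $[\La^{\la^\tr}(V_h)]=\sum_{\mu\vdash n}K_{\mu^\tr,\la^\tr}[\De_h(\mu)]$, and transporting through $\be_{h,n}$ and taking formal characters gives the first displayed formula in (i). The refined form follows from the standard unitriangularity: $K_{\mu^\tr,\la^\tr}\neq 0$ forces $\mu^\tr\gedom\la^\tr$, which is equivalent to $\mu\ledom\la$, and $K_{\la^\tr,\la^\tr}=1$.

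Finally, (ii) is a formal consequence of (i). The matrix $K=(K_{\mu^\tr,\la^\tr})_{\mu,\la\vdash n}$ is unitriangular with respect to the dominance order (reversed on rows), hence invertible, and $N=K^{-1}$ by definition. Writing (i) as a matrix equation $\CH\Lade^{\la^\tr}=\sum_\mu K_{\mu^\tr,\la^\tr}\CH\Dede(\mu)$ and inverting gives $\CH\Dede(\la)=\sum_\mu N_{\mu^\tr,\la^\tr}\CH\Lade^{\mu^\tr}$, where the triangular form follows from the triangular form of $N$.

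The main obstacle is simply citing the correct classical identity $e_\nu=\sum_\mu K_{\mu^\tr,\nu} s_\mu$ and justifying the transfer of standard-filtration multiplicities through the Morita equivalence; both are essentially bookkeeping given the machinery already developed in the preceding sections.
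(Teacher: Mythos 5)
Your proposal is correct, but it takes a slightly different route from the paper. The paper first observes (via Lemma~\ref{L1^n} and Theorem~\ref{4.1c}) that the characters of $\Lade^\mu$ and $\Dede(\mu)$ are independent of the ground field, reduces to $F=\C$ where $\Dede(\mu)=\Tde(\mu)$, and then reads off (i) from the indecomposable-tilting decomposition $\Lade^\nu\cong\bigoplus_\la \Tde(\la^\tr)^{\oplus k_{\la,\nu}}$ of Theorem~\ref{4.5f}(ii), with $k_{\la,\nu}=K_{\la,\nu}$ in characteristic zero. You instead stay over an arbitrary field and work in the Grothendieck group: you transport the classical identity $[\La^{\la^\tr}(V_h)]=\sum_\mu K_{\mu^\tr,\la^\tr}[\De_h(\mu)]$ (equivalently $e_{\la^\tr}=\sum_\mu K_{\mu^\tr,\la^\tr}s_\mu$, realized by a standard filtration of the exterior power) through the equivalence $\be_{h,n}$ using Lemma~\ref{3.5d} and the definition of $\Dede(\mu)$. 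Your route buys independence from the tilting theory and from the base-change result Theorem~\ref{4.1c}, since standard-filtration multiplicities in the Grothendieck group are already characteristic-free; the paper's route is shorter given that Theorem~\ref{4.5f} is in hand and additionally records the stronger statement that $\Lade^{\la^\tr}$ actually splits as a direct sum in characteristic zero. Your unitriangularity argument ($K_{\mu^\tr,\la^\tr}\neq 0$ forces $\mu\leq\la$, $K_{\la^\tr,\la^\tr}=1$) and the matrix inversion for (ii) are both fine and match the paper.
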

\begin{proof}
By Lemma~\ref{L1^n} and Theorem~\ref{4.1c}, the characters of $\Lade^\mu$ and $\Dede(\mu)$ are independent of the ground field for all $\mu\vdash n$. So 
we may assume that $F=\C$, in which case $\Dede(\mu)=\nade(\mu)=\Tde(\mu)=\Lde(\mu)$ for all $\mu\vdash n$. Now (i) follows from Theorem~\ref{4.5f}(ii), and (ii) follows from (i). 
\end{proof}

We can now determine the multiplicity of any Gelfand-Graev word in the standard module $\De(\la)$. We refer to this partial character information as  the {\em Gelfand-Graev fragment}\, of the character.

\begin{Theorem} \label{TGGFragmDede}
Let $\la\vdash n$ and $\mu\vdash n$, and $\nu\vDash n$. Then:
\begin{enumerate}
\item[{\rm (i)}] $\DIM \De(\la)_{\gg^\nu}=\DIM \De(\la)_{\gg^{\nu^+}}$;
\item[{\rm (ii)}] $\DIM \De(\la)_{\gg^\mu}=c(\mu)K_{\la,\mu}.$
\end{enumerate} 
\end{Theorem}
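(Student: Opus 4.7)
The plan is to combine Lemma~\ref{LLaDeRel}(ii), which expresses $\CH\De(\la)$ as a $\Z$-linear combination of the characters $\CH\Lade^{\mu'^\tr}$, with Corollary~\ref{CGGFrLade}, which computes the Gelfand--Graev fragment of $\CH\Lade^{\mu'^\tr}$ explicitly. Part (i) then falls out of the invariance of the column-strict tableaux count $\co_{\mu',\nu}$ (and of $c(\nu)$) under permutations of the parts of $\nu$, whereas part (ii) requires an inversion of the Kostka matrix using Lemma~\ref{LColNK}.

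Concretely, for part (i), by Lemma~\ref{LLaDeRel}(ii) one has
$$\DIM\De(\la)_{\gg^\nu}=\sum_{\mu'\vdash n}N_{\mu'^\tr,\la^\tr}\,\DIM\Lade^{\mu'^\tr}_{\gg^\nu},$$
where $N=K^{-1}$ is the inverse Kostka matrix. By the second statement of Corollary~\ref{CGGFrLade}, for every partition $\mu'$ and every composition $\nu$ we have $\DIM\Lade^{\mu'^\tr}_{\gg^\nu}=\DIM\Lade^{\mu'^\tr}_{\gg^{\nu^+}}$, and substituting this into the display above yields (i) immediately.

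For part (ii), using instead the first statement of Corollary~\ref{CGGFrLade} in the same expansion one gets
$$\DIM\De(\la)_{\gg^\mu}=c(\mu)\sum_{\mu'\vdash n}N_{\mu'^\tr,\la^\tr}\,\co_{\mu',\mu}.$$
I will then apply Lemma~\ref{LColNK} (with $\la$ replaced by $\mu'$) to expand $\co_{\mu',\mu}=\sum_{\nu\vdash n}K_{\nu^\tr,\mu}\,K_{\nu,\mu'^\tr}$ and interchange the orders of summation. Since conjugation is an involution on partitions of $n$, the inner sum $\sum_{\mu'\vdash n}K_{\nu,\mu'^\tr}\,N_{\mu'^\tr,\la^\tr}$ is precisely the $(\nu,\la^\tr)$-entry of $KN=I$, and hence equals $\de_{\nu,\la^\tr}$. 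The outer sum then collapses to $c(\mu)\,K_{(\la^\tr)^\tr,\mu}=c(\mu)\,K_{\la,\mu}$, which is (ii).

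The argument is essentially formal, and the only potential pitfall is the bookkeeping with the transposes during the Kostka matrix inversion; this is handled cleanly because $\mu'\mapsto\mu'^\tr$ is an involution on partitions of $n$, so the summation over $\mu'\vdash n$ is identical to the summation over $\mu'^\tr\vdash n$. All the substantive ingredients (Lemmas \ref{LLaDeRel} and \ref{LColNK}, together with Corollary \ref{CGGFrLade}) have already been established, so there is no real obstacle beyond the linear algebraic manipulation above.
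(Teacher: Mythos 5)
Your proposal is correct and follows essentially the same route as the paper's proof: both parts expand $\CH\De(\la)$ via Lemma~\ref{LLaDeRel}(ii), invoke Corollary~\ref{CGGFrLade} for the Gelfand--Graev coefficients of $\Lade^{\mu'^\tr}$, and for (ii) use Lemma~\ref{LColNK} together with $KN=I$ (noting that transposition is an involution on partitions) to collapse the double sum to $c(\mu)K_{\la,\mu}$. No gaps.
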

\begin{proof}
(i) By Lemma~\ref{LLaDeRel}(ii), we have
$$\DIM \De(\la)_{\gg^\nu}=\sum_{\mu\vdash n}N_{\mu^\tr,\la^\tr}\,\DIM (\Lade^{\mu^\tr})_{\gg^\nu}.$$
So it suffices to prove that $\DIM (\Lade^{\mu^\tr})_{\gg^\nu}=\DIM (\Lade^{\mu^\tr})_{\gg^{\nu^+}}$ for all $\mu\vdash n$. But this is contained in Corollary~\ref{CGGFrLade}.

(ii) Using Lemma~\ref{LLaDeRel}(ii), Corollary~\ref{CGGFrLade} and Lemma~\ref{LColNK}, we get:
\begin{align*}
\DIM \De(\la)_{\gg^\mu}&=\sum_{\nu\vdash n}N_{\nu^\tr,\la^\tr}\,\DIM (\Lade^{\nu^\tr})_{\gg^\mu}
\\
&=\sum_{\nu\vdash n}N_{\nu^\tr,\la^\tr}\,c(\mu)\co_{\nu,\mu}
\\
&=\sum_{\nu,\kappa\vdash n}N_{\nu^\tr,\la^\tr}\,c(\mu)K_{\kappa^\tr,\mu}K_{\kappa,\nu^\tr}
\\
&=\sum_{\kappa\vdash n}\,c(\mu)K_{\kappa^\tr,\mu}\de_{\kappa,\la^\tr}=c(\mu)K_{\la,\mu},
\end{align*}
where we have used that $N=K^{-1}$. 
\end{proof}

We can extend the above result to the Gelfand-Graev fragments of characters of other imaginary modules:

\begin{Corollary} \label{CCharLLa}
Let $\la\vdash n$ and $\nu\vDash n$, and suppose that $h\geq n$, $W\in\mod{S_{h,n}}$, and $M=\beta_{h,n}(W)\in\mod{\ImS_n}$. Then 
$$\DIM M_{\gg^\nu}=c(\nu)\dim e(\nu)W.$$ In particular, 
$\DIM L(\la)_{\gg^\nu}=c(\nu)k_{\la,\nu}.$
\end{Corollary}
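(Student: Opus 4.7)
The plan is to observe that both sides of the identity $\DIM M_{\gg^\nu}=c(\nu)\dim e(\nu)W$ are additive on short exact sequences, and then reduce to the already-proved standard-module case in Theorem~\ref{TGGFragmDede}. Indeed, $V\mapsto \DIM V_{\gg^\nu}$ is additive on short exact sequences of finite dimensional graded $\ImS_n$-modules because formal characters are, and $W\mapsto \dim e(\nu)W$ is additive since $e(\nu)\in S_{h,n}$ is an idempotent. Since $\beta_{h,n}$ is an equivalence of categories (in particular exact), it suffices to verify the identity when $W$ runs over a class of $S_{h,n}$-modules whose classes generate $K_0(\mod{S_{h,n}})$, and the Weyl modules $\{\De_h(\la)\mid\la\vdash n\}$ do so.

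First I would check the identity for $W=\De_h(\la)$. By definition~(\ref{3.5.4}) we have $\beta_{h,n}(\De_h(\la))=\Dede(\la)$, so the left-hand side equals $\DIM \Dede(\la)_{\gg^\nu}$. Applying Theorem~\ref{TGGFragmDede}(i) to pass from the composition $\nu$ to the partition $\nu^+$, and then Theorem~\ref{TGGFragmDede}(ii), gives
$$\DIM \Dede(\la)_{\gg^\nu}=\DIM \Dede(\la)_{\gg^{\nu^+}}=c(\nu^+)K_{\la,\nu^+}=c(\nu)K_{\la,\nu},$$
where the last equality uses that $c(\cdot)$ and $K_{\la,\cdot}$ only depend on the composition up to reordering of parts. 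On the Schur algebra side, the classical (and characteristic-independent) semistandard/bideterminant basis of the Weyl module yields $\dim e(\nu)\De_h(\la)=K_{\la,\nu}$ (as noted in Section~\ref{SSSchurRT}). Thus both sides of the desired identity equal $c(\nu)K_{\la,\nu}$, and additivity together with the generating property of $\{[\De_h(\la)]\}$ in the Grothendieck group finishes the general case.

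The ``in particular'' assertion is immediate: take $W=L_h(\la)$, so that $M=\beta_{h,n}(L_h(\la))=\Lde(\la)$ by (\ref{3.5.3}), and recall from (\ref{EWtSp}) that $\dim e(\nu)L_h(\la)=k_{\la,\nu}$. The main (minor) obstacle is really only the reduction step, that is, checking that both invariants involved descend to well-defined additive functions on $K_0(\mod{S_{h,n}})$; once this is observed, everything else is a direct invocation of results already established in the paper.
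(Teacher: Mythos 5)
Your proof is correct and follows essentially the same route as the paper: the paper likewise reduces to the standard modules by writing $\ch W$ as a $\Z$-linear combination of the $\ch\De_h(\mu)$ (equivalently, using that the $[\De_h(\mu)]$ span the Grothendieck group), transports this through $\be_{h,n}$, and then invokes Theorem~\ref{TGGFragmDede} together with $\dim e(\nu)\De_h(\mu)=K_{\mu,\nu}$.
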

\begin{proof}
Note that $\{\CH\De_h(\mu)\mid\mu\vdash n\}$ is a linear basis in the character ring of modules in $\mod{S_{h,n}}$. 
So we can write $\ch W=\sum_{\mu\vdash n}n_\mu\ch\De_h(\mu)$ for some $n_\mu\in\Z$. Applying the Morita-equivalence $\be_{h,n}$, we then also have 
$$\CH M=\sum_{\mu\vdash n}n_\mu\CH \De(\mu).$$ 
So, applying Theorem~\ref{TGGFragmDede}, 
\begin{align*}
\DIM M_{\gg^\nu} &= \sum_{\mu\vdash n}n_\mu\DIM \De(\mu)_{\gg^\nu}
= \sum_{\mu\vdash n}n_\mu c(\nu) \dim e(\nu)\De_h(\mu)
\\
&
=c(\nu)\dim e(\nu)W, 
\end{align*}
as required.
\end{proof}

\section{Imaginary Jacobi-Trudy formula}
The formal characters of standard modules $\De(\la)$ in terms of the characters of the modules $\De(1^m)$ can in principle be found from Lemma~\ref{LLaDeRel}, since the modules $\Lade^\nu$ are just  $\De(1^{n_1})\circ\dots\circ\De(1^{n_a})$ if $\nu=(n_1,\dots,n_a)$. A standard way of dealing with the inverse matrix $N=K^{-1}$ is through Jacobi-Trudy formulas. 

Let $\la=(l_1,\dots,l_a)\vdash n$. Note by Corollary~\ref{CCircComm} that 
$$\CH\De(1^k)\circ\CH\De(1^l)=\CH\De(1^l)\circ \CH\De(1^k)\qquad(k,l\in\Z_{>0}).
$$
So we can use the quantum shuffle product to make sense of the  following determinant as an element of $\A \words_{n\de}$: 
$$
\JTD(\la):=\det\big(\CH\De(1^{l_r-r+s})\big)_{1\leq r,s\leq a} \in \A \words_{n\de}.
$$
where $\CH\De(1^{0})$ is interpreted as (a multiplicative) identity, and $\CH\De(1^{m})$ is interpreted as (a multiplicative) $0$ if $m<0$. 
For example, if $\la=(3,1,1)$, we get
\begin{align*}
\JTD((3,1,1))
=&\det 
\left(
\begin{matrix}
\CH\De(1^{3}) & \CH\De(1^{4})& \CH\De(1^{5})  \\
1 & \CH\De(1)& \CH\De(1^{2})\\
0 & 1 & \CH\De(1)
\end{matrix}
\right)
\\
=&\CH\De(1^{3})\circ\CH\De(1)\circ \CH\De(1)+\CH\De(1^{5})
\\&-\CH\De(1^{4})\circ\CH\De(1)-\CH\De(1^{3})\circ \CH\De(1^{2}).
\end{align*}

\begin{Remark} 
{\rm 
The characters of the modules $\De(1^m)$ are well-understood in many situations. Let $i$ be the color of the tensor space we are working with, and $\bi=(i_1,\dots,i_e)$ so that $i_e=i$. Then $\bi^m$ is a word of $\De(1^m)=L(1^m)$, see Lemma~\ref{L1^n}. Oftentimes the word  $\bi^m$ is homogeneous, and so $\De(1^m)$ is the homogeneous irreducible module associated to the connected component of $\bi^m$ in the word graph. 
For example in Lie type $\Car={\tt A}_l^{(1)}$ this is {\em always}  the case.  
}
\end{Remark}

\begin{Theorem} \label{TDet} 
{\bf (Imaginary Jacobi-Trudy Formula)} 
Let $\la\vdash n$. Then
$
\CH\Dede(\la)=\JTD(\la^\tr).
$
\end{Theorem}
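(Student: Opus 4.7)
The plan is to deduce this from the classical dual Jacobi-Trudi identity via a ring homomorphism from symmetric functions into the quantum shuffle algebra. The starting point is Lemma~\ref{LLaDeRel}(ii), which gives
$$\CH\Dede(\la) = \sum_{\mu\vdash n} N_{\mu^\tr,\la^\tr}\,\CH\Lade^{\mu^\tr}.$$
Writing $\mu^\tr = (\mu_1^\tr,\dots,\mu_b^\tr)$, the definition of $\Lade^{\mu^\tr}$ as an induction product, combined with the identification $\Lade_m = \Dede(1^m)$ from Lemma~\ref{L1^n} and the compatibility (\ref{ECharShuffle}) of induction with shuffle product of characters, rewrites the right-hand side as
$$\sum_{\mu\vdash n} N_{\mu^\tr,\la^\tr}\,\CH\Dede(1^{\mu_1^\tr})\circ\cdots\circ\CH\Dede(1^{\mu_b^\tr}).$$

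By Corollary~\ref{CCircComm}, the elements $\CH\Dede(1^m)$ pairwise commute under the shuffle product $\circ$, so the assignment $e_m \mapsto \CH\Dede(1^m)$ extends to a ring homomorphism $\Phi$ from the polynomial ring $\Z[e_1,e_2,\dots]$, identified with the ring of symmetric functions, into $(\A\words,\circ)$. Under $\Phi$, the theorem reduces to two identities in $\Z[e_1,e_2,\dots]$: identity (a), namely $s_\la = \sum_{\mu\vdash n} N_{\mu^\tr,\la^\tr}\, e_{\mu_1^\tr}\cdots e_{\mu_b^\tr}$; and identity (b), the dual Jacobi-Trudi formula $s_\la = \det(e_{\la^\tr_r-r+s})_{1\leq r,s\leq a}$.

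Identity (b) is classical. Identity (a) is obtained by inverting the standard relation $h_\mu = \sum_\la K_{\la,\mu}\,s_\la$, which yields $s_\la = \sum_\mu N_{\mu,\la}\,h_\mu$, then applying the involution $\omega$ (which exchanges $h_\mu \leftrightarrow e_\mu$ and sends $s_\la \mapsto s_{\la^\tr}$), and finally reindexing $\la \mapsto \la^\tr$ and $\mu \mapsto \mu^\tr$. Applying $\Phi$ to (a) recovers the second displayed expansion of $\CH\Dede(\la)$ above, while applying $\Phi$ to (b) produces $\JTD(\la^\tr)$; equating the two yields $\CH\Dede(\la) = \JTD(\la^\tr)$. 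The only non-routine aspect is keeping the two transpositions $\la \mapsto \la^\tr$ and $\mu \mapsto \mu^\tr$ properly aligned, but no deeper obstacle arises at this stage, since all the representation-theoretic content has already been absorbed into Lemma~\ref{LLaDeRel}(ii) and Corollary~\ref{CCircComm}.
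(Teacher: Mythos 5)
Your proof is correct. The index bookkeeping checks out: from $h_\mu=\sum_\la K_{\la,\mu}s_\la$ one gets $s_\la=\sum_\mu N_{\mu,\la}h_\mu$, and applying $\omega$ and transposing both indices gives exactly the expansion $s_\la=\sum_{\mu}N_{\mu^\tr,\la^\tr}e_{\mu^\tr}$ matching Lemma~\ref{LLaDeRel}(ii) under $\Phi$; and $\Phi$ is well defined on $\Z[e_1,e_2,\dots]$ precisely because of Corollary~\ref{CCircComm}.

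The route is genuinely different from the paper's, though both rest on the same classical input. The paper proves the determinantal identity \emph{categorically}: it takes the classical Jacobi--Trudi formula $[\De_h(\la)]=\det([\La^{(l_r-r+s)}(V_h)])$ in the Grothendieck group of $\mod{S_{h,n}}$ and transports it through the Morita equivalence $\be_{h,n}$, using Theorem~\ref{4.2a} (imaginary induction corresponds to tensor product) and Lemmas~\ref{3.5d}, \ref{L1^n} to identify the images of the exterior powers with the $\De(1^m)$'s, then passes to characters. You instead work entirely at the character level: all the representation theory is absorbed into Lemma~\ref{LLaDeRel}(ii) (which in the paper is derived from the tilting decomposition of Theorem~\ref{4.5f}(ii) together with base change), and the rest is the purely combinatorial inversion of the Kostka matrix plus the dual Jacobi--Trudi identity for Schur functions, packaged via the homomorphism $\Phi$. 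What the paper's argument buys is a statement already in the Grothendieck group with $\circ$-multiplication, obtained in two lines once the Morita machinery of Chapter~\ref{SSIM} is in place; what yours buys is independence from Theorem~\ref{4.2a} and a clean separation between the one representation-theoretic input and the symmetric-function formalism. Both are legitimate, and the dependency chains are of comparable weight.
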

\begin{proof}
Let $\la^\tr=(l_1,\dots,l_a)$ and take $h\geq n$. It follows from the classical Jacobi-Trudy formula that in the Grothendieck group of $\mod{S_{h,n}}$ we have
$$
[\De_h(\la)]=\det([\La^{(l_r-r+s)}(V_h)])_{1\leq r,s\leq a}
$$
with determinant defined using multiplication given by tensor product. Applying the equivalence of categories $\be_{h,n}$, Theorem~\ref{4.2a}, and Lemmas~\ref{3.5d}, \ref{L1^n} we get 
$$
[\De(\la)]=\det([\Lade_{l_r-r+s}])_{1\leq r,s\leq a}=\det([\De(1^{l_r-r+s})])_{1\leq r,s\leq a}
$$
with determinant defined using multiplication given by induction product `$\circ$'.
Passing to the formal characters, we obtain the required formula. 
\end{proof}

\chapter{Imaginary tensor space for non-simply-laced types}\label{ChLast}

In this section we construct the minuscule \(R_{\de}\)-modules \(L_{\de,i}\) of color \(i\) for a Cartan matrix \(\Car\) of non-simply-laced type, along with the endomorphisms \(\tau_r\) of \(M_n = L_{\de,i}^{\circ n}\) that satisfy the Coxeter relations of the symmetric group \(\frak{S}_n\).

\section{Minuscule representations for non-simply-laced types}\label{ConLNS}

Write \(R_\alpha^z = \O[z] \otimes_{\O} R_\alpha\), where \(z\) is an indeterminate element of degree 2. In the spirit of \cite[Section 1.3]{KKK}, we construct in each case an \(R_\delta^z\)-module \(L_{\de,i}^z\) with quotient isomorphic to  \(L_{\de,i}\) as an \(R_\delta\)-module. Kang, Kashiwara and Kim were able to construct this \(z\)-deformation of arbitrary modules for simply-laced types in general, but for the non-simply-laced types considered below we are forced to be more explicit. This approach allows us to construct the nonzero map \(\tau\) out of the map \(R\) defined in \cite[Chapter 1]{KKK} which satisfies Coxeter relations, but unfortunately happens to be zero on \(L_{\de,i} \circ L_{\de,i}\). 

\subsection{Construction of $L_{\de,i}^z$ in type \({\tt B}_l^{(1)}\)}\label{BlCons}
We label the vertices of the diagram \({\tt B}_l^{(1)}\) as shown below:
\begin{align*}
\begin{braid}\tikzset{baseline=3mm}
\draw[black](2,2) node[left]{\scriptsize $0$}--(2,0);
\draw[black](0,0) node[below]{\scriptsize $1$}--(3.5,0);
\draw[black](2,0) node[below]{\scriptsize $2$};
\draw[black](4.5,0) node {\scriptsize $\cdots$};
\draw[black](7,0) node[below]{\scriptsize $l\hspace{-0.8mm}-\hspace{-0.8mm}1$}--(5.5,0);
\draw[black,double,->,>=stealth](7,0)--(8.8,0);
\draw[black](9,0) node[below]{\scriptsize $l$};
\blackdot(2,2);
\blackdot(0,0);
\blackdot(2,0);
\blackdot(7,0);
\blackdot(9,0);
\end{braid}
\end{align*}
Fix \(i \in I'\). Define
\begin{align*}
\bi:= \begin{cases}
(0,2,3, \ldots, l, l, l-1, \ldots, i+1, 1, 2, \ldots, i) &\textup{if }i<l;\\
(0,2,3, \ldots, l,1,2,\ldots, l) &\textup{if }i=l.
\end{cases}
\end{align*}
Let \(C_{\bi}\) be the connected component of \(\bi\) in the weight graph \(G_\delta\). For \(\bj \in C_{\bi}\), \(1 \leq k  \leq 2l\), define constants
\begin{align*}
\xi_{\bj,r} = \begin{cases} 1 & \textup{if \(j_r = l\) and \(j_t \neq l\) for all \(t<r\)}\\ 
-1 & \textup{if \(j_k = l\) and \(j_t = l\) for some \(t<r\)}\\
0 & \textup{otherwise}\end{cases}
\end{align*}
Let \(L_{\de,l}^z\) be the graded free \(\O[z]\)-module on basis
 \begin{align*}
 B = \{ v_{\bj} \mid \bj \in C_{\bi}\},
 \end{align*}
where each \(v_{\bj}\) is in degree 0. For \(i \neq l\), let \(L_{\de,i}^z\) be the graded free \(\O[z]\)-module on basis
  \begin{align*}
 B= \{ v_{\bj}^{(c)} \mid c \in \{\pm 1\}, \bj \in C_{\bi}\},
 \end{align*}
 where \(v_{\bj}^{(c)}\) is in degree \(c\).
 
Define an action of generators of \(R_\delta^z\) on \(L_{\de,l}^z\) as follows:
\begin{align*}
1_{\bk}v_{\bj}&:=\delta_{\bj,\bk} v_{\bj}\\
y_r v_{\bj}&:= \begin{cases} z^2v_{\bj} & \textup{if } \xi_{\bj,r}=0\\
\xi_{\bj,r}zv_{\bj} &\textup{if } \xi_{\bj,r} \neq 0
\end{cases}\\
\psi_r v_{\bj}&:= \begin{cases} v_{s_r \bj}& \textup{if } j_r \cdot j_{r+1} = 0\\
0 & \textup{otherwise}.
\end{cases}
\end{align*}
If \(i \neq l\), define an action of generators of  \(R_\delta^z\) on \(L_{\de,i}^z\) as follows:
\begin{align*}
1_{\bk}v_{\bj}^{(c)}&:=\delta_{\bj,\bk} v_{\bj}^{(c)}\\
y_r v_{\bj}^{(c)}&:= \begin{cases} z^2v_{\bj}^{(c)} & \textup{if } \xi_{\bj,r}=0\\
\xi_{\bj,r}zv_{\bj}^{(1)} &\textup{if } \xi_{\bj,r} \neq 0, c = 1\\
-\xi_{\bj,r}\left(zv_{\bj}^{(-1)} + v_{\bj}^{(1)}\right) & \textup{if } \xi_{\bj,r} \neq 0, c =-1\\
\end{cases}\\
\psi_r v_{\bj}^{(c)}&:= \begin{cases} v_{s_r \bj}^{(c)}& \textup{if } j_r \cdot j_{r+1} = 0\\
v_{\bj}^{(-1)} &\textup{if }c=1, j_r = j_{r+1} = l \\
0 & \textup{otherwise}.
\end{cases}
\end{align*}

\begin{Proposition}\label{Bnrels}
The formulas above define a graded action of \(R_\delta^z\) on \(L_{\de,i}^z\), and \(L_{\de, i}^z\) is \(\O[z]\)-free on basis \(B\).
\end{Proposition}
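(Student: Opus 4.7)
The $\O[z]$-freeness of $L_{\de,i}^z$ on the basis $B$ is immediate from the definition, so the substance of the claim reduces to two verifications: the assigned formulas land in the correct graded components, and they satisfy the defining relations (R1)--(R7) of $R_\delta^z$. The plan is a direct case analysis.

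For the grading, I would first record the degree data: $\deg y_r 1_{\bj} = (\alpha_{j_r},\alpha_{j_r})$ and $\deg \psi_r 1_{\bj} = -(\alpha_{j_r},\alpha_{j_{r+1}})$, where under the normalization of \S\ref{SSLTN} one has $(\alpha_l,\alpha_l) = 2$ (short) and $(\alpha_j,\alpha_j) = 4$ for $j \neq l$ (long). The $z^{2}$-term in the formula for $y_r$ absorbs the ``long'' shift of $4$, while the $z$-terms absorb the ``short'' shift of $2$; the degree-$\pm 1$ superscripts $v_{\bj}^{(c)}$ in the $i\neq l$ case are precisely what is needed so that both summands of $y_r v_{\bj}^{(-1)} = -\xi_{\bj,r}(zv_{\bj}^{(-1)}+v_{\bj}^{(1)})$ land in degree $1$, and so that $\psi_r v_{\bj}^{(1)} = v_{\bj}^{(-1)}$ realizes the $-2$ shift when $j_r = j_{r+1} = l$. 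This is a short check.

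For the relations, (R1), (R2PsiY), (R2PsiE), and (R3Psi) are immediate from the formulas. Relation (R6) is trivial at positions with $j_r \neq j_{r+1}$ and must be checked at double-$l$ positions in words of $C_{\bi}$; here $\xi_{\bj,r} = 1$, $\xi_{\bj,r+1} = -1$, and a short direct computation confirms the required identity. Relation (R4) splits by the definition of $Q_{ij}$ in (\ref{EArun}): when $j_r = j_{r+1}$ (necessarily $= l$), one has $Q_{l,l} = 0$, matching $\psi_r^{2}v_{\bj}^{(1)} = \psi_r v_{\bj}^{(-1)} = 0$; when $j_r$ and $j_{r+1}$ are non-adjacent in the Dynkin diagram, $Q_{j_r,j_{r+1}} = 1$ and $\psi_r^{2}v_{\bj}^{(c)} = v_{\bj}^{(c)}$ by the ``$0$-dot product'' branch; the remaining adjacent cases are handled by the ``otherwise'' clause (giving $\psi_r^{2} = 0$) against the polynomial $\eps_{ij}(y_r^{-\cc_{ij}} - y_{r+1}^{-\cc_{ji}})$ evaluated on the specific $y$-action, and a uniform check shows the two sides agree modulo the fact that one of $\xi_{\bj,r},\xi_{\bj,r+1}$ vanishes.

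The main obstacle is the braid relation (R7). I would work through the triples $(j_r,j_{r+1},j_{r+2})$ that can occur as three consecutive entries of some $\bj \in C_{\bi}$; the rigid combinatorics of $\bi$ as spelled out in \S\ref{BlCons} cuts this down to a small list of cases. The subtle ones involve an $l$ adjacent to $l-1$, where the difference quotient
\[
\frac{Q_{i_r,i_{r+1}}(y_{r+2},y_{r+1}) - Q_{i_r,i_{r+1}}(y_r,y_{r+1})}{y_{r+2}-y_r}
\]
on the right-hand side of (R7) produces, after substituting the $y$-action, an element involving $z$ rather than a genuine difference; this is precisely where the $z$-deformation is forced upon us and where the extra summand $v_{\bj}^{(1)}$ in $y_r v_{\bj}^{(-1)}$ contributes the compensating term. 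For each allowed triple I would compute $\psi_{r+1}\psi_r\psi_{r+1}v_{\bj}^{(c)} - \psi_r\psi_{r+1}\psi_r v_{\bj}^{(c)}$ and match it with the above difference-quotient expression, using the multiplicativity of $\xi$ along the orbit and the identity $\xi_{\bj,r}+\xi_{\bj,r+1}+\xi_{\bj,r+2} \in \{0,\pm 1\}$ at triples containing an $l$, to reduce everything to a handful of explicit identities in $\O[z]$.
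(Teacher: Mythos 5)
Your overall strategy---direct verification of the relations by case analysis on consecutive entries of words in \(C_{\bi}\), together with the short/long-root degree count for the grading---is exactly the paper's approach, and it would go through. One correction, however: your account of the braid relation (\ref{R7}) misplaces the difficulty. The right-hand side of (\ref{R7}) carries the factor \(\de_{i_r,i_{r+2}}\), and for \(\bj\in C_{\bi}\) the equality \(j_r=j_{r+2}\) forces \(j_r=j_{r+2}=l\) with \(j_{r+1}\) orthogonal to \(\al_l\), so \(Q_{j_r,j_{r+1}}=1\) and the difference quotient vanishes identically; the polynomial \(Q_{l-1,l}\) never enters (\ref{R7}) for this module, so there is no ``compensating term'' to produce there. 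Correspondingly, the left-hand side vanishes because \(\psi_{r+1}\psi_r\psi_{r+1}v_{\bj}^{(c)}\) and \(\psi_r\psi_{r+1}\psi_rv_{\bj}^{(c)}\) expand to literally the same four summands (using \(s_rs_{r+1}s_r=s_{r+1}s_rs_{r+1}\)). The places where the \(z\)-deformation and the extra summand \(v_{\bj}^{(1)}\) in \(y_rv_{\bj}^{(-1)}\) genuinely earn their keep are the quadratic relation (\ref{R4}) at the adjacent pairs \((l-1,l)\) and \((l,l-1)\), and relation (\ref{R6}) at the double-\(l\) positions, where \(\xi_{\bj,r}=1\), \(\xi_{\bj,r+1}=-1\) produce the \(\pm v_{\bj}^{(1)}\) needed to match \(\de_{j_r,j_{r+1}}(\de_{t,r+1}-\de_{t,r})\). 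If you carry out your triple-by-triple computation honestly you will still land on \(0=0\) for (\ref{R7}), so the plan survives, but the narrative around it should be fixed.
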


\begin{proof}
Assume \(i < l\). We check that the given action agrees with the algebra relations 
(\ref{R1})--(\ref{R7}). That relations (\ref{R1}), (\ref{R2PsiE}), and (\ref{R3Psi}) are satisfied is clear. For purposes of checking relations we will have cause to combine cases to write the action of \(y_k\) and \(\psi_k\) as
\begin{align*}
y_r v_{\bj}^{(c)}&= [(1-\delta_{j_r,l})z^2 + \xi_{\bj,r}cz]v_{\bj}^{(c)} - \delta_{c,-1}\xi_{\bj,r}v_{\bj}^{(1)}\\
\psi_r v_{\bj}^{(c)}&=\delta_{j_r \cdot j_{r+1},0}v_{s_r\bj}^{(c)} + \delta_{c,1}\delta_{j_r,j_{r+1}}\delta_{j_r,l}v_{\bj}^{(-1)}.
\end{align*}
We omit idempotents \(1_{\bj}\) in the below, considering only \(\bj \in C_{\bi}\), as other idempotents act as zero and thus cause the the remaining relations to be satisfied.

Relation (\ref{R2PsiY}). We have
\begin{align*}
y_ty_r v_{\bj}^{(c)}:=& y_t \left[ [(1-\delta_{j_r,l})z^2 + \xi_{\bj,r}cz]v_{\bj}^{(c)} - \delta_{c,-1}\xi_{\bj,r}v_{\bj}^{(1)} \right]\\
=& [(1-\delta_{j_r,l})z^2 + \xi_{\bj,r}cz] \left[[(1-\delta_{j_t,l})z^2 + \xi_{\bj,t}cz]v_{\bj}^{(c)} - \delta_{c,-1} \xi_{\bj,t}v_{\bj}^{(1)}\right] \\
&\hspace{10mm} - \delta_{c,-1} \xi_{\bj,r} \left[         [(1-\delta_{j_t,l})z^2 + \xi_{\bj,t}cz]v_{\bj}^{(1)}  \right]\\
=&  [(1-\delta_{j_r,l})z^2 + \xi_{\bj,r}cz] [(1-\delta_{j_t,l})z^2 + \xi_{\bj,t}cz]v_{\bj}^{(c)} \\
&\hspace{10mm} - \delta_{c,-1} \left[ (1-\delta_{j_r,l})\xi_{\bj,t}z^2 + (1-\delta_{j_t,l})\xi_{\bj,r}z^2 + 2 \xi_{\bj,r} \xi_{\bj,t}\right]v_{\bj}^{(1)},
\end{align*}
which is invariant under the exchange of \(r\) and \(t\), hence \(y_ty_rv_{\bj}^{(c)} = y_ry_tv_{\bj}^{(c)}\).

Relation (\ref{R6}). Assume \(j_r \cdot j_{r+1} = 0\). Then
\begin{align*}
y_t \psi_r v_{\bj}^{(c)} &= [(1-\delta_{(s_r \bj)_t,l})z^2 + \xi_{s_r \bj, t}cz]v_{s_r \bj}^{(c)} - \delta_{c,-1} \xi_{s_r \bj,t}v_{s_r \bj}^{(1)}\\
\psi_r y_{s_rt}v_{\bj}^{(c)} &= [(1-\delta_{j_{s_rt,l}})z^2 + \xi_{\bj, s_rt} cz]v_{s_r\bj}^{(c)} - \delta_{c,-1}\xi_{\bj, s_rt} v_{s_r\bj}^{(1)}
\end{align*}
We have \((s_r \bj)_t = j_{s_rt}\), and since \(j_r \neq j_{r+1}\), we have \(\xi_{\bj,s_rt} = \xi_{s_r\bj, t}\). Then 
\begin{align*}
(y_t \psi_r - \psi_r y_{s_rt})v_{\bj}^{(c)} = 0 = \delta_{j_r, j_{r+1}}(\delta_{t, r+1} - \delta_{t,r})v_{\bj}^{(c)}.
\end{align*}
Next assume  \(j_r=j_{r+1} = l\), \(c=1\). Then
\begin{align*}
y_t \psi_r v_{\bj}^{(1)}&= [(1- \delta_{j_t,l})z^2 - \xi_{\bj,t}]v_{\bj}^{(-1)} - \xi_{\bj,t}v_{\bj}^{(1)}\\
\psi_r y_{s_rt} v_{\bj}^{(1)} &= [(1- \delta_{j_{s_rt},l})z^2 + \xi_{\bj,s_rt}z]v_{\bj}^{(-1)}.
\end{align*}
There are a few cases to consider:
\begin{enumerate}
\item \(t \neq r, r+1\). Then \(\xi_{\bj,s_rt} = \xi_{\bj,t} = \delta_{j_t,l} = \delta_{j_{s_rt},l} = 0\), so 
\begin{align*}
(y_t \psi_r - \psi_r y_{s_rt})v_{\bj}^{(c)} = 0 = \delta_{j_r, j_{r+1}}(\delta_{t,r+1} - \delta_{t,r})v_{\bj}^{(c)}.
\end{align*}
\item \(t = r\). Then \(\xi_{\bj,t} = \delta_{j_t,l} = \delta_{j_{s_rt},l} = 1\), \(\xi_{\bj,s_rt} = -1\), so
\begin{align*}
(y_t \psi_r - \psi_r y_{s_rt})v_{\bj}^{(c)} = -v_{\bj}^{(1)} = \delta_{j_r, j_{r+1}}(\delta_{t,r+1} - \delta_{t,r})v_{\bj}^{(c)}.
\end{align*}
\item \(t = r+1\). Then \(\xi_{\bj,s_rt} = \delta_{j_t,l} = \delta_{j_{s_rt},l} = 1\), \(\xi_{\bj,t} = -1\), so
\begin{align*}
(y_t \psi_r - \psi_r y_{s_rt})v_{\bj}^{(c)} = v_{\bj}^{(1)} = \delta_{j_r, j_{r+1}}(\delta_{t,r+1} - \delta_{t,r})v_{\bj}^{(c)}.
\end{align*}
\end{enumerate}
Now assume \(j_r = j_{r+1} = l\), \(c=-1\). Then 
\begin{align*}
y_t \psi_r v_{\bj}^{(-1)} &= 0\\
\psi_r y_{s_rt} v_{\bj}^{(-1)} &= - \xi_{\bj, s_r t} v_{\bj}^{(-1)}.
\end{align*}
If \(t \neq r, r+1\), then \(\xi_{\bj, s_rt} = 0\). If \(t = r\), then \(\xi_{\bj, s_rt}=-1\), and \(t = r+1\), then \(\xi_{\bj,s_rt} =1\). In all these cases (3.2.8) is satisfied.\\
\indent This leaves \(j_r \cdot j_{r+1} \neq 0\), with \(j_r \neq 0\) or \(j_{r+1} \neq 0\). Then \(y_t \psi_r v_{\bj}^{(c)} = \psi_r y_{s_rt}v_{\bj}^{(c)} = 0\). For \(\bj \in C_{\bi}\), \(j_r = j_{r+1}\) implies \(j_r = j_{r+1} = l\), so we have \(j_r \neq j_{r+1}\). Then \(\delta_{j_r,j_{r+1}}(\delta_{t,r+1} - \delta_{t,r})v_{\bj}^{(c)} = 0\), so in all cases (\ref{R6}) is satisfied.

Relation (\ref{R4}). If \(j_r \cdot j_{r+1} \geq 0\), then this relation is clearly satisfied. So assume \(j_r \cdot j_{r+1} < 0\). In this case \(\psi_r^2 v_{\bj}^{(c)} = 0\). Then we just check a few cases:
\begin{enumerate}
\item \(j_r, j_{r+1} \neq l\). Then
\begin{align*}
Q_{j_r,j_{r+1}}(y_r,y_{r+1})v_{\bj}^{(c)} = \epsilon_{j_r,j_{r+1}}(y_r - y_{r+1}) v_{\bj}^{(c)} = \epsilon_{j_r,j_{r+1}}[ z^2v_{\bj}^{(c)} - z^2 v_{\bj}^{(c)}]=0.
\end{align*}
\item \(j_r=l-1, j_{r+1} = l\). Then since \(\bj \in C_{\bi}\), we have \(\xi_{\bj,r}=0, \xi_{\bj,r+1}=1\), and
\begin{align*}
Q_{j_r,j_{r+1}}(y_r,y_{r+1})v_{\bj}^{(c)} &= \epsilon_{l-1,l}(y_r - y_{r+1}^2)v_{\bj}^{(c)}\\
&= \epsilon_{l-1,l}[z^2v_{\bj}^{(c)} - y_{r+1}(czv_{\bj}^{(c)} - \delta_{c,-1}v_{\bj}^{(1)})]\\
&= \epsilon_{l-1,l}[ z^2v_{\bj}^{(c)} - (cz(czv_{\bj}^{(c)} - \delta_{c,-1}v_{\bj}^{(1)}) - \delta_{c,-1}zv_{\bj}^{(1)})] = 0.
\end{align*}
\item \(j_r = l, j_{r+1} =l-1\). Then since \(\bj \in C_{\bi}\), we have  \(\xi_{\bj,r}=-1, \xi_{\bj,r+1}=0\), and 
\begin{align*}
Q_{j_r,j_{r+1}}(y_r,y_{r+1})v_{\bj}^{(c)} &= 
\epsilon_{l,l-1}(y_r^2 - y_{r+1})v_{\bj}^{(c)}\\
&= \epsilon_{l,l-1}[ y_r(-czv_{\bj}^{(c)} + \delta_{c,-1}v_{\bj}^{(1)})               -z^2v_{\bj}^{(c)}] \\
&= \epsilon_{l,l-1}[ (-cz(-czv_{\bj}^{(c)} + \delta_{c,-1}v_{\bj}^{(1)}) - \delta_{c,-1}v_{\bj}^{(1)}) - z^2v_{\bj}^{(c)}]=0.
\end{align*}
\end{enumerate}
\noindent Thus relation (\ref{R4}) is satisfied.

Relation (\ref{R7}). We have
\begin{align*}
\psi_r \psi_{r+1} \psi_r v_{\bj}^{(c)}
&= \delta_{j_r \cdot j_{r+1},0} \delta_{j_r\dot j_{r+2}, 0} \delta_{j_{r+1} \cdot j_{r+2},0} v_{s_r s_{r+1}s_r \bj}^{(c)}\\
&\hspace{10mm}+ \delta_{c,1} \delta_{j_r \cdot j_{r+1}, 0} \delta_{j_{r+1},j_{r+2}} \delta_{j_{r+1},l} v_{s_{r+1}s_r \bj}^{(-1)}\\
&\hspace{20mm} + \delta_{c,1}\delta_{j_r \cdot j_{r+1},0}\delta_{j_r,j_{r+2}} \delta_{j_r,l} v_{\bj}^{(-1)}\\
&\hspace{30mm}+ \delta_{c,1}  \delta_{j_r \cdot j_{r+2},0} \delta_{j_r,j_{r+1}}\delta_{j_r,l}v_{s_r s_{r+1} \bj}^{(-1)}\\
\psi_{r+1} \psi_{r} \psi_{r+1} v_{\bj}^{(c)}
&= \delta_{j_r \cdot j_{r+1},0} \delta_{j_r\dot j_{r+2}, 0} \delta_{j_{r+1} \cdot j_{r+2},0} v_{s_{r+1} s_{r}s_{r+1} \bj}^{(c)}\\
&\hspace{10mm}+ \delta_{c,1} \delta_{j_r \cdot j_{r+1}, 0} \delta_{j_{r+1},j_{r+2}} \delta_{j_{r+1},l} v_{s_{r+1}s_r \bj}^{(-1)}\\
&\hspace{20mm} + \delta_{c,1}\delta_{j_r \cdot j_{r+1},0}\delta_{j_r,j_{r+2}} \delta_{j_r,l} v_{\bj}^{(-1)}\\
&\hspace{30mm}+ \delta_{c,1}  \delta_{j_r \cdot j_{r+2},0} \delta_{j_r,j_{r+1}}\delta_{j_r,l}v_{s_r s_{r+1} \bj}^{(-1)}.
\end{align*}
So, since \(s_rs_{r+1}s_r = s_{r+1} s_r s_{r+1}\), we have 
\begin{align*}
(\psi_{r+1}\psi_r \psi_{r+1} - \psi_r \psi_{r+1} \psi_r)v_{\bj}^{(c)} = 0.
\end{align*}
Thus, if \(j_r \neq j_{r+2}\), relation (\ref{R7}) is satisfied. If \(\bj \in C_{\bi}\) is such that \(j_r = j_{r+2}\), it follows that \(j_r = j_{r+2} = l\), and \(j_r \cdot j_{r+1} = 0\). Then \(Q_{j_r,j_{r+1}} = 1\), and again (\ref{R7}) is satisfied.

The relations in case \(i=l\) are more easily verified by similar computations. 
\end{proof}

\subsection{Construction of $L_{\de,i}^z$ in type \({\tt C}_l^{(1)}\)}\label{ClCons}
We label the vertices of the diagram \({\tt C}_l^{(1)}\) as shown below:
\begin{align*}
\begin{braid}\tikzset{baseline=3mm}
\draw[black,double,->,>=stealth](-2,0) node[below]{\scriptsize $0$}--(-0.2,0);
\draw[black](0,0) node[below]{\scriptsize $1$}--(3.5,0);
\draw[black](2,0) node[below]{\scriptsize $2$};
\draw[black](4.5,0) node {\scriptsize $\cdots$};
\draw[black](7,0) node[below]{\scriptsize $l\hspace{-0.8mm}-\hspace{-0.8mm}1$}--(5.5,0);
\draw[black,double,->,>=stealth](9,0) node[below]{\scriptsize $l$}--(7.2,0);;
\blackdot(-2,0);
\blackdot(0,0);
\blackdot(2,0);
\blackdot(7,0);
\blackdot(9,0);
\end{braid}
\end{align*}

Fix \(i \in I'\). Let \(L_{\de,i,0}^z\) be the graded free 1-dimensional \(\O[z]\)-module on generator \(x_0\) (in degree 0). Define an action of \(R^z_{\alpha_0}\) on \(L_{\de,i,0}^z\) by \(1_{(0)}x_0 = x_0\), \(y_1x_0 = z^2x_0\).

Define
\begin{align*}
\bj^{(1)} &= \begin{cases}
(1,\ldots, l-1,l,l-1, \ldots i+1) &\textup{if } i<l;\\
(1,\ldots, l-1) &\textup{if } i=l. 
\end{cases}
\end{align*}
Let \(L_{\delta,i,1}^z\) be the graded free 1-dimensional \(\O[z]\)-module on generator \(x_1\) (in degree 0). Define an action of generators of \(R^z_{\delta - \alpha_0 - \cdots - \alpha_i}\) as follows:
\begin{align*}
1_{\bk} x_1 &:= \delta_{\bj^{(1)},\bk} x_1\\
y_{r}x_1 &:= \begin{cases} 
zx_1 & \textup{if } r < l;\\
z^2x_1 & \textup{if } r =l; \\
-zx_1 & \textup{if } r > l.
\end{cases}\\
\psi_r x_1 &:= 0.
\end{align*}

If \(i>1\), define
\begin{align*}
\bj^{(2)}=(1, \ldots, i-1),
\end{align*}
and let \(L_{\delta,i,2}^z\) be the graded free 1-dimensional \(\O[z]\)-module on generator \(x_2\) (in degree 0). Define an action of generators of \(R_{\alpha_1 + \cdots + \alpha_{i-1}}^z\) on \(L_{\delta,i,2}\) as follows:
\begin{align*}
1_{\bk} x_2 &:= \delta_{\bj^{(2)},\bk} x_2\\
y_{r}x_2 &:= -zx_2 \\
\psi_r x_2 &:= 0.
\end{align*}

Let \(L_{\de,i,3}^z\) be the graded free 1-dimensional \(\O[z]\)-module on generator \(x_3\) (in degree 0). Define an action of \(R^z_{\alpha_i}\) on \(L_{\de,i,3}\) by \begin{align*}
1_{(i)}x_3&= x_3\\
y_1 x_3 &= \begin{cases}
-zx_3 & \textup{if }i <l;\\
z^2x^3 & \textup{if }i=l.
\end{cases}
\end{align*}

\begin{Proposition}\label{Cnrels1}
The formulas above define a graded action of the algebras \(R_{\alpha_0}^z\), \(R_{\delta - \alpha_0 - \cdots - \alpha_i}^z\), \(R_{\alpha_1 + \cdots + \alpha_{i-1}}^z\) and \(R_{\alpha_i}^z\) on the modules \(L_{\de,i,0}^z\), \(L_{\de,i,1}^z\), \(L_{\de,i,2}^z\), and \(L_{\de,i,3}^z\) respectively, which are \(\O[z]\)-free on their respective bases. 
\end{Proposition}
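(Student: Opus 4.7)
The plan is to verify the KLR relations (\ref{R1})--(\ref{R7}) of $R_\alpha^z$ on each of the four modules; $\O[z]$-freeness on the stated basis is automatic from construction, and relation (\ref{R1}) holds since each module is supported on a single word. For the one-vertex algebras $R^z_{\alpha_0}$ and $R^z_{\alpha_i}$ acting on $L_{\de,i,0}^z$ and $L_{\de,i,3}^z$, only $y_1$ and the single idempotent are in play, so the $\psi$-relations are vacuous and the rest reduce to self-commutativity of $y_1$.

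For $L_{\de,i,1}^z$ and $L_{\de,i,2}^z$, the vanishing of $\psi_r$ on the generator trivializes (\ref{R2PsiE}) and (\ref{R3Psi}), and makes (\ref{R2PsiY}) immediate since each $y_t$ acts by a scalar. Relation (\ref{R6}) then reduces to both sides vanishing: the left because $\psi_r$ kills the generator, the right because consecutive entries in $\bj^{(1)}$ and $\bj^{(2)}$ are always distinct, so $\delta_{j_r, j_{r+1}} = 0$.

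The substantive content is in the quadratic relation (\ref{R4}) and the braid relation (\ref{R7}). For (\ref{R4}), since $\psi_r^2$ acts as zero, one must check that $Q_{j_r, j_{r+1}}(y_r, y_{r+1})$ annihilates the generator for each consecutive pair in the relevant word. Pairs $(k, k{+}1)$ or $(k{+}1, k)$ of short roots with $k, k{+}1 \in \{1, \ldots, l-1\}$ satisfy $Q(u,v) = \eps(u - v)$ and both $y_r, y_{r+1}$ act by the same scalar, giving zero. The critical pairs, appearing only in $\bj^{(1)}$ when $i < l$, involve vertex $l$: using $c_{l-1, l} = -2$ and $c_{l, l-1} = -1$ in type ${\tt C}_l^{(1)}$, the pair $(l-1, l)$ at positions $(l-1, l)$ contributes $\eps_{l-1, l}(y_{l-1}^2 - y_l)\, x_1 = \eps_{l-1, l}(z^2 - z^2)\, x_1 = 0$, and the pair $(l, l-1)$ at positions $(l, l+1)$ contributes $\eps_{l, l-1}(y_l - y_{l+1}^2)\, x_1 = \eps_{l, l-1}(z^2 - (-z)^2)\, x_1 = 0$.

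For (\ref{R7}), both iterated products vanish on the generator, so one must show the right-hand side vanishes too; it is nonzero only when $j_r = j_{r+2}$. In $\bj^{(2)}$, and in $\bj^{(1)}$ when $i = l$, no such repetition occurs. In $\bj^{(1)}$ for $i < l$, the only repetition is at positions $(l-1, l, l+1)$ with entries $(l-1, l, l-1)$, where the divided-difference term equals
\begin{equation*}
\eps_{l-1, l}\,\frac{y_{l+1}^2 - y_{l-1}^2}{y_{l+1} - y_{l-1}}\, x_1 = \eps_{l-1, l}(y_{l+1} + y_{l-1})\, x_1 = \eps_{l-1,l}(-z + z)\, x_1 = 0,
\end{equation*}
thanks to the sign flip in the definition of $y_r x_1$ for $r > l$. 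There is no genuine obstacle; the main point is bookkeeping, namely that the scalar assignments for $y_r x_1$---in particular the sign change at positions past $l$ and the shift from $z$ to $z^2$ at position $l$ itself, as well as the uniform $-z$ on $L^z_{\de,i,2}$---are engineered precisely to make (\ref{R4}) and (\ref{R7}) hold at the problematic subwords $(l-1, l)$, $(l, l-1)$, and $(l-1, l, l-1)$.
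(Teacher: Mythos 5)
Your proposal is correct and follows exactly the route the paper takes: the paper's proof is the one-line assertion that the relations (\ref{R1})--(\ref{R7}) are easily checked, and you have simply carried out that check, correctly identifying the only nontrivial verifications (the quadratic relation at the subwords $(l-1,l)$ and $(l,l-1)$ with $c_{l-1,l}=-2$, $c_{l,l-1}=-1$, and the braid relation at $(l-1,l,l-1)$, where the sign conventions for $y_r x_1$ make everything vanish). No gaps.
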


\begin{proof}
It is easily checked that the given action agrees with the algebra relations 
(\ref{R1})--(\ref{R7}).
\end{proof}

Now define the \(R_{\alpha_0, \delta- \alpha_0-\alpha_i, \alpha_i}^z\)-module
\begin{align*}
L_{\de,i}^z = \begin{cases}
L_{\de,i,0} \boxtimes L_{\de,i,1}  \boxtimes L_{\de,i,3} & \textup{if }i=1;\\
L_{\de,i,0} \boxtimes L_{\de,i,1} \circ L_{\de,i,2} \boxtimes L_{\de,i,3} & \textup{if }i>1.
\end{cases}
\end{align*}
If \(i=1\), write \(x=x_0 \otimes x_1 \otimes x_3\) and \(\bj = (0,\bj^{(1)},1)\). Otherwise write \(x= x_0 \otimes x_1 \otimes x_2 \otimes x_3\) and \(\bj= (0,\bj^{(1)}, \bj^{(2)}, i)\). Then \(x\) is a word vector of word \(\bj\), and \(L_{\delta,i}^z\) is \(\O[z]\)-free on basis 
\begin{align*}
B = \{\psi_u x \mid u \in \D^{1,2l-i-1,i-1,1}_{1,2l-2,1}\}.
\end{align*} 
We are most of the way to defining an action of the standard generators of \(R_\de^z\) on \(L_{\de,i}^z\). The generators \(1_{\bk}, y_r\), and \(\psi_r\) (for \(1<r < 2l-1\)) act as already prescribed by membership in the subalgebra \(R_{\alpha_0, \delta- \alpha_0-\alpha_i, \alpha_i}^z\). It remains to define \(\psi_1 v = \psi_{2l-1}v = 0\), and \(1_{\bk}v = 0\) (if \(k_1 \neq 0\) or \(k_{2n} \neq i\)) for all \(v \in L_{\de,i}^z\).

\begin{Proposition}\label{Cnrels}
The description above defines a graded action of \(R_{\delta}^z\) on \(L_{\de,i}^z\), and \(L_{\de,i}^z\) is \(\O[z]\)-free on basis \(B\).
\end{Proposition}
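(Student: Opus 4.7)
The plan is to split the proof into two parts: establish $\O[z]$-freeness of $L_{\de,i}^z$ on $B$, then verify that declaring $\psi_1, \psi_{2l-1}$ and all ``boundary'' idempotents $1_{\bk}$ (with $k_1 \neq 0$ or $k_{2l} \neq i$) to act as zero is compatible with the defining relations (\ref{R1})--(\ref{R7}) of $R_\delta^z$.

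For freeness, by Proposition~\ref{Cnrels1} each of the four modules $L_{\de,i,j}^z$ is $\O[z]$-free on its given basis. The induction product $L_{\de,i,1} \circ L_{\de,i,2}$ is then $\O[z]$-free with basis $\{\psi_u (x_1 \otimes x_2) \mid u \in \D^{2l-i-1,i-1}_{2l-2}\}$ by the Basis Theorem~\ref{TBasis}, and external tensoring with the rank-one modules $L_{\de,i,0}^z$ and $L_{\de,i,3}^z$ yields $B$.

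For the relations, the parabolic subalgebra $R_{\alpha_0, \delta-\alpha_0-\alpha_i, \alpha_i}^z$ already acts by construction, so all relations internal to it hold automatically. Relations (\ref{R1}), (\ref{R2PsiE}) and (\ref{R3Psi}) are immediate once $\psi_1, \psi_{2l-1}$ and the boundary idempotents are declared to act as zero, since both sides of each such relation are manifestly zero on $L_{\de,i}^z$. For (\ref{R6}) at $r \in \{1, 2l-1\}$, both sides vanish: the left-hand side because it contains $\psi_1$ or $\psi_{2l-1}$ as a factor (and $y_t$ preserves $L_{\de,i}^z$), and the right-hand side because every basis vector has word beginning $(0, 1, \dots)$ and ending $(\dots, j, i)$ with $j$ a neighbor of $i$ in the Dynkin diagram, so the coefficient $\delta_{i_r, i_{r+1}}$ vanishes.

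The main obstacle is verifying (\ref{R4}) and (\ref{R7}) at the boundary positions $r = 1$ and $r = 2l - 1$. Take $r = 1$ as representative. Since every basis vector has word beginning $(0, 1, \dots)$, one has $(k_1, k_2) = (0, 1)$, and in type $\tt C_l^{(1)}$ the polynomial $Q_{0,1}(y_1, y_2) = \epsilon_{0,1}(y_1 - y_2^2)$. As $\psi_1^2 = 0$, the content of (\ref{R4}) is the identity $(y_1 - y_2^2)\psi_u x = 0$ for every $u \in \D^{2l-i-1,i-1}_{2l-2}$. Since $y_1$ commutes with all $\psi_r$ for $r \geq 2$ by (\ref{R6}) (the $\delta_{i_r,i_{r+1}}$ correction terms vanish as $\delta_{1, r+1} = \delta_{1, r} = 0$ for $r \geq 2$), I have $y_1 \psi_u x = \psi_u y_1 x = z^2 \psi_u x$. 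The crux is then to compute $y_2^2 \psi_u x$ by repeatedly pushing $y_2$ through $\psi_u$ via (\ref{R6}); the precise eigenvalues $\pm z$ and $z^2$ assigned to the $y$-action on $x_1$ and $x_2$, combined with the telescoping of the correction terms arising whenever adjacent letters of the word coincide, are designed exactly so that $y_2^2 \psi_u x = z^2 \psi_u x$. A parallel computation at $r = 2l - 1$ uses the analogous polynomial $Q_{j, i}(y_{2l-1}, y_{2l})$, where $j$ depends on which of $\bj^{(1)}$ or $\bj^{(2)}$ contributes to position $2l-1$ under the shuffle, and exploits the ``$-z$''-eigenvalues assigned to the descending positions of $x_1$ and to $x_2$. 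The braid relation (\ref{R7}) at $r = 1$ is then straightforward since both sides vanish (LHS by $\psi_1 = 0$, RHS because $i_1 = 0 \neq i_3 \in I'$), and at $r = 2l-2$ one verifies by a case analysis on the relative position of the unique letter $i$ in the shuffled word either that $i_{2l-2} \neq i$ (so the RHS vanishes) or that both sides can be matched directly using the explicit action.
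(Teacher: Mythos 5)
Your proposal follows essentially the same route as the paper: both observe that all relations internal to the parabolic $R^z_{\alpha_0,\,\de-\alpha_0-\alpha_i,\,\alpha_i}$ hold by construction, that the remaining relations involving $\psi_1$, $\psi_{2l-1}$ or the boundary idempotents mostly have both sides vanishing for word reasons (e.g.\ $(u\bj)_2\neq 0$ and $(u\bj)_{2l-1}\neq i$ kill the right-hand sides of (\ref{R6}) and (\ref{R7}) at the ends), and that the only substantive checks are (\ref{R4}) at $r=1,\,2l-1$ and (\ref{R7}) at $r=2l-2$ (non-trivial only when $i=l-1$), each reducing to showing the relevant $Q$-polynomial acts as zero via the assigned $y$-eigenvalues. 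The one caveat is that the identities you defer to ``the eigenvalues are designed exactly so that'' --- e.g.\ $y_2^2\psi_u x=z^2\psi_u x$ when $u(2)>2$, and $y_{2l-1}\psi_u x=-z\psi_u x$, $y_{2l-2}\psi_u x=z\psi_u x$ in the remaining cases --- are precisely the content of the paper's explicit diagrammatic dot-pushing computations, so a complete write-up would still need to carry those out as the paper does.
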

\begin{proof}
We check that the given action agrees with the algebra relations (\ref{R1})--(\ref{R7}) for \(R_\delta^z\). By construction, all relations that do not involve \(\psi_1\) or  \(\psi_{2l-1}\) are easily seen to be satisfied due to the local nature of the relations and the fact that \(L_{\delta,i,1}^z \circ L_{\delta,i,2}^z\) is an \(R_{\delta-\alpha_0 - \alpha_i}^z\)-module. Additionally, all relations that involve \(1_\bk\) with \(k_1 \neq 0\) or \(k_{2l} \neq i\) are trivially satisfied. We check the rest of the relations that are not immediately obvious below, for basis vectors \(\psi_u x \in B\).

Relation (\ref{R6}). There is no \(u \in \D^{1,2l-i-1,i-1,1}_{1,2l-2,1}\) such that \((u\bj)_2 = 0\) or \((u\bj)_{2l-1}=i\), so we just need verify that the left side of the relation is always zero. Indeed, \(\psi_1\) and \(\psi_{2l-1}\) always act as zero, and the action of \(y_1\) and \(y_{2l}\) commute with all \(\psi_r\)'s by construction, giving the result. 

Relation (\ref{R4}). We have \(\psi_1^2 (\psi_u x) = 0\). Note that \((u\bj)_1 = 1\), so we check: 
\begin{align*}
Q_{0,1}(y_1,y_2) \cdot \psi_u x &= \epsilon_{01}(y_1 - y_2^2) \psi_u x
= \epsilon_{01}(z^2\psi_ux - y_2^2\psi_u x).
\end{align*}
Now either \(u(2)=2\), in which case \(y_2^2 \psi_ux = \psi_uy_2^2x = z^2 \psi_ux \), or \(u(2)>2\), in which case \(\psi_u = \psi_u' \psi_2 \cdots \psi_{2l-i} x\) for some \(\psi_{u'}\) that does not involve \(\psi_1\) or \(\psi_2 \). Then
\begin{align*}
y_2^2 \psi_u x &= y_2^2\psi_u' \psi_2 \cdots \psi_{2l-i} x = \psi_{u'} y_2^2\psi_1 \cdots \psi_{2l-i} x.
\end{align*}
Diagrammatically, \(y_2^2\psi_2 \cdots \psi_{2l-i} x\) looks like 
$$
\begin{braid}\tikzset{baseline=0mm}

\draw(0,1) node[above]{$0$}--(0,-1);
\draw(1,1) node[above]{$1$}--(2,-1);
\draw(2,1) node[above]{$2$}--(3,-1);
\draw(3,1) node[above]{$\cdots$};
\draw(4,1) node[above]{$l\hspace{-0.8mm}-\hspace{-0.8mm}1$}--(5,-1);
\draw(5,1) node[above]{$l$}--(6,-1);
\draw(6,1) node[above]{$l\hspace{-0.8mm}-\hspace{-0.8mm}1$}--(7,-1);
\draw(7,1) node[above]{$\cdots$};
\draw(8,1) node[above]{$i\hspace{-0.8mm}+\hspace{-0.8mm}1$}--(9,-1);
\draw(9,1) node[above]{$1$}--(9,0.8)--(1,-0.1)--(1,-1);
\draw(10,1) node[above]{$2$}--(10,-1);
\draw(11,1) node[above]{$\cdots$};
\draw(12,1) node[above]{$i$}--(12,-1);

\blackdot(1,-0.8);
\blackdot(1,-0.4);

\end{braid}
=
\begin{braid}\tikzset{baseline=0mm}

\draw(0,1) node[above]{$0$}--(0,-1);
\draw(1,1) node[above]{$1$}--(2,-1);
\draw(2,1) node[above]{$2$}--(3,-1);
\draw(3,1) node[above]{$\cdots$};
\draw(4,1) node[above]{$l\hspace{-0.8mm}-\hspace{-0.8mm}1$}--(5,-1);
\draw(5,1) node[above]{$l$}--(6,-1);
\draw(6,1) node[above]{$l\hspace{-0.8mm}-\hspace{-0.8mm}1$}--(7,-1);
\draw(7,1) node[above]{$\cdots$};
\draw(8,1) node[above]{$i\hspace{-0.8mm}+\hspace{-0.8mm}1$}--(9,-1);
\draw(9,1) node[above]{$1$}--(9,0.8)--(1,-0.1)--(1,-1);
\draw(10,1) node[above]{$2$}--(10,-1);
\draw(11,1) node[above]{$\cdots$};
\draw(12,1) node[above]{$i$}--(12,-1);

\blackdot(8.8,0.85);
\blackdot(8.4,0.75);

\end{braid}
\hspace{1cm}
$$
$$
\hspace{1cm}
-
\begin{braid}\tikzset{baseline=0mm}

\draw(0,1) node[above]{$0$}--(0,-1);
\draw(1,1) node[above]{$1$}--(1,-1);
\draw(2,1) node[above]{$2$}--(3,-1);
\draw(3,1) node[above]{$\cdots$};
\draw(4,1) node[above]{$l\hspace{-0.8mm}-\hspace{-0.8mm}1$}--(5,-1);
\draw(5,1) node[above]{$l$}--(6,-1);
\draw(6,1) node[above]{$l\hspace{-0.8mm}-\hspace{-0.8mm}1$}--(7,-1);
\draw(7,1) node[above]{$\cdots$};
\draw(8,1) node[above]{$i\hspace{-0.8mm}+\hspace{-0.8mm}1$}--(9,-1);
\draw(9,1) node[above]{$1$}--(2,-1);
\draw(10,1) node[above]{$2$}--(10,-1);
\draw(11,1) node[above]{$\cdots$};
\draw(12,1) node[above]{$i$}--(12,-1);

\blackdot(1,0.8);

\end{braid}
-
\begin{braid}\tikzset{baseline=0mm}

\draw(0,1) node[above]{$0$}--(0,-1);
\draw(1,1) node[above]{$1$}--(1,-1);
\draw(2,1) node[above]{$2$}--(3,-1);
\draw(3,1) node[above]{$\cdots$};
\draw(4,1) node[above]{$l\hspace{-0.8mm}-\hspace{-0.8mm}1$}--(5,-1);
\draw(5,1) node[above]{$l$}--(6,-1);
\draw(6,1) node[above]{$l\hspace{-0.8mm}-\hspace{-0.8mm}1$}--(7,-1);
\draw(7,1) node[above]{$\cdots$};
\draw(8,1) node[above]{$i\hspace{-0.8mm}+\hspace{-0.8mm}1$}--(9,-1);
\draw(9,1) node[above]{$1$}--(2,-1);
\draw(10,1) node[above]{$2$}--(10,-1);
\draw(11,1) node[above]{$\cdots$};
\draw(12,1) node[above]{$i$}--(12,-1);

\blackdot(8.5,0.85);

\end{braid}
$$
where we picture the vector \(x\) as being at the top of each diagram. But then this is
\begin{align*}
z^2\psi_2 \cdots \psi_{2l-i} x - z\psi_3 \cdots \psi_{2l-i}x +z\psi_3 \cdots \psi_{2l-i}x = z^2\psi_2 \cdots \psi_{2l-i} x,
\end{align*}
so \(y_2^2 \psi_ux = z^2 \psi_ux \), and thus \(Q_{0,1}(y_1,y_2) \cdot \psi_u x =0\) in any case.

On the other side, we have \(\psi_{2l-1}^2(\psi_u x)=0\). Note that either \((u \bj)_{2l-1} = i-1\) or \((u \bj)_{2l-1}=i+1\). The case \(i=l\) is handled similarly to the above argument. For simplicity assume \(i \leq l-2\) (again the case \(i = l-1\) is handled similarly). If \((u\bj)_{2l-1} = i-1\), then \(\psi_u\) does not involve \(\psi_{2l-2}\) or \(\psi_{2l-1}\), and thus
\begin{align*}
Q_{i-1,i}(y_{2l-1},y_{2l}) \cdot \psi_u x &= \epsilon_{i-1,i}(y_{2l-1}-y_{2l}) \cdot \psi_u x = \epsilon_{i-1,i}(y_{2l-1} \psi_u x + z  \psi_u x),
\end{align*}
but \(y_{2l-1} \psi_u x = \psi_uy_{2l-1} x = -z \psi_u x\), so this is zero. If \((u\bj)_{2l-1} = i+1\), then 
\begin{align*}
Q_{i+1,i}(y_{2l-1},y_{2l}) \cdot \psi_u x &= \epsilon_{i+1,i}(y_{2l-1}-y_{2l}) \cdot \psi_u x = \epsilon_{i+1,i}(y_{2l-1} \psi_u x + z  \psi_u x),
\end{align*}
and \(\psi_u\) can be written \(\psi_{u'} \psi_{2l-2} \cdots \psi_{2l-i}\) for some \(\psi_{u'}\) that does not involve \(\psi_{2l-2}\) or \(\psi_{2l-1}\). Then
\begin{align*}
y_{2l-1} \psi_u x = y_{2l-1} \psi_{u'} \psi_{2l-2} \cdots \psi_{2l-i} x = \psi_{u'} y_{2l-1}\psi_{2l-2} \cdots \psi_{2l-i} x.
\end{align*}
Diagrammatically, \(y_{2l-1}\psi_{2l-2} \cdots \psi_{2l-i} x\) looks like
$$
\begin{braid}\tikzset{baseline=0mm}

\draw(0,1) node[above]{$0$}--(0,-1);
\draw(1,1) node[above]{$1$}--(1,-1);
\draw(2,1) node[above]{$2$}--(2,-1);
\draw(3,1) node[above]{$\cdots$};
\draw(4,1) node[above]{$l\hspace{-0.8mm}-\hspace{-0.8mm}1$}--(4,-1);
\draw(5,1) node[above]{$l$}--(5,-1);
\draw(6,1) node[above]{$l\hspace{-0.8mm}-\hspace{-0.8mm}1$}--(6,-1);
\draw(7,1) node[above]{$\cdots$};
\draw(8,1) node[above]{$i\hspace{-0.8mm}+\hspace{-0.8mm}1$}--(12,-0.2)--(12,-1);
\draw(9,1) node[above]{$1$}--(8,-1);
\draw(10,1) node[above]{$2$}--(9,-1);
\draw(11,1) node[above]{$\cdots$};
\draw(12,1) node[above]{$i\hspace{-0.8mm}-\hspace{-0.8mm}1$}--(11,-1);
\draw(13,1) node[above]{$i$}--(13,-1);

\blackdot(12,-0.7);

\end{braid}
=
\begin{braid}\tikzset{baseline=0mm}

\draw(0,1) node[above]{$0$}--(0,-1);
\draw(1,1) node[above]{$1$}--(1,-1);
\draw(2,1) node[above]{$2$}--(2,-1);
\draw(3,1) node[above]{$\cdots$};
\draw(4,1) node[above]{$l\hspace{-0.8mm}-\hspace{-0.8mm}1$}--(4,-1);
\draw(5,1) node[above]{$l$}--(5,-1);
\draw(6,1) node[above]{$l\hspace{-0.8mm}-\hspace{-0.8mm}1$}--(6,-1);
\draw(7,1) node[above]{$\cdots$};
\draw(8,1) node[above]{$i\hspace{-0.8mm}+\hspace{-0.8mm}1$}--(12,-1);
\draw(9,1) node[above]{$1$}--(8,-1);
\draw(10,1) node[above]{$2$}--(9,-1);
\draw(11,1) node[above]{$\cdots$};
\draw(12,1) node[above]{$i\hspace{-0.8mm}-\hspace{-0.8mm}1$}--(11,-1);
\draw(13,1) node[above]{$i$}--(13,-1);

\blackdot(8.5,0.75);

\end{braid}
$$
which is \(-z\psi_{2l-2} \cdots \psi_{2l-i} x\), so \(y_{2l-1}\psi_ux = -z \psi_u x\). Thus \(Q_{i+1,i}(y_{2l-1},y_{2l}) \cdot\psi_u x = 0\).

Relation (\ref{R7}). The only case for which this relation is non-trivial is when \(i=l-1\). Indeed, in all other cases there is no \(\bk = u \bj\) where \(k_1=k_3\) or \(k_{2l-2} = k_{2l}\). When \(i=l-1\), the non-trivial case occurs when \((u \bj)_{2l-2}=l-1\), \((u\bj)_{2l-1} = l\). Then \(\psi_u\) can be written \(\psi_{u'}\psi_{2l-3} \cdots \psi_{l} \psi_{2l-2} \cdots \psi_{l+1}\) for some \(\psi_{u'}\) not involving \(\psi_{2l-3}\), \(\psi_{2l-2}\) or \(\psi_{2l-1}\). Then
\begin{align*}
&\frac{ Q_{l-1,l}(y_{2l},y_{2l-1}) - Q_{l-1,l}(y_{2l-2},y_{2l-1})}{y_{2l}-y_{2l-2}} \psi_ux \\
&= \epsilon_{l-1,l}\frac{(y_{2l}^2 - y_{2l-1}) - (y_{2l-2}^2 - y_{2l-1})}{y_{2l}-y_{2l-2}} \psi_ux\\
&= \epsilon_{l-1,l}(y_{2l} + y_{2l-2}) \cdot \psi_u x\\
&= \epsilon_{l-1,l}(-z \psi_ux + y_{2l-2}\psi_ux)\\
&= \epsilon_{l-1,l}(-z \psi_ux + \psi_{u'}y_{2l-2}\psi_{2l-3} \cdots \psi_{l} \psi_{2l-2} \cdots \psi_{l+1}x)
\end{align*}
Diagrammatically, \(y_{2l-2}\psi_{2l-3} \cdots \psi_{l} \psi_{2l-2} \cdots \psi_{l+1}x\) looks like
$$
\begin{braid}\tikzset{baseline=0mm}
\draw(0,1) node[above]{$0$}--(0,-1);
\draw(1,1) node[above]{$1$}--(1,-1);
\draw(2,1) node[above]{$2$}--(2,-1);
\draw(3,1) node[above]{$\cdots$};
\draw(4,1) node[above]{$l\hspace{-0.8mm}-\hspace{-0.8mm}1$}--(8,-0.5)--(8,-1);
\draw(5,1) node[above]{$l$}--(9,-0.5)--(9,-1);
\draw(6,1) node[above]{$1$}--(4,-1);
\draw(7,1) node[above]{$2$}--(5,-1);
\draw(8,1) node[above]{$\cdots$};
\draw(9,1) node[above]{$l\hspace{-0.8mm}-\hspace{-0.8mm}2$}--(7,-1);
\draw(10,1) node[above]{$l\hspace{-0.8mm}-\hspace{-0.8mm}1$}--(10,-1);

\blackdot(8,-0.7);

\end{braid}
=
\begin{braid}\tikzset{baseline=0mm}
\draw(0,1) node[above]{$0$}--(0,-1);
\draw(1,1) node[above]{$1$}--(1,-1);
\draw(2,1) node[above]{$2$}--(2,-1);
\draw(3,1) node[above]{$\cdots$};
\draw(4,1) node[above]{$l\hspace{-0.8mm}-\hspace{-0.8mm}1$}--(8,-1);
\draw(5,1) node[above]{$l$}--(9,-1);
\draw(6,1) node[above]{$1$}--(4,-1);
\draw(7,1) node[above]{$2$}--(5,-1);
\draw(8,1) node[above]{$\cdots$};
\draw(9,1) node[above]{$l\hspace{-0.8mm}-\hspace{-0.8mm}2$}--(7,-1);
\draw(10,1) node[above]{$l\hspace{-0.8mm}-\hspace{-0.8mm}1$}--(10,-1);

\blackdot(4.5,0.7);

\end{braid}
$$
But this is \(z\psi_{2l-3} \cdots \psi_{l} \psi_{2l-2} \cdots \psi_{l+1}x\), so \(y_{2l-2} \psi_ux = z \psi_ux\), and we are done.
\end{proof}

\subsection{Construction of $L_{\de,i}^z$ in type ${\tt F}_4^{(1)}$}\label{F4Cons}
We label the vertices of the diagram \({\tt F}_4^{(1)}\) as shown below:
\begin{align*}
\begin{braid}\tikzset{baseline=3mm}
\draw[black](-6,0) node[below]{\scriptsize $0$}--(-4,0);
\draw[black](-4,0) node[below]{\scriptsize $1$}--(-2,0);
\draw[black,double,->,>=stealth](-2,0) node[below]{\scriptsize $2$}--(-0.2,0);
\draw[black](0,0) node[below]{\scriptsize $3$}--(2,0);
\draw[black](2,0) node[below]{\scriptsize $4$};
\blackdot(-2,0);
\blackdot(0,0);
\blackdot(2,0);
\blackdot(-4,0);
\blackdot(-6,0);
\end{braid}
\end{align*}

Fix \(i \in I'\). If \(i \in \{1,2\}\), let \(X\) be the set of the following tuples:
\begin{center}
\begin{tabular}{ccc}
$(1,2,3,4,5,6)$, & $(1,3,2,4,5,6)$, & $(1,2,3,5,4,6)$,\\
$(1,3,2,5,4,6)$, & $(3,1,2,4,5,6)$, & $(1,2,3,5,6,4)$,\\
$(3,1,2,5,4,6)$, & $(1,3,2,5,6,4)$, & $(3,1,2,5,6,4)$,\\
$(3,1,5,2,4,6)$, & $(1,3,5,2,6,4)$, & $(3,1,5,2,6,4)$,\\
$(3,5,1,2,6,4)$, & $(3,1,5,6,2,4)$, & $(3,5,1,6,2,4)$,\\
$(3,5,1,2,4,6)$, & $(1,3,5,6,2,4)$, & $(1,3,5,2,4,6)$.
\end{tabular}
\end{center}
If \(i=3\), let \(X\) be the set of the following tuples:
\begin{center}
\begin{tabular}{cccccc}
$(1,2,3,4,5,6)$, & $(1,3,2,4,5,6)$, & $(1,2,3,5,4,6)$, \\
$(1,3,2,5,4,6)$, & $(3,1,2,4,5,6)$, & $(3,1,2,5,4,6)$,\\
$(3,1,5,2,4,6)$, &$(3,5,1,2,4,6)$, &  $(1,3,5,2,4,6)$.
\end{tabular}
\end{center}
If \(i=4\), let \(X\) be the set of the following tuples:
\begin{center}
\begin{tabular}{cccccc}
$(1,2,3,4,6,5)$, & $(1,3,2,4,6,5)$, & $(3,1,2,4,6,5)$.
\end{tabular}
\end{center}
 For \(a \in \{1, \ldots, 6\}\), define 
\begin{align*}
\omega_{a} &= \begin{cases}
3, & a\in \{1,3,4,6\}\\
4, & a \in \{2,5\}
\end{cases}
\\
\chi_a &= \begin{cases}
1 & a \in \{1,2,4\}\\
-1 & a \in \{3,5,6\}.
\end{cases}
\end{align*}
For \(\nu \in X\), define 
\begin{align*}
\bi_{\nu} = \begin{cases}
(0,1,2,\omega_{\nu_1}, \omega_{\nu_2}, \omega_{\nu_3}, 2, \omega_{\nu_4}, \omega_{\nu_5}, \omega_{\nu_6}, 2,1), & i=1\\
 (0,1,2,\omega_{\nu_1}, \omega_{\nu_2}, \omega_{\nu_3}, 2, \omega_{\nu_4}, \omega_{\nu_5}, \omega_{\nu_6}, 1,2), & i=2\\
  (0,1,2,\omega_{\nu_1}, \omega_{\nu_2}, \omega_{\nu_3}, 2, \omega_{\nu_4}, \omega_{\nu_5}, 1,2,\omega_{\nu_6}), & i=3\\
   (0,1,2,\omega_{\nu_1}, \omega_{\nu_2}, \omega_{\nu_3}, 2, \omega_{\nu_4}, 1,2,\omega_{\nu_5},\omega_{\nu_6}), & i=4.
\end{cases}
\end{align*}
Let \(C_{\bi_{\nu}}\) be the connected \(\bi_{\nu}\)-component of the word graph \(G_{\delta}\). Let \(L_{\delta,i}^z\) be the free graded \(\O[z]\)-module on basis
\begin{align*}
B=\{ v_{\nu, \bj} \mid  \nu \in X, \bj \in C_{\bi_{\nu}} \}.
\end{align*}
If \(i \in \{1,2\}\), the grading is given by 
\begin{align*}
\deg v_{\nu, \bj} \hspace{-0.4mm}= \hspace{-0.4mm}\begin{cases}
3, & \nu  = (1,3,2,5,4,6)\\
2, & \nu \in \{(1,2,3,5,4,6), (1,3,2,4,5,6)\}\\
1, & \nu \in \{(1,2,3,4,5,6), (3,1,2,5,4,6), (1,3,2,5,6,4), (1,3,5,2,4,6)\}\\
0, & \nu \in \{(3,1,2,4,5,6), (1,2,3,5,6,4), (3,5,1,2,4,6), (1,3,5,6,2,4)\}\\
-1, & \nu \in \{(3,1,2,5,6,4), (3,1,5,2,4,6), (1,3,5,2,6,4), (3,5,1,6,2,4)\}\\
-2, & \nu \in \{(3,5,1,2,6,4), (3,1,5,6,2,4)\}\\
-3, & \nu = (3,1,5,2,6,4).
\end{cases}
\end{align*}
If \(i =3\), the grading is given by 
\begin{align*}
\deg v_{\nu, \bj} = \begin{cases}
2, & \nu  = (1,3,2,5,4,6)\\
1, & \nu \in \{(1,2,3,5,4,6), (1,3,2,4,5,6)\}\\
0, & \nu \in \{(1,2,3,4,5,6), (3,1,2,5,4,6), (1,3,5,2,4,6)\}\\
-1, & \nu \in \{(3,1,2,4,5,6), (3,5,1,2,4,6)\}\\
-2, & \nu =(3,1,5,2,4,6).
\end{cases}
\end{align*}
If \(i =4\), the grading is given by 
\begin{align*}
\deg v_{\nu, \bj} = \begin{cases}
1, & \nu =(1,3,2,4,6,5) \\
0, & \nu= (1,2,3,4,6,5)\\
-1, & \nu =(3,1,2,4,6,5).
\end{cases}
\end{align*}
For \(\bj \in C_{\bi_{\nu}}\), let \(\mu(\bj)\) be the list of positions in \(\bj\) occupied by 3 or 4, in increasing order. If \(j_r \in \{3,4\}\), let \(l(r, \bj) \in \{1, \ldots, 6\}\) be such that \((\mu(\bj))_{l(r,\bj)} = r\). For example, if \(\bj = (0,1,2,3,3,2,4,3,4,3,2,1)\), then \(\mu(\bj) = (4,5,7,8,9,10)\), and \(l(8,\bj) =4\). 

We now define an action of generators of \(R_\delta^z\) on \(L_{\delta,i}^z\):
\begin{align*}
1_{\bk} v_{\nu, \bj} &= \delta_{\bj, \bk} v_{\nu, \bj}.\\
y_r v_{\nu, \bj} &= \begin{cases}
z^2v_{\nu,\bj}, & j_r \in \{0,1,2\}\\
\chi_{\nu_{l(r,\bj)}}(zv_{\nu,\bj} + Y(r,\nu,\bj)), & j_r \in \{3,4\}\\
\end{cases}\\
\psi_r v_{\nu,\bj} &= \begin{cases}
v_{\nu,s_r \bj}, & j_r \cdot j_{r+1} = 0\\
\Psi(r, \nu, \bj), & j_r, j_{r+1} \in \{3,4\}\\
0, & \textup{otherwise}.
\end{cases}
\end{align*}
where
\begin{align*}
Y(r,\nu,\bj) &= \begin{cases}
v_{\bj, (1,3,2,4,5,6)}, & \nu = (3,1,2,4,5,6), \; l(r,\bj) \in \{1,2\}\\
v_{\bj, (1,2,3,5,4,6)}, & \nu = (1,2,3,5,6,4), \; l(r, \bj) \in \{5,6\}\\
v_{\bj, (1,3,2,5,4,6)}, & \nu = (3,1,2,5,4,6), \; l(r, \bj) \in \{1,2\}\\
v_{\bj, (1,3,2,5,4,6)}, & \nu = (1,3,2,5,6,4), \; l(r, \bj) \in \{5,6\}\\
v_{\bj, (1,3,2,5,6,4)}, & \nu = (3,1,2,5,6,4), \; l(r, \bj) \in \{1,2\}\\
v_{\bj, (3,1,2,5,4,6)}, & \nu = (3,1,2,5,6,4), \; l(r, \bj) \in \{5,6\}\\
v_{\bj, (1,3,5,2,4,6)}, & \nu = (3,1,5,2,4,6), \; l(r, \bj) \in \{1,2\}\\
v_{\bj, (3,1,2,5,4,6)}, & \nu = (3,1,5,2,4,6), \; l(r, \bj) \in \{3,4\}\\
v_{\bj, (1,3,2,5,6,4)}, & \nu = (1,3,5,2,6,4), \; l(r, \bj) \in \{3,4\}\\
v_{\bj, (1,3,5,2,4,6)}, & \nu = (1,3,5,2,6,4), \; l(r, \bj) \in \{5,6\}\\
v_{\bj, (1,3,5,2,6,4)}, & \nu = (3,1,5,2,6,4), \; l(r, \bj) \in \{1,2\}\\
v_{\bj, (3,1,2,5,6,4)}, & \nu = (3,1,5,2,6,4), \; l(r, \bj) \in \{3,4\}\\
v_{\bj, (3,1,5,2,4,6)}, & \nu = (3,1,5,2,6,4), \; l(r, \bj) \in \{5,6\}\\
\epsilon_{34}v_{\bj, (1,2,3,5,6,4)}, & \nu = (3,5,1,2,6,4), \; l(r, \bj) \in \{1,2,3,4\}\\
v_{\bj, (3,5,1,2,4,6)}, & \nu = (3,5,1,2,6,4), \; l(r, \bj) \in \{5,6\}\\
v_{\bj, (1,3,5,6,2,4)}, & \nu = (3,1,5,6,2,4), \; l(r, \bj) \in \{1,2\}\\
\epsilon_{43}v_{\bj, (3,1,2,4,5,6)}, & \nu = (3,1,5,6,2,4), \; l(r, \bj) \in \{3,4,5,6\}\\
-v_{\bj, (1,2,3,4,5,6)}, & \nu = (3,5,1,6,2,4) \\
\epsilon_{34}v_{\bj, (1,2,3,5,4,6)}, & \nu = (3,5,1,2,4,6), \; l(r, \bj) \in \{1,2,3,4\}\\
\epsilon_{43}v_{\bj, (1,3,2,4,5,6)}, & \nu = (1,3,5,6,2,4), \; l(r, \bj) \in \{3,4,5,6\}\\
v_{\bj, (1,3,2,5,4,6)}, & \nu = (1,3,5,2,4,6), \; l(r, \bj) \in \{3,4\}\\
v_{\bj, (1,3,2,4,6,5)}, & \nu = (3,1,2,4,6,5), \; l(r,\bj) \in \{1,2\}\\
0, & \textup{otherwise}.
\end{cases}
\end{align*}
and
\begin{align*}
\Psi(r,\nu,\bj) &= \begin{cases}
v_{s_{l(r,\bj)}\nu, s_r \bj} & \nu_{l(r,\bj)} < \nu_{l(r,\bj)+1}, \;s_{l(r,\bj)}\nu \in X\\
2 \epsilon_{43}zv_{s_r \bj, (1,2,3,4,5,6)}, & \nu=(1,3,2,4,5,6),\; l(r,\bj)=2\\
2 \epsilon_{34}zv_{s_r \bj, (1,2,3,4,5,6)}, & \nu=(1,2,3,5,4,6),\; l(r,\bj)=4\\
2 \epsilon_{43}zv_{s_r \bj, (1,2,3,5,4,6)}, & \nu=(1,3,2,5,4,6),\; l(r,\bj)=2\\
2 \epsilon_{34}zv_{s_r \bj, (1,3,2,4,5,6)}, & \nu=(1,3,2,5,4,6),\; l(r,\bj)=4\\
\epsilon_{34}v_{s_r \bj, (1,2,3,4,5,6)}, & \nu=(3,1,2,4,5,6),\; l(r,\bj)=2\\
\epsilon_{43}v_{s_r \bj, (1,2,3,4,5,6)}, & \nu=(1,2,3,5,6,4),\; l(r,\bj)=4\\
\epsilon_{34}v_{s_r \bj, (1,2,3,5,4,6)}, & \nu=(3,1,2,5,4,6),\; l(r,\bj)=2\\
2\epsilon_{34}zv_{s_r \bj, (3,1,2,4,5,6)}, & \nu=(3,1,2,5,4,6),\; l(r,\bj)=4\\
2\epsilon_{43}zv_{s_r \bj, (1,2,3,5,6,4)}, & \nu=(1,3,2,5,6,4),\; l(r,\bj)=2\\
\epsilon_{43}v_{s_r \bj, (1,3,2,4,5,6)}, & \nu=(1,3,2,5,6,4),\; l(r,\bj)=4\\
\epsilon_{34}v_{s_r \bj, (1,2,3,5,6,4)}, & \nu=(3,1,2,5,6,4),\; l(r,\bj)=2\\
\epsilon_{43}v_{s_r \bj, (3,1,2,4,5,6)}, & \nu=(3,1,2,5,6,4),\; l(r,\bj)=4\\
\epsilon_{43}v_{s_r \bj, (3,1,2,4,5,6)}, & \nu=(3,1,5,2,4,6),\; l(r,\bj)=4\\
\epsilon_{34}v_{s_r \bj, (1,2,3,5,6,4)}, & \nu=(1,3,5,2,6,4),\; l(r,\bj)=2\\
\epsilon_{34}(2zv_{s_r \bj, (3,1,5,2,6,4)} \\
\hspace{0.5cm}+ v_{s_r \bj, (1,3,5,2,6,4)} \\
\hspace{1cm}+ v_{s_r \bj,(3,1,2,5,6,4)}), & \nu=(3,5,1,2,6,4),\; l(r,\bj)=2\\
\epsilon_{43}(2zv_{s_r \bj, (3,1,5,2,6,4)} \\
\hspace{0.5cm}+ v_{s_r \bj, (3,1,5,2,4,6)} \\
\hspace{1cm}+ v_{s_r \bj,(3,1,2,5,6,4)}), & \nu=(3,1,5,6,2,4),\; l(r,\bj)=4\\
2\epsilon_{34}zv_{s_r \bj, (3,1,5,6,2,4)} \\
\hspace{0.5cm}+ \epsilon_{34}v_{s_r \bj, (1,3,5,6,2,4)} \\
\hspace{1cm}- v_{s_r \bj,(3,1,2,4,5,6)}, & \nu=(3,5,1,6,2,4),\; l(r,\bj)=2\\
2\epsilon_{43}zv_{s_r \bj, (3,5,1,2,6,4)} \\
\hspace{0.5cm}+ \epsilon_{43}v_{s_r \bj, (3,5,1,2,4,6)} \\
\hspace{1cm}- v_{s_r \bj,(1,2,3,5,6,4)}, & \nu=(3,5,1,6,2,4),\; l(r,\bj)=4\\
\epsilon_{34}(2zv_{s_r \bj, (3,1,5,2,4,6)} \\
\hspace{0.5cm}+ v_{s_r \bj, (1,3,5,2,4,6)} \\
\hspace{1cm}+ v_{s_r \bj,(3,1,2,5,4,6)}), & \nu=(3,5,1,2,4,6),\; l(r,\bj)=2\\
-v_{s_r \bj, (1,2,3,4,5,6)}, & \nu=(3,5,1,2,4,6),\; l(r,\bj)=4\\
-v_{s_r \bj, (1,2,3,4,5,6)}, & \nu=(1,3,5,6,2,4),\; l(r,\bj)=2\\
\epsilon_{43}(2zv_{s_r \bj, (1,3,5,2,6,4)} \\
\hspace{0.5cm}+ v_{s_r \bj, (1,3,2,5,6,4)} \\
\hspace{1cm}+ v_{s_r \bj,(1,3,5,2,4,6)}), & \nu=(1,3,5,6,2,4),\; l(r,\bj)=4\\
\epsilon_{34}v_{s_r \bj, (1,2,3,5,4,6)}, & \nu=(1,3,5,2,4,6),\; l(r,\bj)=2\\
\epsilon_{43}v_{s_r \bj, (1,3,2,4,5,6)}, & \nu=(1,3,5,2,4,6),\; l(r,\bj)=4\\
2 \epsilon_{43}zv_{s_r \bj, (1,2,3,4,6,5)}, & \nu=(1,3,2,4,6,5),\; l(r,\bj)=2\\
\epsilon_{34}v_{s_r \bj, (1,2,3,4,6,5)}, & \nu=(3,1,2,4,6,5),\; l(r,\bj)=2\\
0 & \textup{otherwise}.
\end{cases}
\end{align*}

\begin{Proposition}\label{F4rels}
The formulas above define a graded action of \(R_{\delta}^z\) on \(L_{\de,i}^z\), and \(L_{\de,i}^z\) is \(\O[z]\)-free on basis \(B\).
\end{Proposition}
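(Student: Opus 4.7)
The plan is to verify Proposition~\ref{F4rels} by direct checking of the KLR relations (\ref{R1})--(\ref{R7}), following the same case-analytic template used in the proofs of Propositions~\ref{Bnrels} and~\ref{Cnrels}. Freeness of $L_{\delta,i}^z$ on the basis $B$ is built into the definition: we declared $L_{\delta,i}^z$ to be the free graded $\O[z]$-module on $B$, so the content of the proposition is really that the prescribed formulas respect the defining relations of $R_\delta^z$. The grading claim is a finite check: for each generator $1_\bk$, $y_r$, $\psi_r$ one verifies that the right-hand side of the formula lies in the correct graded piece of $L_{\delta,i}^z$, using the assigned degrees $\deg v_{\nu,\bj}$, the fact that $z$ has degree $2$, and that $\deg y_r = (\alpha_{j_r},\alpha_{j_r})$ and $\deg \psi_r = -(\alpha_{j_r},\alpha_{j_r+1})$.

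For the relations, I would partition the standard generators into three kinds according to whether the acting index $r$ lands on a letter in $\{0,1,2\}$ or in $\{3,4\}$. Relations (\ref{R1}), (\ref{R2PsiY}), (\ref{R2PsiE}), (\ref{R3Psi}) are essentially formal and follow by inspection from the definitions, since the $y_r$-action is diagonal on each $v_{\nu,\bj}$ up to a triangular correction $Y(r,\nu,\bj)$ that depends only on $\nu$ (and the column $l(r,\bj)$), not on the positional variable being permuted. The commutation $y_ry_t = y_ty_r$ reduces to the analogous symmetry computation already done in the proof of Proposition~\ref{Bnrels}, once one checks that the $Y$-terms in two applications of $y_r,y_t$ give a symmetric contribution. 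Relation (\ref{R6}) requires writing $y_t\psi_r - \psi_r y_{s_r(t)}$ and confirming that the $Y$- and $\Psi$-corrections cancel; the cases $j_r\cdot j_{r+1}=0$ and $j_r,j_{r+1}\in\{3,4\}$ should be handled separately, with the former reducing to a tautology and the latter requiring the $X$-list of admissible $\nu$ tuples to be closed under the required subsitutions.

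The serious work is in relations (\ref{R4}) and (\ref{R7}), and this is where I expect the main obstacle to lie. For the quadratic relation (\ref{R4}) on $R_\delta^z$ in type $F_4^{(1)}$, the polynomials $Q_{ij}(u,v)$ involve $u^2-v$ or $u-v^2$ on the bond $2$--$3$, and so one must compute $\psi_r^2 v_{\nu,\bj}$ together with polynomial evaluations of $y_r,y_{r+1}$ on the same vector and observe that they agree. Since $y_r$ on a $\{3,4\}$-letter produces both a scalar $\chi_{\nu_{l(r,\bj)}}z$ and a non-diagonal jump $Y(r,\nu,\bj)$ into a different $\nu'$-sector, both contributions must interact correctly with $\Psi(r,\nu,\bj)$. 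One must check this relation $\nu$ by $\nu$ on the eighteen (resp.\ nine, three) tuples in $X$, and for each $r$ in the relevant range. The braid relation (\ref{R7}) is the hardest: it demands that
\[
(\psi_{r+1}\psi_r\psi_{r+1}-\psi_r\psi_{r+1}\psi_r)v_{\nu,\bj} = \delta_{i_r,i_{r+2}}\,\frac{Q_{i_r,i_{r+1}}(y_{r+2},y_{r+1})-Q_{i_r,i_{r+1}}(y_r,y_{r+1})}{y_{r+2}-y_r}\,v_{\nu,\bj},
\]
which on the $\{3,4\}$-triples couples all of the non-diagonal jumps built into $Y$ and $\Psi$. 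Most triples have $j_r\neq j_{r+2}$, so both sides are zero and only the closure under $\Psi$ must be checked; the hard substance lies in configurations with $j_r=j_{r+2}\in\{3,4\}$ and in the $\nu$-tuples where multiple $Y$-terms collide, i.e.\ the rows of $\Psi$ containing three or more summands (such as $\nu=(3,5,1,6,2,4)$ and $\nu=(3,1,5,6,2,4)$). I would handle these by writing each side as an explicit linear combination in $B$ and using the sign normalizations $\chi_a$ together with the compatibility $\epsilon_{34}\epsilon_{43}=-1$ to cancel.

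In summary, after the easy relations are dispatched, the proof is a long but finite verification of (\ref{R4}) and (\ref{R7}) on each element of $X$, organized by the column index $l(r,\bj)$; the combinatorics of $Y$ and $\Psi$ have been rigged precisely so that each braid triangle closes, and the primary technical obstacle is simply organizing the bookkeeping cleanly enough that the case-split in the definitions of $Y(r,\nu,\bj)$ and $\Psi(r,\nu,\bj)$ is seen to exhaust the possibilities with no overlap.
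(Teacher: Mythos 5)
Your approach is essentially the paper's: the entire proof of Proposition~\ref{F4rels} given in the paper is the single sentence that the compatibility of the formulas with relations (\ref{R1})--(\ref{R7}) ``has been checked via computer,'' and your outline correctly identifies that $\O[z]$-freeness on $B$ is definitional, that the grading check is routine, and that the substance is a finite case verification of (\ref{R4}) and (\ref{R7}) over the tuples in $X$, organized exactly as in the hand-checked Propositions~\ref{Bnrels} and~\ref{Cnrels}. The only caveat is that, like the paper, you have not actually executed the verification but only described how to organize it, so a complete proof still requires carrying out the case check by hand or by machine.
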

\begin{proof}
That the given action agrees with the algebra relations (\ref{R1})--(\ref{R7}) for \(R_\delta^z\) has been checked via computer.
\end{proof}

\subsection{Construction of $L_{\de,i}^z$ in type ${\tt G}_2^{(1)}$}\label{G2Cons}
We label the vertices of the diagram \({\tt G}_2^{(1)}\) as shown below:
\begin{align*}
\begin{braid}\tikzset{baseline=3mm}
\draw[black](-4,0) node[below]{\scriptsize $0$}--(-2,0);
\draw[black, double distance=3pt, ->, >=stealth](-1.8,0)--(-0.2,0);
\draw[black, ->, >=stealth](-1.8,0) node[below]{\scriptsize $1$}--(-0.2,0);
\draw[black](0,0) node[below]{\scriptsize $2$};
\blackdot(-2,0);
\blackdot(-4,0);
\blackdot(0,0);
\end{braid}
\end{align*}
Fix \(i \in I'\). If \(i =1\), let \(X\) be the set of the following tuples:
\begin{align*}
(1,2,3), \hspace{3mm} (1,3,2), \hspace{3mm}  (2,1,3), \hspace{3mm} 
(2,3,1), \hspace{3mm}  (3,1,2), \hspace{3mm}  (3,2,1).
\end{align*}
If \(i=2\), let \(X\) be the set of the following tuples:
\begin{align*}
(1,2,3), \hspace{3mm} (2,1,3).
\end{align*}
For the type \({\tt G}_2\) case, we assume \(\O = \C\) when defining \(L_{\de,i}^z\). For \(a \in \{1, 2,3\}\), define 
\begin{align*}
\chi_a &= \begin{cases}
1 & a =1\\
\xi & a=2\\
\xi^2 & a = 3.
\end{cases}
\end{align*}
For \(\nu \in X\), define 
\begin{align*}
\bi = \begin{cases}
(0,1,2,2,2,1), & i=1\\
(0,1,2,2,1,2), & i=2.
\end{cases}
\end{align*}
Let \(L_\delta^z\) be the free graded \(\O[z]\)-module on basis
\begin{align*}
B=\{ v_{\nu } \mid  \nu \in X\}.
\end{align*}
If \(i =1\), the grading is given by 
\begin{align*}
\deg v_{\nu} = \begin{cases}
3, & \nu  = (1,2,3)\\
1, & \nu \in \{(2,1,3),(1,3,2)\}\\
-1, & \nu \in \{(2,3,1), (3,1,2)\}\\
-3, & \nu = (3,2,1).
\end{cases}
\end{align*}
If \(i=2\), the grading is given by
\begin{align*}
\deg v_{\nu} = \begin{cases}
1, & \nu=(1,2,3)\\
-1, & \nu =(2,1,3).\\
\end{cases}
\end{align*}

We now define an action of generators of \(R_\delta^z\) on \(L_\delta^z\):
\begin{align*}
1_{\bk} v_{\nu} &= \delta_{\bk, \bi} v_{\nu}.\\
y_r v_{\nu} &= \begin{cases}
z^3v_{\nu}, & j_r \in \{0,1\}\\
\chi_{\nu_{r-2}}zv_{\nu} + Y(r-2,\nu), & r \in \{3,4\}\\
\chi_{\nu_{3}}zv_{\nu} + Y(3,\nu), & r =5, i = 1\\
\xi^2zv_{\nu}, & r =6, i = 2\\
\end{cases}\\
\psi_r v_{\nu} &= \begin{cases}
v_{s_{r-2}\nu}, & r \in \{3,4\}, \nu_{r-2}< \nu_{r-1}, s_{r-2}\nu \in X
\\
0, & \textup{otherwise},
\end{cases}
\end{align*}
where
\begin{align*}
Y(t,\nu) &= \begin{cases}
(-1)^tv_{(1,2,3)} &\nu = (2,1,3), t \in \{1,2\}\\
(-1)^{t+1}v_{(1,2,3)} & \nu = (1,3,2), t \in \{2,3\}\\
-v_{(1,3,2)} &\nu=(2,3,1), t =1\\
-v_{(2,1,3)}&\nu=(2,3,1), t =2\\
v_{(2,1,3)} + v_{(1,3,2)} &\nu=(2,3,1), t =3\\
-v_{(1,3,2)} - v_{(2,1,3)}&\nu=(3,1,2), t =1\\
v_{(1,3,2)}&\nu=(3,1,2), t =2\\
v_{(2,1,3)} &\nu=(3,1,2), t =3\\
-v_{(2,3,1)}&\nu=(3,2,1), t =1\\
-v_{(3,1,2)} + v_{(2,3,1)} &\nu=(3,2,1), t =2\\
v_{(3,1,2)} &\nu=(3,2,1), t =3\\
\end{cases}
\end{align*}

\begin{Proposition}\label{G2rels}
The formulas above define a graded action of \(R_{\delta}^z\) on \(L_{\de,i}^z\), and \(L_{\de,i}^z\) is \(\O[z]\)-free on basis \(B\).
\end{Proposition}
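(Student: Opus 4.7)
The plan is to verify that the formulas satisfy the defining relations (\ref{R1})--(\ref{R7}) of the KLR algebra $R_\delta^z$ in type ${\tt G}_2^{(1)}$, following the same pattern as the proofs of Propositions~\ref{Bnrels}, \ref{Cnrels1}, \ref{Cnrels}, and \ref{F4rels}. The $\mathcal{O}[z]$-freeness on $B$ is immediate from the construction, so the content of the proposition lies entirely in checking the relations.

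First I would dispose of the easy relations. Relation (\ref{R1}) holds because the idempotents $1_{\bk}$ act by $\delta_{\bk,\bi}$, and there is exactly one word $\bi$ in each case. Relation (\ref{R2PsiE}) is immediate because when $j_r \cdot j_{r+1} = 0$ and $\psi_r$ moves $v_\nu$ to $v_{s_{r-2}\nu}$, the underlying word transforms from $\bi$ to $s_r \bi$ (one checks that $s_r\bi = \bi$ in each relevant case, so the relation is trivial). Relation (\ref{R3Psi}) amounts to the statement that the endomorphisms $\psi_r$ and $\psi_t$ act on different pieces of the word when $|r-t|>1$; since the only nonzero actions of $\psi_r$ occur for $r \in \{3,4\}$, this needs to be verified only for $r=3, t=4$ or vice versa.

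Next I would handle the commutation relation (\ref{R2PsiY}). The $y$'s act diagonally in the basis $B$ modulo lower-triangular corrections $Y(t,\nu)$, and one must check that these corrections are compatible, i.e. that $(y_t y_r - y_r y_t) v_\nu = 0$ for all $\nu \in X$ and all admissible $t, r$. Since $y_1, y_2$ act by the scalar $z^3$, the nontrivial cases involve pairs $(y_r, y_t)$ with $r, t \in \{3,4,5\}$ for $i=1$, and the required equality can be expanded using the explicit values of $\chi_a$ (which are $1, \xi, \xi^2$, with $\xi$ a primitive cube root of unity --- this is why $\mathcal{O} = \mathbb{C}$ is assumed) and the piecewise definition of $Y$.

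The main obstacle will be relation (\ref{R4}), since in type ${\tt G}_2^{(1)}$ the polynomial $Q_{12}(u,v) = \epsilon_{12}(u^3 - v)$ (or with roles reversed), which forces one to compute $y_r^3$ for $r$ where $j_r = 1$, a nontrivial cubic expression in the $y$-action. Combined with relation (\ref{R7}), whose right-hand side involves the ratio $(Q_{i_r, i_{r+1}}(y_{r+2}, y_{r+1}) - Q_{i_r, i_{r+1}}(y_r, y_{r+1}))/(y_{r+2} - y_r)$, this produces polynomial expressions of degree up to two in the $y_r$'s whose action must reproduce the effect of the triple product $\psi_r \psi_{r+1} \psi_r - \psi_{r+1} \psi_r \psi_{r+1}$. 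I would verify these by direct but tedious case analysis, splitting on $\nu \in X$ and tracking the action through the explicit formulas for $Y$ and $\Psi$; the introduction of $\xi$ into $\chi_a$ is precisely what is needed to make the cubic identities close. Given the small size of $B$ (six elements for $i=1$, two for $i=2$) and the similarity to the ${\tt F}_4^{(1)}$ case, a computer verification as in Proposition~\ref{F4rels} is the most practical way to complete the check.
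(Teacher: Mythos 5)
Your proposal matches the paper's approach: the paper's entire proof is that the relations (\ref{R1})--(\ref{R7}) have been verified by computer, which is exactly the direct case-by-case check (finished by machine) that you outline. Your plan is sound, with only minor slips of detail — e.g.\ since $\al_2$ is the short root one has $Q_{1,2}(u,v)=\eps_{12}(u-v^3)$, so it is the $y$'s on the $2$-coloured strands (where $\chi_a$ is a cube root of unity) that get cubed, not those with $j_r=1$ — but these do not affect the strategy.
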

\begin{proof}
That the given action agrees with the algebra relations (\ref{R1})--(\ref{R7}) for \(R_\delta^z\) has been checked via computer.
\end{proof}

\subsection{Construction of $L_{\de,i}$ for non-simply-laced types}

Let \(\Car\) be a Cartan matrix of type \({\tt B}_l^{(1)}, {\tt C}_l^{(1)}, {\tt F}_4^{(1)}\), or \({\tt G}_2^{(1)}\), and fix \(i \in I'\).

\begin{Proposition}\label{zTwistMod}
Viewed as an \(R_\delta\)-module, \(L_{\delta,i}^z\) has submodule \(W = \O\{z^kb \mid k >0, b \in B\}\), and \(L_{\delta,i}^z/ W \cong L_{\de,i}\). 
\end{Proposition}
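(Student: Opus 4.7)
The plan is to recognize $W$ as $z L_{\delta,i}^z$, deduce that the quotient $V := L_{\delta,i}^z/W$ carries an $R_\delta$-action obtained by specializing $z\mapsto 0$ in the formulas of Propositions~\ref{Bnrels}, \ref{Cnrels1}, \ref{Cnrels}, \ref{F4rels}, \ref{G2rels}, and then identify $V$ with $L_{\delta,i}$ by appealing to the uniqueness in Proposition~\ref{L3912}(iii).

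First I would observe that $W = zL_{\delta,i}^z$, since every monomial $z^k b$ with $k\ge 1$ lies in $zL_{\delta,i}^z$ and, conversely, $zL_{\delta,i}^z$ is $\O$-spanned by such elements. Because $z$ is central in $R_\delta^z = \O[z]\otimes_\O R_\delta$, multiplication by $z$ is an $R_\delta$-module endomorphism of $L_{\delta,i}^z$, so its image $W$ is an $R_\delta$-submodule. The quotient $V$ is then $\O$-free on $\{\bar b : b\in B\}$, with $R_\delta$-action read off by setting $z=0$ throughout.

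Second, I would verify that $V$ factors through the cyclotomic quotient $R_\delta^{\Lambda_0}$. Case-by-case inspection of the constructions in \S\S\ref{BlCons}--\ref{G2Cons} shows that every word $\bk$ labeling a basis vector begins with $0$, and that $y_1 b = z^d b$ for some $d\ge 1$ whenever $k_1=0$. Thus $y_1 \bar b=0$ in $V$; combined with $\langle \Lambda_0,\alpha_0^\vee\rangle =1$, this means the defining generators (\ref{ECyclot}) of $J_\delta^{\Lambda_0}$ annihilate $V$, which by Lemma~\ref{L150813} is exactly the statement that $V$ is an $R_\delta^{\Lambda_0}$-module. A parallel inspection shows that every word occurring in $V$ ends with the letter $i$: in type ${\tt B}_l^{(1)}$, $\bi$ ends in $i$ and admissible transpositions preserving $C_\bi$ leave the final letter unchanged; in type ${\tt C}_l^{(1)}$, the generator $x$ has word ending in $i$ and $\psi_{2l-1}$ acts as zero, so no $\psi_u x$ can alter the last letter; in types ${\tt F}_4^{(1)}$ and ${\tt G}_2^{(1)}$ one checks directly that each $\bi_\nu$ (for $\nu\in X$) terminates in $i$, using $\omega_{\nu_6}=i$ in the ${\tt F}_4$ cases $i\in\{3,4\}$.

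Putting these together, every composition factor of $V$ is an irreducible $R_\delta^{\Lambda_0}$-module whose words end in $i$, and hence by Proposition~\ref{L3912}(iii) is isomorphic to $L_{\delta,i}$. To finish, I must show $V$ has precisely one composition factor. This I would do by exhibiting a distinguished cyclic generator---$v_\bi^{(1)}$ (or $v_\bi$ when $i=l$) in type ${\tt B}$, the tensor generator $x$ in type ${\tt C}$, $v_{(1,2,3,4,5,6),\bi_{(1,\dots,6)}}$ in type ${\tt F}_4$, and $v_{(1,2,3)}$ in type ${\tt G}_2$---and verifying that the $R_\delta$-submodule it generates already exhausts $V$. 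With $z=0$, the $\psi_r$-action shuffles basis vectors within a common ``level'' (fixed $c\in\{\pm 1\}$ in type ${\tt B}$, fixed $\nu$-stratum in type ${\tt F}_4$), while the residual non-$z$ terms in the $y_r$-action---the corrections $-\xi_{\bj,r} v_\bj^{(1)}$ in type ${\tt B}$ and the values $Y(r,\nu,\bj)$ in type ${\tt F}_4$---provide the cross-level transitions needed to sweep out every basis vector. The main obstacle is precisely this case-by-case irreducibility verification in types ${\tt F}_4^{(1)}$ and ${\tt G}_2^{(1)}$, where the action formulas are most elaborate; these are most efficiently discharged by computer, in the spirit of the proofs of Propositions~\ref{F4rels} and~\ref{G2rels}.
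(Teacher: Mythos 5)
Your first two steps coincide with the paper's proof: $W=zL_{\de,i}^z=R_\de^z z L_{\de,i}^z$ is the image of the central element $z$, so the quotient $V$ is an $R_\de$-module, $\O$-free on $\{\bar b\mid b\in B\}$ with the specialized action; and since every word of $V$ begins with $0$ and ends with $i$, every composition factor of $V$ is isomorphic to $L_{\de,i}$ up to degree shift, by Lemma~\ref{L150813} and Proposition~\ref{L3912}(iii).

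The gap is in your final step. Exhibiting a cyclic generator of $V$ does not show that $V$ has exactly one composition factor: a cyclic graded module all of whose composition factors are isomorphic to $L_{\de,i}$ can still be a nonsplit self-extension of $L_{\de,i}$ by a degree-shifted copy $q^aL_{\de,i}$ (compare the nil-Hecke algebra $R_{n\al_i}$, where $F[y_1]/(y_1^2)$ is cyclic with simple head but has two composition factors, both isomorphic to $L(i)$ up to shift). Nothing in your argument rules out such self-extensions, so cyclicity together with the identification of the composition factors is insufficient. The paper closes this point by a dimension count on an extremal word space: by Lemma~\ref{LMultOneWeight} the graded dimension of the $\bi$-word space of $L_{\de,i}$ is $\prod_k[m_k]^!_{j_k}$, and one reads off from the explicit basis $B$ that the $\bi$-word space of $V$ has the same graded dimension, which forces the composition multiplicity of $L_{\de,i}$ in $V$ to be one and hence $V\cong L_{\de,i}$. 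Replacing your cyclicity verification by this word-space comparison (immediate from the constructions in \S\S\ref{BlCons}--\ref{G2Cons}) completes the argument and also avoids the computer check you anticipate for types ${\tt F}_4^{(1)}$ and ${\tt G}_2^{(1)}$.
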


\begin{proof}
\(R_\delta^z\) has a two-sided ideal \(R_\delta^zz\), with \(R_\delta^z / R_\delta^zz \cong R_\delta\) as \(\O\)-algebras. \(L_{\de,i}^z\) is free as an \(\O\)-module with basis \(\{z^kb \mid k \in \Z_{\geq 0}, b \in B\}\), and has an \(R_\delta^z\)-submodule \(R_\delta^zzL_{\de,i}^z = \O\{ z^k b\mid k \in \Z_{>0}, b \in B_i\}\). Then \(L_{\de,i}':=L_{\de,i}^z/R_\delta^zzL_{\de,i}^z\) is an \(R_\delta^z/R_\delta^zz \cong R_\delta\)-module which is free as an \(\O\)-module with basis \(\{\overline{b} \mid b \in B\}\).\\
\indent All words \(\bj\) of \(L_{\de,i}'\) have \(j_1 = 0\), \(j_{e}=i\) by construction. Thus all composition factors are \(L_{\de,i}\) by \cite[Corollary 5.3]{Kcusp}. By considering the graded dimension of any extremal word space in \(L_{\de,i}'\), it is clear that \(L_{\de,i}\) has composition multiplicity one in \(L_{\de,i}'\), hence \(L_{\de,i}' = L_{\de,i}\). 
\end{proof}

Note that in the case of \({\tt G}_2\), where we previously assumed \(\O=\C\), the coefficients of the action of \(R_\de\) on the basis of \(L_{\de,i}\) are integral, so  we may consider instead the \(\Z\)-form and extend scalars to construct \(L_{\de,i}\) for arbitrary \(\O\).


\section{The endomorphism $\tau_r:M_n \to M_n$ for non-simply-laced types}\label{RNSL}

Let \(z,z'\) be algebraically independent, and write \(R_{\alpha}^{z,z'}\) for \(\O[z,z'] \otimes_{\O} R_\alpha\). Then \(L_{\de,i}^{z'} \circ L_{\de,i}^{z}\) and \(L_{\de,i}^{z} \circ L_{\de,i}^{z'}\) are \(R_{2\delta}^{z,z'}\)-modules that are free as \(\O[z,z]\)-modules. Recall the map \(R_{L_{\delta,i}^{z'}, L_{\delta,i}^{z}}:L_{\delta,i}^{z'} \circ L_{\delta,i}^{z} \to L_{\delta,i}^{z} \circ L_{\delta,i}^{z'}\), and the intertwining elements \(\phi\) defined in \cite{KKK}.

\begin{Proposition}\label{InterReduce}
Let \(v^{z} \in L_{\de,i}^z\) be a word vector of word \(\bi\), with \(y_k v^{z} = c_kz^{a_k}v^z\) for all admissible \(k\). Then \(R_{L_{\de,i}^{z'},L_{\de,i}^{z}}(v^{z'} \otimes v^{z})\) is equal to 
$$ 
\begin{braid}\tikzset{baseline=0mm}
\draw(0,1) node[above]{$0$}--(5,-1);
\draw(1,1) node[above]{$i_2$}--(6,-1);
\draw(2,1) node[above]{$i_3$}--(7,-1);
\draw(3,1) node[above]{$\cdots$};
\draw(4,1) node[above]{$i_e$}--(9,-1);

\draw(5,1) node[above]{$0$}--(0,-1);
\draw(6,1) node[above]{$i_2$}--(1,-1);
\draw(7,1) node[above]{$i_3$}--(2,-1);
\draw(8,1) node[above]{$\cdots$};
\draw(9,1) node[above]{$i_e$}--(4,-1);

\draw(4.5,-1) node[below]{\scriptsize $\displaystyle\prod_{\substack{k,m\\i_k=i_m}} (c_{k}z^{a_k}-c_{m}z'^{a_m})$};

\end{braid}
+
\begin{braid}\tikzset{baseline=0mm}
\draw(0,1) node[above]{$0$}--(0,-1);
\draw(1,1) node[above]{$i_2$}--(6,-1);
\draw(2,1) node[above]{$i_3$}--(7,-1);
\draw(3,1) node[above]{$\cdots$};
\draw(4,1) node[above]{$i_e$}--(9,-1);

\draw(5,1) node[above]{$0$}--(2.5,0)--(5,-1);
\draw(6,1) node[above]{$i_2$}--(1,-1);
\draw(7,1) node[above]{$i_3$}--(2,-1);
\draw(8,1) node[above]{$\cdots$};
\draw(9,1) node[above]{$i_e$}--(4,-1);

\draw(4.5,-1) node[below]{\scriptsize $\displaystyle\prod_{\substack{k,m>1\\i_k=i_m}} (c_{k}z^{a_k}-c_{m}z'^{a_m})$};

\end{braid}.
$$
\end{Proposition}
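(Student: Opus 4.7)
The plan is to compute $R_{L_{\delta,i}^{z'}, L_{\delta,i}^z}(v^{z'} \otimes v^z)$ directly from the KKK-style definition, by realizing the $R$-matrix as a renormalized intertwining element $\phi_{w_0}$, where $w_0 \in \Si_{2e}$ is the minimal-length permutation interchanging the two $e$-blocks. First I would fix a reduced decomposition $w_0 = s_{r_1}\cdots s_{r_N}$ pulling the left strands across the right block one at a time. As in \cite[\S1.3]{KKK}, each elementary factor $\phi_{r_j}$ equals $\psi_{r_j}$ when the two adjacent colors differ, and $\psi_{r_j}(y_{r_j} - y_{r_j+1})^{-1}$ (appropriately rescaled to account for our non-symmetric $Q_{ij}$) when they agree. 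Crucially, on $v^{z'} \otimes v^z$ the operator $y_k$ acts by the scalar $c_k z'^{a_k}$ on the left block and $c_m z^{a_m}$ on the right block, so every same-color crossing factor $(y_k - y_{e+m})^{-1}$ evaluates to the explicit scalar $(c_k z'^{a_k} - c_m z^{a_m})^{-1}$.

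Next I would clear these denominators using the normalization $\prod_{i_k=i_m}(c_k z^{a_k} - c_m z'^{a_m})$ built into the definition of $R$, obtaining the bare $\psi_{w_0}$-diagram with exactly the coefficient claimed in the first term of the proposition. This diagram is the "full crossing" where every left strand is pulled past every right strand, and its evaluation on $v^{z'} \otimes v^z$ is well-defined once one verifies using the braid relation \eqref{R7} that the choice of reduced expression is immaterial (modulo the leading term).

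The second term arises from the fact that the affine vertex $0$ appears exactly once in $\bi$ (at position $1$), so the pair $(k,m) = (1,1)$ is the unique same-color crossing involving $0$. When the intertwiner moves strand $1$ past strand $e+1$, the quadratic relation \eqref{R4} forces $\psi_r^2 1_{00}$ to act by the polynomial $Q_{00}(y_r, y_{r+1})$, and on our eigenvectors this degenerates in a predictable way. Expanding this residue and re-applying the remaining $\phi_{r_j}$'s yields an additional summand in which the two $0$-strands do not fully cross (the pinched diagram pictured in the statement), with precisely the coefficient $\prod_{k,m>1,\ i_k=i_m}(c_k z^{a_k} - c_m z'^{a_m})$, since exactly the $(1,1)$ factor has been absorbed by the residue computation.

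The main obstacle will be making the renormalization bookkeeping genuinely rigorous in the non-simply-laced setting: since \cite{KKK} assumes a symmetric Cartan matrix, one cannot invoke their results as a black box, and each of the four ad hoc constructions of $L_{\de,i}^z$ from \S\ref{ConLNS} must be handled individually to confirm both the scalars $c_k, a_k$ and the exact diagrammatic form of the residue. In practice I would do this uniformly for types ${\tt B}_l^{(1)}$ and ${\tt C}_l^{(1)}$ (where $\bi$ contains the color $0$ only at position $1$ and the eigenvalue pattern is transparent) and case-by-case, exploiting the small dimensions of the bases, for types ${\tt F}_4^{(1)}$ and ${\tt G}_2^{(1)}$; the computer verifications supporting Propositions \ref{F4rels} and \ref{G2rels} make the latter two cases tractable.
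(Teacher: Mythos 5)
Your overall strategy --- realize $R_{L_{\de,i}^{z'},L_{\de,i}^z}$ as the block intertwiner $\phi_{w_1}$ applied to $v^{z}\otimes v^{z'}$, resolve the same-color crossings one at a time using the eigenvalue hypothesis, and isolate the unique $(0,0)$-crossing coming from $i_1=0$ --- is the paper's strategy. But the two mechanisms you invoke are not the right ones, and the gap is substantive. First, the intertwiner at a same-color crossing is the \emph{polynomial} $\phi_r 1_\bi=(\psi_r(y_{r}-y_{r+1})+1)1_\bi$ (\cite[Lemma 1.3.1]{KKK}), not $\psi_r(y_r-y_{r+1})^{-1}$; there are no denominators to clear, and no normalization is ``built into'' $R$ --- in this paper $R_{L^{z'},L^z}$ is literally $\phi_{w_1}$ acting on $v^{z}\otimes v^{z'}$. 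The product $\prod(c_kz^{a_k}-c_mz'^{a_m})$ arises because the $(y_r-y_{r+1})$ factors act by those scalars on the eigenvectors; but each such crossing \emph{also} produces a ``$+1$'' correction term in which the two strands are left uncrossed. The heart of the proof is showing that for every same-color pair with $k,m>1$ this correction term vanishes: the paper pulls the uncrossed $i_k$-strand to the far left of its block, where it creates a crossing annihilating the module (ultimately because $\psi_1 L_{\de,i}^z=0$ and $i_1=0$ is the only occurrence of the affine color). Your proposal never accounts for these correction terms, and without killing them the first summand does not acquire a clean product coefficient.

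Second, the pinched diagram in the second summand does not come from the quadratic relation (\ref{R4}): for a $(0,0)$-crossing one has $Q_{00}=0$ by (\ref{EArun}), so $\psi_r^21_{(\dots 0,0\dots)}=0$ and there is no residue to extract there. The second summand is precisely the one surviving ``$+1$'' correction term, namely the one at the unique $(0,0)$-crossing, which cannot be killed because both strands carry color $0$ at the left end of their blocks; that is exactly why its coefficient is the product over $k,m>1$ only. Finally, note that once these two points are in place the argument is \emph{uniform} in the Cartan type --- it uses only $i_1=0$, $i_k\neq 0$ for $k>1$, $\psi_1L_{\de,i}^z=0$, and the eigenvector hypothesis --- so the case-by-case treatment of the four non-simply-laced constructions that you propose is unnecessary for this proposition (that case analysis belongs to the later, explicit evaluations of $R(v_1^{z'}\otimes v_1^z)$).
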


When we diagrammatically describe module elements, we always picture the vector as being at the top of the diagram, the algebra elements below and acting upwardly, and \(\O[z,z']\)-coefficients at the bottom.
\begin{proof}
Note that \(i_1=0\), and \(i_k \neq 0\) for all \(k>1\). Let \(w_1\) be the block permutation of the tensor factors as in (\ref{Ew_r}). Then by definition \(R_{L_{\de,i}^{z'},L_{\de,i}^{z}}(v^{z'} \otimes v^{z}) = \phi_{w_1}(v^{z} \otimes v^{z'})\). We write this diagrammatically as
$$ 
\begin{braid}\tikzset{baseline=0mm}

\draw [rounded corners,color=gray, fill=black!20] (2,0)--(4.5,1)--(7,0)--(4.5,-1)--cycle;

\draw(0,1) node[above]{$0$}--(5,-1);
\draw(1,1) node[above]{$i_2$}--(6,-1);
\draw(2,1) node[above]{$i_3$}--(7,-1);
\draw(3,1) node[above]{$\cdots$};
\draw(4,1) node[above]{$i_e$}--(9,-1);

\draw(5,1) node[above]{$0$}--(0,-1);
\draw(6,1) node[above]{$i_2$}--(1,-1);
\draw(7,1) node[above]{$i_3$}--(2,-1);
\draw(8,1) node[above]{$\cdots$};
\draw(9,1) node[above]{$i_e$}--(4,-1);

\end{braid}
$$
where any strand crossings in the gray section are understood to represent \(\phi\)'s instead of \(\psi\)'s. Assume \(k\) is the smallest such that \(i_k = i_e\). Then the above is equal to 
$$ 
\begin{braid}\tikzset{baseline=0mm}

\draw [rounded corners,color=gray, fill=black!20] (2.25,-0.25)--(5.5,2)--(7.25,0.25)--(5.8,-0.65)--(5.3,-0.1)--(3.5,-1.5)--cycle;

\draw(0.5,2) node[above]{$0$}--(4.5,-2);
\draw(1,2) node[above]{$i_2$}--(5,-2);
\draw(1.7,2) node[above]{$\cdot\cdot$};
\draw(2.5,2) node[above]{$i_k$}--(6.5,-2);
\draw(3.5,2) node[above]{$i_{k\hspace{-0.5mm}+\hspace{-0.5mm}1}$}--(7.5,-2);
\draw(4.4,2) node[above]{$\cdot\cdot$};
\draw(5,2) node[above]{$i_e$}--(9,-2);

\draw(6,2) node[above]{$0$}--(0.5,-2);
\draw(6.5,2) node[above]{$i_2$}--(1,-2);
\draw(7.2,2) node[above]{$\cdot\cdot$};
\draw(8,2) node[above]{$i_{e\hspace{-0.5mm}-\hspace{-0.5mm}1}$}--(2.5,-2);
\draw(9,2) node[above]{$i_e$}--(3.5,-2);

\blackdot(5.05,-0.55);

\end{braid}
-
\begin{braid}\tikzset{baseline=0mm}

\draw [rounded corners,color=gray, fill=black!20] (2.25,-0.25)--(5.5,2)--(7.25,0.25)--(5.8,-0.65)--(5.3,-0.1)--(3.5,-1.5)--cycle;

\draw(0.5,2) node[above]{$0$}--(4.5,-2);
\draw(1,2) node[above]{$i_2$}--(5,-2);
\draw(1.7,2) node[above]{$\cdot\cdot$};
\draw(2.5,2) node[above]{$i_k$}--(6.5,-2);
\draw(3.5,2) node[above]{$i_{k\hspace{-0.5mm}+\hspace{-0.5mm}1}$}--(7.5,-2);
\draw(4.4,2) node[above]{$\cdot\cdot$};
\draw(5,2) node[above]{$i_e$}--(9,-2);

\draw(6,2) node[above]{$0$}--(0.5,-2);
\draw(6.5,2) node[above]{$i_2$}--(1,-2);
\draw(7.2,2) node[above]{$\cdot\cdot$};
\draw(8,2) node[above]{$i_{e\hspace{-0.5mm}-\hspace{-0.5mm}1}$}--(2.5,-2);
\draw(9,2) node[above]{$i_e$}--(3.5,-2);

\blackdot(5.45,-0.55);

\end{braid}
+
\begin{braid}\tikzset{baseline=0mm}

\draw [rounded corners,color=gray, fill=black!20] (2.25,-0.25)--(5.5,2)--(7.25,0.25)--(5.8,-0.65)--(5.3,-0.1)--(3.5,-1.5)--cycle;

\draw(0.5,2) node[above]{$0$}--(4.5,-2);
\draw(1,2) node[above]{$i_2$}--(5,-2);
\draw(1.7,2) node[above]{$\cdot\cdot$};
\draw(2.5,2) node[above]{$i_k$}--(5.25,-0.6)--(3.5,-2);
\draw(3.5,2) node[above]{$i_{k\hspace{-0.5mm}+\hspace{-0.5mm}1}$}--(7.5,-2);
\draw(4.4,2) node[above]{$\cdot\cdot$};
\draw(5,2) node[above]{$i_e$}--(9,-2);

\draw(6,2) node[above]{$0$}--(0.5,-2);
\draw(6.5,2) node[above]{$i_2$}--(1,-2);
\draw(7.2,2) node[above]{$\cdot\cdot$};
\draw(8,2) node[above]{$i_{e\hspace{-0.5mm}-\hspace{-0.5mm}1}$}--(2.5,-2);
\draw(9,2) node[above]{$i_e$}--(5.25,-0.6)--(6.5,-2);

\end{braid}.
$$
By the commuting properties of the block intertwiner (see \cite[Lemma 1.3.1]{KKK}), we can move the beads up to act on the tensor factors, and in the third term the \(i_k\)-strand can be pulled to the left, giving
$$ 
\begin{braid}\tikzset{baseline=0mm}

\draw [rounded corners,color=gray, fill=black!20] (2.25,-0.25)--(5.5,2)--(7.25,0.25)--(5.8,-0.65)--(5.3,-0.1)--(3.5,-1.5)--cycle;

\draw(0.5,2) node[above]{$0$}--(4.5,-2);
\draw(1,2) node[above]{$i_2$}--(5,-2);
\draw(1.7,2) node[above]{$\cdot\cdot$};
\draw(2.5,2) node[above]{$i_k$}--(6.5,-2);
\draw(3.5,2) node[above]{$i_{k\hspace{-0.5mm}+\hspace{-0.5mm}1}$}--(7.5,-2);
\draw(4.4,2) node[above]{$\cdot\cdot$};
\draw(5,2) node[above]{$i_e$}--(9,-2);

\draw(6,2) node[above]{$0$}--(0.5,-2);
\draw(6.5,2) node[above]{$i_2$}--(1,-2);
\draw(7.2,2) node[above]{$\cdot\cdot$};
\draw(8,2) node[above]{$i_{e\hspace{-0.5mm}-\hspace{-0.5mm}1}$}--(2.5,-2);
\draw(9,2) node[above]{$i_e$}--(3.5,-2);

\draw(4.5,-2) node[below]{\scriptsize $ (c_{k}z^{a_k}-c_{e}z'^{a_e})$};

\end{braid}
+
\begin{braid}\tikzset{baseline=0mm}

\draw [rounded corners,color=gray, fill=black!20] (2.25,-0.25)--(5.5,2)--(7.25,0.25)--(5.8,-0.65)--(5.3,-0.1)--(3.5,-1.5)--cycle;

\draw(0.5,2) node[above]{$0$}--(4.5,-2);
\draw(1,2) node[above]{$i_2$}--(5,-2);
\draw(1.7,2) node[above]{$\cdot\cdot$};
\draw(2.5,2) node[above]{$i_k$}--(2,-0.6)--(3.5,-2);
\draw(3.5,2) node[above]{$i_{k\hspace{-0.5mm}+\hspace{-0.5mm}1}$}--(7.5,-2);
\draw(4.4,2) node[above]{$\cdot\cdot$};
\draw(5,2) node[above]{$i_e$}--(9,-2);

\draw(6,2) node[above]{$0$}--(0.5,-2);
\draw(6.5,2) node[above]{$i_2$}--(1,-2);
\draw(7.2,2) node[above]{$\cdot\cdot$};
\draw(8,2) node[above]{$i_{e\hspace{-0.5mm}-\hspace{-0.5mm}1}$}--(2.5,-2);
\draw(9,2) node[above]{$i_e$}--(5.25,-0.6)--(6.5,-2);

\end{braid}.
$$
The latter term is zero however; \(\psi_1L_{\de,i}^z=0\) in all cases. We get a similar result for the next smallest \(k'\) such that \(i_{k'}=i_e\), and we work our way up the \(i_e\)-strand to get 
$$ 
\begin{braid}\tikzset{baseline=0mm}

\draw [rounded corners,color=gray, fill=black!20] (1.75,0.25)--(3.5,2)--(5.75,0.25)--(4,-1.5)--cycle;

\draw(0,2) node[above]{$0$}--(5,-2);
\draw(1,2) node[above]{$i_2$}--(6,-2);
\draw(2,2) node[above]{$\cdots$};
\draw(3,2) node[above]{$i_e$}--(8,-2);

\draw(4,2) node[above]{$0$}--(0,-2);
\draw(5,2) node[above]{$i_2$}--(1,-2);
\draw(6,2) node[above]{$\cdot\cdot$};
\draw(7,2) node[above]{$i_{e\hspace{-0.5mm}-\hspace{-0.5mm}1}$}--(3,-2);
\draw(8,2) node[above]{$i_e$}--(4,-2);

\draw(4,-2) node[below]{\scriptsize $ \displaystyle \prod_{i_k = i_e}(c_{k}z^{a_k}-c_{e}z'^{a_e})$};

\end{braid}.
$$
Then we apply the same arguments to the \(i_{e-1}\)-strand, and work our way up the strands recursively, until we have 
$$ 
\begin{braid}\tikzset{baseline=0mm}

\draw [rounded corners,color=gray, fill=black!20] (1.75,0.25)--(3.5,2)--(4,1.5)--(2.25,-0.25)--cycle;

\draw(0,2) node[above]{$0$}--(5,-2);
\draw(1,2) node[above]{$i_2$}--(6,-2);
\draw(2,2) node[above]{$\cdots$};
\draw(3,2) node[above]{$i_e$}--(8,-2);

\draw(4,2) node[above]{$0$}--(0,-2);
\draw(5,2) node[above]{$i_2$}--(1,-2);
\draw(6,2) node[above]{$\cdot\cdot$};
\draw(7,2) node[above]{$i_{e\hspace{-0.5mm}-\hspace{-0.5mm}1}$}--(3,-2);
\draw(8,2) node[above]{$i_e$}--(4,-2);

\draw(4,-2) node[below]{\scriptsize $ \displaystyle \prod_{\substack{k,m>1\\i_k = i_m}}(c_{k}z^{a_k}-c_{m}z'^{a_m})$};

\end{braid}.
$$
Then applying the definition of \(\phi\) to the lone \((0,0)\)-crossing gives the result.
\end{proof}


\subsection{The map $R_{L_{\de,i}^{z'},L_{\de,i}^z}$ in type ${\tt B}_l^{(1)}$}\label{RBl}
Fix \(i \in I'\). Recall the construction of \(L_{\de,i}^z\) from section \ref{BlCons}. 
Define the word
\begin{equation}\label{EITypeB}
\bi :=
\left\{
\begin{array}{ll}
(0,2,3, \ldots, l-1,l,l,l-1,\ldots,i+1,1,2,\ldots,i) &\hbox{if $i<l$};\\
(0,2,3, \ldots, l,1,2,\ldots,l) &\hbox{if $i=l$.}
\end{array}
\right.
\end{equation}
Then \(\bi\) is an extremal word for \(L_{\de,i}^z\). Let 
\begin{align*}
v_1^z:=v_{\bi}^{(1)} \;\; (i<l), \hspace{10mm} v_1^z:= v_{\bi}\;\; (i=l).
\end{align*}
Then \(v_1^z\) is a vector of word \(\bi\) that generates \(L_{\de,i}^z\).
\begin{Proposition}\label{Blintertwiner}
\begin{align*}R(v_1^{z'} \otimes v_1^{z}) \in (z^2-z'^2)^{4l-4}\left[\sigma + (-1)^{l+i} + (z-z')R_{\de}^{z} \otimes R_{\de}^{z'}\right](v_1^z \otimes v_1^{z'}).
\end{align*}
\end{Proposition}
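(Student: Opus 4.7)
The strategy is to apply Proposition~\ref{InterReduce} with $v^z := v_1^z = v_{\bi}^{(1)}$, thereby reducing $R(v_1^{z'}\otimes v_1^z)$ to a sum $A\cdot D_1(v_1^z\otimes v_1^{z'}) + B\cdot D_2(v_1^z\otimes v_1^{z'})$, where $D_1$ is the full block permutation diagram and $D_2$ is the ``$0$-strand straightened'' diagram. From the construction of $L_{\de,i}^z$ in Section~\ref{BlCons}, one reads off $y_k v_1^z = z^2 v_1^z$ whenever $\bi_k \neq l$, while $y_l v_1^z = z v_1^z$ and $y_{l+1} v_1^z = -z v_1^z$ at the two consecutive $l$-positions in $\bi$; the case $i = l$ is analogous, with the two $l$-positions being $l$ and $2l$.

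Next I would compute the coefficients
\begin{align*}
A &= \prod_{k,m:\,\bi_k = \bi_m}(c_k z^{a_k} - c_m z'^{a_m}),
&
B &= \prod_{k,m>1:\,\bi_k=\bi_m}(c_k z^{a_k} - c_m z'^{a_m}).
\end{align*}
Each pair with $\bi_k = \bi_m \neq l$ contributes a factor of $(z^2-z'^2)$, while the four $l$-pairs combine to
\begin{align*}
(z-z')(z+z')\bigl(-(z+z')\bigr)\bigl(-(z-z')\bigr) = (z^2-z'^2)^2.
\end{align*}
A straightforward count of label occurrences in $\bi$ gives $4l-6$ non-$l$ pairs, so $A = (z^2-z'^2)^{4l-4}$; since label $0$ occurs only at position $1$, restricting to $k,m>1$ removes precisely one factor of $(z^2-z'^2)$, giving $B = (z^2-z'^2)^{4l-5}$. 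Because $D_1(v_1^z\otimes v_1^{z'}) = \sigma(v_1^z\otimes v_1^{z'})$ by construction of the block permutation, and because $A = (z^2-z'^2)\cdot B$, the proof reduces to establishing
\begin{align*}
D_2(v_1^z\otimes v_1^{z'}) \in (z^2-z'^2)\cdot\bigl[(-1)^{l+i} + (z-z')\,(R_\de^z\otimes R_\de^{z'})\bigr]\cdot(v_1^z\otimes v_1^{z'}).
\end{align*}

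The main obstacle is the evaluation of $D_2(v_1^z\otimes v_1^{z'})$. I would proceed by sliding the crossings of $D_2$ past each other using the KLR braid and quadratic relations, together with the explicit action of $\psi_r$ and $y_r$ on $L_{\de,i}^z$ from Section~\ref{BlCons}. The extra overall factor of $(z^2-z'^2)$ emerges from the relation $\psi_l^2 = Q_{l,l}(y_l,y_{l+1}) = 0$ applied to the crossing of the two $l$-strands, combined with the $y_l$ versus $y_{l+1}$ eigenvalues on $v_1^z$ (which differ in sign and contribute a further $(z-z')$-type factor once the relations are unwound). The sign $(-1)^{l+i}$ then arises from a careful bookkeeping of two sources: (i) the signs $\epsilon_{j,j\pm 1}$ produced each time two same-colored strands of a label $j \neq l$ are exchanged via $Q_{j,j\pm 1}(u,v)=\epsilon_{j,j\pm 1}(u-v)$; and (ii) the sign $-1$ produced whenever $\psi_l$ acts on $v_{\bi}^{(1)}$ to produce $v_{\bi}^{(-1)}$ (with subsequent $y$-actions then returning to the $c=1$ component). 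Tracking these contributions along the specific path of $\bi$ between the terminal label $i$ and the central double-$l$ block accumulates to $(-1)^{l+i}$, while all residual terms carry an additional factor of $(z-z')$ and hence lie in the claimed remainder.
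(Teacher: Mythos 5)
Your setup is exactly the paper's: apply Proposition~\ref{InterReduce}, compute the two scalar prefactors, identify the first diagram with $\sigma(v_1^z\otimes v_1^{z'})$, and reduce to showing that the second diagram applied to $v_1^z\otimes v_1^{z'}$ lies in $(z^2-z'^2)\bigl[(-1)^{l+i}+(z-z')R_{\de}^{z}\otimes R_{\de}^{z'}\bigr](v_1^z\otimes v_1^{z'})$. Your coefficient count is correct: $4l-6$ equal-label pairs with label $\neq l$ each contribute $(z^2-z'^2)$, the four $l$-pairs contribute $(z-z')(z+z')(-(z+z'))(-(z-z'))=(z^2-z'^2)^2$, and restricting to $k,m>1$ removes exactly the single $0$-pair, so $A=(z^2-z'^2)^{4l-4}$ and $B=(z^2-z'^2)^{4l-5}$.

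The gap is that the step you defer --- evaluating the second diagram --- is not a routine remainder but the entire content of the paper's proof: several pages of explicit diagrammatics, splitting the term into $A+B_1+B_2$, repeatedly ``opening'' braid and quadratic relations, and an auxiliary Claim. Moreover, your heuristic for where the extra $(z^2-z'^2)$ comes from is wrong: since $Q_{l,l}=0$, the relation $\psi_r^21_{\bi}=Q_{l,l}(y_r,y_{r+1})1_{\bi}$ only annihilates terms and cannot produce a factor. In the actual computation the factor $(z^2-z'^2)$ arises from opening a quadratic relation at a crossing of strands with \emph{distinct} labels $j,j'\neq l$, where $Q_{j,j'}(y_r,y_{r+1})=\epsilon_{j,j'}(y_r-y_{r+1})$ evaluates to $\epsilon_{j,j'}(z^2-z'^2)$ on the word vectors; the $l$-strands instead contribute the linear factors $(z\pm z')$ through their $y$-eigenvalues $\pm z$. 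Likewise, $\psi_l v_{\bi}^{(1)}=v_{\bi}^{(-1)}$ carries no sign by itself; the signs enter through the chains $\epsilon_{02}\epsilon_{23}\cdots$ (which eventually square away) and through explicit $(-1)$'s generated when dotted same-label crossings are resolved, and terms such as $(z+z')\,(1\otimes\psi_l)(v_1^z\otimes v_1^{z'})$ must be seen to recombine into the multiple of $v_1^z\otimes v_1^{z'}$ plus a $(z-z')$-divisible remainder. Since the statement is precisely a delicate identity of signs and scalar factors, this bookkeeping cannot be waved through; the diagrammatic computation has to be carried out.
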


\begin{proof}
First, assume \(i<l\). In view of Proposition \ref{InterReduce}, we focus on rewriting the term
$$
C=
\begin{braid}\tikzset{baseline=3mm}

\draw(1.5,3) node[above]{$0$}--(1.5,-3);
\draw(2,3) node[above]{$2$}--(15.5,-3);
\draw(2.7,3) node[above]{$\cdot\cdot$};
\draw(3.5,3) node[above]{$i$}--(17,-3);
\draw(4.5,3) node[above]{$i\hspace{-0.8mm}+\hspace{-0.8mm}1$}--(18,-3);
\draw(5.2,3) node[above]{$\cdot\cdot$};
\draw(6,3) node[above]{$l\hspace{-0.8mm}-\hspace{-0.8mm}1$}--(19.5,-3);
\draw(7,3) node[above]{$l$}--(20.5,-3);
\draw(8,3) node[above]{$l$}--(21.5,-3);
\draw(9,3) node[above]{$l\hspace{-0.8mm}-\hspace{-0.8mm}1$}--(22.5,-3);
\draw(9.7,3) node[above]{$\cdot\cdot$};
\draw(10.5,3) node[above]{$i\hspace{-0.8mm}+\hspace{-0.8mm}1$}--(24,-3);
\draw(11.5,3) node[above]{$1$}--(25,-3);
\draw(12,3) node[above]{$2$}--(25.5,-3);
\draw(12.7,3) node[above]{$\cdot\cdot$};
\draw(13.5,3) node[above]{$i$}--(27,-3);

\draw(15,3) node[above]{$0$}--(8,0)--(15,-3);
\draw(15.5,3) node[above]{$2$}--(2,-3);
\draw(16.2,3) node[above]{$\cdot\cdot$};
\draw(17,3) node[above]{$i$}--(3.5,-3);
\draw(18,3) node[above]{$i\hspace{-0.8mm}+\hspace{-0.8mm}1$}--(4.5,-3);
\draw(18.7,3) node[above]{$\cdot\cdot$};
\draw(19.5,3) node[above]{$l\hspace{-0.8mm}-\hspace{-0.8mm}1$}--(6,-3);
\draw(20.5,3) node[above]{$l$}--(7,-3);
\draw(21.5,3) node[above]{$l$}--(8,-3);
\draw(22.5,3) node[above]{$l\hspace{-0.8mm}-\hspace{-0.8mm}1$}--(9,-3);
\draw(23.2,3) node[above]{$\cdot\cdot$};
\draw(24,3) node[above]{$i\hspace{-0.8mm}+\hspace{-0.8mm}1$}--(10.5,-3);
\draw(25,3) node[above]{$1$}--(11.5,-3);
\draw(25.5,3) node[above]{$2$}--(12,-3);
\draw(26.2,3) node[above]{$\cdot\cdot$};
\draw(27,3) node[above]{$i$}--(13.5,-3);

\end{braid}
$$

The algebra braid relation implies that 
$ 
\begin{braid}\tikzset{baseline=0mm}
\draw(0,0.3) node[above]{$2$}--(1,-0.3);
\draw(0.5,0.3) node[above]{$0$}--(0,0)--(0.5,-0.3);
\draw(1,0.3) node[above]{$2$}--(0,-0.3);
\end{braid}
$
opens to become
$ 
\begin{braid}\tikzset{baseline=0mm}
\draw(0,0.3) node[above]{$2$}--(0,-0.3);
\draw(0.5,0.3) node[above]{$0$}--(0.5,-0.3);
\draw(1,0.3) node[above]{$2$}--(1,-0.3);
\end{braid}
$
, since if the \(0\)-strand is moved to the right side of the \((2,2)\)-crossing, it slides up to induce a \((0,2)\)-crossing in the second factor or a \((1,2)\)-crossing in the first factor, both of which are zero. In the future we will simply say that a shape such as
$ 
\begin{braid}\tikzset{baseline=0mm}
\draw(0,0.3)--(1,-0.3);
\draw(0.5,0.3)--(0,0)--(0.5,-0.3);
\draw(1,0.3)--(0,-0.3);
\end{braid}
$
`opens' if the the
$ 
\begin{braid}\tikzset{baseline=0mm}
\draw(0,0.3)--(1,-0.3);
\draw(0.5,0.3)--(1,0)--(0.5,-0.3);
\draw(1,0.3)--(0,-0.3);
\end{braid}
$
term in the associated algebra relation induces such strand crossings that act as zero on either of the tensor factors. Next, the braids
$ 
\begin{braid}\tikzset{baseline=0mm}
\draw(0,0.3) node[above]{$3$}--(1,-0.3);
\draw(0.5,0.3) node[above]{$2$}--(0,0)--(0.5,-0.3);
\draw(1,0.3) node[above]{$3$}--(0,-0.3);
\end{braid}
$
through
$ 
\begin{braid}\tikzset{baseline=0mm}
\draw(0,0.3) node[above]{$l\hspace{-1mm}-\hspace{-1mm}1$}--(2,-0.3);
\draw(1,0.3) node[above]{$l\hspace{-1mm}-\hspace{-1mm}2$}--(0,0)--(1,-0.3);
\draw(2,0.3) node[above]{$l\hspace{-1mm}-\hspace{-1mm}1$}--(0,-0.3);
\end{braid}
$
open in succession, giving
$$
\begin{braid}\tikzset{baseline=3mm}

\draw(4,3) node[above]{$0$}--(4,-3);
\draw(5,3) node[above]{$2$}--(5,-3);
\draw(6,3) node[above]{$\cdots$};
\draw(7,3) node[above]{$l\hspace{-0.8mm}-\hspace{-0.8mm}1$}--(7,-3);
\draw(8,3) node[above]{$l$}--(24,-3);
\draw(9,3) node[above]{$l$}--(25,-3);
\draw(10,3) node[above]{$l\hspace{-0.8mm}-\hspace{-0.8mm}1$}--(26,-3);
\draw(11,3) node[above]{$\cdots$};
\draw(12,3) node[above]{$i\hspace{-0.8mm}+\hspace{-0.8mm}1$}--(28,-3);
\draw(13,3) node[above]{$1$}--(29,-3);
\draw(14,3) node[above]{$2$}--(30,-3);
\draw(15,3) node[above]{$\cdots$};
\draw(16,3) node[above]{$i$}--(32,-3);

\draw(17,3) node[above]{$0$}--(9,0)--(17,-3);
\draw(18,3) node[above]{$2$}--(10,0)--(18,-3);
\draw(19,3) node[above]{$\cdots$};
\draw(20,3) node[above]{$i$}--(12,0)--(20,-3);
\draw(21,3) node[above]{$i\hspace{-0.8mm}+\hspace{-0.8mm}1$}--(13,0)--(21,-3);
\draw(22,3) node[above]{$\cdots$};
\draw(23,3) node[above]{$l\hspace{-0.8mm}-\hspace{-0.8mm}1$}--(15,0)--(23,-3);
\draw(24,3) node[above]{$l$}--(8,-3);
\draw(25,3) node[above]{$l$}--(9,-3);
\draw(26,3) node[above]{$l\hspace{-0.8mm}-\hspace{-0.8mm}1$}--(10,-3);
\draw(27,3) node[above]{$\cdots$};
\draw(28,3) node[above]{$i\hspace{-0.8mm}+\hspace{-0.8mm}1$}--(12,-3);
\draw(29,3) node[above]{$1$}--(13,-3);
\draw(30,3) node[above]{$2$}--(14,-3);
\draw(31,3) node[above]{$\cdots$};
\draw(32,3) node[above]{$i$}--(16,-3);

\draw(16.5,-3) node[below]{\scriptsize $\epsilon_{02} \epsilon_{23} \cdots \epsilon_{l-2,l-1}$};

\end{braid}
$$
This is equal to \(A+B\), where
$$
A=
\begin{braid}\tikzset{baseline=3mm}

\draw(4,3) node[above]{$0$}--(4,-3);
\draw(5,3) node[above]{$2$}--(5,-3);
\draw(6,3) node[above]{$\cdots$};
\draw(7,3) node[above]{$l\hspace{-0.8mm}-\hspace{-0.8mm}1$}--(7,-3);
\draw(8,3) node[above]{$l$}--(24,-3);
\draw(9,3) node[above]{$l$}--(25,-3);
\draw(10,3) node[above]{$l\hspace{-0.8mm}-\hspace{-0.8mm}1$}--(26,-3);
\draw(11,3) node[above]{$\cdots$};
\draw(12,3) node[above]{$i\hspace{-0.8mm}+\hspace{-0.8mm}1$}--(28,-3);
\draw(13,3) node[above]{$1$}--(29,-3);
\draw(14,3) node[above]{$2$}--(30,-3);
\draw(15,3) node[above]{$\cdots$};
\draw(16,3) node[above]{$i$}--(32,-3);

\draw(17,3) node[above]{$0$}--(9,0)--(17,-3);
\draw(18,3) node[above]{$2$}--(10,0)--(18,-3);
\draw(19,3) node[above]{$\cdots$};
\draw(20,3) node[above]{$i$}--(12,0)--(20,-3);
\draw(21,3) node[above]{$i\hspace{-0.8mm}+\hspace{-0.8mm}1$}--(13,0)--(21,-3);
\draw(22,3) node[above]{$\cdots$};
\draw(23,3) node[above]{$l\hspace{-0.8mm}-\hspace{-0.8mm}1$}--(15.85,0.25)--(16.65,0)--(15.85,-0.25)--(23,-3);
\draw(24,3) node[above]{$l$}--(8,-3);
\draw(25,3) node[above]{$l$}--(9,-3);
\draw(26,3) node[above]{$l\hspace{-0.8mm}-\hspace{-0.8mm}1$}--(10,-3);
\draw(27,3) node[above]{$\cdots$};
\draw(28,3) node[above]{$i\hspace{-0.8mm}+\hspace{-0.8mm}1$}--(12,-3);
\draw(29,3) node[above]{$1$}--(13,-3);
\draw(30,3) node[above]{$2$}--(14,-3);
\draw(31,3) node[above]{$\cdots$};
\draw(32,3) node[above]{$i$}--(16,-3);

\draw(16.5,-3) node[below]{\scriptsize $\epsilon_{02} \epsilon_{23} \cdots \epsilon_{l-2,l-1}$};

\end{braid}
$$
and
$$
B=
\begin{braid}\tikzset{baseline=3mm}

\draw(4,3) node[above]{$0$}--(4,-3);
\draw(5,3) node[above]{$2$}--(5,-3);
\draw(6,3) node[above]{$\cdots$};
\draw(7,3) node[above]{$l\hspace{-0.8mm}-\hspace{-0.8mm}1$}--(7,-3);
\draw(8,3) node[above]{$l$}--(15,0)--(8,-3);
\draw(9,3) node[above]{$l$}--(25,-3);
\draw(10,3) node[above]{$l\hspace{-0.8mm}-\hspace{-0.8mm}1$}--(26,-3);
\draw(11,3) node[above]{$\cdots$};
\draw(12,3) node[above]{$i\hspace{-0.8mm}+\hspace{-0.8mm}1$}--(28,-3);
\draw(13,3) node[above]{$1$}--(29,-3);
\draw(14,3) node[above]{$2$}--(30,-3);
\draw(15,3) node[above]{$\cdots$};
\draw(16,3) node[above]{$i$}--(32,-3);

\draw(17,3) node[above]{$0$}--(9,0)--(17,-3);
\draw(18,3) node[above]{$2$}--(10,0)--(18,-3);
\draw(19,3) node[above]{$\cdots$};
\draw(20,3) node[above]{$i$}--(12,0)--(20,-3);
\draw(21,3) node[above]{$i\hspace{-0.8mm}+\hspace{-0.8mm}1$}--(13,0)--(21,-3);
\draw(22,3) node[above]{$\cdots$};
\draw(23,3) node[above]{$l\hspace{-0.8mm}-\hspace{-0.8mm}1$}--(15,0)--(23,-3);
\draw(24,3) node[above]{$l$}--(16,0)--(24,-3);
\draw(25,3) node[above]{$l$}--(9,-3);
\draw(26,3) node[above]{$l\hspace{-0.8mm}-\hspace{-0.8mm}1$}--(10,-3);
\draw(27,3) node[above]{$\cdots$};
\draw(28,3) node[above]{$i\hspace{-0.8mm}+\hspace{-0.8mm}1$}--(12,-3);
\draw(29,3) node[above]{$1$}--(13,-3);
\draw(30,3) node[above]{$2$}--(14,-3);
\draw(31,3) node[above]{$\cdots$};
\draw(32,3) node[above]{$i$}--(16,-3);

\draw(16.5,-3) node[below]{\scriptsize $\epsilon_{02} \epsilon_{23} \cdots \epsilon_{l-1,l}$};

\blackdot(8.6,2.75);
\graydot(16.2,0);

\end{braid}
$$
The latter diagram is meant to stand for a sum of two terms; one taken with the black bead in place and one with the gray bead. We focus on \(A\) for now. The upper 
$ 
\begin{braid}\tikzset{baseline=0mm}
\draw(0,0.3) node[above]{$l$}--(2,-0.3);
\draw(1,0.3) node[above]{$l\hspace{-1mm}-\hspace{-1mm}1$}--(0,0)--(1,-0.3);
\draw(2,0.3) node[above]{$l$}--(0,-0.3);
\end{braid}
$
opens, giving beads on the \(l\)-strands which move up to act on the tensor factors, introducing a factor of \((-z +z')\). Next, the lower
$ 
\begin{braid}\tikzset{baseline=0mm}
\draw(0,0.3) node[above]{$l$}--(2,-0.3);
\draw(1,0.3) node[above]{$l\hspace{-1mm}-\hspace{-1mm}1$}--(0,0)--(1,-0.3);
\draw(2,0.3) node[above]{$l$}--(0,-0.3);
\end{braid}
$
braid opens. Pulling strands to the right, the braids
$ 
\begin{braid}\tikzset{baseline=0mm}
\draw(0,0.3) node[above]{$l\hspace{-1mm}-\hspace{-1mm}1$}--(2,-0.3);
\draw(1,0.3) node[above]{$l$}--(0,0)--(1,-0.3);
\draw(2,0.3) node[above]{$l\hspace{-1mm}-\hspace{-1mm}1$}--(0,-0.3);
\end{braid}
$
through
$ 
\begin{braid}\tikzset{baseline=0mm}
\draw(0,0.3) node[above]{$i\hspace{-1mm}+\hspace{-1mm}1$}--(2,-0.3);
\draw(1,0.3) node[above]{$i\hspace{-1mm}+\hspace{-1mm}2$}--(0,0)--(1,-0.3);
\draw(2,0.3) node[above]{$i\hspace{-1mm}+\hspace{-1mm}1$}--(0,-0.3);
\end{braid}
$
open in succession. We now apply the quadratic algebra relation to open
$ 
\begin{braid}\tikzset{baseline=0mm}
\draw(0,0.3) node[above]{$l\hspace{-1mm}-\hspace{-1mm}1$}--(1,0)--(0,-0.3);
\draw(1,0.3) node[above]{$l$}--(0,0)--(1,-0.3);
\end{braid}
$.
One term in this relation is zero, and the other has
$ 
\begin{braid}\tikzset{baseline=0mm}
\draw(0,0.3) node[above]{$l\hspace{-1mm}-\hspace{-1mm}1$}--(1,0)--(0,-0.3);
\draw(1,0.3) node[above]{$l\hspace{-1mm}-\hspace{-1mm}1$}--(0,0)--(1,-0.3);
\blackdot(0.8,0);
\end{braid}
$,
which becomes
$ 
\begin{braid}\tikzset{baseline=0mm}
\draw(0,0.3) node[above]{$n\hspace{-1mm}-\hspace{-1mm}1$}--(1,-0.3);
\draw(1,0.3) node[above]{$n\hspace{-1mm}-\hspace{-1mm}1$}--(0,-0.3);
\end{braid}
$.\\
\indent Next, 
$ 
\begin{braid}\tikzset{baseline=0mm}
\draw(0,0.3) node[above]{$l\hspace{-1mm}-\hspace{-1mm}1$}--(2,-0.3);
\draw(1,0.3) node[above]{$l\hspace{-1mm}-\hspace{-1mm}2$}--(0,0)--(1,-0.3);
\draw(2,0.3) node[above]{$l\hspace{-1mm}-\hspace{-1mm}1$}--(0,-0.3);
\end{braid}
$
opens, and we apply the quadratic relation to
$ 
\begin{braid}\tikzset{baseline=0mm}
\draw(0,0.3) node[above]{$l\hspace{-1mm}-\hspace{-1mm}2$}--(1,0)--(0,-0.3);
\draw(1,0.3) node[above]{$l\hspace{-1mm}-\hspace{-1mm}1$}--(0,0)--(1,-0.3);
\end{braid}
$. Again one term in this relation is zero, and the other has 
$ 
\begin{braid}\tikzset{baseline=0mm}
\draw(0,0.3) node[above]{$l\hspace{-1mm}-\hspace{-1mm}2$}--(1,0)--(0,-0.3);
\draw(1,0.3) node[above]{$l\hspace{-1mm}-\hspace{-1mm}2$}--(0,0)--(1,-0.3);
\blackdot(0.8,0);
\end{braid}
$, 
which becomes 
$ 
\begin{braid}\tikzset{baseline=0mm}
\draw(0,0.3) node[above]{$l\hspace{-1mm}-\hspace{-1mm}2$}--(1,-0.3);
\draw(1,0.3) node[above]{$l\hspace{-1mm}-\hspace{-1mm}2$}--(0,-0.3);
\end{braid}
$. 
This process repeats for braids 
$ 
\begin{braid}\tikzset{baseline=0mm}
\draw(0,0.3) node[above]{$l\hspace{-1mm}-\hspace{-1mm}2$}--(2,-0.3);
\draw(1,0.3) node[above]{$l\hspace{-1mm}-\hspace{-1mm}3$}--(0,0)--(1,-0.3);
\draw(2,0.3) node[above]{$l\hspace{-1mm}-\hspace{-1mm}2$}--(0,-0.3);
\end{braid}
$
through
$ 
\begin{braid}\tikzset{baseline=0mm}
\draw(0,0.3) node[above]{$i\hspace{-1mm}+\hspace{-1mm}1$}--(2,-0.3);
\draw(1,0.3) node[above]{$i$}--(0,0)--(1,-0.3);
\draw(2,0.3) node[above]{$i\hspace{-1mm}+\hspace{-1mm}1$}--(0,-0.3);
\end{braid}
$, 
giving
$$
A=\begin{braid}\tikzset{baseline=0mm}

\draw(4,3) node[above]{$0$}--(4,-3);
\draw(5,3) node[above]{$2$}--(5,-3);
\draw(6,3) node[above]{$\cdots$};
\draw(7,3) node[above]{$l\hspace{-0.8mm}-\hspace{-0.8mm}1$}--(7,-3);
\draw(8,3) node[above]{$l$}--(9,-3);
\draw(9,3) node[above]{$l$}--(8,-3);
\draw(10,3) node[above]{$l\hspace{-0.8mm}-\hspace{-0.8mm}1$}--(10,-3);
\draw(11,3) node[above]{$\cdots$};
\draw(12,3) node[above]{$i\hspace{-0.8mm}+\hspace{-0.8mm}1$}--(12,-3);
\draw(13,3) node[above]{$1$}--(29,-3);
\draw(14,3) node[above]{$2$}--(30,-3);
\draw(15,3) node[above]{$\cdots$};
\draw(16,3) node[above]{$i$}--(32,-3);

\draw(17,3) node[above]{$0$}--(13,2)--(13,-2)--(17,-3);
\draw(18,3) node[above]{$2$}--(13.5,1.75)--(13.5,-1.75)--(18,-3);
\draw(19,3) node[above]{$\cdots$};
\draw(20,3) node[above]{$i$}--(14.5,1.25)--(14.5,-1.25)--(20,-3);
\draw(21,3) node[above]{$i\hspace{-0.8mm}+\hspace{-0.8mm}1$}--(15,1)--(15,-1)--(21,-3);
\draw(22,3) node[above]{$\cdots$};
\draw(23,3) node[above]{$l\hspace{-0.8mm}-\hspace{-0.8mm}1$}--(16,0.5)--(16,-0.5)--(23,-3);
\draw(24,3) node[above]{$l$}--(16.5,0.25)--(16.5,-0.25)--(20,-1.5)--(23,-2.2)--(25,-3);
\draw(25,3) node[above]{$l$}--(17,0)--(21,-1.5)--(22,-2.2)--(24,-3);
\draw(26,3) node[above]{$l\hspace{-0.8mm}-\hspace{-0.8mm}1$}--(18,0)--(26,-3);
\draw(27,3) node[above]{$\cdots$};
\draw(28,3) node[above]{$i\hspace{-0.8mm}+\hspace{-0.8mm}1$}--(20,0)--(28,-3);
\draw(29,3) node[above]{$1$}--(13,-3);
\draw(30,3) node[above]{$2$}--(14,-3);
\draw(31,3) node[above]{$\cdots$};
\draw(32,3) node[above]{$i$}--(16,-3);

\draw(16.5,-3) node[below]{\scriptsize $(-1)^{l+i}\epsilon_{02} \epsilon_{23} \cdots \epsilon_{i-1,i}(z-z')$};

\blackdot(8.85,-2);
\graydot(22.2,-2.3);

\end{braid}.
$$
\indent Now, focusing on \(B\) and moving the beads up to act on the tensor factors, we have that \(B=B_1 + B_2\), where 
$$
B_1=
\begin{braid}\tikzset{baseline=3mm}

\draw(4,3) node[above]{$0$}--(4,-3);
\draw(5,3) node[above]{$2$}--(5,-3);
\draw(6,3) node[above]{$\cdots$};
\draw(7,3) node[above]{$l\hspace{-0.8mm}-\hspace{-0.8mm}1$}--(7,-3);
\draw(8,3) node[above]{$l$}--(8,-3);
\draw(9,3) node[above]{$l$}--(25,-3);
\draw(10,3) node[above]{$l\hspace{-0.8mm}-\hspace{-0.8mm}1$}--(26,-3);
\draw(11,3) node[above]{$\cdots$};
\draw(12,3) node[above]{$i\hspace{-0.8mm}+\hspace{-0.8mm}1$}--(28,-3);
\draw(13,3) node[above]{$1$}--(29,-3);
\draw(14,3) node[above]{$2$}--(30,-3);
\draw(15,3) node[above]{$\cdots$};
\draw(16,3) node[above]{$i$}--(32,-3);

\draw(17,3) node[above]{$0$}--(9,0)--(17,-3);
\draw(18,3) node[above]{$2$}--(10,0)--(18,-3);
\draw(19,3) node[above]{$\cdots$};
\draw(20,3) node[above]{$i$}--(12,0)--(20,-3);
\draw(21,3) node[above]{$i\hspace{-0.8mm}+\hspace{-0.8mm}1$}--(13,0)--(21,-3);
\draw(22,3) node[above]{$\cdots$};
\draw(23,3) node[above]{$l\hspace{-0.8mm}-\hspace{-0.8mm}1$}--(15,0)--(23,-3);
\draw(24,3) node[above]{$l$}--(16,0)--(24,-3);
\draw(25,3) node[above]{$l$}--(9,-3);
\draw(26,3) node[above]{$l\hspace{-0.8mm}-\hspace{-0.8mm}1$}--(10,-3);
\draw(27,3) node[above]{$\cdots$};
\draw(28,3) node[above]{$i\hspace{-0.8mm}+\hspace{-0.8mm}1$}--(12,-3);
\draw(29,3) node[above]{$1$}--(13,-3);
\draw(30,3) node[above]{$2$}--(14,-3);
\draw(31,3) node[above]{$\cdots$};
\draw(32,3) node[above]{$i$}--(16,-3);

\draw(16.5,-3) node[below]{\scriptsize $\epsilon_{02} \epsilon_{23} \cdots \epsilon_{l-1,l}(z+z')$};

\end{braid}
$$
and
$$
B_2=
\begin{braid}\tikzset{baseline=3mm}

\draw(4,3) node[above]{$0$}--(4,-3);
\draw(5,3) node[above]{$2$}--(5,-3);
\draw(6,3) node[above]{$\cdots$};
\draw(7,3) node[above]{$l\hspace{-0.8mm}-\hspace{-0.8mm}1$}--(7,-3);
\draw(8,3) node[above]{$l$}--(8,-3);
\draw(9,3) node[above]{$l$}--(24,-3);
\draw(10,3) node[above]{$l\hspace{-0.8mm}-\hspace{-0.8mm}1$}--(26,-3);
\draw(11,3) node[above]{$\cdots$};
\draw(12,3) node[above]{$i\hspace{-0.8mm}+\hspace{-0.8mm}1$}--(28,-3);
\draw(13,3) node[above]{$1$}--(29,-3);
\draw(14,3) node[above]{$2$}--(30,-3);
\draw(15,3) node[above]{$\cdots$};
\draw(16,3) node[above]{$i$}--(32,-3);

\draw(17,3) node[above]{$0$}--(9,0)--(17,-3);
\draw(18,3) node[above]{$2$}--(10,0)--(18,-3);
\draw(19,3) node[above]{$\cdots$};
\draw(20,3) node[above]{$i$}--(12,0)--(20,-3);
\draw(21,3) node[above]{$i\hspace{-0.8mm}+\hspace{-0.8mm}1$}--(13,0)--(21,-3);
\draw(22,3) node[above]{$\cdots$};
\draw(23,3) node[above]{$l\hspace{-0.8mm}-\hspace{-0.8mm}1$}--(15,0)--(23,-3);
\draw(24,3) node[above]{$l$}--(16.5,0.25)--(25,-3);
\draw(25,3) node[above]{$l$}--(9,-3);
\draw(26,3) node[above]{$l\hspace{-0.8mm}-\hspace{-0.8mm}1$}--(10,-3);
\draw(27,3) node[above]{$\cdots$};
\draw(28,3) node[above]{$i\hspace{-0.8mm}+\hspace{-0.8mm}1$}--(12,-3);
\draw(29,3) node[above]{$1$}--(13,-3);
\draw(30,3) node[above]{$2$}--(14,-3);
\draw(31,3) node[above]{$\cdots$};
\draw(32,3) node[above]{$i$}--(16,-3);

\draw(16.5,-3) node[below]{\scriptsize $(-1)\epsilon_{02} \epsilon_{23} \cdots \epsilon_{l-1,l}$};

\end{braid}.
$$
We focus on \(B_1\). The braids 
$ 
\begin{braid}\tikzset{baseline=0mm}
\draw(0,0.3) node[above]{$l\hspace{-1mm}-\hspace{-1mm}1$}--(2,-0.3);
\draw(1,0.3) node[above]{$l$}--(0,0)--(1,-0.3);
\draw(2,0.3) node[above]{$l\hspace{-1mm}-\hspace{-1mm}1$}--(0,-0.3);
\end{braid}
$
through 
$ 
\begin{braid}\tikzset{baseline=0mm}
\draw(0,0.3) node[above]{$i\hspace{-1mm}+\hspace{-1mm}1$}--(2,-0.3);
\draw(1,0.3) node[above]{$i\hspace{-1mm}+\hspace{-1mm}2$}--(0,0)--(1,-0.3);
\draw(2,0.3) node[above]{$i\hspace{-1mm}+\hspace{-1mm}1$}--(0,-0.3);
\end{braid}
$
open in succession. Then
$ 
\begin{braid}\tikzset{baseline=0mm}
\draw(0,0.3) node[above]{$l$}--(2,-0.3);
\draw(1,0.3) node[above]{$l\hspace{-1mm}-\hspace{-1mm}1$}--(2,0)--(1,-0.3);
\draw(2,0.3) node[above]{$l$}--(0,-0.3);
\end{braid}
$
opens, introducing a quadratic term 
$ 
\begin{braid}\tikzset{baseline=0mm}
\draw(0,0.3) node[above]{$l$}--(1,0)--(0,-0.3);
\draw(1,0.3) node[above]{$l\hspace{-1mm}-\hspace{-1mm}1$}--(0,0)--(1,-0.3);
\end{braid}
$
which opens, introducing 
$ 
\begin{braid}\tikzset{baseline=0mm}
\draw(0,0.3) node[above]{$l\hspace{-1mm}-\hspace{-1mm}1$}--(1,0)--(0,-0.3);
\draw(1,0.3) node[above]{$l\hspace{-1mm}-\hspace{-1mm}1$}--(0,0)--(1,-0.3);
\blackdot(0.2,0);
\end{braid}
$,
which becomes 
$ 
\begin{braid}\tikzset{baseline=0mm}
\draw(0,0.3) node[above]{$l\hspace{-1mm}-\hspace{-1mm}1$}--(1,-0.3);
\draw(1,0.3) node[above]{$l\hspace{-1mm}-\hspace{-1mm}1$}--(0,-0.3);
\end{braid}
$.
This process repeats with the braids
$ 
\begin{braid}\tikzset{baseline=0mm}
\draw(0,0.3) node[above]{$l\hspace{-1mm}-\hspace{-1mm}1$}--(2,-0.3);
\draw(1,0.3) node[above]{$l\hspace{-1mm}-\hspace{-1mm}2$}--(2,0)--(1,-0.3);
\draw(2,0.3) node[above]{$l\hspace{-1mm}-\hspace{-1mm}1$}--(0,-0.3);
\end{braid}
$
through 
$ 
\begin{braid}\tikzset{baseline=0mm}
\draw(0,0.3) node[above]{$i\hspace{-1mm}+\hspace{-1mm}2$}--(2,-0.3);
\draw(1,0.3) node[above]{$i\hspace{-1mm}+\hspace{-1mm}1$}--(2,0)--(1,-0.3);
\draw(2,0.3) node[above]{$i\hspace{-1mm}+\hspace{-1mm}2$}--(0,-0.3);
\end{braid}
$.
Then
$ 
\begin{braid}\tikzset{baseline=0mm}
\draw(0,0.3) node[above]{$i\hspace{-1mm}+\hspace{-1mm}1$}--(2,-0.3);
\draw(1,0.3) node[above]{$i$}--(0,0)--(1,-0.3);
\draw(2,0.3) node[above]{$i\hspace{-1mm}+\hspace{-1mm}1$}--(0,-0.3);
\end{braid}
$
opens, leaving
$ 
\begin{braid}\tikzset{baseline=0mm}
\draw(0,0.3) node[above]{$l$}--(1,0)--(0,-0.3);
\draw(1,0.3) node[above]{$l$}--(0,0)--(1,-0.3);
\blackdot(0.2,0);
\end{braid}
$,
which becomes
$ 
\begin{braid}\tikzset{baseline=0mm}
\draw(0,0.3) node[above]{$l$}--(1,-0.3);
\draw(1,0.3) node[above]{$l$}--(0,-0.3);
\end{braid}
$, and we have 
$$
B_1=
\begin{braid}\tikzset{baseline=0mm}

\draw(4,3) node[above]{$0$}--(4,-3);
\draw(5,3) node[above]{$2$}--(5,-3);
\draw(6,3) node[above]{$\cdots$};
\draw(7,3) node[above]{$l\hspace{-0.8mm}-\hspace{-0.8mm}1$}--(7,-3);
\draw(8,3) node[above]{$l$}--(8,-3);
\draw(9,3) node[above]{$l$}--(9,-3);
\draw(10,3) node[above]{$l\hspace{-0.8mm}-\hspace{-0.8mm}1$}--(10,-3);
\draw(11,3) node[above]{$\cdots$};
\draw(12,3) node[above]{$i\hspace{-0.8mm}+\hspace{-0.8mm}1$}--(12,-3);
\draw(13,3) node[above]{$1$}--(29,-3);
\draw(14,3) node[above]{$2$}--(30,-3);
\draw(15,3) node[above]{$\cdots$};
\draw(16,3) node[above]{$i$}--(32,-3);

\draw(17,3) node[above]{$0$}--(13,2)--(13,-2)--(17,-3);
\draw(18,3) node[above]{$2$}--(13.5,1.75)--(13.5,-1.75)--(18,-3);
\draw(19,3) node[above]{$\cdots$};
\draw(20,3) node[above]{$i$}--(14.5,1.25)--(14.5,-1.25)--(20,-3);
\draw(21,3) node[above]{$i\hspace{-0.8mm}+\hspace{-0.8mm}1$}--(15,1)--(15,-1)--(21,-3);
\draw(22,3) node[above]{$\cdots$};
\draw(23,3) node[above]{$l\hspace{-0.8mm}-\hspace{-0.8mm}1$}--(16,0.5)--(16,-0.5)--(23,-3);
\draw(24,3) node[above]{$l$}--(16.5,0.25)--(16.5,-0.25)--(20,-1.5)--(23,-2.25)--(25,-3);
\draw(25,3) node[above]{$l$}--(17,0)--(21,-1.5)--(22,-2.25)--(24,-3);
\draw(26,3) node[above]{$l\hspace{-0.8mm}-\hspace{-0.8mm}1$}--(18,0)--(26,-3);
\draw(27,3) node[above]{$\cdots$};
\draw(28,3) node[above]{$i\hspace{-0.8mm}+\hspace{-0.8mm}1$}--(20,0)--(28,-3);
\draw(29,3) node[above]{$1$}--(13,-3);
\draw(30,3) node[above]{$2$}--(14,-3);
\draw(31,3) node[above]{$\cdots$};
\draw(32,3) node[above]{$i$}--(16,-3);

\draw(16.5,-3) node[below]{\scriptsize $(-1)^{l+i}\epsilon_{02} \epsilon_{23} \cdots \epsilon_{i-1,i}(z+z')$};

\end{braid}.
$$
Now we consider \(B_2\). The braids
$ 
\begin{braid}\tikzset{baseline=0mm}
\draw(0,0.3) node[above]{$l\hspace{-1mm}-\hspace{-1mm}1$}--(2,-0.3);
\draw(1,0.3) node[above]{$l$}--(0,0)--(1,-0.3);
\draw(2,0.3) node[above]{$l\hspace{-1mm}-\hspace{-1mm}1$}--(0,-0.3);
\end{braid}
$
through
$ 
\begin{braid}\tikzset{baseline=0mm}
\draw(0,0.3) node[above]{$i\hspace{-1mm}+\hspace{-1mm}1$}--(2,-0.3);
\draw(1,0.3) node[above]{$i\hspace{-1mm}+\hspace{-1mm}2$}--(0,0)--(1,-0.3);
\draw(2,0.3) node[above]{$i\hspace{-1mm}+\hspace{-1mm}1$}--(0,-0.3);
\end{braid}
$
open in succession. Then 
$ 
\begin{braid}\tikzset{baseline=0mm}
\draw(0,0.3) node[above]{$l$}--(2,-0.3);
\draw(1,0.3) node[above]{$l\hspace{-1mm}-\hspace{-1mm}1$}--(2,0)--(1,-0.3);
\draw(2,0.3) node[above]{$l$}--(0,-0.3);
\end{braid}
$
opens, leading to a quadratic term 
$ 
\begin{braid}\tikzset{baseline=0mm}
\draw(0,0.3) node[above]{$l$}--(1,0)--(0,-0.3);
\draw(1,0.3) node[above]{$l\hspace{-1mm}-\hspace{-1mm}1$}--(0,0)--(1,-0.3);
\end{braid}
$
which opens to give
$ 
\begin{braid}\tikzset{baseline=0mm}
\draw(0,0.3) node[above]{$l\hspace{-1mm}-\hspace{-1mm}1$}--(1,0)--(0,-0.3);
\draw(1,0.3) node[above]{$l\hspace{-1mm}-\hspace{-1mm}1$}--(0,0)--(1,-0.3);
\blackdot(0.2,0);
\end{braid}
$
which becomes 
$ 
\begin{braid}\tikzset{baseline=0mm}
\draw(0,0.3) node[above]{$l\hspace{-1mm}-\hspace{-1mm}1$}--(1,-0.3);
\draw(1,0.3) node[above]{$l\hspace{-1mm}-\hspace{-1mm}1$}--(0,-0.3);
\end{braid}
$.
This process repeats with braids
$ 
\begin{braid}\tikzset{baseline=0mm}
\draw(0,0.3) node[above]{$l\hspace{-1mm}-\hspace{-1mm}1$}--(2,-0.3);
\draw(1,0.3) node[above]{$l\hspace{-1mm}-\hspace{-1mm}2$}--(2,0)--(1,-0.3);
\draw(2,0.3) node[above]{$l\hspace{-1mm}-\hspace{-1mm}1$}--(0,-0.3);
\end{braid}
$
through 
$ 
\begin{braid}\tikzset{baseline=0mm}
\draw(0,0.3) node[above]{$i\hspace{-1mm}+\hspace{-1mm}2$}--(2,-0.3);
\draw(1,0.3) node[above]{$i\hspace{-1mm}+\hspace{-1mm}1$}--(2,0)--(1,-0.3);
\draw(2,0.3) node[above]{$i\hspace{-1mm}+\hspace{-1mm}2$}--(0,-0.3);
\end{braid}
$.
Finally, the 
$ 
\begin{braid}\tikzset{baseline=0mm}
\draw(0,0.3) node[above]{$i\hspace{-1mm}+\hspace{-1mm}1$}--(2,-0.3);
\draw(1,0.3) node[above]{$i$}--(0,0)--(1,-0.3);
\draw(2,0.3) node[above]{$i\hspace{-1mm}+\hspace{-1mm}1$}--(0,-0.3);
\end{braid}
$
braid opens, giving
$$
B_2=
\begin{braid}\tikzset{baseline=0mm}

\draw(4,3) node[above]{$0$}--(4,-3);
\draw(5,3) node[above]{$2$}--(5,-3);
\draw(6,3) node[above]{$\cdots$};
\draw(7,3) node[above]{$l\hspace{-0.8mm}-\hspace{-0.8mm}1$}--(7,-3);
\draw(8,3) node[above]{$l$}--(8,-3);
\draw(9,3) node[above]{$l$}--(9,-3);
\draw(10,3) node[above]{$l\hspace{-0.8mm}-\hspace{-0.8mm}1$}--(10,-3);
\draw(11,3) node[above]{$\cdots$};
\draw(12,3) node[above]{$i\hspace{-0.8mm}+\hspace{-0.8mm}1$}--(12,-3);
\draw(13,3) node[above]{$1$}--(29,-3);
\draw(14,3) node[above]{$2$}--(30,-3);
\draw(15,3) node[above]{$\cdots$};
\draw(16,3) node[above]{$i$}--(32,-3);

\draw(17,3) node[above]{$0$}--(13,2)--(13,-2)--(17,-3);
\draw(18,3) node[above]{$2$}--(13.5,1.75)--(13.5,-1.75)--(18,-3);
\draw(19,3) node[above]{$\cdots$};
\draw(20,3) node[above]{$i$}--(14.5,1.25)--(14.5,-1.25)--(20,-3);
\draw(21,3) node[above]{$i\hspace{-0.8mm}+\hspace{-0.8mm}1$}--(15,1)--(15,-1)--(21,-3);
\draw(22,3) node[above]{$\cdots$};
\draw(23,3) node[above]{$l\hspace{-0.8mm}-\hspace{-0.8mm}1$}--(16,0.5)--(16,-0.5)--(23,-3);
\draw(24,3) node[above]{$l$}--(16.5,0.25)--(16.5,-0.25)--(20,-1.5)--(23,-2.25)--(25,-3);
\draw(25,3) node[above]{$l$}--(17,0)--(21,-1.5)--(22,-2.25)--(24,-3);
\draw(26,3) node[above]{$l\hspace{-0.8mm}-\hspace{-0.8mm}1$}--(18,0)--(26,-3);
\draw(27,3) node[above]{$\cdots$};
\draw(28,3) node[above]{$i\hspace{-0.8mm}+\hspace{-0.8mm}1$}--(20,0)--(28,-3);
\draw(29,3) node[above]{$1$}--(13,-3);
\draw(30,3) node[above]{$2$}--(14,-3);
\draw(31,3) node[above]{$\cdots$};
\draw(32,3) node[above]{$i$}--(16,-3);

\blackdot(9,2);
\graydot(22.5,-2.45);

\draw(16.5,-3) node[below]{\scriptsize $(-1)^{l+i}\epsilon_{02} \epsilon_{23} \cdots \epsilon_{i-1,i}$};

\end{braid}
$$
Now we prove the following claim.

{\bf Claim.}
$$
\begin{braid}\tikzset{baseline=0mm}

\draw(4,3) node[above]{$0$}--(4,-3);
\draw(5,3) node[above]{$2$}--(5,-3);
\draw(6,3) node[above]{$\cdots$};
\draw(7,3) node[above]{$l\hspace{-0.8mm}-\hspace{-0.8mm}1$}--(7,-3);
\draw(8,3) node[above]{$l$}--(8,-3);
\draw(9,3) node[above]{$l$}--(9,-3);
\draw(10,3) node[above]{$l\hspace{-0.8mm}-\hspace{-0.8mm}1$}--(10,-3);
\draw(11,3) node[above]{$\cdots$};
\draw(12,3) node[above]{$i\hspace{-0.8mm}+\hspace{-0.8mm}1$}--(12,-3);
\draw(13,3) node[above]{$1$}--(29,-3);
\draw(14,3) node[above]{$2$}--(30,-3);
\draw(15,3) node[above]{$\cdots$};
\draw(16,3) node[above]{$i$}--(32,-3);

\draw(17,3) node[above]{$0$}--(13,2)--(13,-2)--(17,-3);
\draw(18,3) node[above]{$2$}--(13.5,1.75)--(13.5,-1.75)--(18,-3);
\draw(19,3) node[above]{$\cdots$};
\draw(20,3) node[above]{$i$}--(14.5,1.25)--(14.5,-1.25)--(20,-3);
\draw(21,3) node[above]{$i\hspace{-0.8mm}+\hspace{-0.8mm}1$}--(15,1)--(15,-1)--(21,-3);
\draw(22,3) node[above]{$\cdots$};
\draw(23,3) node[above]{$l\hspace{-0.8mm}-\hspace{-0.8mm}1$}--(16,0.5)--(16,-0.5)--(23,-3);
\draw(24,3) node[above]{$l$}--(16.5,0.25)--(16.5,-0.25)--(24,-3);
\draw(25,3) node[above]{$l$}--(17,0)--(25,-3);
\draw(26,3) node[above]{$l\hspace{-0.8mm}-\hspace{-0.8mm}1$}--(18,0)--(26,-3);
\draw(27,3) node[above]{$\cdots$};
\draw(28,3) node[above]{$i\hspace{-0.8mm}+\hspace{-0.8mm}1$}--(20,0)--(28,-3);
\draw(29,3) node[above]{$1$}--(13,-3);
\draw(30,3) node[above]{$2$}--(14,-3);
\draw(31,3) node[above]{$\cdots$};
\draw(32,3) node[above]{$i$}--(16,-3);

\end{braid}
$$
is equal to \((-1)\epsilon_{02}\epsilon_{23} \cdots \epsilon_{i-1,i} (z^2-z'^2)v_1^z \otimes v_1^{z'}\).

{\it Proof of Claim.} The braid
$ 
\begin{braid}\tikzset{baseline=0mm}
\draw(0,0.3) node[above]{$i$}--(2,-0.3);
\draw(1,0.3) node[above]{$i\hspace{-1mm}+\hspace{-1mm}1$}--(0,0)--(1,-0.3);
\draw(2,0.3) node[above]{$i$}--(0,-0.3);
\end{braid}
$
opens, then the quadratic factor 
$ 
\begin{braid}\tikzset{baseline=0mm}
\draw(0,0.3) node[above]{$i$}--(1,0)--(0,-0.3);
\draw(1,0.3) node[above]{$i\hspace{-1mm}+\hspace{-1mm}1$}--(0,0)--(1,-0.3);
\end{braid}
$
opens, introducing a factor of \((z^2-z'^2)\). Then the braids
$ 
\begin{braid}\tikzset{baseline=0mm}
\draw(0,0.3) node[above]{$i\hspace{-1mm}-\hspace{-1mm}1$}--(2,-0.3);
\draw(1,0.3) node[above]{$i$}--(2,0)--(1,-0.3);
\draw(2,0.3) node[above]{$i\hspace{-1mm}-\hspace{-1mm}1$}--(0,-0.3);
\end{braid}
$
through
$ 
\begin{braid}\tikzset{baseline=0mm}
\draw(0,0.3) node[above]{$1$}--(1,-0.3);
\draw(0.5,0.3) node[above]{$2$}--(1,0)--(0.5,-0.3);
\draw(1,0.3) node[above]{$1$}--(0,-0.3);
\end{braid}
$
open in succession, giving
$$
\begin{braid}\tikzset{baseline=0mm}

\draw(4,3) node[above]{$0$}--(4,-3);
\draw(5,3) node[above]{$2$}--(5,-3);
\draw(6,3) node[above]{$\cdots$};
\draw(7,3) node[above]{$l\hspace{-0.8mm}-\hspace{-0.8mm}1$}--(7,-3);
\draw(8,3) node[above]{$l$}--(8,-3);
\draw(9,3) node[above]{$l$}--(9,-3);
\draw(10,3) node[above]{$l\hspace{-0.8mm}-\hspace{-0.8mm}1$}--(10,-3);
\draw(11,3) node[above]{$\cdots$};
\draw(12,3) node[above]{$i\hspace{-0.8mm}+\hspace{-0.8mm}1$}--(12,-3);
\draw(13,3) node[above]{$1$}--(18,0)--(13,-3);
\draw(14,3) node[above]{$2$}--(19,0)--(14,-3);
\draw(15,3) node[above]{$\cdots$};
\draw(16,3) node[above]{$i$}--(20,0)--(16,-3);

\draw(17,3) node[above]{$0$}--(13,0)--(17,-3);
\draw(18,3) node[above]{$2$}--(14,0)--(18,-3);
\draw(19,3) node[above]{$\cdots$};
\draw(20,3) node[above]{$i$}--(16,0)--(20,-3);
\draw(21,3) node[above]{$i\hspace{-0.8mm}+\hspace{-0.8mm}1$}--(21,-3);
\draw(22,3) node[above]{$\cdots$};
\draw(23,3) node[above]{$l\hspace{-0.8mm}-\hspace{-0.8mm}1$}--(23,-3);
\draw(24,3) node[above]{$l$}--(24,-3);
\draw(25,3) node[above]{$l$}--(25,-3);
\draw(26,3) node[above]{$l\hspace{-0.8mm}-\hspace{-0.8mm}1$}--(26,-3);
\draw(27,3) node[above]{$\cdots$};
\draw(28,3) node[above]{$i\hspace{-0.8mm}+\hspace{-0.8mm}1$}--(28,-3);
\draw(29,3) node[above]{$1$}--(29,-3);
\draw(30,3) node[above]{$2$}--(30,-3);
\draw(31,3) node[above]{$\cdots$};
\draw(32,3) node[above]{$i$}--(32,-3);

\draw(16.5,-3) node[below]{\scriptsize $(-1)\epsilon_{12} \cdots \epsilon_{i-1,i}(z^2-z'^2)$};
\end{braid}.
$$
Next 
$ 
\begin{braid}\tikzset{baseline=0mm}
\draw(0,0.3) node[above]{$i\hspace{-1mm}-\hspace{-1mm}1$}--(1,0)--(0,-0.3);
\draw(1,0.3) node[above]{$i$}--(0,0)--(1,-0.3);
\end{braid}
$
opens, introducing 
$ 
\begin{braid}\tikzset{baseline=0mm}
\draw(0,0.3) node[above]{$i$}--(1,0)--(0,-0.3);
\draw(1,0.3) node[above]{$i$}--(0,0)--(1,-0.3);
\blackdot(0.2,0);
\end{braid}
$, 
which becomes 
$ 
\begin{braid}\tikzset{baseline=0mm}
\draw(0,0.3) node[above]{$i$}--(1,-0.3);
\draw(1,0.3) node[above]{$i$}--(0,-0.3);
\end{braid}
$.
The process repeats with the quadratic factors
$ 
\begin{braid}\tikzset{baseline=0mm}
\draw(0,0.3) node[above]{$i\hspace{-1mm}-\hspace{-1mm}2$}--(1,0)--(0,-0.3);
\draw(1,0.3) node[above]{$i\hspace{-1mm}-\hspace{-1mm}1$}--(0,0)--(1,-0.3);
\end{braid}
$
through
$ 
\begin{braid}\tikzset{baseline=0mm}
\draw(0,0.3) node[above]{$1$}--(0.5,0)--(0,-0.3);
\draw(0.5,0.3) node[above]{$2$}--(0,0)--(0.5,-0.3);
\end{braid}
$.
Next the braids
$ 
\begin{braid}\tikzset{baseline=0mm}
\draw(0,0.3) node[above]{$2$}--(1,-0.3);
\draw(0.5,0.3) node[above]{$0$}--(0,0)--(0.5,-0.3);
\draw(1,0.3) node[above]{$2$}--(0,-0.3);
\end{braid}
$
and
$ 
\begin{braid}\tikzset{baseline=0mm}
\draw(0,0.3) node[above]{$3$}--(1,-0.3);
\draw(0.5,0.3) node[above]{$2$}--(0,0)--(0.5,-0.3);
\draw(1,0.3) node[above]{$3$}--(0,-0.3);
\end{braid}
$
through
$ 
\begin{braid}\tikzset{baseline=0mm}
\draw(0,0.3) node[above]{$i$}--(2,-0.3);
\draw(1,0.3) node[above]{$i\hspace{-1mm}-\hspace{-1mm}1$}--(0,0)--(1,-0.3);
\draw(2,0.3) node[above]{$i$}--(0,-0.3);
\end{braid}
$
open in succession, proving the claim.

Then, applying the claim, we have that \(C=A+B_1 +B_2\) is equal to 
$$
\begin{braid}\tikzset{baseline=0mm}

\draw(4,1) node[above]{$0$}--(4,-1);
\draw(5,1) node[above]{$2$}--(5,-1);
\draw(6,1) node[above]{$\cdots$};
\draw(7,1) node[above]{$l\hspace{-0.8mm}-\hspace{-0.8mm}1$}--(7,-1);
\draw(8,1) node[above]{$l$}--(9,-1);
\draw(9,1) node[above]{$l$}--(8,-1);
\draw(10,1) node[above]{$l\hspace{-0.8mm}-\hspace{-0.8mm}1$}--(10,-1);
\draw(11,1) node[above]{$\cdots$};
\draw(12,1) node[above]{$i\hspace{-0.8mm}+\hspace{-0.8mm}1$}--(12,-1);
\draw(13,1) node[above]{$1$}--(13,-1);
\draw(14,1) node[above]{$2$}--(14,-1);
\draw(15,1) node[above]{$\cdots$};
\draw(16,1) node[above]{$i$}--(16,-1);

\draw(17,1) node[above]{$0$}--(17,-1);
\draw(18,1) node[above]{$2$}--(18,-1);
\draw(19,1) node[above]{$\cdots$};
\draw(20,1) node[above]{$i$}--(20,-1);
\draw(21,1) node[above]{$i\hspace{-0.8mm}+\hspace{-0.8mm}1$}--(21,-1);
\draw(22,1) node[above]{$\cdots$};
\draw(23,1) node[above]{$l\hspace{-0.8mm}-\hspace{-0.8mm}1$}--(23,-1);
\draw(24,1) node[above]{$l$}--(25,-1);
\draw(25,1) node[above]{$l$}--(24,-1);
\draw(26,1) node[above]{$l\hspace{-0.8mm}-\hspace{-0.8mm}1$}--(26,-1);
\draw(27,1) node[above]{$\cdots$};
\draw(28,1) node[above]{$i\hspace{-0.8mm}+\hspace{-0.8mm}1$}--(28,-1);
\draw(29,1) node[above]{$1$}--(29,-1);
\draw(30,1) node[above]{$2$}--(30,-1);
\draw(31,1) node[above]{$\cdots$};
\draw(32,1) node[above]{$i$}--(32,-1);

\blackdot(8.75,-0.5);
\graydot(24.25,-0.5);

\draw(17.5,-1) node[below]{\scriptsize $(-1)^{l+i+1}(z^2-z'^2)(z-z')$};
\end{braid}.
$$
$$
+
\begin{braid}\tikzset{baseline=0mm}

\draw(4,1) node[above]{$0$}--(4,-1);
\draw(5,1) node[above]{$2$}--(5,-1);
\draw(6,1) node[above]{$\cdots$};
\draw(7,1) node[above]{$l\hspace{-0.8mm}-\hspace{-0.8mm}1$}--(7,-1);
\draw(8,1) node[above]{$l$}--(8,-1);
\draw(9,1) node[above]{$l$}--(9,-1);
\draw(10,1) node[above]{$l\hspace{-0.8mm}-\hspace{-0.8mm}1$}--(10,-1);
\draw(11,1) node[above]{$\cdots$};
\draw(12,1) node[above]{$i\hspace{-0.8mm}+\hspace{-0.8mm}1$}--(12,-1);
\draw(13,1) node[above]{$1$}--(13,-1);
\draw(14,1) node[above]{$2$}--(14,-1);
\draw(15,1) node[above]{$\cdots$};
\draw(16,1) node[above]{$i$}--(16,-1);

\draw(17,1) node[above]{$0$}--(17,-1);
\draw(18,1) node[above]{$2$}--(18,-1);
\draw(19,1) node[above]{$\cdots$};
\draw(20,1) node[above]{$i$}--(20,-1);
\draw(21,1) node[above]{$i\hspace{-0.8mm}+\hspace{-0.8mm}1$}--(21,-1);
\draw(22,1) node[above]{$\cdots$};
\draw(23,1) node[above]{$l\hspace{-0.8mm}-\hspace{-0.8mm}1$}--(23,-1);
\draw(24,1) node[above]{$l$}--(25,-1);
\draw(25,1) node[above]{$l$}--(24,-1);
\draw(26,1) node[above]{$l\hspace{-0.8mm}-\hspace{-0.8mm}1$}--(26,-1);
\draw(27,1) node[above]{$\cdots$};
\draw(28,1) node[above]{$i\hspace{-0.8mm}+\hspace{-0.8mm}1$}--(28,-1);
\draw(29,1) node[above]{$1$}--(29,-1);
\draw(30,1) node[above]{$2$}--(30,-1);
\draw(31,1) node[above]{$\cdots$};
\draw(32,1) node[above]{$i$}--(32,-1);

\draw(17.5,-1) node[below]{\scriptsize $(-1)^{l+i+1}(z^2-z'^2)(z+z')$};
\end{braid}.
$$
$$
+
\begin{braid}\tikzset{baseline=0mm}

\draw(4,1) node[above]{$0$}--(4,-1);
\draw(5,1) node[above]{$2$}--(5,-1);
\draw(6,1) node[above]{$\cdots$};
\draw(7,1) node[above]{$l\hspace{-0.8mm}-\hspace{-0.8mm}1$}--(7,-1);
\draw(8,1) node[above]{$l$}--(8,-1);
\draw(9,1) node[above]{$l$}--(9,-1);
\draw(10,1) node[above]{$l\hspace{-0.8mm}-\hspace{-0.8mm}1$}--(10,-1);
\draw(11,1) node[above]{$\cdots$};
\draw(12,1) node[above]{$i\hspace{-0.8mm}+\hspace{-0.8mm}1$}--(12,-1);
\draw(13,1) node[above]{$1$}--(13,-1);
\draw(14,1) node[above]{$2$}--(14,-1);
\draw(15,1) node[above]{$\cdots$};
\draw(16,1) node[above]{$i$}--(16,-1);

\draw(17,1) node[above]{$0$}--(17,-1);
\draw(18,1) node[above]{$2$}--(18,-1);
\draw(19,1) node[above]{$\cdots$};
\draw(20,1) node[above]{$i$}--(20,-1);
\draw(21,1) node[above]{$i\hspace{-0.8mm}+\hspace{-0.8mm}1$}--(21,-1);
\draw(22,1) node[above]{$\cdots$};
\draw(23,1) node[above]{$l\hspace{-0.8mm}-\hspace{-0.8mm}1$}--(23,-1);
\draw(24,1) node[above]{$l$}--(25,-1);
\draw(25,1) node[above]{$l$}--(24,-1);
\draw(26,1) node[above]{$l\hspace{-0.8mm}-\hspace{-0.8mm}1$}--(26,-1);
\draw(27,1) node[above]{$\cdots$};
\draw(28,1) node[above]{$i\hspace{-0.8mm}+\hspace{-0.8mm}1$}--(28,-1);
\draw(29,1) node[above]{$1$}--(29,-1);
\draw(30,1) node[above]{$2$}--(30,-1);
\draw(31,1) node[above]{$\cdots$};
\draw(32,1) node[above]{$i$}--(32,-1);

\blackdot(9,0.5);
\graydot(24.25,-0.5);

\draw(17.5,-1) node[below]{\scriptsize $(-1)^{l+i+1}(z^2-z'^2)$};
\end{braid}.
$$
Finally, simplifying and applying Proposition \ref{InterReduce} gives the result.

Now we assume \(i=l\). Let
$$
C=
\begin{braid}\tikzset{baseline=0mm}

\draw(0,2) node[above]{$0$}--(0,-2);
\draw(1,2) node[above]{$2$}--(11,-2);
\draw(2,2) node[above]{$\cdots$};
\draw(3,2) node[above]{$l\hspace{-0.8mm}-\hspace{-0.8mm}1$}--(13,-2);
\draw(4,2) node[above]{$l$}--(14,-2);
\draw(5,2) node[above]{$1$}--(15,-2);
\draw(6,2) node[above]{$2$}--(16,-2);
\draw(7,2) node[above]{$\cdots$};
\draw(8,2) node[above]{$l\hspace{-0.8mm}-\hspace{-0.8mm}1$}--(18,-2);
\draw(9,2) node[above]{$l$}--(19,-2);

\draw(10,2) node[above]{$0$}--(5,0)--(10,-2);
\draw(11,2) node[above]{$2$}--(1,-2);
\draw(12,2) node[above]{$\cdots$};
\draw(13,2) node[above]{$l\hspace{-0.8mm}-\hspace{-0.8mm}1$}--(3,-2);
\draw(14,2) node[above]{$l$}--(4,-2);
\draw(15,2) node[above]{$1$}--(5,-2);
\draw(16,2) node[above]{$2$}--(6,-2);
\draw(17,2) node[above]{$\cdots$};
\draw(18,2) node[above]{$l\hspace{-0.8mm}-\hspace{-0.8mm}1$}--(8,-2);
\draw(19,2) node[above]{$l$}--(9,-2);

\end{braid}.
$$
We now attempt to simplify this term. The braids
$ 
\begin{braid}\tikzset{baseline=0mm}
\draw(0,0.3) node[above]{$2$}--(1,-0.3);
\draw(0.5,0.3) node[above]{$0$}--(0,0)--(0.5,-0.3);
\draw(1,0.3) node[above]{$2$}--(0,-0.3);
\end{braid}
$
and
$ 
\begin{braid}\tikzset{baseline=0mm}
\draw(0,0.3) node[above]{$3$}--(1,-0.3);
\draw(0.5,0.3) node[above]{$2$}--(0,0)--(0.5,-0.3);
\draw(1,0.3) node[above]{$3$}--(0,-0.3);
\end{braid}
$
through
$ 
\begin{braid}\tikzset{baseline=0mm}
\draw(0,0.3) node[above]{$l$}--(2,-0.3);
\draw(1,0.3) node[above]{$l\hspace{-1mm}-\hspace{-1mm}1$}--(0,0)--(1,-0.3);
\draw(2,0.3) node[above]{$l$}--(0,-0.3);
\end{braid}
$
open in succession. This last relation introduces a factor of \((z+z')\), giving
$$
\begin{braid}\tikzset{baseline=0mm}

\draw(0,2) node[above]{$0$}--(0,-2);
\draw(1,2) node[above]{$2$}--(1,-2);
\draw(2,2) node[above]{$\cdots$};
\draw(3,2) node[above]{$l\hspace{-0.8mm}-\hspace{-0.8mm}1$}--(3,-2);
\draw(4,2) node[above]{$l$}--(4,-2);
\draw(5,2) node[above]{$1$}--(15,-2);
\draw(6,2) node[above]{$2$}--(16,-2);
\draw(7,2) node[above]{$\cdots$};
\draw(8,2) node[above]{$l\hspace{-0.8mm}-\hspace{-0.8mm}1$}--(18,-2);
\draw(9,2) node[above]{$l$}--(19,-2);

\draw(10,2) node[above]{$0$}--(5,0)--(10,-2);
\draw(11,2) node[above]{$2$}--(6,0)--(11,-2);
\draw(12,2) node[above]{$\cdots$};
\draw(13,2) node[above]{$l\hspace{-0.8mm}-\hspace{-0.8mm}1$}--(8,0)--(13,-2);
\draw(14,2) node[above]{$l$}--(9,0)--(14,-2);
\draw(15,2) node[above]{$1$}--(5,-2);
\draw(16,2) node[above]{$2$}--(6,-2);
\draw(17,2) node[above]{$\cdots$};
\draw(18,2) node[above]{$l\hspace{-0.8mm}-\hspace{-0.8mm}1$}--(8,-2);
\draw(19,2) node[above]{$l$}--(9,-2);

\draw(9.5,-2) node[below]{\scriptsize $\epsilon_{02}\epsilon_{23} \cdots \epsilon_{l-1,l}(z+z')$};

\end{braid}.
$$
Now the braids
$ 
\begin{braid}\tikzset{baseline=0mm}
\draw(0,0.3) node[above]{$l\hspace{-1mm}-\hspace{-1mm}1$}--(2,-0.3);
\draw(1,0.3) node[above]{$l$}--(0,0)--(1,-0.3);
\draw(2,0.3) node[above]{$l\hspace{-1mm}-\hspace{-1mm}1$}--(0,-0.3);
\end{braid}
$
and 
$ 
\begin{braid}\tikzset{baseline=0mm}
\draw(0,0.3) node[above]{$l$}--(2,-0.3);
\draw(1,0.3) node[above]{$l\hspace{-1mm}-\hspace{-1mm}1$}--(0,0)--(1,-0.3);
\draw(2,0.3) node[above]{$l$}--(0,-0.3);
\end{braid}
$
open, introducing a factor of 
$ 
\begin{braid}\tikzset{baseline=0mm}
\draw(0,0.3) node[above]{$l$}--(1,0)--(0,-0.3);
\draw(1,0.3) node[above]{$l$}--(0,0)--(1,-0.3);
\blackdot(0.8,0);
\end{braid}
$,
which becomes
$ 
\begin{braid}\tikzset{baseline=0mm}
\draw(0,0.3) node[above]{$l$}--(1,-0.3);
\draw(1,0.3) node[above]{$l$}--(0,-0.3);
\end{braid}
$.
Then the braid 
$ 
\begin{braid}\tikzset{baseline=0mm}
\draw(0,0.3) node[above]{$l\hspace{-1mm}-\hspace{-1mm}1$}--(2,-0.3);
\draw(1,0.3) node[above]{$l\hspace{-1mm}-\hspace{-1mm}2$}--(0,0)--(1,-0.3);
\draw(2,0.3) node[above]{$l\hspace{-1mm}-\hspace{-1mm}1$}--(0,-0.3);
\end{braid}
$
opens, introducing a quadratic factor 
$ 
\begin{braid}\tikzset{baseline=0mm}
\draw(0,0.3) node[above]{$l\hspace{-1mm}-\hspace{-1mm}1$}--(1,0)--(0,-0.3);
\draw(1,0.3) node[above]{$l\hspace{-1mm}-\hspace{-1mm}2$}--(0,0)--(1,-0.3);
\end{braid}
$,
which opens, yielding a factor
$ 
\begin{braid}\tikzset{baseline=0mm}
\draw(0,0.3) node[above]{$l\hspace{-1mm}-\hspace{-1mm}2$}--(1,0)--(0,-0.3);
\draw(1,0.3) node[above]{$l\hspace{-1mm}-\hspace{-1mm}2$}--(0,0)--(1,-0.3);
\blackdot(0.8,0);
\end{braid}
$, 
which becomes 
$ 
\begin{braid}\tikzset{baseline=0mm}
\draw(0,0.3) node[above]{$l\hspace{-1mm}-\hspace{-1mm}2$}--(1,-0.3);
\draw(1,0.3) node[above]{$l\hspace{-1mm}-\hspace{-1mm}2$}--(0,-0.3);
\end{braid}
$.
This process repeats for the braids 
$ 
\begin{braid}\tikzset{baseline=0mm}
\draw(0,0.3) node[above]{$l\hspace{-1mm}-\hspace{-1mm}3$}--(2,-0.3);
\draw(1,0.3) node[above]{$l\hspace{-1mm}-\hspace{-1mm}2$}--(0,0)--(1,-0.3);
\draw(2,0.3) node[above]{$l\hspace{-1mm}-\hspace{-1mm}3$}--(0,-0.3);
\end{braid}
$
through
$ 
\begin{braid}\tikzset{baseline=0mm}
\draw(0,0.3) node[above]{$1$}--(1,-0.3);
\draw(0.5,0.3) node[above]{$2$}--(0,0)--(0.5,-0.3);
\draw(1,0.3) node[above]{$1$}--(0,-0.3);
\end{braid}
$, 
giving
$$
\begin{braid}\tikzset{baseline=0mm}

\draw(0,2) node[above]{$0$}--(0,-2);
\draw(1,2) node[above]{$2$}--(1,-2);
\draw(2,2) node[above]{$\cdots$};
\draw(3,2) node[above]{$l\hspace{-0.8mm}-\hspace{-0.8mm}1$}--(3,-2);
\draw(4,2) node[above]{$l$}--(4,-2);
\draw(5,2) node[above]{$1$}--(5,-2);
\draw(6,2) node[above]{$2$}--(11,-2);
\draw(7,2) node[above]{$\cdots$};
\draw(8,2) node[above]{$l\hspace{-0.8mm}-\hspace{-0.8mm}1$}--(13,-2);
\draw(9,2) node[above]{$l$}--(14,-2);

\draw(10,2) node[above]{$0$}--(7,0)--(10,-2);
\draw(11,2) node[above]{$2$}--(6,-2);
\draw(12,2) node[above]{$\cdots$};
\draw(13,2) node[above]{$l\hspace{-0.8mm}-\hspace{-0.8mm}1$}--(8,-2);
\draw(14,2) node[above]{$l$}--(9,-2);
\draw(15,2) node[above]{$1$}--(15,-2);
\draw(16,2) node[above]{$2$}--(16,-2);
\draw(17,2) node[above]{$\cdots$};
\draw(18,2) node[above]{$l\hspace{-0.8mm}-\hspace{-0.8mm}1$}--(18,-2);
\draw(19,2) node[above]{$l$}--(19,-2);

\draw(9.5,-2) node[below]{\scriptsize $(-1)\epsilon_{02}\epsilon_{23} \cdots \epsilon_{l-1,l}(z+z')$};

\end{braid}.
$$
Now, the braids 
$ 
\begin{braid}\tikzset{baseline=0mm}
\draw(0,0.3) node[above]{$2$}--(1,-0.3);
\draw(0.5,0.3) node[above]{$0$}--(0,0)--(0.5,-0.3);
\draw(1,0.3) node[above]{$2$}--(0,-0.3);
\end{braid}
$
and
$ 
\begin{braid}\tikzset{baseline=0mm}
\draw(0,0.3) node[above]{$3$}--(1,-0.3);
\draw(0.5,0.3) node[above]{$2$}--(0,0)--(0.5,-0.3);
\draw(1,0.3) node[above]{$3$}--(0,-0.3);
\end{braid}
$
through
$ 
\begin{braid}\tikzset{baseline=0mm}
\draw(0,0.3) node[above]{$l$}--(2,-0.3);
\draw(1,0.3) node[above]{$l\hspace{-1mm}-\hspace{-1mm}1$}--(0,0)--(1,-0.3);
\draw(2,0.3) node[above]{$l$}--(0,-0.3);
\end{braid}
$
open in succession. This last relation introduces a factor of \((-z+z')\), so we have \(C=(z^2-z'^2)v_1^z \otimes v_1^{z'}\), and applying Proposition \ref{InterReduce} completes the proof.
\end{proof}


\subsection{The map $R_{L_{\de,i}^{z'},L_{\de,i}^z}$ in type ${\tt C}_l^{(1)}$}\label{RCl}

Fix \(i \in I'\). Recall the construction of \(L_{\de,i}^z\) from section \ref{ClCons}. Take 
\begin{equation}\label{EITypeC}
\bi = (0,1,1,2,2, \ldots, i-1,i-1,i,i+1,\ldots,l-1,l,l-1,\ldots,i).
\end{equation}
Then \(\bi\) is an extremal word for \(L_{\de,i}^z\). Let
\begin{align*}
v_1^z:=(\psi_{2i-1} \cdots \psi_{2l-2}) \cdots (\psi_{2k+1} \cdots \psi_{2l-i+k-1}) \cdots (\psi_3 \cdots \psi_{2l-i})x.
\end{align*}
Then \(v_1^z\) is a vector of word \(\bi\) that generates \(L_{\de,i}^z\). To see this, note that
\begin{align*}
(\psi_{2i-3}\psi_{2i-2})\cdots (\psi_5 \psi_6)(\psi_3 \psi_4)v_1^z = \psi_a 1_{\bj}x,
\end{align*}
where  \(\bj = (0,1,\ldots,n-1,n,n-1,\ldots,i+1,1,2,\ldots,i)\) and \(\psi_a = \psi_{a_k} \cdots \psi_{a_1}\) is such that \(\psi_{a_m}1_{a_{m-1} \cdots a_1 \bj}\) is of degree zero for all \(1 \leq m \leq k\). But this implies that \(\psi_b\psi_a1_\bj = 1_{\bj}\) for some \(b\), so \(x\) is in the \(R_{\de}^z\)-span of \(v_1^z\), and \(x\) generates \(L_{\de,i}^z\), proving the claim.
\begin{Proposition}\label{Clintertwiner}
\begin{align*}R(v_1^{z'} \otimes v_1^{z}) \in (z^2-z'^2)^{l+1}\left[\sigma + (-1)^{l+i+1} + (z-z')R_{\de}^{z} \otimes R_{\de}^{z'}\right](v_1^z \otimes v_1^{z'}).
\end{align*}
\end{Proposition}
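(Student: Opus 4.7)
The plan is to mirror the diagrammatic strategy used in the proof of Proposition~\ref{Blintertwiner}. First I would apply Proposition~\ref{InterReduce} to $R_{L_{\de,i}^{z'},L_{\de,i}^z}(v_1^{z'} \otimes v_1^z)$. To do so I need the $y_r$-eigenvalues of $v_1^z$: since $v_1^z = (\psi_{2i-1}\cdots\psi_{2l-2})\cdots(\psi_3\cdots\psi_{2l-i})\,x$ and $x = x_0 \otimes x_1 \otimes x_2 \otimes x_3$ is acted upon by the generators of the parabolic $R^z_{\alpha_0,\de-\al_0-\al_i,\al_i}$ as prescribed in Section~\ref{ClCons}, I can compute these eigenvalues once and for all by tracking how the $\psi$'s commute past the $y$'s using relation~(\ref{R6}). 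The output of Proposition~\ref{InterReduce} is then a sum of two diagrams (one with the $0$-strand fully crossed and one where it does not cross) multiplied by the prefactor $\prod_{k,m\,:\,i_k = i_m}(c_k z^{a_k} - c_m z'^{a_m})$, which for the extremal word $\bi$ in type ${\tt C}_l^{(1)}$ produces the factor $(z^2 - z'^2)^{l+1}$ together with some $(z\pm z')$ factors absorbed into the error term.

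Next I would systematically ``open'' crossings in each of the two diagrams using the braid relation~(\ref{R7}) and the quadratic relation~(\ref{R4}), proceeding from the outermost pairs inward. In each step, one of the two terms produced by opening a triple crossing typically vanishes on $L_{\de,i}^z \circ L_{\de,i}^z$ because it would require multiplication by $\psi_1$ or $\psi_{2l-1}$, both of which annihilate $L_{\de,i}^z$ by construction (cf.\ the proof of Proposition~\ref{Cnrels}). The surviving term either introduces a scalar factor of the form $z\pm z'$ coming from the dot decoration on a same-color crossing, or creates a quadratic ``cap'' between two equal-color strands which then unfolds via relation~(\ref{R4}) into a permutation plus a $(z^2 - z'^2)$-type factor.

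The heart of the computation is the ``doubled bond'' region around the vertex $l$ of the Dynkin diagram. There the three consecutive letters $l-1, l, l-1$ force a $Q_{l-1,l} = \epsilon_{l-1,l}(y_{r-1} - y_r^2)$ interaction that produces the distinguished $(z^2-z'^2)$ contribution, exactly as in the analogous step in the type ${\tt B}_l^{(1)}$ argument. An analogue of the Claim proved inside Proposition~\ref{Blintertwiner} should handle this, reducing the portion of the diagram adjacent to the doubled bond to a tensor swap times $(z^2-z'^2)$.

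The main obstacle will be the careful bookkeeping of signs and coefficients, especially in keeping track of how the various $\psi$-factors used to define $v_1^z$ interact with the crossings of the block intertwiner. The overall sign $(-1)^{l+i+1}$ is expected to arise from the parity of the inversions performed and the sign $\epsilon_{j,j+1}$ associated to each opened crossing, while additional ``junk'' terms of the form $(z-z')\,h\otimes h'$ with $h,h' \in R_\de^z$ are absorbed into the error term on the right-hand side. Once these two diagrams have been reduced to a scalar multiple of $\sigma(v_1^z\otimes v_1^{z'}) + (-1)^{l+i+1}(v_1^z\otimes v_1^{z'})$ modulo $(z-z')\,R_\de^z\otimes R_\de^{z'}$, the claimed identity follows by combining them with the prefactor $(z^2-z'^2)^{l+1}$ produced by Proposition~\ref{InterReduce}.
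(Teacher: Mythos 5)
Your proposal follows essentially the same strategy as the paper's proof: apply Proposition~\ref{InterReduce}, then diagrammatically ``open'' crossings via the braid and quadratic relations, using that $\psi_1$ and $\psi_{2l-1}$ annihilate $L_{\de,i}^z$ and tracking the resulting $(z\pm z')$ factors and signs. One caveat on the details: in the paper's type ${\tt C}_l^{(1)}$ computation the decisive $(z^2-z'^2)$ does \emph{not} arise from a Claim about the doubled bond at vertex $l$ as in type ${\tt B}_l^{(1)}$ (the letter $l$ occurs only once in $\bi$ here); instead the doubled initial letters $1,1,2,2,\dots,i-1,i-1$ are handled by a recursion $A_1\to A_2\to\cdots\to A_i$ (pulling pairs of strands to the right modulo $(z-z')$-terms), and the factor $(z^2-z'^2)$ emerges from splitting $A_i=B_1+B_2$ at the $(i,i-1,i)$-braid, with $B_1$ contributing $(z-z')^2$ and $B_2$ contributing $2z'(z-z')$.
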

\begin{proof}
For \(1 \leq k \leq i\), let
$$
A_k=
\begin{braid}\tikzset{baseline=0mm}

\draw(1.5,3) node[above]{$0$}--(1.5,-3);
\draw(2,3) node[above]{$1$}--(2,-3);
\draw(2.5,3) node[above]{$1$}--(2.5,-3);
\draw(3.2,3) node[above]{$\cdot\cdot$};
\draw(4,3) node[above]{$k\hspace{-0.9mm}-\hspace{-0.9mm}1$}--(4,-3);
\draw(5,3) node[above]{$k\hspace{-0.9mm}-\hspace{-0.9mm}1$}--(5,-3);
\draw(6,3) node[above]{$k$}--(20,-3);
\draw(6.5,3) node[above]{$k$}--(20.5,-3);
\draw(7.1,3) node[above]{$\cdot\cdot$};
\draw(8,3) node[above]{$i\hspace{-0.9mm}-\hspace{-0.9mm}1$}--(22,-3);
\draw(9,3) node[above]{$i\hspace{-0.9mm}-\hspace{-0.9mm}1$}--(23,-3);
\draw(9.75,3) node[above]{$i$}--(23.75,-3);
\draw(10.5,3) node[above]{$i\hspace{-0.9mm}+\hspace{-0.9mm}1$}--(24.5,-3);
\draw(11.25,3) node[above]{$\cdot\cdot$};
\draw(12,3) node[above]{$n\hspace{-0.9mm}-\hspace{-0.9mm}1$}--(26,-3);
\draw(12.75,3) node[above]{$n$}--(26.75,-3);
\draw(13.5,3) node[above]{$n\hspace{-0.9mm}-\hspace{-0.9mm}1$}--(27.5,-3);
\draw(14.4,3) node[above]{$\cdot\cdot$};
\draw(15,3) node[above]{$i$}--(29,-3);

\draw(15.5,3) node[above]{$0$}--(8,0)--(15.5,-3);
\draw(16,3) node[above]{$1$}--(8.5,0)--(16,-3);
\draw(16.5,3) node[above]{$1$}--(9,0)--(16.5,-3);
\draw(17.2,3) node[above]{$\cdot\cdot$};
\draw(18,3) node[above]{$k\hspace{-0.9mm}-\hspace{-0.9mm}1$}--(10.5,0)--(18,-3);
\draw(19,3) node[above]{$k\hspace{-0.9mm}-\hspace{-0.9mm}1$}--(11.5,0)--(19,-3);
\draw(20,3) node[above]{$k$}--(6,-3);
\draw(20.5,3) node[above]{$k$}--(6.5,-3);
\draw(21.1,3) node[above]{$\cdot\cdot$};
\draw(22,3) node[above]{$i\hspace{-0.9mm}-\hspace{-0.9mm}1$}--(8,-3);
\draw(23,3) node[above]{$i\hspace{-0.9mm}-\hspace{-0.9mm}1$}--(9,-3);
\draw(23.75,3) node[above]{$i$}--(9.75,-3);
\draw(24.5,3) node[above]{$i\hspace{-0.9mm}+\hspace{-0.9mm}1$}--(10.5,-3);
\draw(25.25,3) node[above]{$\cdot\cdot$};
\draw(26,3) node[above]{$n\hspace{-0.9mm}-\hspace{-0.9mm}1$}--(12,-3);
\draw(26.75,3) node[above]{$n$}--(12.75,-3);
\draw(27.5,3) node[above]{$n\hspace{-0.9mm}-\hspace{-0.9mm}1$}--(13.5,-3);
\draw(28.4,3) node[above]{$\cdot\cdot$};
\draw(29,3) node[above]{$i$}--(15,-3);

\end{braid}.
$$
We now focus on simplifying the term \(A_1\), or 
$$
\begin{braid}\tikzset{baseline=0mm}

\draw(0,3) node[above]{$0$}--(0,-3);
\draw(1,3) node[above]{$1$}--(15,-3);
\draw(2,3) node[above]{$1$}--(16,-3);
\draw(3,3) node[above]{$\cdots$};
\draw(4,3) node[above]{$i\hspace{-0.9mm}-\hspace{-0.9mm}1$}--(18,-3);
\draw(5,3) node[above]{$i\hspace{-0.9mm}-\hspace{-0.9mm}1$}--(19,-3);
\draw(6,3) node[above]{$i$}--(20,-3);
\draw(7,3) node[above]{$i\hspace{-0.9mm}+\hspace{-0.9mm}1$}--(21,-3);
\draw(8,3) node[above]{$\cdots$};
\draw(9,3) node[above]{$l\hspace{-0.9mm}-\hspace{-0.9mm}1$}--(23,-3);
\draw(10,3) node[above]{$l$}--(24,-3);
\draw(11,3) node[above]{$l\hspace{-0.9mm}-\hspace{-0.9mm}1$}--(25,-3);
\draw(12,3) node[above]{$\cdots$};
\draw(13,3) node[above]{$i$}--(27,-3);

\draw(14,3) node[above]{$0$}--(7,0)--(14,-3);
\draw(15,3) node[above]{$1$}--(1,-3);
\draw(16,3) node[above]{$1$}--(2,-3);
\draw(17,3) node[above]{$\cdots$};
\draw(18,3) node[above]{$i\hspace{-0.9mm}-\hspace{-0.9mm}1$}--(4,-3);
\draw(19,3) node[above]{$i\hspace{-0.9mm}-\hspace{-0.9mm}1$}--(5,-3);
\draw(20,3) node[above]{$i$}--(6,-3);
\draw(21,3) node[above]{$i\hspace{-0.9mm}+\hspace{-0.9mm}1$}--(7,-3);
\draw(22,3) node[above]{$\cdots$};
\draw(23,3) node[above]{$l\hspace{-0.9mm}-\hspace{-0.9mm}1$}--(9,-3);
\draw(24,3) node[above]{$l$}--(10,-3);
\draw(25,3) node[above]{$l\hspace{-0.9mm}-\hspace{-0.9mm}1$}--(11,-3);
\draw(26,3) node[above]{$\cdots$};
\draw(27,3) node[above]{$i$}--(13,-3);

\end{braid}.
$$
This is in \(A_2 + (z-z')R_{\delta}^z \otimes R_{\delta}^{z'}A_2\), by a direct calculation, and \(A_k  \in A_{k+1} + (z-z')R_{\delta}^z \otimes R_{\delta}^{z'}A_{k+1}\) for all \(2 \leq k \leq i-1\), which one sees by `pulling the \((k-1)\)-strands to the right'. Using this fact recursively, we have that \(A_1 \in A_i + (z-z')R_{\de}^z \otimes R_{\de}^{z'} A_i\), where 
$$
A_i=
\begin{braid}\tikzset{baseline=0mm}

\draw(0,3) node[above]{$0$}--(0,-3);
\draw(1,3) node[above]{$1$}--(1,-3);
\draw(2,3) node[above]{$1$}--(2,-3);
\draw(3,3) node[above]{$\cdots$};
\draw(4,3) node[above]{$i\hspace{-0.9mm}-\hspace{-0.9mm}1$}--(4,-3);
\draw(5,3) node[above]{$i\hspace{-0.9mm}-\hspace{-0.9mm}1$}--(5,-3);
\draw(6,3) node[above]{$i$}--(20,-3);
\draw(7,3) node[above]{$i\hspace{-0.9mm}+\hspace{-0.9mm}1$}--(21,-3);
\draw(8,3) node[above]{$\cdots$};
\draw(9,3) node[above]{$l\hspace{-0.9mm}-\hspace{-0.9mm}1$}--(23,-3);
\draw(10,3) node[above]{$l$}--(24,-3);
\draw(11,3) node[above]{$l\hspace{-0.9mm}-\hspace{-0.9mm}1$}--(25,-3);
\draw(12,3) node[above]{$\cdots$};
\draw(13,3) node[above]{$i$}--(27,-3);

\draw(14,3) node[above]{$0$}--(7,0)--(14,-3);
\draw(15,3) node[above]{$1$}--(8,0)--(15,-3);
\draw(16,3) node[above]{$1$}--(9,0)--(16,-3);
\draw(17,3) node[above]{$\cdots$};
\draw(18,3) node[above]{$i\hspace{-0.9mm}-\hspace{-0.9mm}1$}--(11,0)--(18,-3);
\draw(19,3) node[above]{$i\hspace{-0.9mm}-\hspace{-0.9mm}1$}--(12,0)--(19,-3);
\draw(20,3) node[above]{$i$}--(6,-3);
\draw(21,3) node[above]{$i\hspace{-0.9mm}+\hspace{-0.9mm}1$}--(7,-3);
\draw(22,3) node[above]{$\cdots$};
\draw(23,3) node[above]{$l\hspace{-0.9mm}-\hspace{-0.9mm}1$}--(9,-3);
\draw(24,3) node[above]{$l$}--(10,-3);
\draw(25,3) node[above]{$l\hspace{-0.9mm}-\hspace{-0.9mm}1$}--(11,-3);
\draw(26,3) node[above]{$\cdots$};
\draw(27,3) node[above]{$i$}--(13,-3);

\end{braid}.
$$
Now, via \((i,i-1,i)\)-braid relations, moving crossed strands up to act on the individual tensor factors as zero when possible, we have that \(A_i = B_1 + B_2\), where
$$
B_1=
\begin{braid}\tikzset{baseline=0mm}

\draw(0,3) node[above]{$0$}--(0,-3);
\draw(1,3) node[above]{$1$}--(1,-3);
\draw(2,3) node[above]{$1$}--(2,-3);
\draw(3,3) node[above]{$\cdots$};
\draw(4,3) node[above]{$i\hspace{-0.9mm}-\hspace{-0.9mm}1$}--(4,-3);
\draw(5,3) node[above]{$i\hspace{-0.9mm}-\hspace{-0.9mm}1$}--(5,-3);
\draw(6,3) node[above]{$i$}--(12.1,0)--(6,-3);
\draw(7,3) node[above]{$i\hspace{-0.9mm}+\hspace{-0.9mm}1$}--(21,-3);
\draw(8,3) node[above]{$\cdots$};
\draw(9,3) node[above]{$l\hspace{-0.9mm}-\hspace{-0.9mm}1$}--(23,-3);
\draw(10,3) node[above]{$l$}--(24,-3);
\draw(11,3) node[above]{$l\hspace{-0.9mm}-\hspace{-0.9mm}1$}--(25,-3);
\draw(12,3) node[above]{$\cdots$};
\draw(13,3) node[above]{$i$}--(27,-3);

\draw(14,3) node[above]{$0$}--(7,0)--(14,-3);
\draw(15,3) node[above]{$1$}--(8,0)--(15,-3);
\draw(16,3) node[above]{$1$}--(9,0)--(16,-3);
\draw(17,3) node[above]{$\cdots$};
\draw(18,3) node[above]{$i\hspace{-0.9mm}-\hspace{-0.9mm}1$}--(11,0)--(18,-3);
\draw(19,3) node[above]{$i\hspace{-0.9mm}-\hspace{-0.9mm}1$}--(12,0)--(19,-3);
\draw(20,3) node[above]{$i$}--(12.5,0)--(20,-3);
\draw(21,3) node[above]{$i\hspace{-0.9mm}+\hspace{-0.9mm}1$}--(7,-3);
\draw(22,3) node[above]{$\cdots$};
\draw(23,3) node[above]{$l\hspace{-0.9mm}-\hspace{-0.9mm}1$}--(9,-3);
\draw(24,3) node[above]{$l$}--(10,-3);
\draw(25,3) node[above]{$l\hspace{-0.9mm}-\hspace{-0.9mm}1$}--(11,-3);
\draw(26,3) node[above]{$\cdots$};
\draw(27,3) node[above]{$i$}--(13,-3);

\draw(13.5,-3) node[below]{\scriptsize $\epsilon_{i-1,i}$};

\end{braid}
$$
and
$$
B_2=
\begin{braid}\tikzset{baseline=0mm}

\draw(0,3) node[above]{$0$}--(0,-3);
\draw(1,3) node[above]{$1$}--(1,-3);
\draw(2,3) node[above]{$1$}--(2,-3);
\draw(3,3) node[above]{$\cdots$};
\draw(4,3) node[above]{$i\hspace{-0.9mm}-\hspace{-0.9mm}1$}--(4,-3);
\draw(5,3) node[above]{$i\hspace{-0.9mm}-\hspace{-0.9mm}1$}--(5,-3);
\draw(6,3) node[above]{$i$}--(20,-3);
\draw(7,3) node[above]{$i\hspace{-0.9mm}+\hspace{-0.9mm}1$}--(21,-3);
\draw(8,3) node[above]{$\cdots$};
\draw(9,3) node[above]{$l\hspace{-0.9mm}-\hspace{-0.9mm}1$}--(23,-3);
\draw(10,3) node[above]{$l$}--(24,-3);
\draw(11,3) node[above]{$l\hspace{-0.9mm}-\hspace{-0.9mm}1$}--(25,-3);
\draw(12,3) node[above]{$\cdots$};
\draw(13,3) node[above]{$i$}--(27,-3);

\draw(14,3) node[above]{$0$}--(7,0)--(14,-3);
\draw(15,3) node[above]{$1$}--(8,0)--(15,-3);
\draw(16,3) node[above]{$1$}--(9,0)--(16,-3);
\draw(17,3) node[above]{$\cdots$};
\draw(18,3) node[above]{$i\hspace{-0.9mm}-\hspace{-0.9mm}1$}--(11,0)--(18,-3);
\draw(19,3) node[above]{$i\hspace{-0.9mm}-\hspace{-0.9mm}1$}--(21.5,1)--(19,0)--(21.5,-1)--(19,-3);
\draw(20,3) node[above]{$i$}--(6,-3);
\draw(21,3) node[above]{$i\hspace{-0.9mm}+\hspace{-0.9mm}1$}--(7,-3);
\draw(22,3) node[above]{$\cdots$};
\draw(23,3) node[above]{$l\hspace{-0.9mm}-\hspace{-0.9mm}1$}--(9,-3);
\draw(24,3) node[above]{$l$}--(10,-3);
\draw(25,3) node[above]{$l\hspace{-0.9mm}-\hspace{-0.9mm}1$}--(11,-3);
\draw(26,3) node[above]{$\cdots$};
\draw(27,3) node[above]{$i$}--(13,-3);

\end{braid}.
$$
We focus on \(B_1\). 
$ 
\begin{braid}\tikzset{baseline=0mm}
\draw(0,0.3) node[above]{$i$}--(1,0)--(0,-0.3);
\draw(1,0.3) node[above]{$i\hspace{-0.9mm}-\hspace{-0.9mm}1$}--(0,0)--(1,-0.3);
\end{braid}
$
opens, introducing a \((z-z')\) factor. Next the braids
$ 
\begin{braid}\tikzset{baseline=0mm}
\draw(0,0.3) node[above]{$i\hspace{-0.9mm}+\hspace{-0.9mm}1$}--(2,-0.3);
\draw(1,0.3) node[above]{$i$}--(0,0)--(1,-0.3);
\draw(2,0.3) node[above]{$i\hspace{-0.9mm}+\hspace{-0.9mm}1$}--(0,-0.3);
\end{braid}
$
through
$ 
\begin{braid}\tikzset{baseline=0mm}
\draw(0,0.3) node[above]{$l$}--(2,-0.3);
\draw(1,0.3) node[above]{$l\hspace{-0.9mm}-\hspace{-0.9mm}1$}--(0,0)--(1,-0.3);
\draw(2,0.3) node[above]{$l$}--(0,-0.3);
\end{braid}
$
open in succession. Then 
$ 
\begin{braid}\tikzset{baseline=0mm}
\draw(0,0.3) node[above]{$l\hspace{-0.9mm}-\hspace{-0.9mm}1$}--(2,-0.3);
\draw(1,0.3) node[above]{$l$}--(0,0)--(1,-0.3);
\draw(2,0.3) node[above]{$l\hspace{-0.9mm}-\hspace{-0.9mm}1$}--(0,-0.3);
\end{braid}
$
opens, yielding 
$ 
\begin{braid}\tikzset{baseline=0mm}
\draw(0,0.3) node[above]{$l\hspace{-0.9mm}-\hspace{-0.9mm}1$}--(1,0)--(0,-0.3);
\draw(1,0.3) node[above]{$l\hspace{-0.9mm}-\hspace{-0.9mm}1$}--(0,0)--(1,-0.3);
\blackdot(0.8,0);
\end{braid}
$
which becomes
$ 
\begin{braid}\tikzset{baseline=0mm}
\draw(0,0.3) node[above]{$l\hspace{-0.9mm}-\hspace{-0.9mm}1$}--(1,-0.3);
\draw(1,0.3) node[above]{$l\hspace{-0.9mm}-\hspace{-0.9mm}1$}--(0,-0.3);
\end{braid}
$.
Next, the braids 
$ 
\begin{braid}\tikzset{baseline=0mm}
\draw(0,0.3) node[above]{$l\hspace{-0.9mm}-\hspace{-0.9mm}2$}--(2,-0.3);
\draw(1,0.3) node[above]{$l\hspace{-0.9mm}-\hspace{-0.9mm}1$}--(0,0)--(1,-0.3);
\draw(2,0.3) node[above]{$l\hspace{-0.9mm}-\hspace{-0.9mm}2$}--(0,-0.3);
\end{braid}
$
through
$ 
\begin{braid}\tikzset{baseline=0mm}
\draw(0,0.3) node[above]{$i$}--(2,-0.3);
\draw(1,0.3) node[above]{$i\hspace{-0.9mm}+\hspace{-0.9mm}1$}--(0,0)--(1,-0.3);
\draw(2,0.3) node[above]{$i$}--(0,-0.3);
\end{braid}
$
open in succession, giving 
$$
B_1=
\begin{braid}\tikzset{baseline=0mm}

\draw(0.5,3) node[above]{$0$}--(0.5,-3);
\draw(1.5,3) node[above]{$1$}--(1.5,-3);
\draw(2.5,3) node[above]{$1$}--(2.5,-3);
\draw(3,3) node[above]{$\cdot\cdot$};
\draw(4,3) node[above]{$i\hspace{-0.9mm}-\hspace{-0.9mm}1$}--(4,-3);
\draw(5,3) node[above]{$i\hspace{-0.9mm}-\hspace{-0.9mm}1$}--(5,-3);
\draw(6,3) node[above]{$i$}--(6,-3);
\draw(7,3) node[above]{$i\hspace{-0.9mm}+\hspace{-0.9mm}1$}--(7,-3);
\draw(8,3) node[above]{$\cdots$};
\draw(9,3) node[above]{$l\hspace{-0.9mm}-\hspace{-0.9mm}1$}--(9,-3);
\draw(10,3) node[above]{$l$}--(10,-3);
\draw(11,3) node[above]{$l\hspace{-0.9mm}-\hspace{-0.9mm}1$}--(25,-3);
\draw(12,3) node[above]{$l\hspace{-0.9mm}-\hspace{-0.9mm}2$}--(19,0)--(12,-3);
\draw(13,3) node[above]{$\cdots$};
\draw(14,3) node[above]{$i$}--(21,0)--(14,-3);

\draw(15,3) node[above]{$0$}--(11,1.1)--(11,-1.1)--(15,-3);
\draw(16,3) node[above]{$1$}--(11.5,0.9)--(11.5,-0.9)--(16,-3);
\draw(17,3) node[above]{$1$}--(12,0.7)--(12,-0.7)--(17,-3);
\draw(18,3) node[above]{$\cdots$};
\draw(19,3) node[above]{$i\hspace{-0.9mm}-\hspace{-0.9mm}1$}--(12.5,0.3)--(12.5,-0.3)--(19,-3);
\draw(20,3) node[above]{$i\hspace{-0.9mm}-\hspace{-0.9mm}1$}--(13,0)--(20,-3);
\draw(21,3) node[above]{$i$}--(14,0)--(21,-3);
\draw(22,3) node[above]{$i\hspace{-0.9mm}+\hspace{-0.9mm}1$}--(15,0)--(22,-3);
\draw(23,3) node[above]{$\cdots$};
\draw(24,3) node[above]{$l\hspace{-0.9mm}-\hspace{-0.9mm}2$}--(17,0)--(24,-3);
\draw(25,3) node[above]{$l\hspace{-0.9mm}-\hspace{-0.9mm}1$}--(11,-3);
\draw(26,3) node[above]{$l$}--(26,-3);
\draw(27,3) node[above]{$l\hspace{-0.9mm}-\hspace{-0.9mm}1$}--(27,-3);
\draw(28,3) node[above]{$\cdot\cdot$};
\draw(28.5,3) node[above]{$i$}--(28.5,-3);

\draw(13.5,-3) node[below]{\scriptsize $(-1)^{n+i+1}(z-z')$};

\end{braid}.
$$
Next, 
$ 
\begin{braid}\tikzset{baseline=0mm}
\draw(0,0.3) node[above]{$l\hspace{-0.9mm}-\hspace{-0.9mm}1$}--(2,-0.3);
\draw(1,0.3) node[above]{$l\hspace{-0.9mm}-\hspace{-0.9mm}2$}--(2,0)--(1,-0.3);
\draw(2,0.3) node[above]{$l\hspace{-0.9mm}-\hspace{-0.9mm}1$}--(0,-0.3);
\end{braid}
$
opens, leading to 
$ 
\begin{braid}\tikzset{baseline=0mm}
\draw(0,0.3) node[above]{$l\hspace{-0.9mm}-\hspace{-0.9mm}1$}--(1,0)--(0,-0.3);
\draw(1,0.3) node[above]{$l\hspace{-0.9mm}-\hspace{-0.9mm}2$}--(0,0)--(1,-0.3);
\end{braid}
$,
which opens, yielding
$ 
\begin{braid}\tikzset{baseline=0mm}
\draw(0,0.3) node[above]{$l\hspace{-0.9mm}-\hspace{-0.9mm}2$}--(1,0)--(0,-0.3);
\draw(1,0.3) node[above]{$l\hspace{-0.9mm}-\hspace{-0.9mm}2$}--(0,0)--(1,-0.3);
\blackdot(0.2,0);
\end{braid}
$
which becomes
$ 
\begin{braid}\tikzset{baseline=0mm}
\draw(0,0.3) node[above]{$l\hspace{-0.9mm}-\hspace{-0.9mm}1$}--(1,-0.3);
\draw(1,0.3) node[above]{$l\hspace{-0.9mm}-\hspace{-0.9mm}1$}--(0,-0.3);
\end{braid}
$.
This process repeats for braids 
$ 
\begin{braid}\tikzset{baseline=0mm}
\draw(0,0.3) node[above]{$l\hspace{-0.9mm}-\hspace{-0.9mm}2$}--(2,-0.3);
\draw(1,0.3) node[above]{$l\hspace{-0.9mm}-\hspace{-0.9mm}3$}--(2,0)--(1,-0.3);
\draw(2,0.3) node[above]{$l\hspace{-0.9mm}-\hspace{-0.9mm}2$}--(0,-0.3);
\end{braid}
$
through
$ 
\begin{braid}\tikzset{baseline=0mm}
\draw(0,0.3) node[above]{$i\hspace{-0.9mm}+\hspace{-0.9mm}1$}--(2,-0.3);
\draw(1,0.3) node[above]{$i$}--(2,0)--(1,-0.3);
\draw(2,0.3) node[above]{$i\hspace{-0.9mm}+\hspace{-0.9mm}1$}--(0,-0.3);
\end{braid}
$,
giving
$$
B_1=
\begin{braid}\tikzset{baseline=0mm}

\draw(0.5,3) node[above]{$0$}--(0.5,-3);
\draw(1.5,3) node[above]{$1$}--(1.5,-3);
\draw(2.5,3) node[above]{$1$}--(2.5,-3);
\draw(3,3) node[above]{$\cdot\cdot$};
\draw(4,3) node[above]{$i\hspace{-0.9mm}-\hspace{-0.9mm}1$}--(4,-3);
\draw(5,3) node[above]{$i\hspace{-0.9mm}-\hspace{-0.9mm}1$}--(5,-3);
\draw(6,3) node[above]{$i$}--(6,-3);
\draw(7,3) node[above]{$i\hspace{-0.9mm}+\hspace{-0.9mm}1$}--(7,-3);
\draw(8,3) node[above]{$\cdots$};
\draw(9,3) node[above]{$l\hspace{-0.9mm}-\hspace{-0.9mm}1$}--(9,-3);
\draw(10,3) node[above]{$l$}--(10,-3);
\draw(11,3) node[above]{$l\hspace{-0.9mm}-\hspace{-0.9mm}1$}--(11,-3);
\draw(12,3) node[above]{$\cdots$};
\draw(13,3) node[above]{$i\hspace{-0.9mm}+\hspace{-0.9mm}1$}--(13,-3);
\draw(14,3) node[above]{$i$}--(21,-3);

\draw(15,3) node[above]{$0$}--(14.5,1.1)--(14.5,-1.1)--(15,-3);
\draw(16,3) node[above]{$1$}--(15,0.9)--(15,-0.9)--(16,-3);
\draw(17,3) node[above]{$1$}--(15.5,0.7)--(15.5,-0.7)--(17,-3);
\draw(18,3) node[above]{$\cdots$};
\draw(19,3) node[above]{$i\hspace{-0.9mm}-\hspace{-0.9mm}1$}--(16,0.3)--(16,-0.3)--(19,-3);
\draw(20,3) node[above]{$i\hspace{-0.9mm}-\hspace{-0.9mm}1$}--(16.5,0)--(20,-3);
\draw(21,3) node[above]{$i$}--(14,-3);
\draw(22,3) node[above]{$i\hspace{-0.9mm}+\hspace{-0.9mm}1$}--(22,-3);
\draw(23,3) node[above]{$\cdots$};
\draw(24,3) node[above]{$l\hspace{-0.9mm}-\hspace{-0.9mm}2$}--(24,-3);
\draw(25,3) node[above]{$l\hspace{-0.9mm}-\hspace{-0.9mm}1$}--(25,-3);
\draw(26,3) node[above]{$l$}--(26,-3);
\draw(27,3) node[above]{$l\hspace{-0.9mm}-\hspace{-0.9mm}1$}--(27,-3);
\draw(28,3) node[above]{$\cdot\cdot$};
\draw(28.5,3) node[above]{$i$}--(28.5,-3);

\draw(13.5,-3) node[below]{\scriptsize $(-1)^{n+i+1}(z-z')$};

\end{braid}.
$$
Which simplifies to \((-1)^{n+i+1}(z-z')^2(v_1^z \otimes v_2^z)\). Now we focus on \(B_2\). The rightmost 
$ 
\begin{braid}\tikzset{baseline=0mm}
\draw(0,0.3) node[above]{$i$}--(2,-0.3);
\draw(1,0.3) node[above]{$i\hspace{-0.9mm}-\hspace{-0.9mm}1$}--(0,0)--(1,-0.3);
\draw(2,0.3) node[above]{$i$}--(0,-0.3);
\end{braid}
$
opens, then on the left side the braids
$ 
\begin{braid}\tikzset{baseline=0mm}
\draw(0,0.3) node[above]{$i$}--(2,-0.3);
\draw(1,0.3) node[above]{$i\hspace{-0.9mm}-\hspace{-0.9mm}1$}--(0,0)--(1,-0.3);
\draw(2,0.3) node[above]{$i$}--(0,-0.3);
\end{braid}
$
through
$ 
\begin{braid}\tikzset{baseline=0mm}
\draw(0,0.3) node[above]{$l$}--(2,-0.3);
\draw(1,0.3) node[above]{$l\hspace{-0.9mm}-\hspace{-0.9mm}1$}--(0,0)--(1,-0.3);
\draw(2,0.3) node[above]{$l$}--(0,-0.3);
\end{braid}
$
open in succession. Then from the right, the braids
$ 
\begin{braid}\tikzset{baseline=0mm}
\draw(0,0.3) node[above]{$i\hspace{-0.9mm}+\hspace{-0.9mm}1$}--(2,-0.3);
\draw(1,0.3) node[above]{$i$}--(2,0)--(1,-0.3);
\draw(2,0.3) node[above]{$i\hspace{-0.9mm}+\hspace{-0.9mm}1$}--(0,-0.3);
\end{braid}
$
through
$ 
\begin{braid}\tikzset{baseline=0mm}
\draw(0,0.3) node[above]{$l\hspace{-0.9mm}-\hspace{-0.9mm}1$}--(2,-0.3);
\draw(1,0.3) node[above]{$l\hspace{-0.9mm}-\hspace{-0.9mm}2$}--(2,0)--(1,-0.3);
\draw(2,0.3) node[above]{$l\hspace{-0.9mm}-\hspace{-0.9mm}1$}--(0,-0.3);
\end{braid}
$
open in succession. Then 
$ 
\begin{braid}\tikzset{baseline=0mm}
\draw(0,0.3) node[above]{$l\hspace{-0.9mm}-\hspace{-0.9mm}1$}--(1,0)--(0,-0.3);
\draw(1,0.3) node[above]{$l$}--(0,0)--(1,-0.3);
\end{braid}
$
opens, introducing a factor of \((-z+z')\), giving
$$
B_2=
\begin{braid}\tikzset{baseline=0mm}

\draw(0.5,3) node[above]{$0$}--(0.5,-3);
\draw(1.5,3) node[above]{$1$}--(1.5,-3);
\draw(2.5,3) node[above]{$1$}--(2.5,-3);
\draw(3,3) node[above]{$\cdot\cdot$};
\draw(4,3) node[above]{$i\hspace{-0.9mm}-\hspace{-0.9mm}1$}--(4,-3);
\draw(5,3) node[above]{$i\hspace{-0.9mm}-\hspace{-0.9mm}1$}--(5,-3);
\draw(6,3) node[above]{$i$}--(6,-3);
\draw(7,3) node[above]{$i\hspace{-0.9mm}+\hspace{-0.9mm}1$}--(7,-3);
\draw(8,3) node[above]{$\cdots$};
\draw(9,3) node[above]{$l\hspace{-0.9mm}-\hspace{-0.9mm}1$}--(9,-3);
\draw(10,3) node[above]{$l$}--(10,-3);
\draw(11,3) node[above]{$l\hspace{-0.9mm}-\hspace{-0.9mm}1$}--(25,-3);
\draw(12,3) node[above]{$l\hspace{-0.9mm}-\hspace{-0.9mm}2$}--(19,0)--(12,-3);
\draw(13,3) node[above]{$\cdots$};
\draw(14,3) node[above]{$i$}--(21,0)--(14,-3);

\draw(15,3) node[above]{$0$}--(11,1.1)--(11,-1.1)--(15,-3);
\draw(16,3) node[above]{$1$}--(11.5,0.9)--(11.5,-0.9)--(16,-3);
\draw(17,3) node[above]{$1$}--(12,0.7)--(12,-0.7)--(17,-3);
\draw(18,3) node[above]{$\cdots$};
\draw(19,3) node[above]{$i\hspace{-0.9mm}-\hspace{-0.9mm}1$}--(12.5,0.3)--(12.5,-0.3)--(19,-3);
\draw(20,3) node[above]{$i\hspace{-0.9mm}-\hspace{-0.9mm}1$}--(22,0)--(20,-3);
\draw(21,3) node[above]{$i$}--(14,0)--(21,-3);
\draw(22,3) node[above]{$i\hspace{-0.9mm}+\hspace{-0.9mm}1$}--(15,0)--(22,-3);
\draw(23,3) node[above]{$\cdots$};
\draw(24,3) node[above]{$l\hspace{-0.9mm}-\hspace{-0.9mm}2$}--(17,0)--(24,-3);
\draw(25,3) node[above]{$l\hspace{-0.9mm}-\hspace{-0.9mm}1$}--(11,-3);
\draw(26,3) node[above]{$l$}--(26,-3);
\draw(27,3) node[above]{$l\hspace{-0.9mm}-\hspace{-0.9mm}1$}--(27,-3);
\draw(28,3) node[above]{$\cdot\cdot$};
\draw(28.5,3) node[above]{$i$}--(28.5,-3);

\draw(13.5,-3) node[below]{\scriptsize $(-1)^{n+i}(z-z')$};

\end{braid}.
$$
Now 
$ 
\begin{braid}\tikzset{baseline=0mm}
\draw(0,0.3) node[above]{$l\hspace{-0.9mm}-\hspace{-0.9mm}1$}--(2,-0.3);
\draw(1,0.3) node[above]{$l\hspace{-0.9mm}-\hspace{-0.9mm}2$}--(0,0)--(1,-0.3);
\draw(2,0.3) node[above]{$l\hspace{-0.9mm}-\hspace{-0.9mm}1$}--(0,-0.3);
\end{braid}
$
opens, giving
$ 
\begin{braid}\tikzset{baseline=0mm}
\draw(0,0.3) node[above]{$l\hspace{-0.9mm}-\hspace{-0.9mm}2$}--(1,0)--(0,-0.3);
\draw(1,0.3) node[above]{$l\hspace{-0.9mm}-\hspace{-0.9mm}1$}--(0,0)--(1,-0.3);
\end{braid}
$, 
which opens yielding
$ 
\begin{braid}\tikzset{baseline=0mm}
\draw(0,0.3) node[above]{$l\hspace{-0.9mm}-\hspace{-0.9mm}2$}--(1,0)--(0,-0.3);
\draw(1,0.3) node[above]{$l\hspace{-0.9mm}-\hspace{-0.9mm}2$}--(0,0)--(1,-0.3);
\blackdot(0.8,0);
\end{braid}
$, 
which becomes
$ 
\begin{braid}\tikzset{baseline=0mm}
\draw(0,0.3) node[above]{$l\hspace{-0.9mm}-\hspace{-0.9mm}2$}--(1,-0.3);
\draw(1,0.3) node[above]{$l\hspace{-0.9mm}-\hspace{-0.9mm}2$}--(0,-0.3);
\end{braid}
$. 
This process repeats with braids 
$ 
\begin{braid}\tikzset{baseline=0mm}
\draw(0,0.3) node[above]{$l\hspace{-0.9mm}-\hspace{-0.9mm}2$}--(2,-0.3);
\draw(1,0.3) node[above]{$l\hspace{-0.9mm}-\hspace{-0.9mm}3$}--(0,0)--(1,-0.3);
\draw(2,0.3) node[above]{$l\hspace{-0.9mm}-\hspace{-0.9mm}2$}--(0,-0.3);
\end{braid}
$
through
$ 
\begin{braid}\tikzset{baseline=0mm}
\draw(0,0.3) node[above]{$i\hspace{-0.9mm}+\hspace{-0.9mm}1$}--(2,-0.3);
\draw(1,0.3) node[above]{$i$}--(0,0)--(1,-0.3);
\draw(2,0.3) node[above]{$i\hspace{-0.9mm}+\hspace{-0.9mm}1$}--(0,-0.3);
\end{braid}
$.
Then
$ 
\begin{braid}\tikzset{baseline=0mm}
\draw(0,0.3) node[above]{$i$}--(2,-0.3);
\draw(1,0.3) node[above]{$i\hspace{-0.9mm}-\hspace{-0.9mm}1$}--(0,0)--(1,-0.3);
\draw(2,0.3) node[above]{$i$}--(0,-0.3);
\end{braid}
$
opens, yielding 
$ 
\begin{braid}\tikzset{baseline=0mm}
\draw(0,0.3) node[above]{$i\hspace{-0.9mm}-\hspace{-0.9mm}1$}--(1,0)--(0,-0.3);
\draw(1,0.3) node[above]{$i$}--(0,0)--(1,-0.3);
\end{braid}
$
which opens, introducing a factor of \((-2z')\), so we have \(B_2 = (-1)^{n+i+1}(z-z')(2z')(v_1^z \otimes v_1^{z'})\). Then \(A_i = B_1 + B_2 = (-1)^{n+i+1}(z^2 - z'^2)(v_1^z \otimes v_1^{z'})\), so \(A_1 \in (z^2-z'^2)\left[(-1)^{n+i+1} + (z-z')R_\de^z \otimes R_\de^{z'} \right] (v_1^z \otimes v_1^{z'})\). Then Proposition \ref{InterReduce} provides the result.
\end{proof}


\subsection{The map $R_{L_{\de,i}^{z'},L_{\de,i}^z}$ in type ${\tt F}_4$}\label{RF4}
Fix \(i \in I'\). Recall the construction of \(L_{\de,i}^z\) from section \ref{F4Cons}. Let
\begin{equation}\label{EITypeF}
\bi = \begin{cases}
(0,1,2,3,3,2,4,4,3,3,2,1), & \textup{if } i=1;\\
(0,1,2,3,3,2,4,4,3,3,1,2), & \textup{if } i=2;\\
(0,1,2,3,4,3,2,3,4,1,2,3), & \textup{if } i=3;\\
(0,1,2,3,4,3,2,3,1,2,3,4), & \textup{if } i=4.
\end{cases}
\end{equation}
Then \(\bi\) is an extremal word for \(L_{\de,i}^z\). Let 
\begin{align*}
v_1^z= \begin{cases}
v_{(1,3,2,5,4,6),\bi} & \textup{if }i \in \{1,2\}\\
v_{(1,2,3,4,5,6), \bi} & \textup{if }i =3\\
v_{(1,2,3,4,6,5), \bi} & \textup{if }i=4\\
\end{cases}
\end{align*}
Then \(v_1^z\) is a vector of word \(\bi\), and it is easily seen from the action of generators that \(v_1^z\) generates \(L_{\de,i}^z\).
\begin{Proposition}\label{F4intertwiner}
\begin{align*}R(v_1^{z'} \otimes v_1^{z}) \in (z^2-z'^2)^{24}\left[\sigma + c + (z-z')R_{\de}^{z} \otimes R_{\de}^{z'}\right](v_1^z \otimes v_1^{z'}),
\end{align*}
where \(c=1\) if \(i \in \{1,4\}\) and \(c=-1\) if \(i \in \{2,3\}\).
\end{Proposition}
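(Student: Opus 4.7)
The plan is to follow the template set by the proofs of Propositions \ref{Blintertwiner} and \ref{Clintertwiner}. As a first step I would apply Proposition \ref{InterReduce} to reduce the computation of $R_{L_{\de,i}^{z'},L_{\de,i}^z}(v_1^{z'} \otimes v_1^{z})$ to that of two explicit diagrammatic expressions built from the extremal word $\bi$ fixed in (\ref{EITypeF}): one in which the $0$-strands are swapped and one in which the block permutation acts on the non-$0$ strands above a pair of vertical $0$-strands. The prefactor $(z^2-z'^2)^{24}$ will come out of Proposition \ref{InterReduce} directly from the product $\prod_{k,m>1,\; i_k=i_m}(c_k z^{a_k}-c_m z'^{a_m})$ once one substitutes the explicit eigenvalues of the $y_r$'s on $v_1^z$ dictated by the rules of Section \ref{F4Cons}. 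What then remains is to evaluate the residual intertwining diagram modulo $(z-z')R_\de^z\otimes R_\de^{z'}$ and identify it with $\sigma(v_1^z\otimes v_1^{z'}) + c(v_1^z\otimes v_1^{z'})$.

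The core of the argument is a sequence of diagrammatic reductions using the KLR relations (\ref{R6}), (\ref{R4}), (\ref{R3Psi}), (\ref{R7}) in combination with the explicit action on the basis $B$ from Section \ref{F4Cons}. The strategy, as in types $\tt B$ and $\tt C$, is to ``open'' braid triples at the boundary of the shuffle region, sliding the resulting dots upward so that they act as scalars (or as zero) on the outer tensor factors $L_{\de,i}^{z'}$ and $L_{\de,i}^{z}$ via the prescribed $y_r$-eigenvalues. Each opening either inserts a factor of $(z\pm z')$, collapses a crossing by (\ref{R4}), or produces a new braid pattern that can be eliminated by a further Reidemeister-like move. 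Peeling from the outside of the diagram inward, the repeated-letter factors cancel the $(z^2-z'^2)^{24}$ prefactor coming from Proposition \ref{InterReduce}, and a careful sign count at the terminal step identifies the residual scalar as $c$ (with $c=(-1)^{i+1}$ for $i\in\{3\}$, etc., matching the pattern $(-1)^{l+i}$ seen in Proposition \ref{Blintertwiner}).

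The principal obstacle is combinatorial explosion. Unlike in types $\tt B$ and $\tt C$, where the minuscule module $L_{\de,i}^z$ is either homogeneous or has only one off-diagonal correction term in its $y$-action, the $F_4^{(1)}$ construction of Section \ref{F4Cons} has basis $B$ of cardinality up to $18\cdot|C_{\bi_\nu}|$ and the actions $y_rv_{\nu,\bj}$ and $\psi_rv_{\nu,\bj}$ carry elaborate correction terms $Y(r,\nu,\bj)$ and $\Psi(r,\nu,\bj)$ that mix many $\nu\in X$. Consequently, the intermediate diagrams arising in the reduction branch into sums indexed by subsets of $X$, and tracking which basis vectors $v_{\nu,\bj}$ contribute to the $(z-z')^0$-component requires a bookkeeping scheme well beyond what the $\tt B$ and $\tt C$ proofs demanded.

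For this reason I propose to carry out the final identification by direct symbolic computation, in the same spirit in which Proposition \ref{F4rels} was verified. Concretely: expand $\phi_{w_1}(v_1^z\otimes v_1^{z'})$ using the definition of the block intertwiner in \cite[Lemma 1.3.1]{KKK} as a product of elementary $\phi_r$'s; evaluate this product on $v_1^z\otimes v_1^{z'}$ using the module structure tables of Section \ref{F4Cons}; collect terms by $\O[z,z']$-degree; divide out $(z^2-z'^2)^{24}$; and read off both the swap term $\sigma(v_1^z\otimes v_1^{z'})$ and the scalar $c\in\{\pm 1\}$ from the $(z-z')^0$-coefficient of the resulting expression. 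The four cases $i=1,2,3,4$ can be treated in parallel, and the sign $c=1$ for $i\in\{1,4\}$ versus $c=-1$ for $i\in\{2,3\}$ will emerge from the parity of the number of $(a,a\pm 1,a)$-braid openings required to pull the shuffled $2$-strands past the long chain of $3$- and $4$-strands in $\bi$.
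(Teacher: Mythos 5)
Your proposal matches the paper's proof, which states only that the result follows from a straightforward but lengthy computer-aided calculation using Proposition~\ref{InterReduce} and the same diagrammatic techniques as in the ${\tt B}_l^{(1)}$ and ${\tt C}_l^{(1)}$ cases. Your plan — reduce via Proposition~\ref{InterReduce}, attempt the braid-opening reductions of the earlier types, and resolve the combinatorial explosion by direct symbolic computation as in the verification of Proposition~\ref{F4rels} — is exactly the route the authors take.
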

\begin{proof}
The proof of this proposition is a straightforward but lengthy calculation made with the aid of a computer, using Proposition \ref{InterReduce} and the same techniques as in the \({\tt B}_l^{(1)}\) and \({\tt C}_l^{(1)}\) cases.
\end{proof}


\subsection{The map $R_{L_{\de,i}^{z'},L_{\de,i}^z}$ in type ${\tt G}_2^{(1)}$}\label{RG2}
Fix \(i \in I'\). Recall the construction of \(L_{\de,i}^z\) from section \ref{G2Cons}. Let
\begin{equation}\label{EITypeG}
\bi = \begin{cases}
(0,1,2,2,2,1), & \textup{if } i=1;\\
(0,1,2,2,1,2), & \textup{if } i=2.
\end{cases}
\end{equation}
Then \(\bi\) is an extremal word for \(L_{\de,i}^z\). Let 
\begin{align*}
v_1^z:= v_{(1,2,3)}.
\end{align*}
Then \(v_1^z\) is a vector of word \(\bi\) that generates \(L_{\de,i}^z\).
\begin{Proposition}\label{G2intertwiner}
\begin{align*}R_{L_{\de,i}^{z'},L_{\de,i}^z}(v_1^{z'} \otimes v_1^{z}) \in (z^3-z'^3)^{8}\left[\sigma + (-1)^i + (z-z')R_{\de}^{z} \otimes R_{\de}^{z'}\right](v_1^z \otimes v_1^{z'}).
\end{align*}
\end{Proposition}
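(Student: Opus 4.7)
The plan is to follow exactly the strategy used in Propositions \ref{Blintertwiner}, \ref{Clintertwiner}, and \ref{F4intertwiner}: apply Proposition \ref{InterReduce} to rewrite $R_{L_{\de,i}^{z'},L_{\de,i}^z}(v_1^{z'}\otimes v_1^z)$ as an explicit scalar times a simpler diagrammatic expression, and then manipulate that residual diagram using the KLR braid and quadratic relations until only the identity and block-crossing terms survive, modulo the error ideal $(z-z')R_\de^z\otimes R_\de^{z'}$.

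First I would extract the coefficient. Using either $\bi=(0,1,2,2,2,1)$ or $\bi=(0,1,2,2,1,2)$, together with the known action of the $y_r$ on $v_1^z=v_{(1,2,3)}$ from Section \ref{G2Cons} (namely $y_rv_1^z=z^3v_1^z$ for $j_r\in\{0,1\}$ and $y_rv_1^z=\xi^{r-3}zv_1^z$ for the $2$-entries, since $\chi_{\nu_t}=\xi^{t-1}$ on $v_{(1,2,3)}$), the product $\prod_{k,m>1,\,i_k=i_m}(c_kz^{a_k}-c_mz'^{a_m})$ in Proposition \ref{InterReduce} factors into two pairs of long-root differences $(z^3-z'^3)$ coming from the two $1$-strands, and three pairs of short-root differences of the form $(\xi^{a}z-\xi^{b}z')$ coming from the three $2$-strands. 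The identity $\prod_{a\ne b}(\xi^a z-\xi^b z')\in\O^\times(z^3-z'^3)^{?}$ (using that the three cube roots of unity appear symmetrically) should then collapse the product to $(z^3-z'^3)^8$ up to a unit in $\O$.

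Next, the residual diagram is of the same shape as the $A$ in the proofs of Propositions \ref{Blintertwiner}--\ref{Clintertwiner}: a full block crossing of $\bi$ over $\bi$. I would reduce it by repeatedly opening outer $(1,2,1)$ and $(2,1,2)$ triple-bond triples via relation (\ref{R7}), at each step pushing any stray strand that ends up on the side into a position where it triggers a forbidden crossing (e.g.\ creating a $\psi_1$ or an out-of-pattern short-root crossing) on one of the tensor factors, which evaluates to zero. The boundary terms produced by the braid relation introduce cubic-in-$y$ polynomials, which after a further application of the quadratic relation (\ref{R4}) and absorption of dots via (\ref{R6}) contribute either the identity $v_1^z\otimes v_1^{z'}$, the block-swap $\sigma(v_1^z\otimes v_1^{z'})$, or an element of $(z-z')R_\de^z\otimes R_\de^{z'}(v_1^z\otimes v_1^{z'})$. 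A careful bookkeeping of the $\xi$-scalars picked up from the action of the $y_r$'s on $v_1^z$ when absorbing dots should then determine the constant $c$ in front of the identity term, which I expect to be $(-1)^i$ by analogy with the $B$ and $C$ cases and by computing the two boundary cases $i=1$, $i=2$ directly.

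The main obstacle will be the intricate polynomial arithmetic forced by the triple bond: the polynomial $Q_{12}(u,v)$ for $G_2^{(1)}$ involves $(u^3-v)$ or $(u-v^3)$ according to convention, so opening a single $\psi^2$ on a short/long crossing produces cubic expressions in the $y$'s, and each braid relation (\ref{R7}) on a $(1,2,1)$ or $(2,1,2)$ triple generates several such cubic boundary terms which must then be reduced via the action formulas of Section \ref{G2Cons}. In particular, the three-fold cyclic symmetry under multiplication by $\xi$ must be exploited at every stage to collapse the many surviving terms into a single scalar multiple of $v_1^z\otimes v_1^{z'}$. Because of the combinatorial size of this computation, I expect the cleanest verification to be a direct diagrammatic check with the aid of a computer, analogous to what was done in Proposition \ref{F4intertwiner} for type $F_4^{(1)}$.
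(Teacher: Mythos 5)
Your proposal matches the paper's approach exactly: the paper's proof of this proposition consists of the single remark that it is "a straightforward but lengthy calculation made with the aid of a computer, using Proposition~\ref{InterReduce} and the same techniques as in the ${\tt B}_l^{(1)}$ and ${\tt C}_l^{(1)}$ cases," which is precisely the strategy (and the concession to computer verification) you describe. One small bookkeeping slip: the product in Proposition~\ref{InterReduce} runs over all \emph{pairs} of equal letters, so the two $1$-strands contribute $(z^3-z'^3)^4$, the three $2$-strands contribute $\prod_{a,b\in\{0,1,2\}}(\xi^az-\xi^bz')=(z^3-z'^3)^3$, and the $0$-strand one more factor, giving the exponent $8$ — your count of the factors is off even though the final exponent and the cube-root-of-unity collapse you invoke are correct.
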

\begin{proof}
As in the \({\tt F}_4^{(1)}\) case, the proof of this proposition is a straightforward but lengthy calculation made with the aid of a computer, using Proposition \ref{InterReduce} and the same techniques as in the \({\tt B}_l^{(1)}\) and \({\tt C}_l^{(1)}\) cases.
\end{proof}

\subsection{Constructing $\tau_r$ from $R_{L_{\de,i}^{z'}, L_{\de,i}^z}$}\label{TauNSL}

Let \(\Car\) be a Cartan matrix of type \({\tt B}_l^{(1)}, {\tt C}_l^{(1)}, {\tt F}_4^{(1)}\), or \({\tt G}_2^{(1)}\). Fix \(i \in I'\). In Sections \ref{RBl}-\ref{RG2} we have chosen a word vector \(v_1^z\) of extremal word \(\bi\) that generates \(L_{\de,i}^z\), and shown that
\begin{align*}
R_{L_{\de,i}^{z'},L_{\de,i}^z}(v_1^{z'} \otimes v_1^{z}) \in f(z,z')\left[\sigma + c + (z-z')R_{\de}^{z} \otimes R_{\de}^{z'}\right](v_1^z \otimes v_1^{z'}),
\end{align*}
for some \(c = \pm 1\) and nonzero \(f(z,z') \in \O[z,z']\). As \(v_1^{z'} \otimes v_1^z\) generates \(L_{\de,i}^{z'} \circ L_{\de,i}^z\) and \(R_{L_{\de,i}^{z'},L_{\de,i}^z}\) is an \(R_{2\de}^{z,z'}\)-linear endomorphism, this implies that every element in the image of \(R_{L_{\de,i}^{z'},L_{\de,i}^z}\) is divisible by \(f\). Let \(\pi: L_{\de,i}^{z} \circ L_{\de,i}^{z'} \twoheadrightarrow L_{\de,i} \circ L_{\de,i}\) and \(\pi': L_{\de,i}^{z'} \circ L_{\de,i}^{z} \twoheadrightarrow L_{\de,i} \circ L_{\de,i}\) be the quotient maps given by setting \(z=z'=0\). Then since  \(L_{\de,i}^{z'} \circ L_{\de,i}^z\) is free as an \(\O[z,z']\)-module, there is a well-defined map \(\tilde{\tau}:=\pi \circ f^{-1}R_{L_{\de,i}^{z'},L_{\de,i}^z}\) which factors through \(\pi'\) to give a \(R_{2\de}\)-homomorphism
\begin{align*}\label{TauFctnNSL}
\tau: L_{\de,i} \circ L_{\de,i} \to L_{\de,i} \circ L_{\de,i}
\end{align*}
such that \(\tau(v_2)=(\sigma + c)v_2\), where \(v_2=\pi(v_1^z \otimes v_1^{z'})=\pi'(v_1^{z'} \otimes v_1^z)\).\\
\indent Write \(v_1\) for the image of \(v_1^z\) under the quotient \(L_{\de,i}^z \twoheadrightarrow  L_{\de,i}\) of Proposition \ref{zTwistMod}. Then \(v_1\) spans the 1-dimensional top degree component \((1_{\bi}L_{\de,i})_N\) of the  (extremal) word space \(1_{\bi}L_{\de,i}\) in \(L_{\de,i}\), and \(v_2 = v_1 \otimes v_1\). Write \(v_n\) for \(v_1 \otimes \cdots \otimes v_1 \in M_n = L_{\de,i}^{\circ n}\). Inserting the endomorphism \(\tau\) into the \(r\)-th and \((r+1)\)-th positions in \(M_n\) yields endomorphisms \(\tau_r : M_n \to M_n\), \(v_n \mapsto (\sigma_r + c)v_n\).

\begin{Proposition}\label{NSLbraidrels}
The endomorphisms \(\tau_{r}\) satisfy the usual Coxeter relations of the standard generators of the symmetric group \(\frak{S}_n\), i.e., \(\tau_r^2 = 1\), \(\tau_r \tau_s = \tau_s \tau_r\) for \(|r-s|>1\) and \(\tau_r\tau_{r+1} \tau_r = \tau_{r+1} \tau_r \tau_{r+1}\).
\end{Proposition}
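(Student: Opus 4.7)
The plan is to parallel the argument of Theorem \ref{TEndMn}, substituting the explicit deformations $L_{\de,i}^z$ of Sections \ref{BlCons}--\ref{G2Cons} and the intertwiners of Propositions \ref{Blintertwiner}--\ref{G2intertwiner} in place of the KKK machinery, which is unavailable in the non-simply-laced setting. The commutation $\tau_r\tau_s = \tau_s\tau_r$ for $|r-s|>1$ is immediate since by construction $\tau_r$ is the insertion of the endomorphism $\tau$ of $L_{\de,i}\circ L_{\de,i}$ into tensor positions $r$ and $r{+}1$ of $M_n$, so $\tau_r$ and $\tau_s$ act on disjoint pairs of factors when $|r-s|>1$.

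For the quadratic relation $\tau_r^2 = 1$, I use that $\tau_r$ commutes with the left $R_{n\de}$-action and $M_n$ is cyclic on $v_n$, reducing the claim to $\tau^2(v_2) = v_2$ in $L_{\de,i}\circ L_{\de,i}$. Lifting to the deformation, I would compute the composite $R_{L_{\de,i}^{z},L_{\de,i}^{z'}} \circ R_{L_{\de,i}^{z'},L_{\de,i}^z}$ applied to $v_1^{z'}\otimes v_1^z$; running Propositions \ref{Blintertwiner}--\ref{G2intertwiner} in both orientations and composing shows this equals $f(z,z')f(z',z)\cdot(v_1^{z'}\otimes v_1^z)$ modulo a correction term in $(z-z')R_\de^{z}\otimes R_\de^{z'}$, so dividing by $f(z,z')f(z',z)$ and specializing $z=z'=0$ yields $\tau^2(v_2)=v_2$.

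For the braid relation, the same reduction shows it suffices to verify $\tau_r\tau_{r+1}\tau_r(v_3) = \tau_{r+1}\tau_r\tau_{r+1}(v_3)$ inside $L_{\de,i}^{\circ 3}$. The strategy is to lift to the triply deformed product $T:=L_{\de,i}^{z_1}\circ L_{\de,i}^{z_2}\circ L_{\de,i}^{z_3}$, define intertwiners $R_{12}, R_{23}$ swapping adjacent factors, and establish the Yang--Baxter equation
\[
R_{12}R_{23}R_{12} = R_{23}R_{12}R_{23} : T \longrightarrow L_{\de,i}^{z_3}\circ L_{\de,i}^{z_2}\circ L_{\de,i}^{z_1}.
\]
Granted this, dividing both sides by the symmetric factor $f(z_1,z_2)f(z_2,z_3)f(z_1,z_3)$ and specializing $z_1=z_2=z_3=0$ yields the braid relation for $\tau_r$ on $v_3$, and hence for $\tau_r$ on all of $M_n$.

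The main obstacle will be establishing the Yang--Baxter equation itself, since we cannot quote \cite[Lemma 1.3.1]{KKK} directly. My preferred approach is to show that the $\O[z_1,z_2,z_3]$-module $\Hom_{R_{3\de}^{z_1,z_2,z_3}}(T,L_{\de,i}^{z_3}\circ L_{\de,i}^{z_2}\circ L_{\de,i}^{z_1})$ has rank one via an extremal-word dimension count using Lemma \ref{LMultOneWeight}; then Yang--Baxter reduces to comparing the two composites on the single vector $v_1^{z_1}\otimes v_1^{z_2}\otimes v_1^{z_3}$, a polynomial identity in $\O[z_1,z_2,z_3]$ that can be checked by direct expansion using the explicit formulas of Sections \ref{BlCons}--\ref{G2Cons}. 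A type-by-type direct computation provides a fallback.
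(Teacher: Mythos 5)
Your reduction to $v_2$ (resp.\ $v_3$) via cyclicity of $M_n$ and $R_{n\de}$-linearity matches the paper, and the far commutation $\tau_r\tau_s=\tau_s\tau_r$ is indeed immediate. But for the braid and quadratic relations the paper takes a much shorter route than you do: it simply invokes \cite[Chapter 1, Lemma 1.3.1]{KKK}. The point you are missing is that the symmetric-Cartan-matrix hypothesis in \cite{KKK} is needed only to \emph{construct} the affinizations $L^z$; the intertwiner identities themselves (the elements $\phi_w$ are independent of reduced expression, satisfy the braid relations, and $\phi_{w}^2$ acts by an explicit product of $Q_{ij}$'s) are formal consequences of the KLR relations and hold in every type. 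Once Chapter~\ref{ChLast} has produced $L_{\de,i}^z$ by hand, the braid relations for $R$ and the formula $R\circ R=f^2$ come for free. Your alternative---proving Yang--Baxter from scratch via a rank-one Hom-space argument over $\O[z_1,z_2,z_3]$ plus a leading-coefficient comparison---is a legitimate standard $R$-matrix strategy (though you should pass to the fraction field, where the deformed modules become irreducible and pairwise non-isomorphic because the $y_r$ act by distinct scalars; Lemma~\ref{LMultOneWeight} as stated concerns irreducibles over a field and does not apply directly to the $\O[z]$-modules). It buys independence from \cite{KKK} at the cost of a substantial computation the paper deliberately avoids.

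The genuine gap is in your quadratic relation. You propose to obtain the value of $R_{L^z,L^{z'}}\circ R_{L^{z'},L^z}$ on $v_1^{z'}\otimes v_1^z$ by ``running Propositions \ref{Blintertwiner}--\ref{G2intertwiner} in both orientations and composing.'' Formally composing those statements yields, after dividing by $f(z,z')f(z',z)$ and specializing $z=z'=0$, the vector $(\si_1+c)^2v_2=(\si_1^2+2c\si_1+c^2)v_2$; to conclude this equals $v_2$ you must know $\si_1^2v_2$, and that is precisely the content of Lemma~\ref{Lcd}, which the paper \emph{deduces from} $\tau^2=1$ rather than the other way around. As written, your argument is circular. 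You need an independent evaluation of the composite: either redo the full diagrammatic computation for $R\circ R$ (doable but long), or---as the paper does---quote the product formula of \cite[Lemma 1.3.1(iv)]{KKK} for $\phi_{w_1}^2$, which gives the composite as the explicit scalar $f^2$ on the generating vector without ever touching $\si_1^2$.
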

 \begin{proof}
 The braid relations follow from the fact that the map \(R\) satisfies these relations (see \cite[Chapter 1]{KKK}). By Proposition \ref{InterReduce} we have that \(f(z,z')=\prod_{\substack{k,m\\i_k=i_m}} (c_{k}z^{a_k}-c_{m}z'^{a_m})\), where \(y_kv_1^{z} = c_k^{a_k}v_1^z\). Then it follows from \cite[Lemma 1.3.1]{KKK} that \(R_{L_{\de,i}^z,L_{\de,i}^{z'}} \circ R_{L_{\de,i}^{z'},L_{de,i}^z}(v_1^{z'} \otimes v_1^z) = f^2(v_1^{z'} \otimes v_1^z)\), so \(\tau^2(v_1 \otimes v_1) = v_1 \otimes v_1\), and thus \(\tau_r^2 = 1\).
\end{proof}


\end{document}